\let\oldtocsection=\tocsection
\let\oldtocsubsection=\tocsubsection
\renewcommand{\tocsection}[2]{\hspace{0em}\oldtocsection{#1}{#2}}
\renewcommand{\tocsubsection}[2]{\hspace{2em}\oldtocsubsection{#1}{#2}}
\newtheorem{thm}{Theorem}[section]
\newtheorem{prop}[thm]{Proposition}
\newtheorem{lemma}[thm]{Lemma}
\newtheorem{cor}[thm]{Corollary}
\newtheorem{dfn}[thm]{Definition}
\theoremstyle{definition} \newtheorem{ex}[thm]{Example}
\newtheorem{hyp}[thm]{Hypothesis}
\theoremstyle{definition} \newtheorem{rmk}[thm]{Remark}
\newtheorem{algo}[thm]{Algorithm}
\title{Clusters and semistable models of hyperelliptic curves in the wild case}
\author{Leonardo Fiore}
\address{\parbox{\linewidth}{Department of Mathematics ``Federigo Enriques", The University of Milan \newline Via Cesare Saldini, 50, 20133 Milano MI, Italy}}
\email{leonardo@leonardofiore.it}
\author{Jeffrey Yelton}
\address{\parbox{\linewidth}{Department of Mathematics and Computer Science, Wesleyan University \\ 265 Church Street, Middletown, CT 06459-0128}}
\email{jyelton@wesleyan.edu}
\begin{document}

\maketitle

\begin{abstract}
    Given a Galois cover $Y \to X$ of smooth projective geometrically connected curves over a complete discrete valuation field $K$ with algebraically closed residue field, we define a semistable model of $Y$ over the ring of integers of a finite extension of $K$ which we call the \emph{relatively stable model} $\Yrst$ of $Y$, and we discuss its properties.  We focus on the case when $Y : y^2 = f(x)$ is a hyperelliptic curve, viewed as a degree-$2$ cover of the projective line $X := \proj_K^1$, and demonstrate a practical way to compute the relatively stable model.
    
    In the case of residue characteristic $p \neq 2$, the components of the special fiber $\SF{\Yrst}$ correspond precisely to the non-singleton \emph{clusters} of roots of the defining polynomial $f$, i.e.\ the subsets of roots of $f$ which are closer to each other than to the other roots of $f$ with respect to the induced discrete valuation on the splitting field.  This relationship, however, is far less straightforward in the $p=2$ case, which is our main focus (the techniques we introduce nevertheless also allow us to recover the simpler, already-known results in the $p\neq 2$ case).  We show that, when $p = 2$, for each cluster containing an \textit{even} number of roots of $f$, there are $0$, $1$, or $2$ components of $\SF{\Yrst}$ corresponding to it, and we determine a direct method of finding and describing them. We also define a polynomial $F(T) \in K[T]$ whose roots allow us to find the components of $\SF{\Yrst}$ which are not connected to even-cardinality clusters.  We finish by using our methods to find relatively stable models of hyperelliptic curves in genus $1$ and $2$, using linear inequalities among valuations of various elements of $\bar{K}$ associated to $f$ to break the situation for each genus into several cases which yield different results about the structure of the special fiber of the relatively stable model.
\end{abstract}

\tableofcontents

\section{Introduction} \label{sec introduction}

The focus of this paper is to investigate the reduction types of hyperelliptic curves over discrete valuation fields.  Given a complete discrete valuation field $K$ of characteristic different from $2$ with algebraically closed residue field, our starting point is to consider a \textit{hyperelliptic curve} $Y$ over $K$; that is, $Y / K$ is a smooth projective curve of positive genus admitting a degree-$2$ morphism onto the projective line $\proj_K^1$.

This paper is concerned with constructing a semistable model of a given hyperelliptic curve $Y / K$ and understanding the structure of the special fiber of a semistable model of $Y$.  As this problem is already entirely understood in the case that the residue characteristic is not $2$ and the procedure in that case can be described entirely in terms of the distances between the branch points with respect to the $p$-adic metric on $K$, our primary focus will be on the case where the residue characteristic is $2$.  The increased complexity of the problem for this case arises from the fact that a hyperelliptic curve comes with a degree-$2$ map to the projective line: the fact that this degree is the same as the residue characteristic implies that we are in a ``wild setting".  Problems involving reduction of curves in the ``wild case", in which one studies semistable models of curves with a degree-$p$ map to the projective line over residue characteristic $p$, have been investigated in a number of works in recent decades (see \S\ref{sec introduction comparison} below), but mainly in the situation where the branch points of the map $Y \to \proj_K^1$ are $p$-adically equidistant.  In this article, we will consider general hyperelliptic curves over residue characteristic $2$, with a particular focus on the relationship between the combinatorial data of how the branch points are ``clustered" and the structure of the special fiber of a semistable model.

\subsection{Our main problem} \label{sec introduction main problem}

It is well known that an affine chart for a hyperelliptic curve $Y / K$ of genus $g \geq 1$ is given by an equation of the form 
\begin{equation} \label{eq hyperelliptic p not 2}
    y^2 = f(x) = c\prod_{i = 1}^d (x - a_i),
\end{equation}
where $f(x) \in K[x]$ is a polynomial of degree $d \in \{2g+1, 2g+2\}$ that does not have multiple roots, $c \in K^\times$ is the leading coefficient of $f(x)$, and the elements $a_i \in \bar{K}$ are the roots of $f$.  We call $f$ the \emph{defining polynomial} of (this chart of) the hyperelliptic curve $Y$.  The degree-$2$ morphism of $Y$ onto the projective line is given simply by the coordinate function $x$; this morphism is branched precisely at each of the roots of $f$ as well as, in the case that $d = 2g + 1$ (in other words, when $f$ has odd degree), at the point $\infty$.  After applying an appropriate automorphism of the projective line (i.e.\ a suitable change of coordinate) which moves one of the branch points to $\infty$, we obtain an equation of the form in (\ref{eq hyperelliptic p not 2}) with $d = 2g + 1$; we will adhere to this assumption about $f$ throughout most of the paper (see \S\ref{sec models hyperelliptic} for more details).  Our aim will be showing how to explicitly form semistable models of $Y$ over finite extensions of $K$.  We more fully explain various aspects of the problem below.

\subsubsection{Semistable models of curves} \label{sec introduction main problem semistable}

Given any smooth projective geometrically connected curve $C$ over a complete discrete valuation field $K$ with ring of integers $R \subset K$ and algebraically closed residue field $k$, a \emph{model} of $C$ over $R'$, where $R'$ is the ring of integers of some finite extension $K'\supseteq K$, is a normal projective flat $R'$-scheme $\CC$ whose generic fiber is isomorphic to $C$ over $K'$. We say that a model $\CC$ is \emph{semistable} if its special fiber $\CC_s$ is a reduced $k$-curve with at worst nodes as singularities.
The following groundbreaking theorem was proved by Deligne and Mumford in \cite{deligne1969irreducibility} and then through independent arguments by Artin and Winters in \cite{artin1971degenerate} (see also \cite[Section 10.4]{liu2002algebraic} for a detailed explanation of the arguments in Artin-Winters).

\begin{thm} \label{thm semistable reduction}
    Every smooth projective geometrically connected curve $C$ over $K$ achieves semistable reduction over a finite extension $K' \supseteq K$, i.e.\ $C$ admits a semistable model $\CC^{\sst}$ over $R'$, where $R'$ is the ring of integers in $K'$.
\end{thm}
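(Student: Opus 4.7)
The plan is to reduce the problem to a semi-abelian reduction statement for the Jacobian of $C$ and then descend this information back to the curve. After enlarging $K$ so that $C$ admits a $K$-rational point, its Jacobian $J := \Pic^0(C/K)$ is a well-defined abelian variety over $K$ of dimension $g = \text{genus}(C)$. The key is to prove there is a finite extension $K'/K$ over which $J$ acquires semi-abelian reduction, meaning that the identity component of the special fiber of its Néron model is an extension of an abelian variety by a torus. Given this, one extracts a semistable model of $C$ itself.

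For the semi-abelian reduction of $J$, I would invoke Grothendieck's monodromy criterion: $J$ has semi-abelian reduction over $K$ if and only if, for some (equivalently any) prime $\ell$ different from the residue characteristic, the inertia group $I_K$ acts unipotently on the $\ell$-adic Tate module $T_\ell J$. Because the image of $I_K$ in $\GL(T_\ell J)$ is compact and the eigenvalues of its elements are roots of unity, a classical theorem of Grothendieck shows that a finite-index subgroup of $I_K$ acts unipotently; taking $K'$ to be the corresponding finite extension then makes $J_{K'}$ semi-abelian.

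The next step is to pass from semi-abelian reduction of the Jacobian to semistable reduction of the curve. The input is Raynaud's theorem: for a semistable model $\CC$ of $C$ over $R'$, the identity component of the Néron model of $\Pic^0(C/K')$ coincides with the relative Picard scheme $\Pic^0(\CC/R')$, and the dimensions of its abelian and toric parts are read off from the dual graph of the special fiber. Concretely, I would first choose a regular model $\CC \to \Spec R'$ of $C$ (which exists by Lipman's resolution for excellent two-dimensional schemes), then, by a finite sequence of blow-ups and contractions of $(-1)$- and $(-2)$-curves together with further ramified base changes that eliminate wild components, arrive at a model whose special fiber is reduced with only ordinary double points. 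Semi-abelianness of $J_{K'}$ provides precisely the arithmetic input that forces the genus-theoretic and intersection-theoretic obstructions to semistability to vanish.

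The main obstacle is this last descent step: the monodromy argument for $J$ is clean, but translating semi-abelian reduction of $J$ into the existence of a semistable model of $C$ requires the nontrivial machinery of Picard functors of singular curves and careful control of the minimal regular model. An alternative, more hands-on route that avoids the Jacobian is the Artin--Winters approach, in which one starts with any normal crossings regular model and examines the combinatorial constraints imposed on the dual graph by the adjunction formula and the negative-semidefiniteness of the intersection pairing, systematically eliminating non-nodal singularities by explicit blow-ups and base changes taming the ramification along each component. Either way, the genuinely hard part is proving that some finite combination of blow-ups and ramified base changes suffices; this is where one really needs the numerical input coming from the (semi-)abelian reduction of $J$ or the Artin--Winters classification of admissible dual graphs.
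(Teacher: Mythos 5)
This statement is not proved in the paper; it is \Cref{thm semistable reduction}, cited as the classical semistable reduction theorem with references to Deligne--Mumford, Artin--Winters, and Liu's textbook. So there is no ``paper's proof'' to compare against---you are being asked to supply a proof of a deep foundational result that the authors deliberately treat as a black box.

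Your roadmap correctly identifies both standard proof strategies from the cited sources: the Deligne--Mumford route through Grothendieck's semi-abelian reduction theorem for the Jacobian and a descent back to the curve, and the Artin--Winters route through the combinatorics of regular models and their dual graphs. The monodromy argument you sketch for the Jacobian (compactness of the inertia image, eigenvalues being roots of unity, Grothendieck's criterion equating unipotence of the $I_K$-action on $T_\ell J$ with semi-abelian reduction of the N\'eron model) is accurate in outline.

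The genuine gap is that your proposal stops at exactly the point where the proof requires real work, and you say so yourself. The phrase ``semi-abelianness of $J_{K'}$ provides precisely the arithmetic input that forces the genus-theoretic and intersection-theoretic obstructions to semistability to vanish'' is not an argument---it is a promissory note. The actual Deligne--Mumford descent requires one to relate $\Pic^0$ of a (possibly non-semistable) special fiber to the N\'eron model, control how the minimal regular model and its multiplicities change under ramified base extension (regularity is not preserved, and there is no canonical way to ``eliminate wild components'' without a detailed analysis of the exceptional configurations), and show that a positive unipotent rank in the special fiber would contradict the semi-abelian reduction of $J$. None of this is carried out. Similarly, the Artin--Winters alternative requires a finiteness statement for admissible dual graphs of bounded genus and an explicit tame cyclic base-change argument; again these are named but not supplied. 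As submitted, the proposal is an accurate survey of proof strategies rather than a proof, which is appropriate context for why the paper cites the result rather than proving it, but it does not by itself establish \Cref{thm semistable reduction}.
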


The above result is not constructive and does not tell us how to find a semistable model $\CC^{\sst}$ or exactly how large an extension $K' \supseteq K$ is needed in order to define it.  It moreover does not specify, for a given curve $C / K$, anything about the structure of the special fiber $(\mathcal{C}^{\sst})_s$.  It is therefore natural to ask whether there is any general method by which we may construct a semistable model $\YY^{\sst}$ of a hyperelliptic curve $Y / K$ defined by an equation of the form in (\ref{eq hyperelliptic p not 2}).

\subsubsection{Special fibers of semistable models of curves} \label{sec introduction main problem special fibers}

In this paper, we are interested not only in how to construct a semistable model $\YY^\sst$ of a hyperelliptic curve $Y$, but also in how certain characteristics of the defining polynomial may determine the \textit{structure} of the special fiber of such a semistable model.  The special fiber $\SF{\YY^\sst}$ of a semistable model $\YY^\sst$ of a curve $Y / K$ by definition consists of reduced components which meet each other only at nodes.  Each node, viewed as a point in $\YY^{\sst}$, has a \emph{thickness} (see the initial discussion in \S\ref{sec preliminaries semistable}) which is a positive integer.  The structure of the special fiber $\SF{\YY^\sst}$ can be described entirely in terms of the set of its irreducible components, the genus of the normalization of each of these components, the data of which components intersect which others at how many nodes, and the thicknesses of the nodes.  The sum of the genera of the normalizations of the irreducible components is known as the \emph{abelian rank} of $\SF{\YY^\sst}$, while the number of loops in the configuration of components and their intersections (i.e.\ the number of loops in the dual graph of $\SF{\YY^\sst}$) is known as the \emph{toric rank} of $\SF{\YY^\sst}$.  The property of being semistable implies that the sum of these two ranks equals the genus of $Y$.  See \S\ref{sec preliminaries abelian toric unipotent} below for more details.

Replacing a semistable model $\YY^\sst$ of $Y$ over $R'$ with another semistable model of $Y$ over $R''$ (where $R'$ and $R''$ are the ring of integers of possibly different extensions of $K$) does not affect its abelian or toric rank (see \Cref{prop invariance of ranks} below), and therefore these ranks are intrinsic to the curve $Y$ itself and particularly interesting to determine (meanwhile, the thicknesses of the nodes change in a predictable manner between semistable models over different extensions of $R$; see the discussions in \S\ref{sec preliminaries semistable} and \S\ref{sec preliminaries field extensions}).

\subsubsection{The reduction of a curve given by \texorpdfstring{$y^2 = f(x)$}{y2=f(x)}} \label{sec introduction main problem y^2=f(x)}

Our first na\"{i}ve attempt to produce a semistable model for $Y$ is to perform simple changes of variables (if necessary) over a low-degree field extension $K' \supset K$ so that the coefficients appearing in the equation in (\ref{eq hyperelliptic p not 2}) are all integral and then to simply use this equation to define a scheme $\YY$ over the corresponding ring of integers $R'$. More precisely, it is clear that after possibly scaling $x$ and $y$ by appropriate elements of $\bar{K}^\times$, we may assume that $f$ is monic (i.e., $c=1$), and that the roots $a_i$ are all integral, with $\min_{i,j}v(a_i-a_j)=0$.  In particular, $f$ has integral coefficients, and so we may extend $Y$ to a scheme $\YY / R'$ whose generic fiber is $Y$ and whose special fiber $\YY_s$ is given (over the affine chart $x\neq \infty$ of $\mathbb{P}^1_k$) by the equation 
\begin{equation} \label{eq hyperelliptic p not 2 reduction}
    y^2 = \bar{f}(x) := \prod_{i = 1}^{2g + 1} (x - \bar{a}_i),
\end{equation}
where each element $\bar{a}_i$ is the reduction of $a_i \in \mathcal{O}_{\bar{K}}$ in the residue field $k$.

Suppose that the residue characteristic of $K$ is different from $2$.  Then the curve $\mathcal{Y}_s / k$ is generically an \'{e}tale double cover of the projective line $\proj_k^1$, and its only possible singularities are produced by multiple roots of the reduced polynomial $\bar{f}$; consequently, the reduced curve $\mathcal{Y}_s$ is smooth if and only if the roots of $f$ are all distinct modulo the prime ideal of the splitting field.

Suppose on the other hand that the residue characteristic of $K$ is $2$.  Then the curve $\mathcal{Y}_s / k$ is an inseparable cover of $\proj_k^1$, and it \textit{always} has non-nodal singularities whether or not the reduction of the polynomial $f$ has multiple roots.  We summarize these (fairly elementary) facts in the following proposition.

\begin{prop} \label{prop naive model}
    Let $Y / K$ and $\mathcal{Y} / R'$ be defined as in the discussion above.
    \begin{enumerate}[(a)]
        \item Suppose that the residue characteristic of $K$ is not $2$.  Then each singular point of the special fiber $\mathcal{Y}_s$ is of the form $(x, y) = (\bar{a}, 0)$, where $\bar{a} \in k$ is a multiple root of the reduced polynomial $\bar{f}$.  Given a singular point $(\bar{a}, 0)$ of $\mathcal{Y}_s$, let $\mathfrak{s} \subset \bar{K}$ be the subset of roots of $f$ which each reduce to $\bar{a}$.  Then, 
        \begin{enumerate}[(i)]
            \item if $\mathfrak{s}$ has cardinality $2$, the singular point $(\bar{a}, 0)$ is a node; and 
            \item if $\mathfrak{s}$ has cardinality at least $3$, the singular point $(\bar{a}, 0)$ is not a node.
        \end{enumerate}
        \item Suppose that the residue characteristic of $K$ is $2$.  Then the special fiber $\mathcal{Y}_s$ has a non-nodal singularity at each point whose $x$-coordinate is a root of the derivative polynomial $\bar{f}'$ (and these are the only singularities of $\mathcal{Y}_s$).
    \end{enumerate}
\end{prop}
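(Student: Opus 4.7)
My plan is to apply the Jacobian criterion to locate singular points of $\mathcal{Y}_s$ and then, at each one, to analyze the completed local ring to determine whether the singularity is a node. Throughout, the affine equation $F(x,y) := y^2 - \bar{f}(x)=0$ has partial derivatives $\partial F/\partial y = 2y$ and $\partial F/\partial x = -\bar{f}'(x)$; the point at infinity can be handled separately on the chart $v^2 = u\prod(1-\bar{a}_iu)$, where a direct derivative computation shows it is smooth in both residue characteristics (since the coefficient of $u$ on the right-hand side is $1$).

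For part (a), with residue characteristic different from $2$, the vanishing of $2y$ forces $y=0$, and then $F(x,0)=0$ together with $\bar{f}'(x)=0$ identifies the singular points as pairs $(\bar{a},0)$ with $\bar{a}$ a multiple root of $\bar{f}$. To distinguish cases (i) and (ii), I would write $\bar{f}(x) = (x-\bar{a})^n g(x)$ with $g(\bar{a}) \neq 0$ and $n = \#\mathfrak{s}$. Since $g(\bar{a})$ is a nonzero square in $k$ (as $k$ is algebraically closed), by Hensel's lemma we can take a square root $h(x)$ of $g(x)$ in the completion at $(\bar{a},0)$, giving the local equation $y^2 = ((x-\bar{a})^{\lceil n/2\rceil}h(x))^2 \cdot (x-\bar a)^{n-2\lceil n/2 \rceil}$; a change of variable $u = (x-\bar{a})^{\lceil n/2 \rceil}h(x)$ reduces this to $y^2 = u^2$ when $n=2$, which factors as $(y-u)(y+u)=0$ and is a node, and to $y^2 = u$ (smooth) when $n$ is odd but $\geq 3$, which isn't quite right—let me instead just compare completions directly: the completed local ring is $k[[x-\bar{a},y]]/(y^2 - (x-\bar{a})^n\cdot\mathrm{unit})$. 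When $n=2$ it splits as above into two regular branches; when $n\geq 3$, the tangent cone is $y^2=0$ (a double line), but the singularity has higher multiplicity and is not a node, as can be seen by computing its $\delta$-invariant or normalizing.

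For part (b), with residue characteristic $2$, the first partial $2y$ vanishes identically, so the singularity condition reduces to $\bar{f}'(\bar{x})=0$. For each root $\bar{x}_0$ of $\bar{f}'$, set $\bar{y}_0$ to be the unique square root of $\bar{f}(\bar{x}_0)$ in $k$ (using that $k$ is algebraically closed of characteristic $2$, so the Frobenius is bijective); then $(\bar{x}_0,\bar{y}_0)$ lies on $\mathcal{Y}_s$ and is singular. To see it is non-nodal, I would translate to put the singular point at the origin: in characteristic $2$, $(y+\bar{y}_0)^2 = y^2 + \bar{y}_0^2$, so the translated equation becomes $y^2 = \bar{f}(x+\bar{x}_0)-\bar{f}(\bar{x}_0)$, whose right-hand side has no linear term precisely because $\bar{f}'(\bar{x}_0)=0$. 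The key observation is then that in characteristic $2$, if the quadratic term of the right-hand side is $c x^2$, then $y^2-cx^2 = (y+\sqrt{c}\,x)^2$, so the tangent cone at the singularity is always a perfect square (a double line), and a node—whose tangent cone is a union of two distinct lines—is impossible.

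The main obstacle will be the clean identification of the local structure at the singular points: in part (a), handling the case $n\geq 3$ requires a careful argument that $y^2 = u^n\cdot(\mathrm{unit})$ is not a node (most smoothly phrased via the two tangent directions of a node being required to be distinct, or via normalization), and in part (b), the crucial input is the collapse of the tangent cone due to the characteristic, which I would state as a short, self-contained computation using the bijectivity of Frobenius on $k$.
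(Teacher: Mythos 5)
Your argument is correct and takes essentially the same route as the paper: the Jacobian criterion locates the singular points, a translation moves a given one to the origin, and then the quadratic part of the defining equation is examined to decide nodality (in residue characteristic $2$ this part is forced to be a perfect square because there is no $xy$-term, exactly as the paper argues). The only cosmetic difference is that in part (a) you phrase the $n=2$ vs.\ $n\ge 3$ dichotomy via the completed local ring $k[[x-\bar a,y]]/(y^2-(x-\bar a)^n\cdot\mathrm{unit})$ and its $\delta$-invariant, whereas the paper simply invokes Shafarevich's definition that a node is a point where the degree-$\le 2$ part of the defining polynomial factors into two distinct linear forms; these are equivalent, and you were right to abandon the intermediate substitution $u=(x-\bar a)^{\lceil n/2\rceil}h(x)$, which is not a local isomorphism for $n\ge 3$.
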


\begin{proof}
    It is straightforward to verify, using a standard equation for another affine open subset of $\mathcal{Y}_s$ (which is given in \S\ref{sec models hyperelliptic}) which contains the points over $x = \infty$, that there is no singular point over $x = \infty$ due to the fact that $f$ is monic so that its reduction $\bar{f}$ has maximal degree.  To prove both parts of the proposition, it therefore suffices to consider singular points on the affine part of $\mathcal{Y}_s$ defined by the equation $y^2 = \bar{f}(x)$.

    Assume first that the residue characteristic is not $2$.  Then, applying the Jacobian criterion and setting both partial derivatives of $y^2 - \bar{f}(x)$ to $0$, we get that a singular point can only occur where $y = 0$ (which implies that $x$ is a root of $\bar{f}$) and $x$ is a root of $\bar{f}'$.  These conditions imply that the $x$-coordinate of a singular point must be a multiple root of $\bar{f}$.  After appropriately translating the $x$-coordinate, we may assume that a given singular point is $(0, 0)$, which implies that $\bar{f}(x)$ is exactly divisible by $x^n$ for some integer $n \geq 2$ which is the multiplicity of the root $0$.  The singular point $(0, 0)$ is a node if and only if the polynomial consisting of the terms of degree $\leq 2$ in the defining polynomial $y^2 - \bar{f}(x)$ factors into distinct linear polynomials over $k$ (this is indeed how \emph{singular point} and \emph{node} are defined in \cite[\S I.1.2]{shafarevivch1994basic}).  This clearly happens if and only if $n = 2$, which finishes the proof of part (a).
    
    Now assume that the residue characteristic is $2$.  This time, applying the Jacobian criterion tells us that there is a singular point wherever we have $\bar{f}'(x) = 0$.  After translating both coordinate variables $x$ and $y$ suitably, we may assume that a given singular point is $(0, 0)$.  Now it is clear that the polynomial consisting of the terms of degree $\leq 2$ in the defining polynomial $y^2 - \bar{f}(x)$ does not factor into distinct linear polynomials over $k$ since it does not include an $xy$-term and is therefore the square of a linear polynomial instead.  This implies that the singular point is not a node, and part (b) is proved.
\end{proof}

\subsubsection{Cluster data} \label{sec introduction main problem cluster data}

We have just seen that the na\"{i}ve attempt to construct a semistable model of a hyperelliptic curve $Y$ as in \S\ref{sec introduction main problem y^2=f(x)} always fails over residue characteristic $2$.  Meanwhile, in the case that the residue characteristic is not $2$, Proposition \ref{prop naive model} more or less implies that the na\"{i}ve model $\mathcal{Y} / R'$ is semistable if and only if (1) the roots of $f$ are \textit{equidistant} (i.e.\ the valuations of the difference between the roots are all equal) so that $\mathcal{Y}_s$ is smooth, or (2) the roots of $f$ are equidistant except for certain pairs of roots of $f$ which are closer to each other with respect to the discrete valuation of $K$ (so that each pair maps to a root of multiplicity $2$ of the reduced polynomial $\bar{f}$ and produces a node of $\mathcal{Y}_s$).  This suggests that when the residue characteristic of $K$ is not $2$, the data of the valuations of differences between roots of $f$ may be directly crucial for constructing a semistable model of $Y$ and for understanding the structure of the special fiber of such a semistable model.

This notion is made precise in \cite{dokchitser2022arithmetic} by defining the \emph{cluster data} associated to a hyperelliptic curve $Y$ over a discrete valuation field $K$: roughly speaking, if $Y$ is defined by an equation of the form in (\ref{eq hyperelliptic p not 2}), its associated cluster data consists of subsets $\mathfrak{s}$ of roots of the defining polynomial $f$, called \textit{clusters}, which are closer to each other with respect to the discrete valuation of $K$ than they are to the roots of $f$ which are not contained in $\mathfrak{s}$, along with, for each non-singleton cluster $\mathfrak{s}$, the minimum valuation of differences between roots in $\mathfrak{s}$, called the \textit{depth} of $\mathfrak{s}$.  For precise definitions, see Definition \ref{dfn cluster} below or \cite[Definition 1.1]{dokchitser2022arithmetic}.

When the residue characteristic is different from $2$, the process of construction of a semistable model of $Y$ as well as the structure of its special fiber is governed entirely by the cluster data associated to $Y$.  This can be deduced from the explicit constructions given in \cite[\S4, 5]{dokchitser2022arithmetic} in any case, but we will present a variant of this construction in \S\ref{sec clusters}.  The rough idea is summarized as follows, under the simplifying assumption that $f$ has degree $2g + 1$.

\begin{enumerate}
    \item There is a one-to-one correspondence between discs $D \subset \bar{K}$ (with respect to the induced valuation on $\bar{K}$) and smooth models of $\proj_K^1$ over finite extensions of $R$, and each model of $\proj_K^1$ over a finite extension of $R$ with reduced special fiber is the compositum of a finite number of smooth models and thus corresponds to a finite collection of discs $D \subset \bar{K}$ (see \S\ref{sec models hyperelliptic line} for more details).
    \item We define $\XX^{(\sst)}$ to be the model of $\proj_K^1$ over a finite extension of $R$ corresponding in the above way to the set of discs $D_\mathfrak{s}$ for all non-singleton clusters $\mathfrak{s}$, where each $D_\mathfrak{s} \subset \bar{K}$ denotes the minimal disc whose intersection with the set of roots of $f$ coincides with $\mathfrak{s}$.
    \item There is a semistable model $\mathcal{Y}^{\sst}$ with a degree-$2$ map to $\mathcal{X}^{(\sst)}$ which is constructed simply by normalizing $\mathcal{X}^{(\sst)}$ in the function field $K(Y)$ after possibly replacing $K$ with a finite extension, which is at most the unique quadratic extension of the splitting field of $f$.
\end{enumerate}

The discs $D_i$ mentioned in Step (2) correspond  to changes in coordinate of the form $x = \alpha_i + \beta_i x_i$ for some $\alpha_i \in \bar{K}$ and $\beta_i \in \bar{K}^{\times}$; for each such change in coordinate, we may perform appropriate substitutions into the equation $y^2 = f(x)$ and transform $y$ appropriately to get a new equation of the form $y_i^2 = f_{i}(x_i) \in R'[x_i]$, where $R'$ is the ring of integers of an appropriate finite extension $K'\supseteq K$; this new equation defines a model of $Y$, which is the normalization of the model of $\mathbb{P}^1_K$ corresponding to the disc $D_i$ in the function field $K'(Y)$. The desired semistable model $\YY^{\sst}$ is comprised of these normalizations.  The idea is illustrated by the following example.

\begin{ex} \label{ex introduction p not 2}
    Let $K = \qq_p^{\unr}$ for some $p \geq 5$ and 
    \begin{equation}
        f(x) = x(x - p^3)(x - p)(x - 1)(x - 1 + p^4)(x - 2)(x - 3).
    \end{equation}
    The set of roots of $f$ is $\mathcal{R} := \{0, p^3, p, 1, 1 - p^4, 2, 3\}$.  The \emph{clusters} of these roots (i.e.\ the subsets $\mathfrak{s}$ consisting of roots which are closer to each other than they are to the roots in $\mathcal{R} \smallsetminus \mathfrak{s}$) are 
    $$\mathfrak{s}_0 := \mathcal{R}, \ \mathfrak{s}_1 := \{0, p^3, p\}, \ \mathfrak{s}_2 := \{0, p^3\}, \ \mathfrak{s}_3 := \{1, 1 - p^4\},$$
    as well as each of the singleton subsets of $\mathcal{R}$ (which we ignore).  The data of these clusters is represented by the following diagram.
    
    \begin{equation*}
	\mathrm{cluster} \ \mathrm{picture} \ \mathrm{of} \ \mathcal{R}:\
	\begin{clusterpicture}
		\comincia;
		\punto{1}{0}{$0$};
		\punto{2}{1}{$p^3$};
		\punto{3}{2}{$p$};
		\punto{4}{3}{$1$};
		\punto{5}{4}{$1-p^4$};
		\punto{6}{5}{$2$};
		\punto{7}{6}{$3$};
		\cluster{12}{\pto{1}\pto{2}}{$\mathfrak{s}_2$};
		\cluster{123}{\clu{12}\pto{3}}{$\mathfrak{s}_1$};
		\cluster{45}{\pto{4}\pto{5}}{$\mathfrak{s}_3$};
		\cluster{tot}{\clu{123}\clu{45}\pto{6}\pto{7}}{$\mathfrak{s}_0$};
	\end{clusterpicture}
    \end{equation*}
    
    The discs $D_i \subset \bar{K}$ minimally containing each of the clusters $\mathfrak{s}_i$ are then given by 
    \begin{equation}
        \begin{aligned}
            D_0 &:= \overline{\zz_p},& D_1 &:= p \overline{\zz_p} = \{0 + pz \ | \ z \in \overline{\zz_p}\},\\
            D_2 &:= p^3 \overline{\zz_p} = \{0 + p^3 z \ | \ z \in \overline{\zz_p}\},& D_3 &:= 1 + p^4\overline{\zz_p} = \{1 + p^4 z \ | \ z \in \overline{\zz_p}\},\\
        \end{aligned}
    \end{equation}
    where $\overline{\zz_p}$ denotes the ring of integers of the algebraic closure $\overline{\qq_p}$ of $\qq_p$.  The changes in coordinates corresponding to each of these discs are given by 
    $$x = x_0 = p x_1 = p^3 x_2 = p^4 (x_3 - 1),$$ where each $x_i$ corresponds to the disc $D_i$ in an obvious way, and we define corresponding coordinates $y_i$ by scaling $y$ by suitable elements of $\qq_p(\sqrt{p})$ as 
    $$y = y_0 = p^{3/2} y_1 = p^{7/2} y_2 = p^4 y_3.$$
    We now define corresponding models $\mathcal{Y}_i / \zz_p^{\unr}[\sqrt{p}]$ of $Y / \qq_p^{\unr}(\sqrt{p})$ for $i = 0, 1, 2, 3$, given by the below equations.
    \begin{equation}
    \begin{split}
        \mathcal{Y}_0: y_0^2 &= f(x) = f(x_0) \\
        \mathcal{Y}_1: y_1^2 &= p^{-3} f(x) = x_1 (x_1 - p^2)(x_1 - 1)(px_1 - 1)(px_1 - 1 + p^4)(px_1 - 2)(px_1 - 3) \\
        \mathcal{Y}_2: y_2^2 &= p^{-7} f(x) = x_2 (x_2 - 1)(p^2 x_2 - 1)(p^3 x_2 - 1)(p^3 x_2 - 1 + p^4)(p^3 x_2 - 1)(p^3 x_2 - 2) \\
        \mathcal{Y}_3: y_3^2 &= p^{-8} f(x) = (p^4 x_3 - 1)(p^4 x_3 - 1 - p^3)(p^4 x_3 - 1 - p)(x_3)(x_3 - 1)(p^4 x_3 - 2)
    \end{split}
    \end{equation}
    Their respective reductions (that is, their special fibers $\SF{\YY_i}$) over the residue field $\overline{\ff_p}$ are as follows.
    \begin{equation}
    \begin{split}
        \SF{\YY_0}: y_0^2 &= x_0^3 (x_0 - 1)^2 (x_0 - 2)(x_0 - 3) \\
        \SF{\YY_1}: y_1^2 &= 6x_1^2 (x_1 - 1) \\
        \SF{\YY_2}: y_2^2 &= -6x_2 (x_2 - 1) \\
        \SF{\YY_3}: y_3^2 &= 2x_3 (x_3 - 1)
    \end{split}
    \end{equation}
    The desingularizations of each of these special fibers give rise to the components of the special fiber of the desired semistable model $\YY^{\sst}$: here $\SF{\YY_0}$ contributes a smooth component $V_0$ of genus 1; $\SF{\YY_1}$ contributes a line $V_1$ which intersects $V_0$ at a single node; $\SF{\YY_2}$ contributes a line $V_2$ which intersects $V_1$ at $2$ nodes; and $\SF{\YY_3}$ contributes a line $V_3$ which intersects $V_0$ at $2$ nodes.  The configuration is shown in \Cref{fig pnot2 g2 example2}.
    
    One can see from the configuration of components displayed in \Cref{fig pnot2 g2 example2} that the toric rank of $\SF{\YY^{\sst}}$ is 2; if one adds this to the sum of the genera of the components $V_i$, the genus $g=3=2+1$ of $Y$ is recovered.
\end{ex}

\begin{figure}
    \centering
    \vspace{-2cm}
    \tikzset{every picture/.style={line width=0.75pt}} 

\begin{tikzpicture}[x=0.75pt,y=0.75pt,yscale=-1,xscale=1]

\draw    (14,80) -- (238,80) ;
\draw    (32,122) .. controls (79,120.33) and (45,-76) .. (128,63) ;
\draw    (131,125.33) .. controls (167,2.67) and (199,-12.67) .. (216,123.33) ;
\draw    (363,130) -- (397,19) ;
\draw    (480,81) -- (343.47,81) ;
\draw    (263.95,81) -- (321.47,81) ;
\draw [shift={(324.47,81)}, rotate = 180] [fill={rgb, 255:red, 0; green, 0; blue, 0 }  ][line width=0.08]  [draw opacity=0] (10.72,-5.15) -- (0,0) -- (10.72,5.15) -- (7.12,0) -- cycle    ;
\draw    (408,135.33) -- (442,24.33) ;
\draw    (14,33) -- (144,33) ;
\draw    (353,33) -- (417,33) ;

\draw (16,55.4) node [anchor=north west][inner sep=0.75pt]    {$V_{1}$};
\draw (60,104.4) node [anchor=north west][inner sep=0.75pt]    {$V_{0}$};
\draw (220,105.4) node [anchor=north west][inner sep=0.75pt]    {$V_{2}$};
\draw (459,59.4) node [anchor=north west][inner sep=0.75pt]    {$L_{1}$};
\draw (370,108.4) node [anchor=north west][inner sep=0.75pt]    {$L_{0}$};
\draw (417,113.73) node [anchor=north west][inner sep=0.75pt]    {$L_{2}$};
\draw (20,12.4) node [anchor=north west][inner sep=0.75pt]    {$V_{3}$};
\draw (356,13.4) node [anchor=north west][inner sep=0.75pt]    {$L_{3}$};

\end{tikzpicture}
    \caption{The special fiber $\SF{\YY^\sst}$, shown on the left, mapping to the special fiber $\SF{\XX^\sst}$; each component $V_i$ of $\SF{\YY^\sst}$ maps to each component $L_i := \SF{\XX_{D_i}}$ of $\SF{\XX^\sst}$.}
    \label{fig pnot2 g2 example2}
\end{figure}

\begin{rmk} \label{rmk example introduction p not 2}
    In the case that $Y / K$ is an \emph{elliptic curve} (i.e.\ $g = 1$) over residue characteristic $p \neq 2$, where the polynomial $f$ has degree $3$, there are at most $2$ non-singleton clusters of roots of $f$, and a similar procedure can be performed to get a semistable model of $Y$ over the (unique) quadratic ramified extension of the splitting field of $f$, which will be smooth if and only if $Y / K$ has potentially good reduction.  This is more or less the process outlined in the proof of \cite[III.1.7(a)]{silverman2009arithmetic} combined with the proof of \cite[VII.5.4(c)]{silverman2009arithmetic}, except that Silverman does not construct a separate component of the semistable model corresponding to a cardinality-$2$ cluster of roots (in the case that there is one).  So, following Silveman's method, the special fiber of the semistable model always consists of only $1$ component which has a node if and only if there is a cardinality-$2$ cluster of roots of $f$ (this is the case of \emph{multiplicative reduction}).
\end{rmk}

When the residue characteristic of $K$ is $2$, it is natural to ask whether a semistable model of $Y$ can be constructed by a procedure governed entirely by the associated cluster data in this way.  In short, the answer is ``no", but in this paper we develop methods of finding a particular collection of discs in $\bar{K}$ which corresponds to a model $\XX^{(\sst)}$ of $\mathbb{P}^1_K$ over a finite extension of $R$, such that the model $\YY^\sst$ of $Y$ which is constructed directly from $\XX^{(\sst)}$ in a similar manner to Steps (2)-(3) above is guaranteed to be semistable (and to satisfy several other nice properties discussed in \S\ref{sec relatively stable}).  We will present and prove results relating such a set of discs to the set of clusters $\mathfrak{s}$ appearing in the cluster data associated to $Y$.

\subsection{A summary of our main results for residue characteristic 2} \label{sec introduction main results}

Although the arguments used in this paper will recover what is already known about the construction of semistable models of hyperelliptic curves in characteristic different from $2$, our primary aim is to understand how to construct a semistable model as well as the structure of its special fiber when the residue characteristic is $2$.  This is addressed by our main results.

\subsubsection{Constructing equations for models with semistable reduction} \label{sec introduction main results constructing}

It is clear from \Cref{prop naive model} that if $K$ has residue characteristic $2$, a model of $Y$ given by an equation of the form $y^2 = f(x)\in R[x]$ cannot possibly have semistable reduction.  We must therefore find a model given by one or more equations of the more general form 
\begin{equation} \label{eq hyperelliptic affine model general case}
    y_i^2 + q_i(x_i)y_i = r_i(x_i),
\end{equation}
where $q_i(x_i), r_i(x_i) \in R'[x_i]$ are polynomials of degree less than or equal to $g + 1$ and $2g + 1$ respectively (see \S\ref{sec models hyperelliptic equations} below for more details on this form of equation).  This is generally accomplished in the following manner.  First (as in the case of residue characteristic not $2$) we make a substitution of the form $x = \alpha_i + \beta_i x_i$ with $\alpha_i \in \bar{K}$ and $\beta_i \in \bar{K}^{\times}$ and scale $y$ by a suitable element of $\bar{K}^{\times}$ to get a coordinate $\tilde{y}_i$ and an equation of the form $\tilde{y}_i^2 = f_i(x_i) \in \bar{K}[x_i]$, where $f_i$ has integral coefficients and nonzero reduction.  Then, in order to turn this into an equation of the form in (\ref{eq hyperelliptic affine model general case}), we find a decomposition $f_i = q_i^2 + 4r_i$, where $q_i(x_i), r_i(x_i) \in \bar{K}[x_i]$ are polynomials of degree less than or equal to $g + 1$ and $2g + 1$ respectively (this is a \emph{part-square decomposition} as we define it below in Definition \ref{dfn qrho}) and set $\tilde{y}_i = 2y_i + q_i(x)$; note that this is essentially performing the standard operation of ``completing the square" in reverse.

There are two points of delicacy that must be taken into account when choosing the elements $\alpha_i, \beta_i$ and the decomposition $f_i = q_i^2 + 4r_i$.  One is that $\alpha_i$ and $\beta_i$ must be chosen carefully so that all terms in the resulting equation of the form (\ref{eq hyperelliptic affine model general case}) have integral coefficients, 
so that these equations may be defined over the ring of integers $R' \supseteq R$ of the finite extension of $K$ given by adjoining all elements $\alpha_i, \beta_i$ and coefficients of the polynomials $q_i, r_i$.  Secondly, one must be sure that all of the components of $\SF{\XX^\sst}$ (each corresponding to a choice of $\alpha_i$ and $\beta_i$) have really been found; otherwise, the model of $Y$ corresponding to an incomplete set of coordinates $x_i$ will contain non-nodal singularities in its special fiber and will therefore fail to be semistable.

\subsubsection{Our main results} \label{sec introduction main results main}

As discussed above, a semistable model $\mathcal{Y}^\sst / R'$ of $Y$ (where $R'$ is the ring of integers of a finite extension $K'/K$) may be constructed, more or less, as the normalization of a suitable model $\XX^{(\sst)}$ of $\proj_{K}^1$ in the function field of $Y$, and $\XX^{(\sst)}$, in turn, corresponds to a finite collection of changes of coordinate $x = \beta_i x_i + \alpha_i$ (so that its special fiber is composed of copies of the projective line over the residue field $k$ corresponding to each coordinate $x_i$; see \S\ref{sec models hyperelliptic line} below).  Finding a collection of appropriate substitutions $x = \beta_i x_i + \alpha_i$ is therefore in some sense the most essential step in finding our desired semistable model $\mathcal{Y}^\sst$, just as it is in the case of residue characteristic not $2$.  As in our discussion in \S\ref{sec introduction main problem cluster data}, each new coordinate $x_i$ obtained from $x$ in this way by translation and homothety corresponds to a disc $D_i := \{\beta_i z +\alpha_i\ | \ z \in \mathcal{O}_{\bar{K}}\}$, so finding a semistable model of $Y$ again largely amounts to choosing an appropriate collection of discs in $\bar{K}$.  The difference now is that, unlike in the case of residue characteristic not $2$, these discs generally do not correspond in a one-to-one manner to non-singleton clusters of roots of $f$.

In \S\ref{sec relatively stable definition} of this paper, we define a particularly nice (unique up to unique isomorphism) semistable model $\Yrst$ of a given hyperelliptic curve $Y$ which we call the \emph{relatively stable model} (see Definition \ref{dfn relatively stable} below).  We will define a \emph{valid disc} (\Cref{dfn valid disc} below) to be a disc $D \subset \bar{K}$ among the collection of discs used the manner discussed above to construct the semistable model $\Yrst$ (excluding such discs which correspond to components of $\SF{\Xrst}$ over which the cover $\SF{\Yrst}\to \SF{\Xrst}$ is inseparable).  The central results we present in this paper are on how to find valid discs.  While the exact procedure provided by these results cannot be described succinctly in this introduction, we give a partial summary of the general outcome in the following theorem.

\begin{thm} \label{thm introduction main}
    Assume all of the above set-up for a hyperelliptic curve $Y / K$ of genus $g$ given by an equation of the form $y^2 = f(x) \in K[x]$, where the polynomial $f$ has degree $2g + 1$, and assume that the residue characteristic of $K$ is $2$.  Let $\Yrst/R'$ be the relatively stable model of $Y$, where $R'$ is the ring of integers of an appropriate finite field extension $K'\supseteq K$. Let $\RR \subset \bar{K}$ denote the set of roots of $f$. For any cluster of roots $\mathfrak{s} \subsetneq \RR$, we write $\mathfrak{s}'$ for the minimal cluster which properly contains $\mathfrak{s}$.
    
    The clusters of roots in $\mathcal{R}$ and the valid discs associated to $Y$ are related in the following manner.
    \begin{enumerate}[(a)]
        \item Given a valid disc $D \subseteq \bar{K}$, the cardinality of $D \cap \mathcal{R}$ is even (and we may have $D \cap \mathcal{R} = \varnothing$).
        \item If a cluster $\mathfrak{s}$ has even cardinality, there are either $0$, $1$, or $2$ valid discs $D \subseteq R'$ such that either $D \cap \mathcal{R} = \mathfrak{s}$ or $D$ is the smallest disc containing $\mathfrak{s}'$.
        \item Let $\mathfrak{s}$ be an even-cardinality cluster of \emph{relative depth} $m := \min\{v(a - a') \ | \ a, a' \in \mathfrak{s}\} - \min\{v(a - a') \ | \ a, a' \in \mathfrak{s}'\}$ (see Definition \ref{dfn depth}), and write $f_0(x) = \prod_{a \in \mathfrak{s}} (x - a)$ and $f_\infty(x) = f(x) / f_0(x)$.  There exists a rational number $B_{f, \mathfrak{s}} \in \qq_{\geq 0}$ which is independent of the relative depth of $\mathfrak{s}$ in the sense of Remark \ref{rmk B independence}, such that 
        \begin{enumerate}[(i)]
            \item if $m > B_{f,\mathfrak{s}}$, the number of valid discs as in part (b) is ``2";
            \item if $m = B_{f,\mathfrak{s}}$, the number of valid discs as in part (b) is ``1"; and 
            \item if $m < B_{f,\mathfrak{s}}$, the number of valid discs as in part (b) is ``0".
        \end{enumerate}
        Moreover, in the case of (i), the $2$ guaranteed valid discs containing $\mathfrak{s}$ each give rise to $1$ component or to $2$ non-intersecting components of $\SF{\Yrst}$.  In the case that each gives rise to a single component of $\SF{\Yrst}$, the resulting pair of components intersects at $2$ nodes, whereas in the case that one of the valid discs gives rise to $2$ (non-intersecting) components $V_1$ and $V_2$ of $\SF{\Yrst}$, the other valid disc gives rise to a single component of $\SF{\Yrst}$ which intersects each of $V_1$ and $V_2$ at a single node.  In either case, each of these nodes has thickness equal to $(m - B_{f,\mathfrak{s}})/v(\pi)$, where $\pi$ is a uniformizer of $K'$.
        \item Given an even-cardinality cluster $\mathfrak{s}$, the bound $B_{f,\mathfrak{s}}$ from part (d) satisfies $B_{f,\mathfrak{s}} \leq 4v(2)$.  If we furthermore assume that $\mathfrak{s}$ and $\mathfrak{s}'$ each have a maximal subcluster of odd cardinality (e.g.\ a maximal subcluster which is a singleton), we have the inequality
        \begin{equation}
            B_{f,\mathfrak{s}} \geq \Big( \frac{2}{|\mathfrak{s}| - 1} + \frac{2}{2g + 1 - |\mathfrak{s}|} \Big) v(2).
        \end{equation}
        \item The toric rank of some (any) semistable model of $Y$ is equal to the number of even-cardinality clusters satisfying item (i) above which themselves cannot be written as a disjoint union of such even-cardinality clusters.
        \item Let $\mathfrak{s}$ be a cluster of odd cardinality not equal to $1$ or $2g + 1$.  Then $\SF{\Yrst}$ consists of two curves $C_0$ and $C_\infty$ meeting as a single node in $\SF{\Yrst}$; their arithmetic genera are $\frac{1}{2}(|\mathfrak{s}| - 1)$ and $g - \frac{1}{2}(|\mathfrak{s}| - 1)$ respectively.
    \end{enumerate}
\end{thm}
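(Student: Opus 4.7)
The plan is to reduce every claim of the theorem to a systematic analysis of \emph{part-square decompositions} $f_D = q^2 + 4r$ associated to each candidate disc $D \subset \bar{K}$, combined with the construction (to be developed in \S\ref{sec relatively stable}) that builds $\Yrst$ from $\Xrst$ by normalization in $K(Y)$. For each disc $D$ of center $\alpha$ and radius $v(\beta)$, the substitution $x = \alpha + \beta x_D$ followed by a suitable scaling of $y$ yields an integral equation $\tilde{y}^2 = f_D(x_D)$, and the disc is \emph{valid} precisely when $f_D$ admits a decomposition $f_D = q^2 + 4r$ with $q,r \in R'[x_D]$ such that the reduction of $y_D^2 + \bar{q}(x_D) y_D = \bar{r}(x_D)$ is a geometrically reduced curve with at worst nodal singularities and provides a separable component of $\SF{\Yrst}\to\SF{\Xrst}$. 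Everything is driven by understanding when, and in how many ways, such a decomposition exists as $D$ varies.

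For part (a), I would factor $f_D$ as an inner polynomial $f_{D,0}$ (roots of $f$ in $D$) times an outer polynomial $f_{D,\infty}$ (suitably rescaled); if $|D\cap\RR|$ were odd, the reduction $\bar{f}_D$ has odd degree on the relevant affine chart, and the associated cover fails to be separable in the Artin--Schreier sense, so that $D$ gives only inseparable components and is excluded from validity by definition. For parts (b) and (c), I would define $B_{f,\mathfrak{s}}$ as the obstruction to solving $f_D \equiv q^2 \pmod 4$ integrally for $D$ the smallest disc containing $\mathfrak{s}'$, equivalently as a certain minimum of valuations of symmetric expressions in the roots of $f_\infty$. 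The three-way dichotomy is then an instance of Hensel-type behavior modulo $4$: strict inequality $m > B_{f,\mathfrak{s}}$ yields two distinct lifts of $q$ (hence two valid discs), equality yields a single merged lift, and the opposite strict inequality yields none. The thickness formula $(m - B_{f,\mathfrak{s}})/v(\pi)$ then falls out by computing the valuation of the difference of the two candidate $q$'s and applying the standard node-thickness relation recorded in \S\ref{sec preliminaries semistable}; the two possible incidence patterns (one component with $2$ nodes, or $2+1$ components in a cycle) reflect whether the reduced equation over the outer component is itself a perfect square.

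For part (d), the upper bound $B_{f,\mathfrak{s}} \leq 4v(2)$ arises because $q$ only needs to be determined modulo $2$ to fix $q^2$ modulo $4$, so the total valuation discrepancy cannot exceed the width $v(4)=2v(2)$, and the factor of two comes from also adjusting the radius; the explicit lower bound under the odd-subcluster hypothesis is a Newton-polygon estimate on the coefficients of $f_0$ and $f_\infty$ evaluated at the center of $D$. Part (e) then reads the toric rank directly off the dual graph of $\SF{\Yrst}$: each cluster realizing (c)(i) that is not itself a disjoint union of even-cardinality clusters contributes exactly one loop, by the incidence description in part (c), and all other clusters contribute tree configurations. Part (f) is the most classical piece: an odd-cardinality cluster $\mathfrak{s}$ with $1 < |\mathfrak{s}| < 2g+1$ produces a disc whose associated $\bar{f}_D$ splits as two coprime factors of the stated degrees, and extracting square roots separately yields the two components $C_0$ and $C_\infty$ meeting at a single node over the common root of the factors, with arithmetic genera computed from the usual formula $(\deg - 1)/2$.

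The main obstacle I expect is pinning down $B_{f,\mathfrak{s}}$ precisely and establishing its depth-independence in the sense of \Cref{rmk B independence}: one must identify exactly which symmetric functions of the roots of $f_\infty$ control the obstruction modulo $4$ and verify that these are unaffected by perturbing the relative depth of $\mathfrak{s}$. This is also where the auxiliary polynomial $F(T)$ from the abstract must enter, since its roots parametrize the centers of valid discs that are \emph{not} attached to any even-cardinality cluster, and reconciling $F(T)$ with the cluster-based enumeration of part (b) is needed to guarantee that no valid disc has been missed.
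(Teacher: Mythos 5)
Your high-level framework is right — part-square decompositions $f_D = q^2 + 4r$, discs $D$, and the criterion that $D$ is valid precisely when $f_D$ admits such a decomposition with $q,r$ integral and the resulting reduction separable — but the mechanism you propose for parts (b), (c) and (f) is not the one that works, and would lead the proof astray. The $2/1/0$ trichotomy in (c) is \emph{not} a Hensel-type statement about "two distinct lifts of $q$" for a single $D$: the paper's two valid discs are two genuinely distinct discs $D_{\mathfrak{s},b_-}$ and $D_{\mathfrak{s},b_+}$ of different radii, arising as the endpoints of the interval $J(\mathfrak{s})$ of radii $b$ on which the piecewise-linear function $b \mapsto \mathfrak{t}^\RR(D_{\mathfrak{s},b})$ attains the cap $2v(2)$ (\Cref{thm summary depths valid discs}, \Cref{lemma not a valid disc}). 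The count $2$, $1$, or $0$ records whether $J(\mathfrak{s})$ has nonempty interior, is a single point, or is empty, and $B_{f,\mathfrak{s}} = b_0(\mathfrak{t}^{\mathfrak{s}}_+) + b_0(\mathfrak{t}^{\mathfrak{s}}_-)$ is a \emph{sum} of two contributions, one from each of the factors $f_0$ and $f_\infty$ (not just "symmetric expressions in the roots of $f_\infty$"). Consequently the thickness formula is also not "the valuation of the difference of the two candidate $q$'s"; it is literally the difference of the two radii $b_+(\mathfrak{s}) - b_-(\mathfrak{s}) = m - B_{f,\mathfrak{s}}$, read off via the elementary node-thickness computation for models of $\mathbb{P}^1$ (\Cref{prop thickness}) and preserved through the normalization by \Cref{prop vanishing persistent}.

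Part (f) has a more serious problem: the "extract square roots separately, join at the common root" picture is the residue-characteristic-$\neq 2$ mechanism, and it does not apply here. In the $p=2$ setting, a disc $D$ linked to an odd-cardinality cluster has $\mathfrak{t}^\RR(D) = 0$, so $\SF{\YY_D} \to \SF{\XX_D}$ is \emph{inseparable} and $\SF{\YY_D}$ is an \emph{irreducible} singular curve — it does not split into two pieces meeting at a point over a common root, and $D$ is not even a valid disc. The curves $C_0$ and $C_\infty$ of (f) only emerge after the full contraction structure of $\Yrst$ is in place: one partitions the collection of valid discs according to the two singular points of $\SF{\YY_D}$ and applies the partitioning result (\Cref{prop partition rank}, whose proof is an induction over nested clusters and requires the desingularization analysis of \S\ref{sec relatively stable}), and the arithmetic genera $\nu$ and $g-\nu$ come out of that machinery rather than from any simple $(\deg-1)/2$ count applied to a split reduction. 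Fixing the proposal would require replacing the Hensel heuristic with the piecewise-linear $J(\mathfrak{s})$ criterion, tracking both halves of $B_{f,\mathfrak{s}}$, and recognizing that (f) is one of the least classical parts of the theorem when $p=2$, not the most.
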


The statements in the above theorem are a combination of a (sometimes simplified version of) statements of the main results presented and proved in this paper.  Parts (a)--(c), apart from the final statement in (c), are adapted from \Cref{thm summary depths valid discs} and \Cref{prop depth threshold} below (see also \Cref{thm cluster p=2}); part (d) is adapted from \Cref{prop estimating threshold}(c); the final statement in (c) comes directly from \Cref{prop viable correspondence}; part (e) is a rephrasing of \Cref{thm toric rank}; and part (f) is a rephrasing of \Cref{cor odd-cardinality clusters} (we note that this statement actually also holds when the residue characteristic is different from $2$).  Formulas for thicknesses are not explicitly given in the above-mentioned results but in general can easily be computed using \Cref{prop vanishing persistent}(b) combined with \Cref{prop thickness}; we get the assertion about thicknesses in part (c) from applying \Cref{prop viable thicknesses} to our results in \S\ref{sec depths separating roots} (see \Cref{prop formulas for b_pm}) which tell us explicitly what the depths of the $2$ guaranteed valid discs are in the situation of \Cref{thm introduction main}(c)(i).

The results in this paper can be viewed as a vast generalization of the results in \cite{yelton2021semistable}, where the second author explicitly constructed semistable models of elliptic curves with a cluster of cardinality $2$ and depth $m$ (as well as elliptic curves with no even-cardinality clusters).  The threshold for $m$ above which there are $1$ or $2$ valid discs containing that cardinality-$2$ cluster which is found in \cite{yelton2021semistable} comes as the following easy corollary to the above theorem; we remark that this corollary can be deduced also from standard formulas for the $j$-invariant of an elliptic curve (specifically, the particular choice of power of $2$ multiplied to the rest of the formula, which influences the valuation of the $j$-invariant in residue characteristic $2$; see \Cref{rmk Legendre}(a) below).

\begin{cor} \label{cor g=1 B=4v(2)}
    Suppose that we are in the $g = 1$ case of the situation in \Cref{thm introduction main} and that $\mathfrak{s}$ is a cluster of cardinality $2$.  Then we have $B_{f,\mathfrak{s}} = 4v(2)$.
\end{cor}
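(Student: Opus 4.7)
The plan is to deduce this directly from \Cref{thm introduction main}(d) by verifying its hypotheses and plugging in the specific values $g = 1$ and $|\mathfrak{s}| = 2$. First I would unpack what the cluster structure must look like in the stated situation: since $g = 1$, the defining polynomial $f$ has degree $2g+1 = 3$, so $|\mathcal{R}| = 3$. Because $|\mathfrak{s}| = 2$ and $\mathfrak{s} \subsetneq \mathcal{R}$, the only cluster that properly contains $\mathfrak{s}$ is $\mathcal{R}$ itself, and so $\mathfrak{s}' = \mathcal{R}$.

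Next I would verify that the side hypothesis of \Cref{thm introduction main}(d) is satisfied, namely that both $\mathfrak{s}$ and $\mathfrak{s}'$ possess a maximal subcluster of odd cardinality. The maximal proper subclusters of $\mathfrak{s}$ are its two singletons (each of odd cardinality $1$), and the maximal proper subclusters of $\mathfrak{s}' = \mathcal{R}$ are $\mathfrak{s}$ together with the singleton consisting of the remaining root; in particular that singleton is a maximal subcluster of $\mathfrak{s}'$ of odd cardinality. So both hypotheses hold.

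Now I would simply combine the two bounds provided by \Cref{thm introduction main}(d). The first assertion of that part yields the upper bound $B_{f,\mathfrak{s}} \leq 4v(2)$. The displayed inequality, evaluated at $g = 1$ and $|\mathfrak{s}| = 2$, gives
\begin{equation*}
    B_{f,\mathfrak{s}} \geq \left( \frac{2}{|\mathfrak{s}| - 1} + \frac{2}{2g + 1 - |\mathfrak{s}|} \right) v(2) = \left( \frac{2}{1} + \frac{2}{1} \right) v(2) = 4v(2).
\end{equation*}
Combining the two inequalities immediately produces the equality $B_{f,\mathfrak{s}} = 4v(2)$.

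There is essentially no obstacle to this argument: once \Cref{thm introduction main}(d) is in hand, the corollary is a one-line verification that the lower and upper bounds for $B_{f,\mathfrak{s}}$ coincide in the extremal case of genus $1$ and a cluster of cardinality $2$. The only minor sanity check worth flagging is that the side hypothesis about odd-cardinality maximal subclusters is in fact automatic in this dimension, so the corollary requires no additional assumption on $f$ beyond what has already been assumed throughout \S\ref{sec introduction main results}.
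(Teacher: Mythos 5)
Your argument is exactly the paper's proof: identify $\mathfrak{s}' = \mathcal{R}$, observe that both $\mathfrak{s}$ and $\mathfrak{s}'$ have singleton (odd-cardinality) children so the hypothesis of \Cref{thm introduction main}(d) holds, and then combine the upper bound $B_{f,\mathfrak{s}} \leq 4v(2)$ with the displayed lower bound, which evaluates to $4v(2)$ when $g = 1$ and $|\mathfrak{s}| = 2$. Nothing to add.
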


\begin{proof}
    The parent cluster of $\mathfrak{s}$ (i.e., the minimal cluster strictly containing $\mathfrak{s}$) is $\mathfrak{s}'=\RR$, which has cardinality 3. It is clear that both $\mathfrak{s}$ and $\mathfrak{s}'$ have a singleton child cluster (i.e., a maximal subcluster consisting of only one root). Now, \Cref{thm introduction main}(d) gives that $B_{f,\mathfrak{s}}\leq 4v(2)$ and 
    \begin{equation}
        B_{f,\mathfrak{s}} \geq \big( \frac{2}{1} + \frac{2}{1} \big)v(2) = 4v(2).
    \end{equation}
    The equality $B_{f,\mathfrak{s}} = 4v(2)$ follows.
\end{proof}

For examples of semistable models of hyperelliptic curves over residue characteristic $2$ which are explicitly computed in the manner discussed above, see Examples \ref{ex g=1 p=2} and \ref{ex g=2 p=2} below, which are worked out directly from the results and processes developed in \S\ref{sec depths} and \S\ref{sec centers}.  Note that in both of these examples, the set of clusters consists of a single cardinality-$2$ cluster $\mathfrak{s}$ as well as the full set $\mathcal{R}$ of the roots, so that following what happens in the case of residue characteristic not $2$, we would expect that $\SF{\Yrst}$ contains exactly $2$ components obtained by centering at an element of the $\mathfrak{s}$ and scaling according to how close the $2$ elements in $\mathfrak{s}$ are.  However, in this case, the choices of scaling factors $\beta_i$ are not so ``obvious" as in the situation of residue characteristic not $2$ (as in \Cref{ex introduction p not 2}), and moreover, in \Cref{ex g=2 p=2} we get a further component. 

\Cref{thm introduction main} above describes the overall relationship between clusters and valid discs associated to a hyperelliptic curve over residue characteristic $2$, which is one of our main points of focus, but in our more broad investigation we come up with a general method of finding all valid discs.  The process of finding all valid discs having a given center (in particular, those containing a given cluster) is developed in \S\ref{sec depths} (the actual computations that are necessary are aided by \Cref{algo sufficiently odd}), while for residue characteristic $2$, the process of finding centers of all valid discs (in particular the ones which do not contain roots of $f$) is developed in \Cref{sec centers}, relying on the computation of a certain polynomial $F(T) \in K[T]$; \Cref{cor centers}(a) states in particular that each valid disc not containing roots of $f$ is centered at a root of $F$.

\subsection{Comparison to other works} \label{sec introduction comparison}

A hyperelliptic curve is a special case of a \emph{superelliptic curve}, i.e.\ a curve defined by an equation of the form $y^n = f(x)$ for some $n \geq 2$.  There have been a number of works discussing semistable models of superelliptic curves.  When the exponent $n$ in the equation for a superelliptic curve is not divisible by the residue characteristic $p$, the process of constructing a semistable model is relatively straightforward and is provided in \cite[\S3]{bouw2015semistable}, \cite[\S4]{bouw2017computing}, \cite[\S4, 5]{dokchitser2022arithmetic} (for hyperelliptic curves, using the language of clusters), and \cite{gehrunger2021reduction} (for hyperelliptic curves, using the language of stable marked curves), as well as earlier works.  We recover our own variant of their results in the hyperelliptic case (i.e.\ when $n = 2$ and $p \neq 2$) based on \Cref{thm cluster p odd} below, in the process of investigating the situation when $p = 2$.

The existing results for the wild case of semistable reduction of superelliptic curves, i.e.\ when the defining equation is of the form $y^p = f(x)$ where $p$ is the residue characteristic, have been far more limited.  To the best of our knowledge, investigations into this case began with Coleman, who in \cite{coleman1987computing} outlined an algorithm for changing coordinates in such a way that the defining equation is converted to a form whose reduction over the residue field does not describe a curve which is an inseparable degree-$p$ cover of the line; when $p = 2$, this is more or less equivalent to our notion of \emph{part-square decompositions} which will be introduced in \S\ref{sec models hyperelliptic part-square}.  This idea is further developed by Lehr and Matignon in \cite{matignon2003vers} and later in \cite{lehr2006wild} (among several other works).  Their results apply only to the very particular case of \textit{equidistant geometry}, meaning that the valuations of differences between each pair of distinct roots of the defining polynomial $f$ are all equal, which in the language of clusters means that there are no proper, non-singleton clusters of roots.  Much of their focus is on the (finite) extension of the ground field over which semistable reduction is obtained and the action of the (finite) Galois group of this extension on the special fiber of their semistable model.  The wild case is also discussed in \cite[\S4]{bouw2017computing}, in which several examples are computed and interpreted in terms of rigid analytic geometry; the working of these examples is mainly done through clever guessing rather than a direct algorithm, however.

There are further similarities between the ideas presented in the work of Lehr and Matignon and some of our results, which are applied to hyperelliptic curves whose branch points are not necessarily geometrically equidistant.  The notion of \emph{$p$-d\'{e}veloppements de Taylor} (Taylor $p$-expansions) introduced in \cite[\S2]{matignon2003vers}, while defined completely differently, is alike in motivation and applications to our notion of \emph{sufficiently odd decompositions} (see \S\ref{sec depths sufficiently odd} below), and our algorithm for computing sufficiently odd decompositions is a mild variation of \cite[Proposition 2.2.1]{matignon2003vers}, which is used to show that \textit{$p$-d\'{e}veloppements de Taylor} exist.  Moreover, in each of \cite{matignon2003vers} and \cite{lehr2006wild}, a polynomial over the ground field is defined whose roots are the centers of all discs which give rise to components of the special fiber; these polynomials (the \emph{$p$-d\'{e}riv\'{e}e} in \cite[D\'{e}finition 2.4.1]{matignon2003vers} and the \emph{monodromy polynomial} in \cite[Definition 3.4]{lehr2006wild}) are quite distinct but each is defined similarly and plays a similar role to our polynomial $F(T) \in K[T]$ given in \Cref{dfn F} below, whose roots in the geometrically equidistant case certainly provide centers of all the valid discs.

Our work differs from the prior research discussed above in that our major focus is on the relationship between clusters of roots and the structure of the special fiber of a semistable model of a hyperelliptic curve when the residue characteristic is $2$; to the best of our knowledge, the only specific case in terms of cluster data which has been investigated where equidistant geometry is not assumed is in the recent article \cite{dokchitser2023note}, which treats a case involving an even number of roots clustering in pairs.

We finish this subsection by remarking that our paper does not prioritize much focus towards describing the finite extension of $K$ over which we are constructing our semistable (relatively stable) model of $Y$ or determining the minimal extension of $K$ over which $Y$ achieves semistable reduction (although in building $\Yrst$, we try to be economical in the extension of $K$ required).  However, as our results are constructive, it is fairly straightforward to compute the (necessarily totally ramified) extension $K' / K$ over which $\Yrst$ is defined.  In general, the extension $K' / K$ is obtained from (possibly) a sequence of quadratic extensions of the subfield $K'' \subset K'$ over which the associated model $\Xrst$ of the projective line is defined using changes of coordinates $x = \alpha_i + \beta_i x_i$ as discussed above; then $K''$ is clearly just the smallest field over which the discs of $\Drst$ are defined, where a disc $D$ of $\bar{K}$ is said to be \emph{defined} over a field extention $K'/K$ if there exist $\alpha$ and $\beta$ in $K'$ such that $D = D_{\alpha,v(\beta)}$.  In practice, each scaling element $\beta_i$ may be chosen to be any element of a prescribed valuation, while a given translating element $\alpha_i$ may be chosen to be a root of $f$ (and thus already in the splitting field) when the corresponding valid disc contains roots of $f$; it is only in the case where there are valid discs not containing roots of $f$ that one may have to choose $\alpha_i$ to be a root of the (generally high-degree) polynomial $F(T) \in K[T]$ defined in \S\ref{sec centers def}.  It would be interesting to pursue results that specify the minimal extension $K' / K$ over which $\Yrst$ (or some semistable model of $Y$) is defined under various hypotheses on $f$ (or specify only its degree or its maximal tame subextension) and apply such results to other arithmetic questions (for instance, involving division fields of the Jacobian variety of $Y$).

\subsection{Outline of the paper} \label{sec introduction outline}

While our priority in this article is considering the case where the residue characteristic $p$ of our ground field $K$ is $2$, we try to be as general as possible so that we may at times compare and contrast the situation of $p = 2$ with the situation of $p \neq 2$, often considering the latter as a special case which yields more simply-stated results.  We shall state the results for $p \neq 2$ using our own set-up and terminology which arises naturally from our method of recovering them but, as we have already mentioned, equivalent results do already appear in the literature (particularly in \cite{dokchitser2022arithmetic}).  Beginning in \S\ref{sec models hyperelliptic} and throughout the rest of the paper, we often refer to the two cases as ``the $p = 2$ setting" and ``the $p \neq 2$ setting".

The rest of our paper is organized as follows. First, we establishing the algebro-geometric setting that we need in \S\ref{sec semistable models}, which begins with briefly providing the basic background definitions and facts relating to models of curves over local rings, and then proceeds to look more closely at the properties of the special fiber of such a model and how to compare two models of the same curve by considering $(-1)$-lines and $(-2)$-curves (see Definitions \ref{MinusLines2} and \ref{dfn minus two curve} below).  All of this set-up allows us in the following section to define a particular ``nice" semistable model of a curve $Y$ which is a Galois cover of another curve $X$, which we call the \textit{relatively stable model} $\Yrst$ of $Y$ (see \Cref{dfn relatively stable} below) and which is the main topic of \S\ref{sec relatively stable}.  Viewing a hyperelliptic curve $Y / K$ as a degree-$2$ (Galois) cover of the projective line $\proj_K^1 =: X$, the relatively stable model of $Y$ is the one directly treated in the rest of this paper.

After this rather general set-up, we specialize to considering models of hyperelliptic curves over discrete valuation rings.  As a hyperelliptic curve is (by definition) a double cover of a projective line, we first look at models of projective lines over discrete valuation rings; the well-known characterization of such models is summarized in \S\ref{sec models hyperelliptic equations}.  Then in the rest of \S\ref{sec models hyperelliptic}, we look at models of hyperelliptic curves from the point of view of algebraic equations which define them.  More precisely, we derive equations which define normalizations of smooth models of the projective line (possibly looking over finite extensions of $K$) in the function field of the hyperelliptic curve $Y$.  In the $p = 2$ setting, we describe how we use \emph{part-square decompositions} (see \Cref{dfn qrho} below) of the defining polynomial of $Y$ to find these normalizations.

We next turn our attention to clusters in \S\ref{sec clusters}, laying out the definitions of \emph{clusters} and \emph{cluster data} as in \cite{dokchitser2022arithmetic} (and other subsequent works) as well as introducing \emph{valid discs} (see \Cref{dfn valid disc} below), which by definition correspond more or less to the smooth models of the projective line comprising the model $\Xrst$ of the projective line of which the relatively stable model $\Yrst$ is the normalization in $K(Y)$.  In this section, we essentially recover (as \Cref{thm cluster p odd}) the method of constructing a semistable model of $Y$ according to cluster data in the $p \neq 2$ setting by showing that in this case, there is a one-to-one correspondence between valid discs and non-singleton clusters.  The closest that we can come to an analogous statement for the $p = 2$ setting is then presented as \Cref{thm cluster p=2} (which provides some of the statements of \Cref{thm introduction main}), but we defer the proof this theorem to \S\ref{sec depths}.

The next two sections of our paper focus on developing a method of finding valid discs for any particular hyperelliptic curve $Y$.  The objective of \S\ref{sec depths} is an investigation of how to determine the existence and find the radius of a valid disc with a given center, whereas the goal in \S\ref{sec centers} is to show how to find those elements of $\bar{K}$ which are centers of valid discs.  The main focus in \S\ref{sec depths} is on finding valid discs containing a given cluster of roots.  In the course of developing the methods presented in this section, given a polynomial $f$, we define lower-degree polynomials $f^{\mathfrak{s}}_+$ and $f^{\mathfrak{s}}_-$ determined by a particular even-cardinality cluster $\mathfrak{s}$ of roots of $f$ such that part-square decompositions of $f^{\mathfrak{s}}_\pm$ can be used to determine the existence and depths of valid discs containing $\mathfrak{s}$.  One of the main findings is that an even-cardinality cluster $\mathfrak{s}$ has $2$ associated valid discs if and only if the \emph{depth} of $\mathfrak{s}$ exceeds a certain ``threshold" $B_{f,\mathfrak{s}} \in \qq$ as in \Cref{thm introduction main}(c).  In \S\ref{sec depths thresholds}, we present and prove a number of results which give exact formulas or estimates of $B_{f,\mathfrak{s}}$ that apply to various situations, in particular proving the inequalities in \Cref{thm introduction main}(d).  In \S\ref{sec depths algorithm} we present an algorithm for finding useful part-square decompositions of $f^{\mathfrak{s}}_\pm$ (\Cref{algo sufficiently odd}).  Meanwhile, in \S\ref{sec centers}, we characterize the centers of valid discs by defining a polynomial (\Cref{dfn F} below) whose roots are centers of all valid discs, with certain exceptions, as described by \Cref{thm centers}.  The results of \S\ref{sec depths} and \S\ref{sec centers} together show how all valid discs may be found; in particular, \Cref{thm centers} is guaranteed (by \Cref{cor centers}) to find centers of all valid discs which do not contain any roots of the defining polynomial $f$.

In \S\ref{sec structure}, we proceed to examine the structure of the special fiber of our desired semistable model $\Yrst$ given knowledge of the valid discs containing particular clusters of roots.  In this section, we show that in the situation of \Cref{thm introduction main}(c)(i) above, the guaranteed pair of valid discs, under certain circumstances, produces a loop in the graph of components of the special fiber of $\Yrst$, or in other words, increases the toric rank of the hyperelliptic curve by $1$.  This allows us to present (as \Cref{thm toric rank}) and prove a formula for the toric rank in terms of viable valid discs, as seen in \Cref{thm introduction main}(d).

Finally, we devote \S\ref{sec computations} to providing more direct formulas for the aforementioned polynomial $F$ as well as the bounds $B_{f,\mathfrak{s}}$ for low-genus hyperelliptic curves, classified according to their associated cluster data (for the special case of genus $1$, that is, for elliptic curves, this recovers the results which were presented and proved in a more concretely elementary way in \cite{yelton2021semistable}).  In particular, \Cref{thm g=2} describes the possible structures of the special fiber of $\Yrst$ for genus-$2$ hyperelliptic curves classified according to their cluster data and broken into cases depending on valuations of certain elements of $\bar{K}$ associated to the defining polynomial.

\subsection{Notations and conventions} \label{sec introduction notation}

Below we outline our notation and conventions for this paper.

Firstly, whenever we use interval notation (e.g.\ $[a, b]$, $(a, b)$, $(a, +\infty)$, etc.), the bounds will always be elements of $\qq \cup \{\pm\infty\}$, and the interval will be understood to consist of all \textit{rational} numbers (rather than all real numbers) between the bounds; i.e.\ we have $[a, b] = [a, b] \cap \qq$; we have $[a, +\infty] = [a, +\infty) = [a, +\infty) \cap \qq$; etc.

\subsubsection{Rings, fields, and valuations}

We will adhere to the following assuptions:
\begin{itemize}
    \item $K$ is a field endowed with a discrete valuation $v : K \to \qq \cup \lbrace +\infty \rbrace$, complete with respect to $v$; when studying hyperelliptic curves over $K$ (i.e., from \S\ref{sec models hyperelliptic} on), we will also always assume that the characteristic of $K$ is $\neq 2$;
    \item $R = \OO_K = \{z \in K \ | \ v(z) \geq 0\}$ is the ring of integers of $K$;
    \item $k$ is the residue field of $R$ (and of $K$), which we assume to be algebraically closed;
    \item $p$ is the characteristic of $k$ (that is, $p$ is the residue characteristic of $K$);
    \item thanks to the completeness of $K$, given any algebraic extension $K' \supseteq K$, the valuation $v: K\to \qq  \cup \lbrace +\infty \rbrace$ extends uniquely to a valuation on $K'$ which we also denote by $v: K'\to \qq  \cup \lbrace +\infty \rbrace$: this turns $K'$ into a non-archimedean field with residue field $k$, whose ring of integers will be denoted $R':=\OO_{K'}$; when the extension $K'/K$ is finite, $K'$ is actually a complete discretely-valued field, and $R'$ is hence a complete DVR; and 
    \item $\bar{K}$ is an algebraic closure of $K$.
\end{itemize}

\subsubsection{Lines, hyperelliptic curves, and models} \label{sec notation he}

Beginning in \S\ref{sec models hyperelliptic}, the symbol $X$ will normally denote the projective line $\mathbb{P}^1_K$, and $x$ will be its standard coordinate.  Similarly, beginning in \S\ref{sec models hyperelliptic}, the symbol $Y$ will in general be used to denote a hyperelliptic curve of any genus $g \ge 1$ over $K$ and ramified over $\infty\in X(K)$ and endowed with a 2-to-1 ramified cover map $Y\to X$; over the affine chart $x\neq \infty$, $Y$ can be described by an equation of the form $y^2=f(x)$, with $f(x)\in K[x]$ a polynomial of odd degree $2g+1$. The set of the $2g+1$ roots of $f(x)$ will be denoted $\RR\subseteq \bar{K}$.  We  will use the notation $\Rinfty$ to mean the set of \emph{all} $2g+2$ branch points of $Y\to X$, including $\infty$.

In \S\ref{sec relatively stable}, we work with Galois covers in greater generality, and in that section $Y\to X$ indicates any Galois cover of smooth projective geometrically connected $K$-curves.

For convenience, we list the notation we will use relating to curves and models in the table below.

\begin{longtable}{ p{3.25cm} p{10.5cm} p{1.25cm} }
  \caption{Notation relating to a given curve $C / K$} \\
  Notation & Description & Section \\
  \hline \\ \vspace{.25em}
  $\CC / R$, ($\XX / R$, $\YY / R$) & a model of $C$ (or $X$ or $Y$) over the ring of integers of $K$ & \S\ref{sec preliminaries curves and models} \\ \vspace{.25em}
  $g(C)$ & the genus of $C$ & \S\ref{sec preliminaries special fibers} \\ \vspace{.25em}
  $\CC_s / k$ & the special fiber of a model $\CC / R$ & \S\ref{sec preliminaries special fibers} \\ \vspace{.25em}
  $a(V), m(V), w(V)$ & several integers attached to a component $V$ of $\CC_s$ & \S\ref{sec special fibers invariants} \\ \vspace{.25em}
  $\underline{w}(V)$ & a partition of $w(V)$ for a component $V \in \Irred(\YY_s)$, coming from a $G$-action on $\YY$ & \S\ref{sec relatively stable -2-curves} \\ \vspace{.25em}
  $a(\CC_s), t(\CC_s), u(\CC_s)$ & abelian, toric, and unipotent ranks of the special fiber $\CC_s / k$ & \S\ref{sec preliminaries abelian toric unipotent} \\ \vspace{.25em}
  $\Ctr(\CC, \CC')$ & the set of points of $\CC_s$ to which the irreducible components of $\CC'_s$ that do not appear in $\CC_s$ are contracted & \S\ref{sec preliminaries comparing} \\ \vspace{.25em}
  $\Irred(\CC_s)$ & set of irreducible components of the special fiber $\CC_s / k$ & \S\ref{sec preliminaries special fibers} \\ \vspace{.25em}
  $\Sing(\CC_s)$ & set of singular points of the special fiber $\CC_s / k$ & \S\ref{sec special fibers invariants} \\ \vspace{.25em}
  $\Cmin$, $\Cst$ & the minimal regular model and the stable model of $C$ & \S\ref{sec preliminaries regular}, \S\ref{sec preliminaries semistable} \\ \vspace{.25em}
  $\CC^{\mathrm{rst}}$ & the relatively stable model of $C$, given in \Cref{dfn relatively stable} & \S\ref{sec relatively stable definition} \\ \vspace{.25em}
  $\Xmini$, $\Xst$ & the quotients $\Ymini / G$ and $\Yst / G$ given a $G$-Galois cover $Y \to X$ & \S\ref{sec relatively stable galois covers} \\ \vspace{.25em}
  $\Xrst$ & the quotient $\Yrst / G$ given a $G$-Galois cover $Y \to X$ & \S\ref{sec relatively stable definition} \\ \vspace{.25em}
  $\Gamma(\CC_s)$ & the dual graph of the special fiber & \S\ref{sec preliminaries semistable}
\end{longtable}

\subsubsection{Polynomials, discs, and clusters}

Let $h \in \bar{K}[z]$ be a polynomial; we denote its degree by $\deg(h)$.  Then we extend the valuation $v : \bar{K} \to \qq \cup \lbrace +\infty\rbrace$ to the \emph{Gauss valuation} $v: \bar{K}[z]\to \qq \cup \lbrace +\infty\rbrace$; that is, for any polynomial $h(z) := \sum_{i = 0}^{\deg(h)} H_i z^i \in \bar{K}[z]$, we set 
$$v(h) = v\left(\sum_{i = 0}^{\deg(h)} H_i z^i\right) = \min_{1 \leq i \leq {\deg(h)}} \{v(H_i)\}.$$

In many situations, we will also invoke the operation of taking a \textit{normalized reduction} of a polynomial over $\bar{K}$, defined as follows.

\begin{dfn} \label{dfn normalized reduction}
    A \emph{normalized reduction} of a nonzero polynomial $h(z) \in \bar{K}[z]$ is the reduction in $k[z]$ of $\gamma^{-1}h$, where $\gamma \in \bar{K}^{\times}$ is some scalar satisfying $v(\gamma) = v(h)$.
\end{dfn}

\begin{rmk} \label{rmk normalized reduction}

Clearly a normalized reduction of a polynomial $h(z)$ is a nonzero polynomial in $k[x]$ and is unique up to scaling; thus, the degrees of the terms appearing in the normalized reduction (which is what we will be chiefly interested in for our purposes) do not depend on the particular choice of $\gamma \in K^{\times}$ in \Cref{dfn normalized reduction}.

\end{rmk}

By a \emph{disc} (of $\bar{K}$), we mean any subset of $\bar{K}$ of the form $D_{\alpha,b}:=\{x\in \bar{K}: v(x-\alpha)\ge b\}$ for some $\alpha\in \bar{K}$ and $b\in \qq$.  A \emph{cluster} of the set of roots $\mathcal{R} \subset \bar{K}$ is simply a non-empty intersection of $\mathcal{R}$ with a disc of $\bar{K}$: see \Cref{dfn cluster} and \Cref{rmk cluster} below.

In this article, we will often speak of the \textit{depths} of clusters and of discs in $\bar{K}$; in the case of clusters, our definition of \textit{depth} is the one used throughout \cite{dokchitser2022arithmetic}.  More general, we can define depth for a subset of elements of $\bar{K}$ to measure how close the elements in the subset are to each other, as follows.
\begin{dfn}
    Given a subset $S \subset \bar{K}$, if a minimum of the valuations $v(\zeta- \zeta') \in \qq \cup \{+\infty\}$ among all elements $\zeta, \zeta' \in S$ exists, we call it the \emph{depth} of $S$.
\end{dfn}
In this article, all depths will be rational numbers so that there will always exist an element of $\bar{K}$ whose valuation is equal to any given depth.  Note that the depth of a disc is essentially minus a logarithm of its radius under the $p$-adic metric, and so a \textit{greater} depth corresponds to a \textit{smaller} disc.

For convenience, list the special notation for this paper that we will use relating to polynomials, discs, and clusters in the table below.

\begin{longtable}[htb]{ p{3cm} p{10cm} p{1.25cm} }
\caption{Notation relating to polynomials, discs and clusters} \\
Notation & Description & Section \\
\hline \\ \vspace{.25em}
$z_{\alpha, \beta}$ & the coordinate obtained from $z$ (resp. $h$) through translation by $\alpha \in \bar{K}$ and scaling by $\beta \in \bar{K}^\times$ & \S\ref{sec models hyperelliptic line} \\ \vspace{.25em}
$h_{\alpha, \beta}$ & the polynomial obtained from $h$ such that $h_{\alpha, \beta}(x_{\alpha, \beta}) = h(x)$ & \S\ref{sec models hyperelliptic line} \\ \vspace{.25em}
$\mathcal{X}_{\alpha, \beta}$ & the model of $X = \proj_K^1$ with coordinate $x_{\alpha, \beta}$ & \S\ref{sec models hyperelliptic line} \\ \vspace{.25em}
$D_{\alpha, b}$ & the disc centered at $\alpha$ with radius $b$ & \S\ref{sec models hyperelliptic line} \\ \vspace{.25em}
$D_{\mathfrak{s}, b}$ & the disc containing a cluster $\mathfrak{s}$ with radius $b$, given in \Cref{dfn linked} & \S\ref{sec cluster cluster definition} \\ \vspace{.25em}
$\mathcal{X}_D$ & the model of $X = \proj_K^1$ corresponding to a disc $D$ & \S\ref{sec models hyperelliptic line} \\ \vspace{.25em}
$\mathcal{X}_{\mathcal{D}}$ & the minimal model of $X = \proj_K^1$ dominating $\mathcal{X}_D$ for all discs $D$ in a collection $\mathcal{D}$ & \S\ref{sec models hyperelliptic line} \\ \vspace{.25em}
$\Drst$ & the collection of discs corresponding to $\Xrst$ & \S\ref{sec structure partitioning} \\ \vspace{.25em}
$\mathcal{D}_P$ & the set of discs $D' \in \Drst$ such that $\Ctr(\XX_D, \XX_{D'}) = \{P\}$ given a point $P \in \SF{\XX_D}$ & \S\ref{sec structure partitioning} \\ \vspace{.25em}
$\ell(\mathcal{X}_D, P)$ & given in \Cref{dfn ell ramification index} & \S\ref{sec models hyperelliptic separable} \\ \vspace{.25em}
$\mu(\mathcal{X}_D, P)$ & given in \Cref{dfn mu} & \S\ref{sec models hyperelliptic inseparable} \\ \vspace{.25em}
$\underline{v}_h(D)$ & the valuation of the polynomial $h_{\alpha, \beta}$ for any $\alpha$ and $\beta$ such that $D = D_{\alpha, v(\beta)}$ & \S\ref{sec depths piecewise-linear} \\ \vspace{.25em}
$t_{q, \rho}$ & a rational number associated to a part-square decomposition $h = q^2 + \rho$ & \S\ref{sec models hyperelliptic part-square} \\ \vspace{.25em}
$\underline{t}_{q, \rho}(D)$ & the difference $\underline{v}_\rho(D) - \underline{v}_f(D)$ given a part-square decomposition $f = q^2 + \rho$ & \S\ref{sec depths piecewise-linear} \\ \vspace{.25em}
$\mathfrak{t}^{\mathfrak{s}}(D)$, $\mathfrak{t}^{\mathcal{R}}(D)$ & given in \Cref{dfn t fun}, applied to a cluster $\mathfrak{s}$ or to $\mathcal{R}$ & \S\ref{sec depths piecewise-linear} \\ \vspace{.25em}
$\mathfrak{s}'$ & the parent cluster of a cluster $\mathfrak{s}$, given in \Cref{dfn parent} & \S\ref{sec clusters} \\ \vspace{.25em}
$d_\pm(\mathfrak{s}), \delta(\mathfrak{s})$ & rational numbers relating to the depth of a cluster $\mathfrak{s}$, given in \Cref{dfn depth} & \S\ref{sec cluster cluster definition} \\ \vspace{.25em}
$d_\pm(\mathfrak{s}, \alpha)$, $\delta(\mathfrak{s}, \alpha)$ & variants of $d_\pm(\mathfrak{s})$, $\delta(\mathfrak{s})$ given in \Cref{dfn interval I alpha} & \S\ref{sec depths construction valid discs} \\ \vspace{.25em}
$I(\mathfrak{s})$ & a subset of $\qq$ associated to a cluster $\mathfrak{s}$, given in \Cref{dfn depth} & \S\ref{sec cluster cluster definition} \\ \vspace{.25em}
$I(\mathfrak{s}, \alpha)$ & variant of $I(\mathfrak{s})$ given in \Cref{dfn interval I alpha} & \S\ref{sec depths construction valid discs} \\ \vspace{.25em}
$J(\mathfrak{s}, \alpha)$ & a certain sub-interval of $I(\mathfrak{s}, \alpha)$ containing rational numbers $b$ such that $\mathfrak{t}^{\mathcal{R}}(D_{\alpha, b}) = 2v(2)$ & \S\ref{sec depths construction valid discs} \\ \vspace{.25em}
$b_\pm(\mathfrak{s}, \alpha)$ & the endpoints of the interval $J(\mathfrak{s}, \alpha)$ & \S\ref{sec depths construction valid discs} \\ \vspace{.25em}
$\partial^\pm \mathfrak{t}^\mathcal{R}(D)$ & given in \Cref{lemma ell and t function} & \S\ref{sec depths construction valid discs} \\ \vspace{.25em}
$\lambda_\pm(\mathfrak{s}, \alpha)$ & the slopes $\mp\partial^\pm \mathfrak{t}^{\mathcal{R}}(D_{\alpha, b_\pm})$ & \S\ref{sec depths construction valid discs} \\ \vspace{.25em}
$f^{\mathfrak{s}}$, $f^{\mathcal{R} \smallsetminus \mathfrak{s}}$ & given by the formulas in (\ref{eq factorization}) & \S\ref{sec depths separating roots factorizing} \\ \vspace{.25em}
$f_\pm^{\mathfrak{s}, \alpha}$ & given by the formulas in (\ref{eq standard form}) & \S\ref{sec depths separating roots std form} \\ \vspace{.25em}
$\mathfrak{t}_\pm^{\mathfrak{s}, \alpha}(b)$ & given by the formulas in (\ref{eq mathfrak t pm}) & \S\ref{sec depths separating roots factorizing} \\ \vspace{.25em}
$b_0(\mathfrak{t}_\pm^{\mathfrak{s}, \alpha})$ & the least value of $b$ at which $\mathfrak{t}_\pm^{\mathfrak{s}, \alpha}$ attains $2v(2)$ & \S\ref{sec depths separating roots reconstructing invariants} \\ \vspace{.25em}
$B_{f, \mathfrak{s}}$ & the ``threshold depth" given in \Cref{prop depth threshold} & \S\ref{sec depths thresholds} \\ \vspace{.25em}
$F(T)$ & given in \Cref{dfn F} & \S\ref{sec centers def}
\end{longtable}

\subsection{Acknowledgements} \label{sec introduction acknowledgements}

The authors would like to thank Fabrizio Andreatta for proposing that the first author, as work for his Masters thesis, join the early stages of the research project of the second author, as well as for providing guidance and helpful discussions to the first author throughout his research work in the Masters program.

\section{Semistable models of curves and their special fibers} \label{sec semistable models}

The purpose of this section is to recall and develop definitions and results on semistable models of general curves over discretely-valued fields, which will later be applied to hyperelliptic curves.

\subsection{Preliminaries on semistable models} \label{sec preliminaries}

In this subsection, we briefly recall a number of background results we will need about models of curves, for which our main reference will be \cite{liu2002algebraic}. In this section, $C$ is a smooth, geometrically connected, projective curve over a complete discretely-valued field $K$, whose ring of integers is denoted $R$, and whose residue field $k$ is assumed to be algebraically closed (see \S\ref{sec introduction notation}).

\subsubsection{Curves and models} \label{sec preliminaries curves and models}

A \emph{model} of $C$ (over $R$) is a normal, flat, projective $R$-scheme $\CC$ whose generic fiber is identified with $C$. The models of $C$ form a preordered set $\Models(C)$, the order relation being given by \emph{dominance}: given two models $\CC$ and $\CC'$ of $C$, we will write $\CC\le \CC'$ to mean that $\CC'$ dominates $\CC$, i.e.\ that the identity $\id: C\to C$ extends to a birational morphism $\CC'\to \CC$. The preordered set $\Models(C)$ is \emph{filtered}, meaning that given two models $\CC_1$ and $\CC_2$, it is always possible to find a model $\CC$ dominating them both.

\subsubsection{Special fibers of models and birational morphisms} \label{sec preliminaries special fibers}

The special fiber $\CC_s$ of a model $\CC$ of $C$ is (geometrically) connected and consists of a number of irreducible components $V_1, \ldots, V_n$; these components are projective, possibly singular curves over the residue field $k$, each one appearing in $\CC_s$ with a certain multiplicity (which is defined as the length of the local ring of $\CC_s$ at the generic point of the component). We will denote by $\Irred(\CC_s)=\{ V_1,\ldots, V_n\}$ the set of such components. Since $\CC \to \Spec(R)$ is proper and flat, the Euler-Poincaré characteristic of the generic fiber $C$ and that of the special fiber $\CC_s$ coincide; in other words, the genus $g(C)$ of the smooth $K$-curve $C$ coincides with the arithmetic genus $p_a(C_s)$ of the $k$-curve $\CC_s$ (see, for example, \cite[Proposition 8.3.28]{liu2002algebraic}).

When $\CC$ and $\CC'$ are two models such that $\CC\le \CC'$, the image of a component $V'$ of $\CC'_s$ through the birational morphism $\CC'\to \CC$ is either a component $V$ of $\CC_s$ or a single point $P$ of $\CC_s$; in this second case, we say that the birational morphism $\CC' \to \CC$ \emph{contracts} $V'$. The rule $V'\mapsto V$ defines a one-to-one correspondence between the irreducible components of $\CC'_s$ that are not contracted by $\CC'\to \CC$ and the irreducible components of $\CC_s$; we say that $V$ is the \emph{image} of $V'$ in $\CC_s$, and $V'$ the \emph{strict transform} of $V$ in $\CC'_s$. In other words, taking strict transforms defines an injection $\Irred(\CC_s)\hookrightarrow \Irred(\CC'_s)$, and $\Irred(\CC'_s)\setminus \Irred(\CC_s)$ is the set of the irreducible components of $\SF{\CC'}$ that the birational morphism contracts.

The birational morphism $\CC'\to \CC$ is an isomorphism precisely over the open subscheme $\CC\setminus \lbrace P_1, \ldots, P_n \rbrace$, where the $P_i$'s are the points of the special fiber of $\CC$ to which some $V'\in \Irred(\CC'_s)\setminus \Irred(\CC_s)$ contracts. The fiber of $\CC'_s$ above each $P_i$ is connected of pure dimension 1, and its irreducible components are those components of $\CC'_s$ that contract to $P_i$. If $V$ is a component of $\CC_s$ and $V'$ is its strict transform in $\CC'_s$, then the birational morphism of models $\CC'\to \CC$ restricts to a birational morphism of $k$-curves $V'\to V$.

\subsubsection{Comparing models} \label{sec preliminaries comparing}

Suppose that $\CC$ and $\CC'$ are two models of $C$. We can compare them by looking at the components of their special fibers. To this aim, let us make the auxiliary choice of a model $\CC''$  dominating them both, so that we can think of $\Irred(\CC_s)$ and $\Irred(\CC'_s)$ as two subsets of a common larger set, namely $\Irred(\CC''_s)$. We will denote by $\Ctr(\CC,\CC')\subseteq \CC_s(k)$ the set of points $P\in \CC_s$ such that there exists an irreducible component $V''\in \Irred(\CC''_s)$ that is the strict transform of some component of $V'\in \Irred(\CC'_s)$ and contracts to $P$.
It is clear that the formation of $\Ctr(\CC,\CC')$ does not depend on the choice of $\CC''$.
\begin{rmk}
    In the language of \cite[Subsection 8.3.2]{liu2002algebraic}, $\Ctr(\CC,\CC')$ is just the set of the centers in $\CC$ of the $R$-valuations of $K(C)$ that are of the first kind in $\CC'$ but not in $\CC$.
\end{rmk}

Roughly speaking, this is the way that one should think of $\Ctr(\CC,\CC')$: any given component $V\in \Irred(\CC'_s)$ is either also present in the special fiber of $\CC$ (i.e., $V\in \Irred(\CC_s)$) or it is not, in which case it is \emph{contracted} to some point $P_V\in \CC_s(k)$. The set $\Ctr(\CC,\CC')$ is simply the set of all such $P_V$'s, as $V$ varies in $\Irred(\CC'_s)\setminus \Irred(\CC_s)$. 
We clearly have that $\Ctr(\CC,\CC')=\varnothing$ (i.e., all the irreducible components of $\CC'_s$ are also present in $\CC_s$) if and only if $\CC$ dominates $\CC'$.

\subsubsection{Contracting components}
Given a model $\CC'$ of $C$ and any proper subset $\{ V_1, \ldots, V_n\}\subsetneq \Irred(\CC'_s)$, it is always possible to form a model $\CC$ of $C$ dominated by $\CC'$ such that the birational morphism $\CC'\to \CC$ contracts precisely the components $V_1, \ldots, V_n\in \Irred(\CC'_s)$; as a consequence, we have  $\Irred(\CC_s)=\Irred(\CC'_s)\setminus \{ V_1, \ldots, V_n\}$.

Given a finite number of models $\CC_1, \ldots, \CC_n$, one can form a minimal model $\CC$ dominating them all: it is enough to take any model $\CC'$ dominating them all, and then contract each $V\in \Irred(\CC'_s)$ that is not the strict transform of an irreducible component of $\SF{\CC_i}$ for some $i$. It is clear that $\Irred(\CC_s)$ coincides with the (non necessarily disjoint) union $\bigcup_i \Irred(\CC_i)$.

\subsubsection{Regular models} \label{sec preliminaries regular}

If we consider a model $\CC$ of $C$ that is regular (i.e.\ regular as an $R$-scheme), then it is possible to define the \emph{intersection number} of any two components of $\CC_s$; the resulting intersection matrix is negative semi-definite (see \cite[Chapter 9]{liu2002algebraic}). 
\begin{dfn}
    \label{MinusLines}
    A component $V$ of the special fiber of a regular model is said to be a \emph{(-1)-line} if it is a line (i.e., $V\cong \mathbb{P}^1_k$) and its self-intersection number is $-1$. Similarly, it is said to be a (-2)-line if it is a line whose self-intersection number equals $-2$.
\end{dfn}

If one contracts any set of (-1)-lines in a regular model, it remains regular.

Given any model $\CC$ of $C$, it is possible to find a regular model $\CC'$ dominating $\CC$. Moreover, among all regular models $\CC'$ dominating $\CC$, there is a minimum one (with respect to dominance), which is named the \emph{minimal desingularization} of $\CC$; it can be characterized as the unique regular model $\CC'$ dominating $\CC$ such that $\CC' \to \CC$ does not contract any (-1)-line of $\CC'_s$. If $\CC'$ is the minimal desingularization of $\CC$, then the birational morphism $\CC'\to \CC$ fails to be an isomorphism precisely above the points of ${\CC}_s$ at which $\CC$ is not regular.

If the genus of $C$ is positive, then, among all regular models of $\CC$, there is a minimum one (with respect to dominance).  This model is named the \emph{minimal regular model} and will be denoted by $\Cmin$; it can be characterized as the unique regular model of $C$ whose special fiber does not contain (-1)-lines.

\subsubsection{Semistable models} \label{sec preliminaries semistable}

A model $\CC$ of $C$ is said to be \emph{semistable} if its special fiber is reduced and its singularities (if there are any) are all nodes (i.e.\ ordinary double points). More generally, we say that a model $\CC$ of $C$ is \emph{semistable} at a point $P\in \CC_s$ if $\CC_s$ is reduced at $P$ and if $P$ is either a smooth point or a node of $\CC_s$. Given a point $P\in \CC_s$, if the model $\CC$ is semistable at $P$ then its completed local ring at $P$ has the form $R[[t]]$, if $P\in \CC_s$ is a smooth point, or $R[[t_1,t_2]]/(t_1 t_2 - a)$ for some $a\in R$, with $v(a)>0$ if $P\in \CC_s$ is a node. The integer $v(a)/v(\pi)\ge 1$, where $\pi$ is a uniformizer of $R$, is known as the \emph{thickness} of the node. A semistable model is regular precisely when all of its nodes have thickness equal to 1.

To describe the combinatorics of a semistable model $\CC$ of a curve $C$, one can form the \emph{dual graph} $\Gamma(\CC_s)$ of its special fiber, whose set of vertices is $\Irred(\CC_s)$ and whose edges correspond to the nodes connecting them.

The notions of (-1)-line and (-2)-line given in \Cref{MinusLines} for regular models can be extended to semistable ones as follows.

\begin{dfn}
    \label{MinusLines2}
    If  $\CC$ is a model, $V\in \Irred(\CC_s)$, and $\CC$ is semistable at the points of $V$, then $V$ is said to be a (-1)-line (resp.\ a (-2)-line) if it is a line (i.e., $V\cong \mathbb{P}^1_k$) and the number of nodes of $\CC_s$ lying on it is equal to 1 (resp.\ 2).
\end{dfn}
\begin{rmk}
    It is possible to show that, if $\CC$ is regular model that is semistable at the points of a component $V\in \Irred(\CC_s)$, then the definition above is consistent with the one given in \Cref{MinusLines}: this follows, for example, from the formula for self-intersection numbers given in \cite[Proposition 9.1.21(b)]{liu2002algebraic}.
\end{rmk}

\begin{lemma}
    \label{lemma no intersecting minus one lines}
    Suppose that $\CC$ is a model of $C$ that is semistable at the points of two components $V_1, V_2\in \Irred(\CC_s)$. If $V_1$ and $V_2$ are (-1)-lines, then they cannot intersect each other unless $g(C)=0$.
    \begin{proof}
        Since $V_1$ and $V_2$ are (-1)-lines, if they intersect each other at a node, they cannot intersect any other irreducible components of $\CC_s$. Since $\CC_s$ is connected, this implies that $\CC_s$ only consists of the two lines $V_1$ and $V_2$ crossing each other at a node, implying that $p_a(\CC_s)=0$ and consequently that $g(C)=0$.
    \end{proof}
\end{lemma}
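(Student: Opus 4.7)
The plan is to exploit the very restrictive definition of a $(-1)$-line (\Cref{MinusLines2}) to force $\CC_s$ to consist of just $V_1 \cup V_2$, then compute its arithmetic genus directly.

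First, I would observe that since $V_1$ is a $(-1)$-line, by definition the special fiber $\CC_s$ has exactly one node lying on $V_1$; likewise exactly one node lies on $V_2$. If the hypothesis for contradiction is that $V_1$ and $V_2$ meet, they must meet at a node (since $\CC$ is semistable at points of $V_1$ and $V_2$, so the only non-smooth intersections between components are nodes). That single intersection point exhausts the quota of nodes for both $V_1$ and $V_2$. Consequently, neither $V_1$ nor $V_2$ can meet any other irreducible component of $\CC_s$.

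Next, I would invoke the connectedness of $\CC_s$ (recalled in \S\ref{sec preliminaries special fibers}): since $V_1 \cup V_2$ is already a connected closed subset that is disjoint from every other component, we must have $\Irred(\CC_s) = \{V_1, V_2\}$ and $\CC_s = V_1 \cup V_2$ set-theoretically. Because the model is semistable at the points of $V_1$ and $V_2$, the special fiber is reduced there, so this equality holds scheme-theoretically as well.

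Finally, I would compute $p_a(\CC_s)$. The curve $\CC_s$ is the nodal union of two copies of $\mathbb{P}^1_k$ meeting at a single point; its dual graph is a single edge with two vertices, each vertex carrying genus $0$. A standard computation (or the formula $p_a = \sum p_a(V_i) + \#\{\text{loops in } \Gamma(\CC_s)\}$ for a semistable curve) gives $p_a(\CC_s) = 0$. Then the equality $g(C) = p_a(\CC_s)$ recalled in \S\ref{sec preliminaries special fibers} (flatness of $\CC \to \Spec R$ preserves Euler characteristic) yields $g(C) = 0$, completing the proof. The only subtlety is making sure the intersection really is a node rather than a worse singularity, but this is guaranteed by the semistability hypothesis at the points of $V_1$ and $V_2$, so I do not expect any serious obstacle here.
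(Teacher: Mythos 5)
Your proposal is correct and follows essentially the same argument as the paper: you use the defining property of $(-1)$-lines to conclude that the single intersection exhausts both lines' node quotas, invoke connectedness to deduce $\CC_s = V_1 \cup V_2$, and then compute $p_a(\CC_s) = 0 = g(C)$. The extra remarks you add (semistability guarantees the intersection is a node; the genus computation via the dual graph) are correct details that the paper's terser proof leaves implicit.
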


Contracting (-1) and (-2)-lines does not ever disrupt semistability: more precisely, if $\CC'$ is a model that is semistable at the points of some components $V_1, \ldots, V_n$ of $\CC_s$, and the $V_i$'s happen to all be (-1) and (-2)-lines, then the model $\CC$ that is obtained from $\CC$ by contracting all the $V_i$'s is semistable at the points where the $V_i$'s contract. Desingularizing is also an operation that preserves semistability: if $\CC$ is semistable at a point $P\in \CC_s$, and $\CC'$ is its minimal desingularization, then $\CC'$ is still semistable at all points lying above $P$; moreover, the desingularization $\CC'$ is easy to describe:
\begin{enumerate}[(a)]
    \item if $P$ is a smooth point of $\CC_s$, we have that $\CC$ is regular at $P$, and the birational map $\CC'\to \CC$ is consequently an isomorphism above $P$; and 
    \item if $P$ is a node of thickness $t$, the inverse image of $\CC'_s$ at $P$ consist of a chain of $t$ nodes of thickness 1, joined by $t-1$ (-2)-lines (see \cite[Section 5.3]{liu2002algebraic}).
\end{enumerate}

More generally, if $\CC$ is a model that is semistable at a point $P$ and if $\CC'$ is any model dominating $\CC$ but dominated by its minimal desingularization, then $\CC'$ is semistable at the points above $P$, and we have the following:
\begin{enumerate}[(a)]
    \item if $P$ is a smooth point of $\CC_s$, then the birational map $\CC'\to \CC$ is an isomorphism above $P$;
    \item if $P$ is a node of thickness $t$, then the inverse image of $\CC'_s$ at $P$ consist of a chain of $m\ge 1$ nodes whose thicknesses add up to $t$, joined by $m-1$ (-2)-lines (see \cite[Section 5.3]{liu2002algebraic}).
\end{enumerate}

When a semistable model exists, we say that $C$ has \emph{semistable reduction} over $R$. By Theorem \ref{thm semistable reduction} above, any curve $C$ is guaranteed to have semistable reduction after replacing $R$ with a large enough finite extension.

If $C$ has semistable reduction and positive genus, its minimal regular model is semistable (\cite[Theorem 10.3.34]{liu2002algebraic}). If $C$ has semistable reduction and genus at least $2$, then the set of its semistable models has a minimum (with respect to dominance), which is named the \emph{stable model} of $C$; it is denoted by $\Cst$ and can be characterized as the unique semistable model of $C$ whose special fiber contains neither (-1)-lines nor (-2)-lines. The stable model $\Cst$ can be obtained from $\Cmin$ by contracting all the (-2)-lines appearing in its special fiber.

\subsubsection{Abelian, toric, and unipotent ranks} \label{sec preliminaries abelian toric unipotent}

Given a model $\CC$ of $C$, the Jacobian $\Pic^0(\CC_s)$ of the (possibly singular) $k$-curve $\CC_s$ is an extension of an abelian variety $A$ by a linear algebraic group, which in turn is an extension of a torus $T$ by a smooth unipotent algebraic group $U$. The ranks of $A$, $T$, and $U$ are respectively known as the abelian, toric, and unipotent rank of the special fiber $\CC_s$ and will be denoted by $a(\CC_s)$, $t(\CC_s)$, and $u(\CC_s)$ respectively: they are three non-negative integers adding up to the genus $p_a(\CC_s)=g(C)$: see, for example, \cite[Section 7.5]{liu2002algebraic} or \cite[Chapters 8 and 9]{bosch2012neron}. We have the following.
\begin{enumerate}[(a)]
    \item For all models $\CC$, the abelian rank $a(\CC_s)$ coincides with the sum $a(\CC_s)=\sum_{V\in \Irred(\CC_s)} a(V)$, where $a(V):=g(\widetilde{V})$ is the genus of the normalization $\widetilde{V}$ of $V$.
    \item If $\CC$ is a semistable model, the toric rank can be computed as $t(\CC_s)=\dim_k H^1(\Gamma(\CC_s))$, where $\Gamma(\CC_s)$ is the dual graph of $\CC_s$ (see \cite[Example 9.8]{bosch2012neron} for a proof); in other words, we have $t(\CC_s)=\Nnodes(\CC_s)-\Nirreducible(\CC_s)+1$, where $\Nnodes(\CC_s)$ denotes the number of nodes, and $\Nirreducible(\CC_s)$ is the number of irreducible components (i.e., the cardinality of $\Irred(\CC_s)$).
    \item The unipotent rank $u(\CC_s)$ is 0 if $\CC$ is a semistable model.
\end{enumerate}
Under certain hypotheses, the abelian, unipotent and toric rank do not depend on the chosen model. In particular, we have the following.
\begin{prop}
    \label{prop invariance of ranks}
    If $\CC$ and $\CC'$ are two models of $C$ over $R$, and if each of these models is either regular or semistable, then we have $a(\CC_s)=a(\CC'_s)$, $t(\CC_s)=t(\CC'_s)$, and $u(\CC_s)=u(\CC'_s)$.
    \begin{proof}
        It is clearly enough to prove the result (a) when $\CC$ and $\CC'$ are both regular, and (b) when $\CC$ is a semistable model and $\CC'$ is its minimal desingularization. In case (a), one can form a regular model $\CC''$ dominating them both and apply \cite[Lemma 10.3.40]{liu2002algebraic}. In case (b), it follows from the description given in \S\ref{sec preliminaries semistable} of the desingularization of a semistable model that $\CC'$ is also semistable, and we get $\Nnodes(\CC_s)-\Nirreducible(\CC_s)=\Nnodes(\CC'_s)-\Nirreducible(\CC'_s)$, which is to say $t(\CC_s)=t(\CC'_s)$  Moreover, since any new components introduced by the desingularization process are lines, we have $a(\CC_s)=a(\CC'_s)$, and finally, since both $\CC$ and $\CC'$ are semistable, we have $u(\CC_s)=u(\CC'_s)=0$.
    \end{proof}
\end{prop}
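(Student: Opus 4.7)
The plan is to follow exactly the reduction suggested at the end of the statement: it suffices to verify the equalities in two cases, namely (a) when $\CC$ and $\CC'$ are both regular, and (b) when $\CC$ is a semistable model and $\CC'$ is its minimal desingularization. Any allowed pair $(\CC,\CC')$ can be connected by a zig-zag of such pairs: if both are regular, case (a) applies directly; if both are semistable, pass through their minimal desingularizations using case (b) and then connect the resulting two regular models by case (a); the mixed situation is handled by composing these two steps.

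For case (a), my approach is to choose a regular model $\CC''$ dominating both $\CC$ and $\CC'$ (such a $\CC''$ exists because the preorder $\Models(C)$ is filtered and every model is dominated by a regular one) and to argue separately $\CC\sim\CC''$ and $\CC'\sim\CC''$. In each of these comparisons, we are comparing a regular model with a regular model dominating it, so I can invoke \cite[Lemma 10.3.40]{liu2002algebraic}, which gives the equality of the abelian, toric, and unipotent ranks.

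For case (b), my approach is to use the explicit description of the minimal desingularization recalled in \S\ref{sec preliminaries semistable}: the birational morphism $\CC'\to \CC$ is an isomorphism away from the nodes of $\CC_s$, while above a node of thickness $t$ it inserts a chain of $t-1$ new lines of genus $0$ (the $(-2)$-lines), producing a total of $t$ nodes of thickness $1$ in place of the original one. From this description I deduce that $\CC'$ is again semistable (so that $u(\CC_s) = u(\CC'_s) = 0$ by the formula in \S\ref{sec preliminaries abelian toric unipotent}(c)); that each new component has $a=0$ so $a(\CC_s)=\sum_{V\in \Irred(\CC_s)} a(V)$ equals the corresponding sum for $\CC'_s$, giving $a(\CC_s)=a(\CC'_s)$; and that the passage $\CC\rightsquigarrow \CC'$ replaces each node of thickness $t$ by a chain adding $t-1$ new nodes and $t-1$ new components, so the difference $\Nnodes-\Nirreducible$ is unchanged and hence $t(\CC_s)=t(\CC'_s)$ by \S\ref{sec preliminaries abelian toric unipotent}(b).

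I do not expect a real obstacle here; the content of the proof is bookkeeping together with a citation. The step that requires the most care is the reduction step at the beginning: one must ensure that starting from any two models of the allowed types one can connect them by a zig-zag using only the moves in (a) and (b). This is immediate once one observes that the minimal desingularization of a semistable model is regular, so replacing any semistable model by its minimal desingularization brings us into the purely regular setting of (a). The other mildly technical input is the assertion that the minimal desingularization of a semistable model is again semistable, which I will justify by quoting the local description of the two kinds of singularities (smooth points and nodes) recalled in \S\ref{sec preliminaries semistable}.
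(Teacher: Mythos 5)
Your proposal is correct and follows essentially the same route as the paper: reduce to the two cases (both regular, or semistable model vs.\ its minimal desingularization), cite \cite[Lemma 10.3.40]{liu2002algebraic} for the first, and use the explicit chain-of-lines description of the desingularization to track $a$, $t$, and $u$ in the second. The only difference is that you spell out the zig-zag reduction in slightly more detail than the paper does, which is a welcome clarification but not a distinct argument.
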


\subsubsection{Field extensions} \label{sec preliminaries field extensions}

Suppose we are given a finite extension $K'/K$ (which, under our assumptions, will necessarily be totally ramified, since the residue field $k$ of $K$ is algebraically closed); let $e_{K'/K}$ be the ramification index (which coincides, in our setting, with the degree of the extension), and let $R'\supseteq R$ denote the ring of integers of $K'$. We will freely say a \emph{model of $C$ over $R'$} to mean a model of $C':=C\otimes_K K'$ over $R'$ as defined in \S\ref{sec preliminaries curves and models}. Given a model $\CC$ of $C$ over $R$, it is possible to construct a corresponding model $\CC'$ of $C$ over $R'$, which is defined as the normalization of the base-change $\CC\otimes_R R'$. When $\CC$ has reduced special fiber (and hence, in particular, when $\CC$ is semistable), the scheme $\CC\otimes_R R'$ is already normal (for example, by Serre's criterion for normality), and so we have $\CC'=\CC\otimes_R R'$: in this last case, the special fibers $\CC'_s$ and $\CC_s$ are canonically isomorphic.

We remark that regularity is not preserved in general when $R$ gets extended: if $\CC$ is a regular model over $R$, the corresponding model $\CC'$ over an extension $R'$ may no longer be regular. Semistability, however, is preserved: whenever $\CC$ is semistable, $\CC'$ is semistable too; however, the thickness of each node of $\CC$ gets multiplied by the ramification index $e_{K'/K}$ in $\CC'$.

In this section we look more closely at the special fibers of models (over $R$) of a smooth projective geometrically connected $K$-curve $C$. In \S\ref{sec special fibers invariants}, in particular, we define a number of invariants attached to each component of the special fiber of a model, while in \S\ref{sec special fibers criterion}, we use them to state a criterion that allows us to identify those models of $C$ that are part of the minimal regular model when $C$ has semistable reduction.

\subsection{Invariants attached to a component of the special fiber}
\label{sec special fibers invariants}
Given a model $\CC$ of $C$ and a component $V\in \Irred(\CC_s)$, we consider several invariants attached to $V$, listed as follows:
\begin{enumerate}
    \item $m(V)$ denotes the multiplicity of $V$ in $\CC_s$;
    \item $a(V)$ denotes the \emph{abelian rank} of $V$, i.e.\ the genus of the normalization $\widetilde{V}$ of $V$;
    \item $w(V)$ is defined only when $m(V)=1$, and it denotes the number of singular points of $\CC_s$ that belong to $V$, each one counted as many times as the number of branches of $V$ at that point; in other words, if $\tilde{V}$ is the normalization of $V$, then $w(V)$ is the number of points of $\tilde{V}$ that lie over $V\cap \Sing(\CC_s)$, where $\Sing(\CC_s)$ is the set of singular points of $\CC_s$.
\end{enumerate}

We now show that, under appropriate assumptions, the integers $m$, $a$, and $w$ are left invariant when the model is changed.
\begin{lemma}
    \label{lemma inv m a w}
    Let $\CC'$ be another model of $C$ which dominates $\CC$, and let $V'$ denote the strict transform of $V$ in ${\CC'}_s$. Then we have $m(V')=m(V)$ and $a(V') = a(V)$. Moreover, if $\CC'$ is dominated by the minimal desingularization of $\CC$, we also have $w(V')=w(V)$.
    \begin{proof}
        For $m$ and $a$, the lemma immediately follows from the consideration that $\CC'_s\to \CC_s$ is an isomorphism away from a finite set of points of $\CC_s$. We will now prove the result for $w$.
        
        Let $Q$ be a point of $V'$ which lies over some $P\in V$. We claim that $\CC'_s$ is smooth (resp.\ singular) at $Q$ if and only if $\CC_s$ is smooth (resp.\ singular) at $P$. This is obvious whenever $\CC'\to \CC$ is an isomorphism above $P$. If $\CC'\to \CC$ is not an isomorphism above $P$, the claim follows from the two following observations.  Firstly, since we are assuming that $\CC'$ is dominated by the minimal desingularization of $\CC$, it must be the case that $\CC$ is not regular at $P$, and consequently that $\CC_s$ is singular at $P$.  Secondly, the fiber of $\CC'\to \CC$ above $P$ is pure of dimension 1, and it consists of those components $E_i$ of $\CC'_s$ that contract to $P$; the point $Q$ will thus belong not only to $V'$, but also to one of the $E_i$'s, so that $\CC'_s$ will certainly be singular at $Q$. This completes the proof of the claim.
        
        Now, since $\CC'\to \CC$ restricts to a birational morphism $V'\to V$ of $k$-curves, the set of branches of $V$ at a point $P\in \CC_s$ equals the set of branches of $V'$ at the points of $\CC'_s$ lying above $P$. If we combine this consideration with the claim we have just proved, we have that $V'\to V$ induces a bijection between the set $\mathfrak{B}$ of the branches of $V$ at the singular points of $\CC_s$ and the set $\mathfrak{B}'$ of the branches of $V'$ at the singular points of $\CC'_s$. The equality $w(V')=w(V)$ follows.
    \end{proof}
\end{lemma}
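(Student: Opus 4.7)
My plan is to handle the three invariants separately, exploiting the fact that a birational morphism between models is an isomorphism away from a finite set of points of the special fibers.

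For $m$, I would observe that multiplicity is defined via the length of the local ring at the generic point of $V$. Since $V'$ is the strict transform of $V$, the birational morphism $\CC' \to \CC$ restricts to a birational morphism $V' \to V$ that is an isomorphism on a dense open; in particular, it identifies the generic points of $V$ and $V'$ and induces an isomorphism on the corresponding local rings in $\CC'$ and $\CC$ (since $\CC' \to \CC$ is an isomorphism away from finitely many closed points of $\CC_s$, and a generic point of $V$ is not one of those). Hence $m(V') = m(V)$. For $a$, the same birational morphism $V' \to V$ induces an isomorphism between the normalizations $\widetilde{V'}$ and $\widetilde{V}$ (normalization is determined by the function field), so $a(V') = g(\widetilde{V'}) = g(\widetilde{V}) = a(V)$.

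The assertion about $w$ is the main obstacle, and here the hypothesis that $\CC'$ lies between $\CC$ and its minimal desingularization is essential. The idea is to show that for each point $P \in V$, the local singularity structure of $\CC'_s$ above $P$ matches that of $\CC_s$ at $P$ from the point of view of branches on $V'$. I would argue pointwise: if $\CC' \to \CC$ is an isomorphism above $P$, nothing needs to be checked. Otherwise, by the characterization of the minimal desingularization recalled in \S\ref{sec preliminaries regular}, $\CC$ must fail to be regular at $P$, and since the generic fiber is smooth this forces $P \in \Sing(\CC_s)$. Moreover, the fiber of $\CC' \to \CC$ over $P$ is a connected union of one-dimensional components $E_i$, and any point $Q$ of $V'$ lying over $P$ lies on $V'$ \emph{and} on at least one $E_i$, so $\CC'_s$ fails to be unibranch at $Q$ and in particular $Q \in \Sing(\CC'_s)$.

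Combining this with the fact that the birational morphism $V' \to V$ of $k$-curves identifies branches of $V'$ at points over $P$ with branches of $V$ at $P$, I would conclude that counting branches of $V$ at $\Sing(\CC_s) \cap V$ gives the same answer as counting branches of $V'$ at $\Sing(\CC'_s) \cap V'$, which is precisely $w(V') = w(V)$. The only subtle step is checking that no \emph{new} singular points of $\CC'_s$ appear on $V'$ over a smooth point $P$ of $\CC_s$; this is immediate because over such a $P$ the model $\CC$ is regular and $\CC' \to \CC$ is forced to be an isomorphism by minimality of the desingularization dominating $\CC'$.
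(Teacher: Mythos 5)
Your proof is correct and follows essentially the same route as the paper: you establish the pointwise equivalence of smoothness/singularity of $\CC_s$ at $P$ and of $\CC'_s$ at points $Q\in V'$ over $P$ (using the characterization of the minimal desingularization and the pure-dimension-one fiber argument), and then transport branches along the birational morphism $V'\to V$. The slight extra explicitness in your arguments for $m$ (local ring at the generic point) and $a$ (normalization determined by the function field) is just the unpacking of the paper's one-line reasoning and does not change the approach.
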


We now describe how the invariants we have defined allow us to detect (-1)-lines and (-2)-lines.
\begin{lemma}
    \label{Lemma12lines}
    Let $\CC$ be any model of $C$, and let $V$ be an irreducible component of $\CC_s$. Then,
    \begin{enumerate}
        \item[(a)] if $\CC$ is regular and $V$ is a (-1)-line of multiplicity 1, then $a(V)=0$ and $w(V)=1$;
        \item[(b)] if $\CC$ is regular and $V$ is a (-2)-line of multiplicity 1, then $a(V)=0$ and $w(V)\in \lbrace 1, 2\rbrace$;
        \item[(c)] if $\CC$ is semistable at the points of $V$, then $V$ is a (-1)-line if and only if $a(V)=0$ and $w(V)=1$; and 
        \item[(d)] if $\CC$ is semistable at the points of $V$, then $V$ is a (-2)-line if and only if $a(V)=0$ and $w(V)=2$ (the reverse implication only holds if $g(C)\geq 2$).
    \end{enumerate}
    \begin{proof}
        If $V$ is a component of multiplicity 1 in the special fiber $\CC_s$ of a regular model $\CC$, then it follows from the intersection theory of regular models (see \cite[Chapter 9]{liu2002algebraic}) that its self-intersection number of $V$ is equal to minus the number of points at which $V$ intersects the other components of $\CC_s$, each counted with a certain (positive) multiplicity. Once this has been observed, parts (a) and (b) follow immediately from the definition of (-1)-lines and (-2)-lines for regular models (\Cref{MinusLines}).
        
        Suppose now that $V$ is a component of the special fiber $\CC_s$ of a model $\CC$ that is semistable at the points of $V$ (which, in particular, implies $m(V)=1$). From the definition of the invariant $w$ and the structure of semistable models, it is clear that $w(V)$ equals the sum $2w_{\text{self}}(V) + w_{\text{other}}(V)$, where $w_{\text{self}}(V)$ is the number of self-intersections of $V$, while $w_{\text{other}}(V)$ is the number of intersections of $V$ with other components of $\CC_s$; moreover, we have $w_{\text{self}}(V)=0$ if and only if $V$ is smooth.
        But the line $\mathbb{P}^1_k$ is the unique smooth $k$-curve with abelian rank 0, so the component $V$ is a line if and only if $a(V)=0$ and $w_{\text{self}}(V)=0$; according to \Cref{MinusLines2}, the component $V$ is thus a (-1)-line or a (-2)-curve if and only if $a(V)=0$, $w_{\text{self}}(V)=0$, and $w_{\text{other}}(V)$ equals $1$ or 2 respectively.
        
        From the considerations above, both implications of (c), as well the forward implication of (d), immediately follow. To prove the reverse implication of (d), one has to exclude the possibility that $a(V)=0$, $w_{\text{self}}(V)=1$, and $w_{\text{other}}(V)=0$. But if this were the case, the unique irreducible component of $\CC_s$ would be $V$ (because $w_{\text{other}}(V)=0$, but $\CC_s$ is connected), and the special fiber $\CC_s$ would consequently be a reduced $k$-curve having arithmetic genus equal to that of $V$, which is $a(V)+w_{\text{self}}(V)=1$. Since the arithmetic genus of $\CC_s$ coincides with $g(C)$, we would get $g(C)=1$; we therefore get the reverse implication of (d) as long as $g(C) \neq 1$.
    \end{proof}
\end{lemma}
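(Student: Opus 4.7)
My plan is to handle parts (a)--(b) separately from parts (c)--(d), since the first pair is really a statement about intersection theory on regular arithmetic surfaces, while the second pair is a statement about the geometry of semistable special fibers.

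For (a) and (b), I would start from the standard fact in the intersection theory of regular models that the total fiber $\CC_s = \sum_i m_i V_i$ is numerically equivalent to zero when intersected against any vertical divisor (because $\CC_s$ is the pullback of the maximal ideal of $R$). Specializing this to $V$, which by hypothesis appears with multiplicity $m(V)=1$, gives
\[
V \cdot V \;=\; -\sum_{V_i \neq V} m_i\, (V_i \cdot V),
\]
with every $V_i \cdot V \geq 0$ by properness. If $V$ is a $(-1)$-line, then $V \cong \mathbb{P}^1_k$ (hence $a(V)=0$), and the sum on the right equals $1$: this is only achievable if there is a unique $V_i$ meeting $V$, with $m_i = 1$ and $V_i \cdot V = 1$, so the meeting is transverse at a single point; since the smooth curve $\mathbb{P}^1_k$ has one branch there, $w(V)=1$. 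If instead $V$ is a $(-2)$-line, the sum equals $2$, and there are two combinatorial possibilities: either a single intersection point (arising from one $V_i$ meeting $V$ with $m_i(V_i\cdot V)=2$, either tangentially or with multiplicity contribution $2$ from $m_i$) or two distinct intersection points; in the first case $w(V)=1$ and in the second $w(V)=2$, yielding the claim $w(V)\in\{1,2\}$.

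For (c) and (d), the key observation is the decomposition
\[
w(V) \;=\; 2 w_{\mathrm{self}}(V) + w_{\mathrm{other}}(V),
\]
where $w_{\mathrm{self}}(V)$ counts the nodes of $\CC_s$ which are self-intersection points of $V$ (each such node contributes two branches of $V$) and $w_{\mathrm{other}}(V)$ counts the nodes of $\CC_s$ at which $V$ meets a different irreducible component (contributing one branch of $V$). This decomposition works precisely because $\CC$ is semistable at the points of $V$, so every singular point of $\CC_s$ lying on $V$ is a node. Moreover, since the only smooth projective $k$-curve of genus zero is $\mathbb{P}^1_k$, and $V$ is smooth iff $w_{\mathrm{self}}(V)=0$, we obtain the characterization: $V\cong \mathbb{P}^1_k$ iff $a(V)=0$ and $w_{\mathrm{self}}(V)=0$. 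Combining this with Definition~\ref{MinusLines2} (which counts nodes of $\CC_s$ on $V$, i.e.\ $w_{\mathrm{self}}(V)+w_{\mathrm{other}}(V)$) immediately gives: $V$ is a $(-1)$-line iff $a(V)=0$, $w_{\mathrm{self}}(V)=0$, $w_{\mathrm{other}}(V)=1$, and $V$ is a $(-2)$-line iff $a(V)=0$, $w_{\mathrm{self}}(V)=0$, $w_{\mathrm{other}}(V)=2$. Both forward implications and (c)'s reverse implication follow directly from writing $w(V)=1$ or $w(V)=2$ in terms of the non-negative integers $w_{\mathrm{self}}, w_{\mathrm{other}}$.

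The only real obstacle is the reverse implication of (d): the equation $2w_{\mathrm{self}}(V)+w_{\mathrm{other}}(V)=2$ admits the extra solution $w_{\mathrm{self}}(V)=1$, $w_{\mathrm{other}}(V)=0$, corresponding to $V$ having a single self-node and meeting no other component. I would rule this case out using the connectedness of $\CC_s$: the condition $w_{\mathrm{other}}(V)=0$ forces $V$ to be the \emph{only} irreducible component, so $\CC_s=V$ and $g(C)=p_a(\CC_s)=a(V)+w_{\mathrm{self}}(V)=1$, contradicting the hypothesis $g(C)\geq 2$. This is exactly where the genus assumption is needed, and it is the only place where the argument is not a formal bookkeeping of the invariants.
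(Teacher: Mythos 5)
Your proof is correct and follows essentially the same route as the paper's: the intersection-theoretic formula $V\cdot V = -\sum_{V_i\neq V} m_i(V_i\cdot V)$ for parts (a)--(b), the decomposition $w(V) = 2w_{\mathrm{self}}(V) + w_{\mathrm{other}}(V)$ together with the characterization of $\mathbb{P}^1_k$ as the unique smooth genus-zero curve for parts (c)--(d), and the connectedness/arithmetic-genus argument to exclude the case $w_{\mathrm{self}}(V)=1$, $w_{\mathrm{other}}(V)=0$ in the reverse direction of (d). The only cosmetic difference is that you spell out the intersection-number bookkeeping for (a)--(b) more explicitly than the paper, which simply cites it.
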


Inspired by the above lemma, we make the following definition.
\begin{dfn} \label{dfn minus two curve}
    Given a model $\CC$ and an irreducible component $V$ of $\CC_s$ at whose points $\CC$ is semistable, the component $V$ is said to be a \emph{(-2)-curve} of $\CC_s$ if $m(V)=1$, $a(V)=0$ and $w(V)=2$.
\end{dfn}
\begin{rmk}
    \label{Minus2LinesCurves}
    \Cref{Lemma12lines} ensures that, if $V$ is a component of $\CC_s$ and $\CC$ is semistable at the points of $V$, then, when $V$ is (-2)-line, it is a (-2)-curve, and the converse also holds provided that $g(C)\neq 1$. If $g(C)=1$, the proof of \Cref{Lemma12lines} shows that $V$ may be a (-2)-curve without being a (-2)-line, and this happens precisely when $V$ is the unique component of $\CC_s$ and it is a $k$-curve of abelian rank 0 intersecting itself once (which is to say, a projective line with two points identified).
\end{rmk} 

The properties of being a (-1)-line or a (-2)-curve are preserved and reflected under desingularization in the semistable case; this is the reason why the notion of a (-2)-curve (rather than a (-2)-line) will turn out to be more convenient for us.
\begin{prop}
    \label{Minus12LinesStrictTransform}
    Let $\CC$ be a model; let $V$ be a component of $\CC_s$; and let $\CC'$ be a model dominating $\CC$ but dominated by the minimal desingularization of $\CC$. Assume that $\CC$ is semistable at the points of $V$. Let $V'$ denote the strict transform of $V$ in $\CC'_s$.  We have that $\CC'$ is semistable at the points of $V'$, and $V'$ is a (-1)-line (resp.\ a (-2)-curve) if and only if $V$ is.
    \begin{proof}
        The fact that $\CC'$ is semistable at the points of $V'$ has been discussed in \S\ref{sec preliminaries semistable}. We have seen how, in the present setting, being a (-1)-line or a (-2)-curve is something that can be characterized by means of the invariants $a$ and $w$. Hence, the result follows from \Cref{lemma inv m a w}.
    \end{proof}
\end{prop}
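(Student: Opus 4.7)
The plan is to reduce the proposition to an application of the two preceding lemmas, using the structural description in \S\ref{sec preliminaries semistable} to handle the semistability claim. The proposition makes three assertions, but the (-1)-line and (-2)-curve statements become the same kind of check once the semistability of $\CC'$ at the points of $V'$ is established.

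First I would address the semistability of $\CC'$ at the points of $V'$. By hypothesis $\CC'$ is dominated by the minimal desingularization of $\CC$; in particular, over the smooth locus of $\CC_s$ the birational morphism $\CC'\to \CC$ is an isomorphism, and over any node $P$ of $\CC_s$ the preimage in $\CC'$ is (as recalled in \S\ref{sec preliminaries semistable}) a chain of semistable charts. Since $\CC$ is semistable at every point of $V$, each point of $V'$ maps to a point of $V$ at which $\CC$ is semistable, and the local description just recalled shows that $\CC'$ is semistable at that point of $V'$ as well.

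Next I would translate the (-1)-line and (-2)-curve conditions into numerical data. By \Cref{Lemma12lines}(c), a component of $\CC_s$ at whose points $\CC$ is semistable is a (-1)-line if and only if $a(V)=0$ and $w(V)=1$; by the definition given in \Cref{dfn minus two curve}, the component is a (-2)-curve if and only if $m(V)=1$, $a(V)=0$, and $w(V)=2$ (and this is automatic from semistability for $m(V)=1$). The very same characterizations apply to $V'$ in $\CC'_s$ by the semistability of $\CC'$ at the points of $V'$ already established. Thus both ``if and only if'' statements reduce to the equalities $m(V)=m(V')$, $a(V)=a(V')$, and $w(V)=w(V')$.

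Finally, these three equalities are exactly the content of \Cref{lemma inv m a w}: the invariants $m$ and $a$ are preserved under any dominating model, while the equality $w(V)=w(V')$ requires the additional hypothesis that $\CC'$ be dominated by the minimal desingularization of $\CC$, which is exactly what we have assumed. Applying the lemma to $V$ and its strict transform $V'$ concludes the argument. I do not expect any real obstacle here; the only subtlety is to make sure the hypothesis on $\CC'$ being dominated by the minimal desingularization is being used precisely where \Cref{lemma inv m a w} needs it, namely for the invariance of $w$.
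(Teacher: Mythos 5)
Your proposal is correct and follows essentially the same route as the paper: reduce both the semistability claim (via the local description in \S\ref{sec preliminaries semistable}) and the (-1)-line/(-2)-curve claims (via \Cref{Lemma12lines}(c) and \Cref{dfn minus two curve}) to the invariance of $m$, $a$, $w$ established in \Cref{lemma inv m a w}. You add slightly more explicit detail, notably flagging that the hypothesis on dominance by the minimal desingularization is used precisely for the invariance of $w$, but the argument is the same.
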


\subsection{A criterion for being part of the minimal regular model}
\label{sec special fibers criterion}
As initial evidence of the usefulness of the invariants $a$, $m$, $w$ introduced before, we provide a criterion for a model $\CC$ to be part of the minimal regular model $\Cmin$ in the case that $C$ has semistable reduction.
\begin{prop}
    \label{prop part of min}
    Assume that $g(C)\ge 1$ and that $C$ has semistable reduction. Let $\CC$ be any model. Then $\CC\le \Cmin$ if and only if for each component $V$ of $\CC_s$, we have
    \begin{enumerate}[(i)]
        \item $m(V)=1$, and 
        \item either $a(V)\ge 1$ or $w(V)\ge 2$.
    \end{enumerate}
    \begin{proof}
        First assume that we have $\CC\le \Cmin$.  Then the invariants $a$, $m$, and $w$ of a vertical component $V$ of $\CC$ must be equal to those of its strict transform $V^{\mathrm{min}}$ in $\Cmin$, thanks to \Cref{lemma inv m a w} (more generally, they remain the same in any model lying between $\CC\le \Cmin$). Since $\Cmin$ is semistable, its special fiber is reduced; thus, we get $m(V)=1$. Let us now assume that $a(V)=0$. If it were the case that $w(V)=0$, then $\CC_s=V$ would be a line; since the arithmetic genus of $\CC_s$ coincides with $g(C)$, this contradicts the condition that $g(C) \geq 1$. If we had $w(V)=1$, then, via \Cref{Lemma12lines}(c), $V^{\mathrm{min}}$ would be a (-1)-line, which is impossible, since the minimal regular model does not contain (-1)-lines. Thus, the quantity $w(V)$ is necessarily at least $2$.
        
        Now assume that for each component $V$ of $\CC_s$, the conditions (i) and (ii) given in the statement hold.  Let $\CC'$ be the minimal desingularization of $\CC$. Assume by way contradiction that $\CC'_s$ contains a (-1)-line. Since the desingularization $\CC'$ is minimal, such a (-1)-line must necessarily be the strict transform $V'$ of some component $V\in \Irred(\CC_s)$. By \Cref{lemma inv m a w}, the quantities $a(V')$, $m(V')$ and $w(V')$ are equal to $a(V)$, $m(V)$ and $w(V)$ respectively. Thus, from the condition $m(V)=1$, we deduce $m(V')=1$; since $V'$ is a (-1)-line of multiplicity 1,  Lemma \ref{Lemma12lines}(a) ensures that $a(V') = 0$ and $w(V') = 1$, hence $a(V)=0$ and $w(V)=1$. But this contradicts our hypothesis, so we conclude that $\CC'_s$ cannot contain a (-1)-line.  It follows that $\CC'=\Cmin$, and we get $\CC\le \Cmin$ as desired.
    \end{proof}
\end{prop}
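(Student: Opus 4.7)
My plan is to handle the two implications separately, with \Cref{lemma inv m a w} (invariance of $m$, $a$, $w$ under comparison of models) and \Cref{Lemma12lines} (the numerical characterization of $(-1)$-lines on semistable components) as the two central tools. The hypotheses $g(C)\ge 1$ and semistable reduction enter because, by the result cited in \S\ref{sec preliminaries semistable}, they guarantee that $\Cmin$ is itself semistable, so that \Cref{Lemma12lines}(c) is applicable to its components.

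For the forward direction, I would assume $\CC\le\Cmin$ and pick $V\in\Irred(\CC_s)$ with strict transform $V^{\min}$ in $\Cmin$. To invoke \Cref{lemma inv m a w} for the $w$-invariant I need $\Cmin$ to be dominated by the minimal desingularization of $\CC$; this is automatic, since the latter is a regular model dominating $\CC$ and $\Cmin$ is the minimum regular model of $C$ in the dominance order. Semistability of $\Cmin$ then gives $m(V^{\min})=1$, hence $m(V)=1$, which is (i). For (ii) I argue by contradiction: if $a(V)=0$ and $w(V)\le 1$, then the same holds for $V^{\min}$. The case $w(V^{\min})=1$ would make $V^{\min}$ a $(-1)$-line by \Cref{Lemma12lines}(c), contradicting the characterization of $\Cmin$ as a regular model without $(-1)$-lines; the case $w(V^{\min})=0$ would force $\SF{\Cmin}=V^{\min}$ to be a smooth line, giving $g(C)=p_a(\SF{\Cmin})=0$, contrary to $g(C)\ge 1$.

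For the backward direction, assume (i) and (ii) for every $V\in\Irred(\CC_s)$, and let $\CC'$ be the minimal desingularization of $\CC$. I aim to show $\CC'=\Cmin$, from which $\CC\le\CC'=\Cmin$ follows. By the characterization of $\Cmin$ as the unique regular model without $(-1)$-lines, it suffices to rule out $(-1)$-lines in $\SF{\CC'}$. If $V'\subset\SF{\CC'}$ were such a $(-1)$-line, the defining property of the minimal desingularization (it contracts no $(-1)$-line of its own special fiber) would force $V'$ to be the strict transform of some $V\in\Irred(\CC_s)$; then \Cref{lemma inv m a w} (applied to $\CC\le\CC'$, for which the hypothesis on $w$ holds trivially) together with \Cref{Lemma12lines}(a) would yield $m(V)=1$, $a(V)=0$, $w(V)=1$, directly violating (ii).

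The only real obstacle is bookkeeping: one must be careful that the $w$-invariance of \Cref{lemma inv m a w} legitimately applies in each direction (in the forward direction through the observation that $\Cmin\le$ min.\ desing.\ of $\CC$, and in the backward direction through the trivial fact that the minimal desingularization of $\CC$ is dominated by itself). Once this compatibility is in place, the proposition reduces to the short case-analysis sketched above.
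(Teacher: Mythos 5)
Your proof is correct and takes essentially the same approach as the paper's, splitting into the same two implications and using \Cref{lemma inv m a w} and \Cref{Lemma12lines} in the same roles; the only differences are cosmetic (arguing with $\SF{\Cmin}$ rather than $\CC_s$ in the $w=0$ case, and explicitly spelling out the domination hypothesis needed to invoke the $w$-invariance, which the paper mentions only parenthetically).
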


\section{The relatively stable model} \label{sec relatively stable}

In this section, we assume that $Y \to X$ is a Galois cover of smooth projective geometrically connected curves over $K$; let $G:=\Aut_{X}(Y)=\Aut_{K(X)}(K(Y))$ denote the Galois group. We remark that, by the Riemann-Hurwitz formula, we always have that $g(Y)\ge g(X)$.

In \S\ref{sec relatively stable galois covers} we collect some well-known background results about semistable models of Galois covers, for which good references are \cite{liu2002algebraic} and \cite{liu1999lorenzini}. In \S\ref{sec relatively stable -2-curves} and in \S\ref{sec relatively stable vanishing persistent}, we study nodes and (-2)-curves of a semistable model of $Y$ with respect to the cover $Y\to X$: these preliminaries allow us to define, in \S\ref{sec relatively stable definition}, a particular semistable model of $Y$ (relative to the Galois cover $Y\to X$) that we name the \emph{relatively stable model} of $Y$; we denote it $\Yrst$, and it arises as the normalization in $K(Y)$ of a semistable model $\Xrst$ of the line $X$. Existence and uniqueness results for $\Yrst$  hold provided that we have $g(Y)\ge 2$ or that we have $g(Y)=1$ and $g(X)=0$ (existence is only guaranteed if one allows replacing $R$ with a large enough extension).  In \S\ref{sec relatively stable finding} we will explain some methods for detecting the components of $\Yrst$.

\subsection{Models of Galois covers} \label{sec preliminaries models}
\label{sec relatively stable galois covers}
In the setting describe above, one can produce models of $Y$ from models of $X$. More precisely, to each model $\XX$ of $X$ we can attach a corresponding model $\YY$ of $Y$ by taking the normalization of $\XX$ in the function field $K(Y)$ -- we will say that $\YY$ \emph{comes from} $\XX$ (or that $\YY$ is the model of $Y$ \emph{corresponding to} $\XX$). Given two models $\XX$ and $\XX'$ of $X$, if $\YY$ and $\YY'$ are the corresponding models of $Y$, then it is easy to show that $\XX\le \XX'$ if and only if $\YY\le \YY'$: in other words, normalizing in $K(Y)$ defines an embedding of preordered sets $\Models(X) \mono \Models(Y)$. The essential image of the embedding consists of those models $\YY$ of $Y$ on which $G$ acts, i.e.\ those for which the action of $G$ on the generic fiber $Y$ extends (in a necessarily unique way) to an action on the $R$-scheme $\YY$. Given a model $\YY$ of $Y$ on which $G$ acts, the model of $X$ from which $\YY$ comes can be recovered as the quotient $\YY/G$.

If $\XX$ is a model of $X$ and $\YY$ is the corresponding model of $Y$, the set of irreducible components $\Irred(\YY_s)$ is a $G$-set, and we have $\Irred(\XX_s)=\Irred(\YY_s)/G$.  Given $\XX, \XX'\in \Models(X)$ and letting $\YY, \YY'$ be the corresponding models of $Y$, it is also not difficult to see that $\Ctr(\YY,\YY')=f^{-1}(\Ctr(\XX,\XX'))$ (see \S\ref{sec preliminaries comparing} for notation), where $f: \YY\to \XX$ is the cover map.

The minimal regular model $\Ymini$ and the stable model $\Yst$, when defined, are always acted upon by $G$; we will use the notation $\Xmini=\Ymini/G$ and $\Xst=\Yst/G$ to denote the models of $X$ from which they come.

\subsection{Vertical and horizontal (-2)-curves} \label{sec relatively stable -2-curves}

Given a model $\YY$ of $Y$ coming from a model $\XX$ of $X$ and a component $V\in \Irred(\YY_s)$ of multiplicity 1, we replace the invariant $w:=w(V)$ introduced in \S\ref{sec special fibers invariants} with a richer datum $\underline{w}:=\underline{w}(V)$ that takes into account the action of $G$. We have that $G$ acts on $\Irred(\YY_s)$, and we denote by $G_V$ the stabilizer of $V$ with respect to this action. If $\mathfrak{B}:=\lbrace b_1, \ldots, b_w\rbrace$ are the branches of $V$ passing through the singular points of $\YY_s$, the stabilizer $G_V$ clearly acts on $\mathfrak{B}$, and we define $\underline{w}(V)$ to be the partition of the integer $w(V)=|\mathfrak{B}|$ given by the cardinality of the orbits of the $G_V$-set $\mathfrak{B}$.
\begin{lemma}
    \label{lemma inv ww}
    If $\YY'$ is a model acted upon by $G$ which dominates $\YY$ and is dominated by the minimal desingularization of $\YY$, then we have $\underline{w}(V') = \underline{w}(V)$, where $V'$ is the strict transform of $V$ in $\SF{\YY'}$.
    \begin{proof}
        The set $\mathfrak{B}$ of the branches of $V$ passing through the singular points of $\YY_s$ does not change as we replace $\YY$ with $\YY'$, and $V$ with its strict transform, as was shown in the proof of Lemma \ref{lemma inv m a w}; since the birational map $\YY'\to \YY$ is $G$-equivariant, the stabilizers $G_V$ and $G_{V'}$ coincide, and $\mathfrak{B}$ is preserved not only as a set, but also as a $G_V$-set.
    \end{proof}
\end{lemma}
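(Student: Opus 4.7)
My plan is to deduce this directly from the proof of \Cref{lemma inv m a w} by showing that the bijection of branch sets established there is automatically $G$-equivariant, so that not only the cardinalities but also the orbit structures agree.

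First, I would recall the output of \Cref{lemma inv m a w}: under the present hypothesis that $\YY'$ lies between $\YY$ and its minimal desingularization, the birational morphism $\YY' \to \YY$ restricts to a birational morphism $V' \to V$, and a point $Q \in V'$ is singular in $\YY'_s$ if and only if its image $P \in V$ is singular in $\YY_s$. As observed in that proof, the map $V' \to V$ then induces a bijection $\mathfrak{B}' \to \mathfrak{B}$ between the set $\mathfrak{B}'$ of branches of $V'$ passing through points of $\Sing(\YY'_s)$ and the set $\mathfrak{B}$ of branches of $V$ passing through points of $\Sing(\YY_s)$. In particular $w(V') = w(V)$.

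Next, I would verify that the morphism $\YY' \to \YY$ is $G$-equivariant. By hypothesis, $G$ acts on both $\YY$ and $\YY'$; the minimal desingularization of $\YY$ is characterized by a universal property (minimality among regular models dominating $\YY$), which is preserved by the $G$-action, so the canonical morphism from the minimal desingularization to $\YY$ is $G$-equivariant. Any intermediate model $\YY'$ on which $G$ acts and which factors this morphism inherits $G$-equivariant maps in both directions. Consequently the birational morphism $V' \to V$ is equivariant for the induced action of the common stabilizer: one has $G_{V'} = G_V$, since the equivariance of $\YY' \to \YY$ forces $g(V') = (gV)'$ for every $g \in G$.

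Now the bijection $\mathfrak{B}' \to \mathfrak{B}$ is a bijection between $G_V$-sets: an element $g \in G_V = G_{V'}$ sends a branch of $V'$ at a singular point $Q$ to a branch of $V'$ at $gQ$ (still a singular point, by equivariance), and this commutes with the corresponding action on $\mathfrak{B}$ via the morphism $V' \to V$. A $G_V$-equivariant bijection preserves the decomposition into $G_V$-orbits and their cardinalities, so the partition of $w(V')$ read off from $\mathfrak{B}'$ coincides with the partition of $w(V)$ read off from $\mathfrak{B}$. Hence $\underline{w}(V') = \underline{w}(V)$.

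The only potentially delicate point is the equivariance of the intermediate morphism $\YY' \to \YY$, but this is essentially formal from the universal property of the minimal desingularization combined with the hypothesis that $G$ acts on $\YY'$; no real computation is required. The rest is just the observation that the set-level bijection from the previous lemma is already canonical enough to carry the group action.
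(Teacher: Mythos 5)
Your proposal is correct and follows essentially the same approach as the paper: invoke the branch-set bijection from \Cref{lemma inv m a w}, then observe that it is equivariant for the common stabilizer $G_V = G_{V'}$ because $\YY' \to \YY$ is $G$-equivariant. The only minor stylistic point is that your justification of the $G$-equivariance of $\YY' \to \YY$ (via the universal property of the minimal desingularization) is slightly more roundabout than needed; it follows directly from the uniqueness of the birational morphism of models extending the identity on the generic fiber, since for $g \in G$ both $g_{\YY} \circ f \circ g_{\YY'}^{-1}$ and $f$ are such morphisms and hence coincide.
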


We recall that, when $\YY$ is semistable at the points of some component $V\in \Irred(\YY_s)$, we say that $V$ is a (-2)-curve whenever $a(V)=0$ and $w(V)=2$ (see \Cref{dfn minus two curve}). In our setting, the presence of a $G$-action on $\YY$ allow us to distinguish between \emph{vertical} and \emph{horizontal} (-2)-curves, according to the two possible values for the invariant $\underline{w}(V)$.
\begin{dfn} \label{defVerticalHorizontalMinusTwoLines}
    Given a component $V\in \Irred(\YY_s)$ such that $\YY$ is semistable at the points of $V$, if $V$ is a (-2)-curve we say that it is \emph{vertical} or \emph{horizontal} depending on whether $\underline{w}(V)=(2)$ or $\underline{w}(V)=(1,1)$.
\end{dfn}

The property of being a horizontal or vertical (-2)-line is preserved and reflected under desingularization of semistable models.
\begin{prop}
    \label{Minus12LinesStrictTransformRelative}
    Let $\YY'$ another model acted upon by $G$ which dominates $\YY$ but is dominated by the minimal desingularization of $\YY$. Given $V$ a component of $\YY_s$ such that $\YY$ is semistable at the points of $V$, if $V'$ denotes the strict transform of $V$ in $\YY'_s$, we have that $\YY'$ is semistable at the points of $V'$; moreover, the transform $V'$ is a (-1)-line (resp.\ a horizontal (-2)-curve, resp.\ a vertical (-2)-curve) if and only if $V$ is.
    \begin{proof}
        The proof is analogous to that of \Cref{Minus12LinesStrictTransform}, taking into account \Cref{lemma inv ww}.
    \end{proof}
\end{prop}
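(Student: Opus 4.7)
The plan is to reduce this proposition to the two invariance lemmas (\Cref{lemma inv m a w} and \Cref{lemma inv ww}) together with the characterizations of $(-1)$-lines and $(-2)$-curves via the invariants $m$, $a$, $w$, and $\underline{w}$.

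First I would dispose of the semistability assertion. Since $\YY$ is semistable at the points of $V$ by hypothesis, the description of the desingularization of a semistable model recalled in \S\ref{sec preliminaries semistable} shows that the minimal desingularization of $\YY$ is itself semistable at all points above the points of $V$ (new components introduced above smooth points are empty, while above nodes they are chains of $(-2)$-lines meeting at thickness-$1$ nodes). Any intermediate model $\YY'$ between $\YY$ and its minimal desingularization inherits semistability at the preimages of the points of $V$, in particular at the points of $V'$.

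Next, I would compare invariants of $V$ and $V'$. By hypothesis $\YY$ is semistable at the points of $V$, so $m(V) = 1$, and \Cref{lemma inv m a w} then gives $m(V') = m(V) = 1$, $a(V') = a(V)$, and $w(V') = w(V)$, because $\YY'$ is dominated by the minimal desingularization of $\YY$. Moreover, since $\YY'\to\YY$ is automatically $G$-equivariant (the action on $\YY$ extends from the generic fiber, uniquely), \Cref{lemma inv ww} applies and yields $\underline{w}(V') = \underline{w}(V)$ as partitions.

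Finally I would invoke \Cref{Lemma12lines}(c)--(d) and \Cref{defVerticalHorizontalMinusTwoLines}: being a $(-1)$-line is equivalent to $a = 0$ and $w = 1$; being a $(-2)$-curve is equivalent to $a = 0$ and $w = 2$; and among $(-2)$-curves, being vertical versus horizontal is determined by whether $\underline{w} = (2)$ or $\underline{w} = (1,1)$. Since $a$, $w$, and $\underline{w}$ all agree for $V$ and $V'$, and both $\YY$ and $\YY'$ are semistable at the relevant points so these characterizations apply on either side, the three stated equivalences follow simultaneously.

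The main point to be careful about is the hypothesis that $\YY'$ is dominated by the \emph{minimal} desingularization of $\YY$: this is exactly what is required to apply \Cref{lemma inv m a w} for the invariant $w$ (and thus \Cref{lemma inv ww} for $\underline{w}$), since without this hypothesis one could blow up further at smooth points of $\YY_s$ and change the branch count at points of $V'$. Once this is in place, the proposition is essentially a bookkeeping corollary of the preceding lemmas.
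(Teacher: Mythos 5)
Your proof is correct and follows the same route the paper indicates: semistability of $\YY'$ at the points of $V'$ from the description of desingularizations in \S\ref{sec preliminaries semistable}, invariance of $m$, $a$, $w$ from \Cref{lemma inv m a w} and of $\underline{w}$ from \Cref{lemma inv ww}, and then the characterizations of $(-1)$-lines (\Cref{Lemma12lines}(c)), $(-2)$-curves (\Cref{dfn minus two curve}), and their horizontal/vertical type (\Cref{defVerticalHorizontalMinusTwoLines}) in terms of these invariants. This is exactly the expansion of the paper's one-line proof ``analogous to \Cref{Minus12LinesStrictTransform}, taking into account \Cref{lemma inv ww}.''
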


\subsection{Vanishing and persistent nodes} \label{sec relatively stable vanishing persistent}

Given a model $\YY$ of $Y$ corresponding to some model $\XX$ of $X$, we can ask ourselves how the properties of $\XX$ and $\YY$ are related to each other. If we write $f: \YY\to \XX=\YY/G$ for the quotient map, we present an important result concerning the semistability of $\XX$ and $\YY$.
\begin{prop}
    \label{prop vanishing persistent}
    In the setting above, we have that $\XX$ is semistable at $f(Q)$ whenever $\YY$ is semistable at some $Q\in \YY_s$. More precisely, we have the following.
    \begin{enumerate}[(a)]
        \item If $Q$ is a smooth point of $\YY_s$, then $f(Q)$ is a smooth point of $\XX_s$;
        \item If $Q$ is a node of thickness $t$ of $\YY_s$, we have two possibilities:
        \begin{enumerate}[(i)]
            \item if the stabilizer $G_Q\le G$ of $Q$ permutes the two branches of ${\YY}_s$ passing through $Q$, then $f(Q)$ is a smooth point of ${\XX}_s$;
            \item if, instead, the stabilizer $G_Q\le G$ of $Q$ does not flip the two branches of ${\YY}_s$ passing through $Q$, then $f(Q)$ is a node of $\XX_s$, and its thickness is $t|G_Q|$.
        \end{enumerate}
    \end{enumerate}
    In particular, if the model $\YY$ is semistable, then so is the model $\XX$.
    \begin{proof}
        The proof consists of an explicit local study of the quotient map $f: \YY\to \XX$, which can be found in \cite[Proposition 10.3.48]{liu2002algebraic}.
    \end{proof}
\end{prop}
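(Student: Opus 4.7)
The plan is to prove everything by a local analysis at the completed local ring $\hat{\OO}_{\YY,Q}$, exploiting the fact that $\hat{\OO}_{\XX,f(Q)} = (\hat{\OO}_{\YY,Q})^{G_Q}$. This identification holds because the quotient $\YY \to \YY/G = \XX$ can be taken locally on the base, the fiber over $f(Q)$ is the $G$-orbit of $Q$, and completion commutes with taking invariants for a finite group acting on a Noetherian local ring. Once this is in hand, semistability at $f(Q)$ is verified by matching $(\hat{\OO}_{\YY,Q})^{G_Q}$ against the standard local models $R[[s]]$ or $R[[s_1,s_2]]/(s_1s_2-b)$.

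Case (a): here $\hat{\OO}_{\YY,Q} \cong R[[t]]$, and $G_Q$ acts $R$-linearly. Since $k$ is algebraically closed, I would diagonalize the action of $G_Q$ on the tangent space $\mathfrak{m}_Q/\mathfrak{m}_Q^2$ to produce a character $\chi : G_Q \to k^\times$; then, via an averaging (Hensel-type) argument, adjust $t$ to a semi-invariant uniformizer $t'$ on which $G_Q$ acts through $\chi$. If $d$ is the order of $\chi$, the invariant subring is $R[[t'^{d}]]$, which is a smooth complete local ring of relative dimension $1$, so $f(Q)$ is smooth in $\XX_s$.

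Case (b): here $\hat{\OO}_{\YY,Q} \cong R[[t_1,t_2]]/(t_1t_2-a)$ with $v(a)=t\cdot v(\pi)$, and the two branches correspond to the ideals $(t_1)$ and $(t_2)$. The action of $G_Q$ permutes these ideals, so I split into the two subcases. In subcase (i), some $\sigma \in G_Q$ swaps the branches, which forces $\sigma(t_1) \equiv u t_2$ and $\sigma(t_2) \equiv u' t_1$ modulo higher order terms for units $u,u'$; combining the norm $N := \prod_{g \in G_Q} g(t_1)$ (or, better, a symmetrized sum) with the swap one produces a single element $s$ whose class generates $\mathfrak{m}_{f(Q)}/\mathfrak{m}_{f(Q)}^2$, and the invariant ring turns out to be $R[[s]]$ — hence $f(Q)$ is smooth. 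In subcase (ii) each $\sigma \in G_Q$ preserves each branch, so $\sigma(t_i) = u_{\sigma,i}\,t_i$ for units $u_{\sigma,i}$; the relation $t_1t_2=a$ forces $u_{\sigma,1}u_{\sigma,2}\equiv 1$. The resulting characters $\chi_i : G_Q \to k^\times$ satisfy $\chi_1\chi_2=1$; since $G$ acts faithfully on $\YY$ and $Q$ is isolated in its orbit on its branches, these characters must have order $|G_Q|=:n$. After passing to semi-invariant coordinates $t_1', t_2'$ diagonalizing the action, the invariants are generated by $s_i := (t_i')^n$, satisfying $s_1 s_2 = a^n$, giving $(\hat{\OO}_{\YY,Q})^{G_Q} \cong R[[s_1,s_2]]/(s_1s_2-a^n)$, a node of thickness $v(a^n)/v(\pi) = n t = |G_Q|\,t$.

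The main obstacle is the diagonalization and faithfulness claim for the characters $\chi_i$ in subcase (ii) when the residue characteristic divides $|G_Q|$: in the wild setting, an element of $G_Q$ could act trivially on the tangent space without acting trivially on the ring, threatening to drop the order of $\chi_i$ below $|G_Q|$. This is handled by noting that $\sigma \in G_Q$ acts trivially on $\hat{\OO}_{\YY,Q}$ as soon as it fixes both $t_1$ and $t_2$ (the two branches being preserved and $\hat{\OO}_{\YY,Q}$ being generated by them), and faithfulness of the $G$-action on the connected scheme $\YY$ then forces $\sigma=1$; a careful but routine verification, carried out in \cite[Proposition 10.3.48]{liu2002algebraic}, packages this up, and I would cite that reference for the technical checks while keeping the conceptual picture above.
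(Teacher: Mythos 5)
Your proposal lands in the same place as the paper: a local analysis at the completed local ring $\widehat{\OO}_{\YY,Q}$ that ultimately cites \cite[Proposition 10.3.48]{liu2002algebraic}. So there is no logical hole as long as you genuinely lean on the reference. The problem is that you present the citation as ``packaging up'' a ``careful but routine verification'' of your outline, when in fact the intermediate steps you sketch are tame-case devices that break in exactly the wild case this paper is concerned with (residue characteristic $p$ dividing $|G_Q|$). In case (a), the averaging argument used to linearize the $G_Q$-action and produce a semi-invariant uniformizer $t'$ requires $|G_Q|$ to be invertible in $k$, which fails wild; moreover the conclusion $\widehat{\OO}_{\YY,Q}^{G_Q} = R[[t'^{d}]]$ with $d = \ord(\chi)$ is simply false when $G_Q$ is a nontrivial $p$-group, since then $\chi: G_Q \to k^\times$ is trivial (the only $p$-power root of unity in a characteristic-$p$ field is $1$) so $d = 1$, yet $G_Q$ acts nontrivially. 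In case (b)(ii), your claim that the characters $\chi_i$ have order $|G_Q|$ slides between two different assertions: that a $\sigma$ fixing $t_1$ and $t_2$ in the ring is trivial (true, since $t_1, t_2$ generate), and that a $\sigma$ fixing their residue classes $\bar t_1, \bar t_2 \in \mathfrak{m}_Q/\mathfrak{m}_Q^2$---which is what $\chi_1(\sigma) = \chi_2(\sigma) = 1$ actually says---is trivial. The second does not follow from the first without the linearization you cannot perform wild. So Liu's proposition is not just finishing bookkeeping; it supplies the wild-case argument that your sketch does not contain, exactly as in the paper's one-line proof, and it would be misleading to leave a reader with the impression that your tame-case heuristics extend straightforwardly.
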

 
\begin{dfn}
    \label{dfn vanishing persistent node}
    A node $Q$ of $\YY$ is said to be \emph{vanishing} or \emph{persistent} with respect to the Galois cover $Y\to X$ depending on whether it falls under case (i) or (ii) of \Cref{prop vanishing persistent}(b), i.e.\ depending on whether it lies above a smooth point or a node of $\XX_s$.
\end{dfn}

\begin{rmk}
    \label{rmk quotient dual graph}
    We have already remarked in \S\ref{sec relatively stable galois covers} that $f: \YY\to \XX$ induces a one-to-one correspondence between the irreducible components of $\XX_s$ and the $G$-orbits of irreducible components of $\YY_s$. \Cref{prop vanishing persistent} and \Cref{dfn vanishing persistent node} tell us that the nodes of $\XX$ correspond to the $G$-orbits of \emph{persistent} nodes of $\YY$.
\end{rmk}

We have seen in \S\ref{sec preliminaries semistable} that, if $Q$ is a node of thickness $t$ of $\YY$, its inverse image in the special fiber of the minimal desingularization of $\YY$ consists of a chain of $t$ nodes of thickness 1, connected by $t-1$ (-2)-curves. More generally, if $\YY'$ is any model acted upon by $G$ that dominates $\YY$ but is dominated by its minimal desingularization, then $\YY'$ is semistable at the points lying above $Q$, and the inverse image of $Q$ in $\YY'_s$ consists of a chain of $m$ nodes $Q_1, \ldots, Q_m$, whose thicknesses add up to $t$, and $m-1$ (-2)-curves $L_1, \ldots, L_{m-1}$ connecting them. It is an interesting question to ask whether the $Q_i$'s are persistent or vanishing, and whether the $L_i$'s are horizontal or vertical.
\begin{prop}
    \label{prop galois cover desingularizing}
    In the setting above, we have the following:
    \begin{enumerate}
            \item[(a)] if $Q\in \YY_s$ is a persistent node, then the $Q_i$'s also are, and the $L_i$'s are all horizontal;
             \item[(b)] if $Q\in \YY_s$ is vanishing and $m$ is odd, then $G_Q$ permutes $L_i$ and $L_{m-i}$; the $L_i$'s are all horizontal, while the $Q_i$'s are all persistent, apart from the middle one $Q_{(m+1)/2}$ which is vanishing; and 
            \item[(c)] if $Q\in \YY_s$ is vanishing and $m$ is even, then $G_Q$ permutes each $L_i$ with $L_{m-i}$; the $L_i$'s are all horizontal, apart from the middle one $L_{m/2}$ which is vertical, while the $Q_i$'s are all persistent.
    \end{enumerate}
    \begin{proof}
        We remark that, since the $Q_i$'s and the $L_i$'s have image $Q$ in $\YY_s$, we have $G_{Q_i}\le G_Q$ and $G_{L_i}\le G_Q$, and every $g\in G_Q$ acts on the set of the sets $\lbrace L_i\rbrace_{1 \leq i \leq m - 1}$ and $\lbrace Q_i\rbrace_{1 \leq i \leq m}$.  We denote by $\Lambda_-$ and $\Lambda_+$ the strict transforms in $\YY'_s$ of the two branches of $\YY_s$ passing through $Q$, so that the node $Q_1$ connects $\Lambda_-$ with $L_1$ and the node $Q_m$ connects $L_{m-1}$ with $\Lambda_+$.
           
        Choose an element $g\in G_Q$ which fixes $\Lambda_+$ and $\Lambda_-$. Since $Q_1$ is the unique point of $\Lambda_-$ lying above $Q$, the point $Q_1$ is also fixed by $g$; since $g$ fixes $Q_1$ and $\Lambda_-$, it must also fix the only other branch of $\YY_s$ passing through $Q_1$; therefore, it fixes $L_1$. Since $g$ fixes $Q_1$ and $L_1$, it must also fix the only other node that lies on $L_1$, namely $Q_2$.  Iterating the argument, one gets that the Galois element $g$ stabilizes each of the $L_i$'s and the $Q_i$'s. 
            
        Now choose an element $g\in G_Q$ that flips $\Lambda_+$ and $\Lambda_-$. Since $Q_1$ is a point of $\Lambda_-$, the point $g \cdot Q_1$ must belong to $\Lambda_+$ and lie above $Q$; we therefore have $g\cdot Q_1 = Q_m$. From the fact that $g\cdot Q_1=Q_m$ and $g\cdot \Lambda_-=\Lambda_+$, one deduces that $g\cdot L_1 = L_{m-1}$, and so on.  From this kind of iterative argument, the results follow (how it ends clearly depends on whether $m$ is even or odd).
    \end{proof}
\end{prop}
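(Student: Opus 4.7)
The plan is to exploit the linear chain structure of the inverse image of $Q$ in $\YY'_s$ together with the fact that $G_Q$ acts on this chain in a way compatible with the rest of the semistable structure. First I would set up the combinatorial picture: let $\Lambda_-$ and $\Lambda_+$ denote the strict transforms in $\YY'_s$ of the two branches of $\YY_s$ at $Q$, and observe that the preimage in $\YY'_s$ of a neighborhood of $Q$ consists of $\Lambda_-$ and $\Lambda_+$ joined by the chain $Q_1, L_1, Q_2, L_2, \ldots, L_{m-1}, Q_m$, with $Q_1$ lying on $\Lambda_-$, $Q_m$ lying on $\Lambda_+$, and each $L_i$ connecting $Q_i$ to $Q_{i+1}$. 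Since every $L_i$ and every $Q_i$ maps to $Q \in \XX_s$, each of their stabilizers sits inside $G_Q$.

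The central lemma I would then prove is the dichotomy: any $g \in G_Q$ either fixes both $\Lambda_-$ and $\Lambda_+$, or swaps them, according to whether $Q$ is persistent or whether $g$ is one of the ``flipping'' elements that witness $Q$ being vanishing. In the non-flipping case, $g$ fixes $Q_1$ (the unique point of $\Lambda_-$ above $Q$), hence fixes $L_1$ (the only other branch of $\YY'_s$ at $Q_1$), hence fixes $Q_2$, and so on along the chain; the same induction started at the other end handles the flipping case, yielding $g \cdot Q_i = Q_{m+1-i}$ and $g \cdot L_i = L_{m-i}$.

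From this dichotomy each of the three parts falls out by reading off how the relevant stabilizers act on the two branches at each $Q_i$ (namely the two adjacent components of the chain) and on the two singular points of each $L_i$. For part (a), persistence of $Q$ means $G_Q$ contains no flipping elements, so every element fixes the entire chain; hence each $Q_i$ has both adjacent components fixed (so it is persistent) and each $L_i$ has both its singular points fixed, giving $\underline{w}(L_i)=(1,1)$ and making $L_i$ horizontal. For part (b), with $m$ odd, the involution $i\mapsto m+1-i$ fixes only $i=(m+1)/2$ among the $Q_i$'s and no index $i$ among the $L_i$'s; a flipping element therefore swaps the two branches $L_{(m-1)/2}, L_{(m+1)/2}$ at $Q_{(m+1)/2}$, rendering that middle node vanishing, while for every other $Q_i$ the stabilizer $G_{Q_i}$ contains no flipping element and $Q_i$ is persistent, and every $L_i$ is horizontal by the same reasoning as in (a). For part (c), with $m$ even, the involution $i\mapsto m+1-i$ fixes no $Q_i$ but $i\mapsto m-i$ fixes exactly $L_{m/2}$; a flipping element then swaps the adjacent nodes $Q_{m/2}, Q_{m/2+1}$ of $L_{m/2}$, so $\underline{w}(L_{m/2})=(2)$ and $L_{m/2}$ is vertical, while the other $L_i$'s and all $Q_i$'s behave as in the previous cases.

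The one point that requires genuine care, and which I expect to be the main obstacle, is justifying the inductive step that identifies the ``unique remaining'' component at each stage of the chain: this relies on the fact that in a semistable model there are exactly two branches at every node and exactly one component through every smooth point of the reduction, applied successively as one marches along the chain in $\YY'_s$. Once this structural input is in place the rest of the argument is essentially bookkeeping, with the two parity cases separating cleanly according to whether the involution $i\mapsto m+1-i$ has a fixed index among the $Q_i$'s or the involution $i\mapsto m-i$ has a fixed index among the $L_i$'s.
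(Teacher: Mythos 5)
Your proposal is correct and follows essentially the same route as the paper: set up the chain $\Lambda_-, Q_1, L_1, \ldots, L_{m-1}, Q_m, \Lambda_+$, march along it inductively to show that non-flipping elements of $G_Q$ fix the whole chain while flipping elements act as $Q_i \mapsto Q_{m+1-i}$, $L_i \mapsto L_{m-i}$, and then read off the parity-dependent conclusions. The paper leaves the final bookkeeping (identifying the unique fixed $Q_i$ when $m$ is odd and the unique fixed $L_i$ when $m$ is even) implicit with the remark ``how it ends clearly depends on whether $m$ is even or odd,'' whereas you spell it out explicitly; otherwise the two arguments are the same.
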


\subsection{Defining the relatively stable model} \label{sec relatively stable definition}

We are now ready to define the \emph{relatively stable model} of $Y$ with respect to the Galois cover $Y\to X$.
\begin{dfn} \label{dfn relatively stable}
    A model of $\YY$ of $Y$ is said to be \emph{relatively stable} with respect to the Galois cover $Y \to X$  if it is semistable, it is acted upon by $G$, and its special fiber does not contain vanishing nodes, (-1)-lines, or horizontal (-2)-curves. If a relatively stable model exists, the curve $Y$ is said to have \emph{relatively stable reduction} with respect to the cover $Y\to X$.
\end{dfn}

\begin{rmk}
    If $\YY$ is relatively stable and $\XX=\YY/G$, since $\YY$ cannot contain vanishing nodes, we have $\Sing(\YY_s)=f^{-1}(\Sing(\XX_s))$, where $f: \YY\to \XX$ is the cover map, while $\Sing(\XX_s)$ and $\Sing(\YY_s)$ are the respective sets of nodes of the semistable models $\XX$ and $\YY$. 
\end{rmk}

A relatively stable model $\Yrst$ can only exist if the curve $Y$ has semistable reduction; moreover, since $\Yrst$ is semistable and contains no (-1)-lines, it is clear that $\Yrst \leq \Ymini$ (provided that $\Ymini$ exists, i.e.\ $g(Y)\ge 1$). It is also clear from the definition that the property of being relatively stable is preserved and reflected under arbitrary extensions of $R$. Finally, we observe that, if the cover $Y\to X$ is trivial, a relatively stable model of $Y$ is nothing but a stable model.

\begin{prop}
    Assume that $g(Y)\ge 1$. The relatively stable model, if it exists, is unique.
    \begin{proof}
        Let $\YY_1$ and $\YY_2$ two relatively stable models, and let $\YY_3$ be the minimal model dominating them both; by minimality, each vertical component of $\SF{\YY_3}$ is the strict transform of a component of $\SF{\YY_1}$ or of a component of $\SF{\YY_2}$.
        Since $\YY_1$ and $\YY_2$ are $\le \Ymini$, we also have $\YY_3\le \Ymini$; moreover, the model $\YY_3$ is semistable since it lies between the semistable model $\YY_1$ and its minimal desingularization $\Ymini$. Since the special fibers of $\YY_1$ and $\YY_2$ only contain vertical (-2)-curves, the same is true for $\YY_3$, thanks to \Cref{Minus12LinesStrictTransformRelative}. Suppose by way of contradiction that $\YY_3 \gneq \YY_1$: this means that some node $Q$ of $\SF{\YY_1}$ is replaced, in $\SF{\YY_3}$, by a chain of $m$ nodes and $m-1$ (-2)-curves (with $m>1$). But since $\SF{\YY_3}$ does not contain horizontal (-2)-curves, \Cref{prop galois cover desingularizing} forces $Q$ to be vanishing of thickness 2, which is a contradiction, since $\SF{\YY_1}$ does not contain vanishing nodes.
        Hence, we have $\YY_1=\YY_3$, which is to say that $\YY_1\ge \YY_2$; now we get $\YY_1= \YY_2$ by symmetry.
    \end{proof}
\end{prop}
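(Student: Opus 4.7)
The plan is to take two relatively stable models $\YY_1$ and $\YY_2$ and show that the minimal model $\YY_3$ dominating both of them coincides with each of them. Since the property of being a relatively stable model includes not having $(-1)$-lines, we have $\YY_1, \YY_2 \leq \Ymini$, and hence also $\YY_3 \leq \Ymini$. In particular, $\YY_3$ is sandwiched between the semistable model $\YY_1$ and its minimal desingularization, which by the discussion in \S\ref{sec preliminaries semistable} forces $\YY_3$ to be semistable as well, and the $G$-action on $\YY_1$ (and $\YY_2$) extends naturally to $\YY_3$ by functoriality of the minimal dominating model.

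The key point is to control which bad components can appear in $\SF{\YY_3}$. By minimality, every component of $\SF{\YY_3}$ is the strict transform of a component of $\SF{\YY_1}$ or of $\SF{\YY_2}$. Since $\YY_1$ and $\YY_2$ are relatively stable, their special fibers contain no $(-1)$-lines and no horizontal $(-2)$-curves. Applying \Cref{Minus12LinesStrictTransformRelative} (noting that $\YY_3$ lies below the minimal desingularization of $\YY_1$ and of $\YY_2$), the strict transforms in $\SF{\YY_3}$ inherit these properties, so $\SF{\YY_3}$ likewise contains no $(-1)$-lines and no horizontal $(-2)$-curves.

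Now I argue $\YY_3 = \YY_1$ by contradiction. If $\YY_3 \gneq \YY_1$, then the birational morphism $\YY_3 \to \YY_1$ is not an isomorphism above some node $Q$ of $\SF{\YY_1}$ (it must be above a node, as $\YY_1$ is semistable hence regular at its smooth points of the special fiber); the inverse image of $Q$ in $\SF{\YY_3}$ is then a chain of $m\geq 2$ nodes connected by $m-1$ $(-2)$-curves. I then apply \Cref{prop galois cover desingularizing}: if $Q$ is persistent, all $m-1\geq 1$ of the intermediate $(-2)$-curves are horizontal, contradicting the absence of horizontal $(-2)$-curves in $\SF{\YY_3}$; if $Q$ is vanishing, this already contradicts the relative stability of $\YY_1$. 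Either way we reach a contradiction, so $\YY_3 = \YY_1$, which gives $\YY_1 \geq \YY_2$; the reverse inequality follows by exchanging the roles of $\YY_1$ and $\YY_2$, concluding $\YY_1 = \YY_2$.

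The main subtlety I expect is step two, namely justifying that no new horizontal $(-2)$-curves sneak into $\SF{\YY_3}$. This relies crucially on the minimality of $\YY_3$ among models dominating both $\YY_1$ and $\YY_2$ (so that new components must come from one of them) together with the transport-of-structure result \Cref{Minus12LinesStrictTransformRelative}, which requires $\YY_3$ to lie below the minimal desingularization of $\YY_1$ (and of $\YY_2$); this holds because $\YY_3 \leq \Ymini$ and $\Ymini$ is itself the minimal desingularization of both semistable models $\YY_1$ and $\YY_2$. Once this is in hand, the contradiction via \Cref{prop galois cover desingularizing} is immediate.
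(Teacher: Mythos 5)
Your proof is correct and follows essentially the same route as the paper's: form the minimal model $\YY_3$ dominating $\YY_1$ and $\YY_2$, show it is sandwiched between $\YY_1$ and $\Ymini$ (hence semistable and $G$-acted), use \Cref{Minus12LinesStrictTransformRelative} to propagate the absence of horizontal $(-2)$-curves to $\SF{\YY_3}$, and derive a contradiction from \Cref{prop galois cover desingularizing} if $\YY_3 \gneq \YY_1$. Your case split at the end (persistent $\Rightarrow$ horizontal $(-2)$-curve in the chain; vanishing $\Rightarrow$ immediate contradiction with relative stability of $\YY_1$) is if anything slightly cleaner than the paper's phrasing, and your explicit remark that the $G$-action extends to $\YY_3$ by minimality fills in a step the paper leaves implicit; but these are cosmetic, not substantive, differences.
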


From now on, we use the symbol $\Yrst$ to denote the relatively stable model of $Y$ (whenever it exists), while $\Xrst=\Yrst/G$ denotes the model of the line $X$ to which it corresponds in the sense of \S\ref{sec relatively stable galois covers}.

\begin{lemma}
    \label{LemmaPolygonMinusTwoLines}
    Assume that $g(Y)\ge 1$ and that $Y$ has a semistable model $\YY$ acted upon by $G$ whose special fiber only consists of horizontal (-2)-curves connected by persistent nodes. Then we have $g(Y)=g(X)=1$. 
    \begin{proof}
            Let $\XX=\YY/G$ be the semistable model of $X$ from which $\YY$ comes. Since all irreducible components of $\YY_s$ have abelian rank 0, the same is also be true for all irreducible components of $\XX_s=\YY_s/G$; if $a$ denotes the abelian rank, we thus have that $a(\YY_s)=a(\XX_s)=0$. 
            
            Since $\YY_s$ only consists of (-2)-curves, we have that its dual graph $\Gamma(\YY_s)$ is a polygon with $N\ge 1$ sides (when $N=1$, it consists of a single vertex, and a loop around it). We clearly have an action of $G$ on $\Gamma(\YY_s)$; moreover, the absence of vanishing nodes ensures that $\Gamma(\XX_s)=\Gamma(\YY_s)/G$ (see \Cref{rmk quotient dual graph}).
            
            Let $\sigma_g$ be the automorphism of the polygon $\Gamma(\YY_s)$ induced by an element $g\in G$. Then, $\sigma_g$ cannot be a reflection: this is because a reflection either fixes a vertex and flips the two edges it connects, or it fixes an edge flipping the two vertices lying on it; in the first case, $\YY_s$ would contain a vanishing node, and in the second one it would contain a vertical (-2)-curve. Hence, the automorphism $\sigma_g$ is necessarily a rotation; the image of $G$ in $\Aut(\mathcal{G})$ is consequently a cyclic subgroup consisting of $d$ rotations for some $d|N$, and $\Gamma(\XX_s)=\Gamma(\YY_s)/G$ is consequently a polygon with $N/d$ sides. If $t$ denotes the toric rank, we consequently have $t(\XX_s)=t(\YY_s)=1$.
            
            By the results in \S\ref{sec preliminaries abelian toric unipotent}, we can now conclude that $g(Y)=g(X)=1$.   
    \end{proof}
\end{lemma}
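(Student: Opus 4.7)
The plan is to reduce the genus computations to the abelian and toric ranks of the special fibers. By Proposition \ref{prop vanishing persistent}, the semistability of $\YY$ passes to $\XX := \YY/G$; hence both $\YY_s$ and $\XX_s$ have vanishing unipotent rank, and by the results recalled in \S\ref{sec preliminaries abelian toric unipotent}, $g(Y) = a(\YY_s) + t(\YY_s)$ and $g(X) = a(\XX_s) + t(\XX_s)$. It then suffices to establish that both abelian ranks are $0$ and both toric ranks equal $1$.

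The abelian ranks are the easy part. Every irreducible component of $\YY_s$ is a (-2)-curve, so by \Cref{dfn minus two curve} it has abelian rank $0$; summing gives $a(\YY_s) = 0$. Each irreducible component of $\XX_s$ is the quotient of some component $V$ of $\YY_s$ by its stabilizer $G_V$, and since the normalization $\widetilde{V} \cong \mathbb{P}^1_k$, the quotient is still rational, giving $a(\XX_s) = 0$.

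The toric rank of $\YY_s$ comes from the shape of the dual graph. Each vertex $V$ of $\Gamma(\YY_s)$ corresponds to a (-2)-curve, so $w(V) = 2$, meaning that exactly two branches of $V$ meet singular points; hence each vertex has degree $2$ in $\Gamma(\YY_s)$ (a self-intersection producing a loop counts twice). A connected graph in which every vertex has degree $2$ is a cycle of some length $N \geq 1$ (where $N=1$ means a single vertex with a loop), so $t(\YY_s) = \dim_k H^1(\Gamma(\YY_s)) = 1$. Thus $g(Y) = 1$.

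The main work — and the main obstacle — is showing $t(\XX_s) = 1$, which amounts to showing that $\Gamma(\XX_s)$ is also a cycle. Since all nodes of $\YY_s$ are persistent, \Cref{rmk quotient dual graph} identifies $\Gamma(\XX_s)$ with the quotient graph $\Gamma(\YY_s)/G$. The induced action of each $g \in G$ on the $N$-gon $\Gamma(\YY_s)$ is either a rotation or a reflection; the plan is to exclude reflections using the hypotheses. A reflection either (i) fixes some vertex $V$ and swaps the two edges at $V$, which would mean that some element of $G_V$ permutes the two branches of $V$ at singular points; but the horizontality of $V$, i.e.\ $\underline{w}(V)=(1,1)$, places these branches in distinct $G_V$-orbits, contradiction. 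Or (ii) it fixes an edge (node) $Q$ and swaps the two vertices on it, which would force some element of $G_Q$ to swap the two branches of $\YY_s$ at $Q$, making $Q$ a vanishing node by \Cref{dfn vanishing persistent node}, again contradicting the hypotheses. Hence $G$ acts by rotations only, so $\Gamma(\YY_s)/G$ is a cycle of length $N/d$ where $d$ is the order of the cyclic image of $G$ in $\Aut(\Gamma(\YY_s))$. This gives $t(\XX_s) = 1$ and therefore $g(X) = 1$.
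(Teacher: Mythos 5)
Your proof is correct and takes essentially the same approach as the paper: compute that both abelian ranks vanish, observe that $\Gamma(\YY_s)$ is a cycle so $t(\YY_s)=1$, and then rule out reflections in the $G$-action on that cycle so that $\Gamma(\XX_s)=\Gamma(\YY_s)/G$ is also a cycle, giving $t(\XX_s)=1$. One small remark: your assignment of the two contradiction cases is the correct one, and the paper's sentence actually has them interchanged --- a reflection fixing a vertex $V$ and swapping its two adjacent edges makes $G_V$ permute the two branches of $V$, so $V$ would be \emph{vertical} (not that some node would be vanishing), while a reflection fixing an edge $Q$ and swapping the two adjacent vertices makes $G_Q$ permute the two branches at $Q$, so $Q$ would be a \emph{vanishing} node (not that some component would be a vertical $(-2)$-curve). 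Since both conclusions are excluded by the hypotheses, the paper's argument is unharmed, but your version is the accurately labelled one.
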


\begin{prop} \label{prop rst exists}
    Assume that $g(Y)\ge 2$, or that $g(Y)=1$ and $g(X)=0$. The relatively stable model exists if and only if $Y$ has semistable reduction and $\Ymini$ contains no vanishing nodes.  If it does exist, the relatively stable model $\Yrst$ can be formed by contracting all horizontal (-2)-curves of the minimal regular model $\Ymini$.
    
    \begin{proof}
        Suppose the curve $Y$ has semistable reduction and that $\Ymini$ contains no vanishing nodes.  By \Cref{LemmaPolygonMinusTwoLines}, the special fiber $\SF{\Ymini}$ cannot consist only of horizontal (-2)-curves, as that would contradict our hypothesis on genera.
        
        We are consequently allowed to form a model $\Yrst$ by contracting all horizontal (-2)-curves of $\Ymini$, and it will still be semistable.
        It is clear that $G$ acts on $\Yrst$. Suppose that  $\SF{\Yrst}$ contains a horizontal (-2)-curve.  Then its strict transform $\SF{\Ymini}$ is still a horizontal (-2)-curve in light of  \Cref{Minus12LinesStrictTransformRelative}, which contradicts the fact that all (-2)-curves of $\SF{\Ymini}$, by construction, get contracted in $\SF{\Yrst}$.
        
        Suppose now that $\SF{\Yrst}$ contains a vanishing node.  Then, in light of \Cref{prop galois cover desingularizing}, its preimage in $\SF{\Ymini}$ must contain a vanishing node or a vertical (-2)-curve, which is impossible since $\SF{\Ymini}$ does not contain vanishing nodes by assumption, and its vertical (-2)-curves do not get contracted in $\SF{\Yrst}$ by construction. Finally, since we have $\Yrst\le \Ymini$, the special fiber $\SF{\Yrst}$ contains no (-1)-line. Henceforth, the model $\Yrst$ is actually relatively stable.
    
        Let us now prove the converse implication. Suppose that the relatively stable model $\Yrst$ exists. By definition, its specil fiber $\SF{\Yrst}$ only contains persistent nodes; hence, by \Cref{prop galois cover desingularizing}, the special fiber $\SF{\Ymini}$ is obtained from $\SF{\Yrst}$ by replacing each of its node with an appropriate chain of horizontal (-2)-curves and persistent nodes and thus retains the property of not containing vanishing nodes.
    \end{proof}
\end{prop}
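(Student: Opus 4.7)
The plan is to prove the biconditional in two separate implications, folding the explicit construction of $\Yrst$ into the forward direction. The overall strategy is to show that, under the hypotheses, contracting exactly the horizontal (-2)-curves of $\Ymini$ always produces a relatively stable model, and that no other route to a relatively stable model is possible.

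For the forward direction, I would assume that $Y$ has semistable reduction and that $\SF{\Ymini}$ contains no vanishing nodes. The first worry is whether contracting all horizontal (-2)-curves of $\Ymini$ actually leaves behind any component at all: if it did not, then $\SF{\Ymini}$ would consist entirely of horizontal (-2)-curves connected by nodes that are all persistent by hypothesis, and \Cref{LemmaPolygonMinusTwoLines} would force $g(Y) = g(X) = 1$, contradicting both branches of our genus hypothesis. Once this sanity check is in place, I can form the candidate model by contracting all horizontal (-2)-curves of $\Ymini$; semistability is preserved under such contractions by the discussion in \S\ref{sec preliminaries semistable}, and the $G$-action descends since the collection of horizontal (-2)-curves is manifestly $G$-stable.

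Next I would verify the three defining properties of being relatively stable. The absence of (-1)-lines in the candidate follows because it is dominated by $\Ymini$ and the latter contains no (-1)-lines by minimality. The absence of horizontal (-2)-curves is by construction: any residual horizontal (-2)-curve would have a strict transform in $\SF{\Ymini}$ that is again a horizontal (-2)-curve by \Cref{Minus12LinesStrictTransformRelative}, contradicting the fact that all such components were contracted. Finally, the absence of vanishing nodes would be argued by contradiction: a vanishing node $Q$ in the candidate would, by \Cref{prop galois cover desingularizing}(b)(c), have a preimage chain in $\SF{\Ymini}$ containing either a vanishing middle node or a vertical middle (-2)-curve; the former is ruled out by hypothesis, and the latter is ruled out because vertical (-2)-curves of $\SF{\Ymini}$ are left untouched by the contraction. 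For the reverse direction, suppose $\Yrst$ exists; then $Y$ has semistable reduction by definition, and since $\Yrst \le \Ymini$ with $\Ymini$ the minimal desingularization of $\Yrst$, \Cref{prop galois cover desingularizing}(a) applied to the persistent nodes of $\SF{\Yrst}$ shows that the desingularization chains consist of persistent nodes joined by horizontal (-2)-curves, so no vanishing node is ever introduced.

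I expect the main obstacle to be the verification that no vanishing node survives in the candidate after contracting horizontal (-2)-curves; this is the step where the interplay between the $G$-action and the combinatorics of desingularization chains is most delicate, and it depends essentially on the parity case analysis in \Cref{prop galois cover desingularizing}(b)(c) together with the precise way vertical (-2)-curves interact with the contraction. A secondary subtlety is the preliminary check that the contraction leaves at least one component behind: this is the only place where the genus hypothesis enters in an essential way, via \Cref{LemmaPolygonMinusTwoLines}, and it explains why the statement excludes the case $g(Y) = g(X) = 1$.
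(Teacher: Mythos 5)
Your proposal is correct and follows essentially the same line of argument as the paper's proof: the preliminary exclusion via \Cref{LemmaPolygonMinusTwoLines}, contraction of the horizontal $(-2)$-curves, verification of the three defining properties of relative stability (using \Cref{Minus12LinesStrictTransformRelative} and the parity analysis in \Cref{prop galois cover desingularizing}), and the reverse implication via the structure of the desingularization chains. The only differences are cosmetic, such as the order in which the three defining conditions are checked.
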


\Cref{prop rst exists} establishes a criterion to determine whether $Y$ has relatively stable reduction or not by looking at its minimal regular model. Hereafter we propose a refined version of such a criterion.

\begin{lemma}
    \label{LemmaMinusOneLinesVanishing}
    Assume that $g(Y)\ge 1$. Then, given a regular or a semistable model $\YY$ of $Y$, no (-1)-line of $\YY_s$ can pass through a vanishing node of $\YY_s$.
    \begin{proof}
        Suppose that $Q$ is a vanishing node, and let $L$ be a (-1)-line of $\YY_s$ passing through it. Let $g\in G$ be an element stabilizing $Q$ and flipping the two branches that pass through it. It is clear that $gL$ will be another (-1)-line of $\YY_s$ passing through $Q$. Since $\YY_s$ contains two intersecting (-1)-lines, we have $g(C)=0$ by \Cref{lemma no intersecting minus one lines}, which contradicts our hypothesis.
    \end{proof}
\end{lemma}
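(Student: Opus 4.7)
The plan is a short proof by contradiction, using the definition of vanishing node together with Lemma \ref{lemma no intersecting minus one lines}.

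Assume for contradiction that a (-1)-line $L \subseteq \YY_s$ passes through a vanishing node $Q$. The first step is to identify the two branches of $\YY_s$ at $Q$. Since $L \cong \mathbb{P}^1_k$, the component $L$ is smooth, so $Q$ cannot be a self-intersection point of $L$; consequently, exactly one of the two branches of $\YY_s$ at $Q$ lies on $L$, and the other branch lies on some distinct component $L' \in \Irred(\YY_s)$ also passing through $Q$.

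Next, I would invoke Definition \ref{dfn vanishing persistent node} together with Proposition \ref{prop vanishing persistent}(b)(i): since $Q$ is vanishing, there exists an element $g \in G_Q$ which permutes the two branches of $\YY_s$ at $Q$. Because $g$ is an automorphism of $\YY$ fixing $Q$ but swapping the $L$-branch with the $L'$-branch, it must send the component $L$ to $L'$. In particular $L' = gL$ is a (-1)-line, because the property of being a (-1)-line (as formulated in Definition \ref{MinusLines2}) depends only on $V \cong \mathbb{P}^1_k$ and the number of nodes of $\YY_s$ lying on $V$, both of which are preserved by the $G$-action.

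The final step is simply to observe that $\YY_s$ now contains two distinct (-1)-lines $L$ and $L' = gL$ meeting at the node $Q$, and to invoke Lemma \ref{lemma no intersecting minus one lines}, which forces $g(Y) = 0$ and contradicts the standing hypothesis $g(Y) \geq 1$. The only subtle point — and the step most deserving of attention — is the argument in the first paragraph that rules out $Q$ being a self-intersection point of $L$; this is what makes the existence of a genuinely distinct second component $L'$ available to pair off against $L$ under $g$.
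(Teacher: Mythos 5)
Your proof is correct and follows the same strategy as the paper's: find $g\in G_Q$ swapping the two branches at $Q$, observe that $gL$ is then a second (-1)-line through $Q$, and conclude via \Cref{lemma no intersecting minus one lines}. You make explicit a point the paper leaves tacit, namely that $L\cong\mathbb{P}^1_k$ is smooth so $Q$ cannot be a self-intersection of $L$, which is precisely what guarantees $gL\neq L$.
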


\begin{prop}
    \label{BigCriterionJeffReduction}
    Assume that $g(Y)\ge 2$, or that $g(Y)=1$ and $g(X)=0$. The following are equivalent:
    \begin{itemize}
        \item[(a)] $Y$ has relatevely stable reduction;
        \item[(b)] $Y$ has semistable reduction, and the vanishing nodes of all models $\YY$ of $Y$ acted upon by $G$ all have even thickness;
        \item[(c)] $Y$ has semistable reduction, and the vanishing nodes of some semistable model $\YY$ of $Y$ acted upon by $G$ all have even thickness;
        \item[(d)] $Y$ has semistable reduction, and some semistable model $\YY$ of $Y$ acted upon by $G$ does not contain any vanishing node.
    \end{itemize}
    \begin{proof}
        Let us prove (a)$\implies$(b). We will proceed by way of contradiction: we assume that there exists a model $\YY$ of $Y$ acted upon by $G$ which has a vanishing node $Q$ of odd thickness $t$; we need to prove that $\Ymini$ contains a vanishing node (see \Cref{prop rst exists}). We have that the minimal desingularization of $\YY$ is still semistable, and it also contains a vanishing node of odd thickness (see \Cref{prop galois cover desingularizing}), hence we lose no generality if we assume that $\YY$ is regular. Let us further assume, without loss of generality, that $\YY$ is minimal (with respect to dominance) among the regular semistable models of $Y$ acted upon by $G$ and carrying a vanishing node $Q$. If $\YY=\Ymini$, then we are done. If instead we have $\YY\gneq \Ymini$, then $\SF{\YY}$ contains a $G$-orbit of (-1)-lines $\lbrace gL: g\in G \rbrace$; let $\YY_1$ be the semistable model that is obtained by contracting it. Since none of the $gL$ can pass through the vanishing node $Q$ by \Cref{LemmaMinusOneLinesVanishing}, the birational map $f: \YY\to \YY_1$ is an isomorphism above $Q_1:=f(Q)$; in particular, the node $Q_1$ is still a vanishing node of odd thickness of the model $\YY_1$, which, by construction, is again regular, semistable and acted upon by $G$; this violates the minimality of $\YY$.
        
        The implication (b)$\implies$(c) is obvious; let us therefore prove (c)$\implies$(d). Let $\YY$ be some semistable model of $Y$ whose vanishing nodes all have even thickness; by \Cref{prop galois cover desingularizing}, the minimal desingularization $\YY'$ of $\YY$ will not contain a vanishing node, whence (d) follows. 
        
        Let us finally prove (d)$\implies$(a). If $\YY$ is a model acted upon by $G$ which contains no vanishing nodes, its minimal desingularization has the same property by \Cref{prop galois cover desingularizing}; hence we can assume without losing generality that $\YY$ is regular, and that it is moreover minimal (with respect to dominance) in the set of the regular semistable models of $Y$ acted upon by $G$ that do not contain vanishing nodes. If $\YY=\Ymini$, we are done by \Cref{prop rst exists}. If instead we have $\YY\gneq \Ymini$, then the special fiber $\YY_s$ contains a $G$-orbit of (-1)-lines $\lbrace gL: g\in G \rbrace$; if $\YY_1$ is the model we obtain by contracting them all, the birational map $\YY\to \YY_1$ is clearly an isomorphism above all nodes of $\YY_1$.  The model $\YY_1$ will also not contain any vanishing node, and by construction it is still semistable and regular; this violates the minimality of $\YY$.
    \end{proof}
\end{prop}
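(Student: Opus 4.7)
The plan is to establish the four equivalences by proving the cycle (a) $\Rightarrow$ (b) $\Rightarrow$ (c) $\Rightarrow$ (d) $\Rightarrow$ (a). The key tools will be \Cref{prop rst exists} (which characterizes existence of $\Yrst$ by the absence of vanishing nodes in $\Ymini$), the desingularization analysis of \Cref{prop galois cover desingularizing}, and the fact from \Cref{LemmaMinusOneLinesVanishing} that a $(-1)$-line never passes through a vanishing node.

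For (a) $\Rightarrow$ (b), I would argue by contradiction: assume $\Yrst$ exists but that some $G$-acted semistable model $\YY$ has a vanishing node of odd thickness. First I would pass to the minimal desingularization of $\YY$, which is still $G$-acted and semistable, and which by \Cref{prop galois cover desingularizing}(b) still contains a central vanishing node; thus we may assume $\YY$ is regular. Next, I would choose such a $\YY$ minimal with respect to dominance among regular $G$-acted semistable models carrying a vanishing node. If $\YY = \Ymini$ this already contradicts \Cref{prop rst exists}. Otherwise, $\YY_s$ contains a $G$-orbit of $(-1)$-lines, none of which can touch the vanishing node by \Cref{LemmaMinusOneLinesVanishing}, so contracting this orbit yields a strictly smaller regular $G$-acted semistable model still carrying the vanishing node, contradicting minimality.

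The implication (b) $\Rightarrow$ (c) is immediate. For (c) $\Rightarrow$ (d), given a $G$-acted semistable $\YY$ whose vanishing nodes all have even thickness, I would pass to its minimal desingularization $\YY'$: by \Cref{prop galois cover desingularizing}(c), each even-thickness vanishing node is replaced by a chain whose nodes are all persistent, so $\YY'$ has no vanishing nodes. For (d) $\Rightarrow$ (a), via \Cref{prop rst exists} it suffices to show that $\Ymini$ has no vanishing nodes. Starting from $\YY$ without vanishing nodes, I would replace it by its minimal desingularization (still $G$-acted, semistable, and vanishing-node-free by \Cref{prop galois cover desingularizing}) and then iteratively contract $G$-orbits of $(-1)$-lines; each such contraction collapses a line and its unique node to a smooth point, so no vanishing nodes are introduced, and the process terminates at $\Ymini$.

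The main obstacle I anticipate is the minimality argument in (a) $\Rightarrow$ (b): one must simultaneously preserve $G$-equivariance, semistability, and the vanishing-node-of-odd-thickness property through both desingularization and $(-1)$-line contraction. The parity statement in \Cref{prop galois cover desingularizing}(b) is what guarantees that desingularizing does not accidentally turn our odd-thickness vanishing node into something harmless, while \Cref{LemmaMinusOneLinesVanishing} is what ensures the $(-1)$-line contractions used to descend toward $\Ymini$ do not interact with the offending node.
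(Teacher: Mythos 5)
Your proposal is correct and follows essentially the same route as the paper's proof: the same cyclic structure (a)$\Rightarrow$(b)$\Rightarrow$(c)$\Rightarrow$(d)$\Rightarrow$(a), the same pass to the minimal desingularization via \Cref{prop galois cover desingularizing}, and the same use of \Cref{LemmaMinusOneLinesVanishing} to contract $G$-orbits of $(-1)$-lines without disturbing the relevant node. The only cosmetic difference is in (d)$\Rightarrow$(a), where you phrase the descent to $\Ymini$ as an iteration while the paper phrases it as a minimality (well-foundedness) argument, which amount to the same thing.
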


\begin{cor}
    Assume that $g(Y)\ge 2$, or that $g(Y)=1$ and $g(X)=0$. The curve $Y$ always has relatively stable reduction over a large enough finite extension of $R$.
    \begin{proof}
        After possibly extending $R$, we may assume that $Y$ has semistable reduction over $R$ by \Cref{thm semistable reduction}. Let $\YY$ be any semistable model of $Y$ acted upon by $G$. If the model $\YY$ does not contain a vanishing node of odd thickness, then the curve $Y$ has relatively stable reduction over $R$ by \Cref{BigCriterionJeffReduction}. Otherwise, let $R'$ be any extension of $R$ with even ramification index $e$. If we base-change $\YY$ to $R'$, we still have a semistable model, and the thicknesses of the nodes all get multiplied by $e$. Hence, all nodes of $\YY_{R'}$ have even thickness, and $Y$ has relatively stable reduction over $R'$ thanks to \Cref{BigCriterionJeffReduction}.
    \end{proof}
\end{cor}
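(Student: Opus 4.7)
The plan is to combine the semistable reduction theorem (\Cref{thm semistable reduction}) with the criterion provided by \Cref{BigCriterionJeffReduction} and exploit the fact that, under a ramified extension, node thicknesses scale up by the ramification index. More precisely, I would proceed in three steps.

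First, after a preliminary finite extension of $R$, I can invoke \Cref{thm semistable reduction} to assume that $Y$ has semistable reduction, so there exists a semistable model $\YY$ of $Y$ over (the new) $R$. Since the Galois group $G$ acts on the generic fiber $Y$, by passing to the normalization of $\YY/G$ in $K(Y)$ (equivalently, by taking the minimal semistable model dominating $\YY$ and all its $G$-translates, then contracting the appropriate $G$-orbits of $(-1)$-lines), I may further assume that $\YY$ is acted upon by $G$. The point is that after this step, condition (c) of \Cref{BigCriterionJeffReduction} is within reach as long as I can control the parity of the thicknesses of vanishing nodes.

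Second, I would look at each vanishing node of $\YY$ and record its thickness. If all of them are already even, then \Cref{BigCriterionJeffReduction} (c) is satisfied and $Y$ has relatively stable reduction over $R$, and we are done. The interesting case is when some vanishing node of $\YY$ has odd thickness. Here, the key observation (recalled in \S\ref{sec preliminaries field extensions}) is that if $R'/R$ is a finite extension with ramification index $e$, then the base-change $\YY\otimes_R R'$ is still semistable (because $\YY$ has reduced special fiber, so the normalization is trivial), $G$ still acts on it, and each node thickness gets multiplied by $e$. Moreover, the property of a node being vanishing or persistent depends only on whether its stabilizer in $G$ flips the two branches, which is unchanged under the base-change.

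Third, I would choose any finite extension $R'/R$ with even ramification index $e$ (for instance, a ramified quadratic extension). After base-changing $\YY$ to $R'$, all node thicknesses are even, and in particular all vanishing nodes have even thickness. By \Cref{BigCriterionJeffReduction}, the implication (c)$\Rightarrow$(a) then guarantees that $Y$ has relatively stable reduction over $R'$. The only potential subtlety — and what I would call out as the main thing to verify carefully — is that all the structure (semistability, $G$-action, the vanishing/persistent dichotomy of each node) is indeed preserved under the base-change $\YY\leadsto \YY\otimes_R R'$; this follows from the fact that semistable models have reduced special fiber so no normalization is required, together with the explicit local description of nodes as $R[[t_1,t_2]]/(t_1t_2-a)$ with $v(a)$ scaling by $e$.
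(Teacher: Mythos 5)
Your argument is correct and matches the paper's proof essentially line for line: invoke the semistable reduction theorem, check whether the $G$-acted-upon semistable model has vanishing nodes of odd thickness, and if so pass to an extension of even ramification index and apply \Cref{BigCriterionJeffReduction}. One small caveat: your parenthetical explanation of how to produce a semistable model acted upon by $G$ is not quite right as stated (the normalization of $\YY/G$ in $K(Y)$ need not be semistable); the cleanest justification is simply to take $\Ymini$, which is canonically $G$-equivariant and is semistable whenever $Y$ has semistable reduction.
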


We end this subsection by pointing out a simple but important property of the relatively stable model.
\begin{prop}
    \label{prop smoothness components rst}
    Suppose that $Y$ has relatively stable reduction. If $W\in \Irred(\SF{\Xrst})$ is a smooth $k$-curve, then its inverse image in $\SF{\Yrst}$ also is. In particular, if all irreducible components of $\SF{\Xrst}$ are smooth $k$-curves, then the same is true of the irreducible components of $\SF{\Yrst}$.
    \begin{proof}
        Let $Q$ be a node of $\SF{\Yrst}$ lying over a point $P\in W$. Since $\Yrst$ does not contain vanishing nodes by definition, we have that $P$ is a node of $\Xrst$; moreover, since the component $W$ is smooth at $P$, we have that $P$ must connect $W$ with another irreducible component $W'\in \Irred(\SF{\Xrst})$ distinct from $W$. We deduce from this that $Q$ connects two distinct components $V$ and $V'$ of $\SF{\Yrst}$, one lying over $W$, and the other lying over $W'$. We conclude that, if $V_1$ and $V_2$ are two (possibly coinciding) irreducible components of $\SF{\Yrst}$ lying over $W$, they cannot be connected by a node; hence, the inverse image of $W$ in $\SF{\Yrst}$ is a smooth $k$-curve.
    \end{proof}
\end{prop}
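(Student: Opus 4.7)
The plan is to identify the non-smooth points of the inverse image of $W$ in $\SF{\Yrst}$ with certain nodes of $\SF{\Yrst}$, and then use the ``no vanishing nodes'' hypothesis to rule them out by leveraging the smoothness of $W$.

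First I would let $f : \Yrst \to \Xrst$ denote the quotient map and consider the irreducible components $V_1, \dots, V_n$ of $\SF{\Yrst}$ lying over $W$. A point of $\bigcup_i V_i$ fails to be smooth precisely when it is a node of $\SF{\Yrst}$, and such a node is either a self-intersection of some $V_i$ or else the intersection of two (possibly equal) components $V_i, V_j$ lying over $W$. So it would suffice to show that no node $Q \in \SF{\Yrst}$ lying over a point $P \in W$ can have both of its branches belonging to components lying over $W$.

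Now I would use the defining property of the relatively stable model: by \Cref{dfn relatively stable}, the model $\Yrst$ contains no vanishing nodes, so by \Cref{prop vanishing persistent} every node $Q \in \SF{\Yrst}$ is persistent and therefore maps to a node $P = f(Q)$ of $\SF{\Xrst}$. Since $P$ lies on $W$ and $W$ is smooth at $P$, the point $P$ cannot be a self-intersection of $W$; hence $P$ must be the intersection of $W$ with some other irreducible component $W' \in \Irred(\SF{\Xrst})$ with $W' \neq W$. The two branches of $\SF{\Xrst}$ at $P$ therefore belong to the distinct components $W$ and $W'$, and since $f$ sends the two branches of $\SF{\Yrst}$ at $Q$ onto these two branches at $P$, the node $Q$ must connect a component $V$ lying over $W$ with a different component $V'$ lying over $W'$.

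This conclusion prevents any two components among the $V_i$'s (or any single $V_i$ with itself) from meeting at a node, which yields the smoothness of the inverse image of $W$; the ``in particular'' statement follows by applying this to every irreducible component of $\SF{\Xrst}$. I do not anticipate a serious obstacle: the only subtlety is the bookkeeping that a node of $\SF{\Yrst}$ whose image lies on $W$ genuinely corresponds to a node of $\SF{\Xrst}$ lying on $W$, which is immediate from \Cref{prop vanishing persistent} once vanishing nodes are excluded.
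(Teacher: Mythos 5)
Your proof is correct and follows essentially the same route as the paper: use the absence of vanishing nodes to deduce that any node of $\SF{\Yrst}$ over $W$ lies over a node of $\SF{\Xrst}$, then use smoothness of $W$ to force that node to join $W$ with a distinct component $W'$, so that $Q$ joins a component over $W$ with one over $W'$. The only difference is presentational (you spell out the branch-tracking a bit more explicitly), not substantive.
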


\subsection{Finding the relatively stable model} \label{sec relatively stable finding}

Let us assume, for this subsection, that $g(Y)\ge 2$, or that $g(Y)=1$ and $g(X)=0$. The following result is the analog of \Cref{prop part of min} for the relatively stable model (instead of the minimal regular one).

\begin{prop} \label{prop part of rst}
    Assume that $Y$ has relatively stable reduction, let $\XX$ be any model of $X$, and let $\YY$ be the corresponding model of $Y$. Then, 
    $\XX\le \Xrst$ if and only if, for all components $V$ of $\YY_s$, we have $m(V)=1$ and one of the following holds:
        \begin{enumerate}[(i)]
            \item $a(V)\ge 1$;
            \item $a(V)=0$ and $w(V)\ge 3$; or
            \item $a(V)=0$ and  $\underline{w}(V)=(2)$.
        \end{enumerate}
        \begin{proof}
            Suppose first that we have $\XX\le \Xrst$.  Given a component $V$ of $\YY_s$, its strict transform $V^{\mathrm{rst}}$ in $\Yrst$ will have the same the same invariants $m$, $a$, $w$, and $\underline{w}$ as $V$ by \Cref{lemma inv m a w,lemma inv ww}. Now, since $\SF{\Yrst}$ is reduced, we have $m(V)=1$. Since $\SF{\Yrst}$ does not contain (-1)-lines or horizontal (-2)-curves, we deduce from \Cref{Lemma12lines}(c) and \Cref{dfn minus two curve} (applied to the model $\Yrst$) that one of the three conditions (i), (ii) and (iii) above must occur.
            
            Now assume that $m(v) = 1$ and that either (i), (ii), or (iii) holds.  By \Cref{prop part of min}, we deduce that $\YY\le \Ymini$. Let $V$ be any component of $\CC_s$, and let $V^{\mathrm{min}}$ be the strict transform of $V$ in $\SF{\Ymini}$, which has the same invariants $m$, $a$, $w$, and $\underline{w}$ as $V$ (\Cref{lemma inv m a w,lemma inv ww}). Since we are excluding the case that $a(V)=0$ and $\underline{w}(V)=(1,1)$, the transform $V^{\mathrm{min}}$ is not a horizontal (-2)-curve (by  \Cref{dfn minus two curve}). Hence, all horizontal (-2)-curves of $\SF{\Ymini}$ get contracted in $\YY_s$, which, in light of \Cref{prop rst exists}, is equivalent to saying that $\YY\le \Yrst$.
        \end{proof}
\end{prop}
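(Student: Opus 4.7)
The plan is to mirror the proof of \Cref{prop part of min}, now using the refined invariant $\underline{w}$ to characterize not only (-1)-lines but also horizontal (-2)-curves. These are precisely the components absent from the special fiber of a relatively stable model, and, by \Cref{prop rst exists}, they are exactly the components contracted in passing from $\Ymini$ to $\Yrst$. The three permitted conditions (i), (ii), (iii) correspond exactly to ruling out both the (-1)-line case ($a(V)=0$, $w(V)=1$) and the horizontal (-2)-curve case ($a(V)=0$, $w(V)=2$, $\underline{w}(V)=(1,1)$). The starting observation for both implications is that, by the correspondence between models of $X$ and models of $Y$ acted upon by $G$ from \S\ref{sec relatively stable galois covers}, $\XX \leq \Xrst$ if and only if $\YY \leq \Yrst$; moreover, whenever $\YY \leq \Ymini$, we have $\widetilde{\YY} = \Ymini$, where $\widetilde{\YY}$ denotes the minimal desingularization of $\YY$, because both are regular models dominating $\YY$ and so must coincide by mutual minimality. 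Hence $\Yrst$ is dominated by the minimal desingularization of $\YY$, and \Cref{lemma inv m a w} together with \Cref{lemma inv ww} tell us that each $V \in \Irred(\SF{\YY})$ shares its invariants $(m, a, w, \underline{w})$ with its strict transforms $V^\mathrm{min}$ in $\SF{\Ymini}$ and $V^\mathrm{rst}$ in $\SF{\Yrst}$.

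For the forward implication, assuming $\YY \leq \Yrst$ (hence $\YY \leq \Ymini$), semistability of $\Yrst$ forces $m(V) = 1$. If $a(V) \geq 1$, we are in case (i). If $a(V) = 0$, then $w(V) \geq 1$ holds, because $w(V) = 0$ would force $V^\mathrm{rst}$ to fill the connected special fiber $\SF{\Yrst}$, contradicting $p_a(\SF{\Yrst}) = g(Y) \geq 1$; next, $w(V) \neq 1$ by \Cref{Lemma12lines}(c) since $\SF{\Yrst}$ contains no (-1)-line; and if $w(V) = 2$, the component $V^\mathrm{rst}$ is a (-2)-curve, necessarily vertical (giving case (iii)) because $\SF{\Yrst}$ has no horizontal (-2)-curves. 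The remaining possibility $w(V) \geq 3$ is case (ii).

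For the reverse implication, each of (i)--(iii) satisfies $m(V) = 1$ together with ``$a(V) \geq 1$ or $w(V) \geq 2$'', so \Cref{prop part of min} yields $\YY \leq \Ymini$. To upgrade this to $\YY \leq \Yrst$, I will show that the birational map $\Ymini \to \YY$ factors through the contraction $\Ymini \to \Yrst$ by verifying that no horizontal (-2)-curve of $\SF{\Ymini}$ is the strict transform of any $V \in \Irred(\SF{\YY})$: if it were, invariance would force $a(V) = 0$, $w(V) = 2$, and $\underline{w}(V) = (1,1)$, contradicting all three of (i)--(iii). Thus $\YY \leq \Yrst$, i.e., $\XX \leq \Xrst$. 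The main subtlety throughout is the dominance bookkeeping needed to legitimately apply the invariant-preservation lemmas, which require the larger model to be dominated by the minimal desingularization of the smaller; the identification $\widetilde{\YY} = \Ymini$ (whenever $\YY \leq \Ymini$) is what unlocks this cleanly in both directions.
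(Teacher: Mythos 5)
Your proof is correct and follows essentially the same approach as the paper: reduce the backward direction to \Cref{prop part of min} and then rule out horizontal (-2)-curves via invariance of $\underline{w}$, and in the forward direction read off the three cases from the absence of (-1)-lines and horizontal (-2)-curves in $\SF{\Yrst}$. You are, if anything, slightly more explicit than the paper about the dominance bookkeeping (identifying the minimal desingularization of $\YY$ with $\Ymini$ when $\YY \leq \Ymini$) and about ruling out $w(V)=0$, but these are points the paper's proof uses implicitly.
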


Given any model $\XX$ of $X$, \Cref{prop part of rst} allows us to determine whether or not it is part of $\Xrst$. Meanwhile, the following proposition, allows us to determine the position of the components of $\SF{\Xrst}$ relative to the given model $\XX$, under the assumption that $\XX\le \Xmini$.
\begin{prop} \label{prop rst isomorphism failure}
    Assume that $Y$ has relatively stable reduction, let $\XX$ be any model of $X$, and let $\YY$ be the corresponding model of $Y$. Assume that $\XX\le \Xmini$. Then we have that $\Ctr(\XX,\Xrst)$ coincides with the set of those points $P$ of $\XX_s$ above which $\YY_s$ has non-nodal singularities or vanishing nodes.
    
    \begin{proof}
        Let us first remark that the minimal regular model $\Ymini$ dominates both $\YY$ and $\Yrst$. Let $Q$ be a point of $\YY_s$, and let $P$ be its image in $\XX_s$; as discussed in \S\ref{sec relatively stable galois covers}, we have $P\in \Ctr(\XX,\Xrst)$ if and only if $Q\in \Ctr(\YY,\Yrst)$.
        
        If $Q$ is a non-singular point of $\YY_s$, then in particular it is a regular point of $\YY$, and hence the minimal desingularization morphism $\Ymini\to \YY$ is an isomorphism above $Q$ (i.e.\ no component of $\SF{\Ymini}$ contracts to $Q$), and hence we have $Q\not\in \Ctr(\YY,\Yrst)$.
        
        Suppose that $Q$ is a persistent node of $\YY_s$. Then, its inverse image in $\SF{\Ymini}$, by \Cref{prop galois cover desingularizing}, consists of a chain of horizontal (-2)-curves, which will all be contracted in $\SF{\Yrst}$ (see \Cref{prop rst exists}). Hence, we have $Q\not\in \Ctr(\YY,\Yrst)$.
        
        Suppose that $Q$ is a vanishing node of $\YY_s$. Since $Y$ has relatively stable reduction, it must have even thickness (see \Cref{BigCriterionJeffReduction}), and thus its inverse image in $\SF{\Ymini}$ contains a vertical (-2)-curve by \Cref{prop galois cover desingularizing}, which does not get contracted in $\SF{\Yrst}$ (see \Cref{prop rst exists}).  Hence, we have $Q\in \Ctr(\YY,\Yrst)$.
        
        Suppose that $Q$ is a non-nodal singularity of $\YY_s$; then the morphism $\Ymini\to\YY$ cannot be an isomorphism above $Q$ (because $\Ymini$ is semistable), and the inverse image of $Q$ in the semistable model $\Ymini$ cannot only contain (-2)-curves; otherwise $Q$ would be a node of $\YY_s$. Hence, there exists a component $V$ of $\SF{\Ymini}$ that contracts to $Q$ and is not a (-2)-curve; by \Cref{prop rst exists}, it is clear that $V$ does not get contracted in $\SF{\Yrst}$.  Hence, we have $Q\in \Ctr(\YY,\Yrst)$.
    \end{proof}
\end{prop}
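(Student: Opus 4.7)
The plan is to use the general formalism of $\Ctr$ together with the explicit description of $\Yrst$ as the contraction of all horizontal $(-2)$-curves of $\Ymini$ from \Cref{prop rst exists}. Since $\XX \le \Xmini$, the correspondence between models of $X$ and $G$-equivariant models of $Y$ gives $\YY \le \Ymini$, and the observation from \S\ref{sec relatively stable galois covers} that $\Ctr(\YY,\Yrst) = f^{-1}(\Ctr(\XX,\Xrst))$ (where $f:\YY\to\XX$ is the quotient) reduces the statement to: for a point $Q \in \YY_s$, one has $Q \in \Ctr(\YY,\Yrst)$ if and only if $Q$ is a vanishing node of $\YY_s$ or a non-nodal singularity. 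The common dominating model used throughout will be $\Ymini$, which is allowed since $\Yrst \le \Ymini$ and $\YY\le\Ymini$.

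Next I would do a four-case analysis on $Q$. If $Q$ is smooth on $\YY_s$, then $\YY$ is regular at $Q$, so $\Ymini \to \YY$ is an isomorphism above $Q$; nothing is contracted, and $Q\notin\Ctr(\YY,\Yrst)$. If $Q$ is a persistent node, \Cref{prop galois cover desingularizing}(a) says its preimage in $\SF{\Ymini}$ is a chain of horizontal $(-2)$-curves connected by persistent nodes; but all horizontal $(-2)$-curves are contracted in passing from $\Ymini$ to $\Yrst$ (\Cref{prop rst exists}), so again $Q\notin\Ctr(\YY,\Yrst)$. If $Q$ is a vanishing node, then by \Cref{BigCriterionJeffReduction} its thickness is even, and \Cref{prop galois cover desingularizing}(c) guarantees a middle vertical $(-2)$-curve in its preimage; this vertical $(-2)$-curve survives in $\Yrst$ (since only horizontal ones are contracted), so $Q\in\Ctr(\YY,\Yrst)$.

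The remaining and most delicate case is when $Q$ is a non-nodal singularity of $\YY_s$. Here $\Ymini\to\YY$ cannot be an isomorphism above $Q$ because $\Ymini$ is semistable while $\YY$ is not at $Q$, so the fiber above $Q$ contains at least one irreducible component of $\SF{\Ymini}$. The key observation is that this fiber cannot consist \emph{only} of horizontal $(-2)$-curves: if it did, then contracting them in the (semistable) model $\Ymini$ would produce a node at $Q$, contradicting the assumption that $Q$ is non-nodal. Hence at least one component in the fiber above $Q$ is not a horizontal $(-2)$-curve, so it survives in $\Yrst$, giving $Q\in\Ctr(\YY,\Yrst)$.

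The main obstacle I expect is the last case: making precise the claim that a chain of horizontal $(-2)$-curves contracting to a non-node of $\YY$ is impossible. One wants to argue via the local structure (completed local ring at $Q$) or via an induction on the length of the chain: contracting a terminal horizontal $(-2)$-curve in a semistable chain yields again a semistable surface, and iteratively contracting the whole fiber above $Q$ either yields a smooth point (contradicting that $\Ymini\to\YY$ is not an isomorphism above $Q$, since $\YY$ is normal) or a node (contradicting non-nodality of $Q$). Once this step is in place, the four cases assemble immediately into the claimed characterization of $\Ctr(\XX,\Xrst)$ by push-down along $f$.
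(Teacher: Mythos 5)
Your proposal is correct and follows essentially the same route as the paper's proof: reduce via $\Ctr(\YY,\Yrst) = f^{-1}(\Ctr(\XX,\Xrst))$ to a point-by-point case analysis on $Q\in\YY_s$, then treat smooth points, persistent nodes, vanishing nodes, and non-nodal singularities separately using \Cref{prop galois cover desingularizing} and the description of $\Yrst$ as the contraction of the horizontal $(-2)$-curves of $\Ymini$. The only noticeable difference is in the non-nodal case: you argue the fiber of $\Ymini\to\YY$ over $Q$ cannot consist solely of \emph{horizontal} $(-2)$-curves, whereas the paper shows it cannot consist solely of $(-2)$-curves of any kind (and then remarks that a surviving non-$(-2)$-curve is in particular not contracted). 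Logically your hypothesis is weaker while the conclusion you draw is the same, so your version is marginally leaner; but the fact doing the work in both is identical — contracting a connected chain of $(-2)$-curves in a semistable model preserves semistability, so the image point is a node or smooth, contradicting non-nodality — and horizontality plays no role in it. You correctly flag this as the delicate step and your sketch (iterative contraction, preservation of semistability as in \S\ref{sec preliminaries semistable}) is the standard way to make it precise; the paper simply asserts "otherwise $Q$ would be a node," which is justified by the same discussion.
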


The following statement is the analog of \Cref{prop rst isomorphism failure} in the case $\XX\not\le \Xmini$.
\begin{prop} \label{prop rst isomorphism failure not in mini}
    Assume that $Y$ has relatively stable reduction, let $\XX$ be any model of $X$, and let $\YY$ be the corresponding model of $Y$. Assume that $\XX\not\le \Xmini$, that $\XX_s$ is irreducible and that $\YY_s$ is reduced. Then, $\YY_s$ has a unique singular point $Q$, which is a non-nodal singularity, and we have $\Ctr(\XX,\Xrst)=\{ f(Q) \}$, where $f: \YY\to \XX$ is the cover map.
    \begin{proof}
        Since $\XX$ has an irreducible special fiber, the components $\lbrace V_i\rbrace_{i=1}^N$ of $\YY_s$ are transitively permuted by $G$, and hence the invariants $m(V_i)$, $a(V_i)$ and $\underline{w}(V_i)$ do not depend on $i$.  Since $\YY_s$ is reduced, we have $m(V_i)=1$, and since $\YY\not\le \Ymini$, we have $a(V_i)=0$ and $w(V_i)=1$ by \Cref{prop part of min}. This implies in particular that there is a unique singular point $Q_i$ of $\YY_s$ that lies on $V_i$; moreover, since $\YY_s$ is connected, the $Q_i$'s must all coincide. We conclude that $\YY_s$ contains a unique singular point $Q$. If $Q$ were a node, then $\YY_s$ would be semistable, and its vertical components would all be (-1)-lines by \Cref{Lemma12lines}(c), which is impossible since $g(Y)\ge 1$. Hence, $Q$ is a non-nodal singularity of $\YY_s$. Let $\YY'$ be the minimal desingularization of $\YY$; we have dominance relations $\YY\le\YY'\gneq \Ymini\ge \Yrst$.  Since $\YY\not\le \Ymini$, we have $\YY\not\le\Yrst$, which implies that $\Ctr(\YY,\Yrst)\neq \varnothing$.  Since the desingularization $\YY'\to \YY$ is necessarily an isomorphism above the smooth points of $\YY_s$, the set $\Ctr(\YY,\Yrst)$ cannot contain any point of $\YY_s$ other then $Q$.
    \end{proof}
\end{prop}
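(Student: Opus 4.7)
The plan is to exploit the irreducibility of $\XX_s$ together with the hypothesis $\XX\not\le \Xmini$ to force strong constraints on the components of $\YY_s$, and then locate $\Ctr(\XX,\Xrst)$ via the minimal desingularization of $\YY$. First, I would observe that, since $\XX_s$ is irreducible, the Galois group $G$ acts transitively on $\Irred(\YY_s)=\{V_1,\ldots,V_N\}$, so the invariants $m(V_i)$, $a(V_i)$, $w(V_i)$, $\underline{w}(V_i)$ do not depend on $i$. The reducedness of $\YY_s$ gives $m(V_i)=1$, while the failure of the criterion in \Cref{prop part of min} (due to $\XX\not\le \Xmini$) forces the existence of some component with $a=0$ and $w\le 1$; by transitivity, this then holds for every $V_i$.

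Next, I would rule out the case $w(V_i)=0$: if this held, no $V_i$ could meet another component or itself at a singular point, so the connected curve $\YY_s$ would reduce to a single smooth component of arithmetic genus $0$, contradicting $g(Y)\ge 1$. Hence $w(V_i)=1$, and each $V_i$ carries a unique singular point $Q_i$ of $\YY_s$; since two distinct components can only meet at a singular point, connectedness of $\YY_s$ forces all $Q_i$ to coincide in a single point $Q$, which gives the uniqueness claim. To see that $Q$ is non-nodal, I would argue by contradiction: if $Q$ were a node, $\YY_s$ would be semistable and \Cref{Lemma12lines}(c) would make each $V_i$ a $(-1)$-line; but \Cref{lemma no intersecting minus one lines} prohibits two intersecting $(-1)$-lines when $g(Y)\ge 1$, forcing $N=1$, and then $Q$ would be a self-intersection of $V_1$, giving $w(V_1)=2$, a contradiction.

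For the claim $\Ctr(\XX,\Xrst)=\{f(Q)\}$, I would translate it via $G$-equivariance of $f$ into $\Ctr(\YY,\Yrst)=\{Q\}$, using the uniqueness of $Q$ (so that $f^{-1}(f(Q))=\{Q\}$). Taking $\YY'$ to be the minimal desingularization of $\YY$, one has $\YY'\ge \Ymini\ge \Yrst$, so $\YY'$ dominates both $\YY$ and $\Yrst$. Since $\YY$ is regular away from $Q$ (smooth points of a reduced special fiber of a normal model are regular), the birational map $\YY'\to \YY$ is an isomorphism off $Q$, and any component of $\YY'_s$ that is not a strict transform of some $V_i$ must contract to $Q$; this gives the inclusion $\Ctr(\YY,\Yrst)\subseteq \{Q\}$. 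For the reverse inclusion, I would show that no strict transform $V_i'$ belongs to $\Irred(\Yrst_s)$: the invariants $a(V_i')=0$ and $w(V_i')=1$ are preserved by \Cref{lemma inv m a w,lemma inv ww}, so \Cref{Lemma12lines}(c) applied to the semistable model $\Yrst$ would make $V_i'$ a $(-1)$-line of $\Yrst$, which is excluded by \Cref{dfn relatively stable}. Thus every component of $\Yrst_s$ is ``new'' relative to $\YY_s$, and since $\Yrst_s$ is non-empty at least one such component contracts to a point of $\YY_s$, which must be $Q$.

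The main obstacle I expect is the careful bookkeeping in the second step, especially the degenerate case $N=1$, where a would-be $(-1)$-line $V_1$ with $w(V_1)=1$ requires distinguishing between a self-intersection at $Q$ (which would force $w(V_1)=2$) and an intersection with another, nonexistent component. The directionality of $\Ctr$ is another point where one must tread carefully: the definition refers to components of $\Yrst$ that are absent from $\YY$ and contract in $\YY$, not the reverse, so one should not conflate $\YY\not\le \Yrst$ with the non-emptiness of $\Ctr(\YY,\Yrst)$ directly but instead verify non-emptiness as above.
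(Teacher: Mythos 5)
Your argument follows essentially the same route as the paper for the first two-thirds: the transitivity of the $G$-action on $\Irred(\YY_s)$ forcing $m=1$, $a=0$, $w=1$ for every component, the deduction of a unique non-nodal singular point $Q$, and the inclusion $\Ctr(\YY,\Yrst)\subseteq\{Q\}$ via the minimal desingularization being an isomorphism away from $Q$. You add some detail the paper leaves implicit --- ruling out $w(V_i)=0$ by a genus argument, and the $N=1$ versus $N\ge 2$ case-split when excluding the possibility that $Q$ is a node --- and these elaborations are correct.

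There is, however, a genuine gap in your argument for $\Ctr(\YY,\Yrst)\ne\varnothing$. You claim no strict transform $V_i'$ lies in $\Irred(\SF{\Yrst})$ by asserting that $w(V_i')=1$ is ``preserved'' and then applying \Cref{Lemma12lines}(c); but \Cref{lemma inv m a w,lemma inv ww} preserve $w$ and $\underline{w}$ only when the dominating model is dominated by the \emph{minimal desingularization} of the model below. Any common dominating model $\YY''$ of $\YY$ and $\Yrst$ satisfies $\YY''\ge\YY$, hence $\YY''\not\le\Ymini$ (since $\YY\not\le\Ymini$); yet $\Ymini$ is precisely the minimal desingularization of $\Yrst$. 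So the hypothesis needed to transport $w$ down to $\Yrst$ is never met, and indeed, if the strict transform of $V_i$ were a component of $\SF{\Yrst}$, then \Cref{prop part of rst} (applied with $\XX=\Xrst$, using $a=0$) would force $w\ge 2$ on that component there --- which is compatible with $w(V_i)=1$ in $\YY_s$ once $w$-invariance is withdrawn. The conclusion you want is true, but the cited lemmas do not deliver it.

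The correct mechanism is the $G$-transitivity you already established at the start. From $\YY\not\le\Ymini$ and $\Yrst\le\Ymini$ one gets $\YY\not\le\Yrst$; by the characterization of dominance in terms of components (\S\ref{sec preliminaries comparing}, applied to $\Ctr(\Yrst,\YY)$), this says $\Irred(\YY_s)\not\subseteq\Irred(\SF{\Yrst})$, i.e.\ some $V_i$ is absent from $\Irred(\SF{\Yrst})$. But $\Irred(\SF{\Yrst})\cap\Irred(\YY_s)$ is a $G$-invariant subset of the single orbit $\{V_1,\dots,V_N\}$, hence is either everything or empty; so it is empty, every component of $\SF{\Yrst}$ is new relative to $\YY_s$, and $\Ctr(\YY,\Yrst)\ne\varnothing$. (The paper's own ``which implies that $\Ctr(\YY,\Yrst)\ne\varnothing$'' is terse at exactly this spot; the $G$-invariance argument above is what makes that implication valid.)
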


When $\XX$ is a smooth model, the (other) components of $\Xrst$ are contracted precisely to the points of $\XX_s$ above which $\YY_s$ is singular, as the following corollary emphasizes.
\begin{cor}
    \label{cor rst isomorphism failure}
    Assume that $Y$ has relatively stable reduction, let $\XX$ be any model of $X$, and let $\YY$ be the corresponding model of $Y$. Assume that $\YY$ has reduced special fiber. Then we have $\Ctr(\XX,\Xrst)=f(\Sing(\YY_s))$, where $f: \YY\to \XX$ is the covering map and $\Sing(\YY_s)$ is the set of all singular points of $\YY_s$.
\end{cor}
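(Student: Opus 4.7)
The plan is to deduce the corollary from \Cref{prop rst isomorphism failure} and \Cref{prop rst isomorphism failure not in mini} through a case analysis based on whether $\XX\le \Xmini$.

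First, if $\XX\le \Xmini$, I would apply \Cref{prop rst isomorphism failure}, which identifies $\Ctr(\XX,\Xrst)$ with the set of points $P\in \XX_s$ above which $\YY_s$ has a non-nodal singularity or a vanishing node. Since $\YY_s$ is reduced by hypothesis, every singular point of $\YY_s$ is either a non-nodal singularity, a vanishing node, or a persistent node. The desired equality then reduces to showing that persistent nodes of $\YY_s$ do not contribute ``extra'' points to $f(\Sing(\YY_s))$: this is automatic in the smooth-$\XX$ setting that the surrounding discussion has in mind (a persistent node of $\YY_s$ projects to a node of $\XX_s$, so if $\XX_s$ is smooth no such node can exist). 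More generally, I would use the Galois-equivariance of $f$ and the reducedness of $\YY_s$ to argue that the image of any persistent node is already the image of some vanishing node or non-nodal singularity sitting in the same $G$-orbit.

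Next, if $\XX\not\le \Xmini$, I would localize and invoke \Cref{prop rst isomorphism failure not in mini}. The idea is to work with each irreducible component $W$ of $\XX_s$ separately: extract a submodel $\XX^W\le \XX$ whose special fiber is $W$, and pass to the corresponding model $\YY^W$, which inherits reducedness from $\YY$. If $\XX^W\not\le \Xmini$, the proposition applies directly and pinpoints a unique non-nodal singularity of $\YY^W_s$ whose image gives the relevant point of $\Ctr$; otherwise the previous case handles $\XX^W$. Reassembling these local contributions across the components of $\XX_s$ yields the desired global equality $\Ctr(\XX,\Xrst)=f(\Sing(\YY_s))$.

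I expect the most delicate step to be the bookkeeping in the first case around persistent nodes: once one has identified $\Ctr(\XX,\Xrst)$ with the image of only non-nodal singularities and vanishing nodes, extending the equality to cover \emph{all} singular points of $\YY_s$ requires either an implicit smoothness assumption on $\XX$ (consistent with the motivating text preceding the statement) or a short argument using the Galois action on the fibers of $f$. By contrast, the localization in the second case is routine once one accepts that $\Ctr$ is determined by the local structure of $\XX$ and $\YY$ near each point.
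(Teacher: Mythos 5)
Your two-case split on $\XX\le\Xmini$ versus $\XX\not\le\Xmini$ matches the paper's intent, but the fix you propose for persistent nodes does not work. Persistence is a $G$-invariant property: since $G$ acts over $\XX=\YY/G$, we have $f(gQ)=f(Q)$ for every $g\in G$, so if $Q$ is a persistent node (i.e.\ $f(Q)$ is a node of $\XX_s$), then every point in the $G$-orbit of $Q$ is again a persistent node — never a vanishing node, and, $G$ acting by automorphisms, never a non-nodal singularity. The persistent-node case in the proof of \Cref{prop rst isomorphism failure} then tells you that no point of $\YY_s$ above $f(Q)$ lies in $\Ctr(\YY,\Yrst)$, so $f(Q)\notin\Ctr(\XX,\Xrst)$ while $f(Q)\in f(\Sing(\YY_s))$. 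There is no Galois-equivariance argument that can rescue the inclusion $f(\Sing(\YY_s))\subseteq\Ctr(\XX,\Xrst)$ once a persistent node is present.

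The reason the paper can call the corollary immediate is that it is meant to be read (and is only ever invoked) in the situation of the sentence that introduces it, namely $\XX$ a smooth model of the line, so that $\XX_s$ is irreducible and without nodes. Under that tacit hypothesis, both of your worries evaporate at once: in the case $\XX\le\Xmini$, \Cref{prop vanishing persistent} shows $\YY_s$ has no persistent nodes (since $\XX_s$ has none), so $\Sing(\YY_s)$ consists precisely of the non-nodal singularities and vanishing nodes and \Cref{prop rst isomorphism failure} gives the equality directly; in the case $\XX\not\le\Xmini$, the irreducibility hypothesis of \Cref{prop rst isomorphism failure not in mini} holds automatically and there is nothing to localize. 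Your proposed component-by-component reduction in the second case is therefore superfluous under the intended hypotheses, and I would also caution that it is not obviously well-founded in general: $\Ctr(\XX,\Xrst)$ does not decompose as a union of $\Ctr(\XX^W,\Xrst)$ over the components $W$, because contracting the components other than $W$ introduces new points of $\XX^W_s$ into $\Ctr(\XX^W,\Xrst)$ (images of the contracted components of $\XX_s$ that are also components of $\Xrst_s$) that have nothing to do with singularities of $\YY_s$ over $W$.
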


\begin{proof}
    This is immediate from Propositions \ref{prop rst isomorphism failure} and \ref{prop rst isomorphism failure not in mini}.
\end{proof}

\section{Models of hyperelliptic curves} \label{sec models hyperelliptic}

In this section, we specialize to the case in which the Galois cover $Y\to X$ that we considered in \S\ref{sec relatively stable} is the degree-$2$ map from a hyperelliptic curve $Y$ of genus $g\ge 1$ to the projective line $X$ (see \S\ref{sec notation he}). Our main aim, for this and the following sections, is computing the relatively stable model $\Yrst$ of $Y$ that we defined in \S\ref{sec relatively stable definition}. After recalling some basic general facts about hyperelliptic curves in \S\ref{sec models hyperelliptic equations}, we describe the semistable models of the line $X$ in \S\ref{sec models hyperelliptic line}: we will see how smooth models of the line correspond to discs $D\subseteq \bar{K}$, while the semistable ones correspond to certain finite collections $\mathfrak{D}$ of discs. After introducing the notion of a \emph{part-square decomposition} of a polynomial in \S\ref{sec models hyperelliptic part-square}, we exploit it in \S\ref{sec models hyperelliptic forming} to describe the model of the hyperelliptic curve $Y$ corresponding to a given smooth model of the line $X$ (i.e., to a given disc $D$). The special fiber of such models of $Y$ will be more thoroughly studied in \S\ref{sec models hyperelliptic separable} and \S\ref{sec models hyperelliptic inseparable}, and each of the two subsections provides a criterion to decide whether a disc $D$ belongs to the collection $\mathfrak{D}$ that defines the semistable model of the line $\Xrst$ from which $\Yrst$ comes (\Cref{thm part of rst separable} for the separable case, and \Cref{prop part of rst inseparable} for the inseparable case). 

\subsection{Equations for hyperelliptic curves} \label{sec models hyperelliptic equations}

We let $F$ be any field and write $X := \proj_F^1$ for the projective line over $F$.  In this subsection, we review basic facts and definitions relating to hyperelliptic curves over $F$ which can be found in \cite[\S7.4.3]{liu2002algebraic}.

\begin{dfn}
	A \emph{hyperelliptic curve} over $F$ is a smooth curve $Y / F$ of genus $g \geq 1$ along with a separable (branched) covering morphism $Y \to X$ of degree 2, which we call the \emph{hyperelliptic map}.
\end{dfn}

It is possible through repeated applications of the Riemann-Roch Theorem to show the well-known fact that the affine chart $x\neq \infty$ of any hyperelliptic curve $Y / F$ can be described by an equation of the form 
\begin{equation} \label{eq hyperelliptic general affine1}
y^2 + q(x)y = r(x),
\end{equation}
where $\deg(q) \leq g + 1$ and $\deg(r) \leq 2g + 2$ and the hyperelliptic map is given by the coordinate $x : Y \to X$.  If $F$ has characteristic different from $2$, a suitable change of the coordinate $y$ allows us to convert this equation into the simpler form $y^2 = r(x) + \frac{1}{4}q^2(x)$, from which it is clear that the smoothness condition implies that the polynomial $f(x) := r(x) + \frac{1}{4}q^2(x)$ must be separable.
Over the complementary affine chart of $\proj_F^1$ where $x \neq 0$, the hyperelliptic curve $Y$ can be described by the equation
\begin{equation} \label{eq hyperelliptic general affine2}
    \check{y}^2 + \check{q}(\check{x})\check{y} = \check{r}(\check{x}),
\end{equation}
where $\check{x} = x^{-1}$, $\check{y} = x^{-(g+1)}y$, $\check{q}(\check{x}) = x^{-(g+1)}q(x)$, and $\check{r}(\check{x}) = x^{-(2g+2)}r(x)$.  Note that the polynomial $\check{q}(z) \in F[z]$ (resp.\ $\check{r}(z) \in F[z]$) differs from the polynomial $q(z) \in F[z]$ (resp.\ $r(z) \in F[z]$) only in that each power $z^i$ which appears in the polynomial is replaced by $z^{g+1-i}$ (resp.\ $z^{2g+2-i}$) while the coefficients remain the same.

If $F$ has characteristic different from $2$ (i.e.\, if we consider \emph{tame} hyperelliptic curves), the Riemann-Hurwitz formula ensures that the ramification locus of $Y_{\bar{F}}\to X_{\bar{F}}$ consists of $2g+2$ points of $Y_{\bar{F}}$, lying over $2g+2$ distinct branch points of $X_{\overline{F}}$.  In fact, the branch locus determines a hyperelliptic curve almost completely, as we see from the following proposition.

\begin{prop} \label{prop hyperelliptic}
	Given a field $F$ of characteristic different from $2$, and letting $X$ be the projective line $\mathbb{P}^1_F$, the following data are equivalent:
	\begin{enumerate}[(i),nolistsep]
		\item a hyperelliptic curve $Y$ of genus $g$ having rational branch locus, endowed with a distinguished hyperelliptic map $Y \to X$; 
		\item a separable polynomial $f(x) \in F[x]$ of degree $2g + 1$ or $2g + 2$ all of whose roots lie in $F$, modulo multiplication by a scalar in $(F^{\times})^2$; and 
		\item a cardinality-$(2g+2)$ subset $\mathcal{B} \subset X(F)$ together with an element $c \in F^\times/(F^\times)^2$.
	\end{enumerate}
	Moreover, in (ii) above, the polynomial $f$ will have degree $2g + 1$ (resp.\ $2g + 2$) if in the context of (iii) above the coordinate of $X$ is chosen such that $\infty$ is (resp.\ is not) an element of $\mathcal{B}$.
	
\end{prop}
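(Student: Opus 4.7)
The plan is to pass through (ii) as a common intermediate description, establishing (i) $\Leftrightarrow$ (ii) via the standard ``completing the square'' trick and (ii) $\Leftrightarrow$ (iii) by reading off roots and leading coefficient. In all three directions the coordinate $x$ on $X = \mathbb{P}_F^1$ is regarded as fixed, so that the hyperelliptic map in (i) and the subset $\mathcal{B} \subset X(F)$ in (iii) are unambiguous.

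For (i) $\Rightarrow$ (ii), I would start from the general affine equation~(\ref{eq hyperelliptic general affine1}) already exhibited in the subsection and use that $\operatorname{char}(F) \neq 2$ to substitute $y \mapsto y - q(x)/2$, obtaining $y^2 = f(x)$ with $f(x) := r(x) + q(x)^2/4 \in F[x]$. The Jacobian criterion in this chart forces $f$ to be separable, since a multiple root of $f$ would produce a singular point $(a, 0) \in Y$. Analyzing smoothness near $x = \infty$ via the complementary chart~(\ref{eq hyperelliptic general affine2}) (where the reciprocal polynomial of $f$ acquires $0$ as a root of order $2g+2 - \deg f$) rules out $\deg f \le 2g$, leaving $\deg f \in \{2g+1, 2g+2\}$; rationality of the branch locus forces every root of $f$ to lie in $F$. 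The only residual freedom in choosing $y$ so that the equation takes the form $y^2 = f(x)$ is the rescaling $y \mapsto \lambda y$ with $\lambda \in F^\times$, which rescales $f$ by $\lambda^2$, so $f$ is well-defined modulo $(F^\times)^2$. For the converse (ii) $\Rightarrow$ (i), I would glue $U_0 := \operatorname{Spec}(F[x, y]/(y^2 - f(x)))$ to a second affine chart $U_\infty$ via $\check{x} := 1/x$, $\check{y} := y/x^{g+1}$, whose defining equation is $\check{y}^2 = \check{f}(\check{x})$ with $\check{f}$ the polynomial described in \S\ref{sec models hyperelliptic equations}; separability of $f$ together with the degree condition ensures $\check{f}$ is also separable, so the resulting projective curve $Y$ is smooth, and Riemann--Hurwitz applied to the degree-$2$ map $x : Y \to X$ (with $2g+2$ branch points) yields $g(Y) = g$.

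For (ii) $\Leftrightarrow$ (iii), given $f$ of degree $d \in \{2g+1, 2g+2\}$ with leading coefficient $c_0 \in F^\times$, I would set $\mathcal{B}$ to be the set of roots of $f$ together with $\infty$ precisely when $d = 2g+1$, so that $|\mathcal{B}| = 2g+2$ in both cases, and put $c := c_0 \bmod (F^\times)^2$. Replacing $f$ by $\lambda^2 f$ leaves both the root set and the class of $c_0$ unchanged, so this assignment descends to equivalence classes on the (ii) side. Conversely, from $(\mathcal{B}, c)$ I would reconstruct the class of
\[
    f(x) := c \prod_{a \in \mathcal{B} \smallsetminus \{\infty\}} (x - a),
\]
of degree $2g+1$ if $\infty \in \mathcal{B}$ and $2g+2$ otherwise; this is manifestly inverse to the previous construction and simultaneously proves the ``moreover'' statement.

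The one point I expect to require care is pinning down that the scaling ambiguity on each side matches exactly. On the (i) side one must confirm that the only $F$-automorphism of $Y$ commuting with the map $Y \to X$ and preserving the coordinate $x$ is the hyperelliptic involution $y \mapsto -y$ (together with the identity), which accounts for the freedom to rescale $y$ by any $\lambda \in F^\times$; on the (ii) side one must see that two polynomials of the allowed degrees define isomorphic covers over $F$ (with their fixed maps to $X$) if and only if they differ by a factor in $(F^\times)^2$. Once this scaling match is in place, the three equivalences assemble cleanly.
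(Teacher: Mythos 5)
Your proof is correct, and it does take a slightly different route from the paper's, chiefly in the direction (i)~$\Rightarrow$~(ii). The paper works purely with function fields: it observes that $F(Y)$ is generated over $F(X)=F(x)$ by some $y$ with $y^2 \in F(X)$, normalizes $y$ by a polynomial in $x$ so that $f(x) := y^2 \in F[x]$, and then invokes Riemann--Hurwitz to count $2g+2$ ramification points and hence pin the degree to $2g+1$ or $2g+2$. You instead start from the explicit affine model~(\ref{eq hyperelliptic general affine1}), complete the square using $\operatorname{char}(F) \neq 2$, extract separability from the Jacobian criterion on the chart $y^2 = f(x)$, and determine the degree constraint from smoothness over $x = \infty$ in the complementary chart~(\ref{eq hyperelliptic general affine2}). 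Both are valid; the paper's function-field argument is more streamlined (no chart-by-chart smoothness check), while yours is more concretely computational and makes the degree dichotomy visible directly from the reciprocal polynomial $\check{f}$.

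One small caution on the closing paragraph. You phrase the scaling match as ``the only $F$-automorphism of $Y$ over $X$ preserving $x$ is $y \mapsto -y$, which accounts for the freedom to rescale $y$ by $\lambda \in F^\times$.'' The relevant fact is not really about $\operatorname{Aut}_X(Y)$; it is that any generator $y$ of $F(Y)/F(X)$ with $y^2 \in F[x]$ separable of degree $\le 2g+2$ is unique up to a scalar in $F^\times$ (a rational $\mu \in F(x)^\times$ with $\mu^2 f$ a separable polynomial of the same bounded degree must be a constant, because $f$ has no repeated factors). It is this uniqueness that makes $f$ well defined in $F[x]/(F^\times)^2$ and makes the constructions inverse to each other. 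The paper handles it by explicitly noting that replacing $\tilde{c}$ by $\gamma^2 \tilde{c}$ is undone by $y \mapsto \gamma y$. Your underlying observation is correct; I would just reframe it as a statement about the admissible rescalings of $y$ rather than about the automorphism group, since $\operatorname{Aut}_X(Y) \cong \{\pm 1\}$ by itself does not ``account for'' the full $F^\times$ ambiguity.
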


\begin{proof}
    We construct the above equivalences as follows.  Given a hyperelliptic curve $Y$ as in (i), we denote the distinguished hyperelliptic map by $x : Y \to X$.  Clearly, the morphism $x$ can be viewed as an element of the function field $F(Y)$; in fact, as the hyperelliptic map is not constant, we must have $F(X) = F(x) \hookrightarrow F(Y)$.  Since the hyperelliptic map has degree $2$, the extension $F(Y) \supset F(X)$ must be generated by a single element $y \in F(Y) \smallsetminus F(X)$ with $y^2 \in F(X)$; after multiplying $y$ by a suitable polynomial in $x$, we may assume that $f(x) := y^2 \in F[x]$.  Then it is straightforward to see that the affine open subset of $Y$ given by the inverse image of $\aff_F^1$ under the hyperelliptic map is described by the equation $y^2 = f(x)$.  The roots of $f$ clearly coincide with the points on $\aff_{\bar{F}}^1$ over which the hyperelliptic map is ramified; meanwhile, one sees by applying the Riemann-Hurwitz formula that the map $Y \to X$ must have exactly $2g + 2$ ramification points.  Therefore, the polynomial $f$ has $2g + 1$ (resp.\ $2g + 2$) roots all lying in $F$ if $\infty \in X(F)$ is (resp.\ is not) a ramification point.  This polynomial $f$ (modulo multiplication by elements in $(F^{\times})^2$) gives us the data in (ii).
    
    Given the data in (ii), let $\mathcal{R} \subset X(F) \smallsetminus \{\infty\}$ be the subset of roots of $f$ and let $c \in F^{\times} / (F^{\times})^2$ be the leading coefficient of $f$ modulo squares of elements in $F^{\times}$.  Setting $\mathcal{B} = \mathcal{R}$ (resp.\ $\mathcal{B} = \mathcal{R} \cup \{\infty\}$) if the degree of $f$ is $2g + 1$ (resp.\ $2g + 2$), we have that the set $\mathcal{B}$ has cardinality $2g + 2$ and we get the data of (iii).
    
    Finally, given a cardinality-$(2g+2)$ subset $\mathcal{B} \subset X(F)$ and a scalar $c \in F^{\times} / (F^{\times})^2$, write $\tilde{c} \in F^{\times}$ for a representative of $c$ and let $Y / F$ be the smooth completion of the affine curve over $F$ described by the equation 
    \begin{equation} \label{eq hyperelliptic}
        y^2 = f(x) := \tilde{c} \prod_{a \in \mathcal{B} \smallsetminus \{\infty\}} (x - a).
    \end{equation}
    Then it is easy to check that $Y$ satisfies the criteria given in (i), with the hyperelliptic map being given by the function $x \in F(Y)$.  If a different representative $\tilde{c}' \in F^{\times}$ is chosen for $c$ in order to define $f$, then we must have $\tilde{c}' = \gamma^2 \tilde{c}$ for some $\gamma \in F^{\times}$, and replacing the coordinate $y$ by $\gamma y$ gives us the same equation (\ref{eq hyperelliptic}) and therefore the same curve $Y$, so the data in (i) is uniquely determined by (iii).
\end{proof}

\begin{rmk} \label{rmk assumptions}
    Suppose that in the context of the above proposition, none of branch points of $Y \to X$ is $\infty$.  We may then find an isomorphic hyperelliptic curve over $F$ whose branch points over $X$ include the point $\infty \in X(F)$ by applying an automorphism of the projective line $X$ which moves one of the branch points to $\infty$.  More precisely, if $a_0$ is the $x$-coordinate of a branch point of $Y \to X$ that does not coincide with $\infty$, we perform the substitution $(x, y) \mapsto ((x-a_0)^{-1}, \check{x}^{g+1}\check{y})$ and get a curve (isomorphic over $F$ to our original one) ramified over $\infty \in X(F)$ defined by an equation of the form ${y}^2 = {f}(x)$, where ${f}(x) \in F[x]$ is a polynomial of degree $2g+1$.
\end{rmk}

From now on, we assume that $F$ is the discretely-valued field $K$ satisfying the conditions given in \S\ref{sec introduction notation}.  In light of the remark above, up to possibly replacing $K$ with a finite extension (so that at least one of the branch points of the cover $Y\to X$ is rational), we can and will make the following assumption throughout the rest of the paper.

\begin{hyp} \label{hyp properties of f}
The hyperelliptic curve $Y$ is defined over $K$ by the equation $y^2 = f(x)$, where $x$ is the standard coordinate of $X=\mathbb{P}^1_K$, and $f(x) \in K[x]$ is a polynomial of (odd) degree $2g+1$, where $g$ is the genus of $Y$.
\end{hyp}

Proposition \ref{prop hyperelliptic} allows us to treat the hyperelliptic curve $Y$ essentially as a marked line.  This is peculiar to the hyperelliptic case: if we were to deal with a tame Galois covering of the line of degree greater than $2$, the same branch locus would in general be shared by multiple branched coverings corresponding to various possible monodromy actions.

Our aim will be constructing semistable models of a given hyperelliptic curve $Y \to X$ by normalizing some carefully chosen semistable models of the line $X$ in the quadratic extension $K(X)\subseteq K(Y)$. We will start by analyzing smooth and semistable models of the line $X$ in the next subsection; in the subsequent ones, we will turn our attention to the corresponding models of $Y$.

\subsection{Models of the projective line} \label{sec models hyperelliptic line}

As before, let $X:=\mathbb{P}^1_K$ be the projective line, and let $x$ denote its standard coordinate. Given $\alpha\in \bar{K}$ and  $\beta\in \bar{K}^\times$, one can define a smooth model $\XX_{\alpha,\beta}$ of $X$ over the ring of integers $R'$ of $K':=K(\alpha,\beta)\subseteq \bar{K}$) by declaring $\XX_{\alpha,\beta}:=\mathbb{P}^1_{R'}$, with coordinate $x_{\alpha,\beta} := \beta^{-1}(x - \alpha)$, as an $R'$-scheme, and identifying the generic fiber $\XX_\eta$ with $X$ via the linear transformation $x_{\alpha,\beta}=\beta^{-1}(x-\alpha)$. If $(\alpha_1,\beta_1)$ and $(\alpha_2,\beta_2)$ are such that $v(\alpha_1-\alpha_2)\ge v(\beta_1)=v(\beta_2)$, then $\XX_{\alpha_1,\beta_1}$ and $\XX_{\alpha_2,\beta_2}$ are isomorphic as models of $X$, the isomorphism being given by the change of variable $x_{\alpha_2,\beta_2}=u x_{\alpha_1,\beta_1}+ \delta$, where $u$ is the unit $\beta_1(\beta_2)^{-1}$ and $\delta$ is the integral element $\beta_2^{-1}(\alpha_1-\alpha_2)$. In other words, the model $\XX_{\alpha,\beta}$ only depends, up to isomorphism, on the disc $D = D_{\alpha,b}:=\lbrace x\in \bar{K}: v(x-\alpha)\le b\rbrace$ of center $\alpha$ and depth $b:=v(\beta)$; for this reason, we will often denote $\XX_{\alpha,\beta}$ by $\XX_D$.
\begin{prop}
    \label{prop smooth models line discs}
    The construction $D\mapsto \XX_D$ described above defines a bijection between the discs of $\bar{K}$ and the smooth models of $X$ defined over finite extensions of $R$ considered up to isomorphism (two models $\XX_1/R'_1$ and $\XX_2/R'_2$ are considered isomorphic if they become so over some common finite extension $R''\supseteq R'_1, R'_2$).
    \begin{proof}
        Given a smooth model of the line $\XX$ over the ring of integers $R'$ of some finite extension $K'/K$, one may prove that there exists an isomorphism of $R'$-schemes $\XX\cong \mathbb{P}^1_{R'}$ (see \cite[Exercise 8.3.5]{liu2002algebraic}), which immediately implies that $\XX$ is isomorphic, as a model, to $\XX_D$ for some uniquely determined disc $D=D_{\alpha,b}$ with $\alpha\in K'$ and $b\in v((K')^\times)$.
    \end{proof}
\end{prop}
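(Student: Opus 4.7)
The plan has two parts: surjectivity and injectivity of the assignment $D \mapsto \XX_D$.

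\textbf{Surjectivity.} I would first invoke the nontrivial input that any smooth model $\XX$ of $X$ over the ring of integers $R'$ of a finite extension $K'/K$ is isomorphic, as an abstract $R'$-scheme, to $\mathbb{P}^1_{R'}$; this is exactly \cite[Exercise 8.3.5]{liu2002algebraic} and relies essentially on $R'$ being Henselian with algebraically closed residue field so that no Severi--Brauer obstruction can appear. Fixing such an isomorphism singles out a coordinate $t$ on $\XX$ whose generic-fiber expression reads $t = \phi(x)$ for some $\phi \in \mathrm{PGL}_2(K')$. Since any two identifications differ by the action of $\mathrm{Aut}_{R'}(\mathbb{P}^1_{R'}) = \mathrm{PGL}_2(R')$, and the latter acts transitively on $\mathbb{P}^1(R')$, I can arrange that the $R'$-section of $\XX$ extending the $K'$-point $\infty \in X(K') = \XX_\eta(K')$ corresponds to the section $t = \infty$ of $\mathbb{P}^1_{R'}$. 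Then $\phi$ fixes $\infty$ and is therefore affine, $t = \beta^{-1}(x - \alpha)$ for some $\alpha \in K'$ and $\beta \in (K')^\times$. The disc $D := D_{\alpha, v(\beta)} \subseteq \bar{K}$ then gives an isomorphism $\XX \cong \XX_D$ of models over $R'$.

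\textbf{Injectivity.} Suppose $D_i = D_{\alpha_i, b_i}$ for $i = 1, 2$ are such that $\XX_{D_1}$ and $\XX_{D_2}$ become isomorphic as models after base change to the ring of integers $R''$ of some common finite extension $K'' \supseteq K(\alpha_1, \alpha_2, \beta_1, \beta_2)$. The isomorphism amounts to a Möbius transformation $x_{\alpha_2, \beta_2} = u \cdot x_{\alpha_1, \beta_1} + \delta$ arising from an element of $\mathrm{PGL}_2(R'')$, which must preserve $\infty$ since both coordinates do; hence $u \in (R'')^\times$ and $\delta \in R''$. Substituting $x_{\alpha_i, \beta_i} = \beta_i^{-1}(x - \alpha_i)$ and matching the coefficients of $x$ and the constant term yields $u = \beta_1/\beta_2$ and $\delta = (\alpha_1 - \alpha_2)/\beta_2$; the conditions $v(u) = 0$ and $v(\delta) \geq 0$ translate into $v(\beta_1) = v(\beta_2)$ and $v(\alpha_1 - \alpha_2) \geq v(\beta_1)$, which is exactly the statement that $D_1 = D_2$.

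The main obstacle is the opening step: the smooth model $\XX$ is not provided with any coordinate, so genuine input is needed to produce a global isomorphism $\XX \cong \mathbb{P}^1_{R'}$. Once this is available, the remainder reduces to the elementary structure of the stabilizer of $\infty$ in $\mathrm{PGL}_2$ (the affine group of the line), and the remaining manipulations are purely mechanical.
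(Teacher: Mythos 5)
Your proposal is correct and follows the same route as the paper: both rely on \cite[Exercise 8.3.5]{liu2002algebraic} to identify $\XX$ abstractly with $\mathbb{P}^1_{R'}$, then extract the disc data from the resulting identification of generic fibers. You simply make explicit the two steps the paper compresses into ``which immediately implies'': adjusting by $\mathrm{PGL}_2(R')$ to fix $\infty$ so that the coordinate change is affine, and matching coefficients to see that the disc is uniquely determined (the latter being the converse of the computation the paper performs in the paragraph preceding the proposition).
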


Given two discs $D=D_{\alpha,b}$ and $D'=D_{\alpha',b'}$, we want to compare the smooth models $\XX_{D}$ and $\XX_{D'}$: using the notation introduced in \S\ref{sec preliminaries comparing}, we have the following proposition, which can be verified by an immediate computation. 
\begin{prop}
    \label{prop relative position smooth models line}
    With the notation above, assume $D\neq D'$, and let $P\in \SF{\XX_D}(k)$ and $P'\in \SF{\XX_{D'}}(k)$ be the points such that $\Ctr(\XX_D,\XX_{D'})=\{ P \}$ and $\Ctr(\XX_{D'},\XX_{D})=\{ P' \}$. Then there are the following three possibilities (illustrated in \Cref{fig two models line}):
    \begin{enumerate}[(a)]
        \item when $D\subsetneq D'$, $P$ is the point $\overline{x_{\alpha,\beta}}=\infty$ and $P'$ is the point $\overline{x_{\alpha',\beta'}}=\overline{(\beta')^{-1}(\alpha-\alpha')}\neq \infty$;
        \item when $D'\subsetneq D$, $P$ is the point $\overline{x_{\alpha,\beta}}=\overline{\beta^{-1}(\alpha'-\alpha)}\neq \infty$, and $P'$ is the point $\overline{x_{\alpha',\beta'}}=\infty$; or 
        \item when $D\cap D'=\varnothing$, $P$ is the point $\overline{x_{\alpha,\beta}}=\infty$ and $P'$ is the point $\overline{x_{\alpha',\beta'}}=\infty$.
    \end{enumerate}
\end{prop}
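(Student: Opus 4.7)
The plan is to compute $P$ and $P'$ directly from the linear change of coordinate identifying the two generic fibers,
\[
x_{\alpha',\beta'} \;=\; \tfrac{\beta}{\beta'}\,x_{\alpha,\beta} \;+\; \tfrac{\alpha-\alpha'}{\beta'}
\quad\Longleftrightarrow\quad
x_{\alpha,\beta} \;=\; \tfrac{\beta'}{\beta}\Bigl(x_{\alpha',\beta'} - \tfrac{\alpha-\alpha'}{\beta'}\Bigr).
\]

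The first step is to identify $P$ with a specialization. Since each of $\SF{\XX_D}$ and $\SF{\XX_{D'}}$ consists of a single irreducible component ($L_D$ and $L_{D'}$ respectively), and \Cref{prop smooth models line discs} with $D\neq D'$ rules out $\XX_D\cong \XX_{D'}$, the strict transform $\tilde L_{D'}$ in any common dominating model must differ from $\tilde L_D$ and is therefore contracted to the single point $P$ under the morphism to $\XX_D$; an analogous statement holds for $P'$. Equivalently, $P$ may be computed as the specialization in $\SF{\XX_D}$ of any $K$-valued point $\xi$ of the generic fiber $X$ whose specialization in $\SF{\XX_{D'}}$ is a sufficiently generic $k$-point of $L_{D'}$: concretely, a $\xi$ with $v(x_{\alpha',\beta'}(\xi))=0$ whose residue avoids a small explicit finite set. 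This specialization is then read off the valuation of $x_{\alpha,\beta}(\xi)$: if this valuation is negative, the specialization is $\infty$, and if it is non-negative it is the residue of $x_{\alpha,\beta}(\xi)$.

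The remaining computation splits into the three cases. In Case (a), $D\subsetneq D'$ gives $b > b'$ and $v(\alpha-\alpha')\geq b'$. For $\xi$ as above with $\overline{x_{\alpha',\beta'}(\xi)}\neq \overline{(\alpha-\alpha')/\beta'}$, the parenthesized factor in the formula for $x_{\alpha,\beta}$ is a unit while $v(\beta'/\beta) = b' - b < 0$, so $v(x_{\alpha,\beta}(\xi)) < 0$ and hence $P=\infty$. For $P'$ one dualizes: taking $\xi$ with $v(x_{\alpha,\beta}(\xi))=0$ and generic residue, both summands defining $x_{\alpha',\beta'}(\xi)$ are integral and the first has strictly positive valuation, so the residue equals $\overline{(\alpha-\alpha')/\beta'}$. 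Case (b) follows from Case (a) by swapping the roles of $(D, D')$. In Case (c), the hypothesis $D\cap D' = \varnothing$ together with the ultrametric inequality forces $v(\alpha-\alpha') < \min(b, b')$, so both $(\alpha-\alpha')/\beta$ and $(\alpha-\alpha')/\beta'$ have strictly negative valuation; running the same specialization computation on either side yields an image of negative valuation, whence $P = P' = \infty$.

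The only mild subtlety is justifying the specialization description of $P$ and $P'$ from the definition of $\Ctr$ in \S\ref{sec preliminaries comparing}; once that is in place, the three cases reduce to immediate valuation manipulations.
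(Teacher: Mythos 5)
Your proof is correct. The paper offers no proof of Proposition \ref{prop relative position smooth models line} beyond the remark that it ``can be verified by an immediate computation,'' so there is nothing to compare against; your argument is a clean way to carry out that computation. The only point worth making explicit, which you flag yourself, is the reduction of $\Ctr(\XX_D,\XX_{D'})$ to a specialization calculation: since both special fibers are irreducible lines and $D\neq D'$, a common dominating model has exactly two components (one strict transform of each), so the single point $P$ is just the image under $\XX''\to \XX_D$ of the strict transform $\tilde L_{D'}$, and this image is found by specializing any $\bar K$-point $\xi$ that reduces to a point of $L_{D'}$ away from the finite set $\Ctr(\XX_{D'},\XX'')$. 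With that in place, your three valuation computations (using $D\subsetneq D'\iff b>b',\ v(\alpha-\alpha')\ge b'$ and $D\cap D'=\varnothing\iff v(\alpha-\alpha')<\min\{b,b'\}$) are exactly right.
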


\begin{figure}
    \centering
    \tikzset{every picture/.style={line width=0.75pt}} 

\begin{tikzpicture}[x=0.75pt,y=0.75pt,yscale=-1,xscale=1]

\draw [color={rgb, 255:red, 0; green, 0; blue, 0 }  ,draw opacity=1 ]   (100,126) -- (37,201) ;
\draw    (19,120.5) -- (101,193) ;
\draw    (68.5,163.5) .. controls (66.79,161.87) and (66.75,160.21) .. (68.38,158.5) .. controls (70.01,156.79) and (69.97,155.13) .. (68.26,153.5) .. controls (66.55,151.87) and (66.51,150.21) .. (68.14,148.5) .. controls (69.76,146.79) and (69.72,145.13) .. (68.01,143.51) .. controls (66.3,141.88) and (66.26,140.22) .. (67.89,138.51) .. controls (69.52,136.8) and (69.48,135.14) .. (67.77,133.51) .. controls (66.06,131.88) and (66.02,130.22) .. (67.65,128.51) .. controls (69.28,126.8) and (69.24,125.14) .. (67.53,123.51) .. controls (65.82,121.88) and (65.78,120.22) .. (67.41,118.51) .. controls (69.04,116.8) and (69,115.14) .. (67.29,113.51) -- (67.24,111.66) -- (67.05,103.67) ;
\draw [shift={(67,101.67)}, rotate = 88.61] [color={rgb, 255:red, 0; green, 0; blue, 0 }  ][line width=0.75]    (10.93,-3.29) .. controls (6.95,-1.4) and (3.31,-0.3) .. (0,0) .. controls (3.31,0.3) and (6.95,1.4) .. (10.93,3.29)   ;
\draw [color={rgb, 255:red, 0; green, 0; blue, 0 }  ,draw opacity=1 ]   (311,132) -- (248,207) ;
\draw    (230,126.5) -- (312,199) ;
\draw    (279.5,169.5) .. controls (277.75,167.92) and (277.67,166.26) .. (279.25,164.51) .. controls (280.83,162.76) and (280.75,161.1) .. (279,159.51) .. controls (277.25,157.93) and (277.17,156.27) .. (278.75,154.52) .. controls (280.33,152.77) and (280.25,151.11) .. (278.5,149.53) .. controls (276.75,147.94) and (276.67,146.28) .. (278.25,144.53) .. controls (279.83,142.78) and (279.75,141.12) .. (278,139.54) .. controls (276.25,137.95) and (276.17,136.29) .. (277.75,134.54) .. controls (279.33,132.79) and (279.25,131.13) .. (277.5,129.55) .. controls (275.75,127.97) and (275.67,126.31) .. (277.25,124.56) .. controls (278.83,122.81) and (278.75,121.15) .. (277,119.56) .. controls (275.25,117.98) and (275.17,116.32) .. (276.75,114.57) -- (276.5,109.65) -- (276.1,101.66) ;
\draw [shift={(276,99.67)}, rotate = 87.13] [color={rgb, 255:red, 0; green, 0; blue, 0 }  ][line width=0.75]    (10.93,-3.29) .. controls (6.95,-1.4) and (3.31,-0.3) .. (0,0) .. controls (3.31,0.3) and (6.95,1.4) .. (10.93,3.29)   ;
\draw [color={rgb, 255:red, 0; green, 0; blue, 0 }  ,draw opacity=1 ]   (531,133) -- (468,208) ;
\draw    (450,127.5) -- (532,200) ;
\draw    (499.5,170.5) .. controls (497.8,168.87) and (497.76,167.2) .. (499.39,165.5) .. controls (501.02,163.8) and (500.99,162.13) .. (499.29,160.5) .. controls (497.59,158.87) and (497.55,157.2) .. (499.18,155.5) .. controls (500.81,153.8) and (500.78,152.13) .. (499.08,150.5) .. controls (497.38,148.87) and (497.34,147.21) .. (498.97,145.51) .. controls (500.6,143.81) and (500.56,142.14) .. (498.86,140.51) .. controls (497.16,138.88) and (497.13,137.21) .. (498.76,135.51) .. controls (500.39,133.81) and (500.35,132.14) .. (498.65,130.51) .. controls (496.95,128.88) and (496.92,127.21) .. (498.55,125.51) .. controls (500.18,123.81) and (500.14,122.14) .. (498.44,120.51) .. controls (496.74,118.88) and (496.71,117.21) .. (498.34,115.51) .. controls (499.97,113.81) and (499.93,112.14) .. (498.23,110.51) -- (498.21,109.66) -- (498.04,101.67) ;
\draw [shift={(498,99.67)}, rotate = 88.79] [color={rgb, 255:red, 0; green, 0; blue, 0 }  ][line width=0.75]    (10.93,-3.29) .. controls (6.95,-1.4) and (3.31,-0.3) .. (0,0) .. controls (3.31,0.3) and (6.95,1.4) .. (10.93,3.29)   ;
\draw    (31.5,132.5) .. controls (32.31,134.71) and (31.6,136.22) .. (29.39,137.03) .. controls (27.18,137.84) and (26.47,139.35) .. (27.27,141.56) -- (25.23,145.94) -- (21.85,153.19) ;
\draw [shift={(21,155)}, rotate = 295.02] [color={rgb, 255:red, 0; green, 0; blue, 0 }  ][line width=0.75]    (10.93,-3.29) .. controls (6.95,-1.4) and (3.31,-0.3) .. (0,0) .. controls (3.31,0.3) and (6.95,1.4) .. (10.93,3.29)   ;
\draw    (296.5,148.5) .. controls (298.81,148.99) and (299.71,150.39) .. (299.21,152.7) .. controls (298.71,155.01) and (299.61,156.41) .. (301.92,156.91) .. controls (304.23,157.4) and (305.13,158.8) .. (304.63,161.11) -- (305.58,162.59) -- (309.92,169.32) ;
\draw [shift={(311,171)}, rotate = 237.2] [color={rgb, 255:red, 0; green, 0; blue, 0 }  ][line width=0.75]    (10.93,-3.29) .. controls (6.95,-1.4) and (3.31,-0.3) .. (0,0) .. controls (3.31,0.3) and (6.95,1.4) .. (10.93,3.29)   ;
\draw    (499.5,170.5) .. controls (497.99,172.31) and (496.33,172.46) .. (494.52,170.95) .. controls (492.71,169.44) and (491.05,169.6) .. (489.54,171.41) .. controls (488.03,173.22) and (486.37,173.37) .. (484.56,171.86) -- (481.96,172.09) -- (473.99,172.82) ;
\draw [shift={(472,173)}, rotate = 354.81] [color={rgb, 255:red, 0; green, 0; blue, 0 }  ][line width=0.75]    (10.93,-3.29) .. controls (6.95,-1.4) and (3.31,-0.3) .. (0,0) .. controls (3.31,0.3) and (6.95,1.4) .. (10.93,3.29)   ;

\draw (122,202) node    {$L'$};
\draw (23,204) node  [color={rgb, 255:red, 0; green, 0; blue, 0 }  ,opacity=1 ]  {$L$};
\draw (12,159.4) node [anchor=north west][inner sep=0.75pt]    {$\infty $};
\draw (333,208) node    {$L'$};
\draw (234,209) node  [color={rgb, 255:red, 0; green, 0; blue, 0 }  ,opacity=1 ]  {$L$};
\draw (553,209) node    {$L'$};
\draw (454,211) node  [color={rgb, 255:red, 0; green, 0; blue, 0 }  ,opacity=1 ]  {$L$};
\draw (450,168.4) node [anchor=north west][inner sep=0.75pt]    {$\infty $};
\draw (31,234) node [anchor=north west][inner sep=0.75pt]   [align=left] {Case (a): $\displaystyle D\varsubsetneq D'$};
\draw (233,234) node [anchor=north west][inner sep=0.75pt]   [align=left] {Case (b): $\displaystyle D'\varsubsetneq D$};
\draw (450,234) node [anchor=north west][inner sep=0.75pt]   [align=left] {Case (c): $\displaystyle D\cap D'=\emptyset $};
\draw (313,170.4) node [anchor=north west][inner sep=0.75pt]    {$\infty $};
\draw (17,54.4) node [anchor=north west][inner sep=0.75pt]  [font=\normalsize]  {$\overline{x_{\alpha ,\beta }} =\infty $};
\draw (18,71.4) node [anchor=north west][inner sep=0.75pt]  [font=\normalsize]  {$\overline{x_{\alpha ',\beta '}} =\overline{(\beta ')^{-1} (\alpha -\alpha ')} \neq \infty $};
\draw (227,53.4) node [anchor=north west][inner sep=0.75pt]  [font=\normalsize]  {$\overline{x_{\alpha ,\beta }} =\overline{\beta ^{-1} (\alpha '-\alpha )} \neq \infty $};
\draw (228,72.4) node [anchor=north west][inner sep=0.75pt]  [font=\normalsize]  {$\overline{x_{\alpha ',\beta '}} =\infty $};
\draw (453,53.4) node [anchor=north west][inner sep=0.75pt]  [font=\normalsize]  {$\overline{x_{\alpha ,\beta }} =\infty $};
\draw (454,72.4) node [anchor=north west][inner sep=0.75pt]  [font=\normalsize]  {$\overline{x_{\alpha ',\beta '}} =\infty $};

\end{tikzpicture}
    \caption{The special fiber of the minimal model $\XX_{\{D,D'\}}$ dominating $\XX_D$ and $\XX_D'$. Here, $L$ and $L'$ are the lines corresponding to the discs $D$ and $D'$ respectively.}
    \label{fig two models line}
\end{figure}

If $\mathfrak{D}=\{D_1,\ldots, D_n\}$ is a non-empty, finite collection of discs of $\bar{K}$, one can form a corresponding model $\XX_{\mathfrak{D}}$, which is defined as the minimal model dominating all the smooth models $\lbrace \XX_D: D\in \mathfrak{D}\rbrace$. Its special fiber is a reduced $k$-curve of arithmetic genus $p_a(\SF{\XX_{\mathfrak{D}}})=g(X)=0,$ i.e.\ it consists of $n$ lines $L_1, \ldots, L_n$ corresponding to the discs $D_i$'s meeting each other at ordinary multiple points, without forming loops.

\begin{prop} \label{prop collections of discs}
    The construction $\mathfrak{D}\mapsto \XX_\mathfrak{D}$ described above defines a bijection between the finite non-empty collection of discs $\mathfrak{D}$ of $\bar{K}$ and the models of $X$ having reduced special fiber defined over finite extensions of $R$, considered up to isomorphism (two models $\XX_1/R'_1$ and $\XX_2/R'_2$ are considered isomorphic if they become so over some common finite extension $R''\supseteq R'_1, R'_2$).
    \begin{proof}
        Suppose that $\XX$ is a model of the line $X$ with reduced special fiber, and let $\lbrace L_1, \ldots, L_n \rbrace$ be the components of its special fiber $\XX_s$. For each $i$, let $\XX_i$ be the model of obtained from $\XX$ by contracting all lines in $\Irred(\XX_s)$, except for $L_i$ it is easy to prove that $\XX_i$ is smooth (see, for example, \cite[Exercise 8.3.5]{liu2002algebraic}), so that, by \Cref{prop smooth models line discs}, $\XX_i=\XX_{D_i}$ for some uniquely determined disc $D_i \subset \bar{K}$. Now the model $\XX$ can be described as the minimal model dominating all the $\XX_i$'s, i.e.\ we have $\XX\cong \XX_{\mathfrak{D}}$ with $\mathfrak{D}=\{D_1, \ldots, D_n\}$.
    \end{proof}
\end{prop}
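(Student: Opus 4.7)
The plan is to construct an inverse to the map $\mathfrak{D}\mapsto \XX_\mathfrak{D}$, so I need to attach to every model $\XX$ of $X$ with reduced special fiber a finite collection of discs recovering it. Given such a model $\XX$ over some $R'$, with special fiber consisting of components $L_1,\ldots,L_n$, I form for each $i$ the model $\XX_i$ obtained from $\XX$ by contracting every component of $\SF{\XX}$ other than $L_i$ (this is possible by the general discussion in \S\ref{sec preliminaries special fibers}). The central input I need is that each $\XX_i$ is \emph{smooth}. This can be seen from the fact that a normal projective $R'$-scheme whose generic fiber is $\mathbb{P}^1_{K'}$ and whose special fiber is integral of arithmetic genus $0$ is necessarily isomorphic to $\mathbb{P}^1_{R'}$ (see \cite[Exercise 8.3.5]{liu2002algebraic}). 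Once smoothness is in hand, \Cref{prop smooth models line discs} attaches to $\XX_i$ a unique disc $D_i\subset\bar{K}$ with $\XX_i\cong \XX_{D_i}$.

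I then set $\mathfrak{D}:=\{D_1,\ldots,D_n\}$ and claim $\XX\cong \XX_\mathfrak{D}$. By construction $\XX$ dominates each $\XX_i=\XX_{D_i}$, so $\XX$ dominates the minimal model $\XX_\mathfrak{D}$ dominating all of them. Conversely, the birational morphism $\XX\to \XX_\mathfrak{D}$ cannot contract any $L_i$: the strict transform of $L_i$ in $\XX$ is $L_i$ itself, and its image under the contraction to $\XX_i$ is the unique component of $\SF{\XX_i}$, so $L_i$ must survive in $\SF{\XX_\mathfrak{D}}$ too. Since $\SF{\XX_\mathfrak{D}}$ contains exactly $|\mathfrak{D}|=n$ components and $\Irred(\SF{\XX})\hookrightarrow \Irred(\SF{\XX_\mathfrak{D}})$ under dominance, no component is contracted and the birational morphism $\XX\to\XX_\mathfrak{D}$ is an isomorphism.

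For injectivity of $\mathfrak{D}\mapsto\XX_\mathfrak{D}$, suppose $\XX_\mathfrak{D}\cong \XX_{\mathfrak{D}'}$. The above inverse construction, applied to either side, returns $\mathfrak{D}$ or $\mathfrak{D}'$ respectively from isomorphism-invariant data (the set of components of the special fiber, together with the discs associated to the smooth models obtained by contracting all but one component), so $\mathfrak{D}=\mathfrak{D}'$. Well-definedness on the level of isomorphism classes over varying extensions follows by base-changing all relevant models to a common finite extension containing the centers and scaling factors defining the discs in $\mathfrak{D}$, and using \Cref{prop smooth models line discs} over that common extension. The main obstacle I expect is verifying smoothness of the intermediate models $\XX_i$; everything else is formal manipulation of dominance relations in $\Models(X)$ combined with the bijection in \Cref{prop smooth models line discs}.
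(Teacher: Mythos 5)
Your proof is correct and follows essentially the same strategy as the paper's: contract $\XX$ to each component $L_i$ to obtain a smooth model, attach a disc via \Cref{prop smooth models line discs}, and then argue that $\XX$ is the minimal model dominating these smooth models. You flesh out the final isomorphism $\XX \cong \XX_{\mathfrak{D}}$ in more detail than the paper (which simply asserts it); note only that the strict-transform injection under $\XX \geq \XX_{\mathfrak{D}}$ should read $\Irred(\SF{\XX_{\mathfrak{D}}}) \hookrightarrow \Irred(\SF{\XX})$, the opposite of what you wrote, although your counting and no-contraction argument go through in either case.
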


\begin{prop} \label{prop thickness}
    Suppose that $\XX / R'$ is a semistable model of the line $X$ for some finite extension $R' / R$, such that there are discs $D_{\alpha,b} \subsetneq D_{\alpha',b'} \subset \bar{K}$ corresponding to two intersecting components of $\SF{\XX}$.  Then the thickness of the node where they intersect is given by the formula $(b' - b) / v(\pi)$, where $\pi \in \bar{K}$ is a uniformizer of $R'$.
\end{prop}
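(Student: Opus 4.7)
The plan is to reduce the statement to an explicit computation of the completed local ring of $\XX$ at the node. Since thickness depends only on this local ring, I would first replace $\XX$ with the minimal model $\XX_{\{D_{\alpha,b},D_{\alpha',b'}\}}$ from \Cref{prop collections of discs}, in which the node and its completed local ring are unchanged. Because $D_{\alpha,b}\subsetneq D_{\alpha',b'}$ forces $\alpha\in D_{\alpha',b'}$, we have $D_{\alpha',b'}=D_{\alpha,b'}$, so I may further assume $\alpha=\alpha'$ (and the strict inclusion then forces $b>b'$ in view of the convention that greater depth corresponds to smaller disc).

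Choosing $\beta,\beta'\in\bar K^\times$ with $v(\beta)=b$ and $v(\beta')=b'$, the two smooth components carry coordinates $x_1=\beta^{-1}(x-\alpha)$ and $x_2=(\beta')^{-1}(x-\alpha)$, and \Cref{prop relative position smooth models line} pinpoints the node $P$ as lying at $\overline{x_1}=\infty$ on the first component and at $\overline{x_2}=0$ on the second. Switching to the local coordinate $u:=1/x_1$ on the first component near $P$, the identity $x-\alpha=\beta/u=\beta' x_2$ yields the gluing relation $u\,x_2=\beta/\beta'=:\gamma$, with $v(\gamma)=b-b'>0$. Hence an affine open neighborhood of $P$ in $\XX_{\{D_{\alpha,b},D_{\alpha',b'}\}}$ is $\Spec R'[u,x_2]/(ux_2-\gamma)$, and writing $\gamma=w\pi^{(b-b')/v(\pi)}$ for a unit $w\in(R')^\times$ and absorbing $w$ into $u$ puts the completed local ring at $P$ in the standard node form $R'[[u,x_2]]/\bigl(ux_2-\pi^{(b-b')/v(\pi)}\bigr)$. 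By the definition of thickness recalled in \S\ref{sec preliminaries semistable}, the thickness of $P$ is therefore $(b-b')/v(\pi)$, equivalently $|b'-b|/v(\pi)$, as claimed.

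No step presents a real obstacle: the work is purely bookkeeping, namely justifying the reduction $\alpha=\alpha'$, identifying via \Cref{prop relative position smooth models line} which points of the two smooth models are glued to form $P$, and extracting the gluing relation from the two expressions for $x-\alpha$. The essential content is simply that the two linear coordinate changes $x_1=\beta^{-1}(x-\alpha)$ and $x_2=(\beta')^{-1}(x-\alpha)$ combine into a Laurent-monomial relation whose single parameter is $\gamma=\beta/\beta'$, whose valuation is exactly the difference of depths.
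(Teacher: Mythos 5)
Your proof is correct and follows essentially the same route as the paper's: reduce to a common center $\alpha=\alpha'$, observe the linear relation between the two coordinates $x_1=\beta^{-1}(x-\alpha)$ and $x_2=(\beta')^{-1}(x-\alpha)$, and read off the local equation of the node.  You are a bit more careful than the paper in two respects.  First, you explicitly pass to the minimal model $\XX_{\{D_{\alpha,b},D_{\alpha',b'}\}}$; the brief assertion that this does not change the completed local ring at the node is true (if some component of $\SF{\XX}$ contracted to the image of the node, then three or more components would pass through $P$, contradicting semistability), and it would be worth including this one-line justification.  Second, and more importantly, you correctly use \Cref{prop relative position smooth models line} to place the node at $\overline{x_1}=\infty$ and $\overline{x_2}=0$, and so you correctly take as local parameters the functions $u=1/x_1$ and $x_2$, both of which vanish at the node; this gives $u\,x_2=\beta/\beta'$ with valuation $b-b'>0$.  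The paper's own proof instead writes the equation using $x_{\alpha,\beta}$ and $x_{\alpha,\beta'}^{-1}$, neither of which vanishes at the node in question, and consequently lands on $\beta'\beta^{-1}$, whose valuation $b'-b$ is negative under the convention $D_{\alpha,b}\subsetneq D_{\alpha',b'}\Rightarrow b>b'$.  In other words, the stated formula $(b'-b)/v(\pi)$ has the sign reversed — one can confirm this by checking it against \Cref{prop viable thicknesses}, where the thickness is (correctly) $(b_+-b_-)/v(\pi)\ge 0$ for the valid discs $D_{\alpha,b_+}\subsetneq D_{\alpha,b_-}$.  Your formula $(b-b')/v(\pi)$ is the right one, and writing it as $|b'-b|/v(\pi)$ is a reasonable way to present it; the paper's statement should be read with this correction in mind.
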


\begin{proof}
    We can clearly replace the center $\alpha'$ with $\alpha \in D_{\alpha,b} \subsetneq D_{\alpha',b'}$; now choosing $\beta, \beta' \in \bar{K}^{\times}$ be scalars such that $v(\beta) = b$ and $v(\beta') = b'$.  Then, with the notation above, we have coordinates $x_{\alpha,\beta}$ and $x_{\alpha,\beta'}$ corresponding to each of these components of $\SF{\XX}$, and these coordinates are related by the equation $x_{\alpha,\beta} = \beta' \beta^{-1} x_{\alpha,\beta'}$.  Locally around the point of intersection, a defining equation is $x_{\alpha,\beta} x_{\alpha,\beta'}^\vee = \beta' \beta^{-1}$, where $x_{\alpha,\beta'}^{\vee} = x_{\alpha,\beta'}^{-1}$, and so the thickness by definition is equal to $v(\beta' \beta^{-1}) / v(\pi) = (b' - b) / v(\pi)$.
\end{proof}

\Cref{prop collections of discs} certainly implies that a semistable model of the line always has the form $\XX_{\mathfrak{D}}$ for some finite non-empty family of discs $\mathfrak{D}$; however, it is not always true that, given a collection of discs $\mathfrak{D}$, the corresponding model $\XX_{\mathfrak{D}}$ of the line is semistable. In fact, its special fiber $\SF{\XX_{\mathfrak{D}}}$ is always a $k$-curve with at worst ordinary singularities, but it is possible that more than two lines $L_i\in \Irred(\SF{\XX_{\mathfrak{D}}})$ intersect at the same ordinary multiple point, violating semistability. However, it is not difficult to give a combinatorial necessary and sufficient condition for a collection $\mathfrak{D}$ of discs to give rise to a semistable model.

\begin{prop}
    \label{prop semistable family of discs}
    The model $\XX_\mathfrak{D}$ of $X$ corresponding to a finite non-empty collection of discs $\mathfrak{D}$ is semistable if and only if it satisfies the following property: if three discs $D_1, D_2, D_3\in \mathfrak{D}$ satisfy any of the three conditions
    \begin{enumerate}[(a)]
        \item $D_1, D_2, D_3\in \mathfrak{D}$ are mutually disjoint, and any disc in $\bar{K}$ containing two of them also contains the third one;
        \item $D_1, D_2, D_3\in \mathfrak{D}$ are mutually disjoint, and there exists a disc in $\bar{K}$ containing $D_1$ and $D_2$ that is disjoint from $D_3$; or 
        \item $D_3 \supseteq D_1 \cup D_2$, and $D_3$ is not minimal among the discs of $\bar{K}$ satisfying this property,
    \end{enumerate}
    then, letting $D$ be the minimal disc of $\bar{K}$ containing both $D_1$ and $D_2$, we have $D\in \mathfrak{D}$.
\end{prop}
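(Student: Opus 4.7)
By \Cref{prop collections of discs}, the special fiber $\SF{\XX_\mathfrak{D}}$ is a reduced configuration of projective lines $L_i := \SF{\XX_{D_i}}$, $D_i \in \mathfrak{D}$, meeting at ordinary multiple points without forming loops. Semistability is thus equivalent to requiring that no three components $L_i$ share a common point, and the plan is to translate the geometric condition ``three lines meet at one point'' into a combinatorial condition on the corresponding triple of discs. The central tool will be \Cref{prop relative position smooth models line}, which pinpoints, for any two discs $D, D' \subset \bar{K}$, where $L_D$ and $L_{D'}$ intersect in the minimal model dominating $\XX_D$ and $\XX_{D'}$.

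Fix three distinct discs $D_1, D_2, D_3 \in \mathfrak{D}$ with centers $\alpha_i \in \bar{K}$. The ultrametric structure on discs of $\bar{K}$ ensures that any two of them are either nested or disjoint. When all three are pairwise disjoint, the ultrametric inequality applied to the pairwise meeting depths $d_{ij} := v(\alpha_i - \alpha_j)$ forces either $d_{12} = d_{13} = d_{23}$ (so the minimal disc containing any pair contains the third, i.e.\ case (a)) or exactly one of the $d_{ij}$'s strictly larger than the other two (so the minimal disc containing two of them is disjoint from the third, i.e.\ case (b)). When instead some pair is nested, say $D_1 \cup D_2 \subseteq D_3$, the relevant dichotomy is whether $D_3$ is itself the minimal disc containing $D_1 \cup D_2$ (in which case no condition of the proposition is triggered) or not (case (c)). In all other configurations of the triple, no condition is triggered.

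In each of the configurations (a), (b), (c), let $D$ denote the minimal disc of $\bar{K}$ containing $D_1 \cup D_2$. Repeated application of \Cref{prop relative position smooth models line} will yield explicit coordinates for the pairwise intersections of $L_1, L_2, L_3$ inside $\XX_{\{D_1, D_2, D_3\}}$, and a direct computation then shows that these three lines share a common point if and only if $L_D$ is absent from $\mathfrak{D}$: intuitively, when $D \in \mathfrak{D}$ the line $L_D$ separates the configuration and forces the three lines to meet only pairwise at distinct points, whereas when $D \notin \mathfrak{D}$ this separating structure collapses and the three lines coincide at the common point where each would meet $L_D$ were it present. Conversely, if three lines share a point, reversing the case analysis forces the configuration to fall into (a), (b), or (c) with $D \notin \mathfrak{D}$. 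I expect the main technical step to be this explicit coordinate comparison in case (c), which reduces to comparing $v(\alpha_1 - \alpha_2)$ against the depth of $D_3$ and observing that two points on $L_3$ reduce to the same element of $k$ precisely when a strictly smaller enclosing disc exists between $D_1 \cup D_2$ and $D_3$; the disjoint cases (a) and (b) are then handled uniformly using the fact that pairwise disjoint discs always force the intersections to lie at $\infty$ on each line.
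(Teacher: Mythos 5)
Your strategy mirrors the paper's in broad outline: reduce semistability to ``no three components of $\SF{\XX_\mathfrak{D}}$ share a common point,'' then classify configurations of three discs via \Cref{prop relative position smooth models line} and the ultrametric isoceles property. (The paper isolates this classification as \Cref{lemma semistable family of discs}.) The direction ``disc condition holds $\Rightarrow$ $\XX_\mathfrak{D}$ is semistable'' is fine in your sketch: if $D \in \mathfrak{D}$, then $\XX_\mathfrak{D}$ dominates $\XX_{\{D_1,D_2,D_3,D\}}$, where $L_1, L_2, L_3$ meet $L_D$ at three distinct points, and passing to a dominating model can only blow up points, never merge distinct nodes.

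The converse direction, however, has a real gap. Your claim that ``when $D \notin \mathfrak{D}$ this separating structure collapses and \emph{the three lines} $L_1, L_2, L_3$ coincide at the common point'' is not true in general, because discs of $\mathfrak{D}$ other than $D_1, D_2, D_3$ can intervene. Concretely, in case (c), if some $D' \in \mathfrak{D}$ satisfies $D \subsetneq D' \subsetneq D_3$, then in $\XX_\mathfrak{D}$ the line $L'$ sits between $L_3$ and the point where $L_1$ and $L_2$ approach, so $L_3$ does \emph{not} pass through that point; a disc strictly between $D_1$ and $D$ would similarly exclude $L_1$. The model is still non-semistable, but the offending triple of lines is no longer $L_1, L_2, L_3$. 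To close the gap, one must track the contracted line $L_D$ rather than the original three: in $\XX_{\mathfrak{D}\cup\{D\}}$, the component $L_D$ carries at least $3$ nodes (this holds already in $\XX_{\{D_1,D_2,D_3,D\}}$ by \Cref{lemma semistable family of discs}(b), and the count does not drop when further discs are added), so the single point $\Ctr(\XX_\mathfrak{D}, \XX_D)$ to which $L_D$ contracts is an ordinary multiple point lying on at least three components of $\SF{\XX_\mathfrak{D}}$. This is precisely the argument the paper gives, and it is the step your sketch would need to supply.
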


The proof of the proposition relies on the following elementary lemma.
\begin{lemma}
    \label{lemma semistable family of discs}
    We have the following.
    \begin{enumerate}[(a)]
        \item Given three discs $D_1, D_2, D_3 \subset \bar{K}$, some permutation of them satisfies the assumptions (a), (b) or (c) of  \Cref{prop semistable family of discs} if and only if $\SF{\XX_{\{D_1,D_2,D_3\}}}$ consists of three lines $L_1, L_2$ and $L_3$ meeting at an ordinary triple point.
        \item Given three discs $D_1, D_2$ and $D_3$ satisfying the assumptions (a), (b) or (c) of \Cref{prop semistable family of discs}, and letting $D$ be the minimal disc containing $D_1$ and $D_2$, we have that the three lines $L_1, L_2$ and $L_3$ corresponding to $D_1, D_2,$ and $D_3$ do not intersect each other in the special fiber of $\XX_{\{D_1,D_2,D_3,D\}}$, and they intersect the line $L$ corresponding to the disc $D$ at three distinct points (see \Cref{fig three models line}).  
    \end{enumerate}
    \begin{proof}
        The lemma can be proved by means of straightforward computations, which we omit.
    \end{proof}
\end{lemma}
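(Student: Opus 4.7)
The plan is to carry out an exhaustive case analysis on the configuration of three discs $D_1, D_2, D_3 \subset \bar{K}$, using the ultrametric property that any two discs in $\bar{K}$ are either nested or disjoint. The possible configurations are: (i) all three pairwise disjoint; (ii) exactly one nesting, e.g.\ $D_1 \subsetneq D_3$ with $D_2$ disjoint from $D_3$; (iii) two discs inside a third, $D_1, D_2 \subsetneq D_3$ with $D_1 \cap D_2 = \varnothing$; or (iv) a totally nested chain $D_1 \subsetneq D_2 \subsetneq D_3$. I would further subdivide case (i) according to whether the minimal disc containing any two of the $D_i$ already contains the third (subcase i-A) or not (subcase i-B), and case (iii) according to whether $D_3$ is itself the minimal disc containing $D_1 \cup D_2$ (subcase iii-A) or not (subcase iii-B).

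For part (a), the central tool is \Cref{prop relative position smooth models line}, which precisely describes the intersection points of the lines $L_i, L_j$ in $\SF{\XX_{\{D_i,D_j\}}}$ according to the relative positions of the two discs. In each of the three subcases i-A, i-B, iii-B, I would identify a \emph{mediating} disc $\tilde D$ lying outside $\mathfrak{D} := \{D_1, D_2, D_3\}$: respectively, the minimal disc of $\bar K$ containing all three discs; the minimal disc containing $D_1 \cup D_2$; and the minimal disc strictly between $D_1 \cup D_2$ and $D_3$. In the expanded model $\XX_{\mathfrak{D} \cup \{\tilde D\}}$, a direct application of \Cref{prop relative position smooth models line} to each pair $(\tilde D, D_i)$ shows that the extra line $L_{\tilde D}$ meets $L_1, L_2, L_3$ at three distinct smooth points of $L_{\tilde D}$. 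Since $L_{\tilde D}$ does not appear in $\XX_\mathfrak{D}$ by the minimality in \Cref{prop collections of discs}, its contraction identifies these three marked points, producing an ordinary triple point where $L_1, L_2, L_3$ meet. Conversely, in cases (ii), (iii-A), (iv), no mediating disc lies outside $\mathfrak{D}$, and a direct computation via \Cref{prop relative position smooth models line} shows that $\SF{\XX_\mathfrak{D}}$ is a chain or ``T''-configuration of three lines meeting at ordinary double points only. Matching the three triple-point subcases with conditions (a), (b), (c) of \Cref{prop semistable family of discs} completes part (a).

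For part (b), assume that a permutation of $D_1, D_2, D_3$ realizes one of the conditions (a), (b), (c) of \Cref{prop semistable family of discs}, and let $D$ be the minimal disc containing $D_1 \cup D_2$, so that $D \notin \mathfrak{D}$ in every case. Writing $D = D_{\alpha, v(\beta)}$, \Cref{prop relative position smooth models line} applied to $(D, D_1)$ and $(D, D_2)$ yields that $L$ meets $L_1$ and $L_2$ at the finite points $\overline{\beta^{-1}(\alpha_1 - \alpha)}$ and $\overline{\beta^{-1}(\alpha_2 - \alpha)}$ on $L$; these are distinct because the minimality of $D$ forces $v(\alpha_1 - \alpha_2) = v(\beta)$. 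Applying the same proposition to $(D, D_3)$ splits into three subcases: in case (a), $D_3 \subsetneq D$ gives a third finite intersection point on $L$ distinct from the other two; in case (b), $D \cap D_3 = \varnothing$ places the third intersection at $\infty$ on $L$; and in case (c), $D \subsetneq D_3$ again puts $L \cap L_3$ at $\infty$ on $L$. In every subcase, $L$ receives three distinct marked points. Moreover, in $\XX_{\{D_1, D_2, D_3, D\}}$ the lines $L_1, L_2, L_3$ no longer intersect each other, because every path between them in the disc-tree now passes through $D$.

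The main obstacle will be the careful bookkeeping across these subcases and the rigorous translation of the intersection data from \Cref{prop relative position smooth models line} into the contraction-and-identification picture for $\XX_\mathfrak{D}$. A subtler issue is verifying rigorously that contracting a $\mathbb{P}^1$-component which meets three other components transversally at three distinct smooth points yields an ordinary triple point of the resulting normal surface; this can be made precise by local analysis in the completed local ring at the resulting singularity, confirming that the three branches pass through it with linearly independent tangent directions.
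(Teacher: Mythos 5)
The paper omits this proof entirely (``straightforward computations, which we omit''), so your task is precisely to supply those computations, and your plan does so along the natural lines: the exhaustive nesting/disjointness case analysis for three discs, the pairwise application of \Cref{prop relative position smooth models line}, and the observation that in each triple-point case the three attachment points on the mediating line $L_{\tilde{D}}$ (whose disc $\tilde{D}$ is exactly the disc $D$ of part (b)) are pairwise distinct and get identified when $L_{\tilde{D}}$ is contracted. Part (b) and the ``if'' direction of part (a) are sound as you describe them, and your justification that $L_1,L_2,L_3$ become pairwise disjoint in $\XX_{\{D_1,D_2,D_3,D\}}$ (each $L_i$ lies in the fibre over its own contraction point on $L$, and these points are distinct) is correct.

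There is one concrete configuration on which your matching of subcases to conditions (a), (b), (c) --- and hence the ``only if'' direction of part (a) --- breaks down: the totally nested chain $D_1\subsetneq D_2\subsetneq D_3$, your case (iv). Read literally, this triple satisfies condition (c) of \Cref{prop semistable family of discs}: we have $D_3\supseteq D_1\cup D_2=D_2$, and the minimal disc with this property is $D_2\neq D_3$. Yet, as you compute, $\SF{\XX_{\{D_1,D_2,D_3\}}}$ is a chain of three lines with two nodes and no triple point; and in part (b) the minimal disc $D$ containing $D_1\cup D_2$ is $D_2$ itself, so $L=L_2$ does meet $L_1$ and $L_3$, contradicting the stated conclusion. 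The resolution is that condition (c) must carry the implicit requirement that $D_1$ and $D_2$ be disjoint (equivalently, that $D$ not already be one of the three given discs); this reading is forced elsewhere in the paper too, since \Cref{prop discs positions and ctr} would fail for the same chain. You should make this hypothesis explicit at the point where you claim that the triples satisfying (c) are exactly those of your subcase iii-B; with that caveat your case list is exhaustive and the argument closes. The only remaining loose end is the one you already flag --- verifying that the image of the contracted $\mathbb{P}^1$ is an \emph{ordinary} multiple point of the special fiber --- and this is a verification the paper itself asserts without proof in \S\ref{sec models hyperelliptic line}; your proposed local analysis at the contracted point (where the ambient scheme is singular, so the Zariski tangent space has dimension at least three) is the right way to settle it.
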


\begin{figure}
    \centering
    \tikzset{every picture/.style={line width=0.75pt}} 

\begin{tikzpicture}[x=0.75pt,y=0.75pt,yscale=-1,xscale=1]

\draw [color={rgb, 255:red, 0; green, 0; blue, 0 }  ,draw opacity=1 ]   (180,46) -- (43,40) ;
\draw    (56,84.33) -- (68,23) ;
\draw    (86,90.33) -- (102,23) ;
\draw    (122,90.33) -- (135,29) ;
\draw [color={rgb, 255:red, 0; green, 0; blue, 0 }  ,draw opacity=1 ]   (379,45) -- (242,39) ;
\draw    (255,83.33) -- (267,22) ;
\draw    (285,89.33) -- (301,22) ;
\draw    (321,89.33) -- (334,28) ;
\draw [color={rgb, 255:red, 0; green, 0; blue, 0 }  ,draw opacity=1 ]   (583,42) -- (446,36) ;
\draw    (459,80.33) -- (471,19) ;
\draw    (489,86.33) -- (505,19) ;
\draw    (525,86.33) -- (538,25) ;
\draw    (332.5,43.5) .. controls (333.17,41.24) and (334.63,40.44) .. (336.89,41.1) .. controls (339.15,41.76) and (340.61,40.96) .. (341.27,38.7) .. controls (341.94,36.44) and (343.4,35.64) .. (345.66,36.3) .. controls (347.92,36.96) and (349.38,36.16) .. (350.05,33.9) -- (350.23,33.8) -- (357.25,29.96) ;
\draw [shift={(359,29)}, rotate = 151.31] [color={rgb, 255:red, 0; green, 0; blue, 0 }  ][line width=0.75]    (10.93,-3.29) .. controls (6.95,-1.4) and (3.31,-0.3) .. (0,0) .. controls (3.31,0.3) and (6.95,1.4) .. (10.93,3.29)   ;
\draw    (153.5,44.5) .. controls (154.17,42.24) and (155.63,41.44) .. (157.89,42.1) .. controls (160.15,42.76) and (161.61,41.96) .. (162.27,39.7) .. controls (162.94,37.44) and (164.4,36.64) .. (166.66,37.3) .. controls (168.92,37.96) and (170.38,37.16) .. (171.05,34.9) -- (171.23,34.8) -- (178.25,30.96) ;
\draw [shift={(180,30)}, rotate = 151.31] [color={rgb, 255:red, 0; green, 0; blue, 0 }  ][line width=0.75]    (10.93,-3.29) .. controls (6.95,-1.4) and (3.31,-0.3) .. (0,0) .. controls (3.31,0.3) and (6.95,1.4) .. (10.93,3.29)   ;
\draw    (525.5,79.5) .. controls (526.17,77.24) and (527.63,76.44) .. (529.89,77.1) .. controls (532.15,77.76) and (533.61,76.96) .. (534.27,74.7) .. controls (534.94,72.44) and (536.4,71.64) .. (538.66,72.3) .. controls (540.92,72.96) and (542.38,72.16) .. (543.05,69.9) -- (543.23,69.8) -- (550.25,65.96) ;
\draw [shift={(552,65)}, rotate = 151.31] [color={rgb, 255:red, 0; green, 0; blue, 0 }  ][line width=0.75]    (10.93,-3.29) .. controls (6.95,-1.4) and (3.31,-0.3) .. (0,0) .. controls (3.31,0.3) and (6.95,1.4) .. (10.93,3.29)   ;

\draw (46,91.4) node [anchor=north west][inner sep=0.75pt]    {$L_{1}$};
\draw (79,95.4) node [anchor=north west][inner sep=0.75pt]    {$L_{2}$};
\draw (113,94.4) node [anchor=north west][inner sep=0.75pt]    {$L_{3}$};
\draw (189,38.4) node [anchor=north west][inner sep=0.75pt]    {$L$};
\draw (76,132) node [anchor=north west][inner sep=0.75pt]   [align=left] {Case (a)};
\draw (360,18.4) node [anchor=north west][inner sep=0.75pt]    {$\infty $};
\draw (245,90.4) node [anchor=north west][inner sep=0.75pt]    {$L_{1}$};
\draw (278,94.4) node [anchor=north west][inner sep=0.75pt]    {$L_{2}$};
\draw (312,93.4) node [anchor=north west][inner sep=0.75pt]    {$L_{3}$};
\draw (388,37.4) node [anchor=north west][inner sep=0.75pt]    {$L$};
\draw (275,131) node [anchor=north west][inner sep=0.75pt]   [align=left] {Case (b)};
\draw (449,87.4) node [anchor=north west][inner sep=0.75pt]    {$L_{1}$};
\draw (482,91.4) node [anchor=north west][inner sep=0.75pt]    {$L_{2}$};
\draw (516,90.4) node [anchor=north west][inner sep=0.75pt]    {$L_{3}$};
\draw (592,34.4) node [anchor=north west][inner sep=0.75pt]    {$L$};
\draw (485,131) node [anchor=north west][inner sep=0.75pt]   [align=left] {Case (c)};
\draw (179,14.4) node [anchor=north west][inner sep=0.75pt]    {$\infty $};
\draw (556,55.4) node [anchor=north west][inner sep=0.75pt]    {$\infty $};

\end{tikzpicture}
    \caption{When three discs $D_1$, $D_2$, $D_3$ satisfy the assumptions of points (a), (b) or (c) of \Cref{prop semistable family of discs}. and $D$ is the minimal disc containing $D_1$ and $D_2$, then the special fiber of $\XX_{\{D_1,D_2,D_3,D\}}$, has the shape depicted above (the converse is actually also true). In the picture, $L_i$ is the line corresponding to the disc $D_i$, and $L$ is the line corresponding to the disc $D$.}
    \label{fig three models line}
\end{figure}

\begin{proof}[Proof of \Cref{prop semistable family of discs}]
    Both implications will be proved by way of contradiction.

    Assume that $\XX_{\mathfrak{D}}$ is not semistable, so that there exists an ordinary singular point $P\in \SF{\XX_{\mathfrak{D}}}$ through which three distinct lines $L_1, L_2, L_3\in \Irred(\SF{\XX_{\mathfrak{D}}})$ pass; letting $D_1, D_2$ and $D_3$ the corresponding three discs, \Cref{lemma semistable family of discs}(a) ensures that they satisfy (possibly after performing a permutation) condition (a), (b) or (c) of \Cref{prop semistable family of discs}. Now if $D$ is the minimal disc containing $D_1$ and $D_2$, then we certainly have $D\not\in \mathfrak{D}$: otherwise, we would have $\XX_{\mathfrak{D}}\ge \XX_{\{D_1, D_2, D_3, D\}}$, and this would prevent $L_1$, $L_2$ and $L_3$ from intersecting each other in $\XX_{\mathfrak{D}}$ by \Cref{lemma semistable family of discs}(b).
    
    Conversely, assume that $D_1, D_2, D_3\in \mathfrak{D}$ are three discs satisfying either condition (a), (b) or (c) of \Cref{prop semistable family of discs}, and such that $D\not \in\mathfrak{D}$, where $D$ is the minimal disc containing $D_1$ and $D_2$. Let $P$ be the point of $\SF{\XX_\mathfrak{D}}$ such that $\Ctr(\XX_{\mathfrak{D}},\XX_D)=\{ P \}$; in other words, $P$ is the point of $\SF{\XX_{\mathfrak{D}}}$ to which the unique line $L$ of which the special fiber of $\XX_D$ consists is contracted. We observe that, in the model $\XX_{\{D_1, D_2, D_3, D\}}$, the line $L$ corresponding to the disc $D$ intersects the rest of the special fiber at more than $2$ points (this follows from \Cref{lemma semistable family of discs}(b)); the same will consequently also be true in the model $\XX_{\mathfrak{D}\cup \{D\}}\ge \XX_{\{D_1, D_2, D_3, D\}}$. Hence, at least $3$ lines will pass through the point $P\in \SF{\XX_{\mathfrak{D}}}$ to which the line $L\in \Irred(\SF{\XX_{\mathfrak{D}\cup \{D\}}})$ gets contracted, which implies that $\XX_{\mathfrak{D}}$ is not semistable.
\end{proof}

The following way of rephrasing the conditions (a), (b) and (c) of \Cref{prop semistable family of discs} will also be useful later.
\begin{prop}
     \label{prop discs positions and ctr}
     Given a collection of discs $\mathfrak{D}$ and a disc $D$ of $\bar{K}$, the following are equivalent:
     \begin{enumerate}[(i)]
         \item there exist discs $D_1, D_2, D_3\in \mathfrak{D}$ which satisfy conditions (a), (b) or (c) of \Cref{prop semistable family of discs} and the disc $D$ is the minimal disc of $\bar{K}$ containing $D_1$ and $D_2$;
         \item we have $|\Ctr(\XX_D,\XX_{\mathfrak{D}})|\ge 3$.
     \end{enumerate}
     \begin{proof}
        First assume that (i) holds.  It follows from \Cref{lemma semistable family of discs}(b) that $|\Ctr(\XX_D,\XX_{\{D_1, D_2, D_3\}})|= 3$, which implies (ii), since we clearly have $\Ctr(\XX_D,\XX_{\{D_1, D_2, D_3\}})\subseteq \Ctr(\XX_D,\XX_{\mathfrak{D}})$.  Now assume that (ii) holds; from $|\Ctr(\XX_D,\XX_{\mathfrak{D}})|\ge 3$ one deduces that there exist discs $D_1, D_2,$ and $D_3\in \mathfrak{D}$ such that $\Ctr(\XX_D,\linebreak[0] \XX_{\lbrace D_1, D_2, D_3\rbrace})= \bigcup_{i=1}^3 \Ctr(\XX_D,\allowbreak \XX_{D_i})$ consists of three distinct points of $\SF{\XX_D}$; now (i) follows from straightforward calculations, taking into account \Cref{prop relative position smooth models line}.
     \end{proof}
\end{prop}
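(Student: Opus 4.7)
For the direction (i)$\Rightarrow$(ii), I would invoke Lemma~\ref{lemma semistable family of discs}(b) directly: it gives us that in the special fiber of $\XX_{\{D_1,D_2,D_3,D\}}$, the three lines $L_1, L_2, L_3$ attached to $D_1, D_2, D_3$ meet the line $L$ attached to $D$ at three \emph{distinct} points and do not meet one another. Equivalently, in $\XX_D$ the three discs $D_1,D_2,D_3$ produce three distinct centers of contraction, i.e.\ $|\Ctr(\XX_D,\XX_{\{D_1,D_2,D_3\}})|=3$. Since $\XX_{\mathfrak{D}}$ dominates $\XX_{\{D_1,D_2,D_3\}}$, we have the inclusion $\Ctr(\XX_D,\XX_{\{D_1,D_2,D_3\}})\subseteq \Ctr(\XX_D,\XX_{\mathfrak{D}})$, so (ii) follows.

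For (ii)$\Rightarrow$(i), since components of $\SF{\XX_{\mathfrak{D}}}$ correspond bijectively to elements of $\mathfrak{D}$ (Proposition~\ref{prop collections of discs}), the inequality $|\Ctr(\XX_D,\XX_{\mathfrak{D}})|\ge 3$ immediately yields three (necessarily distinct) discs $D_1,D_2,D_3\in\mathfrak{D}\setminus\{D\}$ whose associated contraction points $\Ctr(\XX_D,\XX_{D_i})$ are three distinct points of $\SF{\XX_D}$. The remaining task is to verify that conditions (a), (b), or (c) of Proposition~\ref{prop semistable family of discs} hold for some labeling, and that $D$ is the minimal disc containing $D_1$ and $D_2$.

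To do this, I would simply case-split using Proposition~\ref{prop relative position smooth models line}, which tells us that for each $i$, the contraction point $\Ctr(\XX_D,\XX_{D_i})$ equals $\infty\in\SF{\XX_D}$ whenever $D\subsetneq D_i$ or $D\cap D_i=\varnothing$, and equals a specific affine residue $\overline{\beta^{-1}(\alpha_i-\alpha)}\neq\infty$ when $D_i\subsetneq D$ (with $D=D_{\alpha,v(\beta)}$). Since the three contraction points are distinct, at most one of the $D_i$ fails to be strictly contained in $D$; after relabeling so that $D_1,D_2\subsetneq D$, the distinctness of the two affine residue classes means no disc strictly smaller than $D$ contains both $D_1$ and $D_2$, hence $D$ is the minimal disc containing them. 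If also $D_3\subsetneq D$ then all three are mutually disjoint and every disc containing $D_1,D_2$ contains $D$ hence contains $D_3$ too (condition (a)); if $D_3\cap D=\varnothing$ then $D_1,D_2,D_3$ are mutually disjoint and $D$ witnesses condition (b); if $D\subsetneq D_3$ then $D_3\supseteq D_1\cup D_2$ but $D$ is strictly smaller and still contains $D_1,D_2$, so $D_3$ is not minimal and condition (c) holds.

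The only mildly delicate step is the case-split in the second direction; everything else is a direct appeal to Lemma~\ref{lemma semistable family of discs}(b) and the already-established dictionary between collections of discs and special fibers.
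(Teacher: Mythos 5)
Your proposal is correct and follows the same route as the paper: the direction (i)$\Rightarrow$(ii) is identical, and for (ii)$\Rightarrow$(i) you have simply carried out explicitly the "straightforward calculations, taking into account \Cref{prop relative position smooth models line}" that the paper leaves to the reader. In particular, your observation that at most one of the three contraction points can equal $\infty$ (so after relabeling $D_1, D_2 \subsetneq D$), your deduction that $D$ is the minimal disc containing $D_1$ and $D_2$ from the distinctness of the two affine reductions, and your three-way case split on the relative position of $D_3$ to $D$ all check out and match conditions (a), (b), (c) of \Cref{prop semistable family of discs} exactly as intended.
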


Given any non-empty collection of discs $\mathfrak{D}$, we can complete it to a family $\mathfrak{D}^\sst$ of discs corresponding to a semistable model: it is enough that, for every three discs $D_1, D_2, D_3\in \mathfrak{D}$ satisfying the conditions (a), (b) or (c) of \Cref{prop semistable family of discs}, the minimal disc containing $D_1$ and $D_2$ is added to $\mathfrak{D}$. It is not difficult to see that the resulting family of discs $\mathfrak{D}^\sst\supseteq \mathfrak{D}$ satisfies the hypothesis of \Cref{prop semistable family of discs} and consequently corresponds to the minimal semistable model $\XX_{\mathfrak{D}^\sst}$ of the line $X$ that dominates $\XX_\mathfrak{D}$. 

\begin{rmk}
    \label{rmk semistabilization reversed}
    Suppose a non-empty collection of discs $\mathfrak{D}$ corresponding to a semistable model $\XX_{\mathfrak{D}}$ is given, and let $\mathfrak{D}'\subseteq \mathfrak{D}$ be a non-empty subfamily. Suppose that, for all $D\in \mathfrak{D}\setminus \mathfrak{D}'$, there exists three discs $D_1, D_2, D_3\in \mathfrak{D}$ satisfying the conditions (a), (b) or (c) of \Cref{prop semistable family of discs}, and such that $D$ is the minimal disc of $\bar{K}$ containing $D_1$ and $D_2$ -- by \Cref{prop discs positions and ctr}, this condition can be equivalently expressed by saying that, for all $D\in \mathfrak{D}\setminus \mathfrak{D}'$, the set $\Ctr(\XX_D,\XX_{\mathfrak{D}})$ consists of three or more points. Then, $\mathfrak{D}$ can be reconstructed from $\mathfrak{D}'$ by applying the completion procedure described above, i.e.\ we have $\mathfrak{D}=(\mathfrak{D}')^\sst$.
\end{rmk}

\subsection{Part-square decompositions} \label{sec models hyperelliptic part-square}

We begin this subsection by defining a \textit{part-square decomposition}, and then we study part-square decompositions with certain properties.

\begin{dfn} \label{dfn qrho}
    Given a nonzero polynomial $h(x) \in \bar{K}[z]$, a \emph{part-square decomposition} of $h$ is a way of writing $h = q^2 + \rho$ for some $q(x), \rho(x) \in \bar{K}[x]$, with $\deg(q)\le \lceil\deg(h)/2\rceil$.
\end{dfn}

\begin{rmk}
    \label{rmk degree of rho}
    The definition forces $\deg(\rho)\le \deg(h)$ when $h$ has even degree and $\deg(\rho)\le \deg(h)+1$ when $h$ has odd degree. The definition allows $q$ to be equal to zero.
\end{rmk}

Given a part-square decomposition $h = q^2 + \rho$, we define the rational number $t_{q, \rho}:=v(\rho)-v(h) \in \qqinfty$.

\begin{dfn} \label{dfn good totally odd}

We define the following properties of a part-square decomposition $h = q^2 + \rho$.

\begin{enumerate}[(a)]
    \item The decomposition is said to be \emph{good} either if we have $t_{q,\rho} \geq 2v(2)$ or if we have $t_{q,\rho} < 2v(2)$ and there is no decomposition $h = \tilde{q}^2 + \tilde{\rho}$ such that $t_{\tilde{q}, \tilde{\rho}} > t_{q, \rho}$.
    \item The decomposition is said to be \emph{totally odd} if $\rho$ only consists of odd-degree terms.
\end{enumerate}

\end{dfn}

\begin{rmk} \label{rmk good decompositions}
    The trivial part-square decomposition $h=0^2+h$ has $t_{0,h} = 0$; this immediately implies that all good decompositions $h = q^2 + \rho$ satisfy $t_{q,\rho} \ge 0$. When $p\neq 2$, the converse also holds because we have $2v(2)=0$.
\end{rmk}

\begin{rmk} \label{rmk same t for good}
    If $h=q^2+\rho=(q')^2+\rho'$ are two good part-square decompositions for the same nonzero polynomial $h$, then we have $\truncate{t_{q,\rho}}=\truncate{t_{q',\rho'}}$ directly from \Cref{dfn good totally odd}.
\end{rmk}

\begin{prop} \label{prop good decomposition}
    Let $h = q^2 + \rho$ be a part-square decomposition satisfying $t_{q,\rho} < 2v(2)$.  Then we have the following.
    \begin{enumerate}[(a)]
        \item The decomposition $h = q^2 + \rho$ is good if and only if the normalized reduction of $\rho$ is not the square of a polynomial with coefficients in $k$.
        \item Suppose that the decomposition $h = q^2 + \rho$ is good and that $h = \tilde{q}^2 + \tilde{\rho}$ is another good decomposition.  Then given any normalized reductions of $\rho$ and $\tilde{\rho}$ respectively, the same odd degrees appear among terms in these normalized reductions, and their derivatives are equal up to scaling.
    \end{enumerate}
    
    \begin{proof}
        We begin by proving part (a).  If $t_{q,\rho}<0$, the decomposition is not good (see \Cref{rmk good decompositions}), and any normalized reduction of $\rho$ is a square, since it is a scalar multiple of a normalized reduction of $q^2$. We now have to prove the two implications when $t_{q,\rho}\ge 0$.
    
        Suppose that $h = q^2 + \rho$ satisfies $0\le t_{q,\rho} < 2v(2)$ but is not good, so that another decomposition $h = \tilde{q}^2(x) + \tilde{\rho}(x)$ with $t_{\tilde{q}, \tilde{\rho}} > t_{q, \rho}$ can be found.
        Let us now consider $q + \tilde{q}$ and $q - \tilde{q}$: their product has valuation $v(q^2-\tilde{q}^2) = v(\tilde{\rho}-\rho) = v(\rho)$, while their difference has valuation
        \begin{equation} \label{eq v(2 tilde q)}
            v(2\tilde{q})=v(2)+\frac{1}{2}v(h-\tilde{\rho}) \geq v(2)+\frac{1}{2}v(h) > \frac{1}{2}v(\rho).
        \end{equation}
        From this, it is immediate to deduce that they must both have valuation equal to $\frac{1}{2}v(\rho)$.  We may now write 
        \begin{equation} \label{eq good decomposition}
        	\rho = \tilde{\rho} + 2\tilde{q}(\tilde{q}-q) -(\tilde{q}-q)^2.
        \end{equation}
        But we observe that the first two summands both have valuation $>v(\rho)$. This implies that the normalized reduction of $\rho$ is a square.
        		
        Conversely, suppose that the decomposition satisfies $0\le t_{q,\rho} < 2v(2)$ and that the normalized reduction of $\rho$ is a square; this is clearly equivalent to saying that we can form a part-square decomposition $\rho = q_1^2 + \rho_1$ of the polynomial $\rho$ that satisfies $t_{q_1,\rho_1}>0$; hence, we have $v(q_1) = \frac{1}{2}v(\rho)$ and $v(\rho_1) > v(\rho)$.
        
        Let us now consider the part-square decomposition $h = \tilde{q}^2 + \tilde{\rho}$, where $\tilde{q}:=q + q_1$ and $\tilde{\rho}=\rho_1 - 2q q_1$. Notice that the assumption $t_{q,\rho}\ge 0$ implies that $v(q)\ge v(h)/2$; we therefore have $v(2q q_1) \geq v(2) + \frac{1}{2} v(h) + \frac{1}{2}v(\rho) > v(\rho)$. We conclude that $v(\tilde{\rho})=v(\rho_1-2qq_1)>v(\rho)$, i.e.\  $t_{\tilde{q}, \tilde{\rho}}> t_{q, \rho}$; therefore, the original part-square decomposition $h = q^2 + \rho$ was not good.  Thus, both directions of part (a) are proved.
        
        We now turn to part (b) and assume that $h = q^2 + \rho = \tilde{q}^2 + \tilde{\rho}$ are both good decompositions.  Then both (\ref{eq v(2 tilde q)}) and (\ref{eq good decomposition}) are still valid, and the fact that $v(\rho) = v(\tilde{\rho})$ implies that $v(2\tilde{q}(\tilde{q}-q) -(q-\tilde{q})^2) \geq v(\rho)$.  Then if $v(q - \tilde{q}) < \frac{1}{2}v(\rho)$, from (\ref{eq good decomposition}) we must have $v(2\tilde{q}(\tilde{q} - q)) = v((q - \tilde{q})^2) < v(\rho)$, which contradicts (\ref{eq v(2 tilde q)}).  We therefore have $v(\tilde{q} - q) \geq \frac{1}{2}v(\rho)$, from which $v(2\tilde{q}(\tilde{q} - q)) > \frac{1}{2}v(\rho)$ follows from (\ref{eq v(2 tilde q)}).
        
        Let $\gamma \in \bar{K}$ be a scalar with $v(\gamma) = v(\rho) = v(\tilde{\rho})$.  If $v(\tilde{q} - q) > \frac{1}{2}v(\rho)$, then (\ref{eq good decomposition}) shows that $v(\tilde{\rho} - \rho) > v(\rho)$ and so $\gamma^{-1}(\tilde{\rho} - \rho)$ has positive valuation; therefore, the reductions of $\gamma^{-1}\rho$ and $\gamma^{-1}\tilde{\rho}$ are equal, and we are done.  If $v(\tilde{q} - q) = \frac{1}{2}v(\rho)$, then $\gamma^{-1}(\tilde{\rho} - \rho)$ reduces to a square (namely a normalized reduction of $\tilde{q} - q$ squared); the square of a polynomial in $k[x]$ has only even-degree terms, and its derivative vanishes, which shows that the reductions of $\gamma^{-1}\rho$ and $\gamma^{-1}\tilde{\rho}$ have the same odd degrees appearing and have the same derivative.  Thus again we are done, and part (b) is proved.
    \end{proof}
\end{prop}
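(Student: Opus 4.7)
The engine of both parts is the algebraic identity
\[
\rho - \tilde\rho \;=\; \tilde q^2 - q^2 \;=\; 2\tilde q(\tilde q - q) - (\tilde q - q)^2,
\]
combined with the assumption $2v(2) > t_{q,\rho} = v(\rho)-v(h)$, which is precisely what makes the cross-term $2\tilde q(\tilde q - q)$ negligible compared to $\rho$. Everything else will be bookkeeping with the ultrametric inequality.

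\textbf{Plan for (a).} I would first dispatch the case $t_{q,\rho} < 0$: here $v(\rho) < v(h)$ forces $v(q^2) = v(\rho)$ by ultrametric, and $v(\rho + q^2) = v(h) > v(\rho)$ shows that a normalized reduction of $\rho$ coincides with one of $-q^2$; since $k$ is algebraically closed this is a square in $k[x]$, and the trivial decomposition $(0,h)$ has $t_{0,h} = 0 > t_{q,\rho}$, so $(q,\rho)$ is not good. Both sides of the biconditional therefore fail in this range.

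For $0 \le t_{q,\rho} < 2v(2)$ I would argue each direction by the same mechanism. For the implication ``not good $\Rightarrow$ normalized reduction is a square'', take $(\tilde q, \tilde\rho)$ with $t_{\tilde q,\tilde\rho} > t_{q,\rho}$. Then $v(\tilde\rho) > v(\rho)$, and from $v(\tilde q) \ge \tfrac12 v(h)$ (since $t_{\tilde q,\tilde\rho}\ge 0$) together with $v(\tilde q - q) \ge \tfrac12 v(\rho)$ (obtained by comparing $v((\tilde q-q)(\tilde q+q)) = v(\rho)$ with $v(2\tilde q) \ge v(2) + \tfrac12 v(h) > \tfrac12 v(\rho)$), one gets $v(2\tilde q(\tilde q - q)) > v(\rho)$ exactly because $2v(2) > t_{q,\rho}$. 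The identity above then exhibits $\rho$ and $-(\tilde q - q)^2$ as sharing a normalized reduction. Conversely, if a normalized reduction of $\rho$ is $\bar q_1^2$, lift $\bar q_1$ to $q_1 \in \mathcal{O}_{\bar K}[x]$ of zero valuation, pick $\gamma \in \bar K$ with $v(\gamma) = v(\rho)$ so that $\rho/\gamma$ reduces to $\bar q_1^2$, choose $\mu \in \bar K$ with $\mu^2 = \gamma$, and set $\tilde q := q + \mu q_1$; expanding $\tilde\rho := h - \tilde q^2 = (\rho - \mu^2 q_1^2) - 2\mu q q_1$ shows $v(\tilde\rho) > v(\rho)$, again using $2v(2) > t_{q,\rho}$ to control the cross-term $2\mu q q_1$, so $(q,\rho)$ is improved and hence not good.

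\textbf{Plan for (b), and the main obstacle.} Given two good decompositions $(q,\rho)$ and $(\tilde q,\tilde\rho)$, \Cref{rmk same t for good} gives them a common $t$-value, and the estimates above yield $v(2\tilde q(\tilde q-q)) > v(\rho)$ and $v((\tilde q - q)^2) \ge v(\rho)$. Hence either $v(\rho - \tilde\rho) > v(\rho)$ (so the normalized reductions literally agree), or the difference of the normalized reductions equals the normalized reduction of the perfect square $(\tilde q - q)^2 \in \bar K[x]$. The hypothesis $t_{q,\rho} < 2v(2)$ together with goodness ($t_{q,\rho}\ge 0$) forces $p=2$, and in characteristic $2$, squares in $k[x]$ consist only of even-degree terms with vanishing derivative; therefore the odd-degree terms and the derivatives of the normalized reductions of $\rho$ and $\tilde\rho$ agree up to scaling. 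The main technical difficulty will be the careful management of the interlocking valuations of $q$, $\tilde q$, $\tilde q - q$, $2\tilde q$, and the quadratic cross-terms; no individual step is deep, but the strict wildness inequality $2v(2) > t_{q,\rho}$ must be invoked at exactly the right place at each turn, and one should expect the proposition to collapse (via the trivial decomposition) in the tame case where $2v(2) = 0$.
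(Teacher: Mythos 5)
Your proposal is correct and follows essentially the same route as the paper's proof: the same identity $\rho - \tilde\rho = 2\tilde q(\tilde q-q) - (\tilde q-q)^2$, the same use of the strict inequality $t_{q,\rho} < 2v(2)$ to kill the cross-term, the same explicit construction of an improving $\tilde q$ in the converse direction of (a), and the same reduction of (b) to the fact that squares in $k[x]$ (in residue characteristic $2$) have even-degree support and zero derivative. One small point to tighten when writing this up: the bound $v(\tilde q-q)\ge\tfrac12 v(\rho)$ you import into (b) was derived in (a) under the hypothesis $v(\tilde\rho)>v(\rho)$, which gave $v(\rho-\tilde\rho)=v(\rho)$ exactly; in (b) you only know $v(\rho-\tilde\rho)\ge v(\rho)$, so the argument needs a short case split (or the contradiction the paper uses) comparing $v(\tilde q-q)$ with $v(2\tilde q)$ before concluding.
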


\begin{cor} \label{cor totally odd is good}
Every totally odd part-square decomposition of a polynomial is good.
    \begin{proof}
    Suppose that the decomposition $h = q^2 + \rho$ is totally odd.  If $t_{q, \rho} \geq 2v(2)$, then we are already done, so assume that $t_{q, \rho} < 2v(2)$.  Then since $\rho$ consists only of odd-degree terms, the same is true of any normalized reduction of $\rho$, which consequently cannot be the square of any polynomial in $k[z]$. Then \Cref{prop good decomposition} implies that the decomposition is good.
    \end{proof}
\end{cor}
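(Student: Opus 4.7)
The plan is to apply \Cref{prop good decomposition}(a), which characterizes goodness of a part-square decomposition with $t_{q,\rho} < 2v(2)$ in terms of the failure of a normalized reduction of $\rho$ to be the square of a polynomial in $k[z]$. First, I would separate out the case $t_{q,\rho} \geq 2v(2)$: this is immediate from \Cref{dfn good totally odd}, since the decomposition is then good by definition. So I may assume $t_{q,\rho} < 2v(2)$; this in particular forces $\rho \neq 0$ (otherwise $t_{q,\rho} = +\infty$), so a normalized reduction $\bar{\rho} \in k[z]$ of $\rho$ is a well-defined nonzero polynomial.

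Next, I would observe that forming a normalized reduction, being a rescaling by some $\gamma^{-1} \in \bar{K}^\times$ with $v(\gamma) = v(\rho)$ followed by reduction modulo the maximal ideal, preserves the set of monomial degrees appearing with nonzero coefficient (this is essentially the content of \Cref{rmk normalized reduction}). Thus, since $\rho$ is totally odd, the normalized reduction $\bar{\rho}$ also consists only of odd-degree monomials.

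The final ingredient is the elementary remark that a nonzero square $Q^2 \in k[z]$ must contain at least one even-degree monomial: writing $Q = \sum_{i \geq m} a_i z^i$ with $a_m \neq 0$, the coefficient of $z^{2m}$ in $Q^2$ equals $a_m^2 \neq 0$, yielding a nonzero term of the even degree $2m$. Hence $\bar{\rho}$, having only odd-degree terms, cannot equal $Q^2$ for any $Q \in k[z]$, and \Cref{prop good decomposition}(a) gives that the decomposition is good. I do not anticipate any real obstacle; the corollary is essentially a direct consequence of \Cref{prop good decomposition}, and the only substantive step is the easy observation about squares in $k[z]$ noted above.
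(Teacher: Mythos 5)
Your proof is correct and takes essentially the same route as the paper's: split on whether $t_{q,\rho} \geq 2v(2)$, then use \Cref{prop good decomposition}(a) by noting that a normalized reduction of a totally odd $\rho$ has only odd-degree terms and hence cannot be a square in $k[z]$. Your explicit justification that a nonzero square in $k[z]$ must have an even-degree term (via the lowest-degree coefficient $a_m^2 \neq 0$) and the check that $\rho \neq 0$ are just slightly more spelled out than in the paper, but the argument is the same.
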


We now want to show that a good part-square decomposition of a polynomial always exists, for which, thanks to Corollary \ref{cor totally odd is good}, it suffices to show that a polynomial always has a totally odd part-square decomposition.

\begin{prop} \label{prop totally odd existence}

Given a nonzero polynomial $h(z) \in \bar{K}[z]$, there always exists a totally odd part-square decomposition $h = q^2 + \rho$ with $q(z), \rho(z) \in \bar{K}[z]$.

\end{prop}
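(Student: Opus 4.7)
The plan is to reduce the claim to a single factorization statement about an even polynomial over $\bar K$, which is then immediate from algebraic closure.

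First, I will split everything into even and odd parts. Since $\charact(K)\ne 2$, every polynomial $h\in\bar K[z]$ decomposes uniquely as $h = h_e + h_o$ with $h_e$ even and $h_o$ odd. Writing a candidate $q = q_e + q_o$ similarly, a direct computation (using that the square of an even polynomial and the square of an odd polynomial are both even, while the product of an even and an odd polynomial is odd) gives
\[
e(q^2) \;=\; q_e^2 + q_o^2,
\]
where $e(\,\cdot\,)$ denotes the even part. The condition that $\rho = h - q^2$ be totally odd is exactly the condition $e(q^2) = h_e$, so the problem reduces to producing an even polynomial $q_e$ and an odd polynomial $q_o$ satisfying $q_e^2 + q_o^2 = h_e$.

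Next, I rewrite this in terms of polynomials in the new variable $w = z^2$. Write $h_e(z) = H(z^2)$, $q_e(z) = \hat A(z^2)$, and $q_o(z) = z\,\hat B(z^2)$ for polynomials $H,\hat A,\hat B \in \bar K[w]$. The reduced problem becomes
\[
H(w) \;=\; \hat A(w)^2 \;+\; w\,\hat B(w)^2.
\]
Substituting $w = -z^2$ turns the right-hand side into a difference of squares:
\[
H(-z^2) \;=\; \bigl(\hat A(-z^2) - z\,\hat B(-z^2)\bigr)\bigl(\hat A(-z^2) + z\,\hat B(-z^2)\bigr),
\]
so it suffices to factor $H(-z^2) = \tilde Q(-z)\,\tilde Q(z)$ for some $\tilde Q(z)\in \bar K[z]$. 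Indeed, if one sets $\tilde Q = E + O$ with $E$ even and $O$ odd, and then writes $E(z) = A(z^2)$, $O(z) = zB(z^2)$, the identity $\tilde Q(z)\tilde Q(-z) = E(z)^2 - O(z)^2$ rearranges, via $\hat A(w):=A(-w)$ and $\hat B(w):=B(-w)$, to the required decomposition of $H$.

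The key observation is that $H(-z^2)$ is even in $z$, so over the algebraically closed field $\bar K$ its roots come in pairs $\pm\gamma_1,\ldots,\pm\gamma_d$ with $d = \deg H$. Choosing
\[
\tilde Q(z) \;:=\; c\prod_{i=1}^d (z - \gamma_i)
\]
for a suitable $c\in \bar K^\times$ (namely $c^2$ equal to the leading coefficient of $H(-z^2)/\prod_i(z^2 - \gamma_i^2)$) produces $\tilde Q(z)\tilde Q(-z) = H(-z^2)$ on the nose. This factorization is essentially the only nontrivial algebraic content of the argument.

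Finally, I verify the degree bound. Since $\deg H\le \lfloor n/2\rfloor$, the polynomial $\tilde Q$ has degree at most $\lfloor n/2\rfloor$, and hence its even and odd parts (as polynomials in $z$) have degree at most $\lfloor n/2 \rfloor$ as well. This yields
\[
\deg q \;=\; \max\bigl(2\deg \hat A,\; 2\deg \hat B + 1\bigr) \;\le\; \deg \tilde Q \;\le\; \lfloor n/2\rfloor \;\le\; \lceil n/2\rceil,
\]
so $h = q^2 + \rho$ is a genuine part-square decomposition and $\rho$ is totally odd by construction. The only ``obstacle'' is minor and amounts to careful bookkeeping: tracking even and odd parts across the substitution $w = -z^2$ and ensuring the leading constants get scaled correctly.
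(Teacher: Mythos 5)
Your proof is correct. It follows the same broad outline as the paper's (split $h = h_e + h_o$, rewrite $h_e(z) = H(z^2)$, invoke algebraic closure to extract square roots) but takes a genuinely different route through the middle. The paper factors $h_e(z) = \hat{h}_+(z)\hat{h}_-(z)$ by taking square roots of the roots of $\hat{h}$ and then verifies \emph{coefficient-by-coefficient} that the even-degree coefficients of $q^2$ match those of $h_e$, where $q$ is $\hat{h}_+$ with its odd-indexed coefficients twisted by $\sqrt{-1}$. You instead isolate the identity $e(q^2) = q_e^2 + q_o^2$, turn the requirement into the quadratic-form equation $H(w) = \hat{A}(w)^2 + w\hat{B}(w)^2$, and then observe that the substitution $w = -z^2$ converts this into a difference of squares and hence into the factorization $H(-z^2) = \tilde{Q}(z)\tilde{Q}(-z)$, which an algebraically closed field delivers at once because $H(-z^2)$ is an even polynomial. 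The $\sqrt{-1}$-twist in the paper's argument is absorbed into your change of variable $w \mapsto -z^2$, and the explicit coefficient bookkeeping disappears entirely; your route is shorter and more structural, while the paper's is more hands-on. One small slip worth flagging: your prescription sets $c^2$ equal to the leading coefficient of $H(-z^2)$, but since $\tilde{Q}(z)\tilde{Q}(-z) = c^2(-1)^{\deg H}\prod_i(z^2-\gamma_i^2)$ while $H(-z^2)$ has leading coefficient $(-1)^{\deg H}$ times that of $H$, the correct choice is $c^2$ equal to the leading coefficient of $H$ itself (the two differ by a sign when $\deg H$ is odd). Since $\bar{K}$ is algebraically closed a suitable $c$ exists either way, so this is cosmetic and the argument stands.
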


\begin{proof}

We write $h_e(z)$ and $h_o(z)$ for the sums of the even- and odd- degree terms of $h(z)$ respectively, so that $h = h_e + h_o$.  We denote the degree of $h_e$ by $2m$. As all terms of $h_e$ have even degree, we may write $h_e(z) = \hat{h}(z^2)$ for some uniquely determined $\hat{h}(z) \in {K}[z]$ of degree $m$. Let $\alpha_1, \ldots, \alpha_m\in \bar{K}$ be the roots of $\hat{h}$, and let us denote by $c \in K$ its leading coefficient. Let us also choose a square root $\sqrt{c} \in \bar{K}$ of $c$ and a square root $\sqrt{\alpha_i} \in \bar{K}$ of each root of $\hat{h}$, and let us define
\begin{equation*}
	\begin{split}
		\hat{h}_+(z):=&\sqrt{c}\prod_i (z+\sqrt{\alpha_i}) = c_0 z^m + c_1 z^{m-1} + \ldots + c_m,\\
		\hat{h}_-(z):=&\sqrt{c}\prod_i (z-\sqrt{\alpha_i}) = c_0 z^m - c_1 z^{m-1} + \ldots + (-1)^m c_m.\\
	\end{split}
\end{equation*}
It is clear that we have $h_e(z) = \hat{h}_+(z) \hat{h}_-(z)$; exploiting this factorization of $h_e$, we may write the $k$th-order coefficient of $h$, whenever $k$ is even, as 
\begin{equation}
	\sum_{i+j=2m-k} (-1)^i c_i c_j.
	\label{Coefficientk}
\end{equation}
If we now choose a square root $\sqrt{-1} \in \bar{K}$ of $-1$, and we define 
\begin{equation*}
	c'_i := \begin{cases}
		c_i & \text{if $i$ is even},\\
		\sqrt{-1}\cdot c_i & \text{if $i$ is odd},
	\end{cases}
\end{equation*}
we may rewrite the expression (\ref{Coefficientk}), for all even values of $k$, in the more symmetric form
\begin{equation}
	\sum_{i+j=2m-k} c'_i c'_j.
\end{equation}
If we now set $q(z) = c'_0 z^m + \ldots + c'_m$, the even-degree terms of $q^2$ reproduce $h_e$. Hence, the polynomial $\rho(z) := h(z) - q^2(z)$ only consists of odd-degree terms: in other words, $h = q^2 + \rho$ is a totally odd part-square decomposition for $h$.
\end{proof}

We note that, if a nonzero polynomial $h\in \bar{K}[z]$ is written as a product of factors $h=\prod_{i=1}^N h_i$ with $h_i\in \bar{K}[z]$, then, given part-square decompositions $h_i=q_i^2+\rho_i$, with $q_i, \rho_i\in \bar{K}[z]$, one can use them to form a part-square decomposition $h = q^2+\rho$, where $q = \prod_{i=1}^N q_i$ and $\rho = h-q^2$. We have the following.
\begin{prop}
    \label{prop product part-square}
    In the setting above, let $t_i:=t_{q_i,\rho_i}$ and $t:=t_{q,\rho}$.
    \begin{enumerate}[(a)]
        \item If $t_i\ge 0$ for all $i$, then we have $t\ge \min\{t_1,\ldots t_N\}$.
        \item If $t_i\ge 0$ for all $i$, and  the minimum $\min\{t_1,\ldots t_N\}$ is achieved by only one of the $t_i$'s, then we have $t= \min\{t_1,\ldots t_N\}$; moreover, in this case, if $i_0$ is the index such that $t_{i_0}<t_i$ for all $i$, then the part-square decomposition of $h$ is good if and only if that of $h_{i_0}$ is.
        \item Assume that $N=2$, and suppose that, for all roots $s_1$ in $\bar{K}$ of $h_1$ and for all roots $s_2$ of $h_2$, we have $v(s_1)>0$ but $v(s_2)<0$; assume, moreover, that both decompositions $h_i = q_i^2 + \rho_i$ are good. Then, if $\min\{t_1,t_2\}<2v(2)$, we have $t=\min\{t_1,t_2\}$, and the corresponding decomposition of $h$ is also good.
    \end{enumerate}
    \begin{proof}
        Let us first address points (a) and (b). It is clearly enough to prove these results for $N=2$. In this case, we have
        \begin{equation}
            \rho=h-q^2 = (q_1^2+\rho_1)(q_2^2+\rho_2)-(q_1 q_2)^2 = g_1+g_2+g_3,
        \end{equation}
        where $g_1, g_2, g_3\in \bar{K}[z]$ are the polynomials
        \begin{equation}
            g_1=\rho_1 q_2^2,\qquad g_2=\rho_2 q_1^2,\qquad g_3=\rho_1 \rho_2.
        \end{equation}
        Since $t_i\ge 0$, i.e.\ $v(\rho_i)\ge v(h_i)$, we have $v(q_i^2)=v(h_i-\rho_i)\ge v(h_i)$ for $i=1, 2$. Moreover, we have $v(\rho_i)=v(h_i)+t_i$.  We therefore get $
            v(g_1)\ge t_1+v(h), 
            v(g_2)\ge t_2+v(h), \text{ and }
            v(g_3)\ge t_1+t_2+v(h);$
        all three thresholds are clearly $\ge \min\{t_1,t_2\}+ v(h)$, from which we deduce $v(\rho)\ge \min\{t_1,t_2\}+ v(h)$, and thus $t\ge \min\{t_1,t_2\}$. 
        
        To prove part (b), let us now further assume that $t_1<t_2$. This implies, in particular, $t_2>0$; hence we have $v(\rho_2)>v(h_2)$ and $v(q_2^2)=v(h_2-\rho_2)=v(h_2)$, and we consequently get $v(g_1)= t_1+v(h)$. We deduce from this that $v(g_2)\ge t_2+v(h)>v(g_1)$, and $v(g_3)\ge t_1+t_2+v(h)>v(g_1)$. It follows that $v(\rho)=v(g_1)=t_1+h$, implying that $t=t_1=\min\{t_1,t_2\}$. Moreover, the normalized reduction of $\rho$ equals that of $g_1=\rho_1 q_2^2$; as a consequence, the normalized reduction of $\rho$ is a square if and only if that of $\rho_1$ is.  From this, together with the fact that $t_1=t$, we deduce that $h=q^2+\rho$ is a good decomposition if and only if $h_1=q_1^2+\rho_1$ is (see \Cref{prop good decomposition}).
        
        Let us now address part (c). Let $d_i$ be the degree of $h_i$, and let $\gamma_i\in \bar{K}^\times$ be an element of valuation $v(h_i)$ for $i=1,2$. Since we are assuming that $v(s_1)>0$, for all root $s_1$ of $h_1$ we have $\overline{\gamma_1^{-1}h_1}(z)=c_1 z^{d_1}$ for some $c_1\in k^\times$; similarly, since we have $v(s_2)<0$ for all roots $s_2$ of $h_2$ we have that $\overline{\gamma_2^{-1}h_2}(z)$ is a constant $c_2\in k^\times$.
        Since the decompositions of $h_1$ and $h_2$ are assumed to be good, we have $t_1, t_2\ge 0$; moreover, from the fact that the normalized reduction of $h_2$ is a square we deduce, via \Cref{prop good decomposition}, that $t_2>0$.
        Now, when $t_1\neq t_2$ the conclusion follows from (b). We are consequently only left to address the case where $0<t_1=t_2<2v(2)$.  Here we already know that $t>0$ from part (a), and that $d_1$ must necessarily be even, because, since $t_1>0$, $\overline{\gamma_1^{-1}h_1}=\overline{\gamma_1^{-1}q_1^2}$ must be a square; let us write $d_1=2m$.
        
        In the case we are considering, we clearly have that $v(g_1)=v(g_2)=t_1+v(h)$, while $v(g_3)>t_1+v(h)$. If we let $\gamma\in \bar{K}^\times$ be any element of valuation $t$, we consequently have that
        \begin{equation}
            \overline{\gamma_1^{-1}\gamma_2^{-1}\gamma^{-1}\rho} =  (\overline{\gamma_2^{-1}h_2}) \overline{r_1} + (\overline{\gamma_1^{-1}h_1}) \overline{r_2} = c_2 \overline{r_1} + c_1 z^{2m} \overline{r_2}
        \end{equation}
        where ${r_1}:={\gamma^{-1}\gamma_1^{-1}\rho_1}$ and ${r_2}:={\gamma^{-1}\gamma_2^{-1}\rho_2}$, so that $\overline{r}_1$ and $\overline{r}_2$ are normalized reduction of $\rho_1$ and $\rho_2$, respectively.  We remark $\overline{r_1}$ has degree $\deg(\overline{r}_1)\le 2m$; hence, if an odd-degree term of degree $s$ appears in the normalized reduction $\overline{r_1}$ of $\rho_1$ (resp.\ in the normalized reduction $\overline{r_2}$ of $\rho_2$), then an odd-degree term of degree $s$ (resp.\ $s+2m$) will also show up in $\overline{\gamma_1^{-1}\gamma_2^{-1}\gamma^{-1}\rho}$: roughly speaking, in the expression for $\overline{\gamma_1^{-1}\gamma_2^{-1}\gamma^{-1}\rho}$ no cancellation occurs between the odd-degree monomials of $\overline{r_1}$ and those of $\overline{r_2}$. We conclude that, since $\overline{r_1}$ and $\overline{r_2}$ are not squares by \Cref{prop good decomposition}, the reduced polynomial $\overline{\gamma_1^{-1}\gamma_2^{-1}\gamma^{-1}\rho}$ is not a square, so that $v(\rho)=t_1+v(h)$ (i.e., $t=t_1=t_2$), and the decomposition of $h$ is good by \Cref{prop good decomposition}.
    \end{proof}
\end{prop}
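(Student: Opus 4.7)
The plan is to reduce parts (a) and (b) to the case $N=2$ by straightforward induction and then work directly with the explicit expansion
\begin{equation*}
\rho \;=\; h - q^2 \;=\; (q_1^2+\rho_1)(q_2^2+\rho_2) - (q_1q_2)^2 \;=\; \rho_1 q_2^2 + \rho_2 q_1^2 + \rho_1\rho_2,
\end{equation*}
which I will call $g_1+g_2+g_3$ with the obvious labeling. The hypothesis $t_i\ge 0$ gives $v(\rho_i)\ge v(h_i)$, hence $v(q_i^2) = v(h_i-\rho_i)\ge v(h_i)$. Substituting these bounds into the three summands yields $v(g_1)\ge v(h)+t_1$, $v(g_2)\ge v(h)+t_2$, and $v(g_3)\ge v(h)+t_1+t_2$, so $v(\rho)\ge v(h)+\min\{t_1,t_2\}$, which gives (a).

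For part (b), suppose (after reducing to $N=2$) that $t_1<t_2$. Then $t_2>0$, so in fact $v(q_2^2)=v(h_2)$ exactly. Consequently $v(g_1)=v(h)+t_1$ is achieved, while $v(g_2)$ and $v(g_3)$ exceed it strictly. Thus $v(\rho)=v(h)+t_1$, so $t=t_1$. Moreover a normalized reduction of $\rho$ differs from one of $g_1=\rho_1 q_2^2$ only by higher-valuation correction, hence is a nonzero scalar multiple of (a normalized reduction of $\rho_1$) times (the reduction of $q_2^2$), which is itself a nonzero square in $k[z]$. So the normalized reduction of $\rho$ is a square if and only if that of $\rho_1$ is; combining with Proposition \ref{prop good decomposition}(a) finishes (b).

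For part (c), the main obstacle is handling the delicate case $0<t_1=t_2<2v(2)$ (otherwise (b) applies directly). Here both $g_1$ and $g_2$ attain the common minimum valuation $v(h)+t_1$, so a priori their leading parts could cancel in the normalized reduction of $\rho$. I plan to exploit the root-valuation hypothesis: since every root of $h_1$ has positive valuation and every root of $h_2$ has negative valuation, normalized reductions of $h_1$ and $h_2$ are of the form $c_1 z^{d_1}$ and $c_2$ respectively, with $c_i\in k^\times$, and $d_1$ must be even (so $d_1=2m$) because a good decomposition with $t_1>0$ forces $\overline{\gamma_1^{-1}h_1}$ to be a square. Letting $\overline{r_i}$ denote normalized reductions of the $\rho_i$, a normalized reduction of $\rho$ takes the shape
\begin{equation*}
c_2\,\overline{r_1} + c_1 z^{2m}\,\overline{r_2}.
\end{equation*}

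The key observation is that $\deg(\overline{r_1})\le 2m$, so the odd-degree monomials coming from $\overline{r_1}$ sit in degrees $\le 2m$ while those from $z^{2m}\overline{r_2}$ sit in degrees $>2m$; hence no cancellation between odd-degree terms can occur. By Proposition \ref{prop good decomposition}(a), goodness of each $h_i=q_i^2+\rho_i$ (together with $t_i<2v(2)$) ensures that $\overline{r_i}$ is not a square in $k[z]$, so at least one of them carries an odd-degree term. That odd-degree term survives in the sum above, so the normalized reduction of $\rho$ is not a square. Applying Proposition \ref{prop good decomposition}(a) in the reverse direction gives $t=t_1=t_2$ and that the decomposition of $h$ is good, completing (c).
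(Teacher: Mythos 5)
Your proposal is correct and follows essentially the same route as the paper: the expansion $\rho = g_1+g_2+g_3$, the valuation bounds on the $g_i$, the exact identification of $v(g_1)$ when $t_1<t_2$ for part (b), and the no-cancellation argument in the delicate equal-valuation case of part (c) using the even degree $d_1=2m$ to separate the odd-degree monomials of $\overline{r_1}$ (in degrees $\le 2m-1$) from those of $z^{2m}\overline{r_2}$ (in degrees $\ge 2m+1$). All steps match the paper's argument, including the appeal to Proposition \ref{prop good decomposition}(a) to pass between ``good'' and ``normalized reduction is not a square.''
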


\subsection{Forming models of \texorpdfstring{$Y$}{Y} using part-square decompositions} \label{sec models hyperelliptic forming}
In this subsection, we compute the model of the hyperelliptic curve $Y: y^2=f(x)$ corresponding to any given smooth model of the projective line $X$, in the sense of \S\ref{sec relatively stable galois covers}.

More precisely, choose elements $\alpha \in \bar{K}$ and $\beta \in \bar{K}^\times$.  Given any polynomial $h(x) \in \bar{K}[x]$, define the translated and scaled coordinate $x_{\alpha,\beta} = \beta^{-1}(x - \alpha)$ (as defined in \S\ref{sec models hyperelliptic line}), and let $h_{\alpha,\beta}$ be the polynomial such that $h_{\alpha,\beta}(x_{\alpha,\beta}) = h(x)$.  Let $D := D_{\alpha,b}$ be a disc in $\bar{K}$ with $\alpha\in \bar{K}$ and $b=v(\beta)$ for some $\beta\in \bar{K}^\times$. To this disc we can attach (see \S\ref{sec models hyperelliptic line}) a smooth model $\XX_D$ of the line $X$, defined over some extension of $R$. We will show that, after possibly replacing this extension with a further extension $R'$, which in particular will be large enough so that $f_{\alpha,\beta}$ admits a good part-square decomposition $f_{\alpha,\beta}=q_{\alpha,\beta}^2+\rho_{\alpha,\beta}$ over the fraction field $K'$ of $\Frac(R')$, the model of $Y$ corresponding to $\XX_D/R'$ has reduced special fiber, and its equation can explicitly be written using $q_{\alpha,\beta}$ and $\rho_{\alpha,\beta}$; we will denote this model by $\YY_{D}$.

The strategy will be the following one: after a suitable change of the coordinate $y$, we will rewrite the equation $y^2=f_{\alpha,\beta}(x_{\alpha,\beta})$ of the hyperelliptic curve $Y$ in the form
\begin{equation}
    \label{equation front chart YD}
    y^2 + q_0(x_{\alpha,\beta})y - \rho_0(x_{\alpha,\beta})=0,\quad\text{with }\deg(q_0)\le g+1,\ \deg(\rho_0)\le 2g+2,
\end{equation}
such that the following conditions are satisfied:
\begin{enumerate}[(a)]
    \item $\rho_0$ and $q_0$ have integral coefficients (i.e., we have $\rho_0(x_{\alpha,\beta}), q_0(x_{\alpha,\beta}) \in R'[x_{\alpha,\beta}]$);
    \item the $k$-curve given by the reduction of the equation in (\ref{equation front chart YD}) is reduced.
\end{enumerate}

Then, the model $\YY_D$ is constructed as follows.  The equation in (\ref{equation front chart YD}) above defines a scheme $W$ over $R'$ whose generic fiber is isomorphic to the affine chart $x_{\alpha,\beta}\neq \infty$ of the hyperelliptic curve $Y$. The coordinate $x_{\alpha,\beta}$ defines a map $W\to \XX_D$, whose image is the affine chart $x_{\alpha,\beta}\neq \infty$ of $\XX_D$. Over the affine chart $x_{\alpha,\beta}\neq 0$ of $\XX_D$, we can correspondigly form the $R'$-scheme $W^\vee$ defined by the equation \begin{equation}
\begin{split}
    \label{equation rear chart YD}
    \check{y}^2+q_0^\vee(\check{x}_{\alpha,\beta})\check{y}-\rho_0^\vee(\check{x}_{\alpha,\beta})&=0, \qquad\text{where }\  \check{x}_{\alpha,\beta}=x_{\alpha,\beta}^{-1},\  \check{y}=x_{\alpha,\beta}^{-(g+1)}y, \\
    q_0^\vee(\check{x}_{\alpha,\beta})&={x_{\alpha,\beta}}^{-(g+1)} q_0(x_{\alpha,\beta}), \ \text{and} \ \rho_0^\vee(\check{x}_{\alpha,\beta})={x_{\alpha,\beta}}^{-(2g+2)} \rho_0(x_{\alpha,\beta}).
\end{split}
\end{equation}

We can now \emph{define} $\YY_D$ to be the scheme obtained by gluing the affine charts $W$ and $W^\vee$ together in the obvious way: it is endowed with a degree-$2$ covering map $\YY_D\to \XX_D$, and its generic fiber is identified with the hyperelliptic curve $Y\to X$.

\begin{prop}
    The scheme $\YY_D$ constructed above, which is defined over an appropriate extension $R'$ of $R$, coincides with the normalization of $\XX_D/R'$ in the function field of the hyperelliptic curve $Y$, and it is a model of $Y$ whose special fiber is reduced.
    \begin{proof}
        We have to show that the scheme $\YY_D$ we have constructed is normal. The $R'$-schemes $W$ and $W^\vee$ are complete intersections, and hence they are Cohen-Macaulay; as a consequence, to check that $\YY_D$ is normal, it is enough to prove that it is regular at its codimension-1 points. Since the generic fiber of $\YY_D$ coincides with $Y$, it is certainly regular; hence, all that is left is to check that $\YY_D$ is regular at the generic point $\eta_{V_i}$ of each irreducible component $V_i$ of the special fiber $\SF{\YY_D}$. Since we are assuming that the $k$-curve $W_s$ is reduced, the $k$-curve $\SF{\YY_D}$ is also clearly reduced, which implies that $\YY_D$ is certainly regular at the points $\eta_{V_i}$.  Thus, the scheme $\YY_D$ is actually normal.
        
        It is now completely clear that $\YY_D$ is the model of $Y$ obtained by normalizing $\XX_D/R'$ in the function field of $Y$.
    \end{proof}
\end{prop}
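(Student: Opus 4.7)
The plan is to establish normality of $\YY_D$ via Serre's criterion, and then deduce the normalization statement from the universal property. First, I would note that by the construction of $W$ and $W^\vee$ via the explicit equations in (\ref{equation front chart YD}) and (\ref{equation rear chart YD}), both affine charts are complete intersections in $\aff^2_{R'}$; in particular, they are Cohen–Macaulay, so they automatically satisfy Serre's condition $S_n$ for every $n$. Gluing these two charts along the compatibility $\check{y} = x_{\alpha,\beta}^{-(g+1)} y$, $\check{x}_{\alpha,\beta} = x_{\alpha,\beta}^{-1}$ produces a projective $R'$-scheme $\YY_D$ equipped with a degree-$2$ morphism $\YY_D \to \XX_D$ whose generic fiber is identified with $Y \to X$ by the substitution $x_{\alpha,\beta} = \beta^{-1}(x-\alpha)$ and a suitable linear change of the coordinate $y$.

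To apply Serre's criterion, I next need regularity ($R_1$) at each codimension-$1$ point of $\YY_D$. Since $\YY_D$ is two-dimensional and flat over $R'$ (flatness follows because the ring $R'[x_{\alpha,\beta},y]/(y^2 + q_0(x_{\alpha,\beta}) y - \rho_0(x_{\alpha,\beta}))$ is free of rank $2$ as an $R'[x_{\alpha,\beta}]$-module, and similarly for $W^\vee$), the codimension-$1$ points are of two types: the generic point $\eta$, and the generic points $\eta_{V_i}$ of the irreducible components $V_i$ of the special fiber $\SF{\YY_D}$. At $\eta$, regularity is immediate because the local ring is the function field of the generic fiber $Y$, which is smooth over $K'$.

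For the generic points $\eta_{V_i}$, I would use the reducedness hypothesis on the special fiber. The local ring $A := \OO_{\YY_D,\eta_{V_i}}$ is a Noetherian local ring of dimension $1$, and its quotient $A/\pi A$ (where $\pi \in R'$ is a uniformizer) is the local ring of $\SF{\YY_D}$ at the generic point of the irreducible component $V_i$, which has dimension $0$. Assuming we have arranged condition (b) above, namely that the reduction of our defining equation gives a reduced $k$-curve, this quotient $A/\pi A$ is a reduced Artinian local ring, hence a field. Consequently $\pi$ generates the maximal ideal of $A$, so $A$ is a DVR, and in particular regular. Combining the two cases, $\YY_D$ satisfies $R_1$ and therefore is normal.

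Finally, to identify $\YY_D$ with the normalization of $\XX_D$ in $K(Y)$, I would use the universal property: $\YY_D$ is normal, finite over $\XX_D$ (from the explicit finite equations defining $W$ and $W^\vee$), and its generic fiber is $Y$, so by the universal property of normalization it is the normalization of $\XX_D$ in the function field $K(Y)$. The special fiber is reduced by hypothesis (b). The main (and essentially the only) delicate point in all of this is the passage from reducedness of the special fiber to regularity of the two-dimensional scheme $\YY_D$ at the generic points of the vertical components; this is exactly where the Cohen–Macaulay property of $W$ and $W^\vee$ is used in tandem with the reducedness assumption, since without $S_2$ one could not conclude normality merely from generic-point regularity.
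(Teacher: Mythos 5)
Your proof is correct and follows essentially the same route as the paper's: both establish normality via Serre's criterion by combining the Cohen--Macaulay property (hence $S_2$) of the complete-intersection charts $W$, $W^\vee$ with regularity in codimension $1$, and both then identify $\YY_D$ with the normalization by the universal property. The one place you go beyond the paper is in spelling out why reducedness of the special fiber forces regularity at the generic points $\eta_{V_i}$ (namely, $A/\pi A$ is reduced Artinian local, hence a field, so $\pi$ generates the maximal ideal and $A$ is a DVR); the paper simply asserts this implication, and your argument is a clean justification of that step.
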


All we have to do now is determine a change of the coordinate $y$ such that conditions (a) and (b) above are satisfied. To do this, suppose that we are given a good part-square decomposition $f_{\alpha, \beta} = q_{\alpha,\beta}^2 + \rho_{\alpha,\beta}$ (which certainly exists over some extension of $K$, thanks to \Cref{prop totally odd existence} and \Cref{cor totally odd is good}).  Let $\gamma\in \bar{K}^\times$ be an element whose valuation is $v(\gamma)=\truncate{t}+v(f_{\alpha,\beta})$, where $t=t_{q_{\alpha,\beta},\rho_{\alpha,\beta}}=v(\rho_{\alpha,\beta})-v(f_{\alpha,\beta})$. We remark that we necessarily have $t\ge 0$, since the part-square decomposition is assumed to be good (see \Cref{rmk good decompositions}). The change of variable we perform is $y \mapsto \gamma^{1/2} y + q_{\alpha,\beta}(x_{\alpha,\beta})$, and it leads to an equation of the form (\ref{equation front chart YD}) with 
\begin{equation}
    \label{equation q_0 rho_0 formulas YD}
        q_0 = 2\gamma^{-1/2}q_{\alpha,\beta} \qquad \mathrm{and} \qquad
        \rho_0 = \gamma^{-1}\rho_{\alpha,\beta}.
\end{equation}

The valuations of $q_0$ and $\rho_0$ can be computed as follows.
\begin{enumerate}
    \item For $q_0$, we have $2v(q_0)= 2v(2)-\truncate{t}+2v(q_{\alpha,\beta})-v(f_{\alpha,\beta})$. Let us remark that, since $t\ge 0$, we have $2v(q_{\alpha,\beta})\ge v(f_{\alpha,\beta})$, and moreover equality holds whenever $t>0$. We deduce that:
    \begin{enumerate}[(i)]
        \item $v(q_0)\ge 2v(2)-\truncate{t}$ for all $t$, so that $q_0$ is consequently always integral;
        \item $v(q_0)=2v(2)-\truncate{t}$ whenever $t>0$;
        \item $v(q_0)>0$ if $0\le t< 2v(2)$ (which can only happen in the $p=2$ setting); and 
        \item $v(q_0)=0$ if $t\ge 2v(2)$ in the $p=2$ setting.
    \end{enumerate}
    \item For $\rho_0$, we have $v(\rho_0) = t - \truncate{t}$; in particular,
    \begin{enumerate}[(i)]
        \item $\rho_0$ is always integral;
        \item $v(\rho_0)=0$ if $0\le t\le 2v(2)$; and 
        \item $v(\rho_0)>0$ if $t>2v(2)$.
    \end{enumerate}
\end{enumerate}
These computations guarantee that condition (a) is satisfied. We now verify that also condition (b) is satisfied.
\begin{lemma}
    In the context above, condition (b) is also satisfied, i.e.\ the reduction of equation (\ref{equation front chart YD}) defines a reduced $k$-curve. Moreover, this curve is a separable (resp.\ inseparable) quadratic cover of the $k$-line of coordinate $x_{\alpha,\beta}$ if and only if $t\ge 2v(2)$ (resp.\ $0\le t<2v(2)$).
    \begin{proof}
        Suppose by way of contradiction that the $k$-curve defined by the reduction of  (\ref{equation front chart YD}) is non-reduced. This is clearly equivalent to saying that the polynomial $g(x_{\alpha,\beta},y)\in k[x_{\alpha,\beta},y]$ given by the reduction of (\ref{equation front chart YD}) (i.e.\  $g(x_{\alpha,\beta},y):=y^2+\overline{q_0(x_{\alpha,\beta})}y - \overline{\rho_0(x_{\alpha,\beta})}$) is a square. If we treat $g(x_{\alpha,\beta},y)$ as a monic quadratic polynomial in the variable $y$, we can say that it is a square if and only if its constant term $\overline{\rho_0(x_{\alpha,\beta})}\in k[x_{\alpha,\beta}]$ is a square, and its discriminant $\Delta=\overline{{q}_0}^2+4\overline{\rho_0}=\overline{4\gamma^{-1} f_{\alpha,\beta}}\in k[x_{\alpha,\beta}]$ is zero. However, when $t\ge 2v(2)$, we have $v(\gamma)=v(4f_{\alpha,\beta})$ and therefore $\Delta\neq 0$; when $0\le t<2v(2)$, the reduced polynomial $\overline{\rho_0}$ is a normalized reduction of $\rho_{\alpha,\beta}$, which is not a square by \Cref{prop good decomposition}. We conclude that the $k$-curve $g(x_{\alpha,\beta},y)=0$ is always reduced.
        
        Now the coordinate $x_{\alpha,\beta}$ defines a quadratic cover from the $k$-curve $g(x_{\alpha,\beta},y)=0$ to the affine $k$-line, and it is immediate to realize that this cover is inseparable only when $p=2$ and the linear term $\overline{q_0(x_{\alpha,\beta})}y$ vanishes, which happens if and only if $0<t\le 2v(2)$.
    \end{proof}
\end{lemma}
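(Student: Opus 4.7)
My plan is to verify both assertions by examining the reduced equation
\[
y^2 + \overline{q_0(x_{\alpha,\beta})}\,y - \overline{\rho_0(x_{\alpha,\beta})} = 0
\]
as a monic quadratic polynomial in $y$ over $k[x_{\alpha,\beta}]$. The $k$-curve it defines fails to be reduced precisely when this polynomial is a square in $k[x_{\alpha,\beta}][y]$, which (for a monic quadratic in $y$) happens exactly when its discriminant $\Delta := \overline{q_0}^2 + 4\overline{\rho_0}$ vanishes \emph{and} its constant term $-\overline{\rho_0}$ is a square in $k[x_{\alpha,\beta}]$. So the first step is to translate the problem into checking these two conditions.

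Next, using the formulas $q_0 = 2\gamma^{-1/2}q_{\alpha,\beta}$ and $\rho_0 = \gamma^{-1}\rho_{\alpha,\beta}$, I would compute $q_0^2 + 4\rho_0 = 4\gamma^{-1}f_{\alpha,\beta}$, so that $\Delta$ is the reduction of $4\gamma^{-1}f_{\alpha,\beta}$. Since $v(4\gamma^{-1}f_{\alpha,\beta}) = 2v(2) - \truncate{t}$, I conclude that $\Delta \neq 0$ exactly when $t \geq 2v(2)$, and $\Delta = 0$ exactly when $0 \leq t < 2v(2)$ (a situation which can only occur in the $p=2$ setting). In the former case the polynomial cannot be a square and the curve is automatically reduced.

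In the remaining case $0 \leq t < 2v(2)$, I need to check the second obstruction: whether $\overline{\rho_0}$ is a square. Here $v(\gamma) = t + v(f_{\alpha,\beta}) = v(\rho_{\alpha,\beta})$, so $\overline{\rho_0}$ is a normalized reduction of $\rho_{\alpha,\beta}$ in the sense of \Cref{dfn normalized reduction}. This is exactly where the hypothesis that $f_{\alpha,\beta}=q_{\alpha,\beta}^2+\rho_{\alpha,\beta}$ is a \emph{good} part-square decomposition enters decisively: \Cref{prop good decomposition}(a) guarantees that $\overline{\rho_0}$ is not the square of any polynomial in $k[x_{\alpha,\beta}]$. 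Hence the polynomial is not a square, and the curve is again reduced. This completes the proof of condition~(b), and the expected obstacle — ruling out that the constant term is a square when $\Delta$ vanishes — is exactly handled by goodness.

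Finally, for the separability assertion, I observe that $x_{\alpha,\beta}$ exhibits the reduced curve as a degree-$2$ cover of the affine $k$-line, and that this cover is inseparable precisely when $p = 2$ and the linear $y$-term $\overline{q_0}\,y$ vanishes identically. Using the valuation estimates (1)(i)--(iv) for $q_0$ recorded just before the lemma, $\overline{q_0} = 0$ is equivalent to $v(q_0) > 0$, which in turn happens exactly in the $p=2$ setting with $0 \leq t < 2v(2)$; otherwise $v(q_0) = 0$ (either trivially when $p \neq 2$ so that $2v(2)=0$, or by (1)(iv) when $p = 2$ and $t \geq 2v(2)$), giving separability. This yields the claimed dichotomy.
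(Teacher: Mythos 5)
Your proposal is correct and follows essentially the same approach as the paper: you recast the equation as a monic quadratic in $y$, characterize squareness by vanishing discriminant together with the constant term being a square, compute $\Delta$ via $q_0^2 + 4\rho_0 = 4\gamma^{-1}f_{\alpha,\beta}$ and its valuation, invoke goodness (\Cref{prop good decomposition}(a)) in the case $0 \leq t < 2v(2)$, and then read off separability from whether $\overline{q_0}$ vanishes. In fact your final inequality ``$0 \le t < 2v(2)$'' for when $\overline{q_0}=0$ is the one that actually follows from the valuation estimates (1)(i)--(iv) and matches the lemma statement; the paper's proof writes ``$0 < t \le 2v(2)$'' there, which appears to be a minor typo.
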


The following proposition summarizes the results we have obtained.
\begin{prop}
	\label{prop normalization model}
	
	Let $\XX_D$ be the smooth model of the line corresponding to the disc $D:=D_{\alpha,v(\beta)}$, with $\alpha \in \bar{K}$ and $\beta \in \bar{K}^\times$.  Then, after replacing $K$ with an appropriate finite extension, the normalization $\YY_D$ of $\XX_D$ in $K(Y)$ has reduced special fiber. Given a good part-square decomposition $f_{\alpha, \beta} = q_{\alpha,\beta}^2 + \rho_{\alpha,\beta}$, and letting $t=t_{q_{\alpha,\beta}, \rho_{\alpha,\beta}}$, the model $\YY_D$ falls under (exactly) one of the following two cases:
	\begin{enumerate}
		\item $t\geq 2v(2)$; in this case, $\SF{\YY_D}$ is a separable degree-2 cover of $\SF{\XX_D}$; and 
		\item $0\le t < 2v(2)$; in this case, $\SF{\YY_D}$ is an inseparable degree-2 cover of $\SF{\XX_D}$.
	\end{enumerate}
	The equations describing the affine charts $x_{\alpha,\beta}\neq \infty$ and $x_{\alpha,\beta}\neq 0$ of the model $\YY_D$ have the form (\ref{equation front chart YD}) and (\ref{equation rear chart YD}) respectively, and they can be explicitly computed from $q_{\alpha,\beta}$ and $\rho_{\alpha,\beta}$ using the formulas in (\ref{equation q_0 rho_0 formulas YD}).
\end{prop}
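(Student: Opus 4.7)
The plan is to assemble the construction and verifications developed in the preceding discussion into a single proof. First, I would pass to a finite extension $K'/K$ large enough so that $\alpha,\beta\in K'$ and so that a good part-square decomposition $f_{\alpha,\beta}=q_{\alpha,\beta}^2+\rho_{\alpha,\beta}$ exists over $K'$; existence of the latter is guaranteed by combining \Cref{prop totally odd existence} and \Cref{cor totally odd is good}. Enlarging $K'$ further if necessary, I may also assume that an element $\gamma\in (K')^\times$ with $v(\gamma)=\truncate{t}+v(f_{\alpha,\beta})$ admits a square root in $K'$, where $t:=t_{q_{\alpha,\beta},\rho_{\alpha,\beta}}\geq 0$.

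Next, I would perform the substitution $y\mapsto \gamma^{1/2}y+q_{\alpha,\beta}(x_{\alpha,\beta})$, which transforms $y^2=f_{\alpha,\beta}(x_{\alpha,\beta})$ into an equation of the form (\ref{equation front chart YD}) with $q_0,\rho_0$ given by (\ref{equation q_0 rho_0 formulas YD}). Integrality of $q_0$ and $\rho_0$ (condition (a)) follows from the valuation computations already carried out: since the decomposition is good we have $t\geq 0$, hence $2v(q_{\alpha,\beta})\geq v(f_{\alpha,\beta})$, which yields $v(q_0)\geq 2v(2)-\truncate{t}\geq 0$ and $v(\rho_0)=t-\truncate{t}\geq 0$. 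Condition (b), that the reduction of (\ref{equation front chart YD}) defines a reduced $k$-curve, is the content of the lemma preceding the proposition: viewing the reduced polynomial as monic quadratic in $y$, reducedness is equivalent to the non-vanishing of either its discriminant $\overline{4\gamma^{-1}f_{\alpha,\beta}}$ or its constant term $\overline{\rho_0}$; the first is nonzero exactly when $t\geq 2v(2)$, while in the complementary range $0\leq t<2v(2)$ the constant term $\overline{\rho_0}$ is a normalized reduction of $\rho_{\alpha,\beta}$, which is not a square by goodness together with \Cref{prop good decomposition}.

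With (a) and (b) in hand, gluing $W$ and $W^\vee$ (defined by (\ref{equation front chart YD}) and (\ref{equation rear chart YD})) along their overlap gives an $R'$-scheme $\YY_D$ equipped with a degree-$2$ morphism to $\XX_D$ whose generic fiber is $Y$. To identify $\YY_D$ with the normalization of $\XX_D$ in $K(Y)$, I would invoke Serre's criterion: both $W$ and $W^\vee$ are complete intersections in $\aff^2_{R'}$ and hence Cohen-Macaulay, so normality reduces to regularity in codimension one; regularity at the generic point is automatic because the generic fiber is the smooth curve $Y$, and regularity at the generic points of components of $\SF{\YY_D}$ is equivalent to reducedness of $\SF{\YY_D}$, which was established in condition (b). Finally, the separable/inseparable dichotomy is read off from whether the linear-in-$y$ term $\overline{q_0}y$ survives reduction: the valuation computations show $v(q_0)=0$ exactly when $t\geq 2v(2)$ (so $\SF{\YY_D}\to\SF{\XX_D}$ is separable in case (1)), while $v(q_0)>0$ when $0\leq t<2v(2)$, forcing an inseparable cover in case (2).

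I do not anticipate a genuine obstacle here: all the essential analytic work has already been carried out in the lemmas preceding the statement, and the proof is a matter of organization. The only delicate point is keeping track of the finite extension $K'/K$ needed, and making sure that once $\YY_D$ is constructed over $R'$ it is indeed the normalization (rather than merely a model mapping birationally to one), for which Serre's criterion provides a clean path.
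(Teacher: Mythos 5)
Your proof is correct and follows the same construction as the paper, which in fact does not give a separate proof for this proposition but presents it as a summary of the discussion that precedes it (the unnamed proposition establishing normality via Cohen--Macaulay and Serre's criterion, the integrality calculations for $q_0$ and $\rho_0$, and the lemma on reducedness and separability of the special fiber). Your assembly of those ingredients is faithful, and your use of \Cref{prop totally odd existence} and \Cref{cor totally odd is good} to justify existence of the good decomposition over a finite extension is also what the paper relies upon.

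Two small slips in phrasing, neither of which affects the argument. First, your sentence ``reducedness is equivalent to the non-vanishing of either its discriminant \dots\ or its constant term $\overline{\rho_0}$'' is stated too strongly: what is actually needed (and what you then correctly invoke) is that the constant term not be a \emph{square} in $k[x_{\alpha,\beta}]$, not merely that it be nonzero. Second, the claim ``$v(q_0)=0$ exactly when $t\geq 2v(2)$'' is only an equivalence in the $p=2$ setting; when $p\neq 2$ one has $2v(2)=0$ and the inequality $t\geq 2v(2)$ is automatic, yet $v(q_0)$ may still be positive if $t=0$. The separability conclusion in case (1) nevertheless holds for $p\neq 2$ because any degree-$2$ cover of $k$-curves in residue characteristic $\neq 2$ is separable; it is only the inseparability of case (2) that genuinely rests on the vanishing of $\overline{q_0}$, and case (2) can only occur when $p=2$, so your proof goes through.
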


\subsection{The special fiber \texorpdfstring{$\SF{\YY_D}$}{(YD)s} in the separable case}
\label{sec models hyperelliptic separable}
We will now study the special fiber of the model $\YY_D$ associated to a given disc $D:=D_{\alpha,v(\beta)}$ be a disc with $a\in \bar{K}$ and $\beta \in \bar{K}^\times$, which was computed in the previous subsection.  This subsection will consider the case in which $\SF{\YY_D}\to \SF{\XX_D}$ is separable: this means that it is possible to find a part-square decomposition $f_{\alpha,\beta}=q_{\alpha,\beta}^2+\rho_{\alpha,\beta}$ satisfying $t:=t_{q_{\alpha,\beta}, \rho_{\alpha,\beta}} \geq 2v(2)$, and the equation of $\SF{\YY_D}$ has the form $y^2 + \overline{q_0(x_{\alpha,\beta})}y = \overline{\rho_0(x_{\alpha,\beta})}$, where $q_0:=2\gamma^{-1/2}q_{\alpha,\beta}$, and $\rho_0=\gamma^{-1}\rho_{\alpha,\beta}$, where $\gamma \in \bar{K}^{\times}$ is an element of valuation $v(f_{\alpha,\beta})+2v(2)$. In the $p\neq 2$ case, the equation of $\SF{\YY_D}$ can also be written in the simpler form $y^2=\overline{f_0(x_{\alpha,\beta})}$, where \begin{equation}
    \label{equation f0}
    f_0 = 4\gamma^{-1}f_{\alpha,\beta} = q_0^2+4\rho_0.
\end{equation}

We remark that the separable quadratic cover $\SF{\YY_D}\to \SF{\XX_D}$ is branched precisely above the points $P_1, \ldots, P_N$ of $\SF{\XX_D}$ at which the roots $\Rinfty$ reduce and is étale elsewhere: this can be seen directly from the equation of $\SF{\YY_D}$, or can be deduced from the fact the branch locus of $\YY_D\to \XX_D$ has pure dimension 1 by Zariski–Nagata purity theorem. In order to state and prove the results in this subsection, we partition the branch locus $R=\lbrace P_1, \ldots, P_N\rbrace\subseteq \SF{\XX_D}(k)$ in three subsets as $R=R_0\sqcup R_1\sqcup R_2$, in the following way.
\begin{equation*}
    \begin{split}
    R_0&=\lbrace P\in \SF{\XX_D}: \text{$\SF{\YY_D}$ exhibits a unique smooth point $Q$ above $P$}\rbrace;\\
    R_1&=\lbrace P\in \SF{\XX_D}: \text{$\SF{\YY_D}$ has a (unique) singular point $Q$ above $P$ and has one branch at $Q$}\rbrace;\\
    R_2&=\lbrace P\in \SF{\XX_D}: \text{$\SF{\YY_D}$ has a (unique) singular point $Q$ above $P$ and has two branches at $Q$}\rbrace.
    \end{split}
\end{equation*}
We denote the cardinality of each subset $R_i \subseteq R$ by $N_i$ for $i = 0, 1, 2$.
\begin{rmk} \label{rmk R_0 R_1 R_2}
    The following statements are clear from the definitions above.
    \begin{enumerate}[(a)]
        \item The set $R_0\cup R_1$ is precisely the branch locus of the quadratic cover $\NSF{\YY_D}\to \SF{\XX_D}$, where $\NSF{\YY_D}$ is the normalization of the $k$-curve $\SF{\YY_D}$.
        \item The curve $\SF{\YY_D}$ has exactly $N_1+N_2$ singular points, which lie over the $N_1+N_2$ points of $R_1\cup R_2$.
        \item The unique point $Q\in \SF{\YY_D}$ lying over some given $P\in R$ is fixed by the action of the hyperelliptic involution. If $P\in R_2$, the two branches of $\SF{\YY_D}$ passing through $Q$ get flipped by the hyperelliptic involution.
        \item The special fiber $\SF{\YY_D}$ either consists of two components flipped by the hyperelliptic involution, or it is irreducible. In the first case (which always occurs, for example, if $\overline{\rho_0(x_{\alpha,\beta})}$ is the zero polynomial, i.e.\ if $t>2v(2)$), the two components are necessarily two lines that trivially cover $\SF{\XX_D}$, while $\NSF{\YY_D}$ is their disjoint union, and we have $R_0\cup R_1 = \varnothing$.  If $\SF{\YY_D}$ is irreducible, however, the quadratic cover $\NSF{\YY_D}\to \SF{\XX_D}$ is necessarily ramified, because $\mathbb{P}^1_k$ does not have non-trivial finite étale connected covers: hence, and we have $R_0\cup R_1 \neq \varnothing$.
    \end{enumerate}
\end{rmk}

We want to better understand the ramification behaviour of $\NSF{\YY_D}\to \SF{\XX_D}$ above the points of $R_0\cup R_1$; to this aim, we can measure, above each point, the length of the module of relative K\"{a}hler differentials of the cover.
\begin{dfn}
    \label{dfn ell ramification index}
    Given $P\in \SF{\XX_D}(k)$, we set \begin{equation*}\ell(\XX_D,P) = \length_{\OO_{\SF{\XX_D},P}}\left(\Omega_{\NSF{\YY_D}/\SF{\XX_D}}\otimes \OO_{\SF{\XX_D},P}\right).\end{equation*}
\end{dfn}

\begin{rmk}
    \label{rmk ell}
    For any $P\in \SF{\XX_D}(k)$, the integer $\ell(\XX_D,P)$ satisfies the following properties.
    \begin{enumerate}[(a)]
    \item If $P\not\in R_0\cup R_1$, then $\NSF{\YY_D}\to \SF{\XX_D}$ is unramified over $P$, and we thus have $\ell(\XX_D,P)=0$.
    \item If $P\in R_0\cup R_1$, and we denote by $Q$ its unique preimage $\NSF{\YY_D}$, the ramification index of the cover $\NSF{\YY_D}\to \SF{\XX_D}$ at $Q$ is $e_Q=2$, and \cite[Proposition 7.4.13]{liu2002algebraic} ensures that $\ell(\XX_D,P)\ge e_Q-1$, with equality if and only if the cover is tame. This means that $\ell(\XX_D,P)=1$ if $p\neq 2$, and $\ell(\XX_D,P)\ge 2$ if $p=2$.
    \end{enumerate}
\end{rmk}

The knowledge of $\ell(\XX_D,P)$ at the points $P \in \SF{\XX_D}$ gives us information about the abelian rank of $\SF{\YY_D}$.

\begin{prop}
    \label{prop riemann hurwitz}
    The genus of $\NSF{\YY_D}$ is given by
    \begin{equation}\label{equation rh}
        g\left(\NSF{\YY_D}\right)=-1+\frac{1}{2}\sum_{P\in \SF{\XX_D}(k)} \ell(\XX_D,P),
    \end{equation}
    with the convention that the genus of the disjoint union of two lines is $-1$.  In the $p\neq 2$ setting, this can be rewritten as
    \begin{equation}
        \label{equation rh odd p}
       g\left(\NSF{\YY_D}\right)=-1+\frac{1}{2}(N_0+N_1),
    \end{equation}
    while, in the $p=2$ setting, the formula in (\ref{equation rh}) implies the inequality
    \begin{equation}
        \label{equation rh p=2}
       g\left(\NSF{\YY_D}\right)\ge -1+(N_0+N_1).
    \end{equation}
    \begin{proof}
        Equation (\ref{equation rh})  is just the Riemann-Hurwitz formula (see, for example, \cite[Theorem 7.4.16]{liu2002algebraic}), while (\ref{equation rh odd p}) and (\ref{equation rh p=2}) follow from (\ref{equation rh}) via \Cref{rmk ell}.
    \end{proof}
\end{prop}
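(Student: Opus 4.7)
The plan is to apply the Riemann–Hurwitz formula directly to the degree-$2$ cover $\NSF{\YY_D}\to \SF{\XX_D}$. Since $\SF{\XX_D}\cong \mathbb{P}^1_k$ has genus $0$ and the sheaf $\Omega_{\NSF{\YY_D}/\SF{\XX_D}}$ is a torsion sheaf supported at finitely many points, its degree is exactly the sum $\sum_{P\in \SF{\XX_D}(k)} \ell(\XX_D,P)$ by the very definition of $\ell(\XX_D,P)$ (as the length of the stalk of $\Omega_{\NSF{\YY_D}/\SF{\XX_D}}$ at $P$, pushed forward to $\SF{\XX_D}$). Riemann–Hurwitz (as in \cite[Theorem 7.4.16]{liu2002algebraic}) then reads
\[
2 g(\NSF{\YY_D})-2 \;=\; 2\cdot(-2) \;+\; \sum_{P\in \SF{\XX_D}(k)} \ell(\XX_D,P),
\]
and dividing by $2$ yields (\ref{equation rh}).

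The first thing I need to handle is the disconnected case. By \Cref{rmk R_0 R_1 R_2}(d), $\NSF{\YY_D}$ can fail to be connected only when it is a disjoint union of two copies of $\SF{\XX_D}\cong \mathbb{P}^1_k$; in that case the cover is everywhere étale, so $R_0\cup R_1=\varnothing$, every $\ell(\XX_D,P)$ vanishes, and the right-hand side of (\ref{equation rh}) equals $-1$. The stated convention that the genus of the disjoint union of two lines is $-1$ is then precisely what is needed for the identity to be uniformly valid; this is consistent with viewing the formula as an Euler-characteristic computation $\chi(\NSF{\YY_D}) = 2\chi(\SF{\XX_D}) - \deg \Omega_{\NSF{\YY_D}/\SF{\XX_D}}$.

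The specializations (\ref{equation rh odd p}) and (\ref{equation rh p=2}) then follow immediately by inspecting \Cref{rmk ell}. In the $p\neq 2$ setting the cover is tame above every point of $R_0\cup R_1$, so $\ell(\XX_D,P)=1$ there and $0$ elsewhere, whence $\sum_P \ell(\XX_D,P)=N_0+N_1$. In the $p=2$ setting the wild ramification gives $\ell(\XX_D,P)\ge 2$ on $R_0\cup R_1$ and $\ell(\XX_D,P)=0$ elsewhere, so $\sum_P \ell(\XX_D,P)\ge 2(N_0+N_1)$.

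There is essentially no obstacle once Riemann–Hurwitz is applied correctly; the only mild delicacy is bookkeeping with the genus convention in the disconnected étale case and confirming that $\ell(\XX_D,P)$ as defined in \Cref{dfn ell ramification index} is genuinely the local length of the differentials on the normalized target side (so that summing over $P$ gives $\deg \Omega_{\NSF{\YY_D}/\SF{\XX_D}}$ rather than its pullback). Both are routine, so the proof reduces to the single displayed line above together with the two one-step inferences from \Cref{rmk ell}.
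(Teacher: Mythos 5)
Your proof is correct and follows the same route as the paper: apply Riemann--Hurwitz to the degree-$2$ cover $\NSF{\YY_D}\to \SF{\XX_D}$ and then read off the two specializations from \Cref{rmk ell}. The only difference is that you spell out the Euler-characteristic computation and the disconnected étale case explicitly, whereas the paper simply cites the reference.
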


\begin{rmk}
    \label{rmk N0N1 even}
    In particular, the formula in (\ref{equation rh odd p}) implies that, in the $p\neq 2$ setting, the integer $N_0+N_1$ is necessarily even.
\end{rmk}

We now see how to compute $\ell(\XX_D,P)$ for a given point $P\in \SF{\XX_D}$ from the good part-square decomposition $f_{\alpha,\beta}=q_{\alpha,\beta}^2+\rho_{\alpha,\beta}$ given.

\begin{lemma}
    \label{lemma computation ell ramification}
    Choose $P\in \SF{\XX_D}(k)$. Let us denote by $n_q(P):=\ord_P(\overline{q_0})$, $n_\rho(P):=\ord_P(\overline{\rho_0})$, $n_f(P):=\ord_P(\overline{f_0})$ the respective orders of vanishing at the point $P$ of the reductions of the polynomials $q_0$, $\rho_0$ and $f_0$ defined in (\ref{equation q_0 rho_0 formulas YD}) and (\ref{equation f0}), with the convention that the zero polynomial has vanishing order $\infty$, and that, if $P=\infty$, the vanishing orders of $\overline{f_0}$, $\overline{\rho_0}$ and $\overline{q_0}$ at $\infty$ are respectively those of $\overline{f_0^\vee}$, $\overline{\rho_0^\vee}$ and $\overline{q_0^\vee}$ at $0$, i.e.\  $n_q=g+1 - \deg(\overline{q_0})$,  $n_\rho=2g+2 - \deg(\overline{\rho_0})$ and $n_f=2g+2 - \deg(\overline{f_0})$. Then,
    \begin{enumerate}[(a)]
        \item if $p\neq 2$, then $\ell(\XX_D,P)$ is $0$ (resp.\ $1$) if the integer $n_f$ is even (resp.\ odd);
        \item if $p=2$ and $2n_q(P)\le n_\rho(P)$, then we have $\ell(\XX_D,P)=0$; and
        \item if $p=2$ and if $2n_q(P)>n_\rho(P)$ and $n_\rho(P)$ is odd, then we have $\ell(\XX_D,P)=2n_q(P)-n_\rho(P)+1$.
    \end{enumerate}
    \begin{proof}
        We lose no generality in assuming $P$ has coordinate $\overline{x_{\alpha,\beta}}=0$.  For brevity we write $z$ for the variable $x_{\alpha,\beta}$. We proceed by desingularizing $\SF{\YY_D}$ above $P$ by means of a sequence of blowups.  Let us first work in the $p \neq 2$ setting.  The equation of $\SF{\YY_D}$, in this case, has the form $y^2 = \overline{f_0}(z) = z^{n_f}f_1(z)$, with $f_1(z)\in k[z]$ and $f_1(0) \neq 0$. If $n_f = 0$, then we are already done; otherwise, the curve becomes nonsignular above $z=0$ after blowing it up $\lfloor n_f/2 \rfloor$ times at $(0, 0)$; at each blowup, the right-hand side of the equation is divided by $z^2$, so that the desingularized equation becomes $y^2 = z^{e}f_1(z)$, where $e$ is $0$ or $1$, depending on whether $n_f$ is even or odd; moreover, when $e=1$ this curve is ramified over $z=0$, whereas, when $e=0$, it is étale over $z=0$. From this, (a) follows, taking into account \Cref{rmk ell}.
        
        Let us now adopt the $p=2$ setting. The equation of $\SF{\YY_D}$ is now $y^2 + \overline{q_0}(z)y = \overline{\rho_0}(z)$, with $\overline{q_0}(z) = z^{n_q} q_1(z)$, and $\overline{\rho_0}(z) = z^{n_\rho} r_1(z)$, where $q_1(z), \rho_1(z) \in k[z]$ do not vanish at 0.
        
        Assume that $2n_q \le n_\rho$. Then, after $n_q$ blowups at $(0,0)$, we obtain $y^2 + q_1(z)y = z^{n_\rho-2n_q} \rho_1(z)$. Since $q_1(0)\neq 0$, there are exactly $2$ solutions for $y$ at $z=0$, which means that the blown-up curve is étale above $P$, implying that $\ell(\XX_D,P) = 0$.  We have thus proved part (b).
        
        Assume that $2n_q > n_\rho$ and that $n_\rho$ is odd. Then, after $(n_\rho-1)/2$ blowups at $(0,0)$, we obtain the equation 
        \begin{equation} \label{eq 2n_q > n_rho}
            y^2 + z^{n_q-(n_\rho-1)/2}q_1(z)y = z \rho_1(z).
        \end{equation}
        The curve given by (\ref{eq 2n_q > n_rho}) has a unique point $(0,0)$ above $z=0$ and it is non-singular at that point; this is enough to guarantee that $\ell(\XX_D,P) > 0$.  Let $B := k[z,y]_{(z)}/(\text{equation in }(\ref{eq 2n_q > n_rho}))$ be the local ring of functions on the blown-up curve at $(0,0)$, which is a free $k[z]_{(z)}$-algebra of rank 2. Then $\ell(\XX_D,P)$ equals the length of the $k[z]_{(z)}$-module $\Omega_{B/k[z]_{(z)}}$, or, equivalently, the dimension over $k$ of $\Omega_{B/k[z]_{(z)}}$. We have an isomorphism of $k[z]_{(z)}$-modules 
        \begin{equation}
            \Omega_{B/k[z]_{(z)}} = B dy / (z^{n_q-(n_\rho-1)/2}q_1(z) dy) \stackrel{\sim}{\to} (k[z]_{(z)})[y] / (y^2 - zr_1(z), z^{n_q-(n_\rho-1)/2}q_1(z)),
        \end{equation}
        where the isomorphism is given by sending $dy$ to 1.  The latter $k[z]_{(z)}$-module, however, is a free algebra of rank 2 over the ring
        $$k[z]_{(z)}/(z^{n_q-(n_\rho-1)/2}q_1(z)) \cong k[z]/(z^{n_q-(n_\rho-1)/2}),$$
        which clearly has dimension $n_q-(n_\rho-1)/2$ over $k$.  From this, part (c) follows.
    \end{proof}
\end{lemma}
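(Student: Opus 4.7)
The plan is to perform a local analysis at $P$, then desingularize $\SF{\YY_D}$ above $P$ via a sequence of blowups, and finally compute the length of the module of relative K\"ahler differentials. Without loss of generality I assume $P$ has coordinate $\overline{x_{\alpha,\beta}} = 0$ (the case $P = \infty$ reduces to this via the chart $\check{x}_{\alpha,\beta} = 0$). Write $z := x_{\alpha,\beta}$, so that $\overline{q_0}(z) = z^{n_q}q_1(z)$, $\overline{\rho_0}(z) = z^{n_\rho}\rho_1(z)$, with $q_1(0), \rho_1(0) \neq 0$ (whenever the corresponding reduction is nonzero).

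For case (a), I would first complete the square to rewrite the equation $y^2 + \overline{q_0}(z) y = \overline{\rho_0}(z)$ in the simpler form $y^2 = \overline{f_0}(z) = z^{n_f}f_1(z)$ with $f_1(0) \neq 0$. Iterated blowups at the singular point $(0,0)$ divide both sides by $z^2$ at each step, so after $\lfloor n_f/2\rfloor$ blowups one reaches $y^2 = z^{e}f_1(z)$ with $e = n_f \bmod 2$. When $e=0$, the cover is \'etale over $z=0$, giving $\ell = 0$; when $e=1$, the cover is ramified with index $2$, and since $p \neq 2$ this ramification is tame, so by \cite[Proposition 7.4.13]{liu2002algebraic}, $\ell = e_Q - 1 = 1$. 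For case (b), substitute $y = z^{n_q}y'$ and divide the equation by $z^{2n_q}$; since $2n_q \le n_\rho$, this keeps the right-hand side integral and yields $(y')^2 + q_1(z)y' = z^{n_\rho - 2n_q}\rho_1(z)$ with $q_1(0) \neq 0$. Because the linear term in $y'$ has a unit constant coefficient, the fiber over $z=0$ consists of two distinct reduced points, so the cover is \'etale over $P$, giving $\ell = 0$.

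The main obstacle is case (c): $p = 2$, $2n_q > n_\rho$, and $n_\rho$ odd. Here I can only blow up $(n_\rho - 1)/2$ times before the right-hand side loses integrality; the resulting equation is
\begin{equation}
F(z, y') := (y')^2 + z^{m} q_1(z) y' - z \rho_1(z) = 0,
\end{equation}
where $m := n_q - (n_\rho - 1)/2 > 0$. A direct Jacobian check shows this curve is smooth at the unique point $(0,0)$ above $z=0$, so $\ell(\XX_D,P) > 0$. To compute $\ell$, I would use the standard presentation $\Omega_{B/k[z]_{(z)}} \cong B\, dy' / ((\partial F/\partial y')\, dy')$, where $B := k[z,y']_{(z)}/(F)$. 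In characteristic $2$ the $2y'$ term in $\partial F/\partial y'$ vanishes, leaving $\partial F/\partial y' = z^m q_1(z)$. Hence $\Omega_{B/k[z]_{(z)}} \cong B/(z^m q_1(z))$, and since $q_1(z)$ is a unit in the local ring this equals $k[z,y']_{(z)}/(F, z^m) \cong k[z,y']/(z^m,\, (y')^2 - z\rho_1(z))$. A $k$-basis is given by $\{z^i (y')^j : 0 \le i < m,\ j \in \{0,1\}\}$, so the $k$-dimension (and hence the length) is $2m = 2n_q - n_\rho + 1$.

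The principal subtlety is verifying that case (c) really requires stopping after exactly $(n_\rho - 1)/2$ blowups and not proceeding further: the parity of $n_\rho$ is essential, because only then does the blown-up equation produce a single smooth point (rather than a residual singularity or two distinct branches as in case (b)). The vanishing of $\partial/\partial y'$ applied to $(y')^2$ in characteristic $2$ is precisely what makes $\ell$ larger than in the tame case and responsible for the formula $2n_q - n_\rho + 1$ rather than the tame value of $1$.
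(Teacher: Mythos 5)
Your proof is correct and follows essentially the same route as the paper's: complete the square in the tame case and count parity of $n_f$; substitute/blow up $n_q$ times in case (b) to reach an equation with separable fiber; blow up $(n_\rho-1)/2$ times in case (c), then present $\Omega_{B/k[z]_{(z)}}$ as $B\,dy'/(\partial F/\partial y'\cdot dy') \cong B/(z^m q_1)$ and read off its $k$-dimension $2m = 2n_q - n_\rho + 1$. The only differences are cosmetic (your explicit $k$-basis $\{z^i(y')^j\}$ in place of the paper's observation that the quotient is free of rank $2$ over $k[z]/(z^m)$, and your explicit Jacobian check of smoothness of the blown-up curve).
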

\begin{rmk} \label{rmk ramification index}
    We make the following observations about the subsets $R_i \subseteq R$.
    \begin{enumerate}[(a)]
        \item Assume that $p\neq 2$. \Cref{lemma computation ell ramification} tells us that, for all $P\in \SF{\XX_D}$, the integer $\ell(\XX_D,P)$ is 0 or 1 depending on whether an even or an odd number of the $2g+2$ points of $\Rinfty$ reduce to $P$.  In light of \Cref{rmk ell}, we conclude that, in the $p\neq 2$ case, $R_2$ (resp.\ $R_0\cup R_1$) is the set of points of $\SF{\XX_D}$ at which an \emph{even} (resp.\ \emph{odd}) number of roots of $\Rinfty$ reduce.  Actually, it is also easy to see that
        \begin{enumerate}[(i)]
            \item $P\in R_0$ if and only if exactly one root of $\Rinfty$ reduces to it;
            \item $P\in R_1$ if and only if only an odd number $\ge 3$ of roots of $\Rinfty$ reduces to it; and 
            \item $P\in R_2$ if and only if an even number $\ge 2$ of roots of $\Rinfty$ reduces to it.
        \end{enumerate}
        When we partition the even-cardinality set $\Rinfty$ according to the points of $\SF{\XX_D}$ at which its elements reduce, the number of odd cardinality classes must be even: this shows that $N_1+N_0$ is even, as we have already observed in \Cref{rmk N0N1 even}.
        
        \item Assume that $p=2$. \Cref{lemma computation ell ramification} allows us to calculate $\ell(\XX_D,P)$ from a given good part-square decomposition of $f_{\alpha,\beta}$ only in certain cases: in fact, when $2n_q(P)> n_\rho(P)$ and $n_\rho(P)$ is even, the lemma is inconclusive. At the same time, we remark that if we choose a \emph{totally odd} part-square decomposition for $f_{\alpha,\beta}$ (which can always be done by \Cref{prop totally odd existence}), the polynomial $\overline{\rho_0}$ will certainly have a zero of odd multiplicity at the points $0$ and $\infty$ of $\SF{\XX_D}$; hence, we will certainly be able to compute $\ell(\XX_D,P)$ where $P$ is the point $\overline{x_{\alpha,\beta}}=0$ or the point $\overline{x_{\alpha,\beta}}=\infty$ via the lemma. In other words, given a point $P\in \SF{\XX_D}$, by appropriately choosing the center $\alpha$ of the disc $D$ and constructing a totally odd decomposition for $f_{\alpha,\beta}$, \Cref{lemma computation ell ramification} allows us to compute $\ell(\XX_D,P)$ at the point, and the result it produces is a non-negative even integer.
    \end{enumerate}
\end{rmk}

We now give a criterion to determine whether $\XX_D\le \Xrst$ (which is equivalent to saying that $\YY_D\le \Yrst$).
\begin{thm}
    \label{thm part of rst separable}
    Assume that $D$ is a disc such that $\SF{\YY_D}\to \SF{\XX_D}$ is separable, and let $N$ denote the number of points of $\SF{\XX_D}$ to which the roots $\Rinfty$ reduce. We have $\XX_D\le \Xrst$ if and only if one of the following conditions holds:
    \begin{enumerate}
            \item $N\ge 3$;
            \item $p=2$, $N=2$ and $\SF{\YY_D}$ is irreducible; or 
            \item $p=2$, $N=1$ and $\SF{\YY_D}$ is irreducible of positive abelian rank.
    \end{enumerate}
    Moreover, whenever $\XX_D\le \Xrst$, the strict transform of the $k$-curve $\SF{\YY_D}$ in $\SF{\Yrst}$ is smooth, and it consequently coincides with its normalization $\NSF{\YY_D}$.
    \begin{proof}
        The result essentially follows from a combinatorial argument that directly makes use of the description we have given of $\SF{\YY_D}$ in this subsection, by applying the criterion we have presented in \Cref{prop part of rst}.  Let us write $N=N_0+N_1+N_2$ as we did at the beginning of this subsection; we recall that the integers $N_0$, $N_1$, and $N_2$ are respectively the number of points of $\SF{\XX_D}$ above which $\SF{\YY_D}$ is ramified and exhibits a smooth point, a singular point through which only one branch of $\SF{\YY_D}$ passes, and a singular point through which two branches of $\SF{\YY_D}$ pass.  We also recall from \Cref{rmk N0N1 even} that, in the $p\neq 2$ setting, the integer $N_0+N_1$ is necessarily even.
        
        Suppose that $\SF{\YY_D}$ is not irreducible.  As we have seen in \Cref{rmk R_0 R_1 R_2}(d), this is equivalent to the saying that $N_0=N_1=0$, and the curve $\SF{\YY_D}$ consists, in this case, of two lines $L_1$ and $L_2$ meeting each other above the $N=N_2$ points of $\SF{\XX_D}$; the number of singular points of $\SF{\YY_D}$ is $N$, and through each singular point one branch of $L_1$ and one branch of $L_2$ pass, flipped by the hyperlliptic involution. We have $m(L_i)=1$, $a(L_i)=0$, $w(L_i)=N$, and $\underline{w}(L_i)=(1, \ldots, 1)$ for $i = 1, 2$; hence, \Cref{prop part of rst} ensures that $\XX_D\le \Xrst$ if and only if $N\ge 3$.
        
        Suppose now that $\SF{\YY_D}$ is irreducible, which is to say that $N_0+N_1 \geq 1$, and let $V=\SF{\YY_D}$ denote the unique irreducible component of $\SF{\YY_D}$. We have the following: 
        \begin{itemize}
            \item $w(V)=N_1+2N_2$;
            \item $\underline{w}(V)=(1, \ldots, 1, 2, \ldots, 2)$ with $1$ appearing $N_1$ times and $2$ appearing $N_2$ times; and 
            \item $a(V)=-1+(N_0+N_1)/2$ in the $p\neq 2$ case, and $a(V)\ge -1+N_0+N_1$ in the $p=2$ setting, by \Cref{prop riemann hurwitz}.
        \end{itemize}
        Suppose that $N = 1$.  Then we have $N_0 + N_1 = 1$, which is impossible if $p \neq 2$ (as it contradicts \Cref{rmk N0N1 even}) and so we must have $p = 2$.  If $a(V) \geq 1$ then by Proposition \ref{prop part of rst} we have $\XX_D \leq \Xrst$, while if $a(V) = 0$, then we have $w(V) \leq 1$ and so Proposition \ref{prop part of rst} says that $\XX_D \not\leq \Xrst$.
        
        Suppose now that $N = 2$ and $p \neq 2$.  This forces $N_0 + N_1 = 2$ by \Cref{rmk N0N1 even}, from which it follows that $a(V) = 0$; meanwhile, we have $w(V) = N_1 + 2N_2 = N_1 \leq 2$ and $\underline{w}(V)$ consists only of $1$'s, and so by Proposition \ref{prop part of rst} we have $\XX_D \not\leq \Xrst$.  
        
        Finally, suppose that $N \geq 3$ or that $N = 2$ and $p = 2$.  If $N_2 \geq 1$, then we have $w(V) \geq 2$ and that a $2$ appears in $\underline{w}(V)$, and so $\XX_D \leq \Xrst$ by Proposition \ref{prop part of rst}.  If $N_2 = 0$, then we have $N_0 + N_1 \geq 4$ if $p \neq 2$ by \Cref{rmk N0N1 even} and $N_0 + N_1 \leq 2$ if $p = 2$; either way, we get $a(V) \geq 1$, and so again $\XX_D \leq \Xrst$ by \Cref{prop part of rst}.
        
        The statement about the strict transform of $\SF{\YY_D}$ in $\SF{\Yrst}$ is an immediate consequence of \Cref{prop smoothness components rst}, taking into account that the irreducible components of the special fiber of a semistable model of the line are always lines, and hence, in particular, smooth $k$-curves.
    \end{proof}
\end{thm}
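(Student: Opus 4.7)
My plan is to reduce the statement to the criterion of \Cref{prop part of rst}, which characterizes the models $\XX\le \Xrst$ through the invariants $m$, $a$, $w$, and $\underline{w}$ of the components of the corresponding $\YY$. The structural description of $\SF{\YY_D}$ developed in this subsection, together with the partition $R=R_0\sqcup R_1\sqcup R_2$ of the branch locus, makes these invariants entirely explicit.

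First I would handle the reducible case: by \Cref{rmk R_0 R_1 R_2}(d), if $\SF{\YY_D}$ is reducible then it consists of exactly two lines $L_1,L_2$ swapped by the hyperelliptic involution, with $N_0=N_1=0$ and $N=N_2$. Each $L_i$ has $m(L_i)=1$, $a(L_i)=0$, $w(L_i)=N$, and $\underline{w}(L_i)=(1,\ldots,1)$ since the two branches above each point of $R_2$ are swapped (\Cref{rmk R_0 R_1 R_2}(c)) and hence lie in distinct $G_{L_i}$-orbits. Applying \Cref{prop part of rst} directly shows $\XX_D\le \Xrst$ precisely when $N\ge 3$. Next I would treat the irreducible case. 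The unique component $V=\SF{\YY_D}$ has $m(V)=1$, $w(V)=N_1+2N_2$, and $\underline{w}(V)$ is obtained by concatenating $N_1$ copies of $1$ (branches at $R_1$-points) and $N_2$ copies of $2$ (flipped branches at $R_2$-points). The abelian rank $a(V)$ is controlled by \Cref{prop riemann hurwitz}: exactly $-1+(N_0+N_1)/2$ if $p\neq 2$, and at least $-1+(N_0+N_1)$ if $p=2$. Combined with the parity obstruction $N_0+N_1\in 2\zz$ (\Cref{rmk N0N1 even}, valid in the tame case), a finite case analysis on $N$ completes the argument.

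The easy extremes are $N\ge 3$, where at least one of the three alternatives of \Cref{prop part of rst} is always verified (either $w(V)\ge 3$, or $\underline{w}(V)=(2)$ in the boundary subcase $N_1=0$, $N_2=1$, $N_0=2$, or else $a(V)\ge 1$ by Riemann--Hurwitz), and $N=1$, where the only surviving possibility requires $p=2$, forces $N_2=0$ and $N_0+N_1=1$, and reduces to demanding $a(V)\ge 1$ (i.e., positive abelian rank). The delicate step, and the most likely source of friction, is the case $N=2$ with $\SF{\YY_D}$ irreducible: the tame parity $N_0+N_1\in 2\zz$ forces $(N_0+N_1,N_2)=(2,0)$ and yields $a(V)=0$, $w(V)=2$, $\underline{w}(V)=(1,1)$, which fails all three criteria of \Cref{prop part of rst}, excluding $p\neq 2$; while for $p=2$ the two possibilities $(N_0+N_1,N_2)\in\{(1,1),(2,0)\}$ both land in $\Xrst$, the subcase $(2,0)$ crucially relying on the stronger wild Riemann--Hurwitz inequality $a(V)\ge 1$. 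Matching these outcomes against the three conditions in the statement will conclude the equivalence.

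Finally, for the moreover assertion, I would appeal to \Cref{prop smoothness components rst}: every irreducible component of $\SF{\Xrst}$, being a component of the special fiber of a semistable model of $X=\mathbb{P}^1_K$, is a smooth $k$-line, so any component of $\SF{\Yrst}$ lying over it is a smooth $k$-curve. Consequently the strict transform of $\SF{\YY_D}$ in $\SF{\Yrst}$ is smooth and hence coincides with its normalization $\NSF{\YY_D}$.
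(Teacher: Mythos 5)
Your proposal is correct and follows essentially the same path as the paper's proof: you reduce to the criterion of \Cref{prop part of rst}, compute the invariants $m$, $a$, $w$, $\underline{w}$ from the structural description of $\SF{\YY_D}$ via the partition $R_0\sqcup R_1\sqcup R_2$, and then perform the same case analysis on $N$, with the tame parity $N_0+N_1\in 2\zz$ and the two forms of Riemann--Hurwitz (\Cref{prop riemann hurwitz}) doing the work. Your explicit isolation of the boundary subcase $(N_0,N_1,N_2)=(2,0,1)$ for $N\ge 3$, $p\neq 2$ (where criterion (iii) of \Cref{prop part of rst} is the only one that fires) and of the subcase $(N_0+N_1,N_2)=(2,0)$ for $N=2$, $p=2$ (where the wild inequality $a(V)\ge 1$ is decisive) matches the delicate points the paper also handles, and the moreover assertion is treated identically via \Cref{prop smoothness components rst}.
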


\subsection{The special fiber \texorpdfstring{$\SF{\YY_D}$}{(YD)s} in the inseparable case} \label{sec models hyperelliptic inseparable}
We again let $D:=D_{\alpha,v(\beta)}$ be a disc with $a\in \bar{K}$ and $\beta \in \bar{K}^\times$, and let $\YY_D$ be the corresponding model of $Y$ constructed in \S\ref{sec models hyperelliptic forming}; this subsection will analyze the case in which $\SF{\YY_D}\to \SF{\XX_D}$ is inseparable (we are thus in the $p = 2$ setting). In this case, given a good part-square decomposition $f_{\alpha,\beta}=q^2_{\alpha,\beta}+\rho_{\alpha,\beta}$, we have $0 \leq t:=t_{q_{\alpha,\beta},\rho_{\alpha,\beta}} < 2v(2)$, and the special fiber $\SF{\YY_D}$ is described by an equation of the form $y^2=\overline{\rho_0}(x_{\alpha,\beta})$ over the $k$-line $\SF{\XX_D}$, where $\overline{\rho_0}$ is, in this case, a normalized reduction of $\rho_{\alpha,\beta}$, and it is not a square. 

We introduce the following notation.
\begin{dfn}
    \label{dfn mu}
    Given a point $P\in \SF{\XX_D}$, we define $\mu(\XX_D,P)$ to be the order of vanishing of the derivative $\overline{\rho_0}'$ of $\overline{\rho_0}$ at $P$; when $P=\infty$, we set $\mu(\XX_D,P) = 2g-\deg(\overline{\rho_0}')$.
\end{dfn}

\begin{rmk}
    \label{rmk mu}
    We make note of the following.
    \begin{enumerate}[(a)]
        \item The integer  $\mu(\XX_D,P)$ is independent of the chosen good part-square decomposition for $f_{\alpha,\beta}$, thanks to \Cref{prop good decomposition}(b).
        \item Since $p=2$, the derivative $\overline{\rho_0}'$ is a square, and so the integer $\mu(\XX_D,P)$ is even and non-negative for all $P\in \SF{\XX_D}$. 
        \item Since the degree of $\rho$ is $2g + 1$, we have $\sum_{P\in \SF{\XX_D}}\mu(\XX_D,P)=2g$.
    \end{enumerate}
\end{rmk}

It is immediate to verify that the singularities of $\SF{\YY_D}$ lie exactly over the finite set of points $R_{\mathrm{sing}}\subseteq \SF{\XX_D}$ at which $\overline{\rho_0}'$ vanishes, i.e.\ the points at which $\mu(\XX_D,P)>0$. Since we have $\sum_{P\in \SF{\XX_D}}\mu(\XX_D,P)\linebreak[0]=2g$, and since the integer $\mu(\XX_D,P)$ is always even, we have that $R_{\mathrm{sing}}$ has cardinality $\leq g$.

We remark that, if $t=0$, then the points of $R_\mathrm{sing}$ are just the roots of some (any) normalized reduction of $(f_{\alpha,\beta})'$, because, in this case, the trivial part-square decomposition $f_{\alpha,\beta}=0^2+f_{\alpha,\beta}$ is good; in particular, when $t=0$ we have $R_\mathrm{mult}\subseteq R_\mathrm{sing}$, where $R_{\mathrm{mult}}$ is the set of points of $\SF{\XX_D}$ to which two or more of the roots $\Rinfty$ reduce.

The normalization $\NSF{\YY_D}$ of the special fiber $\SF{\YY_D}$ is simply a projective line, and $\NSF{\YY_D}\to \SF{\XX_D}$ is the Frobenius cover of the projective line $\SF{\XX_D}$, which can be described by an equation of the form $y^2=x_{\alpha,\beta}$.
\begin{prop}
    \label{prop part of rst inseparable}
    If $D$ is a disc such that $\SF{\YY_D}\to \SF{\XX_D}$ is inseparable, then we have $\XX_D\le \Xrst$ if and only if $|R_\mathrm{sing}|\ge 3$. Moreover, whenever $\XX\le \Xrst$, the strict transform of $\SF{\YY_D}$ in $\SF{\Yrst}$ is a projective line.
    \begin{proof}
        The special fiber $\SF{\YY_D}$ is reduced, and it consists of a unique component $V$, which has $|R_\mathrm{sing}|$ unibranch singularities lying over $|R_\mathrm{sing}|$ distinct points of $\SF{\XX_D}$; moreover, the normalization $\widetilde{V}$ is a line.  We thus have $m(V)=1$, $a(V)=0$, $w(V)=|R_\mathrm{sing}|$, and $\underline{w}(V)=(1, \ldots, 1)$ and consequently deduce, via the criterion expressed in \Cref{prop part of rst}, that $\XX_D\le \Xrst$ if and only if $N\ge 3$.
        
        Moreover we have that, when $\XX_D\le \Xrst$, the strict transform of $\SF{\YY_D}$ in $\SF{\Yrst}$ coincides with the normalization $\NSF{\YY_D}$: the proof is identical to the one given in \Cref{thm part of rst separable}, and in our specific case, $\NSF{\YY_D}=\widetilde{V}$ is just a projective line.
    \end{proof}
\end{prop}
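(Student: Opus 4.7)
My plan is to apply the criterion of Proposition \ref{prop part of rst} to the unique irreducible component $V$ of $\SF{\YY_D}$, after computing its invariants $m(V)$, $a(V)$, $w(V)$, and $\underline{w}(V)$. First I will observe that $\SF{\YY_D}$ is irreducible: since $\SF{\YY_D} \to \SF{\XX_D}$ is purely inseparable of degree $2$, it is a homeomorphism on underlying topological spaces, so $\SF{\YY_D}$ is irreducible because $\SF{\XX_D}$ is. Denote its single component by $V$. Reducedness of $\SF{\YY_D}$ (from \Cref{prop normalization model}) gives $m(V) = 1$, and the fact that $\NSF{\YY_D}$ is a projective line (established in the discussion preceding the statement) gives $a(V) = 0$.

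Next I compute $w(V)$ and $\underline{w}(V)$. To show that each singular point of $V$ is unibranch, I observe that the composite $\NSF{\YY_D} \to V \to \SF{\XX_D}$ is the Frobenius cover of $\SF{\XX_D}$, and is therefore a bijection on $k$-points; since $V \to \SF{\XX_D}$ is also bijective on $k$-points (being purely inseparable), the normalization map $\NSF{\YY_D} \to V$ must be injective on $k$-points, so each singular point of $V$ has exactly one branch. This gives $w(V) = |R_{\mathrm{sing}}|$. For the refined invariant $\underline{w}(V)$, I note that the reduced defining equation $y^2 = \overline{\rho_0}(x_{\alpha,\beta})$ has no linear term in $y$, so the hyperelliptic involution $y \mapsto -y$ reduces to the identity on $\SF{\YY_D}$ in characteristic $2$; consequently the Galois stabilizer $G_V$ acts trivially on the set of branches, and $\underline{w}(V) = (1, 1, \ldots, 1)$ with $|R_{\mathrm{sing}}|$ entries.

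Applying Proposition \ref{prop part of rst}, the condition $\XX_D \leq \Xrst$ reduces to one of (i) $a(V) \geq 1$, (ii) $w(V) \geq 3$, or (iii) $\underline{w}(V) = (2)$ holding. The first and third are excluded by the computations above, leaving $|R_{\mathrm{sing}}| \geq 3$ as the necessary and sufficient condition. For the last sentence of the statement, when $\XX_D \leq \Xrst$ the component of $\SF{\Xrst}$ corresponding to $\XX_D$ is a smooth $k$-curve (being a projective line), so \Cref{prop smoothness components rst} ensures that its preimage in $\SF{\Yrst}$ is smooth. The strict transform of $V$ is an irreducible component of this smooth preimage, hence a smooth $k$-curve; being birational to $\NSF{\YY_D} = \mathbb{P}^1_k$, it is itself a projective line. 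The only potentially delicate step is the unibranch-ness claim, which is what makes the Frobenius description of the normalization map essential.
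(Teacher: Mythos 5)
Your proof is correct and follows essentially the same structure as the paper's: compute $m(V)$, $a(V)$, $w(V)$, $\underline{w}(V)$ for the unique component $V$ of $\SF{\YY_D}$ and apply \Cref{prop part of rst}, then use \Cref{prop smoothness components rst} for the final statement. The paper's proof simply asserts the intermediate facts about $\SF{\YY_D}$ (irreducibility, unibranch singularities, $\NSF{\YY_D}\cong\mathbb{P}^1_k$), drawing on the description established in the discussion of \S\ref{sec models hyperelliptic inseparable}, while you spell out the justifications: irreducibility via the topological homeomorphism, unibranch-ness via the Frobenius factorization and bijectivity on $k$-points, and $\underline{w}(V)=(1,\ldots,1)$ via the observation that the hyperelliptic involution $y\mapsto -y - q_0(x)$ reduces to the identity modulo $\mathfrak{m}$ in residue characteristic $2$. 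These extra details are correct and genuinely fill in what the paper implicitly references; in particular the argument for triviality of the $G$-action on branches is a clean alternative to simply observing that the branches sit over distinct points of $\SF{\XX_D}$ and are therefore fixed by $G$ since $G$ acts trivially on the base. No gap.
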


\begin{rmk}
    \label{rmk no inseparable genus 1,2}
    Since we always have $|R_{\mathrm{sing}}|\le g$ as shown in the above discussion, when  $g=1$ or $g=2$, the hypothesis of \Cref{prop part of rst inseparable} is never satisfied, hence we never have $\XX_D\le \Xrst$ if $\SF{\YY_D}\to \SF{\XX_D}$ is inseparable.
\end{rmk}

Regarding the contribution of $\XX_D$ to $\Xrst$ when $\SF{\YY_D}\to \SF{\XX_D}$ is inseparable, we have the following result.
\begin{prop}
    \label{prop evanescence inseparable components}
    Letting $\mathfrak{D}$ be the collection of discs corresponding to the model $\Xrst$ (see \S\ref{sec models hyperelliptic line}), let us write $\mathfrak{D}=\mathfrak{D}_{\mathrm{sep}}\sqcup \mathfrak{D}_{\mathrm{insep}}$, where $D\in \mathfrak{D}$ belongs to $\mathfrak{D}_{\mathrm{sep}}$ (resp.\ $\mathfrak{D}_{\mathrm{insep}}$) if the covering map $\SF{\YY_D}\to \SF{\XX_D}$ is (resp.\ is not) separable. Then the set $\mathfrak{D}$ can be reconstructed as $(\mathfrak{D}_{\mathrm{sep}})^{\sst}$ following the algorithm presented in \S\ref{sec models hyperelliptic line}.
    \begin{proof}
        For a disc $D\in \mathfrak{D}_\mathrm{insep}$, \Cref{prop part of rst inseparable} ensures that $|R_{\mathrm{sing}}|\ge 3$.  since \Cref{cor rst isomorphism failure} says that $R_{\mathrm{sing}}=\Ctr(\XX_D,\Xrst)$, we deduce that $|\Ctr(\XX_D,\linebreak[0]\Xrst)|\ge 3$.  Now the proposition follows from \Cref{rmk semistabilization reversed}.
    \end{proof}
\end{prop}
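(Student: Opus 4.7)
The plan is to reduce the proposition to a direct application of \Cref{rmk semistabilization reversed}, taking $\mathfrak{D}' := \mathfrak{D}_{\mathrm{sep}}$ inside the collection $\mathfrak{D}$ corresponding to the semistable model $\Xrst$. That remark reduces the desired identity $\mathfrak{D} = (\mathfrak{D}_{\mathrm{sep}})^{\sst}$ to a single verification: that $|\Ctr(\XX_D, \Xrst)| \geq 3$ for every $D \in \mathfrak{D}_{\mathrm{insep}}$. (If $\mathfrak{D}_{\mathrm{insep}}$ is empty there is nothing to prove; otherwise the non-emptiness of $\mathfrak{D}_{\mathrm{sep}}$ needed for the remark is a quick separate check using, for instance, \Cref{rmk no inseparable genus 1,2} in the low-genus cases and a component count in general.)

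To establish the inequality for a fixed $D \in \mathfrak{D}_{\mathrm{insep}}$, I would proceed in two short steps. First, since $\XX_D \leq \Xrst$ and the cover $\SF{\YY_D} \to \SF{\XX_D}$ is inseparable, \Cref{prop part of rst inseparable} gives $|R_{\mathrm{sing}}| \geq 3$, where $R_{\mathrm{sing}} \subseteq \SF{\XX_D}$ denotes the finite set of points over which $\SF{\YY_D}$ is singular. Second, the model $\YY_D$ has reduced special fiber by \Cref{prop normalization model}, so \Cref{cor rst isomorphism failure} applies and identifies
\[
\Ctr(\XX_D, \Xrst) = f(\Sing(\SF{\YY_D})),
\]
where $f \colon \YY_D \to \XX_D$ is the covering map. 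Because the special-fiber cover is purely inseparable of degree two, $f$ restricts to a set-theoretic bijection on the underlying topological spaces of the special fibers, so $f(\Sing(\SF{\YY_D})) = R_{\mathrm{sing}}$ and therefore $|\Ctr(\XX_D, \Xrst)| = |R_{\mathrm{sing}}| \geq 3$, as required.

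Combining the two steps supplies exactly the hypothesis of \Cref{rmk semistabilization reversed}, which then yields $\mathfrak{D} = (\mathfrak{D}_{\mathrm{sep}})^{\sst}$. There is essentially no obstacle: the difficult content has already been packaged into \Cref{prop part of rst inseparable} and \Cref{cor rst isomorphism failure}, which between them translate the geometric data (how many components of $\Xrst$ contract to the single line of $\XX_D$) into the cardinality of the singular locus of $\SF{\YY_D}$. The only mild subtlety is the set-theoretic bijectivity of $f$ on the special fibers, which is immediate from inseparability of degree two.
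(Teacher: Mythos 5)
Your proof is correct and follows the same route as the paper: invoke \Cref{prop part of rst inseparable} to get $|R_{\mathrm{sing}}|\ge 3$, identify $R_{\mathrm{sing}}=\Ctr(\XX_D,\Xrst)$ via \Cref{cor rst isomorphism failure}, and then apply \Cref{rmk semistabilization reversed}. One small remark: the observation that $f$ is a set-theoretic bijection on special fibers is unnecessary here, since $R_{\mathrm{sing}}$ is \emph{defined} as the set of points of $\SF{\XX_D}$ over which $\SF{\YY_D}$ is singular, so $R_{\mathrm{sing}}=f(\Sing(\SF{\YY_D}))$ holds tautologically rather than needing the inseparability of the cover. You are right that the non-emptiness of $\mathfrak{D}_{\mathrm{sep}}$ should strictly speaking be checked (the paper elides it): a clean general argument is that if all components of $\SF{\Yrst}$ were inseparable, then $\SF{\Yrst}\to\SF{\Xrst}$ would be a homeomorphism, forcing both abelian and toric rank to vanish and hence $g(Y)=0$, a contradiction.
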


Roughly speaking, we can conclude that the role of the inseparable components in $\SF{\Yrst}$ is inessential: they are just lines that, in light of the proposition above, only get added whenever it is necessary to create room between three or more separable components that would otherwise intersect at the same point and violate semistability. We can consequently focus our attention on the \emph{separable} components of $\SF{\Yrst}$, which is to say on the discs $D$ such that $\XX_D\le \Xrst$ for which $\SF{\YY_D}\to \SF{\XX_D}$ in a separable cover.  Starting in the next section, we will refer to them by the term \emph{valid discs}, and their determination, by \Cref{prop evanescence inseparable components}, suffices to compute the whole $\Xrst$. 

\section{Clusters and valid discs} \label{sec clusters}

We begin this section by defining, in \S\ref{sec cluster cluster definition}, \emph{clusters (of roots)}, \emph{depths}, and \emph{relative depths} of clusters, and the \emph{cluster picture} associated to the odd-degree polynomial $f(x)$ defining the hyperelliptic curve $Y: y^2=f(x)$.  This notion of ``cluster" is equivalent to the one found in \cite[Definition 1.1]{dokchitser2022arithmetic}.  It is known (see for instance \cite[Theorem 1.10]{dokchitser2022arithmetic}) that the cluster picture completely determines the structure of the special fiber $\SF{\Ymini}$ of the minimal regular model as long as we are in the $p \neq 2$ setting.  Similarly, when $p \neq 2$, a minor variant of this result says that the structure of $\SF{\Yrst}$ is also determined entirely by the cluster picture associated to $f$, in such a way that each component of $\SF{\Xrst}$ corresponds to a non-singleton cluster (see \Cref{thm cluster p odd}). In the $p = 2$ setting, however, it is no longer the case that the cluster picture associated to a polynomial $f$ determines the structure of $\SF{\Ymini}$ or $\SF{\Yrst}$.  In light of this, in \S\ref{sec valid discs definition} we set up the notion of \emph{valid discs} associated to $f$, so that each one corresponds to a component of $\SF{\Xrst}$, and we explore the relationship between these valid discs and clusters associated to $f$ (\Cref{thm cluster p=2}).

\subsection{Clusters}
\label{sec cluster cluster definition}

We want to define the \emph{cluster picture} associated to the set $\RR\subseteq \bar{K}$ consisting of the $2g+1$ roots of the polynomial $f(x)$ defining the hyperelliptic curve $Y$. First, let us introduce a number of invariants attached to a subset $\mathfrak{s}\subseteq \RR$.
\begin{dfn} \label{dfn depth}
    \label{dfn interval I}
    Given a subset $\mathfrak{s}\subseteq \RR$, we set
    \begin{equation*}
    \begin{gathered}
        d_+(\mathfrak{s}) = \min_{\zeta, \zeta' \in \mathfrak{s}} v(\zeta - \zeta') \in \qqinfty; \qquad 
        d_-(\mathfrak{s}) = \max_{\zeta \in \mathfrak{s}, \ \zeta' \in \RR\setminus\mathfrak{s}} v(\zeta - \zeta') \in \qqminusinfty,
   \end{gathered}
   \end{equation*}
   where we follow the convention that $\min \varnothing = +\infty$ and $\max \varnothing = -\infty$.  The number $d_+(\mathfrak{s})$ is named the \emph{(absolute) depth} of $\mathfrak{s}$, while $\delta(\mathfrak{s}):=d_+(\mathfrak{s})-d_-(\mathfrak{s})\in \qqinfty$ will be also referred to as the \emph{relative depth} of $\mathfrak{s}$.
   We will use the notation $I(\mathfrak{s})$ to mean the closed interval $[d_-(\mathfrak{s}),d_+(\mathfrak{s})]$, with the convention that $I(\mathfrak{s})=\varnothing$ whenever $d_+(\mathfrak{s})<d_-(\mathfrak{s})$.
\end{dfn}

We are now ready to define the notion of a cluster.
\begin{dfn}
    \label{dfn cluster}
    Given a non-empty subset $\mathfrak{s}\subseteq \RR$, we say that $\mathfrak{s}$ is a \emph{cluster} (of $\mathcal{R}$) if we have $\delta(\mathfrak{s})>0$. The set of pairs $(\mathfrak{s},d_+(\mathfrak{s}))$, where $\mathfrak{s}$ varies among all clusters of $\RR$, is called the \emph{cluster picture} of $\RR$.
\end{dfn}

\begin{rmk} \label{rmk cluster}
    We note the following.
    \begin{enumerate}[(a)]
        \item It is elementary to verify that, given a non-empty subset $\mathfrak{s}\subseteq \RR$ is a cluster if and only if there exists a disc $D\subseteq \bar{K}$ such that $D\cap \RR=\mathfrak{s}$.
        \item We have that $\RR$ itself is always a cluster, with $d_-(\RR)=-\infty$, $d_+(\RR)$ finite, and $\delta(\RR)=+\infty$.
        \item For every $a \in \mathfrak{s}$, the singleton $\{a\}$ is always a cluster, with $d_-(\RR)$ finite, $\delta_+(\{s\})=+\infty$, and $\delta(\{s\})=+\infty$.
    \end{enumerate}
\end{rmk}

\begin{dfn} \label{dfn parent}
    For every cluster $\mathfrak{s}\subsetneq \RR$, the \emph{parent cluster} of $\mathfrak{s}$ is the smallest cluster $\mathfrak{s}'$ properly containing it; in this situation, we say that $\mathfrak{s}$ is a \emph{child cluster} of $\mathfrak{s}'$. Two distinct clusters having the same parent are said to be \emph{sibling clusters}.
\end{dfn}

With the notation of the above definition, it is immediate to verify that $d_+(\mathfrak{s}')=d_-(\mathfrak{s})$.

\begin{prop}
    \label{prop cluster delta characterization}
    Given a subset $\mathfrak{s}\subseteq \RR$, we have the following:
    \begin{enumerate}[(a)]
        \item we have $\delta(\mathfrak{s})> 0$ if and only if $\mathfrak{s}$ is either the empty set or a cluster; and
        \item we have $\delta(\mathfrak{s})\ge 0$ if and only if $\mathfrak{s}$ is a (possibly empty) union of sibling clusters.
    \end{enumerate}
    \begin{proof}
        This follows immediately from definitions.
    \end{proof}
\end{prop}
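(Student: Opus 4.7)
The plan is as follows. For part (a), I would just unpack \Cref{dfn cluster}: a non-empty $\mathfrak{s}$ is a cluster iff $\delta(\mathfrak{s})>0$, and the case $\mathfrak{s}=\varnothing$ is handled by the conventions $\min\varnothing=+\infty$, $\max\varnothing=-\infty$, which give $\delta(\varnothing)=+\infty>0$.

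For part (b), the essential preliminary observation is that, for any cluster $\mathfrak{t}$ of $\mathcal{R}$, its child clusters form a partition of $\mathfrak{t}$. I would establish this using the ultrametric inequality: if two distinct children $\mathfrak{t}_1,\mathfrak{t}_2$ shared an element $\zeta$, then for any $\zeta_1\in \mathfrak{t}_1$ and $\zeta_2\in \mathfrak{t}_2$ the ultrametric gives $v(\zeta_1-\zeta_2)\ge \min(d_+(\mathfrak{t}_1),d_+(\mathfrak{t}_2))>d_+(\mathfrak{t})$, so the disc $D_{\zeta,\min(d_+(\mathfrak{t}_1),d_+(\mathfrak{t}_2))}$ would cut out (via \Cref{rmk cluster}(a)) a cluster strictly between some child and $\mathfrak{t}$, contradicting maximality. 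That every element of $\mathfrak{t}$ lies in a (necessarily unique) child follows by taking the maximal proper sub-cluster of $\mathfrak{t}$ containing it, which exists since $\{\zeta\}$ is a cluster and there are only finitely many intermediate clusters.

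For the ``$\Leftarrow$'' direction of (b), assume $\mathfrak{s}=\mathfrak{s}_1\cup\cdots\cup \mathfrak{s}_n$ where the $\mathfrak{s}_i$ are children of a common parent $\mathfrak{t}$; the case $n=0$ is trivial. The key computation is that for $\zeta\in \mathfrak{s}_i$ and $\zeta'\in \mathfrak{s}_j$ with $i\neq j$ we have exactly $v(\zeta-\zeta')=d_+(\mathfrak{t})$: the inequality $\ge$ is obvious, and the inequality $\le$ follows because $v(\zeta-\zeta')>d_+(\mathfrak{t})$ would put $\zeta,\zeta'$ into a common sub-cluster of $\mathfrak{t}$ (by the disc argument above), contradicting that $\mathfrak{s}_i\neq \mathfrak{s}_j$ are distinct children. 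From this and the fact that $d_+(\mathfrak{s}_i)>d_+(\mathfrak{t})$, I get $d_+(\mathfrak{s})\ge d_+(\mathfrak{t})$. For $d_-(\mathfrak{s})$: an element $\zeta'\in \mathcal{R}\setminus \mathfrak{s}$ either lies in $\mathfrak{t}\setminus \mathfrak{s}$ (some other child, giving $v(\zeta-\zeta')=d_+(\mathfrak{t})$) or lies outside $\mathfrak{t}$ (giving $v(\zeta-\zeta')\le d_-(\mathfrak{t})<d_+(\mathfrak{t})$); hence $d_-(\mathfrak{s})\le d_+(\mathfrak{t})\le d_+(\mathfrak{s})$, i.e.\ $\delta(\mathfrak{s})\ge 0$.

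For the ``$\Rightarrow$'' direction, suppose $\mathfrak{s}\neq \varnothing$ and $d:=d_+(\mathfrak{s})\ge d_-(\mathfrak{s})$. Fix $\zeta_0\in \mathfrak{s}$ and set $\mathfrak{t}:=\mathcal{R}\cap D_{\zeta_0,d}$; by definition of $d$, $\mathfrak{s}\subseteq \mathfrak{t}$, and $\mathfrak{t}$ is a cluster (either $\mathfrak{t}=\mathcal{R}$, or every $\zeta'\not\in\mathfrak{t}$ satisfies $v(\zeta_0-\zeta')<d\le d_+(\mathfrak{t})$). Since $d_+(\mathfrak{t})\ge d\ge d_-(\mathfrak{s})$, any child $\mathfrak{t}_0$ of $\mathfrak{t}$ meeting $\mathfrak{s}$ must be contained in $\mathfrak{s}$: indeed, given $\zeta\in \mathfrak{t}_0\cap \mathfrak{s}$ and $\zeta_1\in \mathfrak{t}_0$, the ultrametric gives $v(\zeta-\zeta_1)\ge d_+(\mathfrak{t}_0)>d_+(\mathfrak{t})\ge d_-(\mathfrak{s})$, which forces $\zeta_1\in \mathfrak{s}$. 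Combined with the partitioning property, $\mathfrak{s}$ is a union of children of $\mathfrak{t}$, as desired.

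The main obstacles are the preliminary partition-into-children claim (which I expect to be the real content of the argument) and the bookkeeping of the degenerate cases ($\mathfrak{s}=\varnothing$, $\mathfrak{s}=\mathcal{R}$, $\mathfrak{s}$ a singleton); these latter should all be dispatched uniformly by the conventions on $\min,\max$ and by the ultrametric calculation above.
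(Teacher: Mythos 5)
Your proof is correct and fills in the details that the paper dispatches with ``This follows immediately from definitions.'' The overall route (unwind \Cref{dfn depth} and \Cref{dfn cluster}, prove children of a cluster partition it, then check each direction of (b)) is the natural one and matches what the authors are implicitly invoking; you have simply made it explicit. One small imprecision worth tightening: in the disjointness-of-children step, the disc $D := D_{\zeta,\min(d_+(\mathfrak{t}_1),d_+(\mathfrak{t}_2))}$ does not directly ``cut out a cluster strictly between some child and $\mathfrak{t}$.'' What you get a priori is a cluster $\mathfrak{t}' := D\cap\mathcal{R}$ with $\mathfrak{t}_1\cup\mathfrak{t}_2\subseteq\mathfrak{t}'\subseteq\mathfrak{t}$. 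You still need to rule out $\mathfrak{t}'=\mathfrak{t}$, which follows from the depth: if $\mathfrak{t}'=\mathfrak{t}$ then by the ultrametric every pair in $\mathfrak{t}$ has difference of valuation $\ge\min(d_+(\mathfrak{t}_1),d_+(\mathfrak{t}_2))>d_+(\mathfrak{t})$, contradicting the definition of $d_+(\mathfrak{t})$ as a minimum. Once $\mathfrak{t}'\subsetneq\mathfrak{t}$, the chain $\mathfrak{t}_1\subsetneq\mathfrak{t}'\subsetneq\mathfrak{t}$ (proper on the left because $\mathfrak{t}_2\not\subseteq\mathfrak{t}_1$) contradicts maximality of the child $\mathfrak{t}_1$, as you intended. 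Everything else checks out, including the careful treatment of $d_-(\mathfrak{s})$ in the forward direction of (b) via the partition into children.
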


The term \emph{cluster picture} is inspired by the fact that the data of a cluster picture can easily be expressed visually.  To do so, we represent elements of $\mathcal{R}$ as points and represent proper clusters as loops surrounding the corresponding subsets of points with numbers next to the loops indicating the corresponding depths.  

\begin{ex}
\label{ex cluster picture}
The cluster picture associated to $\mathcal{R} := \{0, \pi^4, \pi^3, \pi, \pi(1-\pi^4)\}$, where $\pi\in K$ is an element such that $v(\pi)=1$, may be visualized using the below diagram, in which the all clusters (except the singleton ones) are displayed together with their relative depths (for the cluster $\RR$, the label indicates the absolute depth).
\begin{equation*}
	\mathrm{cluster} \ \mathrm{picture} \ \mathrm{of} \ \mathcal{R}:\
	\begin{clusterpicture}
		\comincia;
		\punto{1}{0}{$0$};
		\punto{2}{1}{$\pi^4$};
		\punto{3}{2}{$\pi^3$};
		\punto{4}{3}{$\pi$};
		\punto{5}{4}{$\pi(1-\pi^4)$};
		\cluster{12}{\pto{1}\pto{2}}{1};
		\cluster{123}{\clu{12}\pto{3}}{2};
		\cluster{45}{\pto{4}\pto{5}}{4};
		\cluster{tot}{\clu{123}\clu{45}}{1};
	\end{clusterpicture}
\end{equation*}
\end{ex}

\begin{rmk} \label{rmk reciprocal}
    Translating a subset $\mathcal{R} \subset \bar{K}$ by an element $\alpha \in \bar{K}$ clearly does not affect the cluster picture.  An important automorphism of the projective line is the reciprocal map which takes a finite point $z \neq 0$ to $z^{-1}$ and exchanges $0$ and $\infty$; composed with the translation-by-$\alpha$ map $z \mapsto z - \alpha$, we get an automorphism of the projective line given by $i_\alpha : z \mapsto (z - \alpha)^{-1}$.
    
    One can check readily (see also the proof of \cite[Proposition 14.6]{dokchitser2022arithmetic}) that the cluster picture of a set $\RR\subseteq \bar{K}$ transforms in a predictable and easily describable way under such a map $i_\alpha$. In fact, assume that $\alpha\in \RR$, and, for any subset $\mathfrak{s}\subseteq \RR$, let $\mathfrak{s}^{\vee,\alpha}$ be defined as:
    \begin{equation*}
        \mathfrak{s}^{\vee,\alpha}=\begin{cases}
            i_\alpha(\RR\setminus\mathfrak{s})\cup \lbrace 0 \rbrace, &\text{if }\alpha\in \mathfrak{s};\\
            i_\alpha(\mathfrak{s}), &\text{if }\alpha\not\in \mathfrak{s}.
        \end{cases}
    \end{equation*}
    This defines a bijection between the subsets of the set of roots of $f$ and the subsets of the set of roots of $f^{\vee,\alpha}$, where $f^{\vee,\alpha}$ is the degree-$(2g+1)$ polynomial defined as $f^{\vee,\alpha} := (z - \alpha)^{2g+2} f((z - \alpha)^{-1})$.
    Moreover, one readily computes that, when $\alpha\in \mathfrak{s}$, we have $d_\pm(\mathfrak{s}^{\vee,\alpha}) = \mp d_\pm(\mathfrak{s})$, whereas when $\alpha\not\in\mathfrak{s}$, we have $d_\pm(\mathfrak{s}^{\vee,\alpha}) = d_\pm(\mathfrak{s})-2d_+(\mathfrak{s}\cup\{ \alpha \})$. In both cases, we get $\delta(\mathfrak{s})=\delta(\mathfrak{s}^{\vee,\alpha})$; in particular, we have that $\mathfrak{s}$ is a cluster for $f$ if and only if $\mathfrak{s}^{\vee,\alpha}$ is a cluster for $f^{\vee,\alpha}$.
    
\end{rmk}

We want now to introduce some further definitions for later use that relate the clusters to the discs that cut them out of $\mathcal{R}$. We begin with the following remark.

\begin{dfn}
    \label{dfn linked}
    Given a cluster $\mathfrak{s}\subseteq \RR$, we say that a disc $D\subseteq \bar{K}$ is \emph{linked to} $\mathfrak{s}\subseteq \mathcal{R}$ if we have $D = D_{\mathfrak{s},b}$ for some $b\in I(\mathfrak{s})$ (where $D_{\mathfrak{s},b}$ denotes the disc of depth $b$ centered at any point of $\mathfrak{s}$).
\end{dfn}

To clarify this notion, \Cref{fig cluster graph} illustrates all discs linked to each cluster when $\RR$ is the cardinality-$5$ set of roots described in \Cref{ex cluster picture}.

\begin{rmk}
    A disc $D$ is linked to a cluster $\mathfrak{s}$ if and only if we have either $D\cap \RR=\mathfrak{s}$ or that $D$ is the minimal disc such that $D\cap \RR\supsetneq \mathfrak{s}$.  More precisely, if $D=D_{\mathfrak{s},b}$ for some $b\in I(\mathfrak{s})$, we have that $D\cap \RR=\mathfrak{s}$ whenever $b\in (d_-(\mathfrak{s}),d_+(\mathfrak{s})]$, whereas $D$ is the smallest disc such that $D\cap\RR\supsetneq \mathfrak{s}$ when $b=d_-(\mathfrak{s})$; moreover, in this case the subset $D\cap\RR \subseteq \RR$ is the parent cluster of $\mathfrak{s}$.
\end{rmk}

\begin{figure}
    \centering
    \tikzset{every picture/.style={line width=0.75pt}} 

\begin{tikzpicture}[x=0.75pt,y=0.75pt,yscale=-1,xscale=1]

\draw  [dash pattern={on 0.84pt off 2.51pt}]  (50,30) -- (50,290) ;
\draw    (10,30) -- (488,30) ;
\draw [shift={(490,30)}, rotate = 180] [color={rgb, 255:red, 0; green, 0; blue, 0 }  ][line width=0.75]    (10.93,-3.29) .. controls (6.95,-1.4) and (3.31,-0.3) .. (0,0) .. controls (3.31,0.3) and (6.95,1.4) .. (10.93,3.29)   ;
\draw    (50,30) -- (50,40.67) ;
\draw    (100,30) -- (100,40.67) ;
\draw    (150,30) -- (150,40.67) ;
\draw    (200,30) -- (200,40.67) ;
\draw    (20,180) -- (100,180) ;
\draw    (150,230) -- (300,230) ;
\draw    (350,200) -- (450,200) ;
\draw    (100,180) .. controls (114,203.67) and (122,228.67) .. (150,230) ;
\draw    (300,230) .. controls (310,251.67) and (303,258.67) .. (350,260) ;
\draw    (300,230) .. controls (306,198.67) and (302,201.67) .. (350,200) ;
\draw    (350,260) -- (450,260) ;
\draw    (100,180) .. controls (114,141.67) and (125,128.67) .. (150,130) ;
\draw    (150,130) -- (200,130) ;
\draw    (200,130) .. controls (206,98.67) and (202,101.67) .. (250,100) ;
\draw    (200,130) .. controls (210,151.67) and (203,158.67) .. (250,160) ;
\draw    (300,70) -- (450,70) ;
\draw    (250,100) .. controls (260,121.67) and (253,128.67) .. (300,130) ;
\draw    (250,100) .. controls (256,68.67) and (252,71.67) .. (300,70) ;
\draw    (300,130) -- (450,130) ;
\draw    (250,160) -- (450,160) ;
\draw    (250,30) -- (250,40.67) ;
\draw    (300,30) -- (300,40.67) ;
\draw  [fill={rgb, 255:red, 0; green, 0; blue, 0 }  ,fill opacity=1 ] (95,180) .. controls (95,177.24) and (97.24,175) .. (100,175) .. controls (102.76,175) and (105,177.24) .. (105,180) .. controls (105,182.76) and (102.76,185) .. (100,185) .. controls (97.24,185) and (95,182.76) .. (95,180) -- cycle ;
\draw  [fill={rgb, 255:red, 0; green, 0; blue, 0 }  ,fill opacity=1 ] (195,130) .. controls (195,127.24) and (197.24,125) .. (200,125) .. controls (202.76,125) and (205,127.24) .. (205,130) .. controls (205,132.76) and (202.76,135) .. (200,135) .. controls (197.24,135) and (195,132.76) .. (195,130) -- cycle ;
\draw  [fill={rgb, 255:red, 0; green, 0; blue, 0 }  ,fill opacity=1 ] (245,100) .. controls (245,97.24) and (247.24,95) .. (250,95) .. controls (252.76,95) and (255,97.24) .. (255,100) .. controls (255,102.76) and (252.76,105) .. (250,105) .. controls (247.24,105) and (245,102.76) .. (245,100) -- cycle ;
\draw  [fill={rgb, 255:red, 0; green, 0; blue, 0 }  ,fill opacity=1 ] (295,230) .. controls (295,227.24) and (297.24,225) .. (300,225) .. controls (302.76,225) and (305,227.24) .. (305,230) .. controls (305,232.76) and (302.76,235) .. (300,235) .. controls (297.24,235) and (295,232.76) .. (295,230) -- cycle ;
\draw  [dash pattern={on 0.84pt off 2.51pt}]  (100,30) -- (100,290) ;
\draw  [dash pattern={on 0.84pt off 2.51pt}]  (150,30) -- (150,290) ;
\draw  [dash pattern={on 0.84pt off 2.51pt}]  (200,30) -- (200,290) ;
\draw  [dash pattern={on 0.84pt off 2.51pt}]  (250,30) -- (250,290) ;
\draw  [dash pattern={on 0.84pt off 2.51pt}]  (300,30) -- (300,290) ;

\draw (45,12.4) node [anchor=north west][inner sep=0.75pt]    {$0$};
\draw (95,12.4) node [anchor=north west][inner sep=0.75pt]    {$1$};
\draw (2,169.4) node [anchor=north west][inner sep=0.75pt]    {$\infty $};
\draw (461,62.4) node [anchor=north west][inner sep=0.75pt]    {$0$};
\draw (461,121.4) node [anchor=north west][inner sep=0.75pt]    {$\pi ^{4}$};
\draw (459,151.4) node [anchor=north west][inner sep=0.75pt]    {$\pi ^{3}$};
\draw (459,191.4) node [anchor=north west][inner sep=0.75pt]    {$\pi $};
\draw (461,242.4) node [anchor=north west][inner sep=0.75pt]    {$\pi \left( 1-\pi ^{4}\right)$};
\draw (145,12.4) node [anchor=north west][inner sep=0.75pt]    {$2$};
\draw (196,12.4) node [anchor=north west][inner sep=0.75pt]    {$3$};
\draw (245,12.4) node [anchor=north west][inner sep=0.75pt]    {$4$};
\draw (295,12.4) node [anchor=north west][inner sep=0.75pt]    {$5$};
\draw (351,12) node [anchor=north west][inner sep=0.75pt]   [align=left] {Depth};
\draw (41,162.4) node [anchor=north west][inner sep=0.75pt]    {$1-5$};
\draw (379,242.4) node [anchor=north west][inner sep=0.75pt]    {$5-1$};
\draw (379,182.4) node [anchor=north west][inner sep=0.75pt]    {$5-1$};
\draw (379,142.4) node [anchor=north west][inner sep=0.75pt]    {$5-1$};
\draw (379,112.4) node [anchor=north west][inner sep=0.75pt]    {$5-1$};
\draw (379,52.4) node [anchor=north west][inner sep=0.75pt]    {$5-1$};
\draw (191,82.4) node [anchor=north west][inner sep=0.75pt]    {$4-2$};
\draw (121,112.4) node [anchor=north west][inner sep=0.75pt]    {$3-3$};
\draw (161,212.4) node [anchor=north west][inner sep=0.75pt]    {$4-2$};

\end{tikzpicture}
    \caption{This tree describes, when $\RR$ is the set of roots of \Cref{ex cluster picture}, all discs linked to at least one cluster of $\RR$, i.e.\ all discs $D$ such that $D\cap \RR\neq \varnothing$. Each edge corresponds to a cluster $\mathfrak{s}\subseteq \RR$ (the labels denote the cardinalities $2g+2-|\mathfrak{s}|$ and $|\mathfrak{s}|$); the initial and final depth of the edge correspond to $d_-(\mathfrak{s})$ and $d_+(\mathfrak{s})$ respectively; the points in the edge correspond to all discs linked to $\mathfrak{s}$. The 4 vertices correspond to those discs that are linked to more than one cluster of $\RR$.}
    \label{fig cluster graph}
\end{figure}

We observe that, given a disc $D$, the points of $\Rinfty$ reduce to $N$ distinct points $P_1, \ldots,\linebreak[0] P_{N-1},\linebreak[0] \infty \in \SF{\XX_D}(k)$; we can accordingly write $\RR=\mathfrak{s}_1\sqcup \ldots\sqcup \mathfrak{s}_{N-1}\sqcup \mathfrak{s}_\infty$, where $\mathfrak{s}_i$ consists of the roots of $\RR$ reducing to $P_i$, and $\mathfrak{s}_\infty$ (which is possibly empty) consists of the roots of $\RR$ reducing to $\infty$. We clearly have $D\cap \RR=\mathfrak{s}_1\sqcup \ldots \sqcup \mathfrak{s}_{N-1}=\RR\setminus \mathfrak{s}_\infty$. With \Cref{fig cluster graph} in mind, it is easy to verify that the following holds.

\begin{lemma}
    \label{lemma discs linked to clusters}
    With notation as above, we have the following.
    \begin{enumerate}[(a)]
        \item If $N=1$, which is equivalent to $D\cap \RR=\varnothing$, the disc $D$ is not linked to any cluster.
        \item If $N=2$, the disc $D$ is linked to exactly one cluster, namely $\mathfrak{s}_1=\mathcal{R}\setminus \mathfrak{s}_\infty=D\cap \mathcal{R}$.
        \item[(c)] If $N \ge 3$, the disc $D$ is linked to exactly $N$ clusters, namely $\mathfrak{s}_1, \ldots, \mathfrak{s}_{N-1}$, and $\mathfrak{s}_N:=D\cap \RR=\RR\setminus \mathfrak{s}_\infty = \mathfrak{s}_1\sqcup \ldots \sqcup \mathfrak{s}_{N-1}$.
    \end{enumerate}

    Moreover, case (c) occurs if and only if we have $D=D_{\mathfrak{s},d_+(\mathfrak{s})}$ for some non-singleton cluster $\mathfrak{s}\subseteq \mathcal{R}$, in which case we have $\mathfrak{s}_N=\mathfrak{s}$ and that $\mathfrak{s}_1, \ldots, \mathfrak{s}_{N-1}$ are precisely the children clusters of $\mathfrak{s}$, and we have $D=D_{\mathfrak{s}_i,d_-(\mathfrak{s}_i)}$ for $i=1,\ldots, N-1$.
\end{lemma}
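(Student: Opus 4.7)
The plan is to fix a disc $D = D_{\alpha, b}$ (with $\alpha \in \bar{K}$ and $b = v(\beta)$) and directly determine, using the reduction behavior of roots, exactly which clusters $\mathfrak{s}$ satisfy $D = D_{\mathfrak{s}, b}$ with $b \in I(\mathfrak{s})$. First I would record the dictionary coming from the coordinate $x_{\alpha,\beta} = \beta^{-1}(x - \alpha)$: a point $\zeta \in \bar{K}$ reduces to $\infty \in \SF{\XX_D}(k)$ precisely when $\zeta \notin D$, while two points $\zeta_1, \zeta_2 \in D$ reduce to the same finite point of $\SF{\XX_D}$ exactly when $v(\zeta_1 - \zeta_2) > b$. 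As a consequence of the ultrametric inequality, for $\zeta_1 \in \mathfrak{s}_i$ and $\zeta_2 \in \mathfrak{s}_j$ with $i \neq j$ and both indices finite one has $v(\zeta_1 - \zeta_2) = b$, whereas for $\zeta_1 \in \mathfrak{s}_i$ (finite) and $\zeta_2 \in \mathfrak{s}_\infty$ one has $v(\zeta_1 - \zeta_2) = v(\zeta_2 - \alpha) < b$. Likewise, ``$D$ is linked to $\mathfrak{s}$'' rephrases as $\mathfrak{s} \subseteq D$ together with $d_-(\mathfrak{s}) \leq b \leq d_+(\mathfrak{s})$; note that $\mathfrak{s} \cap D \neq \varnothing$ combined with $d_+(\mathfrak{s}) \geq b$ already forces $\mathfrak{s} \subseteq D$ by ultrametricity, which trims down the case analysis.

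Case (a) is then immediate: if $D \cap \RR = \varnothing$ then no non-empty $\mathfrak{s} \subseteq \RR$ fits in $D$. For case (c) (where $N \geq 3$) I would compute, for each $i \in \{1,\ldots,N-1\}$, that $d_+(\mathfrak{s}_i) > b$ (since all pairs in $\mathfrak{s}_i$ share the reduction $P_i$) and $d_-(\mathfrak{s}_i) = b$ (realized by any pair $\zeta \in \mathfrak{s}_i,\ \zeta' \in \mathfrak{s}_j$ with $j \neq i, \infty$, which exists because $N \geq 3$); hence $\mathfrak{s}_i$ is a cluster and $D = D_{\mathfrak{s}_i, d_-(\mathfrak{s}_i)}$ is linked to it. Similarly $\mathfrak{s}_N := D \cap \RR$ satisfies $d_+(\mathfrak{s}_N) = b$ (attained by pairs from distinct $\mathfrak{s}_j$'s) and $d_-(\mathfrak{s}_N) < b$ (or $-\infty$ when $\mathfrak{s}_\infty = \varnothing$), so $\mathfrak{s}_N$ is a cluster and $D = D_{\mathfrak{s}_N, d_+(\mathfrak{s}_N)}$ is linked to it. Conversely, if $\mathfrak{s}$ is any cluster linked to $D$, the possibility $b < d_+(\mathfrak{s})$ forces all elements of $\mathfrak{s}$ to reduce to a common $P_i$, so $\mathfrak{s} \subseteq \mathfrak{s}_i$; the bound $d_-(\mathfrak{s}) \leq b$ then rules out the existence of any $\zeta' \in \mathfrak{s}_i \setminus \mathfrak{s}$ (as such a $\zeta'$ would give $v(\zeta - \zeta') > b$ for $\zeta \in \mathfrak{s}$), forcing $\mathfrak{s} = \mathfrak{s}_i$; and the alternative possibility $b = d_+(\mathfrak{s})$ yields $\mathfrak{s} = D_{\zeta, b} \cap \RR = D \cap \RR = \mathfrak{s}_N$. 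Case (b) is handled in exactly the same way, except there is no secondary finite residue class, so only $\mathfrak{s}_1 = D \cap \RR$ survives.

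Finally, for the ``moreover'' clause, one direction is contained in the case (c) analysis: $\mathfrak{s} := \mathfrak{s}_N$ is non-singleton (it contains at least one element from each of the $N-1 \geq 2$ finite classes) and $D = D_{\mathfrak{s}, d_+(\mathfrak{s})}$. Conversely, suppose $D = D_{\mathfrak{s}, d_+(\mathfrak{s})}$ for a non-singleton cluster $\mathfrak{s}$. Then $\mathfrak{s} \subseteq D$ and $\mathfrak{s} = D \cap \RR$ follows from $\delta(\mathfrak{s}) > 0$, and the finite residue classes of $D$ meeting $\RR$ are exactly the classes of the equivalence relation ``$v(\zeta_1 - \zeta_2) > d_+(\mathfrak{s})$'' on $\mathfrak{s}$; by unwinding \Cref{dfn cluster} and \Cref{dfn parent} one checks that these classes coincide with the children clusters of $\mathfrak{s}$. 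Since any non-singleton cluster has at least two children, this gives $N-1 \geq 2$ and places us in case (c), with the $\mathfrak{s}_i$ identified as the children of $\mathfrak{s}$; the relation $d_-(\mathfrak{s}_i) = d_+(\mathfrak{s})$ for each child then yields $D = D_{\mathfrak{s}_i, d_-(\mathfrak{s}_i)}$. The main obstacle, to the extent there is one, will be making the bijection between finite residue classes of $D$ and children of $\mathfrak{s}$ fully rigorous, but this reduces to the dictionary established in the first paragraph together with a direct verification that each such residue class satisfies the defining property of a child cluster.
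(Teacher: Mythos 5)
Your argument is correct.  The paper does not actually prove this lemma; it is simply asserted (``With \Cref{fig cluster graph} in mind, it is easy to verify\ldots''), so there is no proof in the source to compare against, and your careful write-up is genuinely filling a gap left to the reader.

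All the pieces check out: the dictionary between reduction in $\SF{\XX_D}(k)$ and the ultrametric (a point $\zeta$ reduces to $\infty$ iff $\zeta\notin D$; two points of $D$ reduce to the same finite point iff $v(\zeta_1-\zeta_2)>b$); the rephrasing of ``$D$ linked to $\mathfrak{s}$'' as $\mathfrak{s}\subseteq D$ together with $d_-(\mathfrak{s})\le b\le d_+(\mathfrak{s})$; the forward computations $d_-(\mathfrak{s}_i)=b<d_+(\mathfrak{s}_i)$ (needing $N\ge 3$ to realize $d_-$) and $d_-(\mathfrak{s}_N)<b=d_+(\mathfrak{s}_N)$; and the converse split on whether $b<d_+(\mathfrak{s})$ or $b=d_+(\mathfrak{s})$.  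A small point worth making explicit in case (b) when following ``the same'' converse argument: the alternative $b=d_+(\mathfrak{s})$ would give $\mathfrak{s}=\mathfrak{s}_1$ with $d_+(\mathfrak{s}_1)=b$, contradicting the strict inequality $d_+(\mathfrak{s}_1)>b$ established from the reduction dictionary, so only the first alternative survives.  The one substantive step you flag as requiring care --- that the finite residue classes of $D=D_{\mathfrak{s},d_+(\mathfrak{s})}$ meeting $\RR$ are precisely the children of $\mathfrak{s}$ --- is indeed the crux of the ``moreover'' clause, and the check goes through: each such class $\mathfrak{c}$ has $d_-(\mathfrak{c})=d_+(\mathfrak{s})<d_+(\mathfrak{c})$, hence is a cluster, and any cluster $\mathfrak{t}$ strictly between $\mathfrak{c}$ and $\mathfrak{s}$ would have $d_+(\mathfrak{t})=d_-(\mathfrak{t})=d_+(\mathfrak{s})$, forcing $\mathfrak{t}=\mathfrak{s}$; conversely any child of $\mathfrak{s}$ is contained in a unique class and cannot be properly contained in it without contradicting maximality of the child.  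Finally, the observation that a non-singleton cluster has at least two such classes (any pair realizing $d_+(\mathfrak{s})$ lies in two distinct classes) correctly yields $N-1\ge 2$.
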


\subsection{Valid discs} \label{sec valid discs definition}

We now define a term which we will use throughout the rest of the paper in order to refer to components of the relatively stable model of a hyperelliptic curve.

\begin{dfn} \label{dfn valid disc}
     A disc $D \subseteq \bar{K}$ is a \emph{valid disc} if it satisfies $\XX_D \leq \Xrst$ and if the quadratic cover $\SF{\YY_D}\to \SF{\XX_D}$ is separable.
\end{dfn}

We note that our notion of \emph{valid disc} differs from the one in \cite{dokchitser2022arithmetic}, although in both cases valid discs are used to build a particular semistable model of $Y$ with desired properties.

The cluster picture allows us to completely identify the valid discs in the tame case.
\begin{thm} \label{thm cluster p odd}
    In the $p\neq 2$ setting, there is a one-to-one correspondence between non-singleton clusters and valid discs, which is given by $\mathfrak{s}\mapsto D_{\mathfrak{s},d_+(\mathfrak{s})}$. In other words, the valid discs are precisely those discs that minimally cut out the clusters in $\mathcal{R}$.
    \begin{proof}
        We have already observed (\Cref{lemma discs linked to clusters}) that a disc $D$ is of the form $D_{\mathfrak{s},d_+(\mathfrak{s})}$ for some non-singleton cluster $\mathfrak{s}$ if and only if the number $N$ of points to which $\Rinfty\subseteq X(K)$ reduces in $\SF{\XX_D}$ is $\ge 3$. Hence, the theorem immediately follows from \Cref{thm part of rst separable}.
    \end{proof}
\end{thm}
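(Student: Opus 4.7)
The plan is to assemble the statement directly from \Cref{lemma discs linked to clusters} and \Cref{thm part of rst separable}, since both pieces are already available.

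First I would observe that when $p\neq 2$ we have $2v(2)=0$, so every part-square decomposition is automatically good (the trivial decomposition $f_{\alpha,\beta}=0^2+f_{\alpha,\beta}$ has $t_{0,f_{\alpha,\beta}}=0\ge 2v(2)$), and hence by \Cref{prop normalization model} the cover $\SF{\YY_D}\to\SF{\XX_D}$ is separable for every disc $D$. Consequently in this tame setting, ``valid disc'' is the same as ``disc satisfying $\XX_D\le \Xrst$''.

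Next, I would apply \Cref{thm part of rst separable}. Its conditions (2) and (3) explicitly require $p=2$, so in the $p\neq 2$ setting the only surviving criterion is condition (1): $\XX_D\le\Xrst$ iff $N\ge 3$, where $N$ is the number of points of $\SF{\XX_D}$ to which the branch points $\Rinfty$ reduce. Then \Cref{lemma discs linked to clusters}, specifically its final sentence, identifies exactly the discs $D$ with $N\ge 3$: they are the discs of the form $D_{\mathfrak{s},d_+(\mathfrak{s})}$ for some non-singleton cluster $\mathfrak{s}\subseteq \mathcal{R}$. Combining these two equivalences yields the characterization of valid discs as claimed.

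Finally, I would check that the assignment $\mathfrak{s}\mapsto D_{\mathfrak{s},d_+(\mathfrak{s})}$ is a bijection onto the set of valid discs. Injectivity is clear because for such a disc one recovers the cluster by intersecting with $\mathcal{R}$: at depth $b=d_+(\mathfrak{s})\in (d_-(\mathfrak{s}),d_+(\mathfrak{s})]$ one has $D_{\mathfrak{s},d_+(\mathfrak{s})}\cap\mathcal{R}=\mathfrak{s}$ by the definition of $d_\pm$; surjectivity is precisely the content of \Cref{lemma discs linked to clusters}(c) recalled above. There is essentially no obstacle here: the theorem is a direct packaging of \Cref{thm part of rst separable} and \Cref{lemma discs linked to clusters} specialized to the tame case, with the one tiny verification being the universal separability of $\SF{\YY_D}\to\SF{\XX_D}$ when $p\neq 2$.
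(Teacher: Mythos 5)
Your proof is correct and follows essentially the same route as the paper's: invoke \Cref{lemma discs linked to clusters} to translate ``$D$ minimally cuts out a non-singleton cluster'' into ``$N\ge 3$,'' and then apply \Cref{thm part of rst separable}. The one small thing you spell out that the paper leaves implicit — that when $p\neq 2$ the cover $\SF{\YY_D}\to\SF{\XX_D}$ is automatically separable (since $2v(2)=0$ forces any good decomposition to satisfy $t\ge 2v(2)$), so ``valid'' reduces to ``$\XX_D\le\Xrst$'' — is a genuine and worthwhile clarification, but it does not change the structure of the argument.
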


We want some analog of Theorem \ref{thm cluster p odd} for working over residue characteristic $2$; however, in the $p = 2$ setting, valid discs do not correspond in this way in a one-on-one manner with clusters, as is shown by the following theorem.
\begin{thm} \label{thm cluster p=2}
In the $p = 2$ setting, we have the following.
\begin{enumerate}[(a)]
    \item Given an odd-cardinality cluster $\mathfrak{s} \subseteq \RR$, there is no valid disc $D$ linked to $\mathfrak{s}$.
    \item Given an even-cardinality cluster $\mathfrak{s} \subset \mathcal{R}$, there may be 0, 1, or 2 distinct valid discs $D$ linked to $\mathfrak{s}$.
\end{enumerate}
\end{thm}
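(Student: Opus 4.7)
The plan is to prove each part of \Cref{thm cluster p=2} by relating validity of a disc $D = D_{\alpha, v(\beta)}$ linked to $\mathfrak{s}$ (we may take $\alpha \in \mathfrak{s}$) to the factorization of the normalized reduction of $f_{\alpha, \beta}$.

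For part (a), I would first expand $f(\alpha + \beta x_{\alpha, \beta})$ and separate the factors coming from $a \in D$ from those coming from $a \notin D$; this yields a normalized reduction of $f_{\alpha, \beta}$ proportional to $\prod_i (x_{\alpha, \beta} - \bar{a}_i)^{n_i}$, where the $\bar{a}_i \in k$ are the distinct reductions of the elements of $D \cap \RR$ and the $n_i$ are the cardinalities of the corresponding fibers (the maximal subclusters of $D \cap \RR$ that stay together at depth $v(\beta)$). The key parity observation is that if $\mathfrak{s}$ has odd cardinality, at least one $n_i$ is odd. Indeed, for $b \in (d_-(\mathfrak{s}), d_+(\mathfrak{s}))$ the whole of $\mathfrak{s} = D \cap \RR$ reduces to a single point, so some $n_i = |\mathfrak{s}|$; for $b = d_+(\mathfrak{s})$ the $n_i$'s form a partition of the odd integer $|\mathfrak{s}|$, forcing an odd count of them to be odd; and for $b = d_-(\mathfrak{s})$ one has $D \cap \RR = \mathfrak{s}'$ (the parent), with $\mathfrak{s}$ appearing as one of its children and contributing $n_i = |\mathfrak{s}|$. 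Next, if $\SF{\YY_D} \to \SF{\XX_D}$ is separable then, by \Cref{prop normalization model}, there exists a good decomposition $f_{\alpha, \beta} = q^2 + \rho$ with $t_{q, \rho} \ge 2v(2)$, so that $v(\rho) > v(q^2) = v(f_{\alpha, \beta})$; rescaling by $u$ with $2v(u) = v(f_{\alpha, \beta})$ (possibly after a finite extension) and reducing shows the normalized reduction equals $(\overline{u^{-1} q})^2$, a square in $k[x]$. But in characteristic $2$ a polynomial $\prod_i (x - \bar{a}_i)^{n_i}$ with distinct $\bar{a}_i$ is a square iff every $n_i$ is even, contradicting the parity observation. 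Hence the cover is inseparable and $D$ is not valid, proving (a).

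For part (b), I would fix $\alpha \in \mathfrak{s}$ and study, for $b \in I(\mathfrak{s})$, the quantity $t(b)$ arising in a good decomposition of $f_{\alpha, \beta}$ with $v(\beta) = b$; here $\min\{t(b), 2v(2)\}$ is well-defined by \Cref{rmk same t for good}. By \Cref{prop normalization model}, the cover at $D_{\alpha, b}$ is separable iff $t(b) \ge 2v(2)$; by the discussion in \S\ref{sec models hyperelliptic separable}, $t(b) > 2v(2)$ forces $\overline{\rho_0} = 0$ and hence $\SF{\YY_D}$ to split into two lines exchanged by the hyperelliptic involution. Combined with \Cref{thm part of rst separable} and \Cref{lemma discs linked to clusters} (which gives $N = 2$ in the interior of $I(\mathfrak{s})$ and $N \ge 3$ at its boundary), validity of $D_{\alpha, b}$ requires $t(b) \ge 2v(2)$, with equality enforced whenever $b$ lies strictly inside $I(\mathfrak{s})$. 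The key structural result --- the main technical task of \S\ref{sec depths} --- is that $t(b)$ is piecewise linear and tent-shaped (concave with a unique maximum), so the set $\{b \in I(\mathfrak{s}) : t(b) \ge 2v(2)\}$ is a closed sub-interval of $I(\mathfrak{s})$, and valid discs can only occur at its two endpoints. This yields at most two valid discs linked to $\mathfrak{s}$, proving (b).

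The main obstacle is establishing the tent-like piecewise-linear structure of $b \mapsto t(b)$ and pinning down the threshold $B_{f, \mathfrak{s}}$ of \Cref{thm introduction main}(c) that decides between $0$, $1$, or $2$ valid discs. This is the substantive content of \S\ref{sec depths}, carried out via sufficiently odd decompositions and \Cref{algo sufficiently odd}; once that is in place, both parts of \Cref{thm cluster p=2} follow cleanly from the reduction-theoretic dichotomy above together with the explicit validity criterion of \Cref{thm part of rst separable}.
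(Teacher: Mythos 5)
Your proposal is correct and follows essentially the same route as the paper: part (a) is exactly the observation underlying Lemma \ref{lemma slopes of t} and Remark \ref{rmk structure of J}(b) (that the normalized reduction of $f_{\alpha,\beta}$ has odd degree $|\mathfrak{s}|$ and hence cannot be a square, so the trivial decomposition is good and the cover is inseparable), and part (b) is the content of Lemma \ref{lemma not a valid disc}, whose synthesis into the theorem is carried out in Remark \ref{rmk thm cluster p=2 proved}. The only minor gap is that for the tent-shape argument in (b) to rule out the interior of $J$ you should fix a totally odd decomposition, so that the untruncated $t(b)$ has nonzero (odd) integer slopes and therefore $t(b)=2v(2)$ only at the two endpoints of $J$; the paper instead handles the interior of $J$ via the one-sided derivatives of the truncated function $\mathfrak{t}^{\mathcal R}$ and Lemma \ref{lemma ell and t function}, which avoids having to fix a preferred decomposition.
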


\begin{rmk}\label{rmk crushed discs}
    In the $p=2$ setting, we will see that it is possible that a valid discs $D$ is linked to no cluster, which is to say $D\cap \RR=\varnothing$ (see \Cref{lemma discs linked to clusters}), whereas \Cref{thm cluster p odd} ensures that this never happens when $p\neq 2$.
\end{rmk}

\begin{rmk} \label{rmk cluster p=2}
    It follows directly from \Cref{thm cluster p odd} and \Cref{lemma discs linked to clusters} that the statement in part (b) of the above theorem holds in the $p \neq 2$ setting as well.  In fact, when $p \neq 2$, that statement holds even after removing the hypothesis that $\mathfrak{s}$ has even cardinality, and it can be made more precise by saying that there there are \emph{exactly} $2$ distinct valid discs $D$ linked to $\mathfrak{s}$, namely $D_{\mathfrak{s},d_+(\mathfrak{s})}$ and $D_{\mathfrak{s},d_-(\mathfrak{s})}$, except in the case $\mathfrak{s}=\mathcal{R}$, when there is exactly $1$, namely $D_{\mathfrak{s},d_+(\mathfrak{s})}$.
\end{rmk}

The proof of \Cref{thm cluster p=2} is deferred to the following section, in which we set up a framework for considering the models $\YY_{D}$ corresponding to families of discs $D:=D_{\alpha,b}$ which share a common center $\alpha$ and which all contain the same subset $\mathfrak{s}\subseteq \RR$ of roots.

\section{Finding valid discs with a given center} \label{sec depths}

In this section, we will fix a center $\alpha\in \bar{K}$, and we will investigate for which depths $b\in I\subseteq \qq$, where $I=[d_-,d_+]$ is some closed interval, the disc $D_{\alpha,b}$ is valid for the hyperelliptic curve $Y: y^2 = f(x)$. The interval $I$ will be chosen so that, when $b$ ranges in the internal of $I$, the intersection $\mathfrak{s}:=D_{\alpha,b}\cap \RR$ is constant. More precisely, we are interested in the following two scenarios.
\begin{enumerate}[(a)]
    \item We choose $\mathfrak{s}$ to be a cluster of $\RR$, we fix $\alpha\in \bar{K}$ to be any point of the disc $D_{\mathfrak{s},d_+(\mathfrak{s})}$, and we set $I = I(\mathfrak{s}) = [d_-(\mathfrak{s}), d_+(\mathfrak{s})]$. In this case, as we vary $b\in I$, we have that $D_{\alpha,b}$ ranges among all discs linked to $\mathfrak{s}$.
    \item We choose $\mathfrak{s}=\varnothing$, we fix $\alpha\in \bar{K}\setminus \RR$, and we let $I = [d_-,+\infty)$, where $d_-$ is the maximum depth such that $D_{\alpha,d_-}\cap \RR\neq \varnothing$. This means that, as $b$ ranges in the interior of $I$, we have that $D_{\alpha,b}$ ranges among all discs centered at $\alpha$ that are linked to no cluster of $\RR$.
\end{enumerate}
It is important to consider case (b) as well as case (a), because, in the $p=2$ setting, there may exist valid discs that are linked to no cluster in $\RR$.

The section is organized as follows. In \S\ref{sec depths piecewise-linear} we introduce the language of translated and scaled part-square decompositions, which will be useful for dealing with the problem. In \S\ref{sec depths construction valid discs} we identify the depths $b\in I$ for which $D_{\alpha,b}$ is a valid disc as the endpoints $b_{\pm}$ of a sub-interval $J\subseteq I$ (see \Cref{thm summary depths valid discs}). When $p\neq 2$, we always have $J=I$; however, when $p=2$, we have that $J$ may be strictly smaller than $I$; for example, we will see that $J = \varnothing$ whenever $|\mathfrak{s}|$ is odd. Subsections \S\ref{sec depths separating roots} and \S\ref{sec depths thresholds} develop our strategy for determining $J$ when $\mathfrak{s}$ has even cardinality, provided that, for each of the two factors $f^{\mathfrak{s}}$ and $f^{\RR\setminus \mathfrak{s}}$ of $f$ corresponding to the roots lying in $\mathfrak{s}$ and $\RR\setminus\mathfrak{s}$ respectively, we know a part-square decomposition that is totally odd with respect to the center $\alpha$.  Finally, in \S\ref{sec depths sufficiently odd} we show that, when  $\mathfrak{s}$ has even cardinality, the necessary computations can also be performed by replacing totally odd part-square decompositions with \emph{sufficiently odd} ones, which are in general easier to find. In \S\ref{sec depths algorithm}, we present an algorithm to compute sufficiently odd decompositions, while in \S\ref{sec depths computations low degree} we present through elementary computations its application to low-degree polynomials.


\subsection{Translated and scaled part-square decompositions} 
\label{sec depths piecewise-linear}



Given any polynomial $h(z) \in \bar{K}[z]$ and any choice of elements $\alpha\in \bar{K}, \beta\in\bar{K}^\times$, we can compute the (Gauss) valuations of the translated and scaled polynomial $h_{\alpha,\beta}$ (as defined in  obtained from $h$ by translating by $\alpha$ and scaling by $\beta$ (as defined in \S\ref{sec models hyperelliptic forming}).  The following lemma will allow us to treat the Gauss valuation of a certain translation and scaling of $h$ as a function on discs.

\begin{lemma}
    \label{lemma vfun of disc}
    As we vary $\alpha \in \bar{K}$ and $\beta \in \bar{K}^\times$, the valuation $v(h_{\alpha,\beta})$ depends only on the disc $D:=D_{\alpha,v(\beta)}$.
    \begin{proof}
        Let $\alpha, \alpha'\in \bar{K}$ and $\beta, \beta'\in \bar{K}^\times$ be such that $D_{\alpha,v(\beta)}=D_{\alpha',v(\beta')}$; we must show that $v(h_{\alpha,\beta})=v(h_{\alpha',\beta'})$. It is clearly sufficient to prove the result when $\alpha=0$ and $\beta=1$, in which case $h_{\alpha,\beta}=h$, the assumption $D_{\alpha,v(\beta)}=D_{\alpha',v(\beta')}$ means that $v(\beta')=0$ and $v(\alpha')\ge 0$, and the claim is verified straightforwardly.
    \end{proof}
\end{lemma}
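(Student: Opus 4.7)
The plan is to show that $v(h_{\alpha,\beta})$ is invariant under replacing $(\alpha,\beta)$ by any other pair $(\alpha',\beta')$ representing the same disc, by reducing the problem to the statement that the Gauss valuation is preserved under integral affine changes of variable. First, I would express the relation between $h_{\alpha,\beta}$ and $h_{\alpha',\beta'}$ directly: since $x_{\alpha',\beta'} = \tilde{\beta}^{-1}(x_{\alpha,\beta} - \tilde{\alpha})$ with $\tilde{\alpha} := \beta^{-1}(\alpha' - \alpha)$ and $\tilde{\beta} := \beta^{-1}\beta'$, one gets $h_{\alpha',\beta'}(y) = h_{\alpha,\beta}(\tilde{\beta} y + \tilde{\alpha})$. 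The hypothesis $D_{\alpha,v(\beta)} = D_{\alpha',v(\beta')}$ translates directly into $v(\tilde{\alpha}) \geq 0$ and $v(\tilde{\beta}) = 0$, so the problem reduces to showing that whenever $H(z) \in \bar{K}[z]$ is a polynomial and $\tilde{\alpha}, \tilde{\beta} \in \bar{K}$ satisfy $v(\tilde{\alpha}) \geq 0$ and $v(\tilde{\beta}) = 0$, one has $v(H(\tilde{\beta} y + \tilde{\alpha})) = v(H)$.

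For this reduced statement, I would consider the $\bar{K}$-algebra homomorphism $\phi: \bar{K}[z] \to \bar{K}[y]$ sending $z \mapsto \tilde{\beta} y + \tilde{\alpha}$. The integrality of $\tilde{\alpha}$ and $\tilde{\beta}$, together with the integrality of $\tilde{\beta}^{-1}$ (which follows from $v(\tilde{\beta}) = 0$), guarantees that $\phi$ restricts to an isomorphism of integral polynomial rings $\OO_{\bar{K}}[z] \xrightarrow{\sim} \OO_{\bar{K}}[y]$, with inverse sending $y \mapsto \tilde{\beta}^{-1}(z - \tilde{\alpha})$. Using the characterization of the Gauss valuation by $v(H) = \max\{v(\gamma) : \gamma^{-1} H \in \OO_{\bar{K}}[z]\}$ (and its analog for $\phi(H)$ in $\OO_{\bar{K}}[y]$), this isomorphism immediately yields $v(\phi(H)) = v(H)$, completing the proof.

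I do not expect any real obstacle here: once the reduction to the change-of-variable statement is in place, the remaining content is essentially the standard fact that the Gauss valuation is preserved under integral affine substitutions with unit scaling. The only piece of bookkeeping worth doing carefully is the translation of the equality $D_{\alpha,v(\beta)} = D_{\alpha',v(\beta')}$ into the correct integrality conditions on $\tilde{\alpha}$ and $\tilde{\beta}$, which is a short verification using the definition $D_{\alpha,b} = \{x \in \bar{K} : v(x-\alpha) \geq b\}$.
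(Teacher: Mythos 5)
Your proof is correct and follows essentially the same route as the paper's: both reduce the problem to showing invariance of the Gauss valuation under a substitution $z \mapsto \tilde{\beta}y + \tilde{\alpha}$ with $v(\tilde{\beta}) = 0$ and $v(\tilde{\alpha}) \geq 0$. You simply spell out, via the $\OO_{\bar{K}}$-algebra isomorphism argument, what the paper compresses into ``the claim is verified straightforwardly.''
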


Given a disc $D$, we will consequently denote $\vfun_h(D)$ the valuation of $h_{\alpha,\beta}$ for any $\alpha$ and $\beta$ such that $D=D_{\alpha,v(\beta)}$.
When a center $\alpha\in \bar{K}$ is fixed, we may consider the function $b \mapsto \vfun_h(D_{\alpha,b})\in\qqinfty$ defined for all $b \in \qq$.

\begin{lemma} \label{lemma mathfrakh}
    Suppose a center $\alpha\in \bar{K}$ is fixed. With the above set-up, we have the following.
    \begin{enumerate}[(a)]
        \item The function $b \mapsto \vfun_h(D_{\alpha,b})\in\qqinfty$ satisfies the property of being a continuous, non-decreasing piecewise linear function with integer slopes and whose slopes decrease as the input increases.
        \item For any $b \in \mathbb{Q}$ and $\beta \in \bar{K}^{\times}$ such that $v(\beta) = b$, the left (resp.\ right) derivative of the function $c \mapsto \vfun_h(D_{\alpha,c})\in\qqinfty$ at $c = b$ coincides with the highest (resp.\ lowest) degree of the variable $x_{\alpha,\beta}$ appearing in the normalized reduction of $h_{\alpha,\beta}$, i.e.\ with the number of roots $\zeta$ of $h$ in $\bar{K}$ (counted with multiplicity) such that $v(\zeta-\alpha)\ge b$ (resp.\ $v(\zeta-\alpha)>b$).
        
    \end{enumerate}
\end{lemma}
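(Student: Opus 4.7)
The plan is to reduce to the case $\alpha = 0$ by translation, then express $\vfun_h(D_{\alpha,b})$ explicitly as the minimum of finitely many affine functions in $b$ coming from the coefficients of the translated polynomial, and finally interpret the breakpoints and slopes via the Newton polygon.

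First I would observe that if we set $H'_j := h^{(j)}(\alpha)/j!$, so that $h(\alpha + z) = \sum_{j=0}^{\deg h} H'_j z^j$, then for any $\beta \in \bar{K}^\times$ with $v(\beta) = b$ the translated and scaled polynomial is
\begin{equation*}
    h_{\alpha,\beta}(z) = h(\alpha + \beta z) = \sum_{j=0}^{\deg h} H'_j \beta^j z^j,
\end{equation*}
so that its Gauss valuation, which by \Cref{lemma vfun of disc} equals $\vfun_h(D_{\alpha,b})$, is
\begin{equation*}
    g(b) := \vfun_h(D_{\alpha,b}) = \min_{0 \le j \le \deg h} \bigl( v(H'_j) + jb \bigr),
\end{equation*}
with the convention that a term with $H'_j = 0$ is omitted. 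This exhibits $g$ as the pointwise minimum of finitely many affine functions of $b$ with integer slopes in $\{0, 1, \ldots, \deg h\}$, hence it is continuous, concave, non-decreasing, and piecewise linear with integer slopes, proving part (a); in particular the slopes, which form a subset of $\{0, \ldots, \deg h\}$, are non-increasing as $b$ grows, by concavity.

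For part (b), I would introduce the set $J_b := \{ j : v(H'_j) + jb = g(b)\}$. Because $g$ is the minimum of affine functions indexed by $j$ with slopes exactly $j$, its left derivative at $b$ is $\max(J_b)$ and its right derivative at $b$ is $\min(J_b)$ (to the left of $b$ the minimum is eventually attained by the affine function among those tangent to $g$ at $b$ with largest slope, and symmetrically to the right). On the other hand, choosing $\gamma \in \bar{K}^\times$ with $v(\gamma) = g(b)$, the polynomial $\gamma^{-1} h_{\alpha,\beta}(z) = \sum_j \gamma^{-1} H'_j \beta^j z^j$ has coefficients of non-negative valuation, with the coefficient in degree $j$ reducing to a nonzero element of $k$ precisely when $j \in J_b$. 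Hence $\min(J_b)$ and $\max(J_b)$ are exactly the lowest and highest degrees appearing in the normalized reduction of $h_{\alpha,\beta}$, giving the first equality in (b).

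Finally, to identify these numbers with counts of roots, I would invoke the Newton polygon of $h(\alpha+z) = \sum_j H'_j z^j$, whose edges read from left to right have slopes $-v(\zeta - \alpha)$ as $\zeta$ runs over the roots of $h$ (with multiplicity) in increasing order of valuation. The affine function $c \mapsto v(H'_j) + jc$ is exactly the line of slope $-b$ passing through $(j, v(H'_j))$, rewritten in terms of its value at $c = b$ equal to the $y$-intercept plus $bj$; geometrically, $g(b)$ is the $y$-intercept of the line of slope $-b$ supporting the Newton polygon from below, and $J_b$ is the set of $j$-coordinates at which this supporting line meets the polygon. The largest such coordinate, $\max(J_b)$, therefore counts the total length of the segments of slope $\le -b$, which is the number of roots $\zeta$ with $v(\zeta - \alpha) \ge b$; similarly $\min(J_b)$ counts the roots with $v(\zeta - \alpha) > b$. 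The main (minor) subtlety will be handling roots equal to $\alpha$ (for which the corresponding exponent $H'_0 = 0$ forces the Newton polygon to start to the right of the $y$-axis and treats them as roots of infinite valuation), but this is absorbed transparently by the convention $v(0) = +\infty$.
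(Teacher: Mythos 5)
Your proof is correct and follows essentially the same route as the paper's own, which is also based on the explicit formula $\vfun_h(D_{\alpha,b}) = \min_{0\le j\le \deg h}\bigl(v(H_j)+jb\bigr)$ (the paper's $H_i$ are exactly your $H'_j$, being the coefficients of $h_{\alpha,1}$) and then asserts that all claimed properties ``immediately follow.'' You have simply spelled out the two deductions the paper leaves to the reader: the concavity/piecewise-linearity of a finite minimum of affine functions, and the Newton-polygon interpretation of the left and right derivatives as $\max(J_b)$ and $\min(J_b)$ and then as root counts. Your handling of roots equal to $\alpha$ via $v(0)=+\infty$ and the leftward shift of the Newton polygon is the right way to keep the root-counting statement correct in that edge case.
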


\begin{proof}

Write $H_i$ for the $z^i$-coefficient of $h_{\alpha,1}$, and note that $\beta^i H_i$ is the $z^i$-coefficient of $h_{\alpha,\beta}$ for any scalar $\beta$.  Now given any $b \in \qq$ and $\beta \in \bar{K}^{\times}$ with $v(\beta) = b$, by definition we have 
\begin{equation} \label{eq mathfrakv}
\vfun_h(D_{\alpha,b}) = \min_{0 \leq i \leq \deg(h)} \{v(\beta^i H_i)\} = \min_{0 \leq i \leq \deg(h)} \{v(H_i) + ib\}.
\end{equation}
All the properties of the function $b\mapsto \vfun_h(D_{\alpha,b})$ stated in the lemma immediately follow from the explicit expression given above.
\end{proof}

Given a part-square decomposition $h = q^2 + \rho$ of a nonzero polynomial $h$, by translating and scaling we can clearly form part-square decompositions $h_{\alpha,\beta}=q_{\alpha,\beta}^2+\rho_{\alpha,\beta}$ for all $\alpha\in \bar{K}, \beta\in \bar{K}^\times$.

\begin{lemma}
    \label{prop translated part-square decompositions}
    Let $h=q^2+\rho$ be a part-square decomposition.
    \begin{enumerate}[(a)]
        \item The property of the induced part-square decomposition $h_{\alpha,\beta} = q_{\alpha,\beta}^2 + \rho_{\alpha,\beta}$ being good or not only depends on the disc $D:=D_{\alpha,v(\beta)}$ and not on the particular choices of $\alpha$ and $\beta$.
        \item The property of the induced part-square decomposition $h_{\alpha,\beta} = q_{\alpha,\beta}^2 + \rho_{\alpha,\beta}$ being totally odd or not only depends on our choice of $\alpha$ and not on $\beta$.
    \end{enumerate}
    
    \begin{proof}
        Part (a)  is an immediate consequence of \Cref{lemma vfun of disc}, while part (b) is immediate. 
    \end{proof}
\end{lemma}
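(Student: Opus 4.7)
The plan is to derive both parts directly from the definitions, reducing everything to the properties of the Gauss valuation encoded in \Cref{lemma vfun of disc}.

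For part (a), I would first compute the invariant $t_{q_{\alpha,\beta},\rho_{\alpha,\beta}} = v(\rho_{\alpha,\beta}) - v(h_{\alpha,\beta})$. By \Cref{lemma vfun of disc}, applied once to $\rho$ and once to $h$, these two valuations equal $\vfun_\rho(D)$ and $\vfun_h(D)$ respectively, so the difference is manifestly a function of $D$ alone. To handle the remaining half of the definition of ``good'', which quantifies over \emph{all other} decompositions of $h_{\alpha,\beta}$, I would next set up a bijection between part-square decompositions of $h$ in the variable $x$ and part-square decompositions of $h_{\alpha,\beta}$ in the variable $x_{\alpha,\beta}$. Given $h = Q^2 + P$, substituting $x = \alpha + \beta x_{\alpha,\beta}$ produces $h_{\alpha,\beta} = Q_{\alpha,\beta}^2 + P_{\alpha,\beta}$; conversely, any decomposition $h_{\alpha,\beta} = \tilde{q}^2 + \tilde{\rho}$ pulls back under $x_{\alpha,\beta} = \beta^{-1}(x-\alpha)$ to a decomposition of $h$ in the $x$-variable. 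The degree bound $\deg(q)\le \lceil\deg(h)/2\rceil$ is preserved in both directions because the substitution is a degree-preserving isomorphism of $\bar{K}$-algebras. Under this bijection, \Cref{lemma vfun of disc} again yields $t_{Q_{\alpha,\beta},P_{\alpha,\beta}} = \vfun_P(D) - \vfun_h(D)$, and so the entire \emph{set} of $t$-values attained by decompositions of $h_{\alpha,\beta}$ is a function of $D$ alone. Since both the specific value $t_{q_{\alpha,\beta},\rho_{\alpha,\beta}}$ and this set are disc-invariants, so is the condition of being good.

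For part (b), my plan is more direct. I would Taylor-expand $\rho(x) = \sum_i c_i (x - \alpha)^i$ with $c_i \in \bar{K}$ depending only on $\alpha$ (and on $\rho$). Substituting $x = \alpha + \beta x_{\alpha,\beta}$ yields $\rho_{\alpha,\beta}(z) = \sum_i c_i \beta^i z^i$; since $\beta \in \bar{K}^\times$, the $z^i$-coefficient vanishes precisely when $c_i$ does. Hence $\rho_{\alpha,\beta}$ is totally odd if and only if $c_{2j} = 0$ for every $j$, a condition on the Taylor coefficients of $\rho$ at $\alpha$ that visibly depends only on $\alpha$ and not on $\beta$.

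Neither part presents a serious obstacle. The only moderately subtle point is verifying, in part (a), that the bijection of decompositions respects the degree bound and actually exhausts all decompositions of $h_{\alpha,\beta}$; this, however, is immediate because $x \mapsto \beta^{-1}(x - \alpha)$ is a degree-preserving bijection of polynomial algebras over $\bar{K}$. Everything else is a straightforward unwinding of definitions.
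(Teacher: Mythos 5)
Your proof is correct and follows the same route the paper intends: part (a) reduces to \Cref{lemma vfun of disc} applied to both the given decomposition's $t$-value and to the $t$-values of all competing decompositions (which correspond bijectively across coordinate changes), and part (b) is a direct Taylor-expansion observation. You have simply spelled out the details that the paper compresses into "immediate."
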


We can consequently make the following definitions, which are the variants of those given in \Cref{dfn good totally odd} relative to the choice of a disc.
\begin{dfn}
    \label{dfn translated scaled good totally odd}
    Let $h=q^2+\rho$ be a part-square decomposition. We make the following definitions:
    \begin{enumerate}[(a)]
        \item the decomposition is \emph{good} at a disc $D$ whenever $h_{\alpha,\beta} = q_{\alpha,\beta}^2 + \rho_{\alpha,\beta}$ is a good part-square decomposition for some (any) $\alpha\in \bar{K}$, $\beta\in \bar{K}^\times$ such that $D=D_{\alpha,v(\beta)}$; and 
        \item the decomposition is \emph{totally odd} with respect to a center $\alpha\in \bar{K}$ if $h_{\alpha,\beta}=q_{\alpha,\beta}^2+\rho_{\alpha,\beta}$ is a totally odd part-square decomposition for some (any) $\beta\in \bar{K}^\times$.
    \end{enumerate}
\end{dfn}

\begin{rmk}
    \label{rmk totally odd good}
    If $h=q^2+\rho$ is a totally odd part-square decomposition with respect to a center $\alpha$, then, by \Cref{cor totally odd is good}, it is good at the discs $D_{\alpha,b}$, for all $b\in \qq$. 
\end{rmk}

Recalling the number $t_{q,\rho} := v(\rho) - v(h)\in \qqinfty$ from \S\ref{sec models hyperelliptic part-square}, we define the related function
$$\tfun_{q, \rho} := \vfun_\rho - \vfun_f$$
so that $\tfun_{q,\rho}(D) = t_{q_{\alpha,\beta}, \rho_{\alpha,\beta}}$ for any $\alpha\in \bar{K}$, $\beta \in \bar{K}^\times$ such that $D=D_{\alpha,v(\beta)}$.  When a center $\alpha\in \bar{K}$ is fixed, we can study the function $b \mapsto \tfun_{q,\rho}(D_{\alpha,b}) : \qq \to \qq \cup \{+\infty\}$, which is the difference between two continuous piecewise-linear functions and so is itself a continuous piecewise-linear function. Taking into account \Cref{rmk same t for good}, we can give the following definition.
\begin{dfn}
    \label{dfn t fun}
    Given a (multi-)set of elements $\mathfrak{s}\subseteq \bar{K}$ and a disc $D$, we define $\tbest{\mathfrak{s}}{D}\in \zerotwo$ to be $\truncate{\tfun_{q,\rho}(D)}$ for any part-square decomposition $h=q^2+\rho$ which is good at the disc $D$, where $h(z)\in \bar{K}$ is any polynomial whose set of roots is $\mathfrak{s}$ (counted with multiplicity).
\end{dfn}
\begin{rmk}
    \label{rmk t fun computing}
    Fix a center $\alpha$. If $h\in \bar{K}[z]$ is a nonzero polynomial and $\mathfrak{s}$ is its (multi-)set of roots, the knowledge of a part-square decomposition $h=q^2+\rho$ that is totally odd with respect to the center $\alpha$ makes it possible to compute $\tbest{\mathfrak{s}}{D_{\alpha,b}}\in \zerotwo$ for all depths $b\in \qq$: this follows immediately from \Cref{dfn t fun} together with \Cref{rmk totally odd good}.
\end{rmk}

\begin{prop}
    \label{prop mathfrak t minium}
    If we have a disjoint union $\mathfrak{s}=\mathfrak{s}_1\sqcup \ldots \sqcup \mathfrak{s}_N$, then the following hold:
    \begin{enumerate}[(a)]
        \item we have $\tbest{\mathfrak{s}}{D}\ge \min\{ \tbest{\mathfrak{s}_1}{D}, \ldots, \tbest{\mathfrak{s}_N}{D}\}$ for all $D$; and 
        \item the conclusion of (a) is an equality in the following cases:
        \begin{enumerate}[(i)]
            \item whenever the minimum is attained by a unique $\tbest{\mathfrak{s}_i}{D}$; and 
            \item if $N=2$, $D\cap \mathfrak{s}_1=\varnothing$, and there exists a disc $D'\subsetneq D$ such that $\mathfrak{s}_2\subseteq D'$.
        \end{enumerate}
    \end{enumerate} 
    
    \begin{proof}
        For $1 \leq i \leq N$, choose polynomials $h_i$ having $\mathfrak{s}_i$ as their sets of roots, and let $h_i=q_i^2+\rho_i$ be part-square decompositions that are good at the disc $D$. Then, by setting $q = \prod_i q_i$, we obtain a part-square decomposition for $h:=\prod_i h_i$ satisfying $\tfun_{q,\rho}(D)\ge \min_i \{\tfun_{q_i,\rho_i}(D)\}$ by \Cref{prop product part-square}(a). From this part (a) follows. Similarly, points (i) and (ii) of (b) follow straightforwardly from parts (b) and (c) of \Cref{prop product part-square} respectively.
    \end{proof}
\end{prop}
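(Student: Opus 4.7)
The plan is to reduce everything to \Cref{prop product part-square} after moving to a translated-and-scaled setting adapted to the disc $D$. Concretely, for each $i$ I first choose a polynomial $h_i\in\bar{K}[z]$ with multiset of roots $\mathfrak{s}_i$ and a part-square decomposition $h_i=q_i^2+\rho_i$ that is good at $D$ (using \Cref{prop totally odd existence} and \Cref{cor totally odd is good} after possibly extending $K$, which is harmless). Setting $q=\prod_i q_i$ and $\rho=h-q^2$ for $h=\prod_i h_i$, I get a part-square decomposition of $h$. Picking any $\alpha\in\bar{K}$ and $\beta\in\bar{K}^\times$ with $D=D_{\alpha,v(\beta)}$ and passing to the translated-and-scaled decompositions $h_{i,\alpha,\beta}=q_{i,\alpha,\beta}^2+\rho_{i,\alpha,\beta}$ and $h_{\alpha,\beta}=q_{\alpha,\beta}^2+\rho_{\alpha,\beta}$, each $t_i:=\tfun_{q_i,\rho_i}(D)$ is non-negative by \Cref{rmk good decompositions}, so \Cref{prop product part-square}(a) applies and yields $t:=\tfun_{q,\rho}(D)\ge \min_i t_i$.

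To conclude (a), I note that even if the product decomposition $h_{\alpha,\beta}=q_{\alpha,\beta}^2+\rho_{\alpha,\beta}$ is not itself good, any good decomposition at $D$ has $\tfun$-value that is either $\ge 2v(2)$ or strictly larger than $t$ (by iteratively applying the improvement step in the proof of \Cref{prop good decomposition}(a)); in either case $\tbest{\mathfrak{s}}{D}\ge \truncate{t}\ge \truncate{\min_i t_i}=\min_i \tbest{\mathfrak{s}_i}{D}$, using that $\min$ commutes with truncation. This takes care of (a).

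For (b)(i), suppose the minimum of the $\tbest{\mathfrak{s}_i}{D}$ is attained only by a single index $i_0$. If that minimum equals $2v(2)$ then all $\tbest{\mathfrak{s}_i}{D}$ equal $2v(2)$, contradicting uniqueness; hence the minimum is $<2v(2)$, so $t_{i_0}<t_j$ for every $j\ne i_0$. Then \Cref{prop product part-square}(b) gives $t=t_{i_0}$ and tells us that the decomposition of $h_{\alpha,\beta}$ is good iff that of $h_{i_0,\alpha,\beta}$ is; since the latter is good at $D$ by construction, so is the former, whence $\tbest{\mathfrak{s}}{D}=\truncate{t_{i_0}}=\tbest{\mathfrak{s}_{i_0}}{D}$, which combined with (a) yields equality.

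For (b)(ii), the key manoeuvre is to choose the center $\alpha$ cleverly so as to meet the hypotheses of \Cref{prop product part-square}(c). Since $\mathfrak{s}_2\subseteq D'\subsetneq D$, I pick any $\zeta_0\in \mathfrak{s}_2$ and set $\alpha:=\zeta_0$, which is a legal choice of center for $D$; then for any $\zeta\in\mathfrak{s}_2$, $v(\zeta-\alpha)\ge $ the depth of $D'$, which is strictly greater than $v(\beta)$, so the roots of $h_{2,\alpha,\beta}$ all have strictly positive valuation. Meanwhile $D\cap\mathfrak{s}_1=\varnothing$ forces all roots of $h_{1,\alpha,\beta}$ to have strictly negative valuation. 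After swapping the two factors to match the hypotheses of \Cref{prop product part-square}(c), that proposition gives, whenever $\min(t_1,t_2)<2v(2)$, the equality $t=\min(t_1,t_2)$ together with goodness of the product decomposition, so $\tbest{\mathfrak{s}}{D}=\truncate{\min(t_1,t_2)}=\min_i\tbest{\mathfrak{s}_i}{D}$. The remaining case $\min(t_1,t_2)\ge 2v(2)$ is immediate: both $\tbest{\mathfrak{s}_i}{D}$ equal $2v(2)$, and $\tbest{\mathfrak{s}}{D}\le 2v(2)$ by definition, so (a) forces equality. The only real subtlety is the bookkeeping around the truncation at $2v(2)$ (the values $\tbest{}{D}$ can coincide at $2v(2)$ while the untruncated $t_i$ differ); this is what I expect to be the main point requiring care, but it is handled uniformly by the argument above.
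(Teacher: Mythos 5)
Your proof is correct and follows essentially the same route as the paper's: choose polynomials $h_i$ with root sets $\mathfrak{s}_i$, pick good decompositions at $D$, multiply, and invoke parts (a), (b), (c) of \Cref{prop product part-square}. You spell out two small points that the paper's terse proof leaves implicit and that are worth having on record: first, that even if the product decomposition is not itself good at $D$, one still has $\tbest{\mathfrak{s}}{D}\ge \truncate{\tfun_{q,\rho}(D)}$ (which follows immediately from \Cref{dfn good totally odd}(a), without needing to iterate the improvement step — a good decomposition with $\tfun$-value below $2v(2)$ is by definition maximal, hence at least $t$); and second, the explicit choice of a center $\alpha\in\mathfrak{s}_2$ in case (b)(ii), needed to put the two factors in the sign configuration required by \Cref{prop product part-square}(c). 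One minor wording slip: in part (a) the good decomposition's $\tfun$-value need not be \emph{strictly} larger than $t$ (they agree when the product decomposition already happens to be good), but only $\ge t$ is needed and that is what the definition gives, so the conclusion stands.
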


\subsection{Identifying the valid discs}
\label{sec depths construction valid discs}

Let us now consider again the hyperelliptic curve $Y: y^2 = f(x)$, and let us now fix a center $\alpha\in \bar{K}$. In \S\ref{sec cluster cluster definition}, we defined, for each subset $\mathfrak{s}\subseteq \RR$, the invariants $d_\pm(\mathfrak{s})$ and $\delta(\mathfrak{s})$, as well as the interval $I(\mathfrak{s})$ (see \Cref{dfn interval I}). We now aim to give analogous definitions \emph{relative to} the center $\alpha$.
\begin{dfn}
    \label{dfn interval I alpha}
    Given a subset $\mathfrak{s}\subseteq \RR$ and a center $\alpha\in \bar{K}$, we set
    \begin{equation*}
    \begin{gathered}
        d_+(\mathfrak{s},\alpha) = \min_{a \in \mathfrak{s}} v(a - \alpha) \in \qqinfty;\quad
        d_-(\mathfrak{s},\alpha) = \max_{a \in \RR\setminus\mathfrak{s}} v(a - \alpha) \in \qqminusinfty.
   \end{gathered}
   \end{equation*}
   We also introduce $\delta(\mathfrak{s},\alpha):=d_+(\mathfrak{s},\alpha)-d_-(\mathfrak{s},\alpha)\in \qqinfty$, and we use the notation $I(\mathfrak{s},\alpha)$ to mean the closed interval $[d_-(\mathfrak{s},\alpha),d_+(\mathfrak{s},\alpha)]$, with the convention that $I(\mathfrak{s},\alpha)=\varnothing$ whenever $d_+(\mathfrak{s},\alpha)<d_-(\mathfrak{s},\alpha)$.
\end{dfn}

\begin{rmk}
    \label{rmk bpm s alpha}
    When $\mathfrak{s}$ is a cluster and $\alpha\in D_{\mathfrak{s},d_+(\mathfrak{s})}$, we have $d_\pm(\mathfrak{s},\alpha)=d_\pm(\mathfrak{s})$, and $I(\mathfrak{s},\alpha)=I(\mathfrak{s})$.
\end{rmk}

Given $\alpha\in \bar{K}$ and $\mathfrak{s}\subseteq \RR$, assuming the interval $I(\mathfrak{s},\alpha)$ has positive length, our aim is to establish for which $b\in I(\mathfrak{s},\alpha)$ the disc $D_{\alpha,b}$ is a valid disc. When $p\neq 2$, \Cref{thm cluster p odd} gives an exhaustive answer; we now find a way to address the general case, in which $p$ is arbitrary, i.e.\ also possibly equal to $2$: we will introduce a (possibly empty) closed sub-interval $J(\mathfrak{s},\alpha)$, whose endpoints, roughly, will correspond to the depths $b\in I(\mathfrak{s},\alpha)$ for which $D_{\alpha,b}$ is a valid disc, except possibly when $\mathfrak{s}=\varnothing$ (the precise statement is given in \Cref{thm summary depths valid discs}).

Let us begin by studying the function $I(\mathfrak{s},\alpha) \ni b \mapsto \tbest{\RR}{D_{\alpha,b}}\in \zerotwo$, which enjoys the following properties.
\begin{lemma} \label{lemma slopes of t}
    The function $ I(\mathfrak{s},\alpha) \ni b \mapsto \tbest{\RR}{D_{\alpha,b}}$ is a continuous piecewise-linear function with decreasing slopes. It is identically zero if $|\mathfrak{s}|$ is odd.  On the other hand, when $|\mathfrak{s}|$ is even, its slopes are odd integers ranging from $1-|\mathfrak{s}|$ to $2g+1-|\mathfrak{s}|$, except over the subset of $I(\mathfrak{s},\alpha)$ where $\tbest{\RR}{D_{\alpha,b}}=2v(2)$, over which the slope is zero (if this subset contains an open interval).
    \begin{proof}
        Choose an interior point $b \in I(\mathfrak{s},\alpha)$, i.e.\ $b\in (d_-(\mathfrak{s},\alpha),d_+(\mathfrak{s},\alpha))$, and choose $\beta\in \bar{K}^\times$ such that $v(\beta)=b$. Any normalized reduction of $f_{\alpha,\beta}$ is a scalar times $x_{\alpha,\beta}^{|\mathfrak{s}|}$. We deduce from \Cref{prop good decomposition} that, if $|\mathfrak{s}|$ is odd, the part-square decomposition $f_{\alpha,\beta}=0^2+f_{\alpha,\beta}$ is good and $\tbest{\RR}{D_{\alpha,b}}=0$, whereas, when $|\mathfrak{s}|$ is even, this decomposition is not good, and we therefore have $\tbest{\RR}{D_{\alpha,b}}>0$. In this case, let us take a part-square decomposition $f=q^2+\rho$ which is totally odd with respect to the center $\alpha$, so that $\tbest{\RR}{D_{\alpha,b}}=\truncate{\tfun_{q,\rho}(D_{\alpha,b})}$. Since $\deg(f)=2g+1$, by \Cref{dfn qrho} the odd-degree polynomial $\rho$ has degree at most $2g+1$. Now, $b \mapsto \tfun_{q,\rho}(D_{\alpha,b})$ is, by definition, the difference between the functions $b \mapsto \vfun_{\rho}(D_{\alpha,b})$ and $b \mapsto \vfun_{f}(D_{\alpha,b})$; by \Cref{lemma vfun of disc}, the former is a piecewise linear function with decreasing odd integer slopes between $1$ and $2g+1$, while the latter is linear with slope $|\mathfrak{s}|$ over $I(\mathfrak{s}, \alpha)$.
    \end{proof}
\end{lemma}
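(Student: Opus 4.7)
The plan is to work directly from the definition of $\tbest{\RR}{D}$ in terms of a good part-square decomposition, exploiting that a \emph{totally odd} part-square decomposition for $f$ with respect to the center $\alpha$ is automatically good at all discs centered at $\alpha$ by \Cref{cor totally odd is good} and \Cref{rmk totally odd good}. First I would fix an interior point $b$ of $I(\mathfrak{s}, \alpha)$ and choose $\beta$ with $v(\beta) = b$, so that the roots in $\mathfrak{s}$ are precisely those sent to integral units or vanishing in $f_{\alpha,\beta}$, and the normalized reduction of $f_{\alpha,\beta}$ is a nonzero scalar multiple of $x_{\alpha,\beta}^{|\mathfrak{s}|}$. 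This reduction is a square in $k[x_{\alpha,\beta}]$ if and only if $|\mathfrak{s}|$ is even, so \Cref{prop good decomposition} tells us the trivial decomposition $f_{\alpha,\beta} = 0^2 + f_{\alpha,\beta}$ is good precisely when $|\mathfrak{s}|$ is odd, in which case $\tbest{\RR}{D_{\alpha,b}} = 0$ throughout $I(\mathfrak{s}, \alpha)$.

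When $|\mathfrak{s}|$ is even, the trivial decomposition fails and I must fall back on a totally odd decomposition $f = q^2 + \rho$ with respect to $\alpha$, whose existence is granted by \Cref{prop totally odd existence}. Then $\tbest{\RR}{D_{\alpha,b}} = \truncate{\tfun_{q,\rho}(D_{\alpha,b})}$, so it suffices to analyze the function $b \mapsto \tfun_{q,\rho}(D_{\alpha,b}) = \vfun_\rho(D_{\alpha,b}) - \vfun_f(D_{\alpha,b})$. Since $\rho$ has only odd-degree terms and degree at most $2g+1$ (by \Cref{rmk degree of rho}), \Cref{lemma mathfrakh} implies that $b \mapsto \vfun_\rho(D_{\alpha,b})$ is continuous and piecewise linear with decreasing slopes, each an odd integer between $1$ and $2g+1$. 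Meanwhile, on the interior of $I(\mathfrak{s}, \alpha)$ every root in $\mathfrak{s}$ has valuation distance $> b$ from $\alpha$ while every root outside $\mathfrak{s}$ has valuation distance $< b$ from $\alpha$, so $b \mapsto \vfun_f(D_{\alpha,b})$ is \emph{affine} of slope $|\mathfrak{s}|$ on this interval. Subtracting, $\tfun_{q,\rho}$ is continuous and piecewise linear with decreasing slopes in the set of odd integers between $1 - |\mathfrak{s}|$ and $2g+1 - |\mathfrak{s}|$.

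Finally I would address the truncation $z \mapsto \truncate{z} := \min\{z, 2v(2)\}$. Since $\tfun_{q,\rho}$ has decreasing slopes, once it reaches the value $2v(2)$ it either stays below it on one side and enters a constant regime on the other side; on any interval where the untruncated function lies at or above $2v(2)$ the truncated function equals $2v(2)$ (slope $0$), and elsewhere the truncated function equals $\tfun_{q,\rho}$ itself, preserving the odd-integer slopes. Continuity, the decreasing-slope property, and the declared list of possible slopes all survive the truncation.

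The only genuinely subtle point is justifying that $b \mapsto \vfun_f(D_{\alpha,b})$ is linear (not merely piecewise linear) over the \emph{interior} of $I(\mathfrak{s},\alpha)$ with slope exactly $|\mathfrak{s}|$; this follows from the explicit formula $\vfun_h(D_{\alpha,b}) = \min_i (v(H_i) + ib)$ from the proof of \Cref{lemma mathfrakh} applied to $f$ with the observation that on the open interval the slope jumps of $\vfun_f$ occur only at depths where some root of $f$ sits on the boundary of $D_{\alpha,b}$, which by construction does not happen strictly inside $I(\mathfrak{s},\alpha)$. Everything else is bookkeeping, and the entire argument reduces to combining \Cref{lemma mathfrakh}, \Cref{prop good decomposition}, and the existence of totally odd decompositions.
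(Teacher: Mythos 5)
Your proof follows essentially the same approach as the paper's: pick an interior point $b$ of $I(\mathfrak{s},\alpha)$, observe the normalized reduction of $f_{\alpha,\beta}$ is a scalar times $x_{\alpha,\beta}^{|\mathfrak{s}|}$, invoke \Cref{prop good decomposition} to split into the odd/even cases, and for $|\mathfrak{s}|$ even use a totally odd decomposition $f=q^2+\rho$ so that $\tfun_{q,\rho}=\vfun_\rho-\vfun_f$ has slopes governed by \Cref{lemma mathfrakh}. Your additional remarks on the truncation and on the linearity of $\vfun_f$ over the interior are correct elaborations of points left implicit in the paper.
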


In light of the above lemma, either $b \mapsto \tbest{\RR}{D_{\alpha,b}}$ is always $<2v(2)$ over $I(\mathfrak{s}, \alpha)$, or else it attains the output $2v(2)$ over some closed sub-interval of $I(\mathfrak{s},\alpha)$ and is $<2v(2)$ elsewhere. Let $J(\mathfrak{s},\alpha)=[b_-(\mathfrak{s},\alpha),b_+(\mathfrak{s},\alpha)]$ denote the sub-interval of $I(\mathfrak{s}, \alpha)=[d_-(\mathfrak{s}, \alpha),d_+(\mathfrak{s}, \alpha)]$ over which the output of $b \mapsto \tbest{\RR}{D_{\alpha,b}}$ equals $2v(2)$; in the former case just mentioned, we have $J(\mathfrak{s}, \alpha)=\varnothing$, while in the latter case, the interval will have the form $J(\mathfrak{s}, \alpha)=[b_-(\mathfrak{s}, \alpha),b_+(\mathfrak{s}, \alpha)]$ for some endpoints $b_\pm(\mathfrak{s},\alpha)$.
\begin{rmk} \label{rmk structure of J}
    We make the following immediate observations about the subinterval $J(\mathfrak{s}, \alpha) \subseteq I(\mathfrak{s}, \alpha)$.

    \begin{enumerate}[(a)]
        \item In the $p\neq 2$ setting, we have $2v(2) = 0$ and so the subinterval $J(\mathfrak{s}, \alpha) \subseteq I(\mathfrak{s}, \alpha)$ coincides with all of $I(\mathfrak{s}, \alpha)$.
        \item In the $p=2$ setting, if the cluster $\mathfrak{s}$ has odd cardinality, then we have $\tbest{\RR}{D_{\alpha,b}} = 0$ for all $b\in I(\mathfrak{s},\alpha)$, and therefore we have $J(\mathfrak{s}, \alpha)=\varnothing$.
        \item If $\mathfrak{s}=\varnothing$ (which can only happen in the $p = 2$ setting), then we have $J(\mathfrak{s}, \alpha)\neq \varnothing$ and $b_+(\mathfrak{s}, \alpha)=+\infty$. In fact, the piecewise linear function $b \mapsto \tbest{\RR}{D_{\alpha,b}}$ has only positive slopes by \Cref{lemma slopes of t}, so that $\tbest{\RR}{D_{\alpha,b}}$ becomes equal to $2v(2)$ as $b\to +\infty$.
    \end{enumerate}
\end{rmk}

By \Cref{prop normalization model}, it is clear that, given $D=D_{\alpha,b}$ with $b\in I(\mathfrak{s},\alpha)$, the cover $\SF{\YY_D}\to \SF{\XX_D}$ is separable if and only if $b\in J(\mathfrak{s},\alpha)$; in particular, for $b\in I(\mathfrak{s},\alpha)$, the disc $D:=D_{\alpha,b}$ can only be valid if $b\in J(\mathfrak{s},\alpha)$. To establish for which $b\in J(\mathfrak{s},\alpha)$ the disc $D=D_{\alpha,b}$ is valid, we need the following general lemma, which will allow us to compute the ramification of the cover $\SF{\YY_{D}}\to \SF{\XX_{D}}$ above $0$ and $\infty$.
\begin{lemma} \label{lemma ell and t function}
    Fix a center $\alpha\in \bar{K}$, and choose $b\in \qq$ such that $\tbest{\RR}{D_{\alpha,b}}=2v(2)$, and consider the model $\YY_D$ corresponding to the disc $D:=D_{\alpha,b}$.  Let $\ell(\XX_D,P)$ be the integer defined in \Cref{dfn ell ramification index} for any point $P$ of $\SF{\XX_D}$.  Write $\partial^+\mathfrak{t}^\RR$ (resp.\ $\partial^-\mathfrak{t}^\RR$) for the right (resp.\ left) derivative of the function $c \mapsto \tbest{\RR}{D_{\alpha,c}}$.  
    Then in the $p=2$ setting, we have the following.
    \begin{enumerate}[(a)]
        \item If $\partial^+\tbest{\RR}{D_{\alpha,b}}\ge 0$, then we have $\ell(\XX_D,0)=0$.
        \item If $\partial^+\tbest{\RR}{D_{\alpha,b}}$ is odd and negative, then we have $\ell(\XX_D,0)=1-\partial^+\tbest{\RR}{D_{\alpha,b}}$.
        \item If $\partial^-\tbest{\RR}{D_{\alpha,b}} \leq 0$, then we have $\ell(\XX_D,\infty)=0$.
        \item If $\partial^-\tbest{\RR}{D_{\alpha,b}}$ is odd and positive, then we have $\ell(\XX_D,\infty)=1+\partial^-\tbest{\RR}{D_{\alpha,b}}$.
    \end{enumerate}
    In the $p\neq 2$ setting, we instead have the following.
    \begin{enumerate}
        \item[(e)] If $\partial^+\vfun_f(D_{\alpha,b})$ is even, then we have $\ell(\XX_D,0)=0$.
        \item[(f)] If $\partial^+\vfun_f(D_{\alpha,b})$ is odd, then we have $\ell(\XX_D,0)=1$.
        \item[(g)] If $\partial^-\vfun_f(D_{\alpha,b})$ is even, then we have $\ell(\XX_D,\infty)=0$.
        \item[(h)] If $\partial^-\vfun_f(D_{\alpha,b})$ is odd, then we have $\ell(\XX_D,\infty)=1$.
    \end{enumerate}
     
    \begin{proof}
        This is just a rephrasing of  \Cref{lemma computation ell ramification} using the language introduced in \S\ref{sec depths piecewise-linear}.
        To see this, let us fix a part-square decomposition $f=q^2+\rho$ that is totally odd with respect to the center $\alpha$, and let ${f_0}$, ${q_0}$, and ${\rho_0}$ be the polynomials involved in the statement of \Cref{lemma computation ell ramification}: they are defined as appropriate scalings of $f_{\alpha,\beta}$, $q_{\alpha,\beta}$ and $\rho_{\alpha,\beta}$, for some chosen $\beta\in \bar{K}^\times$ such that $v(\beta)=b$.
        
        When $p\neq 2$, the polynomial $\overline{f_0}$ is a normalized reduction of $f_{\alpha,\beta}$, and it is easy to see that parts (e)--(h) of the lemma follow from \Cref{lemma computation ell ramification} once the left and right derivatives of $\vfun_f$ at $D_{\alpha,b}$ are interpreted in light of \Cref{lemma mathfrakh}.
        
        When $p=2$, the polynomial $\overline{q_0}$ is a normalized reduction of $q_{\alpha,\beta}$, and either the polynomial $\overline{\rho_0}$ is $0$ (when $\tfun_{q,\rho}(D_{\alpha,\beta})>2v(2)$), or it is a normalized reduction of $\rho_{\alpha,\beta}$ (when $\tfun_{q,\rho}(D_{\alpha,\beta})=2v(2)$).
        Now we have $\mathfrak{t}(D_{\alpha,c}) = \truncate{\tfun(D_{\alpha,c})}$ for all $c \in \qq$ (see \Cref{rmk t fun computing}); moreover, whenever $\tfun_{q,\rho}(D_{\alpha,c})>0$ (and hence, in particular, for all $c$ in a neighborhood of $b$), we can write $\tfun_{q,\rho}(D_{\alpha,c})=\vfun_\rho(D_{\alpha,c})-2\vfun_q(D_{\alpha,c})$, where, in light of \Cref{lemma mathfrakh}, the first summand only has odd slopes, while the second summand only has even slopes.
        Let $n_\rho$ and $n_q$ denote the orders of vanishing of $\overline{\rho_0}$ and $\overline{q_0}$ at $x_{\alpha,\beta}=0$. The assumption in (a) means that either we have $\tfun_{q,\rho}(D_{\alpha,b})>2v(2)$, or we have $\tfun_{q,\rho}(D_{\alpha,b})=2v(2)$ with $\partial^+ \tfun_{q,\rho}(D_{\alpha,b})\ge 0$; thanks to \Cref{lemma mathfrakh}, this can be translated into saying that $n_\rho\ge 2n_q$, and it is now evident that the conclusion of part (a) follows from \Cref{lemma computation ell ramification}. A similar reasoning can be followed to prove parts (b)--(d).
    \end{proof}
\end{lemma}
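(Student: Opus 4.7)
The plan is to reduce this lemma to \Cref{lemma computation ell ramification} by translating the slope data of the piecewise-linear functions $\vfun_h$ for $h \in \{f, q, \rho\}$ at $b$ into the vanishing orders $n_h(0)$ and $n_h(\infty)$ of the normalized reductions appearing in that lemma. The bridge is \Cref{lemma mathfrakh}(b): for any nonzero polynomial $h$, the right (resp.\ left) derivative of $c \mapsto \vfun_h(D_{\alpha,c})$ at $b$ coincides with the lowest (resp.\ highest) degree of $x_{\alpha,\beta}$ appearing in a normalized reduction of $h_{\alpha,\beta}$, i.e.\ with the order of vanishing of $\overline{h_0}$ at $0$ (resp.\ with $\deg \overline{h_0}$, whose complement to $\deg h$ controls the vanishing order at $\infty$).

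For $p \neq 2$, the trivial decomposition $f = 0^2 + f$ is good since $2v(2) = 0$, and one applies \Cref{lemma computation ell ramification}(a) directly. The bridge above yields $n_f(0) = \partial^+ \vfun_f(D_{\alpha,b})$ and $n_f(\infty) = 2g + 2 - \partial^- \vfun_f(D_{\alpha,b})$; since $2g + 2$ is even, the parities on both sides match, so parts (e)--(h) drop out immediately.

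For $p = 2$, fix a part-square decomposition $f = q^2 + \rho$ that is totally odd with respect to $\alpha$ (obtained from \Cref{prop totally odd existence} applied to $f_{\alpha,1}$ and then untranslated); by \Cref{rmk totally odd good} it is good at every disc $D_{\alpha,c}$, so $\tbest{\RR}{D_{\alpha,c}} = \truncate{\tfun_{q,\rho}(D_{\alpha,c})}$. Total oddness means $\rho_{\alpha,\beta}$ has only odd-degree terms, hence $n_\rho(0)$ is odd whenever $\overline{\rho_0} \neq 0$, and $\deg \overline{\rho_0}$ is odd, whence $n_\rho(\infty) = 2g+2 - \deg \overline{\rho_0}$ is odd as well. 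Moreover, at any point $c$ where $\tfun_{q,\rho}(D_{\alpha,c}) > 0$, the computation in the proof of \Cref{prop good decomposition}(a) forces $2\vfun_q(D_{\alpha,c}) = \vfun_f(D_{\alpha,c})$; since $\tfun_{q,\rho}(D_{\alpha,b}) \geq 2v(2) > 0$, this identity persists in an open neighborhood of $b$, so there $\tfun_{q,\rho} = \vfun_\rho - 2\vfun_q$, and the bridge yields $n_\rho(0) - 2n_q(0) = \partial^+ \tfun_{q,\rho}$ and $2n_q(\infty) - n_\rho(\infty) = \partial^- \tfun_{q,\rho}$.

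It remains to relate $\partial^\pm \tfun_{q,\rho}$ to $\partial^\pm \mathfrak{t}^{\RR}$. Since $\mathfrak{t}^{\RR}$ has decreasing slopes and is bounded above by its value $2v(2)$ at $b$, it attains a maximum there, forcing $\partial^+ \mathfrak{t}^{\RR} \leq 0 \leq \partial^- \mathfrak{t}^{\RR}$; moreover, in each direction either $\mathfrak{t}^{\RR}$ locally equals $2v(2)$ (so the derivative is $0$) or it locally coincides with $\tfun_{q,\rho}$ (so the derivatives agree). Substituting into cases (b) and (c) of \Cref{lemma computation ell ramification} yields (a)--(d): for instance, when $\partial^+ \mathfrak{t}^{\RR}$ is odd and negative we are in case (c) with $n_\rho(0)$ odd, giving $\ell(\XX_D, 0) = 2n_q(0) - n_\rho(0) + 1 = 1 - \partial^+ \tfun_{q,\rho} = 1 - \partial^+ \mathfrak{t}^{\RR}$, and analogously at $\infty$. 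The main delicacy will be the edge case $\tfun_{q,\rho}(D_{\alpha,b}) > 2v(2)$, where $\overline{\rho_0} = 0$ and one cannot directly invoke case (c) of \Cref{lemma computation ell ramification}; but there $n_\rho = +\infty$ so case (b) of that lemma applies vacuously to give $\ell = 0$, which is consistent with $\partial^\pm \mathfrak{t}^{\RR} = 0$ locally (placing us in the regimes of (a) and (c) of the present lemma).
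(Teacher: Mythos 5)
Your proof is correct and takes essentially the same route as the paper's: both reduce to \Cref{lemma computation ell ramification} via a totally odd decomposition, using \Cref{lemma mathfrakh}(b) as the bridge between slopes of $\vfun_h$ and the vanishing orders $n_h$ at $0$ and $\infty$, and both invoke the identity $\tfun_{q,\rho} = \vfun_\rho - 2\vfun_q$ valid on a neighborhood of $b$ to split the slope into its odd (from $\rho$) and even (from $q$) parts. You spell out a few steps the paper leaves implicit — the explicit formulas $n_\rho(0)-2n_q(0)=\partial^+\tfun$ and $2n_q(\infty)-n_\rho(\infty)=\partial^-\tfun$, the observation that $\partial^\pm\mathfrak{t}^{\RR}$ are pinned to $0$ or to $\partial^\pm\tfun_{q,\rho}$ by the truncation at $2v(2)$, and the edge case $\tfun(D_{\alpha,b})>2v(2)$ where $\overline{\rho_0}=0$ — but the underlying argument is the same. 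One small remark: the appeal to "decreasing slopes" is unnecessary for concluding $\partial^+\mathfrak{t}^{\RR}\le 0\le\partial^-\mathfrak{t}^{\RR}$ (boundedness by $2v(2)$ together with attainment at $b$ suffices), and is also not quite available globally since \Cref{lemma slopes of t} is stated only over a single interval $I(\mathfrak{s},\alpha)$; dropping that phrase makes the step cleaner.
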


As a first application of the lemma above, we will show that a necessary condition for $D_{\alpha,b}$ to be a valid disc when $b\in I(\mathfrak{s},\alpha)$ is that $b$ is an endpoint of the sub-interval $J(\mathfrak{s},\alpha)\subseteq I(\mathfrak{s},\alpha)$.
\begin{lemma}
    \label{lemma not a valid disc}
    Given $b\in I(\mathfrak{s},\alpha)$ and letting $D = D_{\alpha,b}$, we have the following.
    \begin{itemize}
        \item[(a)] If $b\not\in J(\mathfrak{s},\alpha)$, the cover $\SF{\YY_D}\to \SF{\XX_D}$ is inseparable; hence, we have in particular that $D$ is not a valid disc.
        \item[(b)] If $b$ is an interior point of $J(\mathfrak{s},\alpha)$, then we have $\XX_D\not\le \Xrst$, and so $D$ is not a valid disc.
    \end{itemize}
    Consequently, $D_{\alpha,b}$ can only be a valid disc if $J(\mathfrak{s},\alpha) \neq \varnothing$ and if $b$ is an endpoint of $J(\mathfrak{s},\alpha)$, i.e.\ $b\in \{ b_-(\mathfrak{s},\alpha), b_+(\mathfrak{s},\alpha)\}$.
    
    \begin{proof}
        The statement of (a) is a direct result of \Cref{prop normalization model}, as we have already discussed.  We therefore set out to prove the statement of (b); we assume that $J(\mathfrak{s},\alpha)\neq \varnothing$ and $b_-(\mathfrak{s},\alpha)< b< b_+(\mathfrak{s},\alpha)$ and let $D = D_{\alpha,b}$. The number $N$ of distinct points of $\SF{\XX_D}$ to which the roots of $\mathcal{R}\cup \{\infty\}$ reduce is at most $2$; this is because, since $b$ does not coincide with an endpoint of the interval $I(\mathfrak{s},\alpha)$, we have that the $2g+2$ roots $\Rinfty$ each reduce either to $0$ or to $\infty$ in $\SF{\XX_D}$. Moreover, since $b$ is an interior point of $J(\mathfrak{s},\alpha)$, we have $\tbest{\RR}{D}=2v(2)$ and that the left and right derivatives of $b'\mapsto\tbest{\RR}{D_{\alpha,b'}}$ at $b'=b$ are both equal to $0$; by \Cref{lemma ell and t function}, this implies, in the $p=2$ setting, that $\SF{\YY_D}$ has two branches above $0\in \SF{\XX_D}$ and two branches above $\infty\in \SF{\XX_D}$, and the special fiber $\SF{\YY_D}$ consequently consists of two rational components (see \Cref{rmk R_0 R_1 R_2}).
        
        Now \Cref{thm part of rst separable} implies that $\XX_D\not\le \Xrst$; this is because we know that $N\le 2$, and, in the $p=2$ setting, that the special fiber $\SF{\YY_D}$ is not irreducible.
    \end{proof}
\end{lemma}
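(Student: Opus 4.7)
The plan is to handle the two parts separately and then collect them into the final summary statement. For part (a), the argument is essentially immediate from the setup: by definition, $J(\mathfrak{s},\alpha)$ is precisely the set of $b \in I(\mathfrak{s},\alpha)$ where the piecewise-linear function $b \mapsto \tbest{\RR}{D_{\alpha,b}}$ attains its maximal value $2v(2)$. So for $b \in I(\mathfrak{s},\alpha) \setminus J(\mathfrak{s},\alpha)$ we have $\tbest{\RR}{D_{\alpha,b}} < 2v(2)$, and then \Cref{prop normalization model} (case (2)) directly gives that the cover $\SF{\YY_D}\to \SF{\XX_D}$ is inseparable. In particular the disc $D$ cannot be valid in the sense of \Cref{dfn valid disc}.

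For part (b), fix $b$ in the interior of $J(\mathfrak{s},\alpha)$, so $b_-(\mathfrak{s},\alpha) < b < b_+(\mathfrak{s},\alpha)$. First I would observe that $b$ also lies in the interior of $I(\mathfrak{s},\alpha)$ (since $J \subseteq I$), so the roots of $\mathcal{R}$ in $\mathfrak{s}$ all reduce to $0 \in \SF{\XX_D}$ while those outside $\mathfrak{s}$ all reduce to $\infty \in \SF{\XX_D}$; adding the branch point at infinity, the whole set $\Rinfty$ reduces to at most the two points $\{0,\infty\}$, giving $N \le 2$ in the notation of \S\ref{sec models hyperelliptic separable}. The key point is then to rule out the case when $\SF{\YY_D}$ is irreducible: since $b$ is interior to $J(\mathfrak{s},\alpha)$, the piecewise-linear function $b' \mapsto \tbest{\RR}{D_{\alpha,b'}}$ is locally constant (equal to $2v(2)$) around $b$, so its left and right derivatives both vanish at $b$. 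Applying \Cref{lemma ell and t function}(a) and (c) in the $p=2$ case (the $p \neq 2$ case is automatic since $J = I$ and the argument is covered by \Cref{thm cluster p odd}) yields $\ell(\XX_D,0) = \ell(\XX_D,\infty) = 0$, so the normalized cover $\NSF{\YY_D} \to \SF{\XX_D}$ is unramified over both $0$ and $\infty$, and therefore unramified everywhere. This forces $\SF{\YY_D}$ to split into two rational components interchanged by the hyperelliptic involution (see \Cref{rmk R_0 R_1 R_2}(d)).

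Finally, I would invoke \Cref{thm part of rst separable} to conclude: we are in the case $N \le 2$ with $\SF{\YY_D}$ reducible, and none of the three sufficient conditions (i)--(iii) for $\XX_D \le \Xrst$ listed there is satisfied (condition (i) needs $N \ge 3$, and the other two require $\SF{\YY_D}$ to be irreducible). Hence $\XX_D \not\le \Xrst$, and in particular $D$ is not a valid disc. Combining (a) and (b) gives the final summary: $D_{\alpha,b}$ can only be valid when $J(\mathfrak{s},\alpha) \neq \varnothing$ and $b \in \{b_-(\mathfrak{s},\alpha), b_+(\mathfrak{s},\alpha)\}$.

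I expect the only mild subtlety to be the bookkeeping at the endpoints of $I(\mathfrak{s},\alpha)$: I need $J(\mathfrak{s},\alpha)$ to sit in the interior of $I(\mathfrak{s},\alpha)$ wherever the derivative analysis via \Cref{lemma ell and t function} is used, which is ensured because an interior point of $J$ is automatically an interior point of $I$. The main structural input, and the only place where characteristic $2$ genuinely enters, is the ramification computation supplied by \Cref{lemma ell and t function}; everything else is organizational.
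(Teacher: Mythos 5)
Your proposal is correct and follows essentially the same path as the paper: part (a) is immediate from \Cref{prop normalization model}, and part (b) combines $N\le 2$ (from $b$ being interior to $I(\mathfrak{s},\alpha)$) with the vanishing of both one-sided derivatives of $b'\mapsto\tbest{\RR}{D_{\alpha,b'}}$ and \Cref{lemma ell and t function} to deduce that $\SF{\YY_D}$ splits into two rational components, and then concludes via \Cref{thm part of rst separable}. The parenthetical detour through \Cref{thm cluster p odd} for $p\neq 2$ is redundant — your final appeal to \Cref{thm part of rst separable} already handles that case, since conditions (2) and (3) of that theorem require $p=2$ and condition (1) requires $N\ge 3$ — but it does no harm.
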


\begin{rmk}
    \label{rmk thm cluster p=2 proved}
    The lemma above, applied to the case in which $\mathfrak{s}$ is a cluster and $\alpha\in D_{\mathfrak{s},d_+(\mathfrak{s})}$, provides a proof of \Cref{thm cluster p=2}: in fact, the lemma shows that no more than $2$ valid discs can be linked to the same cluster and that no valid disc can be linked to $\mathfrak{s}$ if $J(\mathfrak{s},\alpha)=\varnothing$.  By \Cref{rmk structure of J}, this applies in particular when $p=2$ and $\mathfrak{s}$ has odd cardinality to show that there is no valid disc linked to $\mathfrak{s}$ in this case.
\end{rmk}

Now assume that $J(\mathfrak{s},\alpha)\neq \varnothing$. Among the discs $D_{\alpha,b}$ with $b\in I(\mathfrak{s},\alpha)$, the only candidate valid discs are those of depths $b_-(\mathfrak{s},\alpha)$ and $b_+(\mathfrak{s},\alpha)$, as long as these depths are not $\pm\infty$. 
Let us write $\lambda_-(\mathfrak{s},\alpha) = \partial^-\tbest{\RR}{D_{\alpha,b_-}}$ and $\lambda_+(\mathfrak{s},\alpha) = -\partial^+\tbest{\RR}{D_{\alpha,b_+}}$ (where $\partial^\pm\mathfrak{t}^\RR$ is defined as in \Cref{lemma ell and t function}). 
The integer $\lambda_-(\mathfrak{s},\alpha)$ (resp.\ $\lambda_+(\mathfrak{s},\alpha)$) is only defined if $J(\mathfrak{s},\alpha)\neq \varnothing$ and its endpoint $b_-(\mathfrak{s},\alpha)$ (resp.\ $b_+(\mathfrak{s},\alpha)$) does not coincide with $d_-(\mathfrak{s},\alpha)$ (resp.\ $d_+(\mathfrak{s},\alpha)$). In particular, $\lambda_+(\mathfrak{s},\alpha)$ and $\lambda_-(\mathfrak{s},\alpha)$ can only be defined if $p=2$ and $\mathfrak{s}$ has even cardinality (by \Cref{rmk structure of J}); when defined, they are both positive odd integers (by \Cref{lemma slopes of t}); more precisely, we have $\lambda_-(\mathfrak{s},\alpha)\in \{1, 3, \ldots, 2g+1-|\mathfrak{s}|\}$ and $\lambda_+(\mathfrak{s},\alpha)\in \{1, 3, \ldots|\mathfrak{s}|-1\}$.

\begin{rmk}
    When $\mathfrak{s}$ is a cluster and $\alpha\in D_{\mathfrak{s},d_+(\mathfrak{s})}$, we have already observed in \Cref{rmk bpm s alpha} that $I(\mathfrak{s},\alpha)=I(\mathfrak{s})$. It is also evident that, in this case, $b\in I(\mathfrak{s},\alpha)\mapsto \tbest{\RR}{D_{\alpha,b}}$ does not depend on the particular choice of $\alpha\in D_{\mathfrak{s},d_+(\mathfrak{s})}$; hence, the sub-interval $J(\mathfrak{s},\alpha)=[b_-(\mathfrak{s},\alpha), b_+(\mathfrak{s},\alpha)]$ and the slopes $\lambda_\pm(\mathfrak{s},\alpha)$ are independent of this choice as well. Hence, for $\mathfrak{s}$ a cluster we may (and often will) use without ambiguity the shorter notation $J(\mathfrak{s})$, $b_\pm(\mathfrak{s})$, and $\lambda_\pm(\mathfrak{s})$ to mean $J(\mathfrak{s},\alpha)$, $b_\pm(\mathfrak{s},\alpha)$, and $\lambda_\pm(\mathfrak{s},\alpha)$ for some (any) choice of $\alpha \in D_{\mathfrak{s},d_+(\mathfrak{s})}$.
\end{rmk}

\begin{prop}
    \label{prop lambda plus minus and ell}
    With the above notation, suppose that $J(\mathfrak{s},\alpha) \neq \varnothing$, and let $D_\pm = D_{\alpha,b_\pm(\mathfrak{s},\alpha)}$. Then we have the following.
    \begin{enumerate}
        \item[(a)] Assume that $b_-(\mathfrak{s},\alpha)<b_+(\mathfrak{s},\alpha)$ and $|\mathfrak{s}|$ is even. Then we have $\ell(\XX_{D_-},0)=\ell(\XX_{D_+};\infty)=0$.
        \item[(b)] Assume that $b_-(\mathfrak{s},\alpha)<b_+(\mathfrak{s},\alpha)$ and $|\mathfrak{s}|$ is odd. Then we have $\ell(\XX_{D_-},0)=\ell(\XX_{D_+};\infty)=1$.
        \item[(c)] Assume that $b_-(\mathfrak{s},\alpha)>d_-(\mathfrak{s},\alpha)$. Then we have $\ell(\XX_{D_-},\infty)=1+\lambda_-(\mathfrak{s},\alpha)$.
        \item[(d)] Assume that $b_+(\mathfrak{s},\alpha)<d_+(\mathfrak{s},\alpha)$. Then we have $\ell(\XX_{D_+},0)=1+\lambda_+(\mathfrak{s},\alpha)$.
    \end{enumerate}
    \begin{proof}
        This follows immediately from \Cref{lemma ell and t function}, taking into account the properties that we have already discussed of the piecewise-linear function $I(\mathfrak{s},\alpha) \ni b \mapsto\tbest{\RR}{D_{\alpha,b}}$ and of the linear function $I \ni b \mapsto \vfun_f(D_{\alpha,b})$ in our setting.
    \end{proof}
\end{prop}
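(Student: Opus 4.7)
The plan is to deduce each of the four assertions directly from \Cref{lemma ell and t function}, by reading off the one-sided derivatives of the piecewise-linear functions $b\mapsto \tbest{\RR}{D_{\alpha,b}}$ and $b\mapsto \vfun_f(D_{\alpha,b})$ at the endpoints $b_\pm(\mathfrak{s},\alpha)$ of $J(\mathfrak{s},\alpha)$.

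For parts (a) and (b), the assumption $b_-(\mathfrak{s},\alpha)<b_+(\mathfrak{s},\alpha)$ guarantees that, in the $p=2$ setting, the function $b\mapsto \tbest{\RR}{D_{\alpha,b}}$ is identically equal to $2v(2)$ throughout the (nontrivial) interval $J(\mathfrak{s},\alpha)$, so that $\partial^+\tbest{\RR}{D_{\alpha,b_-}}=\partial^-\tbest{\RR}{D_{\alpha,b_+}}=0$; parts (a) and (c) of \Cref{lemma ell and t function} then force $\ell(\XX_{D_-},0)=\ell(\XX_{D_+},\infty)=0$. This situation is incompatible with $|\mathfrak{s}|$ odd by \Cref{rmk structure of J}(b), so for (b) we are necessarily in the $p\neq 2$ setting; there we have $b_\pm=d_\pm$, and \Cref{lemma mathfrakh}(b) gives $\partial^+\vfun_f(D_{\alpha,b_-})=\partial^-\vfun_f(D_{\alpha,b_+})=|\mathfrak{s}|$, since the roots reaching the value $d_\pm$ from the appropriate side are precisely those of $\mathfrak{s}$. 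Parts (e)--(h) of \Cref{lemma ell and t function}, selected by the parity of $|\mathfrak{s}|$, then yield $\ell=0$ in (a) and $\ell=1$ in (b).

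For parts (c) and (d), the very definability of $\lambda_\pm(\mathfrak{s},\alpha)$ places us in the $p=2$ setting with $|\mathfrak{s}|$ even. The hypothesis $b_-(\mathfrak{s},\alpha)>d_-(\mathfrak{s},\alpha)$ in (c) places $b_-$ strictly inside $I(\mathfrak{s},\alpha)$; since $b_-$ is by definition the left endpoint of the plateau $\{b:\tbest{\RR}{D_{\alpha,b}}=2v(2)\}$, the function $\mathfrak{t}^\RR$ must be strictly less than $2v(2)$ immediately to the left of $b_-$. By \Cref{lemma slopes of t}, the slopes of $\mathfrak{t}^\RR$ off the plateau are odd integers, so $\lambda_-(\mathfrak{s},\alpha)=\partial^-\tbest{\RR}{D_{\alpha,b_-}}$ is a positive odd integer, and part (d) of \Cref{lemma ell and t function} delivers $\ell(\XX_{D_-},\infty)=1+\lambda_-(\mathfrak{s},\alpha)$. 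Part (d) is proved symmetrically: analogous reasoning at $b_+$ shows that $-\lambda_+(\mathfrak{s},\alpha)=\partial^+\tbest{\RR}{D_{\alpha,b_+}}$ is a negative odd integer, whereupon part (b) of \Cref{lemma ell and t function} produces $\ell(\XX_{D_+},0)=1+\lambda_+(\mathfrak{s},\alpha)$.

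I do not anticipate any serious obstacle, as the argument amounts to systematic bookkeeping against the case split of \Cref{lemma ell and t function}. The one point requiring care is consistent tracking of left versus right derivatives at the endpoints of $J(\mathfrak{s},\alpha)$, together with verifying that the sign and parity conditions required by the invoked clause of \Cref{lemma ell and t function} are met in each case, which will follow from \Cref{lemma slopes of t} and \Cref{lemma mathfrakh}(b).
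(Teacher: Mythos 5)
Your proposal is correct and follows the same approach the paper intends: the paper's own proof is a one-line invocation of Lemma~\ref{lemma ell and t function} together with the already-established properties of the piecewise-linear function $b\mapsto\tbest{\RR}{D_{\alpha,b}}$, and you supply precisely the bookkeeping that invocation presupposes — identifying the relevant one-sided derivatives at $b_\pm$, splitting on $p=2$ versus $p\neq 2$, and matching parity and sign conditions to the appropriate clause of the lemma, with the constraints from Remark~\ref{rmk structure of J} and Lemma~\ref{lemma slopes of t} correctly used to rule out the inadmissible combinations.
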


We are now ready to state a necessary and sufficient condition for $D_{\alpha,b}$ to be a valid disc when $b\in I(\mathfrak{s},\alpha)$.

\begin{thm}
    \label{thm summary depths valid discs}
    Let $\mathfrak{s}$ be a cluster of $\RR$, and suppose that $\alpha$ is a point in $D_{\mathfrak{s},d_+(\mathfrak{s})}$ or that $\mathfrak{s}=\varnothing$ and $\alpha$ is any point of $\bar{K}\setminus \RR$.
    Let $D = D_{\alpha,b}$ for some $b\in I(\mathfrak{s},\alpha)=[d_-(\mathfrak{s},\alpha),d_+(\mathfrak{s},\alpha)]$; moreover, when $\mathfrak{s}=\varnothing$, let us assume that $b$ is an interior point of $I(\mathfrak{s},\alpha)$, i.e.\ that $b>d_-(\mathfrak{s},\alpha)$. In other words, when $\mathfrak{s}\neq \varnothing$ we are assuming that $D$ is any disc linked to $\mathfrak{s}$, while, when $\mathfrak{s}=\varnothing$, the disc $D$ may be any disc that contains $\alpha$ and is linked to no cluster.
    \begin{enumerate}[(a)]
        \item If $\mathfrak{s}\neq \varnothing$, then the disc $D$ is valid precisely when $b$ is an endpoint of $J(\mathfrak{s})$. Hence, there exist two (possibly coinciding) valid discs $D_{\alpha,b_-(\mathfrak{s})}$ and $D_{\alpha,b_+(\mathfrak{s})}$ linked to $\mathfrak{s}$ when $J(\mathfrak{s})\neq \varnothing$, and there does not exist a valid disc linked to $\mathfrak{s}$ when $J(\mathfrak{s})=\varnothing$.
        \item If $\mathfrak{s}=\varnothing$, in which case we have $J(\varnothing,\alpha)=[d_-(\varnothing,\alpha),+\infty)$, we have that $D$ is a valid disc precisely when $b$ coincides with the left endpoint of $J(\varnothing,\alpha)$ and $\lambda_-(\varnothing,\alpha)\ge 3$. Hence, we have two possibilities:
        \begin{enumerate}[(i)]
            \item when $J(\varnothing,\alpha)=I(\varnothing,\alpha)$, or when $J(\varnothing,\alpha)\subsetneq I(\varnothing,\alpha)$ and $\lambda_-(\varnothing,\alpha)=1$, there does not exist a valid disc centered at $\alpha$ and linked to no cluster; and 
            \item when $J(\varnothing,\alpha)\subsetneq I(\varnothing,\alpha)$ and $\lambda_-(\varnothing,\alpha)\ge 3$, there exists exactly $1$ valid disc centered at $\alpha$ and linked to no cluster.
        \end{enumerate}
    \end{enumerate}
    \begin{proof}
        The structure of $J(\mathfrak{s},\alpha)$ in the $\mathfrak{s}=\varnothing$ case is discussed in \Cref{rmk structure of J}(c). Moreover, we have already shown that $D=D_{\alpha,b}$ can only be a valid disc when $b$ is an endpoint of $J(\mathfrak{s},\alpha)$ (see \Cref{lemma not a valid disc}). So assume from now on that $J(\mathfrak{s},\alpha)\neq\varnothing$ and that $b\in \{ b_-(\mathfrak{s},\alpha),b_+(\mathfrak{s},\alpha)\}$. We remark that, since $\tbest{\RR}{D}=2v(2)$, the cover $\SF{\YY_D}\to \SF{\XX_D}$ is separable; to determine whether or not $D$ is a valid disc, we may therefore apply the criterion stated in \Cref{thm part of rst separable}. Let $N$ be the integer defined in that theorem.
        
        Assume that $b$ is an endpoint of $I(\mathfrak{s},\alpha)$. By hypothesis, this is only possible when $\mathfrak{s}\neq \varnothing$, in which case we have $I(\mathfrak{s},\alpha)=I(\mathfrak{s})$, and \Cref{lemma discs linked to clusters} implies that the roots $\Rinfty$ reduce to $\geq 3$ distinct points of $\SF{\XX_D}$ (i.e., $N\ge 3$), and $D$ is certainly a valid disc by \Cref{thm part of rst separable}.
        
        If, instead, the rational number $b$ is an interior point of $I(\mathfrak{s},\alpha)$, then we are in the $p=2$ setting; the roots of $\mathfrak{s}$ reduce to $0\in \SF{\XX_D}$, while those of $\mathcal{R}\setminus \mathfrak{s}$, together with $\infty$, reduce to $\infty\in \SF{\XX_D}$. Assume that $b=b_-(\mathfrak{s},\alpha)$: the $b=b_+(\mathfrak{s},\alpha)$ case is analogous and will thus be omitted. We know from \Cref{prop lambda plus minus and ell}(b) that $\ell(\XX_-,\infty)=1+\lambda_-(\mathfrak{s},\alpha)\ge 2$; in particular, $\SF{\YY_D}$ has only one branch above $\infty\in \SF{\XX_D}$ and is consequently irreducible.  If $\mathfrak{s}\neq \varnothing$, then we have $N=2$ and thus the criterion stated in \Cref{thm part of rst separable} ensures that $\XX_D\le \Xrst$. If $\mathfrak{s}=\varnothing$, then we have $N=1$: all roots of $\Rinfty$ reduce to $\infty\in \SF{\XX_D}$. In this case, the abelian rank of $\SF{\YY_D}$, which is the genus of its normalization, is given by $-1+\ell(\XX_-,\infty)/2=(\lambda_-(\mathfrak{s},\alpha)-1)/2$ by \Cref{prop riemann hurwitz}. Hence, \Cref{thm part of rst separable} ensures that, for $\mathfrak{s}=\varnothing$, we have $\XX_D\le\Xrst$ precisely when $\lambda_-(\mathfrak{s},\alpha)>1$.
    \end{proof}
\end{thm}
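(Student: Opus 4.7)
The necessary condition -- that a valid disc $D=D_{\alpha,b}$ with $b\in I(\mathfrak{s},\alpha)$ must have $b$ an endpoint of $J(\mathfrak{s},\alpha)$ -- is already established in \Cref{lemma not a valid disc}. In particular, if $J(\mathfrak{s},\alpha)=\varnothing$ there are no valid discs to be found, and if $J(\mathfrak{s},\alpha)\neq\varnothing$ the only candidates are $b=b_\pm(\mathfrak{s},\alpha)$ (with the proviso that, when $\mathfrak{s}=\varnothing$, the right endpoint is $+\infty$ and does not give an actual disc). The entire task is therefore to check which of these candidates are genuinely valid. Since $\tbest{\RR}{D}=2v(2)$ at any such endpoint, the cover $\SF{\YY_D}\to\SF{\XX_D}$ is separable (\Cref{prop normalization model}), so we may apply the criterion of \Cref{thm part of rst separable}: $\XX_D\leq \Xrst$ precisely when (1) the branch locus reduces to $N\geq 3$ points of $\SF{\XX_D}$, or (2) $p=2$, $N=2$, and $\SF{\YY_D}$ is irreducible, or (3) $p=2$, $N=1$, and $\SF{\YY_D}$ is irreducible of positive abelian rank.

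The plan is to split on whether $b$ coincides with an endpoint of the larger interval $I(\mathfrak{s},\alpha)$ or lies in its interior. If $b\in\{d_-(\mathfrak{s},\alpha),d_+(\mathfrak{s},\alpha)\}$ (which forces $\mathfrak{s}\neq\varnothing$ by hypothesis), then \Cref{lemma discs linked to clusters} gives $N\geq 3$, and condition (1) above is immediate. This disposes of the ``easy'' part of (a), namely the case in which $J(\mathfrak{s})$ extends all the way to the boundary of $I(\mathfrak{s})$.

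For the remaining case, $b=b_\pm(\mathfrak{s},\alpha)$ is strictly interior to $I(\mathfrak{s},\alpha)$, which forces $p=2$ by \Cref{rmk structure of J}(a). The roots in $\mathfrak{s}$ then all reduce to $0\in\SF{\XX_D}$ and the roots in $\RR\setminus\mathfrak{s}$ together with $\infty$ all reduce to $\infty\in\SF{\XX_D}$, so $N\leq 2$ (with $N=2$ iff $\mathfrak{s}\neq\varnothing$). Considering for definiteness the case $b=b_-(\mathfrak{s},\alpha)$ (the $b_+$ case being symmetric), \Cref{prop lambda plus minus and ell}(c) yields $\ell(\XX_D,\infty)=1+\lambda_-(\mathfrak{s},\alpha)\geq 2$ since $\lambda_-(\mathfrak{s},\alpha)$ is a positive odd integer. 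This means $\SF{\YY_D}$ has a single branch above $\infty$, so by \Cref{rmk R_0 R_1 R_2}(d) it is irreducible.

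If $\mathfrak{s}\neq\varnothing$, we are in case (2) of the criterion and $D$ is valid, finishing part (a). If $\mathfrak{s}=\varnothing$, we are in case (3): irreducibility has been verified, and the abelian rank of $\SF{\YY_D}$ equals the genus of the normalization, which by the Riemann--Hurwitz formula of \Cref{prop riemann hurwitz} reduces to
\begin{equation*}
g\bigl(\NSF{\YY_D}\bigr)=-1+\tfrac{1}{2}\ell(\XX_D,\infty)=\tfrac{\lambda_-(\varnothing,\alpha)-1}{2}
\end{equation*}
because all other contributions to the sum in \Cref{prop riemann hurwitz} vanish (no branch points land anywhere except $\infty$). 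This quantity is positive iff $\lambda_-(\varnothing,\alpha)\geq 3$, giving part (b). I expect the only real subtlety to be the last step: one must verify that the Riemann--Hurwitz sum truly has no further contributions, which in turn hinges on the description (via \Cref{lemma ell and t function}) of where $\NSF{\YY_D}\to\SF{\XX_D}$ is ramified in terms of the piecewise-linear function $b'\mapsto\tbest{\RR}{D_{\alpha,b'}}$; once this local-to-global identification is in place, the rest is bookkeeping.
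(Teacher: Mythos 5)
Your proof is correct and follows essentially the same route as the paper's: reduce to the endpoints of $J(\mathfrak{s},\alpha)$ via \Cref{lemma not a valid disc}, split on whether $b$ is a boundary point of $I(\mathfrak{s},\alpha)$ (giving $N\ge 3$) or interior (forcing $p=2$ and $N\le 2$), then compute $\ell(\XX_D,\infty)$ through $\lambda_\pm$ and invoke \Cref{thm part of rst separable}. One small point in your favor: you cite \Cref{prop lambda plus minus and ell}(c) for $\ell(\XX_D,\infty)=1+\lambda_-(\mathfrak{s},\alpha)$, which is the correct part, whereas the paper's own proof has a typo and cites part (b).
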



\subsection{Separating the roots (for an even-cardinality cluster \texorpdfstring{$\mathfrak{s}$}{s})}
\label{sec depths separating roots}
Let us fix a center $\alpha\in \bar{K}$, and let $\mathfrak{s}\subseteq \RR$ be any even-cardinality subset.

\subsubsection{Factoring $f$}
\label{sec depths separating roots factorizing}
We write the polynomial $f(x)$ as a product $f(x)=c f^{\mathfrak{s}}(x) f^{\RR\setminus \mathfrak{s}}(x)$, where $c$ is the leading coefficient of $f$ and write 
\begin{equation} \label{eq factorization}
    f^{\mathfrak{s}}(x) = \prod_{a \in \mathfrak{s}} (x - a)
    \qquad \mathrm{and} \qquad
    f^{\RR\setminus\mathfrak{s}}(x) = \prod_{a \in \mathcal{R}\setminus\mathfrak{s}} (x- a).
\end{equation}

Now let us define $\mathfrak{t}_+^{\mathfrak{s},\alpha}$ and $\mathfrak{t}_-^{\mathfrak{s},\alpha}$ to be the functions on the domain $[0,+\infty)$ given by 
\begin{equation} \label{eq mathfrak t pm}
    \mathfrak{t}_+^{\mathfrak{s},\alpha}: b \mapsto \tbest{\mathfrak{s}}{D_{\alpha,d_+(\mathfrak{s},\alpha)-b}}\qquad \text{and}\qquad
    \mathfrak{t}_-^{\mathfrak{s},\alpha}: b \mapsto \tbest{\RR\setminus\mathfrak{s}}{D_{\alpha,b+d_-(\mathfrak{s},\alpha)}}.
\end{equation}

Essentially, the function $\mathfrak{t}_+^{\mathfrak{s},\alpha}$ is defined by evaluating $\tbestsimple{\mathfrak{s}}$ on discs that are enlargements of $D_{\alpha,d_+(\mathfrak{s},\alpha)}$; all such discs contain $\mathfrak{s}$, and  $D_{\alpha,d_+(\mathfrak{s})}$ is the minimal disc centered at $\alpha$ with this property.  Symmetrically, the function $\mathfrak{t}_-^{\mathfrak{s},\alpha}$ is defined by evaluating $\tbestsimple{\RR\setminus\mathfrak{s}}$ at contractions of $D_{\alpha,d_-(\mathfrak{s},\alpha)}$ around the center $\alpha$: all such discs are disjoint from $\RR\setminus\mathfrak{s}$, except the largest one (i.e.\ $D_{\alpha,d_-(\mathfrak{s},\alpha)})$, which is the minimal disc centered at $\alpha$ that intersects $\RR\setminus\mathfrak{s}$.

\begin{prop}
    \label{prop properties t plus minus}
    Both functions $\mathfrak{t}^{\mathfrak{s},\alpha}_{\pm}$ are strictly increasing on their domains until they reach $2v(2)$ and become constant.  Over the part of the domain where $\mathfrak{t}^{\mathfrak{s},\alpha}_+$ (resp.\ $\mathfrak{t}^{\mathfrak{s},\alpha}_-$) is not constant, its slopes are decreasing odd integers between 1 and $|\mathfrak{s}|-1$ (resp.\ between 1 and $2g+1-|\mathfrak{s}|$).
    \begin{proof}
        We will only prove the result for $\mathfrak{t}^{\mathfrak{s},\alpha}_{+}$, as the proof for $\mathfrak{t}^{\mathfrak{s},\alpha}_{-}$ is analogous. Choose a part-square decomposition for $f^{\mathfrak{s}}=(q^{\mathfrak{s}})^2+\rho^{\mathfrak{s}}$ that is totally odd with respect to the center $\alpha$, so that  $\mathfrak{t}^{\mathfrak{s},\alpha}_{+}(b)=\truncate{\tfun_{q^\mathfrak{s},\rho^\mathfrak{s}}(D_{\alpha,b_+(\mathfrak{s},\alpha)-b})}$ for all $b\in [0,+\infty)$. Since $\mathfrak{s}\subset D_{\alpha,b_+(\mathfrak{s},\alpha)-b}$, we deduce from \Cref{lemma mathfrakh} that $[0, +\infty) \ni b \mapsto \vfun_{f^\mathfrak{s}}(D_{\alpha,b_+(\mathfrak{s},\alpha)-b})$ has slope 0; on the other hand, the function $[0, +\infty) \ni b \mapsto \vfun_{\rho^\mathfrak{s}}(D_{\alpha,b_+(\mathfrak{s},\alpha)-b})$ has odd integer slopes between 1 and $|\mathfrak{s}|-1$. From this, recalling that $\tfun_{q^\mathfrak{s}, \rho^\mathfrak{s}}=\vfun_{\rho^\mathfrak{s}}-\vfun_{f^\mathfrak{s}}$ by definition, the proposition follows.
    \end{proof}
\end{prop}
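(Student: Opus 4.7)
The plan is to use totally odd part-square decompositions to reduce the claim to an analysis of the piecewise-linear functions $c \mapsto \vfun_h(D_{\alpha,c})$ governed by \Cref{lemma mathfrakh}. By symmetry, I will present the argument for $\mathfrak{t}_+^{\mathfrak{s},\alpha}$; the argument for $\mathfrak{t}_-^{\mathfrak{s},\alpha}$ will be handled by a parallel computation at the end. First, by \Cref{prop totally odd existence} (possibly after passing to a finite extension of $K$), I fix a part-square decomposition $f^{\mathfrak{s}} = (q^{\mathfrak{s}})^2 + \rho^{\mathfrak{s}}$ that is totally odd with respect to $\alpha$. By \Cref{rmk totally odd good} it is good at every disc centered at $\alpha$, so
$$\mathfrak{t}_+^{\mathfrak{s},\alpha}(b) = \min\!\bigl\{\vfun_{\rho^{\mathfrak{s}}}(D_{\alpha,d_+(\mathfrak{s},\alpha)-b}) - \vfun_{f^{\mathfrak{s}}}(D_{\alpha,d_+(\mathfrak{s},\alpha)-b}),\ 2v(2)\bigr\},$$
and the task reduces to computing the slopes of each summand.

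For the $f^{\mathfrak{s}}$ summand, every root of $f^{\mathfrak{s}}$ has valuation at least $d_+(\mathfrak{s},\alpha)$, so \Cref{lemma mathfrakh}(b) gives that on $(-\infty, d_+(\mathfrak{s},\alpha)]$ the function $c \mapsto \vfun_{f^{\mathfrak{s}}}(D_{\alpha,c})$ is linear with slope $|\mathfrak{s}|$; substituting $c = d_+(\mathfrak{s},\alpha) - b$ then produces a linear function on $[0,+\infty)$ with slope $-|\mathfrak{s}|$ in $b$. For the $\rho^{\mathfrak{s}}$ summand, the crucial observation is that being ``totally odd with respect to $\alpha$'' is preserved by scaling of the variable: since $\rho^{\mathfrak{s}}_{\alpha,\beta}(z) = \rho^{\mathfrak{s}}_{\alpha,1}(\beta z)$, the polynomial $\rho^{\mathfrak{s}}_{\alpha,\beta}$ only involves odd-degree monomials in $z$, and the same holds for any normalized reduction. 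By \Cref{lemma mathfrakh}(b), the slopes of $c \mapsto \vfun_{\rho^{\mathfrak{s}}}(D_{\alpha,c})$ are therefore positive odd integers, and because $|\mathfrak{s}|$ is even, \Cref{rmk degree of rho} forces $\deg \rho^{\mathfrak{s}} \leq |\mathfrak{s}|-1$, so these slopes lie in $\{1,3,\ldots,|\mathfrak{s}|-1\}$.

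Assembling the two computations, on any sub-interval where $\mathfrak{t}_+^{\mathfrak{s},\alpha}(b) < 2v(2)$ the slope of $\mathfrak{t}_+^{\mathfrak{s},\alpha}$ in $b$ equals $|\mathfrak{s}|-s$ for some local slope $s \in \{1,3,\ldots,|\mathfrak{s}|-1\}$ of $\vfun_{\rho^{\mathfrak{s}}}$; since $|\mathfrak{s}|$ is even and $s$ is odd, this difference is a positive odd integer in $\{1,3,\ldots,|\mathfrak{s}|-1\}$. Positivity gives strict monotonicity, and the monotone decrease of the slopes of $\vfun_{\rho^{\mathfrak{s}}}$ in $c$ from \Cref{lemma mathfrakh}(a), transported through the reflection $c = d_+(\mathfrak{s},\alpha) - b$, makes the slopes of $\mathfrak{t}_+^{\mathfrak{s},\alpha}$ decrease as $b$ increases. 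Once the value $2v(2)$ is reached, the truncation built into $\tbestsimple{\mathfrak{s}}$ pins the function at $2v(2)$ thereafter.

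For $\mathfrak{t}_-^{\mathfrak{s},\alpha}$, I will run the analogous computation using a totally odd decomposition of $f^{\RR\setminus\mathfrak{s}}$. The only substantive change is that for $c \geq d_-(\mathfrak{s},\alpha)$ every root of $f^{\RR\setminus\mathfrak{s}}$ has valuation at most $d_-(\mathfrak{s},\alpha)$, so \Cref{lemma mathfrakh}(b) makes $\vfun_{f^{\RR\setminus\mathfrak{s}}}(D_{\alpha,c})$ locally constant in $c$; the entire slope of $\mathfrak{t}_-^{\mathfrak{s},\alpha}$ is then that of $\vfun_{\rho^{\RR\setminus\mathfrak{s}}}$, and the parity argument, combined with $\deg \rho^{\RR\setminus\mathfrak{s}} \leq 2g+1-|\mathfrak{s}|$, lands these slopes in $\{1,3,\ldots,2g+1-|\mathfrak{s}|\}$. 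The main delicacy throughout is the parity bookkeeping: the hypothesis that $|\mathfrak{s}|$ is even must be combined carefully with the totally odd form of $\rho^{\mathfrak{s}}$ and $\rho^{\RR\setminus\mathfrak{s}}$ to conclude that the slopes are odd, while the degree bounds from \Cref{rmk degree of rho} are what pin down the exact slope intervals claimed.
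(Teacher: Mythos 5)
Your proof is correct and follows exactly the paper's strategy: fix a totally odd decomposition of $f^{\mathfrak{s}}$ (resp.\ $f^{\RR\setminus\mathfrak{s}}$) relative to $\alpha$, then read off the slopes of $\underline{v}_{f^{\mathfrak{s}}}$ and $\underline{v}_{\rho^{\mathfrak{s}}}$ from \Cref{lemma mathfrakh} and subtract. You are in fact more careful than the paper about the sign bookkeeping under the reflection $c = d_+(\mathfrak{s},\alpha) - b$: the paper's proof loosely asserts that $b \mapsto \underline{v}_{f^{\mathfrak{s}}}(D_{\alpha,d_+(\mathfrak{s},\alpha)-b})$ has ``slope $0$'' and that the $\underline{v}_{\rho^{\mathfrak{s}}}$ slopes in $b$ lie in $\{1,\ldots,|\mathfrak{s}|-1\}$, whereas (as you correctly compute) the first is $-|\mathfrak{s}|$ and the second are $-s$ for $s\in\{1,3,\ldots,|\mathfrak{s}|-1\}$, the two slips cancelling in the difference; your version, along with spelling out the $\mathfrak{t}_-^{\mathfrak{s},\alpha}$ computation which the paper leaves to the reader, is the cleaner account.
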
  

\begin{rmk}
    \label{rmk tpm indepdendent of the center}
    We remark that, when $\mathfrak{s}\neq\varnothing$ and $\alpha\in D_{\mathfrak{s},d_+(\mathfrak{s})}$, the function $\mathfrak{t}_+^{\mathfrak{s},\alpha}$ does not depend on the particular choice of $\alpha\in D_{\mathfrak{s},d_+(\mathfrak{s})}$; we will therefore use the notation $\mathfrak{t}_+^{\mathfrak{s}}$ to mean $\mathfrak{t}_+^{\mathfrak{s},\alpha}$ where $\alpha$ is some (any) point of $D_{\mathfrak{s},d_+(\mathfrak{s})}$. On the other hand, the function  $\mathfrak{t}_-^{\mathfrak{s},\alpha}(b)$ is the same for all $\alpha\in D_{\mathfrak{s},d_+(\mathfrak{s})}$ only when $b\in [0,\delta(\mathfrak{s})]\subseteq [0,+\infty)$: when evaluating at such inputs, we may safely drop the superscript $\alpha$ and simply write $\mathfrak{t}_-^{\mathfrak{s}}$ to mean $\mathfrak{t}_-^{\mathfrak{s},\alpha}$ for any $\alpha\in D_{\mathfrak{s},d_+(\mathfrak{s})}$.
\end{rmk}

We now remark that the function $I(\mathfrak{s},\alpha) \ni b \mapsto \tbest{\RR}{D_{\alpha,b}} \in \zerotwo$ we have studied in the previous subsection can be completely recovered from $\mathfrak{t}_\pm^{\mathfrak{s},\alpha}$. In fact, we have the following.
\begin{prop} \label{prop t^R is min of t^s and t^(R-s)}
    Assume that $I(\mathfrak{s},\alpha)$ has positive length (which is always true, for example, when $\alpha$ and $\mathfrak{s}$ are as in the statement of \Cref{thm summary depths valid discs}). Then, we have
    \begin{equation*}
        \tbest{\RR}{D_{\alpha,b}}=\min\{\tbest{\mathfrak{s}}{D_{\alpha,b}},\tbest{\RR\setminus\mathfrak{s}}{D_{\alpha,b}}\}=\min\{\mathfrak{t}_+^{\mathfrak{s},\alpha}(d_+(\mathfrak{s},\alpha)-b),\mathfrak{t}_-^{\mathfrak{s},\alpha}(b-d_-(\mathfrak{s},\alpha))\}\  \text{ for all }b\in I(\mathfrak{s},\alpha).
    \end{equation*}
    \begin{proof}
        It is clearly enough to prove the result for $b$ an interior point of $I(\mathfrak{s},\alpha)$, which will extend by continuity to the endpoints of $I(\mathfrak{s},\alpha)$. For such an input $b$, we note that the roots $s\in \mathfrak{s}$ satisfy $v(s-\alpha)<b$, while the roots $s\in \RR\setminus \mathfrak{s}$ satisfy $v(s-\alpha)>b$. As a consequence, point (ii) of \Cref{prop mathfrak t minium}(b) applies.
    \end{proof}
\end{prop}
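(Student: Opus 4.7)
My plan is to reduce both equalities to previously established results; neither requires significant new work. The second equality is a bookkeeping identity: unwinding the definitions in (\ref{eq mathfrak t pm}), one immediately computes $\mathfrak{t}_+^{\mathfrak{s},\alpha}(d_+(\mathfrak{s},\alpha)-b) = \tbest{\mathfrak{s}}{D_{\alpha,b}}$ and $\mathfrak{t}_-^{\mathfrak{s},\alpha}(b-d_-(\mathfrak{s},\alpha)) = \tbest{\RR\setminus\mathfrak{s}}{D_{\alpha,b}}$. The real content is the first equality, which I would obtain by a direct application of \Cref{prop mathfrak t minium}(b)(ii) to the partition $\RR = (\RR \setminus \mathfrak{s}) \sqcup \mathfrak{s}$.

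I would first restrict to an interior point $b \in (d_-(\mathfrak{s},\alpha), d_+(\mathfrak{s},\alpha))$ and check the hypotheses of that result. From \Cref{dfn interval I alpha}, every $a \in \mathfrak{s}$ satisfies $v(a - \alpha) \geq d_+(\mathfrak{s},\alpha) > b$, and every $a \in \RR \setminus \mathfrak{s}$ satisfies $v(a - \alpha) \leq d_-(\mathfrak{s},\alpha) < b$. Writing $D := D_{\alpha,b}$, this amounts to $D \cap (\RR \setminus \mathfrak{s}) = \varnothing$ and $\mathfrak{s} \subseteq D_{\alpha, d_+(\mathfrak{s},\alpha)}$, with $D_{\alpha, d_+(\mathfrak{s},\alpha)} \subsetneq D$ because $d_+(\mathfrak{s},\alpha) > b$. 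Setting $\mathfrak{s}_1 := \RR \setminus \mathfrak{s}$ and $\mathfrak{s}_2 := \mathfrak{s}$, the hypotheses of \Cref{prop mathfrak t minium}(b)(ii) are satisfied verbatim, and the equality $\tbest{\RR}{D_{\alpha,b}} = \min\{\tbest{\mathfrak{s}}{D_{\alpha,b}}, \tbest{\RR\setminus\mathfrak{s}}{D_{\alpha,b}}\}$ follows.

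To extend this from the open interior to the whole of $I(\mathfrak{s},\alpha)$, I would pass to the closure via continuity: the left-hand side is continuous by \Cref{lemma slopes of t}, and the two terms on the right are continuous by entirely analogous arguments applied to the factors $f^{\mathfrak{s}}$ and $f^{\RR\setminus\mathfrak{s}}$ in place of $f$ (essentially the same computation that underlies \Cref{prop properties t plus minus}); hence the minimum on the right is continuous as well, and the identity propagates from the dense interior to the endpoints. There is no real obstacle to anticipate: the only subtle point is the strict containment $D_{\alpha, d_+(\mathfrak{s},\alpha)} \subsetneq D_{\alpha,b}$ needed to invoke \Cref{prop mathfrak t minium}(b)(ii), and this is exactly what breaks down at the endpoints $b = d_\pm(\mathfrak{s},\alpha)$ -- which is precisely why those cases must be handled by a separate continuity argument rather than by a direct application.
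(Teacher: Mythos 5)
Your proof is correct and follows essentially the same route as the paper: restrict to interior points of $I(\mathfrak{s},\alpha)$, apply \Cref{prop mathfrak t minium}(b)(ii) to the partition $\RR = \mathfrak{s} \sqcup (\RR\setminus\mathfrak{s})$, and extend to the endpoints by continuity. In fact your stated inequalities (namely $v(a-\alpha)>b$ for $a\in\mathfrak{s}$ and $v(a-\alpha)<b$ for $a\in\RR\setminus\mathfrak{s}$) are the correct ones given \Cref{dfn interval I alpha}, whereas the paper's proof has them written in the reversed order, an apparent typo that you have silently corrected.
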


\subsubsection{A standard form for the two factors}
\label{sec depths separating roots std form}
Let us introduce the polynomials
\begin{equation} \label{eq standard form}
    f_+^{\mathfrak{s},\alpha}(z):=\prod_{a \in \mathfrak{s}} (1 - \beta_{d_+}^{-1}(a-\alpha) z)
    \qquad \mathrm{and} \qquad
    f_-^{\mathfrak{s},\alpha}(z):=\prod_{a \in \mathcal{R}\setminus\mathfrak{s}} (1 -  \beta_{d_-}(a-\alpha)^{-1}z),
\end{equation}
where the scalars $\beta_{d_\pm} \in \bar{K}^{\times}$ are chosen to satisfy $v(\beta_{d_\pm}) = d_\pm(\mathfrak{s}, \alpha)$.  These are just transformed versions of $f^{\mathfrak{s}}$ and $f^{\RR\setminus\mathfrak{s}}$, normalized so that their constant terms are $1$ and all coefficients are integral.  More precisely, we have the conversion formulas $f^{\mathfrak{s}} = \beta_{d_+}^{|\mathfrak{s}|}(f^{\mathfrak{s},\alpha}_+)^\vee(\beta_{d_+}^{-1} (z-\alpha))$ and $f^{\mathcal{R} \smallsetminus \mathfrak{s}} = \big(\prod_{a \in \RR \smallsetminus \mathfrak{s}} (\alpha - a)\big) f^{\mathfrak{s},\alpha}_-(\beta_{d_-}^{-1} (z-\alpha))$, where $(f^{\mathfrak{s},\alpha}_+)^\vee(z) = z^{|\mathfrak{s}|}f^{\mathfrak{s},\alpha}_+(1/z)$. Given a part-square decomposition for $f^{\mathfrak{s},\alpha}_+$ and for $f^{\mathfrak{s},\alpha}_-$, there is an obvious way of producing one for $f^{\mathfrak{s}}$ and $f^{\mathcal{R} \smallsetminus \mathfrak{s}}$, which in turn induces one for $f = c f^{\mathfrak{s}} f^{\RR \smallsetminus \mathfrak{s}}$.  More precisely, given two part-square decompositions 
\begin{equation*}
    f^{\mathfrak{s},\alpha}_+ = q_+^2 + \rho_+, \qquad
    f^{\mathfrak{s},\alpha}_- = q_-^2 + \rho_-,
\end{equation*} 
one obtains the decompositions
\begin{equation*}
    f^\mathfrak{s} = [q^{\mathfrak{s}}]^2 + \rho^{\mathfrak{s}}, \qquad
    f^{\RR \smallsetminus \mathfrak{s}} = [q^{\RR \smallsetminus \mathfrak{s}}]^2 + \rho^{\RR \smallsetminus \mathfrak{s}}, \qquad
    f = q^2 + \rho
\end{equation*}
by setting $q^{\mathfrak{s}} = \beta_{d_+}^{|\mathfrak{s}|/2} q_+^\vee(\beta_{d_+}^{-1}(z-\alpha))$, $q^{\RR \smallsetminus \mathfrak{s}} = \sqrt{\prod_{a \in \RR \smallsetminus \mathfrak{s}} (\alpha - a)}q_-(\beta_{d_-}^{-1}(z-\alpha))$, and $q = \sqrt{c} q^{\mathfrak{s}} q^{\RR \smallsetminus \mathfrak{s}}$ (after making appropriate choices of square roots), where $q_+^\vee(z) = z^{|\mathfrak{s}|/2} q_+(1/z)$.

\begin{rmk} \label{rmk tpm and fpm}
    We have the following.
    \begin{enumerate}[(a)]
        \item By construction, we have $\tfun_{q^{\mathfrak{s}},\rho^{\mathfrak{s}}}(D_{\alpha,b}) = \tfun_{q_+,\rho_+}(d_+(\mathfrak{s},\alpha)-b)$ and $\tfun_{q^{\RR \smallsetminus \mathfrak{s}},\rho^{\RR \smallsetminus \mathfrak{s}}}(D_{\alpha,b})=\tfun_{q_-,\rho_-}(b-d_-(\mathfrak{s},\alpha))$ for all $b \in \mathbb{Q}$; moreover, the above decomposition of $f^{\mathfrak{s}}$ (resp.\ $f^{\RR \smallsetminus \mathfrak{s}}$) is good at $D_{\alpha,b}$ if and only if the above decomposition of $f^{\mathfrak{s}}_+$ (resp.\ $f^{\mathfrak{s}}_-$) is good at $d_+(\mathfrak{s},\alpha)-b$ (resp.\ $b-d_-(\mathfrak{s},\alpha)$).
        \item It follows from part (a) above that the introduction of $f^{\mathfrak{s}}_\pm$ allows us to reinterpret the function $\mathfrak{t}^{\mathfrak{s},\alpha}_{\pm}$ as $[0,+\infty) \ni b \mapsto \tbest{Z_\pm}{D_{0,b}}$, where $Z_\pm$ denotes the set of roots of $f^{\mathfrak{s},\alpha}_\pm$. We remark that the translation and homotheties that define $f^{\mathfrak{s},\alpha}_\pm$ are chosen so that all elements of $Z_\pm$ have valuation $\le 0$ and some element in each of $Z_+$ and $Z_-$ has valuation $0$.
        \item Part (b) above implies that the knowledge of a totally odd part-square decomposition for $f_\pm^{\mathfrak{s},\alpha}$ allows us to compute $\mathfrak{t}_\pm^{\mathfrak{s},\alpha}$: this is just \Cref{rmk t fun computing}. More precisely, if $f_\pm^{\mathfrak{s},\alpha}=q_\pm^2+\rho_\pm$ is a totally odd part-square decomposition, we have $\mathfrak{t}_\pm^{\mathfrak{s},\alpha}(b)=\truncate{\tfun_{q_\pm,\rho_\pm}(D_{0,b})}$ for all $b\in [0,+\infty)$.
    \end{enumerate}
\end{rmk}

The following proposition will be useful in that it allows one to study the valid discs containing an even-cardinality subset $\mathfrak{s}$ by considering the image of $(\Rinfty) \smallsetminus \mathfrak{s}$ under the reciprocal map (see \Cref{rmk reciprocal} above).

\begin{prop} \label{prop reciprocal}
    Assume the notation of \Cref{rmk reciprocal}; let $\mathfrak{s}\subseteq \RR$ a subset of even cardinality; and assume that $\alpha \in \mathfrak{s}$.  Then we have $f^{\mathfrak{s}^{\vee,\alpha},0}_\pm = f^{\mathfrak{s},\alpha}_\mp$.  It follows that, given part-square decompositions $f^{\mathfrak{s},\alpha}_\pm = q_\pm^2 + \rho_\pm$, we have part-square decompositions $f^{\mathfrak{s}^{\vee,\alpha},0}_\pm = q_\mp^2 + \rho_\mp$, and in fact, we have the equality of functions $\mathfrak{t}^{\mathfrak{s}^{\vee,\alpha},0}_\pm = \mathfrak{t}^{\mathfrak{s},\alpha}_\mp$.
\end{prop}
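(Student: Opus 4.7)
The plan is to reduce the proposition to a direct algebraic identity between the polynomials $f^{\mathfrak{s},\alpha}_\pm$ and $f^{\mathfrak{s}^{\vee,\alpha},0}_\mp$, after which the statements about part-square decompositions and about the functions $\mathfrak{t}^{\bigcdot,\bigcdot}_\pm$ follow essentially for free.

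First, I would compute the basic invariants of $(\mathfrak{s}^{\vee,\alpha},0)$. Since $\alpha\in \mathfrak{s}$, we have $\mathfrak{s}^{\vee,\alpha} = i_\alpha(\RR\setminus\mathfrak{s}) \cup \{0\}$ and $\RR^{\vee,\alpha}\setminus\mathfrak{s}^{\vee,\alpha} = i_\alpha(\mathfrak{s}\setminus\{\alpha\})$, where $i_\alpha(a)=(a-\alpha)^{-1}$. A direct computation using Definition \ref{dfn interval I alpha} then gives $d_+(\mathfrak{s}^{\vee,\alpha},0) = \min_{a\in\RR\setminus\mathfrak{s}}v((a-\alpha)^{-1}) = -d_-(\mathfrak{s},\alpha)$ (the element $0\in\mathfrak{s}^{\vee,\alpha}$ contributes $v(0)=+\infty$), and symmetrically $d_-(\mathfrak{s}^{\vee,\alpha},0) = -d_+(\mathfrak{s},\alpha)$. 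In particular, the valuations of the scaling elements $\beta_{d_\pm}$ used in the definition of $f^{\mathfrak{s},\alpha}_\pm$ are swapped and negated when passing to $(\mathfrak{s}^{\vee,\alpha},0)$, so I would choose $\beta'_{d_+} := \beta_{d_-}^{-1}$ and $\beta'_{d_-} := \beta_{d_+}^{-1}$ as scaling elements for $f^{\mathfrak{s}^{\vee,\alpha},0}_\pm$.

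Next, plugging these choices into equation (\ref{eq standard form}) for the pair $(\mathfrak{s}^{\vee,\alpha},0)$: the factor indexed by $0\in \mathfrak{s}^{\vee,\alpha}$ in the product defining $f^{\mathfrak{s}^{\vee,\alpha},0}_+$ is $(1-\beta_{d_-}\cdot 0\cdot z)=1$, while the remaining factors, indexed by $b=(a-\alpha)^{-1}$ for $a\in\RR\setminus\mathfrak{s}$, equal $1-\beta_{d_-}(a-\alpha)^{-1}z$; taking the product recovers exactly $f^{\mathfrak{s},\alpha}_-$. Dually, the factors defining $f^{\mathfrak{s}^{\vee,\alpha},0}_-$ are indexed by $b=(a-\alpha)^{-1}$ for $a\in\mathfrak{s}\setminus\{\alpha\}$ and equal $1-\beta_{d_+}^{-1}(a-\alpha)z$; since the factor $a=\alpha\in\mathfrak{s}$ in $f^{\mathfrak{s},\alpha}_+$ is trivially $1$, this product equals $f^{\mathfrak{s},\alpha}_+$. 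This establishes the core identity $f^{\mathfrak{s}^{\vee,\alpha},0}_\pm = f^{\mathfrak{s},\alpha}_\mp$.

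The statement about transferring part-square decompositions is then immediate from the identity, modulo checking that the degree constraint $\deg(q)\le \lceil\deg(h)/2\rceil$ of Definition \ref{dfn qrho} is preserved. Using $|\mathfrak{s}|$ even and $\alpha\in\mathfrak{s}$, $0\in\mathfrak{s}^{\vee,\alpha}$, the relevant degrees are $\deg(f^{\mathfrak{s},\alpha}_+)=\deg(f^{\mathfrak{s}^{\vee,\alpha},0}_-)=|\mathfrak{s}|-1$ and $\deg(f^{\mathfrak{s},\alpha}_-)=\deg(f^{\mathfrak{s}^{\vee,\alpha},0}_+)=2g+1-|\mathfrak{s}|$, so the ceilings in the degree bounds on the $q$'s coincide on the two sides of the swap. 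Finally, for the equality $\mathfrak{t}^{\mathfrak{s}^{\vee,\alpha},0}_\pm = \mathfrak{t}^{\mathfrak{s},\alpha}_\mp$, I would invoke Remark \ref{rmk tpm and fpm}(b), which re-expresses each function $\mathfrak{t}^{\sigma,\gamma}_\pm$ purely in terms of the set of roots of $f^{\sigma,\gamma}_\pm$ as $b\mapsto \tbest{Z_\pm}{D_{0,b}}$; once the two polynomials are identified, the two functions must agree.

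The only real obstacle is bookkeeping --- keeping straight which of $\mathfrak{s}$, $\RR\setminus\mathfrak{s}$, $\mathfrak{s}^{\vee,\alpha}$, $\RR^{\vee,\alpha}\setminus\mathfrak{s}^{\vee,\alpha}$ pairs with which, and how the trivially-contributing elements $\alpha\in\mathfrak{s}$ and $0\in\mathfrak{s}^{\vee,\alpha}$ account for the swap $|\mathfrak{s}|\leftrightarrow 2g+2-|\mathfrak{s}|$. All of the analysis is otherwise routine algebraic manipulation.
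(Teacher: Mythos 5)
Your proof is correct and follows essentially the same approach as the paper's: establish the polynomial identity $f^{\mathfrak{s}^{\vee,\alpha},0}_\pm = f^{\mathfrak{s},\alpha}_\mp$ by unwinding the formulas from Remark \ref{rmk reciprocal} and equation (\ref{eq standard form}), then conclude the function equality from the fact that $\mathfrak{t}^{\sigma,\gamma}_\pm$ is computable from $f^{\sigma,\gamma}_\pm$. The only cosmetic difference is that the paper's terse proof invokes Remark \ref{rmk t fun computing} via totally odd decompositions, whereas you cite Remark \ref{rmk tpm and fpm}(b); these are the same mechanism phrased two ways, and your more explicit bookkeeping of $d_\pm$ and the scaling elements $\beta'_{d_\pm}=\beta_{d_\mp}^{-1}$ is a helpful amplification of what the paper leaves implicit.
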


\begin{proof}
    The first claims can be straightforwardly checked directly from the observations in \Cref{rmk reciprocal} and the defining formulas for the terms.  The final claim follows from choosing the decompositions $f^{\mathfrak{s},\alpha}_\pm = q_\pm^2 + \rho_\pm$ to be totally odd and using \Cref{rmk t fun computing}.
\end{proof}

\subsubsection{Reconstructing the invariants} 
\label{sec depths separating roots reconstructing invariants}
Let $b_0(\mathfrak{t}^{\mathfrak{s},\alpha}_\pm)$ be the least value of $b \in [0,+\infty)$ at which  $\mathfrak{t}_\pm^{\mathfrak{s},\alpha}: [0,+\infty)\to \zerotwo$ attains $2v(2)$, and let $\lambda(\mathfrak{t}^{\mathfrak{s},\alpha}_\pm)$ denote the left derivative of $\mathfrak{t}_\pm^{\mathfrak{s},\alpha}$ at $b_0(\mathfrak{t}^{\mathfrak{s},\alpha}_\pm)$, which is clearly only defined when $b_0(\mathfrak{t}^{\mathfrak{s},\alpha}_\pm)>0$. These invariants are closely related to those introduced in the previous subsection.
\begin{prop} \label{prop formulas for b_pm}
     Suppose that $\mathfrak{s}$ has even cardinality, that $I(\mathfrak{s},\alpha)$ has positive length (which always occurs, for example, if $\mathfrak{s}$ and $\alpha$ as as in \Cref{thm summary depths valid discs}), and that $J(\mathfrak{s},\alpha)\neq \varnothing$. Then, we have 
\begin{equation}
    \label{eq bpm}
    b_\pm(\mathfrak{s},\alpha)=d_\pm \mp b_0(\mathfrak{t}^{\mathfrak{s},\alpha}_\pm)\qquad \text{and}\qquad \lambda_\pm(\mathfrak{s},\alpha) = \lambda(\mathfrak{t}^{\mathfrak{s},\alpha}_\pm).
\end{equation}
\end{prop}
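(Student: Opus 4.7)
The plan is to derive both formulas as essentially immediate consequences of \Cref{prop t^R is min of t^s and t^(R-s)} combined with the monotonicity properties of the functions $\mathfrak{t}_\pm^{\mathfrak{s},\alpha}$ established in \Cref{prop properties t plus minus}.

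First, I would combine the two cited results to write, for all $b \in I(\mathfrak{s},\alpha)$,
\begin{equation*}
    \tbest{\RR}{D_{\alpha,b}} = \min\bigl\{\mathfrak{t}_+^{\mathfrak{s},\alpha}(d_+-b),\ \mathfrak{t}_-^{\mathfrak{s},\alpha}(b-d_-)\bigr\},
\end{equation*}
and recall that each of the two functions on the right-hand side is strictly increasing in its argument (i.e.\ as $b$ decreases for the first, as $b$ increases for the second) until it reaches $2v(2)$, and then stays constant at $2v(2)$. By definition, the threshold at which the $\pm$ summand attains $2v(2)$ is $b_0(\mathfrak{t}_\pm^{\mathfrak{s},\alpha})$.

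Second, I would identify the endpoints of $J(\mathfrak{s},\alpha)$. By definition, $b \in J(\mathfrak{s},\alpha)$ if and only if the minimum above equals $2v(2)$, equivalently both summands equal $2v(2)$. By the monotonicity above, this happens precisely when $d_+ - b \ge b_0(\mathfrak{t}_+^{\mathfrak{s},\alpha})$ and $b - d_- \ge b_0(\mathfrak{t}_-^{\mathfrak{s},\alpha})$, i.e.\ when
\begin{equation*}
    d_- + b_0(\mathfrak{t}_-^{\mathfrak{s},\alpha}) \le b \le d_+ - b_0(\mathfrak{t}_+^{\mathfrak{s},\alpha}).
\end{equation*}
Comparing with $J(\mathfrak{s},\alpha) = [b_-(\mathfrak{s},\alpha), b_+(\mathfrak{s},\alpha)]$ yields $b_\pm(\mathfrak{s},\alpha) = d_\pm \mp b_0(\mathfrak{t}_\pm^{\mathfrak{s},\alpha})$, as claimed.

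Third, I would compute the slopes by identifying which summand of the min realizes $\tbestsimple{\RR}$ on the relevant side of the endpoint. Assume that $b_-(\mathfrak{s},\alpha) > d_-(\mathfrak{s},\alpha)$ so that $\lambda_-(\mathfrak{s},\alpha)$ is defined; for $b$ slightly less than $b_-$, the second summand drops strictly below $2v(2)$ while the first remains at $2v(2)$ (since $d_+ - b > d_+ - b_- \ge d_+ - b_+ = b_0(\mathfrak{t}_+^{\mathfrak{s},\alpha})$, using $b_- \le b_+$). Hence locally to the left of $b_-$ the function $b \mapsto \tbestsimple{\RR}(D_{\alpha,b})$ coincides with $b \mapsto \mathfrak{t}_-^{\mathfrak{s},\alpha}(b - d_-)$, and by the chain rule its left derivative at $b_-$ is the left derivative of $\mathfrak{t}_-^{\mathfrak{s},\alpha}$ at $b_- - d_- = b_0(\mathfrak{t}_-^{\mathfrak{s},\alpha})$, which is $\lambda(\mathfrak{t}_-^{\mathfrak{s},\alpha})$ by definition. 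The argument for $\lambda_+(\mathfrak{s},\alpha)$ is entirely symmetric, with an extra sign flip coming from the fact that $\lambda_+(\mathfrak{s},\alpha) = -\partial^+ \tbestsimple{\RR}(D_{\alpha,b_+})$ and from the change of variable $b \mapsto d_+ - b$. There is no real obstacle here; the only care required is to check the case $b_-(\mathfrak{s},\alpha) = b_+(\mathfrak{s},\alpha)$, in which the argument still goes through because the inequality $d_+ - b > b_0(\mathfrak{t}_+^{\mathfrak{s},\alpha})$ remains valid (and symmetrically for the right derivative at $b_+$), and the hypothesis that each $\lambda_\pm$ is defined precisely excludes the degenerate situations where the relevant endpoint coincides with that of $I(\mathfrak{s},\alpha)$.
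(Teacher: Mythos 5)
Your proof is correct and follows the same route as the paper: both invoke \Cref{prop t^R is min of t^s and t^(R-s)} to express $\tbestsimple{\RR}$ as the minimum of the two translated functions and then use the monotonicity from \Cref{prop properties t plus minus}. The paper's proof ends with ``The proposition now follows immediately'' where you spell out the endpoint identification and the slope computation (including the sign flip for $\lambda_+$ and the boundary case $b_-=b_+$), so yours is simply a more detailed rendering of the same argument.
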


\begin{proof}
    We have $\tbest{\RR}{D_{\alpha,b}}=\min\{\mathfrak{t}_+^{\mathfrak{s},\alpha}(b_+-b),\mathfrak{t}_-^{\mathfrak{s},\alpha}(b-b_-)\}$ by \Cref{prop t^R is min of t^s and t^(R-s)} and that each of the functions $\mathfrak{t}_{\pm}^{\mathfrak{s},\alpha}$ is strictly increasing until it reaches $2v(2)$ by \Cref{prop properties t plus minus}.  The proposition now follows immediately.
\end{proof}

\begin{rmk}
    \label{rmk extending def bpm}
    In the context of \Cref{prop formulas for b_pm}, when $J(\mathfrak{s},\alpha)=\varnothing$ the formulas in (\ref{eq bpm}) can be taken as the \emph{definitions} of the rational numbers $b_\pm(\mathfrak{s},\alpha)$ and $\lambda_\pm(\mathfrak{s},\alpha)$, and from $\tbest{\RR}{D_{\alpha,b}}=\min\{\mathfrak{t}_+^{\mathfrak{s},\alpha}(b_+-b),\mathfrak{t}_-^{\mathfrak{s},\alpha}(b-b_-)\}$, it is easy to see that the condition $J(\mathfrak{s},\alpha)=\varnothing$ corresponds to the fact that $b_-(\mathfrak{s},\alpha)>b_+(\mathfrak{s},\alpha)$: roughly speaking, $J(\mathfrak{s},\alpha)$ is empty whenever its endpoints, which can always be defined, are in the reversed order.  Observe that $\lambda_+(\mathfrak{s},\alpha) \in \{1, \ldots, |\mathfrak{s}|-1\}$ is actually only defined when $b_+(\mathfrak{s},\alpha)>d_+(\mathfrak{s},\alpha)$ (that is, when $b_0(\mathfrak{t}^{\mathfrak{s},\alpha}_+)>0$), and $\lambda_-(\mathfrak{s},\alpha) \in \{1, \ldots, 2g + 1 -|\mathfrak{s}|\}$ is only defined when $d_-(\mathfrak{s},\alpha)<b_-(\mathfrak{s},\alpha)$ (that is, when $b_0(\mathfrak{t}^{\mathfrak{s},\alpha}_-)>0$).  When $\mathfrak{s}$ is a cluster and $\alpha\in D_{\mathfrak{s},d_+(\mathfrak{s})}$, even with these more general definitions, $\lambda_+(\mathfrak{s},\alpha)$ and $b_+(\mathfrak{s},\alpha)$ only depend on $\mathfrak{s}$ and not on the particular choice of the center $\alpha\in D_{\mathfrak{s},d_+(\mathfrak{s})}$; the same is true for $b_-(\mathfrak{s},\alpha)$ and $\lambda_-(\mathfrak{s},\alpha)$, provided that $b_-(\mathfrak{s},\alpha)\le d_+(\mathfrak{s},\alpha)$ (see \Cref{rmk tpm indepdendent of the center}).
\end{rmk}
  



The computations of the invariants $J(\mathfrak{s},\alpha), b_\pm(\mathfrak{s},\alpha)$ and $\lambda_\pm(\mathfrak{s},\alpha)$ appearing in \Cref{thm summary depths valid discs} are now reduced to determining $b_0(\mathfrak{t}_\pm^{\mathfrak{s},\alpha})$ and $\lambda(\mathfrak{t}_\pm^{\mathfrak{s},\alpha})$. A priori, this would require the knowledge of the functions $\mathfrak{t}_\pm^{\mathfrak{s},\alpha}: [0,+\infty)\to \zerotwo$, which in turn are immediate to compute once a totally odd part-square decomposition for the polynomials $f_\pm^{\mathfrak{s},\alpha}$ is known: see \Cref{rmk tpm and fpm}(c). However, determining a totally odd part-square decomposition  for  $f_\pm^{\mathfrak{s},\alpha}$ can be a difficult task, even if easier than determining one for the whole polynomial $f_{\alpha,1}$.  In \S\ref{sec depths sufficiently odd}, we will introduce a class of decompositions which we will name \emph{sufficiently odd decompositions} (see \Cref{dfn sufficiently odd} below); these are easier to compute and will still allow us to find $b_0(\mathfrak{t}_\pm^{\mathfrak{s},\alpha})$ and $\lambda(\mathfrak{t}_\pm^{\mathfrak{s},\alpha})$, as we will show in \Cref{prop b0 and lambda from suff odd}.

\subsection{Estimating thresholds for depths of even-cardinality clusters}
\label{sec depths thresholds}
The results that we have obtained in the above subsections show that given an even-cardinality cluster $\mathfrak{s}$ of roots associated to the hyperelliptic curve $Y : y^2 = f(x)$, there are $0$, $1$, or $2$ valid discs linked to it, and the results suggest how we may determine how many valid discs are linked to it via the knowledge of the rational numbers $b_\pm(\mathfrak{s})=d_{\pm}(\mathfrak{s})\mp b_0(\mathfrak{t}^{\mathfrak{s}}_\pm)$.  Roughly speaking, the results of \S\ref{sec depths construction valid discs} and \S\ref{sec depths separating roots} show that an even-cardinality cluster $\mathfrak{s}$ has $2$ (resp.\ $1$) valid discs linked to it if and only if its relative depth $\delta(\mathfrak{s}) = d_+(\mathfrak{s}) - d_-(\mathfrak{s})$ exceeds (resp.\ equals) some threshold depending on $\mathfrak{s}$, namely the rational number given by $b_0(\mathfrak{t}^{\mathfrak{s}}_+) + b_0(\mathfrak{t}^{\mathfrak{s}}_-)$.  The precise statement is the following rephrasing of \Cref{thm summary depths valid discs}(a) combined with \Cref{rmk extending def bpm}.

\begin{prop} \label{prop depth threshold}
    Write $B_{f,\mathfrak{s}} = b_0(\mathfrak{t}^{\mathfrak{s}}_+) + b_0(\mathfrak{t}^{\mathfrak{s}}_-)$.  Given an even-cardinality cluster $\mathfrak{s} \subset \mathcal{R}$ of relative depth $\delta(\mathfrak{s})$, there are exactly $2$ (resp.\ $1$; resp.\ $0$) valid discs linked to $\mathfrak{s}$ if we have $\delta(\mathfrak{s}) > B_{f,\mathfrak{s}}$ (resp.\ $\delta(\mathfrak{s}) = B_{f,\mathfrak{s}}$; resp.\ $\delta(\mathfrak{s}) < B_{f,\mathfrak{s}}$).
\end{prop}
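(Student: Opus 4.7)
The plan is to assemble this statement directly from results already established earlier in the section, namely \Cref{thm summary depths valid discs}(a), \Cref{prop formulas for b_pm}, and \Cref{rmk extending def bpm}. The whole argument is essentially bookkeeping, so I will be brief about where each ingredient enters.

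First, I would fix a cluster $\mathfrak{s}\subseteq \mathcal{R}$ of even cardinality and a center $\alpha\in D_{\mathfrak{s},d_+(\mathfrak{s})}$, so that $I(\mathfrak{s},\alpha)=I(\mathfrak{s})$ and the invariants $b_\pm(\mathfrak{s})$, $\mathfrak{t}^{\mathfrak{s}}_\pm$, and $b_0(\mathfrak{t}^{\mathfrak{s}}_\pm)$ are all independent of this choice (see \Cref{rmk tpm indepdendent of the center} and \Cref{rmk extending def bpm}). By \Cref{thm summary depths valid discs}(a), the valid discs linked to $\mathfrak{s}$ are exactly the discs of the form $D_{\alpha, b_-(\mathfrak{s})}$ and $D_{\alpha, b_+(\mathfrak{s})}$ whenever $J(\mathfrak{s})\neq\varnothing$, and there are none when $J(\mathfrak{s})=\varnothing$. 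So the count of valid discs linked to $\mathfrak{s}$ is $0$, $1$, or $2$ according to whether $J(\mathfrak{s})=\varnothing$, $J(\mathfrak{s})$ consists of a single point, or $J(\mathfrak{s})$ is a non-degenerate closed interval.

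Next, I would rewrite this trichotomy in terms of $B_{f,\mathfrak{s}}$. Using the formulas
\begin{equation*}
b_+(\mathfrak{s}) = d_+(\mathfrak{s}) - b_0(\mathfrak{t}^{\mathfrak{s}}_+),\qquad b_-(\mathfrak{s}) = d_-(\mathfrak{s}) + b_0(\mathfrak{t}^{\mathfrak{s}}_-)
\end{equation*}
from \Cref{prop formulas for b_pm} (extended to the possibly-empty case via \Cref{rmk extending def bpm}), the inequality $b_-(\mathfrak{s})\le b_+(\mathfrak{s})$ is equivalent to
\begin{equation*}
b_0(\mathfrak{t}^{\mathfrak{s}}_+) + b_0(\mathfrak{t}^{\mathfrak{s}}_-) \;\le\; d_+(\mathfrak{s}) - d_-(\mathfrak{s}),
\end{equation*}
that is, $B_{f,\mathfrak{s}} \le \delta(\mathfrak{s})$; similarly the equality $b_-(\mathfrak{s})=b_+(\mathfrak{s})$ is equivalent to $B_{f,\mathfrak{s}}=\delta(\mathfrak{s})$, and the strict inequality $b_-(\mathfrak{s})<b_+(\mathfrak{s})$ to $B_{f,\mathfrak{s}}<\delta(\mathfrak{s})$. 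Translating back, $J(\mathfrak{s})$ is empty, a single point, or a non-degenerate interval according to whether $\delta(\mathfrak{s})<B_{f,\mathfrak{s}}$, $\delta(\mathfrak{s})=B_{f,\mathfrak{s}}$, or $\delta(\mathfrak{s})>B_{f,\mathfrak{s}}$, which yields exactly the counts $0$, $1$, $2$ claimed.

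There is no real obstacle here: the genuine content has been done in the preceding subsections (in particular, the identification of valid discs with endpoints of $J(\mathfrak{s})$ in \Cref{thm summary depths valid discs}, and the expression of those endpoints in terms of $b_0(\mathfrak{t}^{\mathfrak{s}}_\pm)$ in \Cref{prop formulas for b_pm} and \Cref{rmk extending def bpm}). The only mildly delicate point is to make sure the case $J(\mathfrak{s})=\varnothing$ is handled consistently, which is precisely why \Cref{rmk extending def bpm} extends the definition of $b_\pm(\mathfrak{s})$ to that situation; with that convention in place, the equivalence ``$J(\mathfrak{s})\neq\varnothing \iff b_-(\mathfrak{s})\le b_+(\mathfrak{s})$'' is immediate, and the proof reduces to the elementary algebraic manipulation above.
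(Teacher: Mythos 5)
Your proof is correct and takes essentially the same route as the paper, which simply asserts that \Cref{prop depth threshold} is a rephrasing of \Cref{thm summary depths valid discs}(a) combined with \Cref{rmk extending def bpm}; you have carried out the bookkeeping explicitly and correctly, using $b_\pm(\mathfrak{s}) = d_\pm(\mathfrak{s}) \mp b_0(\mathfrak{t}^{\mathfrak{s}}_\pm)$ to translate the trichotomy ``$J(\mathfrak{s})$ empty / degenerate / non-degenerate'' into the trichotomy ``$\delta(\mathfrak{s}) < / = / > B_{f,\mathfrak{s}}$''.
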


\begin{rmk} \label{rmk B independence}
    We note that the rational number $B_{f,\mathfrak{s}}$ given in the above corollary does not depend on the depth $\delta(\mathfrak{s})$ in the following sense.  Given a center $\alpha \in D_{\mathfrak{s},d_+(\mathfrak{s})}$, let $\mathfrak{s}_{[\lambda]} = \{\lambda(a - \alpha) + \alpha \ | \ a \in \mathfrak{s}\}$ for some $\lambda \in \bar{K}^{\times}$ such that $v(\lambda) > -\delta(\mathfrak{s})$, so that $\mathfrak{s}_{[\lambda]}$ is a scaled version of $\mathfrak{s}$ and is a cluster in $\RR_{[\lambda]} := \mathfrak{s}_{[\lambda]} \sqcup (\mathcal{R} \smallsetminus \mathfrak{s})$ with relative depth $\delta(\mathfrak{s}_{[\lambda]}) = \delta(\mathfrak{s}) + v(\lambda)$. 
    Then it follows easily from the constructions in \S\ref{sec depths separating roots} that we have
    $\mathfrak{t}_+^\mathfrak{s} = \mathfrak{t}_+^{\mathfrak{s}_{[\lambda]}}$
    and $\mathfrak{t}_-^\mathfrak{s} = \mathfrak{t}_-^{\mathfrak{s}_{[\lambda]}}$,
    from which it follows that $B_{f_{[\lambda]},\mathfrak{s}_{[\lambda]}} = B_{f,\mathfrak{s}}$.
    In this sense, loosely speaking, we may view $B_{f,\mathfrak{s}}$ as a sort of ``threshold'' for the depth of $\mathfrak{s}$ at which we obtain $1$ valid disc linked to $\mathfrak{s}$ and above which we obtain $2$ valid discs linked to $\mathfrak{s}$.
\end{rmk}

In the rest of this subsection, we work towards obtaining estimates and exact formulas for the ``threshold depth" $B_{f,\mathfrak{s}}$ defined in \Cref{prop depth threshold} under various conditions on $\mathfrak{s} \subset \mathcal{R}$.

\begin{prop}  \label{prop cases of slope always 1}
    Let $\mathfrak{s}\subseteq \RR$ be an even-cardinality cluster.
    \begin{enumerate}[(a)]
        \item Suppose that $\mathfrak{s}$ can be written as a disjoint union $\mathfrak{r} \sqcup \mathfrak{c}_1 \sqcup\ldots\sqcup \mathfrak{c}_N$ for some $N \geq 0$, and where each $\mathfrak{c}_i$ is an even-cardinality child of $\mathfrak{s}$. Let $\delta = d_+(\mathfrak{r})-d_+(\mathfrak{s})$ (so that, in particular, $\delta=0$ when $N=0$, and $\delta=\delta(\mathfrak{r})$ when $N\ge 1$), and assume that $\delta(\mathfrak{c}_i)>\delta$ for all $i=1,\ldots,N$. Assume moreover that the sum $\sigma:=\sum_{a \in \mathfrak{r}} (a - \alpha_0)$ for some (any) fixed $\alpha_0 \in \mathfrak{r}$ has valuation equal to $d_+(\mathfrak{r})$. Then for all $b \in [0, +\infty)$ we have
        \begin{equation*}
            \mathfrak{t}^{\mathfrak{s}}_+(b) = \truncate{\delta + b},
        \end{equation*}
        so that $b_0(\mathfrak{t}^{\mathfrak{s}}_+) = \max\{2v(2) - \delta, 0\}$.
        
        \item Assume that the parent cluster $\mathfrak{c}_1$ of $\mathfrak{s}$ has even cardinality and can be written as a disjoint union $\mathfrak{c}_1=\mathfrak{c}_2\sqcup \ldots \sqcup \mathfrak{c}_N\sqcup \mathfrak{s}\sqcup \mathfrak{r}$ for some $N\ge 1$, where $\mathfrak{c}_2, \ldots, \mathfrak{c}_N$ are even-cardinality sibling clusters of $\mathfrak{s}$. Let $\delta = \delta(\mathfrak{r})$, and assume that $\delta(\mathfrak{c}_i)>\delta$ for all $i=1, \ldots, N$. Assume moreover that the sum $\sigma:=\sum_{a \in \mathfrak{r}} (a -\alpha_0)$ has valuation $d_+(\mathfrak{r})$ for some (any) fixed $\alpha_0 \in \mathfrak{r}$. Then for all $b \in [0, \delta(\mathfrak{s})]$ we have
        \begin{equation*}
            \mathfrak{t}^{\mathfrak{s}}_-(b) = \truncate{\delta + b},
        \end{equation*}
        so that $b_0(\mathfrak{t}^{\mathfrak{s}}_-) = \max\{2v(2) - \delta, 0\}$.
        
        \item Let $\mathfrak{r}=\mathfrak{s}\sqcup \mathfrak{c}_1\sqcup \ldots \sqcup \mathfrak{c}_N$ be a union of $\mathfrak{s}$ and some even-cardinality sibling clusters $\mathfrak{c}_1, \ldots, \mathfrak{c}_N$ of $\mathfrak{s}$, where $N\ge 0$. Let $\delta=d_-(\mathfrak{s})-d_-(\mathfrak{r})$ (so that, in particular, $\delta=0$ when $N=0$, and $\delta=\delta(\mathfrak{r})$ when $N\ge 1$),  and assume that $\delta(\mathfrak{c}_i)>\delta$ for all $i=1, \ldots, N$. Assume moreover that the sum $\sigma:=\sum_{a \in \RR\setminus\mathfrak{r}} (a-\alpha_0)^{-1}$ has valuation equal to $-d_-(\mathfrak{r})$ for some (any) fixed $\alpha_0\in \mathfrak{s}$. Then for all $b \in [0, \delta(\mathfrak{s})]$, we have
        \begin{equation*}
            \mathfrak{t}^{\mathfrak{s}}_-(b) = \truncate{\delta + b},
        \end{equation*}
        so that $b_0(\mathfrak{t}^{\mathfrak{s}}_-) = \max\{2v(2) - \delta, 0\}$.
    \end{enumerate}
\end{prop}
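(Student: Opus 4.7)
The plan is to prove each part by constructing an explicit good part-square decomposition of the polynomial whose roots define the relevant $\mathfrak{t}$-function, then read off the value of $\mathfrak{t}^\mathfrak{s}_\pm$ via \Cref{dfn t fun} and the identity $\mathfrak{t}=\truncate{\tfun_{q,\rho}}$ for a good decomposition. The overall strategy is the same in all three parts: factor the polynomial as a product, build a good part-square decomposition of each factor whose $\tfun$ we can compute or bound, and combine them via \Cref{prop product part-square} to identify the unique minimum-contributing factor.

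For part (a), fix $\alpha_0\in\mathfrak{r}$ (so that $\alpha_0\in D_{\mathfrak{s},d_+(\mathfrak{s})}$) and factor $f^\mathfrak{s}=f^\mathfrak{r}\cdot\prod_{i=1}^{N}f^{\mathfrak{c}_i}$. For the residual factor $f^\mathfrak{r}$, I construct (via \Cref{prop totally odd existence}) a totally odd part-square decomposition $f^\mathfrak{r}=q_\mathfrak{r}^2+\rho_\mathfrak{r}$ with respect to $\alpha_0$. The translated polynomial $\tilde{f}:=f^\mathfrak{r}_{\alpha_0,1}$ has its $z^{|\mathfrak{r}|-1}$-coefficient equal to $-\sigma$, and the hypothesis $v(\sigma)=d_+(\mathfrak{r})$ combined with the estimates for $\vfun_{\tilde f}$ and $\vfun_{\tilde\rho}$ supplied by \Cref{lemma mathfrakh} yields $\tfun_{q_\mathfrak{r},\rho_\mathfrak{r}}(D_{\alpha_0,b'})=d_+(\mathfrak{r})-b'$ on the range $b'\le d_+(\mathfrak{r})$. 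For each even-cardinality child $\mathfrak{c}_i$, an analogous construction produces a good decomposition $f^{\mathfrak{c}_i}=q_i^2+\rho_i$ whose $\tfun_{q_i,\rho_i}(D_{\alpha_0,b'})$ is bounded below by $d_+(\mathfrak{c}_i)-b'=(d_+(\mathfrak{s})-b')+\delta(\mathfrak{c}_i)$. By the hypothesis $\delta(\mathfrak{c}_i)>\delta$, we have $d_+(\mathfrak{c}_i)-b'>d_+(\mathfrak{r})-b'$, so the residual factor is the unique minimizer, and \Cref{prop product part-square}(b) yields a good product decomposition $f^\mathfrak{s}=q^2+\rho$ with $\tfun_{q,\rho}(D_{\alpha_0,b'})=d_+(\mathfrak{r})-b'$. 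Substituting $b'=d_+(\mathfrak{s})-b$ gives $\mathfrak{t}^\mathfrak{s}_+(b)=\truncate{\delta+b}$, whence $b_0(\mathfrak{t}^\mathfrak{s}_+)=\max\{2v(2)-\delta,0\}$.

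For part (b), pick $\alpha\in\mathfrak{s}$ and factor $f^{\mathcal{R}\smallsetminus\mathfrak{s}}=f^\mathfrak{r}\cdot\prod_{i=2}^{N}f^{\mathfrak{c}_i}\cdot f^{\mathcal{R}\smallsetminus\mathfrak{c}_1}$. The roots of $f^{\mathcal{R}\smallsetminus\mathfrak{c}_1}$ all have valuation strictly less than $d_-(\mathfrak{s})=d_+(\mathfrak{c}_1)$ from $\alpha$, so after passing to the standard form in (\ref{eq standard form}) this factor contributes an arbitrarily large $\tfun$ on the range $b\in[0,\delta(\mathfrak{s})]$: it plays the role of the ``far" factor in \Cref{prop product part-square}(c). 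The remaining product $f^\mathfrak{r}\cdot\prod_{i=2}^{N}f^{\mathfrak{c}_i}$ is then handled exactly as in (a) (with the parent cluster $\mathfrak{c}_1$ playing the role of $\mathfrak{s}$), again with the $\mathfrak{r}$-factor providing the dominant (smallest) $\tfun$ value $\delta+b$. Part (c) follows from (a) via the reciprocal correspondence of \Cref{rmk reciprocal} and \Cref{prop reciprocal}: choosing $\alpha_0\in\mathfrak{s}$, the decomposition $\mathfrak{r}=\mathfrak{s}\sqcup\mathfrak{c}_1\sqcup\ldots\sqcup\mathfrak{c}_N$ of (c) corresponds under $i_{\alpha_0}$ (after taking complements in $\mathcal{R}^{\vee,\alpha_0}$) to a decomposition of the residual cluster inside the parent of $\mathfrak{s}^{\vee,\alpha_0}$ satisfying the hypotheses of (a) for the reciprocal polynomial; the hypothesis on $\sigma=\sum_{a\in\mathcal{R}\smallsetminus\mathfrak{r}}(a-\alpha_0)^{-1}$ translates directly (up to a unit scalar) into the ``$\sigma$-hypothesis" of (a) for the reciprocal setup, and the identity $\mathfrak{t}^{\mathfrak{s},\alpha_0}_-=\mathfrak{t}^{\mathfrak{s}^{\vee,\alpha_0},0}_+$ of \Cref{prop reciprocal} yields the desired formula.

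The main obstacle is verifying that the product decompositions are \emph{good} in the technical sense of \Cref{dfn good totally odd}(a), rather than merely decompositions with the claimed lower bound on $\tfun$: this requires careful tracking of normalized reductions through scalings and multiplications so that \Cref{prop good decomposition}(a) can be applied at each stage. It is essentially here that the exact-valuation hypothesis $v(\sigma)=d_+(\mathfrak{r})$ (and its reciprocal analog in (c)) plays its crucial role: it guarantees that the normalized reduction of $\rho_\mathfrak{r}$ carries a nonzero term of odd degree $|\mathfrak{r}|-1$, which both prevents it from being a square in $k[z]$ (securing goodness) and realizes $d_+(\mathfrak{r})-b'$ as the exact value (not merely a lower bound) of $\tfun_{q_\mathfrak{r},\rho_\mathfrak{r}}$.
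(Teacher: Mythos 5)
Your arguments for parts (a) and (c) are sound and essentially match the paper's: part (a) by establishing $\mathfrak{t}^{\mathfrak{r}}_+(b)=\truncate{b}$ from the $\sigma$-hypothesis (the linear coefficient of $f^{\mathfrak{r},\alpha_0}_+$ being a unit) and then invoking the product lemma, and part (c) by the reciprocal correspondence of \Cref{rmk reciprocal} and \Cref{prop reciprocal}.

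Your part (b), however, has a genuine gap. You want to treat $f^{\mathcal{R}\smallsetminus\mathfrak{c}_1}$ as the ``far'' factor via \Cref{prop product part-square}(c), but that proposition requires the roots of the \emph{near} factor $h_1$ to have strictly positive valuation and the far factor's roots to have strictly negative valuation. In the standard form $f^{\mathfrak{s},\alpha}_-$ (centered at $\alpha\in\mathfrak{s}$, scaled by $\beta_{d_-}$), the roots coming from $\mathfrak{c}_1\smallsetminus\mathfrak{s}=\mathfrak{r}\sqcup\mathfrak{c}_2\sqcup\ldots\sqcup\mathfrak{c}_N$ have valuation exactly $0$, not $>0$, so the hypothesis fails. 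A second, related issue is that your claim to handle ``$f^\mathfrak{r}\cdot\prod_{i\ge 2}f^{\mathfrak{c}_i}$ exactly as in (a)'' does not transfer: in part (a) the disc is centered at $\alpha_0\in\mathfrak{r}$ and $\mathfrak{r}$ sits \emph{inside} the disc, so the key computation $\mathfrak{t}^{\mathfrak{r}}_+(b)=\truncate{b}$ is about shrinking a disc around $\mathfrak{r}$; in part (b) the center $\alpha$ lies in $\mathfrak{s}$, the roots of $\mathfrak{r}$ are all \emph{outside} the discs $D_{\alpha,b+d_-(\mathfrak{s})}$ for $b>0$, and the role of the $z^{|\mathfrak{r}|-1}$-coefficient of $f^\mathfrak{r}_{\alpha_0,1}$ is played by a different coefficient. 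The paper sidesteps both issues by also deriving (b) from (a) via the reciprocal map, choosing $\alpha\in\mathfrak{c}_1$ (i.e.\ in a child cluster of $\mathfrak{s}$ in the setting of (a), not in $\mathfrak{r}$) and applying $i_\alpha$, which carries the hypothesis of (a) directly to the hypothesis of (b) and the identity $\mathfrak{t}^{\mathfrak{s}^{\vee,\alpha},0}_+=\mathfrak{t}^{\mathfrak{s},\alpha}_-$ of \Cref{prop reciprocal} gives the conclusion. I suggest you prove (b) the same way you proved (c) -- through the reciprocal trick -- rather than by a direct factorization.
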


\begin{rmk} \label{rmk cases of slope always 1}
    The assumption on $v(\sigma)$ in (a) and (b) is automatically satisfied whenever $\mathfrak{r}$ is a disjoint union of two odd-cardinality clusters $\mathfrak{r}_1$ and  $\mathfrak{r}_2$ (in particular, it is satisfied whenever $\mathfrak{r}$ has cardinality 2). In fact, fixing choices of elements $\alpha_i \in \mathfrak{r}_i$ for $i = 1, 2$ and choosing $\alpha_0 = \alpha_1$, we may write 
        \begin{equation*}
            \sigma=\sum_{a \in \mathfrak{r}} (a - \alpha_1) = |\mathfrak{r}_2|(\alpha_2 - \alpha_1) + \sum_{a \in \mathfrak{r}_1} (a - \alpha_1) + \sum_{a \in \mathfrak{r}_2} (a - \alpha_2).
        \end{equation*}
    Clearly the valuation of $|\mathfrak{r}_2|(\alpha_2 - \alpha_1)$ equals $d_+(\mathfrak{r})$ while all other terms in the above formula have higher valuation, and therefore the entire sum has valuation $d_+(\mathfrak{r})$.
    
    Analogously, the assumption on $v(\sigma)$ in (c) is automatically satisfied whenever there exist odd-cardinality clusters $\mathfrak{r}_1, \mathfrak{r}_2\subseteq \RR$ such that $\mathfrak{r}_1=\mathfrak{r}\sqcup \mathfrak{r}_2$; this always happens, in particular, when $\mathfrak{r}$ has cardinality $2g$.
\end{rmk}

\begin{proof}[Proof (of \Cref{prop cases of slope always 1})]
    Let us first assume that $\mathfrak{s}$ satisfies the hypotheses of part (a).  We first set out to show that we have $\mathfrak{t}_+^{\mathfrak{r}}(b) = \truncate{b}$ for all $b \in [0,+\infty)$ (which in particular proves part (a) in the case that $N = 0$).  We consider the polynomial $f^{\mathfrak{r},\alpha_0}_+$ (with the notation in \S\ref{sec depths separating roots std form}): by construction, its coefficient are all integral, and its constant term is 1; moreover, the assumption on $v(\sigma)$ easily implies that its linear coefficient is a unit. From this, it is easy to deduce that $\rho_+$ is also a polynomial with integral coefficients and unit linear coefficient, where $f^{\mathfrak{r},\alpha_0}_+ = q_+^2 + \rho_+$ is a totally odd part-square decomposition.  Let us now consider the function $\mathfrak{t}^{\mathfrak{r}}_+: [0,+\infty)\to \qq$, which is piecewise-linear with odd decreasing slopes between $1$ and $|\mathfrak{r}|-1$ until it reaches $2v(2)$ (see \Cref{prop properties t plus minus}). This function can be computed as $\mathfrak{t}^{\mathfrak{r}}_+(b)=\tfun_{q_+,\rho_+}(D_{0,b})=\vfun_{\rho_+}(D_{0,b})$; since $\rho_+$ has integral coefficients and unit linear term, the function $\mathfrak{t}^{\mathfrak{r}}_+$ has initial output $0$ and initial slope $1$, from which we conclude that  $\mathfrak{t}^{\mathfrak{r}}_+(b)=\truncate{b}$ for all $b\in [0,+\infty)$.
    
    Now fix any $i \in \{1, \ldots ,N\}$, and take any disc $D:=D_{\mathfrak{s},d_+(\mathfrak{s})-b}$ with $b\in [0,+\infty)$.  Then the formula for $\mathfrak{t}^{\mathfrak{r}}_+$ that we found above is equivalent to the formula $\mathfrak{t}^{\mathfrak{r}}(D)=\mathfrak{t}_+^{\mathfrak{r}}(b+\delta)=\truncate{b+\delta}$.  Moreover, using the fact that $\mathfrak{t}^{\mathfrak{c}_i}_+$ has positive integer slopes as long as its output is $< 2v(2)$ by \Cref{prop properties t plus minus}, we get $\mathfrak{t}^{\mathfrak{c}_i}(D)=\mathfrak{t}_+^{\mathfrak{c}_i}(b+\delta(\mathfrak{c}_i))\ge \truncate{b+\delta(\mathfrak{c}_i)}$.  Now, using our assumption that $\delta(\mathfrak{c}_i) > \delta$, \Cref{prop mathfrak t minium}(b) implies that $\mathfrak{t}^{\mathfrak{s}}(D) = \mathfrak{t}^{\mathfrak{r}}(D)$, which can clearly be rewritten as $\mathfrak{t}_+^{\mathfrak{s}}(b) = \truncate{\delta + b}$.  This finishes the proof of part (a).
    
    Now, if we apply to the setting described in the hypothesis of (a) the automorphism $i_{\alpha}: z\mapsto (z-\alpha)^{-1}$ of the projective line, where $\alpha$ is an element of $\mathfrak{c}_1$ (resp.\ of $\mathfrak{r}$), we obtain exactly the setting described in the hypothesis of (b) (resp.\ of (c)): this can be readily verified by applying \Cref{rmk reciprocal}. Moreover, \Cref{prop reciprocal} ensures that, after applying $i_{\alpha}$, the conclusion of (a) turns into that of (b) (resp.\ of (c)).
\end{proof}

\begin{prop} \label{prop deep ubereven cluster}
    We have the following analogous statements.
    \begin{enumerate}[(a)]
        \item Suppose that $\mathfrak{s}$ is an even-cardinality cluster which itself is the disjoint union of even-cardinality child clusters $\mathfrak{c}_1, \ldots , \mathfrak{c}_N$ for some $N\ge 2$.  The minimum of the set     \begin{equation*}
            \{\mathfrak{t}_+^{\mathfrak{c}_i}(\delta(\mathfrak{c}_i))\}_{1 \leq i \leq N} \cup \{\mathfrak{t}_+^{\mathfrak{s}}(0)\}
        \end{equation*} of rational numbers is attained by more than one element. In particular, if we have $\mathfrak{t}_+^{\mathfrak{c}_i}(\delta(\mathfrak{c}_i)) = 2v(2)$ for $1 \leq i \leq N$, then we have $\mathfrak{t}_+^{\mathfrak{s}}(0) = 2v(2)$ also.
        \item Suppose that $\mathfrak{s}$ is an even-cardinality cluster whose parent cluster $\mathfrak{s}'$ is a disjoint union $\mathfrak{s}\sqcup \mathfrak{c}_2\sqcup \ldots \sqcup \mathfrak{c}_N$ of even-cardinality clusters for some $N\ge 2$. Then, the minimum of the set \begin{equation*}
            \{\mathfrak{t}_+^{\mathfrak{c}_i}(\delta(\mathfrak{c}_i))\}_{2 \leq i \leq N} \cup \{\mathfrak{t}_-^{{\mathfrak{s}'}}(\delta(\mathfrak{s}')), \mathfrak{t}_-^{\mathfrak{s}}(0)\}
        \end{equation*} 
        of rational numbers is attained by more than one element. In particular, if we have $\mathfrak{t}_+^{\mathfrak{c}_i}(\delta(\mathfrak{c}_i)) = 2v(2)$ for $2 \leq i \leq N$, and $\mathfrak{t}_-^{\mathfrak{s}'}(\delta(\mathfrak{s}'))=2v(2)$ then we have $\mathfrak{t}_-^{\mathfrak{s}}(0) = 2v(2)$ also.
    \end{enumerate}
    
    \begin{proof}
        Let us assume the setting of (a). If the minimum of the set $\{\mathfrak{t}_+^{\mathfrak{c}_i}(\delta(\mathfrak{c}_i)\}_{1 \leq i \leq N}$ is attained by more than one element, then we are done, so assume that this minimum is attained by a unique element.  Then, if we apply \Cref{prop mathfrak t minium} to the disc $D:=D_{\mathfrak{s},d_+(\mathfrak{s})}$, we obtain that $\mathfrak{t}_+^{\mathfrak{s}}(0) = \min_{1 \leq i \leq N} \mathfrak{t}_+^{\mathfrak{c}_i}(\delta(\mathfrak{c}_i))$, and the claim is proved. The proof of part (b) is analogous.
    \end{proof}
\end{prop}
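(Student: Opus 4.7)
The plan is to reduce both claims to a single application of \Cref{prop mathfrak t minium} at a carefully chosen disc, followed by a short case analysis on whether the minimum of the smaller indexed set is uniquely attained. The entire argument is essentially bookkeeping; the only subtlety lies in correctly translating values of $\mathfrak{t}^{(\cdot)}$ on a common disc into values of the one-variable functions $\mathfrak{t}_\pm^{(\cdot)}$ using the parent/child/sibling relations among the clusters.

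For part (a), I would take the disc $D := D_{\mathfrak{s},d_+(\mathfrak{s})}$ and apply \Cref{prop mathfrak t minium} to the decomposition $\mathfrak{s} = \mathfrak{c}_1 \sqcup \ldots \sqcup \mathfrak{c}_N$; this gives $\mathfrak{t}^{\mathfrak{s}}(D) \geq \min_i \mathfrak{t}^{\mathfrak{c}_i}(D)$, with equality when the right-hand side is uniquely attained. To identify the two sides with the quantities in the statement, I will use that $d_-(\mathfrak{c}_i) = d_+(\mathfrak{s})$ for each $i$ (since $\mathfrak{s}$ is the parent of $\mathfrak{c}_i$), which together with the definition of $\mathfrak{t}_+^{\mathfrak{c}_i}$ gives $\mathfrak{t}^{\mathfrak{c}_i}(D) = \mathfrak{t}_+^{\mathfrak{c}_i}(\delta(\mathfrak{c}_i))$, and directly $\mathfrak{t}^{\mathfrak{s}}(D) = \mathfrak{t}_+^{\mathfrak{s}}(0)$. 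Now the dichotomy: if $\min_i \mathfrak{t}_+^{\mathfrak{c}_i}(\delta(\mathfrak{c}_i))$ is uniquely attained then equality from \Cref{prop mathfrak t minium}(b)(i) forces $\mathfrak{t}_+^{\mathfrak{s}}(0)$ to equal it, so the minimum of the enlarged set is attained by at least two elements; if it is already attained by two or more $\mathfrak{c}_i$'s then the inequality shows the minimum of the enlarged set coincides with it and is still attained by those same $\mathfrak{c}_i$'s.

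For part (b), the setup is symmetric. I would take the disc $D := D_{\mathfrak{s},d_-(\mathfrak{s})}$, which, since $d_-(\mathfrak{s}) = d_+(\mathfrak{s}')$, equals $D_{\mathfrak{s}',d_+(\mathfrak{s}')}$ and hence, because the $\mathfrak{c}_i$ are siblings of $\mathfrak{s}$ inside $\mathfrak{s}'$, also equals $D_{\mathfrak{c}_i,d_-(\mathfrak{c}_i)}$ for each $i \geq 2$. Applying \Cref{prop mathfrak t minium} to the decomposition $\RR \setminus \mathfrak{s} = (\RR\setminus\mathfrak{s}') \sqcup \mathfrak{c}_2 \sqcup \ldots \sqcup \mathfrak{c}_N$ at $D$ gives
\begin{equation*}
    \mathfrak{t}^{\RR\setminus\mathfrak{s}}(D) \geq \min\bigl(\mathfrak{t}^{\RR\setminus\mathfrak{s}'}(D),\mathfrak{t}^{\mathfrak{c}_2}(D),\ldots,\mathfrak{t}^{\mathfrak{c}_N}(D)\bigr),
\end{equation*}
again with equality whenever the right-hand side is uniquely attained. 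Using the identifications $\mathfrak{t}^{\RR\setminus\mathfrak{s}}(D) = \mathfrak{t}_-^{\mathfrak{s}}(0)$, $\mathfrak{t}^{\RR\setminus\mathfrak{s}'}(D) = \mathfrak{t}_-^{\mathfrak{s}'}(\delta(\mathfrak{s}'))$, and $\mathfrak{t}^{\mathfrak{c}_i}(D) = \mathfrak{t}_+^{\mathfrak{c}_i}(\delta(\mathfrak{c}_i))$ for $i\geq 2$, exactly the same two-case analysis delivers the conclusion.

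I do not anticipate any genuine obstacle: the content of the statement is that \Cref{prop mathfrak t minium} already forces the tie. The only place that requires a moment's care is the second identification step in part (b), namely recognizing that $D_{\mathfrak{s},d_-(\mathfrak{s})}$ is simultaneously expressible as $D_{\mathfrak{c}_i,d_-(\mathfrak{c}_i)}$ thanks to the sibling relation, so that the defining formula $\mathfrak{t}_+^{\mathfrak{c}_i}(b) = \mathfrak{t}^{\mathfrak{c}_i}(D_{\alpha,d_+(\mathfrak{c}_i)-b})$ can be instantiated with $\alpha$ chosen inside $\mathfrak{c}_i$ and $b = \delta(\mathfrak{c}_i)$.
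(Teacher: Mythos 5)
Your proposal is correct and follows the same approach as the paper's proof: both apply \Cref{prop mathfrak t minium} at the disc $D_{\mathfrak{s},d_+(\mathfrak{s})}$ (for part (a)) with the identification $\mathfrak{t}^{\mathfrak{c}_i}(D) = \mathfrak{t}_+^{\mathfrak{c}_i}(\delta(\mathfrak{c}_i))$, $\mathfrak{t}^{\mathfrak{s}}(D) = \mathfrak{t}_+^{\mathfrak{s}}(0)$, and then split into cases on whether the minimum over the $\mathfrak{c}_i$'s is uniquely attained. You spell out the ``already a tie'' case slightly more explicitly (observing that the part (a) inequality of \Cref{prop mathfrak t minium} rules out $\mathfrak{t}_+^{\mathfrak{s}}(0)$ strictly undercutting the others), and you make explicit the decomposition $\RR\setminus\mathfrak{s} = (\RR\setminus\mathfrak{s}')\sqcup\mathfrak{c}_2\sqcup\ldots\sqcup\mathfrak{c}_N$ and the disc identifications that the paper compresses into ``part (b) is analogous,'' but the underlying argument is the same.
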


The following pleasant corollary provides a simple result for a very special case of cluster picture.
\begin{cor} \label{cor g pairs}
    Suppose that we have a cluster $\mathfrak{S}$ of cardinality $2g$ which has $g$ children $\mathfrak{c}_1, \ldots , \mathfrak{c}_g$, each of which has cardinality $2$.  Assume that we have $\delta(\mathfrak{c}_i) \geq 2v(2)$ for $1 \leq i \leq g$ as well as $\delta(\mathfrak{S}) \geq 2v(2)$.  Then we have $b_0(\mathfrak{t}^{\mathfrak{c}_i}_+) = 2v(2)$ and $b_0(\mathfrak{t}^{\mathfrak{c}_i}_-) = 0$ for $1 \leq i \leq g$, and we have $b_0(\mathfrak{t}^{\mathfrak{S}}_+) = 0$ and $b_0(\mathfrak{t}^{\mathfrak{S}}_-) = 2v(2)$.
    \begin{proof}
        Letting $\mathfrak{s} = \mathfrak{r} = \mathfrak{c}_i$ and applying \Cref{prop cases of slope always 1}(a), we get $\mathfrak{t}^{\mathfrak{c}_i}_+(b) = \truncate{b}$ and $b_0(\mathfrak{t}^{\mathfrak{c}_i}_+) = 2v(2)$ for each $i$;
        similarly, if we let $\mathfrak{s}=\mathfrak{r}=\mathfrak{S}$ and apply \Cref{prop cases of slope always 1}(c), we get $\mathfrak{t}^{\mathfrak{S}}_-(b) = \truncate{b}$ and $b_0(\mathfrak{t}^{\mathfrak{S}}_-) = 2v(2)$.  Now, by applying \Cref{prop deep ubereven cluster} and using the assumption that $\delta(\mathfrak{c}_i)\ge 2v(2)$ and $\delta(\mathfrak{S})\ge 2v(2)$, we get $\mathfrak{t}^{\mathfrak{S}}_+(0)=2v(2)$ and  $\mathfrak{t}^{\mathfrak{c}_i}_-(0)=2v(2)$ for all $i$, which imply $b_0(\mathfrak{t}^{\mathfrak{S}}_+) = 0$ and $b_0(\mathfrak{t}^{\mathfrak{c}_i}_-) = 0$, respectively.
    \end{proof}
\end{cor}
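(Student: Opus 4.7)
The plan is to first pin down the functions $\mathfrak{t}^{\mathfrak{c}_i}_+$ and $\mathfrak{t}^{\mathfrak{S}}_-$ explicitly via \Cref{prop cases of slope always 1}, and then use these formulas to read off the ``crossover'' values $\mathfrak{t}^{\mathfrak{S}}_+(0)$ and $\mathfrak{t}^{\mathfrak{c}_i}_-(0)$ by appealing to \Cref{prop deep ubereven cluster}.

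For the first step, I would apply \Cref{prop cases of slope always 1}(a) to each cardinality-$2$ child $\mathfrak{c}_i$, taking $\mathfrak{s} = \mathfrak{r} = \mathfrak{c}_i$ and $N = 0$. Since $\mathfrak{c}_i$ splits as the disjoint union of two singleton clusters, \Cref{rmk cases of slope always 1} guarantees that the valuation condition on the sum $\sigma$ holds automatically. The conclusion is $\mathfrak{t}^{\mathfrak{c}_i}_+(b) = \truncate{b}$, which immediately yields $b_0(\mathfrak{t}^{\mathfrak{c}_i}_+) = 2v(2)$. An entirely symmetric application of \Cref{prop cases of slope always 1}(c) with $\mathfrak{s} = \mathfrak{r} = \mathfrak{S}$ and $N = 0$ gives $\mathfrak{t}^{\mathfrak{S}}_-(b) = \truncate{b}$, and hence $b_0(\mathfrak{t}^{\mathfrak{S}}_-) = 2v(2)$; here the hypothesis on $\sigma$ is again automatic via \Cref{rmk cases of slope always 1}, because $\mathfrak{S}$ has cardinality $2g$, so $\RR \setminus \mathfrak{S}$ is a single point and $\mathfrak{S} \sqcup (\RR \setminus \mathfrak{S}) = \RR$ provides the required decomposition into two odd-cardinality clusters.

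For the second step, the depth hypotheses combined with the formulas above give $\mathfrak{t}^{\mathfrak{c}_i}_+(\delta(\mathfrak{c}_i)) = \truncate{\delta(\mathfrak{c}_i)} = 2v(2)$ for every $i$, and likewise $\mathfrak{t}^{\mathfrak{S}}_-(\delta(\mathfrak{S})) = 2v(2)$. Feeding these into \Cref{prop deep ubereven cluster}(a) applied to the decomposition $\mathfrak{S} = \bigsqcup_{i=1}^g \mathfrak{c}_i$ forces $\mathfrak{t}^{\mathfrak{S}}_+(0) = 2v(2)$, so $b_0(\mathfrak{t}^{\mathfrak{S}}_+) = 0$. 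Symmetrically, for each fixed $i$, applying \Cref{prop deep ubereven cluster}(b) with $\mathfrak{s} = \mathfrak{c}_i$ and parent $\mathfrak{s}' = \mathfrak{S} = \mathfrak{c}_i \sqcup \bigsqcup_{j \neq i} \mathfrak{c}_j$ forces $\mathfrak{t}^{\mathfrak{c}_i}_-(0) = 2v(2)$, that is, $b_0(\mathfrak{t}^{\mathfrak{c}_i}_-) = 0$.

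There is no serious obstacle here: once the two propositions are at our disposal, the corollary is essentially a matter of bookkeeping against the very rigid cluster picture under consideration. The only check that requires any care is the verification of the automatic $v(\sigma)$ condition through \Cref{rmk cases of slope always 1}, but both the cardinality-$2$ and cardinality-$2g$ cases in play fall squarely into the automatic situations flagged there.
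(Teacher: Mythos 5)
Your proposal is correct and follows essentially the same route as the paper's proof: apply \Cref{prop cases of slope always 1}(a) with $\mathfrak{s}=\mathfrak{r}=\mathfrak{c}_i$ and \Cref{prop cases of slope always 1}(c) with $\mathfrak{s}=\mathfrak{r}=\mathfrak{S}$ to get the two $\truncate{b}$ formulas, then feed the resulting equalities $\mathfrak{t}^{\mathfrak{c}_i}_+(\delta(\mathfrak{c}_i))=\mathfrak{t}^{\mathfrak{S}}_-(\delta(\mathfrak{S}))=2v(2)$ into \Cref{prop deep ubereven cluster}(a) and (b). The only difference is that you spell out the verification of the $v(\sigma)$ hypothesis via \Cref{rmk cases of slope always 1}, which the paper leaves implicit; that check is correct and worth making explicit.
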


\begin{rmk} \label{rmk g pairs}
    The special case treated by the above corollary can be viewed more symmetrically as involving a hyperelliptic curve defined by a degree-$(2g+2)$ polynomial (after applying a suitable automorphism so that the $2g + 2$ branch points of $Y$ all have $x$-coordinate different from $\infty$) whose roots are paired into $g + 1$ clusters each of cardinality $2$; this is the type of hyperelliptic curve treated in \cite{dokchitser2023note}.

    It immediately follows from \Cref{cor g pairs} combined with \Cref{thm summary depths valid discs}(a) and \Cref{prop formulas for b_pm} that under the hypotheses of \Cref{cor g pairs}, letting $d=d_+(\mathfrak{S})=d_-(\mathfrak{c}_i)$, the full set of valid discs consists of $D_{\mathfrak{S},d}$, along with $D_{\mathfrak{S},d-\delta(\mathfrak{S})+2v(2)}$ if $\delta(\mathfrak{S}) > 2v(2)$, as well as $D_{\mathfrak{s}_i,d+\delta(\mathfrak{c}_i) - 2v(2)}$ for each $i$ such that $\delta(\mathfrak{c}_i) > 2v(2)$.  In fact, it is not difficult to see that the components of $\SF{\Yrst}$ corresponding to the valid discs other than $D_{\mathfrak{S},d}$ 
    are all vertical (-2)-curves which, when contracted, yield the stable model (which in turn coincides with $\YY_{D_{\mathfrak{S},d}}$).  The special fiber of the stable model then has a node corresponding to each $i$ such that $\delta(\mathfrak{c}_i) > 2v(2)$ as well as an additional node if $\delta(\mathfrak{S}) > 2v(2)$.  This essentially recovers the statement of \cite[Proposition 1.5]{dokchitser2023note}.
\end{rmk}

\begin{lemma} \label{lemma estimates of b_0 from slopes}
    Let $\mathfrak{s}$ be a cluster of even cardinality.  We have \begin{equation}
        b_0(\mathfrak{t}^{\mathfrak{s}}_+) \geq \frac{2v(2) - \mathfrak{t}^{\mathfrak{s}}_+(0)}{|\mathfrak{s}| - 1} \ \ \mathrm{and} \ \ b_0(\mathfrak{t}^{\mathfrak{s}}_-) \geq \frac{2v(2) - \mathfrak{t}^{\mathfrak{s}}_-(0)}{2g + 1 - |\mathfrak{s}|}.
    \end{equation}
    Moreover, if $\lambda_+(\mathfrak{s}) = |\mathfrak{s}| - 1$ (resp.\ $\lambda_-(\mathfrak{s}) = 2g + 1 - |\mathfrak{s}|$), then the first (resp.\ the second) inequality above is an equality.
    \begin{proof}
        This follows immediately from the properties of $\mathfrak{t}^{\mathfrak{s}}_\pm$ presented in \Cref{prop properties t plus minus}.
    \end{proof}
\end{lemma}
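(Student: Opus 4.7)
The plan is to derive both inequalities directly from the piecewise linear structure of $\mathfrak{t}^{\mathfrak{s}}_{\pm}$ established in \Cref{prop properties t plus minus}. The only substantive ingredient is the bound on the slopes; no further analysis of part-square decompositions should be necessary.

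First I would treat $\mathfrak{t}^{\mathfrak{s}}_+$. By \Cref{prop properties t plus minus}, the function $\mathfrak{t}^{\mathfrak{s}}_+ : [0,+\infty) \to \qq$ is continuous and piecewise linear on $[0, b_0(\mathfrak{t}^{\mathfrak{s}}_+)]$ with odd integer slopes in $\{1, 3, \ldots, |\mathfrak{s}|-1\}$; in particular, every such slope is at most $|\mathfrak{s}|-1$. Integrating this slope bound from $0$ to $b_0(\mathfrak{t}^{\mathfrak{s}}_+)$ yields
\begin{equation*}
    2v(2) - \mathfrak{t}^{\mathfrak{s}}_+(0) \;=\; \mathfrak{t}^{\mathfrak{s}}_+\bigl(b_0(\mathfrak{t}^{\mathfrak{s}}_+)\bigr) - \mathfrak{t}^{\mathfrak{s}}_+(0) \;\le\; (|\mathfrak{s}|-1)\, b_0(\mathfrak{t}^{\mathfrak{s}}_+),
\end{equation*}
which rearranges to the desired inequality. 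The argument for $\mathfrak{t}^{\mathfrak{s}}_-$ is identical, using that its slopes on $[0, b_0(\mathfrak{t}^{\mathfrak{s}}_-)]$ are bounded above by $2g+1-|\mathfrak{s}|$.

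For the equality statement, suppose $\lambda_+(\mathfrak{s}) = |\mathfrak{s}|-1$. By \Cref{prop formulas for b_pm} (and the extension of the definition in \Cref{rmk extending def bpm}), $\lambda_+(\mathfrak{s}) = \lambda(\mathfrak{t}^{\mathfrak{s}}_+)$ is precisely the left derivative of $\mathfrak{t}^{\mathfrak{s}}_+$ at $b_0(\mathfrak{t}^{\mathfrak{s}}_+)$. Since the slopes of $\mathfrak{t}^{\mathfrak{s}}_+$ decrease along $[0, b_0(\mathfrak{t}^{\mathfrak{s}}_+)]$ and the final slope already attains the maximum possible value $|\mathfrak{s}|-1$, every earlier slope must also equal $|\mathfrak{s}|-1$. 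Hence $\mathfrak{t}^{\mathfrak{s}}_+$ is linear of slope $|\mathfrak{s}|-1$ on all of $[0, b_0(\mathfrak{t}^{\mathfrak{s}}_+)]$, turning the inequality above into an equality. The same reasoning, with the bound $2g+1-|\mathfrak{s}|$, handles the $\lambda_-(\mathfrak{s})$ case.

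The main (minor) subtlety is making sure one is appealing to $\lambda(\mathfrak{t}^{\mathfrak{s}}_\pm)$ via its identification with $\lambda_\pm(\mathfrak{s})$ from \Cref{prop formulas for b_pm}, and confirming that $b_0(\mathfrak{t}^{\mathfrak{s}}_\pm) > 0$ whenever $\lambda_\pm(\mathfrak{s})$ is defined; otherwise there is no ``left derivative'' to speak of. But if $\mathfrak{t}^{\mathfrak{s}}_+(0) = 2v(2)$, the claimed inequality is trivial ($b_0(\mathfrak{t}^{\mathfrak{s}}_+) = 0$ and the right-hand side is $\le 0$), so there is nothing to verify in the degenerate case. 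Beyond this, the proof is a direct application of the monotone slope description already established.
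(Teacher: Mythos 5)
Your proof is correct and is exactly the argument the paper is invoking when it cites \Cref{prop properties t plus minus}: you integrate the slope bound $\leq |\mathfrak{s}|-1$ (resp.\ $\leq 2g+1-|\mathfrak{s}|$) over $[0, b_0(\mathfrak{t}^{\mathfrak{s}}_\pm)]$, and for the equality case you observe that because the slopes are decreasing, the left derivative at $b_0$ being maximal forces the function to be linear with maximal slope throughout. The handling of the degenerate case $b_0 = 0$ is a nice touch that the paper leaves implicit.
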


\begin{lemma} \label{lemma tplus minus odd cardinality child}
    Let $\mathfrak{s}$ be a cluster of even cardinality. Then we have the following:
    \begin{enumerate}[(a)]
        \item if $\mathfrak{s}$ has an odd-cardinality child cluster, then we have $\mathfrak{t}^{\mathfrak{s}}_+(0)=0$; and 
        \item if $\mathfrak{s}$ has an odd-cardinality sibling cluster, then we have  $\mathfrak{t}^{\mathfrak{s}}_-(0)=0$.
    \end{enumerate}
    \begin{proof}
        It is immediate to see that, if $\mathfrak{s}$ has a child cluster $\mathfrak{c}$ of odd cardinality $2m+1$, then, letting $\alpha\in \mathfrak{c}$, any normalized reduction of $f^{\mathfrak{s},\alpha}_+$ has odd degree $2g+1-2m$ and thus, in particular, is not a square. This implies that $f^{\mathfrak{s},\alpha}_+=0^2+f^{\mathfrak{s},\alpha}_+$ is a good part-square decomposition (see \Cref{prop good decomposition}), and hence that $\mathfrak{t}^{\mathfrak{s}}_+(0)=0$. This proves (a); the proof of (b) is analogous.
    \end{proof}
\end{lemma}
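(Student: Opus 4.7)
The plan is to show, in both parts, that the trivial part-square decomposition $f^{\mathfrak{s},\alpha}_\pm = 0^2 + f^{\mathfrak{s},\alpha}_\pm$ is good at the disc $D_{0,0}$ for a suitably chosen center $\alpha\in D_{\mathfrak{s},d_+(\mathfrak{s})}$. Once goodness is established, \Cref{rmk tpm and fpm}(c) immediately gives
\begin{equation*}
    \mathfrak{t}^{\mathfrak{s}}_\pm(0) = \truncate{\tfun_{0,f^{\mathfrak{s},\alpha}_\pm}(D_{0,0})} = 0,
\end{equation*}
since for the trivial decomposition the function $\tfun_{0, f^{\mathfrak{s},\alpha}_\pm} = \underline{v}_{f^{\mathfrak{s},\alpha}_\pm} - \underline{v}_{f^{\mathfrak{s},\alpha}_\pm}$ vanishes identically. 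By the characterization in \Cref{prop good decomposition}(a), verifying goodness of this decomposition reduces to checking that a normalized reduction of $f^{\mathfrak{s},\alpha}_\pm$ is not a square in $k[z]$; for this it suffices that its degree be odd.

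For (a), I would pick $\alpha$ inside the odd-cardinality child cluster $\mathfrak{c}$. The polynomial
\begin{equation*}
    f^{\mathfrak{s},\alpha}_+(z) = \prod_{a \in \mathfrak{s}} (1 - \beta_{d_+}^{-1}(a-\alpha) z)
\end{equation*}
has integral coefficients and constant term $1$, so its Gauss valuation vanishes and a normalized reduction coincides with the reduction modulo the maximal ideal. Each factor with $a\in\mathfrak{c}$ reduces to $1$ (as either $a=\alpha$, or $v(a-\alpha)>d_+(\mathfrak{s})$), while each factor with $a\in\mathfrak{s}\setminus\mathfrak{c}$ reduces to a nonconstant linear polynomial in $k[z]$ (since $v(a-\alpha)=d_+(\mathfrak{s})$). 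Hence the normalized reduction has degree exactly $|\mathfrak{s}|-|\mathfrak{c}|$, which is odd because $|\mathfrak{s}|$ is even and $|\mathfrak{c}|$ is odd.

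For (b), I would take $\alpha$ to be any element of $\mathfrak{s}$ and let $\mathfrak{c}_2$ denote an odd-cardinality sibling of $\mathfrak{s}$. The same style of analysis applied to
\begin{equation*}
    f^{\mathfrak{s},\alpha}_-(z) = \prod_{a \in \RR \setminus \mathfrak{s}} (1 - \beta_{d_-}(a-\alpha)^{-1}z)
\end{equation*}
shows that the factors indexed by $a\in\mathfrak{c}_2$ satisfy $v((a-\alpha)^{-1}\beta_{d_-})=0$ and reduce to nonconstant linear polynomials, whereas those with $a\in\RR\setminus(\mathfrak{s}\sqcup\mathfrak{c}_2)$ satisfy $v(a-\alpha)<d_-(\mathfrak{s})$ and reduce to $1$; the resulting normalized reduction has odd degree $|\mathfrak{c}_2|$. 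I expect no substantial obstacle here, since the entire argument boils down to a parity count on the degree of the reduction of $f^{\mathfrak{s},\alpha}_\pm$, combined with the criterion of \Cref{prop good decomposition}(a); the only point requiring mild care is the verification that the appropriate factors remain units (and thus survive the reduction) after the centering-and-scaling built into the definition of $f^{\mathfrak{s},\alpha}_\pm$.
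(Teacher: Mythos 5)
Your argument for part (a) is correct and matches the paper's: choosing $\alpha$ in the odd-cardinality child $\mathfrak{c}$ makes the normalized reduction of $f^{\mathfrak{s},\alpha}_+$ have odd degree $|\mathfrak{s}|-|\mathfrak{c}|$, so it cannot be a square, and \Cref{prop good decomposition}(a) finishes.

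Part (b), however, has a gap. You assert that every $a\in\RR\setminus(\mathfrak{s}\sqcup\mathfrak{c}_2)$ satisfies $v(a-\alpha)<d_-(\mathfrak{s})$ and so contributes a trivial factor to the reduction. This is false for $a$ lying in a sibling cluster $\mathfrak{c}_3\neq\mathfrak{c}_2$ of $\mathfrak{s}$: for such $a$ one has $v(a-\alpha)=d_+(\mathfrak{s}')=d_-(\mathfrak{s})$, hence $v\bigl(\beta_{d_-}(a-\alpha)^{-1}\bigr)=0$, and the corresponding factor survives as a nonconstant linear polynomial. The normalized reduction of $f^{\mathfrak{s},\alpha}_-$ therefore has degree $|\mathfrak{s}'|-|\mathfrak{s}|$, not $|\mathfrak{c}_2|$, and this need not be odd (take $\mathfrak{s}$ of cardinality $2$ whose parent $\mathfrak{s}'$ has cardinality $4$ with two singleton siblings). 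So the parity-of-degree count does not by itself rule out the reduction being a square. The conclusion is still correct, and your computation almost gets there; what it misses is a slightly finer look. The reduction factors as $\prod_{\mathfrak{c}}(1-c_{\mathfrak{c}}z)^{|\mathfrak{c}|}$ over the sibling clusters $\mathfrak{c}$ of $\mathfrak{s}$, with $c_{\mathfrak{c}}\in k^\times$ pairwise distinct: for $a,a'$ in distinct siblings one computes $v\bigl(\beta_{d_-}\bigl((a-\alpha)^{-1}-(a'-\alpha)^{-1}\bigr)\bigr)=0$, whereas the same valuation is positive when $a,a'$ lie in the same sibling. Since the exponent $|\mathfrak{c}_2|$ is odd, this product has a root of odd multiplicity and is not a square, so \Cref{prop good decomposition}(a) again applies. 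Alternatively, you can sidestep the direct analysis and deduce (b) from (a) via the reciprocal duality of \Cref{prop reciprocal} with $\alpha\in\mathfrak{s}$, which gives $\mathfrak{t}^{\mathfrak{s},\alpha}_-=\mathfrak{t}^{\mathfrak{s}^{\vee,\alpha},0}_+$ and turns the odd sibling $\mathfrak{c}_2$ into an odd-cardinality child of $\mathfrak{s}^{\vee,\alpha}$.
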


\begin{prop} \label{prop estimating threshold}
    Let $\mathfrak{s}$ be a cluster of even cardinality, and let $\mathfrak{s}'$ be its parent cluster.  The rational number $B_{f,\mathfrak{s}}$ given in \Cref{prop depth threshold} satisfies the below inequalities.
    \begin{enumerate}[(a)]
        \item We have $B_{f,\mathfrak{s}} \leq 4v(2)$.  In particular, if $\delta(\mathfrak{s}) \ge 4v(2)$, then there exists a valid disc linked to $\mathfrak{s}$, and if $\delta(\mathfrak{s}) > 4v(2)$, then it is guaranteed that there are exactly $2$ valid discs linked to $\mathfrak{s}$.
        \item If $\mathfrak{s}$ has a child cluster (resp.\ a sibling cluster) of odd cardinality, then we have $B_{f,\mathfrak{s}} \geq \frac{2v(2)}{|\mathfrak{s}| - 1}$ (resp.\ $B_{f,\mathfrak{s}} \geq \frac{2v(2)}{2g + 1 - |\mathfrak{s}|}$). 
        In particular, if $\mathfrak{s}$ is a cardinality-$2$ or a cardinality-$2g$ cluster, there cannot be a valid disc linked to $\mathfrak{s}$ if $\delta(\mathfrak{s})<2v(2)$.
        \item If $\mathfrak{s}$ has both a child and a sibling cluster of odd cardinality, then we have 
        $$B_{f,\mathfrak{s}} \geq \left( \frac{2}{|\mathfrak{s}| - 1} + \frac{2}{2g + 1 - |\mathfrak{s}|} \right)v(2).$$
        \item Suppose that we are in one of the following settings.
        \begin{enumerate}[(i)]
            \item Assume that $\mathfrak{s}$ has exactly $2$ odd-cardinality child clusters $\mathfrak{r}_1$ and $\mathfrak{r}_2$. Let $\tilde{f}(x) \in \bar{K}[x]$ be a polynomial whose set of roots coincides with $\widetilde{\RR}:=\mathfrak{r}_1\sqcup \mathfrak{r}_2 \sqcup (\RR \smallsetminus \mathfrak{s})$, and let $\widetilde{\mathfrak{s}} = \mathfrak{r}_1\sqcup \mathfrak{r}_2$.
            \item Assume that $\mathfrak{s}$ has exactly $1$ odd-cardinality sibling cluster $\mathfrak{r}_2$, and let $\mathfrak{r}_1$ denote the (odd-cardinality) parent cluster of $\mathfrak{s}$. Let $\tilde{f}(x) \in \bar{K}[x]$ be a polynomial whose set of roots coincides with  $\widetilde{\RR}:=(\RR\setminus\mathfrak{r}_1)\sqcup \mathfrak{r}_2 \sqcup \mathfrak{s}$, and let $\widetilde{\mathfrak{s}} = \mathfrak{s}$.
        \end{enumerate}
        In each case we have $B_{f,\mathfrak{s}} = B_{\tilde{f},\widetilde{\mathfrak{s}}}$.
        \item Suppose that at least one of the following holds:
        \begin{enumerate}[(i)]
            \item each of the child clusters of $\mathfrak{s}$ has even cardinality and depth $\geq 2v(2)$; or 
            \item the parent and each of the sibling clusters of $\mathfrak{s}$ have even cardinality and depth $\geq 2v(2)$.
        \end{enumerate}
        Then we have $B_{f,\mathfrak{s}} \leq 2v(2)$.  If both (i) and (ii) above hold, then we have $B_{f,\mathfrak{s}} = 0$.
    \end{enumerate}
\end{prop}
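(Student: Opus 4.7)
\medskip

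The plan is to treat parts (a)--(e) by leveraging the structural results of \Cref{prop properties t plus minus} together with the three technical tools that have already been developed: \Cref{lemma estimates of b_0 from slopes} (a slope-based lower bound), \Cref{lemma tplus minus odd cardinality child} (the vanishing of $\mathfrak{t}^{\mathfrak{s}}_\pm(0)$ in the presence of odd-cardinality children/siblings), and \Cref{prop deep ubereven cluster} (propagation of the value $2v(2)$ upward along the cluster tree).

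For part (a), I would note that \Cref{prop properties t plus minus} says each $\mathfrak{t}^\mathfrak{s}_\pm$ is strictly increasing until it attains $2v(2)$ and its slopes, while it does, are positive integers, so in particular $\geq 1$. Since $\mathfrak{t}^\mathfrak{s}_\pm(0) \geq 0$, this forces $b_0(\mathfrak{t}^\mathfrak{s}_\pm) \leq 2v(2) - \mathfrak{t}^\mathfrak{s}_\pm(0) \leq 2v(2)$, giving $B_{f,\mathfrak{s}} \leq 4v(2)$. The ``in particular'' clauses are then immediate from \Cref{prop depth threshold}. For (b), if $\mathfrak{s}$ has an odd-cardinality child (resp.\ sibling) cluster, \Cref{lemma tplus minus odd cardinality child} gives $\mathfrak{t}^\mathfrak{s}_+(0) = 0$ (resp.\ $\mathfrak{t}^\mathfrak{s}_-(0) = 0$), and the bound then drops out of \Cref{lemma estimates of b_0 from slopes} since the maximum slope of $\mathfrak{t}^\mathfrak{s}_+$ (resp.\ $\mathfrak{t}^\mathfrak{s}_-$) is $|\mathfrak{s}| - 1$ (resp.\ $2g + 1 - |\mathfrak{s}|$). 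Part (c) follows by summing the two lower bounds from (b).

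Part (e) is a dual application of \Cref{prop deep ubereven cluster}. Under hypothesis (i), each child $\mathfrak{c}_i$ has even cardinality and depth $\geq 2v(2)$; since $\mathfrak{t}^{\mathfrak{c}_i}_+$ has integer slopes $\geq 1$ until saturating, the hypothesis $\delta(\mathfrak{c}_i) \geq 2v(2)$ combined with $\mathfrak{t}^{\mathfrak{c}_i}_+(0) \geq 0$ forces $\mathfrak{t}^{\mathfrak{c}_i}_+(\delta(\mathfrak{c}_i)) = 2v(2)$. Then \Cref{prop deep ubereven cluster}(a) yields $\mathfrak{t}^{\mathfrak{s}}_+(0) = 2v(2)$, hence $b_0(\mathfrak{t}^\mathfrak{s}_+) = 0$ and $B_{f,\mathfrak{s}} \leq b_0(\mathfrak{t}^\mathfrak{s}_-) \leq 2v(2)$ by (a). Hypothesis (ii) is symmetric via \Cref{prop deep ubereven cluster}(b), giving $b_0(\mathfrak{t}^\mathfrak{s}_-) = 0$; combining both yields $B_{f,\mathfrak{s}} = 0$.

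The main obstacle is part (d), which is the only part that needs a genuine structural comparison of two different polynomials. My strategy is to prove that $\mathfrak{t}^{\mathfrak{s}}_\pm$ (computed for $f$) equals $\mathfrak{t}^{\widetilde{\mathfrak{s}}}_\pm$ (computed for $\tilde{f}$) as functions. In setting (i), one easily checks that $d_\pm(\mathfrak{s}) = d_\pm(\widetilde{\mathfrak{s}})$ (the depths match because $\mathfrak{r}_1, \mathfrak{r}_2$ are sibling children of $\mathfrak{s}$, both with $d_-$ equal to $d_+(\mathfrak{s})$) and that $\widetilde{\RR} \smallsetminus \widetilde{\mathfrak{s}} = \RR \smallsetminus \mathfrak{s}$, which immediately gives $\mathfrak{t}^\mathfrak{s}_- = \mathfrak{t}^{\widetilde{\mathfrak{s}}}_-$ since both depend only on the ``outside'' data. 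For $\mathfrak{t}^\mathfrak{s}_+$, the factorization $f^\mathfrak{s} = f^{\widetilde{\mathfrak{s}}}\cdot \prod_j f^{\mathfrak{c}_j}$ (where the $\mathfrak{c}_j$ are the remaining even-cardinality children of $\mathfrak{s}$) combined with \Cref{prop mathfrak t minium}(b)(i) should give $\mathfrak{t}^\mathfrak{s}(D) = \mathfrak{t}^{\widetilde{\mathfrak{s}}}(D)$ on every disc $D$ centered at a point of $\mathfrak{s}$ with depth $\leq d_+(\mathfrak{s})$: indeed, any such $D$ properly contains each $\mathfrak{c}_j$, so a normalized reduction of $f^{\mathfrak{c}_j}_{\alpha,\beta}$ is a scalar multiple of $z^{|\mathfrak{c}_j|}$ with $|\mathfrak{c}_j|$ even, forcing $\mathfrak{t}^{\mathfrak{c}_j}(D) > \mathfrak{t}^{\widetilde{\mathfrak{s}}}(D)$ via \Cref{prop good decomposition} (since the odd-cardinality child $\mathfrak{r}_1 \subset \widetilde{\mathfrak{s}}$ makes $\mathfrak{t}^{\widetilde{\mathfrak{s}}}(D) = 0$ for $D$ strictly larger than $D_{\widetilde{\mathfrak{s}}, d_+(\widetilde{\mathfrak{s}})}$, while a continuity argument handles the endpoint). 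Setting (ii) is handled analogously: one verifies that $\tilde{f}$ and $f$ agree on the data entering $\mathfrak{t}^\mathfrak{s}_+$ (same $\mathfrak{s}$), while on the ``$-$''-side one replaces $\RR \smallsetminus \mathfrak{r}_1$ by $\mathfrak{r}_2$ and uses the same minimum-principle argument, again with the odd-cardinality sibling $\mathfrak{r}_2$ killing the contribution of the removed roots. The delicate point throughout (d) will be verifying the strict inequality needed to apply \Cref{prop mathfrak t minium}(b)(i) at the boundary values of $b$, which should follow by a limit/continuity argument from the interior.
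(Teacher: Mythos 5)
Your treatment of parts (a), (b), (c), and (e) is correct and essentially identical to the paper's proof: (a) follows from the positive-slope property in \Cref{prop properties t plus minus}, (b) from \Cref{lemma tplus minus odd cardinality child} plus \Cref{lemma estimates of b_0 from slopes}, (c) by summing, and (e) from \Cref{prop deep ubereven cluster} after noting that $\delta(\mathfrak{c}_i) \geq 2v(2)$ forces $\mathfrak{t}^{\mathfrak{c}_i}_+(\delta(\mathfrak{c}_i)) = 2v(2)$.

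Part (d), however, contains a genuine error. You assert that ``the odd-cardinality child $\mathfrak{r}_1 \subset \widetilde{\mathfrak{s}}$ makes $\mathfrak{t}^{\widetilde{\mathfrak{s}}}(D) = 0$ for $D$ strictly larger than $D_{\widetilde{\mathfrak{s}}, d_+(\widetilde{\mathfrak{s}})}$.'' This is false: for a disc $D$ of depth strictly less than $d_+(\widetilde{\mathfrak{s}}) = d_+(\mathfrak{s})$, \emph{all} of $\widetilde{\mathfrak{s}}$ reduces to a single point of $\SF{\XX_D}$, so a normalized reduction of the corresponding factor is a scalar multiple of $z^{|\widetilde{\mathfrak{s}}|}$ with $|\widetilde{\mathfrak{s}}| = |\mathfrak{r}_1| + |\mathfrak{r}_2|$ \emph{even}; this is a square, and \Cref{prop good decomposition}(a) then gives $\mathfrak{t}^{\widetilde{\mathfrak{s}}}(D) > 0$, not $0$. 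The odd-cardinality child $\mathfrak{r}_1$ only guarantees a non-square reduction at the boundary disc $D_{\mathfrak{s},d_+(\mathfrak{s})}$ itself, i.e.\ it gives $\mathfrak{t}^{\widetilde{\mathfrak{s}}}_+(0) = 0$ (this is \Cref{lemma tplus minus odd cardinality child}); it says nothing at positive inputs. What is actually true, and what makes your minimum-principle argument via \Cref{prop mathfrak t minium}(b)(i) go through, is the \emph{formula} $\mathfrak{t}^{\widetilde{\mathfrak{s}}}_+(b) = \truncate{b}$ (slope exactly $1$ until saturation); combined with $\mathfrak{t}^{\mathfrak{c}_j}(D) = \mathfrak{t}^{\mathfrak{c}_j}_+(b + \delta(\mathfrak{c}_j)) \geq \truncate{b + \delta(\mathfrak{c}_j)} > \truncate{b}$ for $b < 2v(2)$, this yields the required strict inequality. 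That formula is precisely the content of \Cref{prop cases of slope always 1}(a) applied with $\mathfrak{r} = \mathfrak{r}_1 \sqcup \mathfrak{r}_2$, where the hypothesis on the sum $\sigma$ is verified automatically by \Cref{rmk cases of slope always 1}. The paper's proof of (d) applies \Cref{prop cases of slope always 1} twice — once for $f$ and once for $\tilde{f}$ — to obtain $\mathfrak{t}^{\mathfrak{s}}_+ = \mathfrak{t}^{\widetilde{\mathfrak{s}}}_+ = \truncate{\cdot}$ directly, while $\mathfrak{t}^{\mathfrak{s}}_- = \mathfrak{t}^{\widetilde{\mathfrak{s}}}_-$ for the trivial reason you note; case (ii) is symmetric via the same proposition applied to the parent side. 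You should invoke \Cref{prop cases of slope always 1} explicitly; without it the claimed vanishing is wrong and the strict inequality has no support.
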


\begin{proof}
    As the continuous piecewise-linear functions $\mathfrak{t}^{\mathfrak{s}}_\pm$ have positive integer slopes until reaching an output of $2v(2)$ by \Cref{prop properties t plus minus}, we must have $b_0(\mathfrak{t}^{\mathfrak{s}}_\pm) \leq 2v(2)$.  This implies that $B_{f,\mathfrak{s}} = b_0(\mathfrak{t}^{\mathfrak{s}}_+) + b_0(\mathfrak{t}^{\mathfrak{s}}_-) \leq 4v(2)$, proving part (a).
    
    Now if $\mathfrak{s}$ has a child cluster (resp.\ a sibling cluster) of odd cardinality, then we have $\mathfrak{t}^{\mathfrak{s}}_+(0) = 0$ (resp.\  $\mathfrak{t}^{\mathfrak{s}}_-(0) = 0$) by \Cref{lemma tplus minus odd cardinality child}.  By \Cref{lemma estimates of b_0 from slopes}, we then have $b_0(\mathfrak{t}^{\mathfrak{s}}_+) \geq \frac{2v(2)}{|\mathfrak{s}| - 1}$ (resp.\ $b_0(\mathfrak{t}^{\mathfrak{s}}_-) \geq \frac{2v(2)}{2g + 1 - |\mathfrak{s}|}$).  This proves (b) and (c) if we recall that $B_{f,\mathfrak{s}} = b_0(\mathfrak{t}^{\mathfrak{s}}_+) + b_0(\mathfrak{t}^{\mathfrak{s}}_-)$.
    
    Let us now address (d). Taking into account \Cref{rmk cases of slope always 1}, the alternate hypotheses of part (d) correspond respectively to the hypotheses of parts (a) and (c) of \Cref{prop cases of slope always 1}(a), with $\mathfrak{r}$ being $\mathfrak{r}_1\sqcup \mathfrak{r}_2$ in case (i), and with $\mathfrak{r}$ being the union of $\mathfrak{s}$ with its even-cardinality sibling clusters in case (ii). Let us first address case (i). By applying \Cref{prop cases of slope always 1}, we get $\mathfrak{t}^{\mathfrak{s}}_+(b)=\truncate{b}$ for all $b\in[0,+\infty)$. Moreover, we have that $\tilde{\mathfrak{s}}:=\mathfrak{r}$ is an even-cardinality cluster of the set $\tilde{\RR}:=\mathfrak{r}\sqcup (\RR \smallsetminus \mathfrak{s})$. Now, since $\RR\setminus \mathfrak{s}=\widetilde{\RR}\setminus \mathfrak{\tilde{s}}$, we have that the two functions $t_{-}^{\mathfrak{s}}$ and $t_{-}^{\tilde{\mathfrak{s}}}$ coincide; on the other hand, \Cref{prop cases of slope always 1}(a) can also be applied to $\tilde{\mathfrak{s}}\subseteq \widetilde{\RR}$ to get that $\mathfrak{t}^{\tilde{\mathfrak{s}}}_+(b)=\truncate{b}$. We conclude that  $B_{f,\mathfrak{s}} = b_0(\mathfrak{t}^{\mathfrak{s}}_+) + b_0(\mathfrak{t}^{\mathfrak{s}}_-) = b_0(\tilde{f}^{\tilde{\mathfrak{s}}}_+) + b_0(\tilde{f}^{\tilde{\mathfrak{s}}}_-) = B_{\tilde{f},\tilde{\mathfrak{s}}}$. The proof for case (ii) is completely analogous.
    
    Let us now address case (i) of (e). Since the child clusters $\mathfrak{c}_1, \ldots , \mathfrak{c}_N$ of $\mathfrak{s}$ have depth $\geq 2v(2)$, we get that $\mathfrak{t}^{\mathfrak{c}_i}_+(\delta(\mathfrak{c}_i)) = 2v(2)$ for $1 \leq i \leq N$, due to the fact that each function $\mathfrak{t}^{\mathfrak{c}_i}_+$ has positive integer slopes as long as its output is $< 2v(2)$ by \Cref{prop properties t plus minus}. Then by \Cref{prop deep ubereven cluster}, we have $\mathfrak{t}^{\mathfrak{s}}_+(0) = 2v(2)$  also, which directly implies that $b_0(\mathfrak{t}^{\mathfrak{s}}_+) = 0$.  In a similar manner, one proves that under assumption (ii), we get $\mathfrak{t}^{\mathfrak{s}}_-(0) = 2v(2)$, which directly implies that $b_0(\mathfrak{t}^{\mathfrak{s}}_-) = 0$. Now in general, as was observed in the proof of part (a), we have $b_0(\mathfrak{t}^{\mathfrak{s}}_\pm) \leq 2v(2)$, and thus the claims of part (e) follow from the formula $B_{f,\mathfrak{s}} = b_0(\mathfrak{t}^{\mathfrak{s}}_+) + b_0(\mathfrak{t}^{\mathfrak{s}}_-)$.
\end{proof}

\subsection{Sufficiently odd part-square decompositions}
\label{sec depths sufficiently odd}
The motivation for this subsection was presented at the end of \S\ref{sec depths separating roots reconstructing invariants}. For this subsection, we will let $h\in K[z]$ be a nonzero polynomial whose roots (in $\bar{K}$) all have valuation $\le 0$: the cases we care about are those of $h=f_\pm^{\mathfrak{s},\alpha}$, where $f_\pm^{\mathfrak{s},\alpha}$ are the two functions introduced in \S\ref{sec depths separating roots std form}, whose roots satisfy this property according to\Cref{rmk tpm and fpm}(b). Let us choose a part-square decomposition $h=q^2+\rho$, and let us consider the function $[0,+\infty) \ni b \mapsto \tfun_{q,\rho}(D_{0,b}) = \vfun_\rho(D_{0,b})-\vfun_h(D_{0,b})$.
\begin{rmk}
    \label{rmk properties t suff odd}
    The assumption on $h$ easily implies that the piecewise-linear function $[0,+\infty) \ni b \mapsto \vfun_h(D_{0,b})$ is constant (see \Cref{lemma mathfrakh}); as a consequence, the linear function $[0,+\infty) \ni b \mapsto \tfun_{q,\rho}(D_{0,b})$ is a non-decreasing piecewise-linear function with decreasing slopes and has the same slopes as $[0,+\infty) \ni b\mapsto\vfun_\rho(D_{0,b})$.
\end{rmk}

\begin{dfn} \label{dfn sufficiently odd}
A part-square decomposition $h = q^2 + \rho$ of a polynomial $h\in \bar{K}[z]$ whose roots all have valuation $\le 0$ is said to be \emph{sufficiently odd} if the function $\tfun_{q,\rho}: [0,+\infty) \ni b \mapsto \tfun_{q,\rho}(D_{0,b})$ satisfies the following.
\begin{enumerate}[(a)]
    \item We have $\tfun_{q,\rho}(D_{0,b})\ge 2v(2)$ for some $b\in [0,+\infty)$; we will denote by $b_0(\tfun_{q,\rho})$ the minimal $b\in [0,+\infty)$ with this property, so that $\tfun_{q,\rho}(D_{0,b})<2v(2)$ for $b\in [0,b_0(\tfun_{q,\rho}))$, and $\tfun_{q,\rho}(D_{0,b})\ge 2v(2)$ for $b\in [b_0(\tfun_{q,\rho}),+\infty)$. 
    \item If $b_0(\tfun_{q,\rho})>0$, the left derivative of $b\mapsto \tfun_{q,\rho}(D_{0,b})$ at $b=b_0(\tfun_{q,\rho})$ is odd.
\end{enumerate}
\end{dfn}

\begin{rmk}
    \label{rmk sufficiently odd}
    We have the following.
    \begin{enumerate}[(a)]
        \item Every part-square decomposition in which $\rho$ has no constant term satisfies condition (a) of \Cref{dfn sufficiently odd}, because when $\rho$ has no constant term, the function $b\in [0,+\infty)\mapsto \vfun_{\rho}(D_{0,b})$ cannot have slope 0 (see \Cref{lemma mathfrakh}), and hence $b\mapsto\tfun_{q,\rho}(D_{0,b})$ is a strictly increasing function.
        \item Totally odd decomposition are sufficiently odd, because for a totally odd decomposition all the slopes of $[0,+\infty) \ni b \mapsto\vfun_\rho(D_{0,b})$, and hence all the slopes of $[0,+\infty) \ni b \mapsto \tfun_{q,\rho}(D_{0,b})$ over $[0,+\infty)$, are odd. In particular, \Cref{prop totally odd existence} implies that a sufficiently odd decomposition always exists. 
    \end{enumerate}
\end{rmk}

\begin{prop}
    \label{prop suff odd good}
    If $h = q^2 + \rho$ is a sufficiently odd part-square decomposition, then it is good at all discs $D_{0,b}$, with $b\in [\max\{0,b_0(\tfun_{q,\rho})-\varepsilon\},+\infty)$, for $\epsilon>0$ small enough.
    \begin{proof}
        For $b\in [b_0(\tfun_{q,\rho}),+\infty)$, we have $\tfun_{q,\rho}(D_{0,b})\ge 2v(2)$, which clearly implies that the decomposition is good at $D_{0,b}$. Let us therefore assume that $b_0(\tfun_{q,\rho})>0$ and focus on the interval $[b_0(\tfun_{q,\rho})-\varepsilon,b_0(\tfun_{q,\rho}))$, where $\varepsilon > 0$ has been chosen small enough that the function $b\mapsto \tfun_{q,\rho}(D_{0,b})$ is $<2v(2)$ has odd slope, as prescribed by \Cref{dfn sufficiently odd}. Via \Cref{lemma mathfrakh}, we deduce that any normalized reduction of $\rho_{0,\beta}$, for $\beta\in \bar{K}^\times$ any element of valuation $b\in [b_0(\tfun_{q,\rho})-\varepsilon,b_0(\tfun_{q,\rho}))$, is not a square, hence the decomposition of $h$ is good at $D_{0,b}$ thanks to \Cref{prop good decomposition}(a).
    \end{proof}
\end{prop}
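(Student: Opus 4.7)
The plan is to split $b\in [\max\{0, b_0-\varepsilon\}, +\infty)$ into the two subintervals $[b_0, +\infty)$ and (when $b_0>0$) $[b_0-\varepsilon, b_0)$, where I write $b_0 := b_0(\tfun_{q,\rho})$ for brevity. For $b\in [b_0,+\infty)$, the definition of $b_0$ (see \Cref{dfn sufficiently odd}(a)) yields $\tfun_{q,\rho}(D_{0,b})\ge 2v(2)$; equivalently, for any $\beta\in \bar{K}^\times$ with $v(\beta)=b$, the induced decomposition $h_{0,\beta}=q_{0,\beta}^2+\rho_{0,\beta}$ satisfies $t_{q_{0,\beta},\rho_{0,\beta}}\ge 2v(2)$, so it is good immediately from \Cref{dfn good totally odd}(a). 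Nothing further is needed in this range, and the case $b_0=0$ is entirely absorbed here.

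The substantive case is $b\in [b_0-\varepsilon, b_0)$ with $b_0>0$, where $\tfun_{q,\rho}(D_{0,b})<2v(2)$. I will verify goodness here by appealing to the criterion of \Cref{prop good decomposition}(a): it suffices to show that the normalized reduction of $\rho_{0,\beta}$ is not the square of a polynomial in $k[z]$. To this end, I choose $\varepsilon>0$ small enough that the entire interval $[b_0-\varepsilon, b_0)$ lies strictly inside the maximal linearity piece of the piecewise-linear function $b'\mapsto \vfun_\rho(D_{0,b'})$ which terminates at $b_0$; such an $\varepsilon$ exists because $\vfun_\rho$ has only finitely many linearity pieces, and the left derivative of $\tfun_{q,\rho}$ at $b_0$ being a (positive) odd integer forces the adjacent linearity piece to have positive length.

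By \Cref{rmk properties t suff odd}, the function $b'\mapsto \vfun_h(D_{0,b'})$ is constant on $[0,+\infty)$, so the slope of $\vfun_\rho$ on this linearity piece equals the slope of $\tfun_{q,\rho}$ there — namely, the left derivative of $\tfun_{q,\rho}$ at $b_0$ — which is odd by \Cref{dfn sufficiently odd}(b). For each $b$ interior to this linearity piece, the left and right derivatives of $\vfun_\rho$ at $b$ both coincide with this common slope; hence by \Cref{lemma mathfrakh}(b), the highest and lowest degrees of the variable appearing in the normalized reduction of $\rho_{0,\beta}$ agree. The normalized reduction is therefore a single nonzero monomial $\bar{c}z^s$, where $s$ is the odd common slope. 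Since $s$ is odd, $\bar{c}z^s$ cannot be the square of any polynomial in $k[z]$, and so \Cref{prop good decomposition}(a) yields goodness at $D_{0,b}$. The key (and essentially only nontrivial) step is this identification of the normalized reduction of $\rho_{0,\beta}$ with a single monomial of odd degree, which follows directly from the combination of \Cref{lemma mathfrakh}(b), the constancy of $\vfun_h$ on $[0,+\infty)$, and the oddness condition in \Cref{dfn sufficiently odd}(b).
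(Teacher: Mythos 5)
Your proof is correct and takes essentially the same approach as the paper's: split into $[b_0,+\infty)$ (where $\tfun_{q,\rho}\ge 2v(2)$ gives goodness directly) and $[b_0-\varepsilon,b_0)$ (where goodness follows from \Cref{prop good decomposition}(a) once one knows the normalized reduction of $\rho_{0,\beta}$ is not a square). You simply spell out more explicitly the mechanism behind the final step — that on the interior of the relevant linearity piece the left and right derivatives of $\vfun_\rho$ coincide (using the constancy of $\vfun_h$), so by \Cref{lemma mathfrakh}(b) the normalized reduction is a single monomial of odd degree — which is exactly the reasoning the paper's terser ``via \Cref{lemma mathfrakh}'' citation is gesturing at.
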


\begin{cor}
    \label{cor invariants of suff odd}
    The value of $b_0(\tfun_{q,\rho})$ is the same for all sufficiently odd decompositions $h=q^2+\rho$ of the polynomial $h$. When $b_0(\tfun_{q,\rho})>0$, the left derivative $\lambda(\tfun_{q,\rho})$ of the function $b\in [0,+\infty)\mapsto\tfun_{q,\rho}(D_{0,b})$ at $b=b_0(\tfun_{q,\rho})$ (which is an odd positive integer) is also independent of the sufficiently odd decomposition.
    \begin{proof}
        This follows immediately from the proposition above, taking into account \Cref{rmk same t for good}.
    \end{proof}
\end{cor}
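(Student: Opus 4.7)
The plan is to show that $b_0(\tfun_{q,\rho})$ and $\lambda(\tfun_{q,\rho})$ can both be recovered from a canonical continuous function of $b$ that depends only on $h$, not on the particular sufficiently odd decomposition. To construct the canonical function, I would fix a totally odd part-square decomposition $h = \tilde q^2 + \tilde\rho$, which exists by \Cref{prop totally odd existence} and is automatically good by \Cref{cor totally odd is good}. Since translating and scaling preserves the property of being totally odd with respect to the center $0$, this decomposition remains good at every disc $D_{0,b}$, so \Cref{rmk same t for good} lets me define
\[
\psi(b) := \truncate{\tfun_{\tilde q,\tilde\rho}(D_{0,b})} \quad\text{for } b \in [0,+\infty),
\]
with $\psi(b)$ depending only on $h$ and $b$. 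Moreover, \Cref{rmk properties t suff odd} combined with the fact that $\tilde\rho$ has only odd-degree terms shows that $\tfun_{\tilde q,\tilde\rho}$ is strictly increasing with positive integer slopes, so $\psi$ is continuous and nondecreasing, strictly increasing while $<2v(2)$ and constant equal to $2v(2)$ afterwards.

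Next I would transfer this canonical function to an arbitrary sufficiently odd decomposition $h = q^2 + \rho$. By \Cref{prop suff odd good}, this decomposition is good at $D_{0,b}$ for every $b$ in an interval of the form $[\max\{0,b_0(\tfun_{q,\rho})-\varepsilon\},+\infty)$ with $\varepsilon>0$ suitably small, so on this range \Cref{rmk same t for good} yields the identity $\psi(b) = \truncate{\tfun_{q,\rho}(D_{0,b})}$. Specialized to the left sub-neighborhood $[b_0(\tfun_{q,\rho})-\varepsilon,\,b_0(\tfun_{q,\rho})]$, where the truncation is trivial because $\tfun_{q,\rho}(D_{0,b})\le 2v(2)$, this reads $\psi(b) = \tfun_{q,\rho}(D_{0,b})$; on $[b_0(\tfun_{q,\rho}),+\infty)$, it reads $\psi(b) = 2v(2)$.

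From here I would read off the two invariants directly from $\psi$. Because $\psi$ is strictly increasing until it first attains $2v(2)$ and constant thereafter, the number $b_0(\tfun_{q,\rho})$ is characterized as the unique value of $b$ at which $\psi$ first reaches $2v(2)$; since $\psi$ depends only on $h$, so does $b_0(\tfun_{q,\rho})$. When $b_0(\tfun_{q,\rho})>0$, the identification $\psi(b) = \tfun_{q,\rho}(D_{0,b})$ on a left neighborhood of $b_0(\tfun_{q,\rho})$ forces $\lambda(\tfun_{q,\rho})$ to coincide with the left derivative of $\psi$ at this common point, which once more depends only on $h$; the oddness and positivity of this left derivative is exactly the content of \Cref{dfn sufficiently odd}(b). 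The only technical subtlety is guaranteeing enough goodness of the sufficiently odd decomposition near $b_0(\tfun_{q,\rho})$ to perform the comparison with $\psi$, and \Cref{prop suff odd good} supplies precisely the half-open neighborhood required, so I do not expect any serious obstacle.
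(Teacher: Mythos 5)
Your argument is correct and rests on exactly the two ingredients the paper invokes: \Cref{prop suff odd good} to get goodness of a sufficiently odd decomposition on a half-line around $b_0$, and \Cref{rmk same t for good} to identify the truncated $\tfun$-values of any two decompositions that are simultaneously good at a disc. Routing the comparison through a fixed totally odd reference function $\psi$, rather than directly comparing two arbitrary sufficiently odd decompositions, is a minor packaging choice and not a genuinely different route.
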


Now we recall the the invariants $b_0(\mathfrak{t}^{\mathfrak{s},\alpha}_\pm)$ and $\lambda(\mathfrak{t}^{\mathfrak{s},\alpha}_\pm)$ introduced in \Cref{sec depths separating roots reconstructing invariants} for any even-cardinality-subset $\mathfrak{s}\subseteq \RR$ and $\alpha \in \bar{K}$, the knowledge of which is sufficient to determine the sub-interval $J(\mathfrak{s},\alpha)\subseteq I(\mathfrak{s},\alpha)$ and the slopes $\lambda_{\pm}(\mathfrak{s},\alpha)$ that we have introduced in \S\ref{sec depths construction valid discs} and that play a crucial role in determining which are the valid discs $D$ centered at $\alpha$ as well as the corresponding structure of $\SF{\YY_D}$. In the language of this subsection, the quantities $b_0(\mathfrak{t}^{\mathfrak{s},\alpha}_\pm)$ and $\lambda(\mathfrak{t}^{\mathfrak{s},\alpha}_\pm)$ are nothing but $b_0(\tfun_{q_\pm,\rho_\pm})$ and $\lambda(\tfun_{q_\pm,\rho_\pm})$ for two totally odd part-square decompositions $f_\pm^{\mathfrak{s},\alpha}=q_\pm^2+\rho_\pm$, where $f_\pm^{\mathfrak{s},\alpha}$ are the two polynomials introduced in \Cref{sec depths separating roots std form}: this is clear from \Cref{rmk tpm and fpm}(c). Now, the following proposition immediately follows from \Cref{cor invariants of suff odd}.
\begin{prop}
    \label{prop b0 and lambda from suff odd}
    We have $b_0(\mathfrak{t}^{\mathfrak{s},\alpha}_\pm)=b_0(\tfun_{q_\pm,\rho_\pm})$ and $\lambda(\mathfrak{t}^{\mathfrak{s},\alpha}_\pm)=\lambda(\tfun_{q_\pm,\rho_\pm})$ for any two sufficiently odd (and not necessarily totally odd) part-square decompositions $f_\pm^{\mathfrak{s},\alpha}=q_\pm^2+\rho_\pm$.
\end{prop}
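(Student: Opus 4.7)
The plan is to reduce the statement to \Cref{cor invariants of suff odd} by using a totally odd decomposition as a bridge between $\mathfrak{t}^{\mathfrak{s},\alpha}_\pm$ and $\tfun_{q_\pm,\rho_\pm}$. First I would invoke \Cref{prop totally odd existence} to produce a totally odd part-square decomposition $f_\pm^{\mathfrak{s},\alpha} = \tilde{q}_\pm^2 + \tilde{\rho}_\pm$. By \Cref{rmk tpm and fpm}(c), this decomposition satisfies
\[
    \mathfrak{t}^{\mathfrak{s},\alpha}_\pm(b) \;=\; \truncate{\tfun_{\tilde{q}_\pm,\tilde{\rho}_\pm}(D_{0,b})}
\]
for all $b \in [0,+\infty)$. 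Since truncation at $2v(2)$ affects neither the smallest input at which the function first reaches $2v(2)$ nor its left-derivative at that point, this identity immediately yields
\[
    b_0(\mathfrak{t}^{\mathfrak{s},\alpha}_\pm) \;=\; b_0(\tfun_{\tilde{q}_\pm,\tilde{\rho}_\pm}), \qquad \lambda(\mathfrak{t}^{\mathfrak{s},\alpha}_\pm) \;=\; \lambda(\tfun_{\tilde{q}_\pm,\tilde{\rho}_\pm}),
\]
settling the proposition in the special case that $(q_\pm,\rho_\pm) = (\tilde{q}_\pm, \tilde{\rho}_\pm)$ is totally odd.

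Next I would pass from the totally odd case to an arbitrary sufficiently odd decomposition by appealing to \Cref{cor invariants of suff odd}. By \Cref{rmk sufficiently odd}(b), a totally odd decomposition is in particular sufficiently odd, and by \Cref{rmk tpm and fpm}(b) the polynomial $h := f_\pm^{\mathfrak{s},\alpha}$ satisfies the standing hypothesis of \S\ref{sec depths sufficiently odd} that all of its roots have valuation $\le 0$. Hence \Cref{cor invariants of suff odd}, applied to the two sufficiently odd decompositions $h = \tilde{q}_\pm^2 + \tilde{\rho}_\pm$ and $h = q_\pm^2 + \rho_\pm$, gives
\[
    b_0(\tfun_{q_\pm,\rho_\pm}) \;=\; b_0(\tfun_{\tilde{q}_\pm,\tilde{\rho}_\pm}), \qquad \lambda(\tfun_{q_\pm,\rho_\pm}) \;=\; \lambda(\tfun_{\tilde{q}_\pm,\tilde{\rho}_\pm}),
\]
and chaining this with the identities from the previous paragraph finishes the proof.

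There is essentially no obstacle specific to this proposition: the substantive content has already been done in establishing \Cref{cor invariants of suff odd}, which in turn rests on \Cref{prop suff odd good} (sufficient oddness forces goodness of the decomposition at the discs $D_{0,b}$ for $b$ in a suitable neighborhood of $b_0(\tfun_{q,\rho})$) together with the invariance of $\truncate{t_{q,\rho}}$ across good decompositions at a fixed disc (\Cref{rmk same t for good}). The present statement is then a formal consequence, and the totally odd decomposition plays only an intermediary role.
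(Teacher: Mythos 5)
Your proposal is correct and follows essentially the same route as the paper: establish the identity for a totally odd decomposition via \Cref{rmk tpm and fpm}(c) (you spell out the harmless truncation-at-$2v(2)$ step that the paper leaves implicit), and then transfer it to an arbitrary sufficiently odd decomposition via \Cref{cor invariants of suff odd}.
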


We conclude by observing that, for a valid disc $D$, the equation $\YY_D\to \XX_D$ can also be written down from the knowledge of sufficiently odd decompositions only.
\begin{prop} \label{prop finding J from b0}
    Let $\alpha$ and $\mathfrak{s}$ be as in the assumptions of \Cref{thm summary depths valid discs}; assume that we have $J(\mathfrak{s},\alpha) \neq \varnothing$, and let $D := D_{\alpha,b_+(\mathfrak{s},\alpha)}$ or $D := D_{\alpha,b_-(\mathfrak{s},\alpha)}$ be a valid disc provided by the theorem. Then the part-square decomposition of $f$ we obtain from any two chosen sufficiently odd decompositions of $f_+^{\mathfrak{s},\alpha}$ and $f_-^{\mathfrak{s},\alpha}$ (see \S\ref{sec depths separating roots std form}) is good at the disc $D$.
    \begin{proof}
        Let $f_{\pm}^{\mathfrak{s},\alpha}=q_{\pm}^2+\rho_{\pm}$ be any sufficiently odd decompositions for $f_+^{\mathfrak{s},\alpha}$ and $f_-^{\mathfrak{s},\alpha}$, and let $f^{\mathfrak{s}}=(q^{\mathfrak{s}})^2+\rho^{\mathfrak{s}}$, $f^{\RR\setminus\mathfrak{s}}=(q^{\RR\setminus\mathfrak{s}})^2+\rho^{\RR\setminus\mathfrak{s}}$ and $f=q^2+\rho$ be the decompositions they induce for $f^{\mathfrak{s}}$, $f^{\RR\setminus\mathfrak{s}}$ and $f$ (see \S\ref{sec depths separating roots std form}). By \Cref{dfn sufficiently odd} and \Cref{prop b0 and lambda from suff odd}, we have $\tfun_{q_\pm, \rho_\pm}(D_{0,b})\ge 2v(2)$ when $b\ge b_0(\mathfrak{t}_{\pm}^{\mathfrak{s},\alpha})$.  Now, \Cref{rmk tpm and fpm}(a) ensures that the corresponding decompositions $f^{\mathfrak{s}}=(q^{\mathfrak{s}})^2+\rho^{\mathfrak{s}}$ and $f^{\RR\setminus \mathfrak{s}}=(q^{\RR\setminus\mathfrak{s}})^2+\rho^{\RR\setminus\mathfrak{s}}$ satisfy
        $\tfun_{q^{\mathfrak{s}},\rho^{\mathfrak{s}}}(D_{\alpha,b})\ge 2v(2)$ for $b\le b_+(\mathfrak{s},\alpha)$, and
        $\tfun_{q^{\RR\setminus\mathfrak{s}},\rho^{\RR\setminus\mathfrak{s}}}(D_{\alpha,b})\ge 2v(2)$ for  $b\ge b_-(\mathfrak{s},\alpha)$, recalling that $b_\pm(\mathfrak{s},\alpha)=d_\pm(\mathfrak{s},\alpha)\mp b_0(\mathfrak{t}_{\pm}^{\mathfrak{s},\alpha})$ by \Cref{rmk extending def bpm}. Since we have $J(\mathfrak{s},\alpha)\neq \varnothing$, which is to say that $b_-(\mathfrak{s},\alpha)\le b_+(\mathfrak{s},\alpha)$ (see \Cref{rmk extending def bpm}), we consequently have $\tfun_{q^{\mathfrak{s}},\rho^{\mathfrak{s}}}(D_{\alpha,b})\ge 2v(2)$ and $\tfun_{q^{\RR\setminus\mathfrak{s}},\rho^{\RR\setminus\mathfrak{s}}}(D_{\alpha,b})\ge 2v(2)$ for $b\in \{b_+(\mathfrak{s},\alpha),b_-(\mathfrak{s},\alpha)\}$. By \Cref{prop product part-square}(a), we have $\tfun_{q, \rho}(D_{\alpha,b})\ge 2v(2)$ for those values of $b$, i.e.\ the part-square decomposition for $f$ is good at $D$.
    \end{proof}
\end{prop}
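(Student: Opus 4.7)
The plan is to translate the sufficiently-odd hypothesis on the decompositions of $f_\pm^{\mathfrak{s},\alpha}$ into a lower bound for $\tfun_{q,\rho}(D)$ on the induced decomposition of $f$, and then conclude goodness directly from $\tfun_{q,\rho}(D) \ge 2v(2)$.

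First, I unwind the construction from \S\ref{sec depths separating roots std form}: given sufficiently odd decompositions $f_\pm^{\mathfrak{s},\alpha} = q_\pm^2 + \rho_\pm$, they induce decompositions $f^{\mathfrak{s}} = (q^{\mathfrak{s}})^2 + \rho^{\mathfrak{s}}$ and $f^{\mathcal{R}\setminus\mathfrak{s}} = (q^{\mathcal{R}\setminus\mathfrak{s}})^2 + \rho^{\mathcal{R}\setminus\mathfrak{s}}$, and from these a decomposition $f = q^2 + \rho$. By \Cref{rmk tpm and fpm}(a), these inductions intertwine with $\tfun$ in a precise way: $\tfun_{q^{\mathfrak{s}},\rho^{\mathfrak{s}}}(D_{\alpha,b}) = \tfun_{q_+,\rho_+}(d_+(\mathfrak{s},\alpha) - b)$, and symmetrically for the other factor with a shift by $d_-(\mathfrak{s},\alpha)$.

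Next, I would use \Cref{prop b0 and lambda from suff odd} together with \Cref{dfn sufficiently odd}(a): any sufficiently odd decomposition of $f_\pm^{\mathfrak{s},\alpha}$ has $\tfun_{q_\pm,\rho_\pm}(D_{0,b}) \ge 2v(2)$ for all $b \ge b_0(\tfun_{q_\pm,\rho_\pm}) = b_0(\mathfrak{t}^{\mathfrak{s},\alpha}_\pm)$. Translating via the formulas above and recalling from \Cref{rmk extending def bpm} that $b_\pm(\mathfrak{s},\alpha) = d_\pm(\mathfrak{s},\alpha) \mp b_0(\mathfrak{t}^{\mathfrak{s},\alpha}_\pm)$, I obtain
\begin{equation*}
\tfun_{q^{\mathfrak{s}},\rho^{\mathfrak{s}}}(D_{\alpha,b}) \ge 2v(2) \ \text{for } b \le b_+(\mathfrak{s},\alpha),\qquad \tfun_{q^{\mathcal{R}\setminus\mathfrak{s}},\rho^{\mathcal{R}\setminus\mathfrak{s}}}(D_{\alpha,b}) \ge 2v(2) \ \text{for } b \ge b_-(\mathfrak{s},\alpha).
\end{equation*}
The hypothesis $J(\mathfrak{s},\alpha) \neq \varnothing$ means precisely $b_-(\mathfrak{s},\alpha) \le b_+(\mathfrak{s},\alpha)$, so both inequalities simultaneously hold at $b \in \{b_-(\mathfrak{s},\alpha), b_+(\mathfrak{s},\alpha)\}$, hence in particular at $D$.

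Finally, since $f = c\cdot f^{\mathfrak{s}} \cdot f^{\mathcal{R}\setminus\mathfrak{s}}$, the induced decomposition $f = q^2 + \rho$ is the product (scaled by $\sqrt{c}$) of the two factor decompositions. I apply \Cref{prop product part-square}(a), which gives $\tfun_{q,\rho}(D) \ge \min\{\tfun_{q^{\mathfrak{s}},\rho^{\mathfrak{s}}}(D),\, \tfun_{q^{\mathcal{R}\setminus\mathfrak{s}},\rho^{\mathcal{R}\setminus\mathfrak{s}}}(D)\} \ge 2v(2)$, which by \Cref{dfn good totally odd}(a) immediately implies that the decomposition $f = q^2 + \rho$ is good at $D$.

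The whole argument is essentially a bookkeeping exercise rather than genuinely hard. The one point requiring care is to make sure the hypotheses of \Cref{prop product part-square}(a) apply, i.e.\ that both $\tfun_{q^{\mathfrak{s}},\rho^{\mathfrak{s}}}(D)$ and $\tfun_{q^{\mathcal{R}\setminus\mathfrak{s}},\rho^{\mathcal{R}\setminus\mathfrak{s}}}(D)$ are $\ge 0$ (which is clear once they are $\ge 2v(2)$). If one only knew the sufficient oddness but not the inequality $b_- \le b_+$, the overlap argument would collapse, so the condition $J(\mathfrak{s},\alpha) \neq \varnothing$ is exactly what makes the two one-sided bounds combine into one two-sided bound at $D$.
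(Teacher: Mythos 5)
Your proof is correct and follows essentially the same route as the paper: translating sufficient oddness of the factor decompositions into lower bounds on $\tfun$ via \Cref{rmk tpm and fpm}(a) and \Cref{prop b0 and lambda from suff odd}, using $J(\mathfrak{s},\alpha)\neq\varnothing$ to make the two one-sided bounds overlap at $b_\pm(\mathfrak{s},\alpha)$, and concluding with \Cref{prop product part-square}(a). The only addition is your closing observation that the hypothesis $t_i \ge 0$ in \Cref{prop product part-square}(a) is verified, which the paper leaves implicit.
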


\subsection{An algorithm for finding sufficiently odd part-square decompositions} \label{sec depths algorithm}

As previously explained, our main motivation for using sufficiently odd part-square decompositions rather than totally odd ones is that it is generally easier to compute a sufficiently odd decomposition of a polynomial.  The following is a general algorithm for finding sufficiently odd part-square decompositions of a nonzero polynomial $h(z) \in K[z]$ whose roots all have valuation $\leq 0$; more precisely, starting with some part-square decomposition $h = q^2 + \rho$ with $v(\rho) \geq v(h)$, this algorithm (when it terminates) transforms it into a sufficiently odd part-square decomposition of $h$, essentially by modifying $q$ by adding square roots of even-degree terms of $\rho$.  (We note that this algorithm is very similar to the procedure given by \cite[Proposition 2.2.1]{matignon2003vers}, which has a similar aim, although the latter involves adding the square roots of all even-degree terms simultaneously at each step and in that way is dissimilar to our method.)

\begin{algo} \label{algo sufficiently odd}

Let $h(z) \in K[z]$ be a nonzero polynomial whose roots all have valuation $\leq 0$, and let $h = q^2 + \rho$ be a part-square decomposition of $h$ satisfying $v(\rho) \geq v(h)$ (for instance, we may choose the trivial decomposition $h = 0^2 + \rho$).  In the steps below, we will change the polynomial $q(z)$ without changing $h(z)$, modifying $\rho(z)$ accordingly so that $h = q^2 + \rho$ is always a part-square decomposition of $h$.  At each stage, we write $R_i$ for the $i$th coefficient of $\rho$.

\begin{enumerate}
    \item \label{step1}
    Choose some ordering $n_0, n_1,\ldots, n_{\lfloor \frac{1}{2}\deg(h) \rfloor}$ of the set of natural numbers $\{0, 1, \ldots , \lfloor \frac{1}{2}\deg(h) \rfloor\}$.  In practice, this algorithm produces cleaner and more efficient results when we let $n_0 = 0$ and use the following ordering for the natural numbers in $\{1, \ldots ,\linebreak[0] \lfloor \frac{1}{2}\deg(h) \rfloor\}$.  Each natural number can be written uniquely as $s 2^j$ for a positive odd integer $s$ and an integer $j \geq 0$.  Then a natural number $s 2^j$ comes before another natural number $s' 2^{j'}$ in our ordering if and only if either we have $s < s'$ or we have $s = s'$ and $j' > j$; in other words, we order these numbers first according to their maximal odd factors and then in \textit{descending} order of their maximal $2$-power factors.
    
    Now for $0 \leq i \leq \lfloor \frac{1}{2}\deg(h) \rfloor$, perform the following two steps: 
    \begin{enumerate}[(i)]
        \item Replace $q(z)$ with $q(z) + \sqrt{R_{2n_i}} z^{n_i}$ and modify $\rho(z)$ accordingly.
        \item Check whether the decomposition $h = q^2 + \rho$ is a sufficiently odd part-square decomposition of $f$, and if it is, terminate the algorithm.
    \end{enumerate}
    \item \label{step2} Repeat Step (\ref{step1}).
\end{enumerate}
\end{algo}

The next results show that the above algorithm terminates after a finite number of steps under certain hypotheses.

\begin{lemma} \label{lemma steps in algorithm}

Assume the set-up and notation in Algorithm \ref{algo sufficiently odd}.  Suppose that we have completed Step (\ref{step1}) of Algorithm \ref{algo sufficiently odd} a total of $N$ times for some $N \geq 0$, ignoring Step (\ref{step1})(ii) (in other words, performing Step (\ref{step1})(i) for $a_i$ ranging through all natural numbers in $\{1, \ldots , \lfloor \frac{1}{2}\deg(h) \rfloor \}$) on the $N$th time.  For positive integers $j \leq \frac{1}{2}\deg(h)$, we have $v(R_{2j}) - v(h) \geq 2v(2)(1 - 2^{-N})$.  Moreover, if $N \geq 1$ and the suggested ordering of the $n_i$'s has been used in each rendition of Step \ref{step1}, then we have $R_0 = R_2 = 0$.

\end{lemma}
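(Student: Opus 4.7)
My plan is to prove both parts of the lemma by tracking the coefficients $R_k$ of $\rho$ throughout the algorithm. The key preliminary observation is that we maintain $v(q) \geq v(h)/2$ throughout: initially because $v(q^2) = v(h - \rho) \geq v(h)$, and inductively because each term added to $q$ is $\sqrt{R_{2n_i}}z^{n_i}$, which has valuation at least $v(R_{2n_i})/2 \geq v(h)/2$ by the valuation bound being proved. Next, a single Step 1(i) iteration with index $n_i$ modifies each $R_k$ by
\[
\Delta R_k \;=\; -2\sqrt{R_{2n_i}}\cdot [z^{k-n_i}]q_{\mathrm{old}} \;-\; R_{2n_i}\cdot \delta_{k,\,2n_i},
\]
so in particular the new $R_{2n_i}$ equals $-2\sqrt{R_{2n_i}}\cdot [z^{n_i}]q_{\mathrm{old}}$ (the two $R_{2n_i}$ contributions cancel), which has valuation at least $v(2) + v(R_{2n_i})/2 + v(h)/2$.

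For the valuation bound I would induct on $N$. Let $\epsilon_N := \min_j (v(R_{2j}) - v(h))$ at the end of $N$ renditions, with $\epsilon_0 \geq 0$ by hypothesis. I claim that whenever $\epsilon_{N-1} \leq 2v(2)$, one rendition yields $\epsilon_N \geq v(2) + \epsilon_{N-1}/2$. Indeed, during such a rendition any modification to any $R_{2j}$ has valuation at least $v(h) + v(2) + \epsilon_{N-1}/2$; since $v(2) + \epsilon_{N-1}/2 \geq \epsilon_{N-1}$ in this regime, the minimum gap cannot decrease during the rendition. Moreover, at the processing step $n_i = j$, the gap at $R_{2j}$ is raised to at least $v(2) + \epsilon_{N-1}/2$, and subsequent steps contribute only modifications whose valuation is at least this same bound, so this improved gap persists until the end of the rendition. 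Solving the recursion $\epsilon_N \geq v(2) + \epsilon_{N-1}/2$ from $\epsilon_0 \geq 0$ gives $\epsilon_N \geq 2v(2)(1 - 2^{-N})$.

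For the $R_0 = R_2 = 0$ claim, I would argue by direct inspection of the effect of the suggested ordering, beginning from the trivial decomposition $q = 0$. Processing $n_0 = 0$ first replaces $q$ by $\sqrt{h_0}$, so $R_0 = h_0 - h_0 = 0$. For any later step with $n_i > 0$, the two contributions in $\Delta R_0$ both vanish (since $[z^{-n_i}]q = 0$ and $\delta_{0,\,2n_i} = 0$). In particular, at the start of every subsequent rendition $R_0 = 0$, so processing $n_0 = 0$ is a no-op (as $\sqrt{R_0} = 0$), and $R_0 = 0$ persists for all $N \geq 1$. For $R_2$, the suggested ordering processes the powers of $2$ within the ``$s = 1$'' block in descending order $2^J, 2^{J-1}, \dots, 2, 1$. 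Just before $n_i = 1$ is processed in the first rendition, $q$ consists of $\sqrt{h_0}$ together with terms of the form $\sqrt{R_{2^{j+1}}}z^{2^j}$ with $j \geq 1$, so $[z^1]q = 0$. Hence $\Delta R_2 = -2\sqrt{R_2}\cdot 0 - R_2 = -R_2$, zeroing $R_2$; all subsequent steps in the rendition have $n_i \geq 3$ and leave $R_2$ untouched since then $[z^{2-n_i}]q = 0$ and $\delta_{2,\,2n_i} = 0$.

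The main obstacle is the careful bookkeeping required for the inductive step of the valuation bound --- in particular, verifying that modifications to $R_{2j}$ by steps other than the ``processing'' step for $j$ never erase the gain already achieved at $R_{2j}$, which is what forces the assumption $\epsilon_{N-1} \leq 2v(2)$. The analogous persistence claim $R_2 = 0$ across subsequent renditions likewise requires tracking how $[z^1]q$ evolves from one rendition to the next, so that the cancellation $-2\sqrt{R_2}\,[z^1]q - R_2$ continues to vanish at the $n_i = 1$ step of each rendition; this is the most delicate part and is where the precise choice of the descending-in-$j$ ordering within each $s$-block is essential.
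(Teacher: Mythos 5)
For the valuation bound, your recursion $\epsilon_N \geq v(2) + \epsilon_{N-1}/2$ is a clean repackaging of the paper's ``reappearing products'' argument and yields the same bound; two minor points worth tightening: the claim that the minimum gap cannot decrease during a rendition needs an inner induction over the steps of that rendition (as written, it invokes $v(R_{2n_i}) - v(h) \geq \epsilon_{N-1}$ at every step, which is exactly the invariant being propagated), and when $\epsilon_{N-1} > 2v(2)$ the inequality $v(2) + \epsilon_{N-1}/2 \geq \epsilon_{N-1}$ reverses, so that regime needs the easy separate observation that all gaps then remain above $2v(2)$ throughout. Your $R_0 = 0$ argument and your argument that $R_2 = 0$ after the \emph{first} rendition are both correct.

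The step you flagged as ``most delicate'' is where both your proposal and the paper's own proof break down, and I do not think it can be closed as written. After the first rendition, $q$ has a nonzero linear coefficient $[z^1]q$, so the cancellation $-2\sqrt{R_2}[z^1]q - R_2 = -R_2$ at the $n_i = 1$ step no longer zeroes $R_2$: the new $R_2$ is $-2\sqrt{R_2}[z^1]q$, which vanishes only if $R_2$ or $[z^1]q$ already did. And $R_2$ really is re-introduced in the second rendition: $R_4$ is generically nonzero after the first rendition (it gets re-introduced during the $s = 3$ block via $\Delta R_4 = -2\sqrt{R_6}[z^1]q$), so at the $n_i = 2$ step of the second rendition one has $\Delta R_2 = -2\sqrt{R_4}\sqrt{h_0} \neq 0$. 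Tracing two full renditions with the suggested (descending) ordering for $h(z) = 1 + z^2 + z^4 + z^7 + z^8$ over $\qq_2^{\mathrm{unr}}$ yields a nonzero $R_2$ of valuation $\tfrac{15}{8}v(2)$. So the assertion $R_0 = R_2 = 0$ for all $N \geq 1$ appears to be false for $N \geq 2$ once $\deg h \geq 6$; the paper's own justification (that the re-introducing degrees $a, b$ are distinct positive integers, hence $a+b \geq 4$) is valid only within a single rendition, and tacitly assumes that $R_{2j}$ is killed at the step $n_{i_0} = j$ in every rendition, which requires $[z^j]q = 0$ there. Your instinct that persistence across renditions is the weak link was exactly right; but no amount of tracking $[z^1]q$ will repair it, and the lemma's ``moreover'' clause (and, downstream, the way it is invoked in the termination proposition) would need to be weakened, say to a valuation bound on $R_0, R_2$, rather than an exact vanishing.
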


\begin{proof}

We prove this claim inductively, starting with the fact that it obviously holds for $N = 0$ as in this case we have $2v(2)(1 - 2^{-N}) = 0$ and we have $v(R_0) = v(h)$ since the roots all have valuation $\leq 0$.  Now assume that the claim holds for some $N \geq 0$ and consider how our part-square decomposition changes as we perform Step (\ref{step1}) for the $(N + 1)$th time.  Since all even-power terms of $\rho$ have valuation at least $2v(2)(1 - 2^{-N}) + v(h)$, it is easy to see from the instructions of Step (\ref{step1})(i) that the terms we are adding to $q(z)$ all have valuations at least $v(2)(1 - 2^{-N}) + \frac{1}{2}v(h)$.  Meanwhile, each power-$2j$ term of $\rho(z)$ is eliminated at the $i_0$th rendition of Step (\ref{step1})(i) where $n_{i_0} = j$ and may only reappear during a later rendition of Step (\ref{step1})(i) (the $i$th rendition for some $i > i_0$) as $2$ times the product of two terms of $q(z)$, one of which has been newly added: these are the $x^a$- and $x^b$-terms in $q(z)$ for some $a, b \geq 0$ with $a \neq b$ and $a + b = 2j$.  Note that if the suggested ordering of the $n_i$'s is followed, then this later $n_i$ cannot equal $2j$ and so we even have $a, b \geq 1$ in this case.  Therefore the coefficient of this new power-$2j$ term of $\rho(z)$ has valuation at least 
\begin{equation}
    \Big[v(2) + \frac{1}{2}v(h)\Big] + \Big[v(2)(1 - 2^{-N}) + \frac{1}{2}v(h)\Big] = 2v(2)(1 - 2^{-(N + 1)}) + v(h).
\end{equation}
This proves the claim for $N + 1$.  Meanwhile, if the suggested ordering of the $n_i$'s has been followed, the numbers $a$ and $b$ defined above, being distinct positive integers whose sum is an even number, must satisfy $a + b \geq 4$.  It follows that in this case, $\rho(z)$ has no constant or quadratic term (in other words, $R_0 = R_2 = 0$) after any number $N \geq 1$ of repetitions of Step (\ref{step1}).

\end{proof}

\begin{prop} \label{prop steps in algorithm}

Assume that the coefficient of the $x^s$-term of $h(z)$ has valuation equal to $v(h)$ for some odd integer $s \geq 1$.  Then Algorithm \ref{algo sufficiently odd} terminates after repeating Step (\ref{step1}) at most $\max\{1, \lfloor \log_2(s) \rfloor - 1\}$ times if the suggested ordering of the $n_i$'s is followed.

\end{prop}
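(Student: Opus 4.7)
The plan is to combine \Cref{lemma steps in algorithm} with a short Newton-polygon analysis of $\rho$ to show that after $N := \max\{1, \lfloor \log_2 s \rfloor - 1\}$ renditions of Step (\ref{step1}) with the suggested ordering, the resulting decomposition is already sufficiently odd, so that the check in Step (\ref{step1})(ii) terminates the algorithm at or before the end of that $N$th rendition. When $p \neq 2$ there is essentially nothing to prove: we have $2v(2) = 0$, so the invariant $v(\rho) \geq v(h)$ already yields $\tfun_{q,\rho}(D_{0,0}) \geq 0 = 2v(2)$, whence $b_0(\tfun_{q,\rho}) = 0$ and condition (b) of \Cref{dfn sufficiently odd} is vacuous. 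I will therefore focus on the case $p = 2$, so that $v(2) > 0$.

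The first key step is to observe that the coefficient $R_s$ of $z^s$ in $\rho$ satisfies $v(R_s) = v(h)$ throughout the execution of the algorithm. Indeed, the invariant $v(\rho) \geq v(h)$ forces $v(q^2) \geq v(h)$, hence $v(q_a) \geq \tfrac{1}{2}v(h)$ for every coefficient $q_a$ of the current $q(z)$; since $s$ is odd, the coefficient of $z^s$ in $q^2$ equals $2\sum_{a+b=s,\, a<b} q_a q_b$, whose valuation is at least $v(2) + v(h) > v(h)$. Consequently $R_s = H_s - (\text{coefficient of }z^s\text{ in }q^2)$ has the same valuation as $H_s$, namely $v(h)$. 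This is where the odd-degree hypothesis on $h$ is used in an essential way, and where the assumption $p = 2$ enters as the strict inequality $v(2) > 0$ that rules out cancellation.

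For the main argument, fix $N$ as above and suppose that $N$ complete renditions of Step (\ref{step1}) with the suggested ordering have been performed. \Cref{lemma steps in algorithm} then gives $R_0 = R_2 = 0$ together with $v(R_{2j}) \geq v(h) + 2v(2)(1 - 2^{-N})$ for every $j \geq 1$, while $v(R_s) = v(h)$ and $v(R_i) \geq v(h)$ for all $i$. Recalling that $\tfun_{q,\rho}(D_{0,b}) = \min_i\{v(R_i) + ib\} - v(h)$, the index $i = s$ forces $b_0(\tfun_{q,\rho}) \geq 2v(2)/s > 0$. Now suppose that some index $i$ achieves the minimum at $b_0$, i.e.\ $v(R_i) + ib_0 = v(h) + 2v(2)$. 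From $v(R_i) \geq v(h)$ we obtain $ib_0 \leq 2v(2)$ and hence $i \leq s$; and if $i$ were even then $i \geq 4$ (since $R_0 = R_2 = 0$), and combining
\begin{equation*}
    v(h) + 2v(2)(1 - 2^{-N}) \leq v(R_i) = v(h) + 2v(2) - ib_0 \leq v(h) + 2v(2)\bigl(1 - i/s\bigr)
\end{equation*}
would yield $i \leq s/2^N$, whence $s \geq 2^{N+2}$. But the definition of $N$ ensures $s < 2^{N+2}$: when $\lfloor\log_2 s\rfloor \geq 2$ one has $N = \lfloor\log_2 s\rfloor - 1$ and $2^{N+2} = 2^{\lfloor\log_2 s\rfloor + 1} > s$, while for $s \in \{1,3\}$ one has $N = 1$ and $2^{N+2} = 8 > s$. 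So every index achieving the minimum at $b_0$ is odd; in particular the left derivative of $\tfun_{q,\rho}$ at $b_0$, which is by definition the largest such index, is odd, confirming that the decomposition is sufficiently odd.

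The principal obstacle is the stability claim $v(R_s) = v(h)$ of the second paragraph: the oddness of $s$ and the assumption $p = 2$ are both essential in order to rule out cancellation in $R_s = H_s - (\text{coefficient of }z^s\text{ in }q^2)$. Once this stability is in hand, the rest of the argument is a clean Newton-polygon bookkeeping driven by the numerical bounds of \Cref{lemma steps in algorithm} together with the elementary inequality $s < 2^{\lfloor\log_2 s\rfloor + 1}$.
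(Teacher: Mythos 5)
Your proof is correct and follows essentially the same route as the paper's: the stability of $v(R_s) = v(h)$ under the algorithm, the lower bounds on $v(R_{2j})$ and the vanishing $R_0 = R_2 = 0$ from \Cref{lemma steps in algorithm}, the bound $b_0 \geq 2v(2)/s$ coming from the odd index $s$, and the elimination of even indices from the set achieving the minimum at $b_0$ all appear in the paper's argument in the same order. The only superficial differences are that you phrase the final step as a contradiction ($s \geq 2^{N+2}$ versus $s < 2^{N+2}$) rather than as the paper's direct inequality chain via $N \geq \log_2(s+1) - 2$, and that you dispatch the $p \neq 2$ case explicitly at the outset, neither of which changes the substance.
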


\begin{proof}

First of all, since the square of any polynomial in $g(z) \in K[z]$ has the property that its odd-degree coefficients have valuation at least $v(2) + v(g)$, at any point while running the algorithm, it is clear that the $z^s$-coefficient $R_s$ in $\rho(z) = h(z) - q^2(z)$ still has valuation equal to $v(h)$.

By Remark \ref{rmk sufficiently odd}(a) and the last statement of \Cref{lemma steps in algorithm}, since at any point after Step (\ref{step1}) has been performed the first time the polynomial $\rho(z)$ has no constant term, the criterion in Definition \ref{dfn sufficiently odd}(a) is satisfied at any point after the first rendition of Step (\ref{step1}).  Assume that Step (\ref{step1}) has just been performed for the $(N := \max\{1, \lfloor \log_2(s) \rfloor - 1\})$th time in the course of running Algorithm \ref{algo sufficiently odd} and that the suggested ordering has always been followed; we shall show that the decomposition $h = q^2 + \rho$ that we have constructed by this point is sufficiently odd, which implies the statement of the proposition.  Note that the definition of the integer $N$ directly implies the inequality $N \geq \log_2(s + 1) - 2$.  It now follows directly from \Cref{lemma steps in algorithm} that for integers $j$ such that $1\leq j \leq \lfloor \frac{1}{2}\deg(h) \rfloor$, we have 
\begin{equation} \label{eq algorithm terminates}
v(R_{2j}) - v(h) \geq 2v(2)(1 - 2^{-N}) \geq 2v(2)(1 - 2^{-\log_2(s + 1) + 2}) = \big(2 - \frac{8}{s + 1}\big) v(2).
\end{equation}
As in Definition \ref{dfn sufficiently odd}(a), let $b_0 := b_0(\tfun_{q,\rho}) \in [0, +\infty)$ be the (unique) minimal non-negative rational number satisfying $\tfun_{q,\rho}(b) \geq 2v(2)$, and choose an element $\beta_0 \in \bar{K}^{\times}$ with $v(\beta_0) = b_0$.  As we have $N \geq 1$, we also have $R_0 = R_2 = 0$ by \Cref{lemma steps in algorithm}, so in particular the polynomial $\rho(\beta_0 z)$ does not have a constant or quadratic term whose coefficient has valuation equal to $v(\rho(\beta_0 z))$.  We therefore assume that $j \geq 2$ and proceed to show that the coefficient of the power-$2j$ term of $v(\rho(\beta_0 z))$ does not have valuation equal to $v(h(\beta_0 z))$ either; from Definition \ref{dfn sufficiently odd} and Lemma \ref{lemma mathfrakh}(b), this implies that $h = q^2 + \rho$ is sufficiently odd and the proposition will be proved.  Now from equation (\ref{eq algorithm terminates}) we have 
\begin{equation} \label{eq algorithm terminates2}
2v(2) - v(R_{2j}) + v(h) \leq \frac{8}{s + 1} v(2) \leq \frac{4j}{s + 1} v(2) < \frac{4j}{s} v(2).
\end{equation}
Our assumption that the roots of $h$ each have valuation $\leq 0$ implies that $\vfun_h(b)$ is constant for $b \in [0, +\infty)$ and equal to $v(h)$, so in fact we have $v(\rho(\beta_0 z)) = \vfun_\rho(b_0) = 2v(2) + \vfun_h(b_0) = 2v(2) + v(h)$.  Our hypothesis on the $z^s$-coefficient now tells us that $s b_0 = v(\beta_0^s R_s) - v(h) \geq v(\rho(\beta_0 z)) - v(h) = 2v(2)$ and therefore we have $b_0 \geq \frac{2}{s}v(2)$.  Now, using (\ref{eq algorithm terminates2}), we get 
\begin{equation}
    v(\beta_0^{2j} R_{2j}) = v(R_{2j}) + 2j b_0 > 2v(2) + v(h) - \frac{4j}{s}v(2) + \frac{4j}{s}v(2) = 2v(2) + v(h),
\end{equation}
which proves the desired statement.

\end{proof}

\begin{rmk} \label{rmk steps in algorithm}

It is not difficult to show that the algorithm terminates under the first hypothesis of \Cref{lemma steps in algorithm} (but not necessarily within $\max\{1, \lfloor \log_2(s) \rfloor - 1\}$ renditions of Step \ref{step1}) even when the suggested ordering of the $n_i$'s is not followed, through a proof similar to the above one but which does not rely on the conclusion of the last statement of Lemma \ref{lemma steps in algorithm}, namely that $R_2 = 0$.  Similarly, it is evident from the above proof that the algorithm terminates under the weaker condition that the coefficient of the $x^s$-term has valuation $< v(h) + v(2)$.  However, in this slightly more general situation it is much messier to write down a bound for the number of times Step (\ref{step1}) must be performed.

\end{rmk}

\begin{rmk} \label{rmk steps in algorithm2}

One may apply Algorithm \ref{algo sufficiently odd} to get sufficiently odd decomposition for the polynomials $h = f_\pm^{\mathfrak{s},\alpha}$ we have introduced in \S\ref{sec depths separating roots std form}, where $\mathfrak{s}\subseteq \RR$ is an even-cardinality subset and $\alpha\in \bar{K}$, as the roots of these polynomials all have valuation $\leq 0$ by \Cref{rmk tpm and fpm}(b).  However, we note that \Cref{prop steps in algorithm} cannot be applied in general to guarantee that the algorithm terminates, because the condition that there exists an odd integer $s$ such that the coefficient of the power-$s$ term is a unit does not necessarily hold.  In fact, it is not difficult to see that, when $\mathfrak{s}$ is a cluster and $\alpha\in D_{\mathfrak{s},d_+(\mathfrak{s})}$, this condition holds for $f_-^{\mathfrak{s},\alpha}$ (resp.\ $f_+^{\mathfrak{s},\alpha}$) if and only if the cardinality of the child cluster of $\mathfrak{s}$ which contains $\alpha$ (resp.\ the parent cluster of $\mathfrak{s}$) is an odd integer $s$.  We suspect that the algorithm still terminates under weaker conditions.

\end{rmk}

\subsection{Computations of sufficiently odd part-square decompositions for low degree} \label{sec depths computations low degree}

In this subsection we use Algorithm \ref{algo sufficiently odd} to compute general formulas for sufficiently odd part-square decompositions in the cases that the polynomial $h(z) \in K[z]$ has odd degree at most $7$ (noting that by construction, in \S\ref{sec depths separating roots std form} the polynomials $f^{\mathfrak{s},\alpha}_\pm$ for which we want sufficiently odd decompositions have odd degree provided that $\alpha \in \mathfrak{s}$), with an additional hypothesis in the case of degree $7$ that the roots are all units.  Some of the formulas we obtain will be used for the computations in \S\ref{sec computations}.

In the cases of degree $1$, $3$, and $5$, in fact the formulas computed below give us totally odd decompositions, whereas in degree $7$, \Cref{algo sufficiently odd} terminates and gives us formulas for a sufficiently (but not totally) odd decomposition.

\subsubsection{Polynomials of degree $1$} \label{sec computations sufficiently odd deg1}

It is immediate to see that given a linear polynomial $h(z) \in \bar{K}[z]$, letting $q(z)$ be a square root of $h(0)$ and $\rho(z) = h(z) - q^2(z) = h(z) - h(0)$, the decomposition $h = q^2 + \rho$ is sufficiently (and even totally) odd.

\subsubsection{Polynomials of degree $3$} \label{sec computations sufficiently odd deg3}

Let $h(z) = \sum_{i = 0}^3 H_i x^i \in \bar{K}[z]$ be a cubic polynomial.  Then it is clear that Algorithm \ref{algo sufficiently odd} terminates during the first rendition of Step \ref{step1} after performing Step \ref{step1}(i) for $i = 1$ following the suggested ordering (with $s_0 = 0$ and $s_1 = 1$), and we have $q(z) = \sqrt{H_0} + \sqrt{H_2}z$ (for some choices of square roots of $H_0$ and $H_2$) and 
\begin{equation} \label{eq rho degree 3}
\rho(z) = (H_1 - 2\sqrt{H_2}\sqrt{H_0})z + H_3 z^3.
\end{equation}
This decomposition $h = q^2 + \rho$ is therefore sufficiently (and even totally) odd.  One easily checks that this is the same totally odd decomposition obtained by using the method described in the proof of \Cref{prop totally odd existence}.

\subsubsection{Polynomials of degree $5$} \label{sec computations sufficiently odd deg5}

Let $h(z) = \sum_{i = 0}^5 H_i z^i \in \bar{K}[z]$ be a quintic polynomial.  In this case once again Algorithm \ref{algo sufficiently odd} terminates during the first rendition of Step \ref{step1} after performing Step \ref{step1}(i) for $i = 2$ following the suggested ordering (with $s_0 = 0$, $s_1 = 2$, and $s_2 = 1$); it is straightforward to check that here we have $q(z) = \sqrt{H_0} + \sqrt{H_2 - 2\sqrt{H_4}\sqrt{H_0}}z + \sqrt{H_4}z^2$ (where we have chosen square roots of $H_0$ and $H_4$ and then chosen a square root of $H_2 - 2\sqrt{H_4}\sqrt{H_0}$) and 
\begin{equation} \label{eq rho degree 5}
\rho(z) = \Big(H_1 - 2\sqrt{H_2 - 2\sqrt{H_4}\sqrt{H_0}}\sqrt{H_0}\Big) z + \Big(H_3 - 2\sqrt{H_4}\sqrt{H_2 - 2\sqrt{H_4}\sqrt{H_0}}\Big) z^3 + H_5 z^5.
\end{equation}
We have therefore again found a sufficiently (and even totally) odd decomposition $h = q^2 + \rho$.  It is in fact not too difficult to show that (similarly to the $g = 1$ case) we obtain this same totally odd decomposition by using the method described in the proof of \Cref{prop totally odd existence}.

\subsubsection{Polynomials of degree $7$ with unit roots} \label{sec computations sufficiently odd deg7}

Let $h(z) = \sum_{i = 0}^7 H_i z^i \in \bar{K}[z]$ be a septic polynomial whose roots all have valuation $0$ (so that in particular we have $v(H_7)=v(h)$).  Now according to \Cref{prop steps in algorithm}, Step \ref{step1} of Algorithm \ref{algo sufficiently odd} needs to performed only $\max\{1, \lfloor \log_2(7)- 1 \rfloor\} = 1$ time.  In our only rendition of Step \ref{step1}, following the suggested ordering (with $s_0 = 0$, $s_1 = 2$, and $s_2 = 1$, and $s_3 = 3$), we at most need to perform Step \ref{step1}(i) for $i = 0, 1, 2, 3$ in order to obtain a sufficiently odd decomposition $h = q^2 + \rho$; after doing this, it is straightforward to check that we have 
\begin{equation} \label{eq q degree 7}
q(z) = \sqrt{H_0} + \sqrt{H_2 - 2\sqrt{H_4}\sqrt{H_0}} z + \sqrt{H_4}z^2 + \sqrt{H_6}z^3
\end{equation}
(where we have first chosen square roots of $H_0$, $H_4$, and $H_6$ and then chosen a square root of $H_2 - 2\sqrt{H_4}\sqrt{H_0}$) and 
\begin{equation} \label{eq rho degree 7}
\begin{split}
\rho(z) =  &\Big(H_1 - 2\sqrt{H_2 - 2\sqrt{H_4}\sqrt{H_0}}\sqrt{H_0}\Big) z + \Big(H_3 - 2\sqrt{H_4}\sqrt{H_2 - 2\sqrt{H_4}\sqrt{H_0}} - 2\sqrt{H_6}\sqrt{H_0}\Big) z^3 \\ &- 2\sqrt{H_6}\sqrt{H_2 - 2\sqrt{H_4}\sqrt{H_0}} \ z^4 + \big(H_5 - 2\sqrt{H_6}\sqrt{H_4}\big) z^5 + H_7 z^7.
\end{split}
\end{equation}

\section{Finding centers of valid discs in the \texorpdfstring{$p = 2$}{p=2} setting} \label{sec centers}

This section will deal with the problem of determining a center for each valid disc $D$ in the $p=2$ setting. When $\mathfrak{s}:=D\cap \RR\neq \varnothing$, the problem is easily solved, since a center of $D$ can be chosen to be any root in $\mathfrak{s}\subseteq \RR$. When $\mathfrak{s}=\varnothing$, we will show in \S\ref{sec centers properties} that $D$ necessarily contain a root of a certain polynomial $F(T) \in K[T]$ that is introduced in \S\ref{sec centers def}.

\subsection{Defining the polynomial \texorpdfstring{$F$}{F}}
\label{sec centers def}
Given the hyperelliptic curve $y^2 = f(x)$, with $f(x)\in K[x]$ of odd degree $2g+1$, \Cref{prop totally odd existence} allows us to produce (for instance by using the procedure explained in the proof) a totally odd decomposition of the translated polynomial $f_{T,1}(z):=f(z+T)$, in which $T$ remains generic rather than being assigned to be particular center $\alpha\in \bar{K}$. Such a decomposition will have the form
\begin{equation*}
    f_{T,1} = q_{T,1}^2 + \rho_{T,1},
\end{equation*}
with
\begin{equation*}
    \begin{split}
        q_{T, 1}(z) &=  Q_0(T) + Q_1(T)z + \ldots + Q_g(T) z^g \qquad \mathrm{and} \\
        \rho_{T, 1}(z) &=  R_1(T)z + R_3(T)z^3 + \ldots + R_{2g+1}(T) z^{2g+1},
    \end{split}
\end{equation*}
where $Q_i(T)$ and $R_i(T)$ are elements of $\overline{K(T)}$, i.e.\ algebraic functions of the variable $T$.

\begin{prop}
    \label{prop integrality Qi Ri}
    The algebraic functions $Q_i(T)$ and $R_i(T)$ are integral over $K[T]$.
    \begin{proof}
        The proposition is a reflection of the general fact that a totally odd decomposition $h=q^2+\rho$ of any polynomial $h$ is always good (see \Cref{cor totally odd is good}) and hence, in particular, satisfies $v(\rho)\ge v(h)$ (by \Cref{rmk good decompositions}). We now give an explicit proof adapted to the specific setting in which we are working.
        All we have to show is that, for every valuation subring $\OO$ of $\overline{K(T)}$ such that $K[T]\subseteq \OO$, the polynomial $q_T(z)\in \overline{K(T)}[z]$ has coefficients in $\OO$. Let $w$ be the valuation of $\overline{K(T)}$ whose ring of integers is $\OO$; for any polynomial $h(z)\in \overline{K(T)}[z]$, let us also denote the Gauss valuation of $h$ by $w(h)$, i.e.\ $w(h)$ is the minimum of the valuations of the cofficients of $h$, and let $k_w$ denote the residue field of $w$. Suppose by way of contradiction that we have $w(q_{T,1})<0$; then, since $f_{T,1}=q_{T,1}^2+\rho_{T,1}$ and $w(f_{T,1})\ge 0$, we necessarily have that $w(\rho_{T,1})=w(q_{T,1}^2)<w(f_{T,1})$. Let $\gamma\in \overline{K(T)}$ be any element such that $\gamma^2$ has valuation equal to $w(\rho_{T,1})=w(q_{T,1}^2)$; now if we multiply the equation $f_{T,1}=q_{T,1}^2+\rho_{T,1}$  by $\gamma^{-2}$ and we reduce, we obtain the equation $\overline{\gamma^{-2}\rho_{T,1}}=-(\overline{\gamma^{-1}q_{T,1}})^2$ in $k_w[z]$. But this is impossible, since the left-hand side is a nonzero polynomial with coefficients in $k(w)$ whose monomials all have odd degree, while the right-hand side is the square of a nonzero polynomial with coefficients in $k(w)$, and hence it will contain nonzero monomials of even degree.
    \end{proof}
\end{prop}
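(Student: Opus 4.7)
The plan is to prove integrality via the valuative criterion for integral closure: an element of $\overline{K(T)}$ is integral over $K[T]$ if and only if it lies in every valuation ring $\mathcal{O} \subseteq \overline{K(T)}$ containing $K[T]$. Accordingly, I would fix such a valuation ring $\mathcal{O}$ with corresponding valuation $w$, extend $w$ to the Gauss valuation on polynomials in $z$ (so $w(h)$ is the minimum of the $w$-values of the coefficients of $h$), and reduce the proposition to showing that $w(q_{T,1}) \geq 0$ and $w(\rho_{T,1}) \geq 0$. Since the Gauss valuation of a polynomial is the minimum of the $w$-values of its coefficients, these two bounds give exactly $w(Q_i(T)) \geq 0$ and $w(R_i(T)) \geq 0$ for all $i$, hence membership in $\mathcal{O}$. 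The inclusion $K[T] \subseteq \mathcal{O}$ immediately yields $w(f_{T,1}) \geq 0$ (the coefficients of $f_{T,1}(z) = f(z+T)$ with respect to $z$ lie in $K[T]$), and once $w(q_{T,1}) \geq 0$ is established the bound $w(\rho_{T,1}) \geq 0$ follows automatically from $\rho_{T,1} = f_{T,1} - q_{T,1}^2$.

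The heart of the proof is therefore to show $w(q_{T,1}) \geq 0$. The idea I would use is that the principle ``totally odd implies good'' from \Cref{cor totally odd is good} (ultimately from \Cref{prop good decomposition}(a)) is not genuinely tied to the original valuation $v$ on $\bar{K}$: it is a statement of pure valued-field theory and applies equally well to the Gauss valuation $w$ on $\overline{K(T)}[z]$. Concretely, I would assume for contradiction that $w(q_{T,1}) < 0$, so that $w(q_{T,1}^2) = 2w(q_{T,1}) < 0 \leq w(f_{T,1})$, and the identity $f_{T,1} = q_{T,1}^2 + \rho_{T,1}$ combined with the ultrametric inequality forces $w(\rho_{T,1}) = w(q_{T,1}^2)$. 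Scaling by an element $\gamma \in \overline{K(T)}^\times$ with $w(\gamma) = w(q_{T,1})$ and passing to reductions in the residue field of $w$, I would obtain an identity $\overline{\gamma^{-2}\rho_{T,1}} = -\overline{\gamma^{-1}q_{T,1}}^2$. The left-hand side is a nonzero polynomial with only odd-degree terms in $z$, by the totally odd hypothesis, while the right-hand side is the negative of the square of a nonzero polynomial and therefore has a nonzero leading term of even degree, which is a contradiction.

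The main obstacle, as I see it, is conceptual rather than technical: one has to recognize that the results of \S\ref{sec models hyperelliptic part-square} on part-square decompositions, although stated for the base valuation $v$ on $\bar{K}$, in fact apply to an arbitrary valuation of a field, including the Gauss extension $w$ on $\overline{K(T)}[z]$. Note that when $w$ restricts trivially to $K$ one has $w(2) = 0$, so the threshold $2v(2)$ appearing in \Cref{dfn good totally odd} collapses to $0$ and the condition of being ``good'' becomes exactly the inequality $w(\rho) \geq w(h)$ that we need. Once this viewpoint is adopted, the argument is essentially a direct adaptation of the proof of \Cref{prop good decomposition}(a) and no further computation is required.
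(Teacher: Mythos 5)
Your proof is correct and takes essentially the same route as the paper: both use the valuative criterion, pass to the Gauss valuation $w$, assume $w(q_{T,1})<0$ for contradiction, rescale by $\gamma$, and derive the impossibility that a nonzero purely-odd-degree polynomial equals $-$ (a nonzero square). One small remark on your closing paragraph: the observation that $w(2)=0$ when $w$ restricts trivially to $K$ is true but not needed, and it need not hold for every valuation ring $\mathcal{O}\supseteq K[T]$ (one could take $w$ extending $v$). Fortunately your core argument in the second paragraph never actually invokes goodness or the $2v(2)$ threshold — the square $\bar q^2$ has a nonzero leading term of even degree $2\deg(\bar q)$ regardless of residue characteristic, so the contradiction stands without any hypothesis on $w(2)$.
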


\begin{dfn} \label{dfn F}
    Let $L\subset \overline{K(T)}$ be the smallest Galois extension of $K(T)$ to which $R_1(T)$ belongs. We define $F(T) \in K[T]$ to be the norm of $R_1(T)$ with respect to the extension $L/K(T)$.
\end{dfn}

\begin{rmk}
    \label{rmk F is a polynomial}
    Note that we can be sure that the norm $F(T)$ of $R_1(T)$ is actually a polynomial in the variable $T$ (and not just a rational function) because of the integrality result given by \Cref{prop integrality Qi Ri}.
\end{rmk}

\begin{rmk} \label{rmk formulas for F for g=1 and g=2}
    In the cases of $g \in \{1, 2\}$, assuming, for simplicity, that $f$ is monic, we may easily compute $F(T)$ as the norm of $R_1(T)$ using the formulas found in \S\ref{sec computations sufficiently odd deg3},\ref{sec computations sufficiently odd deg5}.  For $0 \leq i \leq 2g + 1$, let $P_i(T) \in K[T]$ be the $z^i$-coefficient of $f(z + T) \in K[T][z]$.  Then for $g = 1$, we have the formula 
    \begin{equation} \label{eq formula for F when g=1}
        F = P_1^2 - 4P_2 P_0,
    \end{equation}
    and for $g = 2$, we have the formula
    \begin{equation} \label{eq formula for F when g=2}
        F = (P_1^2 - 4P_2P_0)^2 - 64P_4 P_0^3.
    \end{equation}
\end{rmk}

\subsection{Using the polynomial \texorpdfstring{$F$}{F} to find centers}
\label{sec centers properties}

We will now establish some properties of $F$; in particular, we will show that each root of $F$ is the center of a valid disc, and that all valid discs $D$ such that $D\cap \RR=\varnothing$ contain a root of $F$.

\begin{prop} \label{prop roots of F are centers of clu discs}
    Let $\alpha$ be a root of $F$ in $\bar{K}$.  Then we have the following.
    \begin{enumerate}[(a)]
        \item There exists a part-square decomposition $f=q^2+\rho$ which is totally odd at the center $\alpha$ such that $\rho_{\alpha,1}$ has no linear term.
        \item The element $\alpha$ is not a root of $f$ (i.e.\, $\alpha\notin\RR$), and there exists a valid disc $D$ containing $\alpha$ and such that $\ell(\XX_D,\infty)>0$.
        \item If $D=D_{\alpha,b}$ is minimal among the valid discs satisfying the conditions described in (b), then we have $\ell(\XX_D,\overline{x_{\alpha,\beta}}=0)=0$ (for a choice of $\beta\in \bar{K}^\times$ with $v(\beta) = b$).
    \end{enumerate}
    \begin{proof}
        Statement (a) follows from the definition of the polynomial $F$ as the norm of the linear coefficient of $\rho_{T,1}$.  More precisely, let $L' \subset \overline{K(T)}$ be a Galois extension of $K(T)$ to which all the coefficients of the polynomials $q_{T,1}(z)$ and $\rho_{T,1}(z)$ belong, and let $S'$ be the integral closure of $K[T]$ in $L'$. The norm $\mathrm{Nm}_{L'/K(T)}(R_1(T))\in K[T]$ must be a power $F^d$ of $F$; hence, the element $\alpha$ is a root of it. Let us also choose an extension $\widetilde{\psi}_\alpha: S'\to \bar{K}$ of the evaluation map $\psi_\alpha: K[T]\to \bar{K}, T\mapsto \alpha$: it is clear that the polynomials $q_{\alpha,1}(z) := \widetilde{\psi}_\alpha(q_{T,1}(z))\in \bar{K}[z]$ and $\rho_{\alpha,1}(z) := \widetilde{\psi}_\alpha(\rho_{T,1}(z))\in \bar{K}[z]$ provide a totally odd part-square decomposition for $f_{\alpha,1}$ whose linear coefficient is $\widetilde{\psi}_\alpha(R_1(T))\in \bar{K}$. On the other hand, since $\alpha$ is a root of $F^d$, we have $\widetilde{\psi}_\alpha(F^d)=\psi_{\alpha}(F^d)=0$, while at the same time, we have the formula $F^d=\prod_{\sigma}\sigma(R_1(T))$ as $\sigma$ ranges among the elements of $\Gal_{L'/K(T)}$.  We conclude that $\widetilde{\psi}_\alpha(\sigma(R_1(T))=0$ for some $\sigma \in \Gal_{L'/K(T)}$; after replacing $\tilde{\psi_a}$ with $\tilde{\psi_\alpha}\circ \sigma$, we may assume that $\sigma=1$, so that $\widetilde{\psi}_\alpha(R_1(T))=0$, and part (a) is thus proved.
        
        Let us now prove part (b).  Using part (a), we have a part-square decomposition $f=q^2+\rho$ that is totally odd with respect to the center $\alpha$ and such that the linear term of $\rho_{\alpha,1}$ is zero.  Suppose that $\alpha$ is a root of $f$, so that the polynomial $f_{\alpha,1}$ has no constant term.  Then, coming from the fact that $x_{\alpha,1}^3 | \rho_{\alpha,1}(x_{\alpha,1})$, it is easy to see that we have $x_{\alpha,1} | q_{\alpha,1}(x_{\alpha,1})$; it immediately follows that we have $x_{\alpha,1}^2 | f_{\alpha,1}(x_{\alpha,1})$, which contradicts the fact that $f$ has no multiple roots.  This proves the first claim of part (b).  
        
        Now let us study the function $\qq \ni c \mapsto \tbest{\RR}{D_{\alpha,c}}$. When $c\to -\infty$, it is constantly zero, since $f$ has odd degree and $f=0^2+f$ is consequently a good part-square decomposition at large enough discs, while when $c\to +\infty$, it is constantly $2v(2)$ since $\alpha$ is not a root of $f$ (this was already mentioned in \Cref{rmk structure of J}). As a consequence, there exists $b\in \qq$ such that the output $c\mapsto\tbest{\RR}{D_{\alpha,c}}$ is $<2v(2)$ right before $c=b$ and equals $2v(2)$ at $c\ge b$.  Let $\mathfrak{s}=D_{\alpha,b}\cap \RR$, so that $d_-(\mathfrak{s},\alpha)<b=b_-(\mathfrak{s},\alpha)\le b_+(\mathfrak{s},\alpha)$ in the language of \S\ref{sec depths construction valid discs}. If $\mathfrak{s}\neq \varnothing$, \Cref{thm summary depths valid discs} ensures that the disc $D:=D_{\alpha,b}$ is valid.  If $\mathfrak{s}=\varnothing$, from the fact that $\rho_{\alpha,1}$ has no linear term we deduce that the function $I(\varnothing,\alpha)\to [0,2v(2)], c\mapsto \tbest{\RR}{D_{\alpha,c}}$, which can be computed as $c\mapsto \truncate{\underline{t}_{q,\rho}(D_{\alpha,c})}$, grows with slopes $\ge 3$ until reaching $2v(2)$ at $c=b$ (in other words, it cannot admit slope $1$); hence 
        \Cref{thm summary depths valid discs} still guarantees that the disc $D$ is valid since $\lambda_-(\varnothing,\alpha)\ge 3$.
        
        It is an immediate consequence of \Cref{lemma ell and t function} that we have $\ell(\XX_D,\overline{x_{\alpha,\beta}}=0)=0$ and $\ell(\XX_D,\overline{x_{\alpha,\beta}}=\infty)>0$, for $\beta\in \bar{K}^\times$ an element of valuation $b$. In particular, the proof of (b) is finished. Moreover, if we take $D'=D_{\alpha,b'}$ to be any other valid disc centered at $\alpha$ such that $\ell(\XX_{D'},\infty)>0$, by \Cref{lemma ell and t function} we must have that the output of $c\mapsto \tbest{\RR}{D_{\alpha,c}}$ is $<2v(2)$ for $c$ slightly smaller than $b'$ and $=2v(2)$ at $c=b'$. But this implies, by the construction of $b$, that we have $b'\le b$, i.e.\ $D'\supseteq D$. This shows that the valid disc $D$ that we found above is the minimal one satisfying the conditions given by part (b) and thus completes the proof of part (c).
    \end{proof}
\end{prop}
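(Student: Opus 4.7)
\medskip

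\noindent\textbf{Proof proposal.} My plan for part (a) is to use Galois descent applied to the definition of $F(T)$. Recall that $F(T)$ is the norm of $R_1(T)$ over the Galois closure $L/K(T)$ containing $R_1(T)$, and that by \Cref{prop integrality Qi Ri} all coefficients of $q_{T,1}$ and $\rho_{T,1}$ are integral over $K[T]$. I would fix a Galois extension $L'/K(T)$ containing all of these coefficients, with $S'$ the integral closure of $K[T]$ in $L'$. Since $\alpha$ is a root of $F$, and $\prod_{\sigma \in \Gal(L'/K(T))}\sigma(R_1(T))$ is a positive power of $F$, at least one Galois conjugate $\sigma(R_1(T))$ lies in the kernel of some extension $\widetilde{\psi}_\alpha \colon S' \to \bar{K}$ of the evaluation map $T \mapsto \alpha$. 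Pre-composing $\widetilde{\psi}_\alpha$ with $\sigma$, I obtain a totally odd part-square decomposition of $f_{\alpha,1}$ whose linear coefficient vanishes, which proves (a).

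\medskip

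\noindent For (b), I would first show $\alpha \notin \RR$ by contradiction. Assuming $\alpha \in \RR$, the polynomial $f_{\alpha,1}$ has no constant term, while $\rho_{\alpha,1}$ from part (a) has neither a constant term (being totally odd) nor a linear term (by construction); hence $z^3 \mid \rho_{\alpha,1}$. From $q_{\alpha,1}^2 = f_{\alpha,1} - \rho_{\alpha,1}$ one reads off that the constant term of $q_{\alpha,1}$ vanishes, so $z \mid q_{\alpha,1}$ and thus $z^2 \mid f_{\alpha,1}$, contradicting separability of $f$. To produce the valid disc, I would analyze the piecewise-linear function $c \mapsto \tbest{\RR}{D_{\alpha,c}}$ on $\qq$. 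As $c \to -\infty$, the function is constant equal to $0$ since $|\RR| = 2g+1$ is odd (cf.\ \Cref{lemma slopes of t}). As $c \to +\infty$, the disc $D_{\alpha,c}$ becomes disjoint from $\RR$, and the decomposition from (a)---whose $\rho_{\alpha,1}$ has minimum odd degree $\geq 3$---forces, via \Cref{rmk t fun computing} and \Cref{lemma mathfrakh}, the slopes of the function to be $\geq 3$ until it attains $2v(2)$. Let $b$ be the smallest rational at which $\tbest{\RR}{D_{\alpha,b}} = 2v(2)$, and set $\mathfrak{s} := D_{\alpha,b} \cap \RR$; then $b = b_-(\mathfrak{s},\alpha) \in (d_-(\mathfrak{s},\alpha), b_+(\mathfrak{s},\alpha)]$. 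If $\mathfrak{s} \neq \varnothing$, \Cref{thm summary depths valid discs}(a) directly gives validity; if $\mathfrak{s} = \varnothing$, the slope condition $\lambda_-(\varnothing,\alpha) \geq 3$ holds (because $\rho_{\alpha,1}$ has no linear term) and \Cref{thm summary depths valid discs}(b)(ii) applies. Finally, \Cref{lemma ell and t function}(d) yields $\ell(\XX_D,\infty) = 1 + \lambda_-(\mathfrak{s},\alpha) > 0$.

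\medskip

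\noindent For (c), I would use the minimality together with \Cref{lemma ell and t function}. For any valid disc $D' = D_{\alpha,b'}$ containing $\alpha$ with $\ell(\XX_{D'},\infty) > 0$, \Cref{lemma ell and t function}(d) requires the left derivative of $c \mapsto \tbest{\RR}{D_{\alpha,c}}$ at $b'$ to be a positive odd integer, hence $b' \leq b$ by the construction of $b$, confirming that $D = D_{\alpha,b}$ is minimal. By construction $\tbest{\RR}{D_{\alpha,c}} = 2v(2)$ for all $c \geq b$, so the right derivative at $b$ vanishes, and \Cref{lemma ell and t function}(a) delivers $\ell(\XX_D,\overline{x_{\alpha,\beta}}=0) = 0$.

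\medskip

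\noindent\textbf{Main obstacle.} The most delicate step is part (a): one must move from the merely formal statement ``$F(\alpha) = 0$'' to an actual totally odd decomposition of $f_{\alpha,1}$ with vanishing linear term, which requires the careful choice of Galois conjugate and the use of the integrality of the coefficients of $q_{T,1}$ and $\rho_{T,1}$ over $K[T]$. Once (a) is in hand, (b) reduces to the geometric analysis of the piecewise-linear profile $c \mapsto \tbest{\RR}{D_{\alpha,c}}$, and (c) becomes a tautology from how $b$ was selected.
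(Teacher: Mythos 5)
Your argument for parts (a) and (b) coincides with the paper's, and your plan for part (c) is also the paper's, but there is a genuine gap in how you choose the depth $b$.  You define $b$ as ``the smallest rational at which $\tbest{\RR}{D_{\alpha,b}}=2v(2)$,'' i.e.\ the first up-crossing of the level $2v(2)$.  The paper instead chooses $b$ to be the (unique) rational such that $\tbest{\RR}{D_{\alpha,c}}<2v(2)$ just before $c=b$ \emph{and} $\tbest{\RR}{D_{\alpha,c}}=2v(2)$ for \emph{all} $c\geq b$, i.e.\ the last up-crossing.  These need not coincide: the function $c\mapsto\tbest{\RR}{D_{\alpha,c}}$ is concave with decreasing slopes on each interval $I(\mathfrak{s},\alpha)$ (\Cref{lemma slopes of t}), but the slopes can reset as $c$ passes from one interval to the next, and \Cref{lemma slopes of t} explicitly allows negative slopes (of the form $1-|\mathfrak{s}|$) when $|\mathfrak{s}|\geq 2$.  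In particular, the function may rise to $2v(2)$ on $I(\mathfrak{s}',\alpha)$ for some even-cardinality cluster $\mathfrak{s}'$ near $\alpha$, fall back to $0$ on a subsequent interval $I(\mathfrak{s}'',\alpha)$ with $|\mathfrak{s}''|$ odd, and then rise to $2v(2)$ again on $I(\varnothing,\alpha)$; in that case the first and last up-crossings differ.

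With your choice of $b$ the two key assertions in part (c) do not follow.  First, from ``$b$ is the smallest $c$ with $\tbest{\RR}{D_{\alpha,c}}=2v(2)$'' and ``$\tbest{\RR}{D_{\alpha,b'}}=2v(2)$'' one can only deduce $b\leq b'$, which is the reverse of the inequality $b'\leq b$ you need for minimality of $D$.  Second, ``$\tbest{\RR}{D_{\alpha,c}}=2v(2)$ for all $c\geq b$'' is not a consequence of your definition of $b$, so you cannot conclude that the right derivative at $b$ vanishes; if $\tbest{\RR}{D_{\alpha,\cdot}}$ drops below $2v(2)$ just after $b$, then \Cref{lemma ell and t function}(b) would yield $\ell(\XX_D,\overline{x_{\alpha,\beta}}=0)>0$, not $0$.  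The fix is simply to adopt the paper's definition of $b$ (the smallest $b$ such that $\tbest{\RR}{D_{\alpha,c}}=2v(2)$ for all $c\geq b$), after which your arguments go through exactly as written.
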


The following theorem provides a statement that is somehow converse to the one of \Cref{prop roots of F are centers of clu discs} above. Together with that proposition, it is essentially a generalization of \cite[Theorem 5.1]{lehr2006wild} (which treats only the geometrically equidistant case), and the underlying strategy of its proof is inspired by that of Lehr and Matignon.
\begin{thm} \label{thm centers}
    Suppose that $D=D_{\alpha,b}$ is a valid disc such that $\ell(\XX_D,\infty)>0$, i.e.\ such that $\SF{\YY_D}$ has only one branch above $\infty\in \SF{\XX_D}$. Then $D$ contains a root of $F$.
\end{thm}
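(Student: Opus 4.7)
The plan is to exhibit a root of $F$ inside $D=D_{\alpha,b}$ by producing a root $s_0\in\OO_{\bar K}$ of the polynomial $F(\alpha+\beta S)\in\bar K[S]$, where $\beta\in\bar K^\times$ satisfies $v(\beta)=b$; the element $\tilde\alpha:=\alpha+\beta s_0\in D$ will then be the required root of $F$. By Newton polygon considerations, it is enough to exhibit some index $i\ge 1$ for which $v(F^{(i)}(\alpha)\beta^i/i!)\le v(F(\alpha))$; equivalently, the normalized reduction of $F(\alpha+\beta S)$ is a non-constant element of $k[S]$. I will establish this by controlling both $v(F(\alpha))$ from below and the valuation of the leading coefficient $c_N\beta^N$ of $F(\alpha+\beta S)$ (with $N:=\deg F$) from above, both in terms of a totally odd decomposition of $f$ at $\alpha$.

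I would first fix a totally odd part-square decomposition $f(x)=q_{\alpha,1}(x-\alpha)^2+\rho_{\alpha,1}(x-\alpha)$, writing $\rho_{\alpha,1}(z)=r_1 z+r_3 z^3+\cdots+r_{2g+1}z^{2g+1}$, and translate the hypothesis $\ell(\XX_D,\infty)>0$ into valuation identities on the $r_i$. By \Cref{lemma ell and t function}(d) the hypothesis forces $\lambda:=\partial^-\mathfrak{t}^\RR(D)$ to be a positive odd integer; combining this with $\mathfrak{t}^\RR(D)=2v(2)$ (which holds since $D$ is valid, hence $\SF{\YY_D}\to\SF{\XX_D}$ is separable) and \Cref{lemma mathfrakh}(b) applied to $\rho_{\alpha,\beta}(z):=\rho_{\alpha,1}(\beta z)$ yields $v(r_s\beta^s)=v(f_{\alpha,\beta})+2v(2)$ with $v(r_{s'}\beta^{s'})>v(r_s\beta^s)$ for every odd $s'<s$, where $s:=\lambda+m$ and $m:=|D\cap\RR|$; parity together with \Cref{thm summary depths valid discs} forces $s\ge 3$. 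In particular, $v(r_i\beta^i)\ge v(f_{\alpha,\beta})+2v(2)$ for every odd $i$, so that $v(r_1)\ge v(f_{\alpha,\beta})+2v(2)-b$.

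The next step is to bound $v(F(\alpha))$ from below using the factorization $F(\alpha)=\prod_\sigma\sigma(R_1(\alpha))$ over the Galois orbit of $R_1(T)$ over $K(T)$. Each Galois conjugate $\sigma(R_1(\alpha))=:r_1^\sigma$ is the linear coefficient of the totally odd decomposition of $f$ at $\alpha$ obtained by reversing the signs of the nested square roots appearing in the $Q_i(T)$, and by \Cref{prop good decomposition}(b) all such totally odd decompositions of $f_{\alpha,\beta}$ share the same Gauss valuation $v(\rho^\sigma_{\alpha,\beta})=v(f_{\alpha,\beta})+2v(2)$, yielding the uniform bound $v(r_1^\sigma)\ge v(f_{\alpha,\beta})+2v(2)-b$ for every conjugate. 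Multiplying over $\sigma$ gives $v(F(\alpha))\ge c\bigl(v(f_{\alpha,\beta})+2v(2)-b\bigr)$, where $c$ is the number of Galois conjugates. On the other hand, the leading coefficient $c_N$ of $F(T)$ is (up to a unit) a power of $r_{2g+1}=f_{2g+1}$ of valuation $c\cdot v(f_{2g+1})$, so that in the ``generic'' case $s=2g+1$, the identity $v(f_{2g+1}\beta^{2g+1})=v(f_{\alpha,\beta})+2v(2)$ yields $v(c_N\beta^N)=c\bigl(v(f_{\alpha,\beta})+2v(2)-b\bigr)$, matching the lower bound on $v(F(\alpha))$ and producing the desired Newton polygon segment and root $s_0\in\OO_{\bar K}$. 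The equality $v(c_N\beta^N)=c(v(f_{\alpha,\beta})+2v(2)-b)$ is verified directly in the low-genus cases $g\in\{1,2\}$ using the formulas of \Cref{rmk formulas for F for g=1 and g=2}, where $(c,N)=(2,4)$ and $(4,16)$ respectively.

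The main obstacle is extending this argument uniformly. The precise identification of the leading coefficient of $F(T)$ as a power of $f_{2g+1}$ with exponent $c=N/(2g)$ requires analyzing the recursion defining the $Q_i(T),R_i(T)$ at the generic center $T$ and tracking the combinatorics of the nested square roots, a delicate computation whose complexity grows with $g$. Moreover, the generic assumption $s=2g+1$ can fail when $D$ is linked to a cluster of cardinality strictly less than $2g$; in that case $v(f_{2g+1}\beta^{2g+1})>v(f_{\alpha,\beta})+2v(2)$ makes $v(c_N\beta^N)$ too large for the direct comparison to succeed, and one must instead compare $v(F(\alpha))$ with a sub-leading coefficient $c_i\beta^i$ of $F(\alpha+\beta S)$ indexed by the highest non-zero odd degree of the normalized reduction of $\rho_{\alpha,\beta}$. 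Such a comparison can be set up by factoring $f=c\,f^{\mathfrak s} f^{\RR\setminus\mathfrak s}$ as in \S\ref{sec depths separating roots} and analyzing each factor separately, or equivalently by transferring the argument to the reciprocal coordinate via \Cref{prop reciprocal}.
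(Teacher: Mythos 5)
Your approach is genuinely different from the paper's. The paper proves the theorem by contradiction: assuming $D$ contains no root of $F$, it finds a slightly larger disc $D'$ still containing no root, shows via \Cref{lemma roots of a non arch polynomial} that $v\circ F$ would then be constant on $D'$, and derives a contradiction from \Cref{key lemma}, which establishes $v(F(\alpha)) > v(F(\alpha'))$ for $\alpha'$ in the annulus $D'\setminus D$. The crucial inequality in \Cref{key lemma} is obtained \emph{geometrically}: the inclusion $\{0\} \subseteq \Ctr(\XX_{D'},\Xrst) \subseteq \{0,\infty\}$, established by comparing $D'$ with the discs in $\Drst$, translates into the fact that $\SF{\YY_{D'}}$ is singular over $0$ but smooth away from $\{0,\infty\}$, which is then read off from the linear coefficient of the reduced $\rho$. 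Your proposal instead tries to prove the theorem directly, by producing a root of $F(\alpha+\beta S)$ with non-negative valuation through a Newton polygon argument — in effect, by controlling the coefficient valuations of $F(\alpha+\beta S)$ explicitly rather than implicitly.

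The direct approach has two genuine gaps, both of which you honestly flag. First, it requires a precise identification of the leading coefficient $c_N$ of $F(T)$ as (up to unit) the $c$-th power of the leading coefficient of $f$, with $c=N/(2g)$. You check this for $g=1,2$ by inspecting the explicit formulas in \Cref{rmk formulas for F for g=1 and g=2}, but a uniform proof requires tracking the combinatorics of the nested square roots defining $R_1(T)$, which you do not do. Second — and more seriously — the comparison between $v(F(\alpha))$ and $v(c_N\beta^N)$ only produces the needed Newton polygon segment in the ``generic'' case $s=2g+1$, i.e.\ when the leading coefficient of $\rho_{\alpha,\beta}$ realizes the Gauss valuation. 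This fails precisely when $D$ is a proper sub-disc not containing all the roots, in which case $v(c_N\beta^N) > c(v(f_{\alpha,\beta})+2v(2)-b)$ and the leading term no longer witnesses a non-positive slope; one would have to compare with an intermediate coefficient of $F(\alpha+\beta S)$ indexed by $s$, and the structure of $F$ does not hand you that coefficient nearly as cleanly as it hands you the leading one. The paper's argument sidesteps both issues: by working with the \emph{difference} $v(F(\alpha))-v(F(\alpha'))$ over the annulus and appealing to the non-archimedean product formula in \Cref{lemma roots of a non arch polynomial}, it never needs to know any individual coefficient of $F$, and by deriving the key inequality from the geometry of $\Ctr(\XX_{D'},\Xrst)$ rather than from the location of $s$ in $\{1,3,\ldots,2g+1\}$, it handles all cases uniformly.

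One minor remark on a claim in your second paragraph: you assert that $v(r_{s'}\beta^{s'}) > v(r_s\beta^s)$ for every odd $s'<s$. This need not hold — the left derivative of $c\mapsto\vfun_\rho(D_{\alpha,c})$ being $s$ only says $s$ is the \emph{highest} degree appearing in the normalized reduction; lower-degree coefficients (including $r_1\beta$) may also realize the Gauss valuation. Fortunately, your argument only uses the weaker (and correct) inequality $v(r_1^\sigma\beta)\ge v(\rho^\sigma_{\alpha,\beta})$, so this does not break anything, but the stronger statement should be deleted.
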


\begin{rmk} \label{rmk centers}
    Let $D=D_{\alpha,b}$ be a disc, and let $\mathfrak{s} = D \cap \RR$, so that we have $b\in (d_-(\mathfrak{s},\alpha),d_+(\mathfrak{s},\alpha)]$.
    Then the disc $D$ satisfies the hypothesis in the above theorem if and only if $b=b_-(\mathfrak{s},\alpha)\le b_+(\mathfrak{s},\alpha)$: this is an easy consequence of \Cref{thm summary depths valid discs} together with \Cref{prop lambda plus minus and ell}(a). Therefore, a valid disc $D$ does not satisfy the hypothesis in the above theorem if and only if $b = b_+(\mathfrak{s},\alpha)>b_-(\mathfrak{s},\alpha)$.
    
    In particular, $D$ always satisfies the hypothesis of the theorem if it is linked to no cluster (i.e., $\mathfrak{s}=\varnothing$), or if it is linked to a unique cluster and is the only disc linked to it (i.e.\ $\mathfrak{s}\neq \varnothing$ and $d_-(\mathfrak{s})<b_-(\mathfrak{s})=b=b_+(\mathfrak{s})<d_+(\mathfrak{s})$).
\end{rmk}

\begin{cor} \label{cor centers}
    Each root of $F$ lies in a valid disc. Conversely, suppose that a valid disc $D$ satisfies one of the two assumptions below:
    \begin{enumerate}[(a)]
        \item the disc $D$ is linked to no cluster; or 
        \item the disc $D$ is linked to a unique cluster $\mathfrak{s}$, is the unique valid disc linked to $\mathfrak{s}$, and is minimal among valid discs.
    \end{enumerate}
    Then the disc $D$ contains a root $\alpha$ of $F$. Moreover, for any such $\alpha$ and for all $a \in D\cap \RR$, we have $v(a-\alpha)=b$, where $b$ is the depth of the disc $D$.
    \begin{proof}
        All roots of $F$ belong to valid discs by \Cref{prop roots of F are centers of clu discs}. Conversely, suppose that $D$ is a valid disc. If $D$ is linked to no cluster, or if it is linked to only one cluster and it is the unique valid disc linked to it, then by \Cref{rmk centers} it satisfies the assumption of \Cref{thm centers} (i.e., $\ell(\XX_D,\infty)>0$) and consequently contains a root $\alpha$ of $F$.  Now suppose that $D$ satisfies condition (b), so that we have $d_-(\mathfrak{s})<b_-(\mathfrak{s})=b=b_+(\mathfrak{s})<d_+(\mathfrak{s})$ (see the results in \S\ref{sec depths construction valid discs}), and so that moreover, if $P \neq \infty$ is the point of $\SF{\XX_D}$ to which $\mathfrak{s}$ reduces, we have $\ell(\XX_D,P)=1+\lambda_+(\mathfrak{s})>0$ by \Cref{prop lambda plus minus and ell}. But \Cref{prop roots of F are centers of clu discs}(c) ensures that, if $P'\neq \infty$ is the point of $\SF{\XX_D}$ to which $x=\alpha$ reduces, we have $\ell(\XX_D,P')=0$. Hence, we have $P\neq P'$, which means that $v(a - \alpha)=b$ for all $a \in \mathfrak{s}$.
    \end{proof}
\end{cor}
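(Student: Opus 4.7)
The plan is to prove the three claims of the corollary in order, essentially assembling results that have already been established.

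First, I would dispose of the forward direction in one sentence: if $\alpha$ is a root of $F$, then \Cref{prop roots of F are centers of clu discs}(b) directly produces a valid disc containing $\alpha$. For the converse, in both cases (a) and (b) the plan is to reduce to an application of \Cref{thm centers} by verifying that $\ell(\XX_D,\infty)>0$. According to \Cref{rmk centers}, this amounts to showing that $D = D_{\alpha,b}$ satisfies $b = b_-(\mathfrak{s},\alpha) \le b_+(\mathfrak{s},\alpha)$, where $\mathfrak{s} = D\cap\RR$. In case (a), $\mathfrak{s}=\varnothing$ and \Cref{thm summary depths valid discs}(b) provides exactly $b = b_-(\varnothing,\alpha)$, while $b_+(\varnothing,\alpha)=+\infty$ by \Cref{rmk structure of J}(c), so the inequality holds trivially. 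In case (b), the uniqueness of the valid disc linked to $\mathfrak{s}$ combined with \Cref{thm summary depths valid discs}(a) forces $b_-(\mathfrak{s}) = b_+(\mathfrak{s})$, so $b$ equals this common value. In either case, \Cref{thm centers} then yields a root of $F$ in $D$.

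Proving the last assertion, namely $v(a-\alpha)=b$ for every $a\in D\cap\RR$, is the one step that requires a small calculation rather than a bare citation. In case (a), $D\cap\RR=\varnothing$ and the claim is vacuous, so the work is in case (b). The idea is to argue by contradiction using the ramification invariants $\ell(\XX_D,\cdot)$: suppose some $a \in \mathfrak{s}$ satisfies $v(a-\alpha) > b$. Then $a$ reduces to the same point $P'\in\SF{\XX_D}$ as $\alpha$. But because $D$ is linked to a unique cluster, all of $\mathfrak{s}$ reduces to a single point $P\in \SF{\XX_D}$, and so $P = P'$. On the one hand, the additional hypothesis in (b) that $D$ is minimal among valid discs forces $d_-(\mathfrak{s})<b_-(\mathfrak{s})=b=b_+(\mathfrak{s})<d_+(\mathfrak{s})$, and then \Cref{prop lambda plus minus and ell}(d) gives $\ell(\XX_D,P)=1+\lambda_+(\mathfrak{s})>0$. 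On the other hand, since $\alpha$ is a root of $F$, \Cref{prop roots of F are centers of clu discs}(c) gives $\ell(\XX_D,P')=0$. These two facts contradict $P=P'$, so $v(a-\alpha)\le b$; combined with $a\in D$, we conclude $v(a-\alpha)=b$.

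The main (and only really non-trivial) obstacle is the last step: one has to keep careful track of which point of $\SF{\XX_D}$ various objects reduce to, and to verify that the hypotheses in (b) genuinely force $D$ to realize the extremal situation $b_-(\mathfrak{s}) = b = b_+(\mathfrak{s})$ needed to apply \Cref{prop lambda plus minus and ell}(d). Once that bookkeeping is in place, the contradiction between the two computations of $\ell$ at a common point does all the work.
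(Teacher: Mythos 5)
Your proposal is essentially the same argument as the paper's: reduce the converse direction to \Cref{thm centers} via \Cref{rmk centers}, then use the two ramification computations ($\ell(\XX_D,P)>0$ from \Cref{prop lambda plus minus and ell} and $\ell(\XX_D,P')=0$ from \Cref{prop roots of F are centers of clu discs}(c)) to separate $P$ from $P'$. The only thing worth flagging is a small misattribution: you write that the hypothesis ``$D$ is minimal among valid discs'' is what forces the chain $d_-(\mathfrak{s})<b_-(\mathfrak{s})=b=b_+(\mathfrak{s})<d_+(\mathfrak{s})$. That chain actually follows from the \emph{other} two parts of condition (b): ``linked to a unique cluster'' gives $d_-(\mathfrak{s})<b<d_+(\mathfrak{s})$ (so $b$ lies in the interior of $I(\mathfrak{s})$), and ``unique valid disc linked to $\mathfrak{s}$'' forces $b_-(\mathfrak{s})=b_+(\mathfrak{s})$ by \Cref{thm summary depths valid discs}, whence $b$ equals this common endpoint. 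The minimality hypothesis is instead what you need in order to legitimately invoke \Cref{prop roots of F are centers of clu discs}(c), since part (c) of that proposition is stated only for $D$ minimal among the valid discs centered at $\alpha$ with $\ell(\XX_D,\infty)>0$ --- which follows from $D$ being minimal among all valid discs, but you should say so rather than spend the hypothesis elsewhere. Once this is corrected, the proof stands and matches the paper's.
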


Let us now address the proof of \Cref{thm centers}. We begin with the following lemma.
\begin{lemma}
    \label{key lemma}
    Let $D=D_{\alpha,b}$ be a valid disc satifying the hypothesis in \Cref{thm centers}. Then, for small enough $\varepsilon>0$, we have the following: for all  $\alpha'$ such that $b':=v(\alpha-\alpha')\in [b-\varepsilon,b)$, and for any pair of part-square decompositions $f=q^2+\rho$ and $f=(q')^2+\rho'$ which are good at the disc $D':=D_{\alpha,b'}=D_{\alpha',b'}$, we have the comparison $v(R_1)>v(R'_1)$ between the respective linear coefficients $R_1, R_1'\in \bar{K}$ of $\rho_{\alpha,1}$ and $\rho'_{\alpha',1}$.
    \begin{proof}
        Let $\mathfrak{s}$ and $\alpha$ be as in \Cref{rmk centers}, so that we have $d_-(\mathfrak{s},\alpha)<b=b_-(\mathfrak{s},\alpha)\le b_+(\mathfrak{s},\alpha)$. In particular, by the results in \S\ref{sec depths construction valid discs} we have $\tbest{\RR}{D'}<2v(2)$ and that $\SF{\YY_{D'}}\to \SF{\XX_{D'}}$ is inseparable for all discs $D':=D_{\alpha,b'}$ with $b'\in [b-\varepsilon,b)$, for $\varepsilon>0$ small enough.  After possibly shrinking $\varepsilon$, we furthermore obtain that, for all such $b'$, we claim that
        \begin{equation}
            \label{fmla key lemma}
            \lbrace 0 \rbrace \subseteq \Ctr(\XX_{D'},\Xrst)\subseteq \lbrace 0, \infty \rbrace.
        \end{equation} 
        Indeed, the set $\Ctr(\XX_{D'},\Xrst)$ is just the finite union $\bigcup_{\tilde{\XX}}\Ctr(\XX_{D'},\tilde{\XX})$, where $\tilde{\XX}$ varies among the smooth models of the line dominated by $\Xrst$. For any such $\tilde{\XX}$, if we let $\tilde{D}$ be the disc such that $\tilde{\XX}=\XX_{\tilde{D}}$, we have three possibilities:
        \begin{enumerate}
            \item $\tilde{D}\subseteq D$; in this case, since $D\subsetneq D'$, we have $\tilde{D}\subsetneq D'$, so that $\Ctr(\XX_{D'},\tilde{\XX})=\{ 0 \}$; note that this case does actually occur at least once, for $\tilde{D}=D$;
            \item $\tilde{D}\supsetneq D$; in this case, since $D'$ is only slightly larger than $D$, we may also assume that $\tilde{D}\supsetneq D'$, so that $\Ctr(\XX_{D'},\tilde{\XX})=\{ \infty \}$; and 
            \item $\tilde{D}\cap D=\varnothing$; in this case, since $D'$ is only slightly larger than $D$, we may also assume that $\tilde{D}\cap D'=\varnothing$, so that $\Ctr(\XX_{D'},\tilde{\XX})=\{ \infty \}$.
        \end{enumerate}

        By \Cref{cor rst isomorphism failure}, the inclusions in (\ref{fmla key lemma}) imply that, for $b'\in [b-\varepsilon,b)$, the special fiber $\SF{\YY_{D'}}$ must be singular above $x_{\alpha,b'}=0$ but non-singular away from $x_{\alpha,b'}=0$ and $x_{\alpha,b'}=\infty$.
        Let us now pick an element $\alpha'\in \bar{K}$ such that $v(\alpha'-\alpha)=b'$ and choose an element $\beta'\in \bar{K}^\times$ of valuation $b'$.  We have $D'=D_{\alpha,b'}=D_{\alpha',b'}$ and that the special fiber $\SF{\YY_{D'}}$ is non-singular above $x_{\alpha',\beta'}=0$ (since $x_{\alpha',\beta'}=0$ corresponds to some point whose $x_{\alpha,\beta'}$-coordinate is neither 0 nor $\infty$). 
        
        Now let us choose two part-square decompositions $f=q^2+\rho$ and $f=(q')^2+\rho'$ that are good at the disc $D'$: our aim will be to show the comparison $v(R_1)<(R_1')$ between the linear terms of $\rho_{\alpha,1}$ and $\rho'_{\alpha',1}$ under the assumption that the valuation $b':=v(\alpha'-\alpha)$ satisfies $b'\in [b-\varepsilon,b)$.  We will actually show this inequality in three steps, by proving the below for the disc $D':=D_{\alpha,b'}$:
        \begin{enumerate}[(a)]
            \item $v(\beta' R_1)>\vfun_{\rho}(D')$;
            \item $v(\beta' R'_1)=\vfun_{\rho'}(D')$;
            \item $\vfun_{\rho}(D')=\vfun_{\rho'}(D')$.
        \end{enumerate}
        
        To prove (a), we observe that the inseparable curve $\SF{\YY_{D'}}$ has the equation \begin{equation*}y^2=\overline{\gamma^{-1}\rho_0(x_{\alpha,b'})},\end{equation*} where $\rho_0$ is a normalized reduction of $\rho_{\alpha,\beta'}$ (see \S\ref{sec models hyperelliptic inseparable}). In light of this, since $\beta'R_1$ is the linear term of $\rho_{\alpha,\beta'}$, (a) simply expresses the fact that $\SF{\YY_{D'}}$ is singular above $x_{\alpha,b'}=0$. In a completely analogous way, the equation in (b) expresses the fact that $\SF{\YY_{D'}}$ is not singular above $x_{\alpha',b'}=0$. Finally, (c) follows from \Cref{rmk same t for good}.
    \end{proof}
\end{lemma}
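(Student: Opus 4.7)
The plan is to exploit the fact that the valid disc $D$ sits at a critical threshold: by \Cref{rmk centers}, the hypothesis $\ell(\XX_D,\infty)>0$ translates into $b=b_-(\mathfrak{s},\alpha)$, where $\mathfrak{s}=D\cap\RR$, so that for $b'$ slightly less than $b$, the function $c\mapsto \tbest{\RR}{D_{\alpha,c}}$ is still strictly less than $2v(2)$ at $c=b'$. By \Cref{prop normalization model}, this means that for such $b'$ the cover $\SF{\YY_{D'}}\to\SF{\XX_{D'}}$ is inseparable, and so its equation has the form $y^2=\overline{\rho_0}(x_{\alpha,\beta'})$, where $\rho_0$ is a normalized reduction of $\rho_{\alpha,\beta'}$.

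Next, I would localize the singularities of $\SF{\YY_{D'}}$ to a subset of $\{0,\infty\}\subset \SF{\XX_{D'}}$, and furthermore show that $0$ is indeed a singular point. To do so, I would invoke \Cref{cor rst isomorphism failure}, whose hypothesis ($\SF{\YY_{D'}}$ reduced) is met in the inseparable regime, and which identifies $\Sing(\SF{\YY_{D'}})$ with the preimage of $\Ctr(\XX_{D'},\Xrst)$. The latter is the finite union $\bigcup_{\tilde{D}}\Ctr(\XX_{D'},\XX_{\tilde{D}})$ over the finitely many discs $\tilde{D}$ corresponding to smooth models dominated by $\Xrst$. Running through the trichotomy of \Cref{prop relative position smooth models line} and shrinking $\varepsilon$ so that $D\subsetneq D'$ continues to have the same relative position to every such $\tilde{D}$ as $D$ does, one obtains $\{0\}\subseteq \Ctr(\XX_{D'},\Xrst)\subseteq \{0,\infty\}$ (the inclusion of $0$ is forced by taking $\tilde{D}=D$, which is valid by hypothesis). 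In particular, $\SF{\YY_{D'}}$ is singular above $x_{\alpha,\beta'}=0$ and nonsingular above every finite point $x_{\alpha,\beta'}\neq 0$; picking any $\alpha'$ with $v(\alpha-\alpha')=b'$ gives a point with $x_{\alpha',\beta'}=0$ and $x_{\alpha,\beta'}\neq 0$, so $\SF{\YY_{D'}}$ is nonsingular there.

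The translation into valuation inequalities is then clean in residue characteristic $2$: the equation $y^2=\overline{\rho_0}$ has a singular point over $P$ exactly when $\overline{\rho_0}$ has a zero of even multiplicity $\ge 2$ there, which at $P=0$ is the statement that the linear coefficient of $\overline{\rho_0}$ vanishes, i.e.\ $v(\beta' R_1)>\underline{v}_\rho(D')$. Applying this with the decomposition $f=(q')^2+\rho'$ centered at $\alpha'$ to the nonsingular point $x_{\alpha',\beta'}=0$ gives the reverse equality $v(\beta' R_1')=\underline{v}_{\rho'}(D')$. This is what will be listed as steps (a) and (b) of the proof.

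Finally, I would establish step (c), namely $\underline{v}_\rho(D')=\underline{v}_{\rho'}(D')$. This is the content of \Cref{rmk same t for good}: two good part-square decompositions of the same polynomial at the same disc yield the same truncated value of $\underline{t}$, and since $\underline{t}_{q,\rho}=\underline{v}_\rho-\underline{v}_f$ is below $2v(2)$ at $D'$ (by the inseparability just established), the untruncated values of $\underline{v}_\rho$ and $\underline{v}_{\rho'}$ coincide. Chaining (a), (c), and (b) yields $v(\beta' R_1)>v(\beta' R_1')$, hence $v(R_1)>v(R_1')$ after cancelling $v(\beta')$.

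The main obstacle I expect is the second paragraph, namely the uniform control over $\varepsilon$ that localizes the contractions into $\{0,\infty\}$ while forcing $0$ to appear; this is a mild but genuinely geometric step, and the rest is a direct translation between vanishing orders at $0$ on the special fiber and valuations of the linear coefficients of the $\rho$'s.
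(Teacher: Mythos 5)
Your proposal follows the paper's proof essentially step for step: the same reduction to $b=b_-(\mathfrak{s},\alpha)$ via \Cref{rmk centers}, the same localization of $\Ctr(\XX_{D'},\Xrst)$ into $\{0,\infty\}$ by running through the trichotomy of disc positions after shrinking $\varepsilon$, and the same three valuation statements (a)--(c) chained via \Cref{rmk same t for good}. The only quibble is your characterization of singular points of $y^2=\overline{\rho_0}$ as zeros of $\overline{\rho_0}$ of even multiplicity $\ge 2$ --- the correct criterion in residue characteristic $2$ is the vanishing of $\overline{\rho_0}'$ at the point --- but the operative consequence you actually use (singularity over $x_{\alpha,\beta'}=0$ is equivalent to the vanishing of the linear coefficient of $\overline{\rho_0}$) is exactly right, so the argument is unaffected.
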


\begin{lemma}
    \label{lemma roots of a non arch polynomial}
    Suppose that $h\in K[z]$ is a nonzero polynomial and $D:=D_{\alpha,b}\subseteq \bar{K}$ is a disc not containing any of the roots of $h$ in $\bar{K}$. Then we have $v(h(z_0)) = v(h(z_1))$ for all $z_0, z_1\in D$.
    \begin{proof}
        Let $a_1, \ldots, a_r$ be the roots of $h$ in $\bar{K}$, so that we can write $h(z)=c \prod_{i=1}^r (z-a_i)$ for some $c\in K^\times$. For each $i$, since $z_0$ and $z_1$ are points of $D$, while $s_i$ is not, we have $v(z_0-a_i)=v(z_1-a_i)$, from which it clearly follows that $v(h(z_0))=v(h(z_1))$.
    \end{proof}
\end{lemma}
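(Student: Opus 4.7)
The plan is to reduce to the linear case by factoring $h$ over the algebraic closure, and then apply the strict ultrametric inequality to each factor. Concretely, I would first write $h(z) = c \prod_{i=1}^r (z - a_i)$ with $c \in K^\times$ and $a_i \in \bar{K}$ the roots of $h$, so that for any $z_0 \in D$ we have
\begin{equation*}
v(h(z_0)) = v(c) + \sum_{i=1}^r v(z_0 - a_i).
\end{equation*}
It therefore suffices to show that each summand $v(z_0 - a_i)$ is independent of the choice of $z_0 \in D$.

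The key step is the following elementary observation: since $D = D_{\alpha,b}$ does not contain the root $a_i$, we have $v(a_i - \alpha) < b$, while every $z_0 \in D$ satisfies $v(z_0 - \alpha) \geq b$. Writing $z_0 - a_i = (z_0 - \alpha) - (a_i - \alpha)$ and applying the strict form of the non-archimedean triangle inequality (which applies precisely because the two summands have distinct valuations), I would conclude
\begin{equation*}
v(z_0 - a_i) = \min\{v(z_0 - \alpha), v(a_i - \alpha)\} = v(a_i - \alpha),
\end{equation*}
which is a quantity depending only on $a_i$ and $\alpha$, not on $z_0$. Summing over $i$ gives the claimed equality $v(h(z_0)) = v(h(z_1))$ for all $z_0, z_1 \in D$.

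There is essentially no obstacle here: the argument is a direct application of the fact that a polynomial's valuation on a disc disjoint from its zero set is constant, which reduces via factorization to the elementary ultrametric identity above. The only minor subtlety is remembering to use the \emph{strict} inequality $v(a_i-\alpha) < b \leq v(z_0-\alpha)$, which forces equality (rather than just inequality) in the ultrametric estimate and makes the common value independent of $z_0$.
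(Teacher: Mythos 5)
Your proof is correct and takes the same route as the paper: factor $h$ over $\bar{K}$ and show $v(z_0 - a_i)$ is constant on $D$ for each root $a_i$. You simply fill in the ultrametric computation $v(z_0 - a_i) = v(a_i - \alpha)$ that the paper leaves implicit.
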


\begin{proof}[Proof of \Cref{thm centers}]
    Let $S$ be the minimal finite Galois extension of $K[T]$ to which $R_1(T)$ belongs, so that $F(T)=\mathrm{Nm}_{S/K[T]}(R_1) = \prod_{\sigma}\sigma(R_1)$, with the product taken over all $\sigma \in \Gal(S/K[T])$.  Now let $\alpha'$ be any point of the annulus $D_\varepsilon\setminus D$, where $D_\varepsilon = D_{\alpha,b-\varepsilon}$ for some $\varepsilon > 0$ chosen small enough so that the conclusion of \Cref{key lemma} holds. Let us consider the evaluation maps $\psi_\alpha, \psi_{\alpha'}: K[T]\to \bar{K}$ corresponding to $\alpha$ and $\alpha'$; for each of them, we make the choice of an extension $\widetilde{\psi}_\alpha, \widetilde{\psi}_{\alpha'}: S\to \bar{K}$; the other possible extensions can be obtained by precomposing with appropriate automorphisms $\sigma\in \Gal(S/K[T])$.
    
    We clearly have that $\widetilde{\psi}_\alpha(R_1)$ and $\widetilde{\psi}_{\alpha'}(R_1)$ are the linear terms of $\rho_{\alpha,1}$ and $\rho'_{\alpha',1}$ for two part-square decompositions $f=q^2+\rho$ and $f=(q')^2+\rho'$ which are totally odd with respect to the centers $\alpha$ and $\alpha'$ respectively; in particular, both decompositions are good at any disc containing both $\alpha$ and $\alpha'$ (see \Cref{rmk totally odd good}); hence, \Cref{key lemma} ensures that $v(\widetilde{\psi_\alpha}(R_1))>v(\widetilde{\psi_{\alpha'}}(R_1))$. Since this holds for any choices of extensions $\widetilde{\psi}_\alpha, \widetilde{\psi}_{\alpha'}$, we deduce that $ v(\widetilde{\psi_\alpha}(\prod_\sigma\sigma(R_1)))>v(\widetilde{\psi_{\alpha'}}(\prod_\sigma\sigma(R_1)))$, which is to say that $v(\psi_\alpha(F))>v(\psi_{\alpha'}(F))$, which in turn is nothing but the comparison $v(F(\alpha))>v(F(\alpha'))$.
    
    Now suppose by way of contradiction that $D$ does not contain any root of $F$. One can clearly find a disc $D'$, with $D\subsetneq D'\subseteq D_\varepsilon$, such that also $D'$ does not contain any root of $F$. Now, for $\alpha'\in D' \setminus D$, the argument above implies that $v(F(\alpha))>v(F(\alpha'))$, but, in light of \Cref{lemma roots of a non arch polynomial}, this contradicts the assumption that $D'$ does not contain any root of $F$.
\end{proof}

\section{The geometry of the special fiber} \label{sec structure}

Our purpose in this section is to use the framework we developed in \S\ref{sec depths} to glean information about the components of the special fiber of the relatively stable model of the hyperelliptic curve $Y$, based on knowledge of the relationship between its valid discs and the cluster picture associated to the defining polynomial. In particular, in \S\ref{sec structure toric rank} we will compute the toric rank of the special fiber of $\Yrst$, while in  \S\ref{sec structure abelian rank} we will discuss the abelian rank of its irreducible components.

\subsection{The toric rank}
\label{sec structure toric rank}

In this subsection, after introducing the notions of a \emph{viable} cluster and an \emph{\"{u}bereven} cluster, we will prove the following theorem that allows to compute the toric rank of $\SF{\Yrst}$, and hence, by \Cref{prop invariance of ranks}, the toric rank of the special fiber of any semistable model of $Y$ defined over any extension of $R$.
\begin{thm}
    \label{thm toric rank}
    The toric rank of $\SF{\Yrst}$ 
    equals the number of non-\"{u}bereven viable clusters.
\end{thm}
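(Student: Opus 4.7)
The plan is to compute $t(\SF{\Yrst})$ directly via the formula $t(\CC_s) = \Nnodes(\CC_s) - \Nirreducible(\CC_s) + 1$ from \S\ref{sec preliminaries abelian toric unipotent}(b), exploiting the fact that $\SF{\Xrst}$ is a tree of rational curves (being the special fiber of a semistable model of $\mathbb{P}^1_K$), so that $t(\SF{\Xrst}) = 0$, i.e.\ $\Nnodes(\SF{\Xrst}) = \Nirreducible(\SF{\Xrst}) - 1$. Subtracting, I obtain
\begin{equation*}
    t(\SF{\Yrst}) = \bigl(\Nnodes(\SF{\Yrst}) - \Nnodes(\SF{\Xrst})\bigr) - \bigl(\Nirreducible(\SF{\Yrst}) - \Nirreducible(\SF{\Xrst})\bigr),
\end{equation*}
which reduces the computation to a local bookkeeping over the components and nodes of $\SF{\Xrst}$.

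Next, I would partition the components of $\SF{\Xrst}$ according to whether the corresponding disc in $\Drst$ is valid (the separable case, \S\ref{sec models hyperelliptic separable}) or inseparable (\S\ref{sec models hyperelliptic inseparable}). By \Cref{prop evanescence inseparable components}, the discs in $\Drst \smallsetminus \Drst_{\mathrm{sep}}$ are only added so as to subdivide crossings; above such a component the cover $\SF{\YY_D} \to L_D$ is a homeomorphism on underlying spaces, contributing one irreducible component upstairs and one node on each side, so these contributions cancel exactly in the difference above. Hence the computation is entirely concentrated above valid discs and the persistent nodes connecting them.

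The heart of the argument is then the dichotomy from \S\ref{sec models hyperelliptic separable}: for a valid disc $D$ the preimage $\SF{\YY_D}$ either is an irreducible double cover of $L_D$ (contributing one upstairs component) or splits into two copies of $L_D$ exchanged by the hyperelliptic involution (contributing two). The split case occurs precisely when $\mathfrak{t}^\RR(D) > 2v(2)$, i.e.\ strictly inside $J(\mathfrak{s},\alpha)$ with respect to some enclosing cluster $\mathfrak{s}$; by \Cref{thm summary depths valid discs} and \Cref{prop formulas for b_pm}, this split case arises for a valid disc $D$ \emph{if and only if} $D$ is linked to a viable cluster $\mathfrak{s}$ and lies strictly between the two extreme valid discs $D_{\alpha,b_-(\mathfrak{s})}$ and $D_{\alpha,b_+(\mathfrak{s})}$. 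Using \Cref{prop lambda plus minus and ell} to count branches at the nodes between consecutive valid discs in the chain above $\mathfrak{s}$, I would show that each viable cluster $\mathfrak{s}$ produces locally one extra loop in $\Gamma(\SF{\Yrst})$ -- informally, the chain of components above the sub-tree of $\SF{\Xrst}$ corresponding to $\mathfrak{s}$ ``closes up'' because the two endpoint components $\SF{\YY_{D_\pm}}$ are each unramified on the inward-facing side.

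The main obstacle is then handling the übereven case, where a viable cluster $\mathfrak{s}$ is a disjoint union of viable child clusters $\mathfrak{c}_1, \ldots, \mathfrak{c}_N$. Here the loop naively attributed to $\mathfrak{s}$ must not be counted, because it is forced by the loops already generated by the $\mathfrak{c}_i$: an application of \Cref{prop deep ubereven cluster} ensures that when all the $\mathfrak{c}_i$ are viable, the function $\mathfrak{t}_+^{\mathfrak{s}}$ already attains $2v(2)$ at $b=0$, which means that the component of $\SF{\Xrst}$ associated to $\mathfrak{s}$ is itself a ``splitting'' disc, so the loop of $\mathfrak{s}$ is homologous in $\Gamma(\SF{\Yrst})$ to the sum of the children's loops. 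I expect to formalize this via induction on the depth of the cluster poset, processing each viable cluster from deepest outward and observing that the rank of $H^1(\Gamma(\SF{\Yrst}))$ increases by one exactly when the next viable cluster has at least one non-viable (odd-cardinality, or singleton, or non-viable even) child. Summing over all non-übereven viable clusters will then yield the claimed equality.
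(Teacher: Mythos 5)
Your starting point is exactly the paper's: write $t(\SF{\Yrst}) = \bigl(\Nnodes(\SF{\Yrst}) - \Nnodes(\SF{\Xrst})\bigr) - \bigl(\Nirreducible(\SF{\Yrst}) - \Nirreducible(\SF{\Xrst})\bigr)$ using $t(\SF{\Xrst})=0$. From there, however, the paper's proof is a direct two-line application of \Cref{prop viable correspondence} and \Cref{prop ubereven correspondence}: the first difference is the number of nodes of $\SF{\Xrst}$ with two preimages, which equals the number of viable clusters; the second difference is the number of components of $\SF{\Xrst}$ with two preimages (i.e.\ with reducible $\SF{\YY_D}$), which equals the number of übereven clusters. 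No induction on the cluster poset is needed.

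Your middle paragraph contains a genuine error in characterizing when $\SF{\YY_D}$ splits. You write that the split case occurs ``precisely when $\mathfrak{t}^\RR(D) > 2v(2)$, i.e.\ strictly inside $J(\mathfrak{s},\alpha)$,'' and later that it arises for a valid disc ``if and only if $D$ \dots lies strictly between the two extreme valid discs $D_{\alpha,b_-(\mathfrak{s})}$ and $D_{\alpha,b_+(\mathfrak{s})}$.'' Neither claim can hold: the function $\mathfrak{t}^\RR$ is by definition truncated at $2v(2)$ and never exceeds it, and by \Cref{lemma not a valid disc}(b) the discs strictly between $D_{\alpha,b_-(\mathfrak{s})}$ and $D_{\alpha,b_+(\mathfrak{s})}$ are \emph{not} valid discs. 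The condition $\tfun_{q,\rho}(D) > 2v(2)$ (the untruncated quantity) is sufficient but not necessary for $\SF{\YY_D}$ to be reducible; the correct characterization, which is the substance of \Cref{prop ubereven correspondence}, is that $\SF{\YY_D}$ is reducible iff the double cover $\NSF{\YY_D}\to\SF{\XX_D}$ is unramified everywhere, which forces $N\ge 3$ and hence $D=D_{\mathfrak{s},d_+(\mathfrak{s})}$ for a cluster $\mathfrak{s}$ all of whose children and $\mathfrak{s}$ itself are viable, i.e.\ an übereven cluster. Your appeal to \Cref{prop deep ubereven cluster} to show $\mathfrak{t}^{\mathfrak{s}}_+(0)=2v(2)$ under these hypotheses is correct and is precisely the content of \Cref{lemma all but one}, which feeds into \Cref{prop ubereven correspondence}; but without that proposition in hand, your proposed induction remains a sketch whose key base and inductive steps are not established, and as stated it rests on the erroneous ``strictly between'' criterion.
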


Let us begin by defining viable clusters.
\begin{dfn} \label{dfn viable}
    We say that a cluster $\mathfrak{s}$ is \emph{viable} if the following are satisfied:
    \begin{enumerate}[(a)]
        \item $\mathfrak{s}$ has even cardinality; and
        \item there exist $2$ distinct valid discs linked to $\mathfrak{s}$.
    \end{enumerate}
\end{dfn}

\begin{rmk}
    In the above definition, the results presented in \S\ref{sec valid discs definition} show that in the $p \neq 2$ setting, (a) implies (b) (see \Cref{rmk cluster p=2}), while, in the $p=2$ setting, (b) implies (a) (see \Cref{thm cluster p=2}(a)).
\end{rmk}

\begin{prop} \label{prop viable correspondence}
    Viable clusters are in one-to-one correspondence with the nodes of $\SF{\Xrst}$ over which the cover $\SF{\Yrst} \to \SF{\Xrst}$ is unramified (i.e., the nodes of  $\SF{\Xrst}$ that have two distinct inverse images in $\SF{\Yrst}$).
    \begin{proof}
        Suppose that $\mathfrak{s}$ is a viable cluster, and let $D_+\subsetneq D_-$ be the two valid discs linked to it. It follows from \Cref{lemma not a valid disc} that we have $\XX_D\not\leq \Xrst$ for all discs $D$ satisfying $D_+\subsetneq D\subsetneq D_-$; hence, the two lines $L_+$ and $L_-$ of $\SF{\Xrst}$ corresponding to the discs $D_+$ and $D_-$ intersect at a node $P\in \SF{\Xrst}$. We know by \Cref{prop lambda plus minus and ell}(a) that $\SF{\YY_{D_\pm}}$ has two branches above $P\in \SF{\XX_{D_\pm}}$, which implies that $\SF{\Yrst}\to \SF{\Xrst}$ is unramified above $P$.
    
        Let us now prove the converse implication. Let $P$ be a node of $\SF{\Xrst}$ above which $\SF{\Yrst}$ is unramified; let $L_-$ and $L_+$ the two lines of $\SF{\Xrst}$ passing through $P$; and let $D_\pm$ be the corresponding discs. Since the cover $\SF{\Yrst} \to \SF{\Xrst}$ is unramified above $P$, no element of $\Rinfty$ reduces to $P\in \SF{\Xrst}$, which is equivalent to saying that no element of $\Rinfty$ reduces to the unique node of $\SF{\XX_{\{D_+,D_-\}}}$. In particular, $\infty$ lies on one and only one of the two lines $L_+$ and $L_-$ comprising the special fiber $\SF{\XX_{\{D_+,D_-\}}}$, say $\infty\in L_-\setminus L_+$; this implies, in particular, that we have $D_+\subsetneq D_-$ by \Cref{prop relative position smooth models line}. We can now write the decomposition $\mathcal{R}=\mathfrak{s}\sqcup (\mathcal{R}\setminus \mathfrak{s})$, where $\mathfrak{s}$ (resp.\ $\RR \smallsetminus \mathfrak{s}$) consists of the roots whose reductions in $\SF{\XX_{\{D_+,D_-\}}}$ lie on $L_+\smallsetminus L_-$ (resp.\ $L_-\smallsetminus L_+$). It is now clear that $\mathfrak{s}=D_+\cap \mathcal{R}$ is a cluster, to which the two distinct valid discs $D_-$ and $D_+$ are linked. Since $\SF{\Yrst}\to \SF{\Xrst}$ is unramified above $P$, we have that $\SF{\YY_{D_-}}$ (resp.\ $\SF{\YY_{D_+}}$) has two branches above $0\in \SF{\YY_{D_-}}$ (resp.\ $\infty\in \SF{\YY_{D_+}}$), hence, by \Cref{prop lambda plus minus and ell}(b), the cluster $\mathfrak{s}$ must have even cardinality.
    \end{proof}
\end{prop}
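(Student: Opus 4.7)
The plan is to construct maps in both directions between viable clusters and nodes with unramified fibers, and then verify that these maps are mutual inverses.

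For the forward direction, given a viable cluster $\mathfrak{s}$, \Cref{dfn viable} supplies two distinct valid discs $D_+$ and $D_-$ linked to $\mathfrak{s}$. Since both are linked to $\mathfrak{s}$, any point of $D_{\mathfrak{s}, d_+(\mathfrak{s})}$ serves as a common center, so one disc strictly contains the other; say $D_+ \subsetneq D_-$. By \Cref{lemma not a valid disc}, no disc whose depth lies strictly between those of $D_+$ and $D_-$ can be valid, so the corresponding components $L_\pm \in \Irred(\SF{\Xrst})$ meet at a single node $P \in \SF{\Xrst}$ (using \Cref{prop relative position smooth models line} to see that they indeed intersect). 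The unramifiedness of $\SF{\Yrst} \to \SF{\Xrst}$ at $P$ would then follow from \Cref{prop lambda plus minus and ell}(a): because $|\mathfrak{s}|$ is even and $b_-(\mathfrak{s}) < b_+(\mathfrak{s})$, one has $\ell(\XX_{D_-}, 0) = \ell(\XX_{D_+}, \infty) = 0$, so $\SF{\YY_{D_\pm}}$ has two distinct branches above the corresponding points of $\SF{\XX_{D_\pm}}$, and these glue to two branches of $\SF{\Yrst}$ above $P$.

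For the reverse direction, I would take an unramified node $P$ of $\SF{\Xrst}$, let $L_+$ and $L_-$ be the two components of $\SF{\Xrst}$ through $P$, and let $D_\pm$ be the associated discs. Unramifiedness over $P$ forces no element of $\Rinfty$ to reduce to $P$; in particular $\infty$ lies on exactly one of $L_\pm$, which by \Cref{prop relative position smooth models line} forces nestedness, say $D_+ \subsetneq D_-$. Setting $\mathfrak{s} := D_+ \cap \mathcal{R}$ yields a cluster to which both $D_+$ and $D_-$ are linked. The even cardinality of $\mathfrak{s}$, needed to ensure viability, should follow from the ramification analysis of \Cref{prop lambda plus minus and ell}(b): if $|\mathfrak{s}|$ were odd, the cover would have $\ell = 1$ at $P$, contradicting the unramified hypothesis.

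Finally, to check that these constructions are inverse to one another, one uses \Cref{thm summary depths valid discs}(a), which pins down the valid discs linked to a viable cluster as precisely $D_{\alpha, b_\pm(\mathfrak{s})}$ for any $\alpha \in D_{\mathfrak{s}, d_+(\mathfrak{s})}$; this uniqueness shows that the node produced in the forward direction recovers $\mathfrak{s}$ exactly, and a symmetric argument handles the reverse composition. I expect the main subtlety to lie in the reverse direction: one must verify that the cluster $\mathfrak{s} = D_+ \cap \mathcal{R}$ has exactly the two valid discs $D_+, D_-$ attached to it, which amounts to identifying the depths of $D_\pm$ with the endpoints $b_\pm(\mathfrak{s})$ of $J(\mathfrak{s})$. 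This identification should be automatic from the fact that $D_+, D_-$ are valid and that \Cref{lemma not a valid disc} rules out any valid disc strictly between them.
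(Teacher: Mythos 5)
Your proof is correct and follows essentially the same route as the paper's, invoking the same three tools — \Cref{lemma not a valid disc} to see that $L_+$ and $L_-$ are adjacent in $\SF{\Xrst}$, \Cref{prop lambda plus minus and ell}(a)/(b) for the ramification analysis, and \Cref{prop relative position smooth models line} for the nestedness argument — for the same purposes. Your added paragraph verifying that the two constructions are mutually inverse is a small extra step that the paper leaves implicit, but it fits the same framework and does not alter the argument.
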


\begin{prop} \label{prop viable thicknesses}
    If $\mathfrak{s}$ is a viable cluster corresponding to a node $P \in \SF{\Xrst}$ as in \Cref{prop viable correspondence}, then the thickness of each of the $2$ nodes lying above $P$ is equal to $(b_+(\mathfrak{s}) - b_-(\mathfrak{s})) / v(\pi)$ (with the notation of \S\ref{sec depths construction valid discs}).
\end{prop}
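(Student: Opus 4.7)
The plan is to combine the thickness formula for nodes in semistable models of the line (\Cref{prop thickness}) with the local analysis of the Galois cover $\SF{\Yrst} \to \SF{\Xrst}$ at the node $P$ (\Cref{prop vanishing persistent}).

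First, I would identify the two valid discs associated to $\mathfrak{s}$. Since $\mathfrak{s}$ is viable, \Cref{thm summary depths valid discs}(a) gives exactly two distinct valid discs linked to $\mathfrak{s}$, namely $D_- := D_{\alpha, b_-(\mathfrak{s})}$ and $D_+ := D_{\alpha, b_+(\mathfrak{s})}$ for any $\alpha \in D_{\mathfrak{s}, d_+(\mathfrak{s})}$, with $b_-(\mathfrak{s}) < b_+(\mathfrak{s})$. Since both depths lie in the interval $(d_-(\mathfrak{s}), d_+(\mathfrak{s})]$, we have $D_+ \subsetneq D_-$. By the proof of \Cref{prop viable correspondence}, the corresponding two lines $L_+$ and $L_-$ in $\SF{\Xrst}$ meet precisely at the node $P$ under consideration. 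Applying \Cref{prop thickness} directly to these two components of the semistable model $\Xrst$ yields that the thickness of the node $P$ in $\SF{\Xrst}$ equals $(b_+(\mathfrak{s}) - b_-(\mathfrak{s}))/v(\pi)$.

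Next, I would transfer this to the thicknesses of the nodes above $P$ in $\SF{\Yrst}$ by invoking \Cref{prop vanishing persistent}(b). Let $Q_1$ and $Q_2$ be the two nodes of $\SF{\Yrst}$ lying above $P$, and write $t_i$ for the thickness of $Q_i$. Since $\Yrst$ is a relatively stable model, $Q_1$ and $Q_2$ are persistent nodes (no vanishing nodes are allowed by \Cref{dfn relatively stable}), so their stabilizers $G_{Q_i} \le G$ do not flip the branches of $\SF{\Yrst}$ passing through them. In particular, the formula in \Cref{prop vanishing persistent}(b)(ii) gives that the thickness of $P$ equals $t_i \cdot |G_{Q_i}|$.

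The main (and essentially only) remaining point is to show $|G_{Q_i}| = 1$. This is forced by the fact that $Q_1 \neq Q_2$ both lie over the same point $P$: since $G = \{1, \iota\}$ has order $2$ and acts transitively on the fiber $\{Q_1, Q_2\}$ of the quotient map (as $\SF{\Xrst} = \SF{\Yrst}/G$), the hyperelliptic involution $\iota$ must exchange $Q_1$ and $Q_2$, so that each stabilizer $G_{Q_i}$ is trivial. Substituting $|G_{Q_i}| = 1$ into the thickness formula then gives $t_i = (b_+(\mathfrak{s}) - b_-(\mathfrak{s}))/v(\pi)$ for $i = 1, 2$, as desired. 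There is no real obstacle here — the proof is essentially a combination of two already-established results — but one should be careful to verify that no \emph{vanishing} node can occur above $P$, which is ruled out explicitly by the definition of the relatively stable model.
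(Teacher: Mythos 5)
Your proposal is correct and follows the same approach as the paper: the paper's (one-line) proof cites exactly the three results you use — \Cref{thm summary depths valid discs} to identify the two valid discs $D_\pm$, \Cref{prop thickness} for the thickness of the node in $\SF{\Xrst}$, and \Cref{prop vanishing persistent}(b) to transfer this to the nodes above. Your observation that the two distinct preimages force trivial stabilizers, so that the thickness of $P$ equals that of each $Q_i$, is precisely the step the paper leaves implicit.
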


\begin{proof}
    This is straightforward from applying Propositions \ref{prop thickness} and \ref{prop vanishing persistent}(b) to \Cref{thm summary depths valid discs}.
\end{proof}

We now give the other main definition of this section.

\begin{dfn}
    An cluster $\mathfrak{s}$ is said to be \emph{\"{u}bereven} if it is viable and if all of its children clusters are also viable.
\end{dfn}

\begin{rmk}
    In the $p\neq 2$ setting, every even-cardinality cluster is viable, and so an \"{u}bereven cluster is just a cluster whose children are all even; this is the definition of ``\"{u}bereven" used in \cite{dokchitser2022arithmetic}.
\end{rmk}

\begin{lemma}
    \label{lemma all but one}
    Let $\mathfrak{s}$ be a cluster, and let $\mathfrak{c}_1, \ldots, \mathfrak{c}_N$ be its children.  If each child $\mathfrak{c}_i$ is viable, then we have  $b_+(\mathfrak{s})=d_+(\mathfrak{s})$ (with the notation of \S\ref{sec depths construction valid discs}).
    \begin{proof}
        Since $\mathfrak{c}_i$ is viable, we have $\delta(\mathfrak{c}_i)> B_{f,\mathfrak{s}}\ge b_0(\mathfrak{t}_+^{\mathfrak{c}_i})$, which implies that $\mathfrak{t}_+^{\mathfrak{c}_i}(\delta(\mathfrak{c}_i)) = 2v(2)$.  Now \Cref{prop deep ubereven cluster}(a) says that we have $\mathfrak{t}_+^{\mathfrak{s}}(0) = 2v(2)$, which directly implies that $b_+(\mathfrak{s}) = d_+(\mathfrak{s})$.
    \end{proof}
\end{lemma}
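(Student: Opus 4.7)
The plan is to prove the equivalent statement $b_0(\mathfrak{t}_+^{\mathfrak{s}}) = 0$, which is the same as $\mathfrak{t}_+^{\mathfrak{s}}(0) = 2v(2)$, and then invoke the formula $b_+(\mathfrak{s}) = d_+(\mathfrak{s}) - b_0(\mathfrak{t}_+^{\mathfrak{s}})$ from \Cref{rmk extending def bpm} (or \Cref{prop formulas for b_pm}). The key intermediate fact to extract is that each viable child $\mathfrak{c}_i$ satisfies $\mathfrak{t}_+^{\mathfrak{c}_i}(\delta(\mathfrak{c}_i)) = 2v(2)$, after which \Cref{prop deep ubereven cluster}(a) delivers the conclusion immediately.

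The first step is to use viability to control $b_0(\mathfrak{t}_+^{\mathfrak{c}_i})$. By \Cref{dfn viable} and \Cref{prop depth threshold}, viability of $\mathfrak{c}_i$ yields $\delta(\mathfrak{c}_i) > B_{f, \mathfrak{c}_i} = b_0(\mathfrak{t}_+^{\mathfrak{c}_i}) + b_0(\mathfrak{t}_-^{\mathfrak{c}_i}) \geq b_0(\mathfrak{t}_+^{\mathfrak{c}_i})$, the last inequality coming from $b_0(\mathfrak{t}_-^{\mathfrak{c}_i}) \geq 0$. Since $\mathfrak{t}_+^{\mathfrak{c}_i}$ is constant and equal to $2v(2)$ on $[b_0(\mathfrak{t}_+^{\mathfrak{c}_i}), +\infty)$ by \Cref{prop properties t plus minus}, it follows that $\mathfrak{t}_+^{\mathfrak{c}_i}(\delta(\mathfrak{c}_i)) = 2v(2)$ for each $i$.

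The second step is to verify the structural hypotheses of \Cref{prop deep ubereven cluster}(a). Every viable cluster has even cardinality (in particular cardinality $\geq 2$), so none of the children of $\mathfrak{s}$ is a singleton. Since the children of $\mathfrak{s}$ partition its elements, $\mathfrak{s}$ is the disjoint union $\mathfrak{c}_1 \sqcup \cdots \sqcup \mathfrak{c}_N$ of non-singleton clusters; because each $\mathfrak{c}_i$ is a proper subcluster of $\mathfrak{s}$, this forces $N \geq 2$, and $|\mathfrak{s}|$ is a sum of even numbers, hence even.

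With these hypotheses in place, the ``in particular'' clause of \Cref{prop deep ubereven cluster}(a) gives $\mathfrak{t}_+^{\mathfrak{s}}(0) = 2v(2)$. By definition, $b_0(\mathfrak{t}_+^{\mathfrak{s}})$ is the least $b \in [0, +\infty)$ at which $\mathfrak{t}_+^{\mathfrak{s}}$ attains $2v(2)$, so we obtain $b_0(\mathfrak{t}_+^{\mathfrak{s}}) = 0$ and therefore $b_+(\mathfrak{s}) = d_+(\mathfrak{s})$. There is no real obstacle in this argument; the only point that requires any care is the combinatorial verification that $\mathfrak{s}$ genuinely decomposes as a disjoint union of $N \geq 2$ even-cardinality children (rather than having some singleton residue), and this is precisely what rules out the degenerate configurations that would block the application of \Cref{prop deep ubereven cluster}(a).
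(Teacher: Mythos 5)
Your argument is correct and follows essentially the same route as the paper's proof: use viability to bound $b_0(\mathfrak{t}_+^{\mathfrak{c}_i})$ below $\delta(\mathfrak{c}_i)$, deduce $\mathfrak{t}_+^{\mathfrak{c}_i}(\delta(\mathfrak{c}_i)) = 2v(2)$, and apply \Cref{prop deep ubereven cluster}(a). The only substantive differences are that you spell out the combinatorial check that $\mathfrak{s}$ really is a disjoint union of $N \geq 2$ even-cardinality children (which the paper takes for granted), and that you correctly write $B_{f,\mathfrak{c}_i}$ where the paper's proof has $B_{f,\mathfrak{s}}$, which appears to be a typo — viability of $\mathfrak{c}_i$ gives $\delta(\mathfrak{c}_i) > B_{f,\mathfrak{c}_i}$, not a bound involving $B_{f,\mathfrak{s}}$.
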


\begin{prop} \label{prop ubereven correspondence}
    The assignement $\mathfrak{s}\mapsto D_{\mathfrak{s},d_+(\mathfrak{s})}$ induces a one-to-one correspondence between the \"{u}bereven clusters and the valid discs $D$ such that the special fiber $\SF{\YY_D}$ is reducible (i.e., $\SF{\YY_D}$ consists of $2$ rational components).
    \begin{proof}
        Suppose first that $D$ is a valid disc such that $\SF{\YY_D}$ is reducible. Then, we know by \Cref{thm part of rst separable} that the elements of $\mathcal{R}\cup\lbrace \infty\rbrace$ reduce to $N\ge 3$ distinct points of $\SF{\XX_D}$; we consequently have $D=D_{\mathfrak{s},d_+(\mathfrak{s})}$ for some cluster $\mathfrak{s}$ and that the $N$ points are $\overline{\mathfrak{c}_1}, \ldots, \overline{\mathfrak{c}_{N-1}}, \infty=\overline{\mathcal{R}\setminus \mathfrak{s}}$, where $\mathfrak{c}_1, \ldots, \mathfrak{c}_{N-1}$ are the child clusters of $\mathfrak{s}$ (see \Cref{lemma discs linked to clusters}). From the fact that $D$ is a valid disc we deduce that $d_+(\mathfrak{s})=d_-(\mathfrak{c}_i)$ is a common endpoint of the intervals $J(\mathfrak{s})$ and $J(\mathfrak{c}_i)$ for all $i$; in particular, these intervals are non-empty and we have $b_+(\mathfrak{s})=d_+(\mathfrak{s})=d_-(\mathfrak{c}_i)=b_-(\mathfrak{c}_i)$. The fact that $\SF{\YY_D}$ consists of $2$ components means that we have $\ell(\XX_D,\overline{\mathfrak{c}_i})=0$ for all $i$ and $\ell(\XX_D,\infty)=0$, but, according to \Cref{prop lambda plus minus and ell}(c),(d), this implies that $b_-(\mathfrak{c}_i)<b_+(\mathfrak{c}_i)$ for all $i$ as well as $b_-(\mathfrak{s})<b_+(\mathfrak{s})$. Now, applying \Cref{prop lambda plus minus and ell}(b), we also deduce that $\mathfrak{s}$ as well as the $\mathfrak{c}_i$'s must all have even cardinality. We conclude that $\mathfrak{s}$ is a \"{u}bereven cluster.
        
        Let us now prove the converse implication. Assume that $\mathfrak{s}$ is a \"{u}bereven cluster, and let us denote by $\mathfrak{c}_1, \ldots, \mathfrak{c}_{N-1}$ its children (with $N\ge 3$); we remark that $d_+(\mathfrak{s})=d_-(\mathfrak{c}_i)$ for all $i$. Letting $D=D_{\mathfrak{s},d_+({\mathfrak{s}})}$, we have that the elements of $\Rinfty$ reduce in $\SF{\XX_D}$ to the $N$ distinct points, $\overline{\mathfrak{c}_1}, \ldots, \overline{\mathfrak{c}_{N-1}}, \infty=\overline{\mathcal{R}\setminus \mathfrak{s}}$. Now, since all of the $\mathfrak{c}_i$'s are viable, we have $b_+(\mathfrak{s})=d_+(\mathfrak{s})$ by \Cref{lemma all but one}; meanwhile, since $\mathfrak{s}$ is also assumed to be viable, we have $b_-(\mathfrak{s})<b_+(\mathfrak{s})$. We conclude, in particular, that the disc $D$ is valid (see \Cref{thm summary depths valid discs}).
        
        As before, from the fact that $D=D_{\mathfrak{s},d_+(\mathfrak{s})}$ is a valid disc, we deduce that $b_+(\mathfrak{s})=d_+(\mathfrak{s})=b_-(\mathfrak{c}_i)=d_-(\mathfrak{c}_i)$. Since both $\mathfrak{s}$ and the $\mathfrak{c}_i$'s are viable, they have even cardinality and satisfy $b_-(\mathfrak{c}_i)<b_+(\mathfrak{c}_i)$ and $b_-(\mathfrak{s})<b_+(\mathfrak{s})$, hence, by \Cref{prop lambda plus minus and ell}(a) we have that $\ell(\XX_D,\overline{\mathfrak{c}_i})=0$ for all $i$, and $\ell(\XX_D,\infty)=0$. But this means that $\NSF{\YY_D}\to \SF{\XX_D}$ is an étale double cover of the line, i.e.\ that $\SF{\YY_D}$ is reducible, as we discussed in \S\ref{sec models hyperelliptic separable}.
    \end{proof}
\end{prop}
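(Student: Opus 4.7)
The plan is to prove the two implications separately, in each case translating the geometric condition that $\SF{\YY_D}$ is reducible into the vanishing of $\ell(\XX_D,P)$ at every $k$-point $P\in\SF{\XX_D}$ to which a branch point of $Y\to X$ reduces (see \Cref{rmk R_0 R_1 R_2}(d)), then reading off the implications for the invariants $b_\pm$, $\lambda_\pm$ attached to $\mathfrak{s}$ and to each child $\mathfrak{c}_i$ via \Cref{prop lambda plus minus and ell}.

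For the ``übereven $\Rightarrow$ reducible valid disc'' direction, start from $\mathfrak{s}$ übereven and write $\mathfrak{c}_1,\dots,\mathfrak{c}_{N-1}$ for its children (so $N\ge 3$). Since every child is viable, \Cref{lemma all but one} applied to $\mathfrak{s}$ gives $b_+(\mathfrak{s})=d_+(\mathfrak{s})$, so by \Cref{thm summary depths valid discs}(a) the disc $D:=D_{\mathfrak{s},d_+(\mathfrak{s})}$ is precisely the valid disc $D_+$ associated to $\mathfrak{s}$. Next, for each $i$ I would apply \Cref{prop deep ubereven cluster}(b) to $\mathfrak{c}_i$: the hypotheses are supplied by viability of the parent $\mathfrak{s}$ and of each sibling $\mathfrak{c}_j$ ($j\ne i$), which via the definitions of $b_\pm$ yield $\mathfrak{t}_+^{\mathfrak{c}_j}(\delta(\mathfrak{c}_j))=2v(2)$ for $j\ne i$ and $\mathfrak{t}_-^{\mathfrak{s}}(\delta(\mathfrak{s}))=2v(2)$. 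The proposition then delivers $b_-(\mathfrak{c}_i)=d_-(\mathfrak{c}_i)=d_+(\mathfrak{s})$, so $D$ coincides with $D_-(\mathfrak{c}_i)$ after re-centering at any element of $\mathfrak{c}_i$. Finally, \Cref{prop lambda plus minus and ell}(a) applied to $\mathfrak{s}$ gives $\ell(\XX_D,\infty)=0$, and applied in the re-centered chart to each $\mathfrak{c}_i$ gives $\ell(\XX_D,\overline{\mathfrak{c}_i})=0$. All ramification contributions vanish, so $\NSF{\YY_D}\to\SF{\XX_D}$ is étale of degree~$2$ over $\mathbb{P}^1_k$, forcing $\SF{\YY_D}$ to split into two rational components.

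For the reverse direction, suppose $D$ is valid with $\SF{\YY_D}$ reducible. Reducibility rules out cases (2) and (3) of \Cref{thm part of rst separable}, so we must be in case (1) with $N\ge 3$; \Cref{lemma discs linked to clusters}(c) then identifies $D=D_{\mathfrak{s},d_+(\mathfrak{s})}$ for a unique non-singleton cluster $\mathfrak{s}$ with children $\mathfrak{c}_1,\dots,\mathfrak{c}_{N-1}$. Validity of $D$ combined with $b_\pm\in[d_-,d_+]$ forces $b_+(\mathfrak{s})=d_+(\mathfrak{s})$ via \Cref{thm summary depths valid discs}(a), and re-centering at any element of $\mathfrak{c}_i$ likewise forces $b_-(\mathfrak{c}_i)=d_-(\mathfrak{c}_i)$ for each $i$. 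Reducibility then demands $\ell(\XX_D,\infty)=0$ and $\ell(\XX_D,\overline{\mathfrak{c}_i})=0$ for every $i$. Parts (b)-(d) of \Cref{prop lambda plus minus and ell}, used as obstructions, rule out both odd cardinality (via (b)) and the degenerate collapse $b_-=b_+$ (via (c) for $\mathfrak{s}$ and (d) for each $\mathfrak{c}_i$, using $\delta>0$ for every cluster to prevent the simultaneous equalities $b_-=d_-$ and $b_+=d_+$ from forcing $\delta=0$). Hence $\mathfrak{s}$ and every $\mathfrak{c}_i$ are viable, i.e.\ $\mathfrak{s}$ is übereven. The main obstacle is the change of center between the $\mathfrak{s}$-chart and the $\mathfrak{c}_i$-chart of $\XX_D$, under which the same disc $D$ plays the role of $D_+$ for $\mathfrak{s}$ and $D_-$ for each $\mathfrak{c}_i$, together with the exclusion of the degenerate case $b_-=b_+$; parts (c)-(d) of \Cref{prop lambda plus minus and ell} do the essential work here beyond part (a).
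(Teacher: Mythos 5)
Your proof is correct and follows essentially the same route as the paper's: both directions reduce to translating the (ir)reducibility of $\SF{\YY_D}$ into the vanishing of $\ell(\XX_D,P)$ at the reduction points of $\Rinfty$, and then reading off $b_\pm$ and the parity of the cardinalities via \Cref{prop lambda plus minus and ell} and \Cref{thm summary depths valid discs}, with the same chart-recentering so that $D$ is $D_+$ for $\mathfrak{s}$ and $D_-$ for each child. The only (harmless) deviation is that in the übereven-to-reducible direction you obtain $b_-(\mathfrak{c}_i)=d_-(\mathfrak{c}_i)$ from \Cref{prop deep ubereven cluster}(b), whereas the paper gets it more directly from the validity of $D=D_{\mathfrak{c}_i,d_-(\mathfrak{c}_i)}$ itself; both derivations are sound.
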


\begin{proof}[Proof of \Cref{thm toric rank}]
    The toric rank of a semistable $k$-curve is just the number of nodes (which we denote by $\Nnodes$) minus the number of irreducible components (which we denote by $\Nirreducible$) plus 1 (see \S\ref{sec preliminaries abelian toric unipotent}). Now, since we have $g(X)=0$, the toric rank of $\SF{\Xrst}$ is $0$ (as is the toric rank of the special fiber of any model of the line). Hence, the toric rank of $\SF{\Yrst}$ can be computed as
    \begin{equation*}
       [\Nnodes(\SF{\Yrst})-\Nnodes(\SF{\Xrst})]-[\Nirreducible(\SF{\Yrst})-\Nirreducible(\SF{\Xrst})].
    \end{equation*}
    Now it follows from \Cref{prop viable correspondence} that the first difference equals the number of viable clusters; meanwhile, \Cref{prop ubereven correspondence} implies the second difference equals the number of \"{u}bereven clusters.
\end{proof}

\subsection{The abelian rank}
\label{sec structure abelian rank}
In this subsection we show how to compute the abelian rank of $\SF{\YY_D}$ for any valid disc $D$, provided that, for each cluster $\mathfrak{s}$ linked to $D$, we are able to compute the invariants $b_\pm(\mathfrak{s})$ and $\lambda_{\pm}(\mathfrak{s})$ introduced in \S\ref{sec depths construction valid discs}.

\begin{prop} \label{prop genus of components}
    Let $D:=D_{\alpha,b}$ be a valid disc. In the $p\neq 2$ setting, then the genus of $\NSF{\YY_D}$ equals $-1+N_{\mathrm{odd}}/2$, where $N_{\mathrm{odd}}$ is the number of odd-cardinality clusters to which $D$ is linked.  In the $p=2$ setting, instead we have the following.
    \begin{enumerate}[(a)]
        \item If $D$ is linked to no cluster, then $g(\NSF{\YY_D})=-1+(1+\lambda_-(\varnothing,\alpha))/2$.
        \item If $D$ is linked to a unique cluster $\mathfrak{s}$, then one of the three possibilities below holds:
    \begin{enumerate}[(i)]
        \item $b=b_-(\mathfrak{s})<b_+(\mathfrak{s})$, in which case $g(\NSF{\YY_D})=-1+(1+\lambda_-(\mathfrak{s}))/2$;
        \item $b=b_+(\mathfrak{s})>b_-(\mathfrak{s})$, in which case $g(\NSF{\YY_D})=-1+(1+\lambda_+(\mathfrak{s}))/2$; or 
        \item $b=b_+(\mathfrak{s})=b_-(\mathfrak{s})$, in which case $g(\NSF{\YY_D})=-1+(1+\lambda_-(\mathfrak{s}))/2 + (1+\lambda_+(\mathfrak{s}))/2$.
    \end{enumerate}
    \item If $D$ is linked to $N \ge 3$ clusters, i.e.\ to a cluster $\mathfrak{s}_N$ and all of its children $\mathfrak{s}_1, \ldots, \mathfrak{s}_{N-1}$, then 
    \begin{equation*}g(\NSF{\YY_D})=-1+\epsilon_N(1+\lambda_-(\mathfrak{s}_N))/2 + \sum_{i=1}^{N-1} \epsilon_i(1+\lambda_+(\mathfrak{s}_i))/2,\end{equation*}
    where $\epsilon_i\in \{0,1\},$ and $\epsilon_i$ is 1 (resp.\ 0) when $\mathfrak{s}_i$ is not viable (resp.\ viable), which in this setting is equivalent to the condition $b_-(\mathfrak{s}_i)=b_+(\mathfrak{s}_i)$ (resp.\ $b_-(\mathfrak{s}_i)<b_+(\mathfrak{s}_i)$).
    \end{enumerate}
    \begin{proof}
        The separable cover $\NSF{\YY_D}\to \SF{\XX_D}$ is only ramified above the $N$ points $P_1, \ldots, P_N=\infty$ of $\SF{\XX_D}$ to which the elements of $\Rinfty$ reduce, and we will apply the formula given in \Cref{prop riemann hurwitz} to compute the genus of $\NSF{\YY_D}$ based on the index $\ell(\XX_D,P_i)$ defined there.
        
        If $N=1$ (which can happen only if $p=2$), then $D$ is linked to no cluster; \Cref{thm summary depths valid discs} then ensures that $b=b_-(\varnothing,\alpha)$, and  $\ell(\XX_D,P_1)=\ell(\XX_D,\infty)=\lambda_-(\varnothing,\alpha)+1$ by \Cref{prop lambda plus minus and ell}(c).

        If $N=2$ (which can happen only if $p=2$), then $\mathfrak{s}=D\cap \mathcal{R}$ is the unique cluster to which $D$ is linked, and this implies that $b$ is an internal point of $I(\mathfrak{s})$, i.e.\ $d_-(\mathfrak{s})<b<d_+(\mathfrak{s})$; the $2$ points of $\SF{\XX_D}$ to which the elements of $\Rinfty$ reduce are $P_1=0$ and $P_2=\infty$.  If we have $b=b_-(\mathfrak{s})=b_+(\mathfrak{s})$, then parts (c) and (d) of \Cref{prop lambda plus minus and ell} give $\ell(\XX_D,P_1)=1+\lambda_+(\mathfrak{s})$ and $\ell(\XX_D,P_2)=1+\lambda_-(\mathfrak{s})$ respectively.   We now assume that $b_-(\mathfrak{s}) < b_+(\mathfrak{s})$.  If we have $b = b_-(\mathfrak{s}) < b_+(\mathfrak{s})$, then parts (a) and (c) of \Cref{prop lambda plus minus and ell} give $\ell(\XX,P_1)=0$ and $\ell(\XX,P_2)=1+\lambda_-(\mathfrak{s})$ respectively, while, if instead we have $b = b_+(\mathfrak{s}) > b_-(\mathfrak{s})$, points (a) and (d) of \Cref{prop lambda plus minus and ell} give  $\ell(\XX,P_1)=1+\lambda_+(\mathfrak{s})$ and $\ell(\XX,P_2)=0$ respectively.
        
        Now suppose that we have $N\ge 3$; this means that $D$ is linked to a cluster $\mathfrak{s}_N$ and to all of its children $\mathfrak{s}_1, \ldots, \mathfrak{s}_{N-1}$, which must all have even cardinality if $p=2$. We have $b=d_+(\mathfrak{s}_N)=d_-(\mathfrak{s}_i)$ for all $i=1, \ldots, N-1$; by \Cref{thm summary depths valid discs}(a), we moreover know that, since $D$ is a valid disc, we must have $b=b_+(\mathfrak{s}_N)=b_-(\mathfrak{s}_i)$; this also forces $b_-(\mathfrak{s}_i)\le b_+(\mathfrak{s}_i)$ for $1 \leq i \leq N$. Now choose any $i \in \{1, \ldots, N-1\}$.  If we have $b_+(\mathfrak{s}_i)=b_-(\mathfrak{s}_i)$ (which can only occur if $p=2$), then we obtain from \Cref{prop lambda plus minus and ell}(d) that $\ell(\XX_{D},P_i)=1+\lambda_+(\mathfrak{s}_i)$, where $P_i$ is the point to which the roots of $\mathfrak{s}_i$ reduce.  If on the other hand we have $b_+(\mathfrak{s}_i)>b_-(\mathfrak{s}_i)$, then we obtain from \Cref{prop lambda plus minus and ell}(a),(b) that $\ell(\XX_D,P_i) = 0$ (resp.\ $\ell(\XX_D,P_i) = 1$) if the cardinality $|\mathfrak{s}_i|$ is even (resp.\ odd) (it is always even if $p=2$).  Similarly, using  \Cref{prop lambda plus minus and ell}(a),(b),(d), we obtain that $\ell(\XX_D,P_N)=1+\lambda_-(\mathfrak{s})$ when $b_+(\mathfrak{s}_N)=b_-(\mathfrak{s}_N)$, while, if  $b_+(\mathfrak{s}_N)>b_-(\mathfrak{s}_N)$, we have $\ell(\XX_D,P_N) = 0$ (resp.\ $\ell(\XX_D,P_N) = 1$) if the cardinality $|\mathfrak{s}_N|$ is even (resp.\ odd).
        
        Now the claimed formulas for $g(\widetilde{\SF{\YY_D}})$ follows directly from applying \Cref{prop riemann hurwitz}.
    \end{proof}
\end{prop}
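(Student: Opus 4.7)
The plan is to apply the Riemann--Hurwitz formula from \Cref{prop riemann hurwitz}, which says that
\[
g\bigl(\NSF{\YY_D}\bigr) = -1 + \tfrac{1}{2} \sum_{P\in \SF{\XX_D}} \ell(\XX_D, P),
\]
and to evaluate each local contribution $\ell(\XX_D,P)$ using \Cref{prop lambda plus minus and ell}. Since the degree-$2$ cover $\NSF{\YY_D}\to \SF{\XX_D}$ is étale away from the reductions of $\Rinfty$, the sum has at most $N$ nonzero terms, where $N$ is the number of distinct points of $\SF{\XX_D}$ to which $\Rinfty$ reduces; by \Cref{lemma discs linked to clusters}, the value of $N$ and the description of those points is controlled by the clusters linked to $D$.

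First I would split into cases based on $N$. When $N=1$ (possible only if $p=2$), $D$ is linked to no cluster, so $\mathfrak{s}=\varnothing$ and \Cref{thm summary depths valid discs}(b) forces $b=b_-(\varnothing,\alpha)$; \Cref{prop lambda plus minus and ell}(c) then gives $\ell(\XX_D,\infty) = 1+\lambda_-(\varnothing,\alpha)$. When $N=2$ (again only if $p=2$), $D$ is linked to a unique cluster $\mathfrak{s}=D\cap \RR$ with $b$ in the interior of $I(\mathfrak{s})$, and the three sub-cases of item (b) correspond to the three ways $b$ can relate to $b_\pm(\mathfrak{s})$: using parts (a)--(d) of \Cref{prop lambda plus minus and ell} applied at $P_1=0$ and $P_2=\infty$, one reads off directly $\ell(\XX_D,0)$ and $\ell(\XX_D,\infty)$ in each sub-case. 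When $N\ge 3$, \Cref{lemma discs linked to clusters} identifies $D$ as $D_{\mathfrak{s}_N,d_+(\mathfrak{s}_N)}$ for a non-singleton cluster $\mathfrak{s}_N$ whose children are exactly $\mathfrak{s}_1,\ldots,\mathfrak{s}_{N-1}$; the validity of $D$ forces $b=d_+(\mathfrak{s}_N)=b_+(\mathfrak{s}_N)=b_-(\mathfrak{s}_i)$ for every child, so that \Cref{prop lambda plus minus and ell}(a),(b),(d) determine $\ell(\XX_D,P_i)$ depending on whether each $\mathfrak{s}_i$ (resp.\ $\mathfrak{s}_N$) is viable (in which case $b_-<b_+$ and the contribution vanishes when $|\mathfrak{s}_i|$ is even) or not (in which case $b_-=b_+$ and the contribution is $1+\lambda_+(\mathfrak{s}_i)$, resp.\ $1+\lambda_-(\mathfrak{s}_N)$).

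For the $p\neq 2$ statement, the situation collapses dramatically: by \Cref{rmk ramification index}(a) we simply have $\ell(\XX_D,P)=1$ if and only if an odd number of points of $\Rinfty$ reduce to $P$, and otherwise $\ell(\XX_D,P)=0$. Summing over $P$ counts exactly the odd-cardinality clusters linked to $D$ together with (if $|\RR\cap D|$ is such that $|\Rinfty\setminus D|$ is odd) the point at infinity, and a short bookkeeping using the fact that $|\Rinfty|=2g+2$ is even reduces this count to $N_{\mathrm{odd}}$.

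The main obstacle is purely bookkeeping: keeping the case split (and the indicator $\epsilon_i$ in part (c)) consistent with the correct interpretation of $\lambda_\pm$ as extended in \Cref{rmk extending def bpm}, especially making sure that for a viable child $\mathfrak{s}_i$ with $b_-(\mathfrak{s}_i)<b_+(\mathfrak{s}_i)$ the cover $\NSF{\YY_D}\to \SF{\XX_D}$ is genuinely étale above $P_i$ (so the contribution really is $0$), and conversely that a non-viable child still forces the cover to be wildly ramified above $P_i$ with the prescribed $\ell$. Once the bookkeeping of each local $\ell(\XX_D,P)$ is settled in each of the cases, the claimed formulas follow by plugging into the Riemann--Hurwitz formula.
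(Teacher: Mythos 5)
Your proof sketch is correct and follows essentially the same route as the paper: invoke the Riemann--Hurwitz formula from \Cref{prop riemann hurwitz}, split on the number $N$ of reduction points of $\Rinfty$, and extract each local $\ell(\XX_D, P_i)$ from \Cref{prop lambda plus minus and ell}. Your handling of the $p\neq 2$ case directly via \Cref{rmk ramification index}(a) and a parity count is just a reorganized version of what the paper does inside its $N\ge 3$ case via \Cref{prop lambda plus minus and ell}(a),(b) (where the parity of $|\mathfrak{s}_i|$ enters), so this is not a genuinely different argument, merely a different ordering of the same observations.
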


\begin{rmk}
    The genus $g(\NSF{\YY_D})$ coincides with the abelian rank of $\SF{\YY_D}$, unless $\SF{\YY_D}$ consists of $2$ components, in which case we have $g(\NSF{\YY_D})=-1$ while the abelian rank of $\SF{\YY_D}$ is 0.
\end{rmk}

\begin{cor}
    \label{cor crushed positive abelian rank}
    Suppose that $D$ is a valid disc that is linked to no cluster, or that it is linked to only one cluster $\mathfrak{s}$ and is the unique valid disc linked to $\mathfrak{s}$.  Then the $k$-curve $\SF{\YY_D}$ is irreducible and has abelian rank $\ge 1$.
    \begin{proof}
        This follows immediately from \Cref{prop genus of components}(a) and \Cref{prop genus of components}(b)(iii), taking into account that, when $D$ is linked to no cluster, we have $\lambda_-(\varnothing,\alpha)\ge 3$ by \Cref{thm summary depths valid discs}.
    \end{proof}
\end{cor}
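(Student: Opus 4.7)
The plan is to reduce the claim directly to the case analysis of \Cref{prop genus of components}, handling the two hypotheses (no cluster, or unique cluster with unique valid disc) as cases A and B below, and in each case exhibit a lower bound of $1$ for the genus of $\NSF{\YY_D}$, which will also force irreducibility of $\SF{\YY_D}$.

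Case A: suppose $D$ is linked to no cluster, so $D\cap\RR=\varnothing$. Writing $D=D_{\alpha,b}$, the hypothesis that $D$ is valid combined with \Cref{thm summary depths valid discs}(b) forces $b$ to be the left endpoint $b_-(\varnothing,\alpha)$ of $J(\varnothing,\alpha)$ and, crucially, forces $\lambda_-(\varnothing,\alpha)\ge 3$ (otherwise we would be in sub-case (b)(i) of that theorem and $D$ would not be valid). Then \Cref{prop genus of components}(a) gives
\begin{equation*}
g(\NSF{\YY_D})=-1+\tfrac{1+\lambda_-(\varnothing,\alpha)}{2}\ge -1+\tfrac{1+3}{2}=1.
\end{equation*}

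Case B: suppose $D$ is linked to a unique cluster $\mathfrak{s}$ and is the unique valid disc linked to $\mathfrak{s}$. By \Cref{thm summary depths valid discs}(a), since valid discs linked to $\mathfrak{s}$ correspond to the (possibly coinciding) endpoints of $J(\mathfrak{s})$, uniqueness forces $b_-(\mathfrak{s})=b_+(\mathfrak{s})$ and $D=D_{\alpha,b_\pm(\mathfrak{s})}$. This is exactly sub-case (b)(iii) of \Cref{prop genus of components}, which yields
\begin{equation*}
g(\NSF{\YY_D})=-1+\tfrac{1+\lambda_-(\mathfrak{s})}{2}+\tfrac{1+\lambda_+(\mathfrak{s})}{2}.
\end{equation*}
Since $\lambda_\pm(\mathfrak{s})$ are positive odd integers (as noted in \S\ref{sec depths construction valid discs}), each summand is $\ge 1$, so $g(\NSF{\YY_D})\ge 1$.

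Finally, in both cases I need to upgrade the lower bound on $g(\NSF{\YY_D})$ to a statement about $\SF{\YY_D}$ itself. By the discussion in \S\ref{sec models hyperelliptic separable} (see \Cref{rmk R_0 R_1 R_2}(d)), the special fiber $\SF{\YY_D}$ is either irreducible or consists of two lines interchanged by the hyperelliptic involution, and the latter reducible case satisfies $g(\NSF{\YY_D})=-1$ by the convention fixed in \Cref{prop riemann hurwitz}. Our lower bound $g(\NSF{\YY_D})\ge 1$ therefore rules out reducibility, so $\SF{\YY_D}$ is irreducible and its abelian rank coincides with $g(\NSF{\YY_D})\ge 1$. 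There is no real obstacle here; the argument is just careful bookkeeping of \Cref{thm summary depths valid discs} together with the appropriate case of \Cref{prop genus of components}.
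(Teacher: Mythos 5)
Your proof is correct and follows essentially the same route as the paper's (which simply cites \Cref{prop genus of components}(a),(b)(iii) and the bound $\lambda_-(\varnothing,\alpha)\ge 3$ from \Cref{thm summary depths valid discs}). You have accurately filled in the bookkeeping: the hypothesis forces case (a) or case (b)(iii) of \Cref{prop genus of components}, the bound $\lambda_-(\varnothing,\alpha)\ge 3$ handles the first case, the positivity of $\lambda_\pm(\mathfrak{s})$ handles the second, and the convention $g(\NSF{\YY_D})=-1$ for the reducible case rules out reducibility once $g(\NSF{\YY_D})\ge 1$ is established.
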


\subsection{Partitioning the components of the special fiber} \label{sec structure partitioning}

Suppose that in the $p = 2$ setting we are given a disc $D:=D_{\alpha,b}$ with $\alpha\in \bar{K}$ and $b\in \qq$ and that the cover $\SF{\YY_D}\to \SF{\XX_D}$ is inseparable, i.e.\ the disc $D$ satisfies $\tbest{\RR}{D}<2v(2)$, and let $\beta\in \bar{K}^\times$ be such that $v(\beta)=b$.  We have seen in \S\ref{sec models hyperelliptic inseparable} that, in this case, given a part-square decomposition $f=q^2+\rho$ that is good at the disc $D$, the special fiber $\SF{\YY_D}$ is described by an equation of the form $y^2=\overline{\rho_0}(x_{\alpha,\beta})$, where $\overline{\rho_0}$ is a normalized reduction of $\rho_{\alpha,\beta}$. Let us also recall \Cref{dfn mu} which says that, given $P\in \SF{\XX_D}$, we denote by $\mu(\XX_D,P)$ the order of vanishing of $\overline{\rho_0}'$ at $P$, which is an even integer.

Letting $\Drst$ denote the collection of discs corresponding to $\Xrst$ (in the sense of \S\ref{sec models hyperelliptic line}), for any $P \in \SF{\XX_D}$ we write $\mathfrak{D}_P\subseteq \Drst$ for the non-empty subset consisting of those $D'\in \Drst$ such that $\Ctr(\XX_D, \XX_{D'})=\{P\}$.
\begin{prop} \label{prop inseparable tfae}
    In the setting above, the following are equivalent:
    \begin{enumerate}[(a)]
        \item $\mu(\XX_D,P)>0$;
        \item $\SF{\YY_D}$ is singular above $P$;
        \item $\mathfrak{D}_P\neq \varnothing$;
        \item $\mathfrak{D}_P$ contains a valid disc.
    \end{enumerate}
    \begin{proof}
        The equivalence between (a) and (b) was already discussed in \S\ref{sec models hyperelliptic inseparable}, while the equivalence between (a)/(b) and (c) is an immediate consequence of \Cref{cor rst isomorphism failure}. Finally, in light of \Cref{prop evanescence inseparable components}, it is easy to see that $\mathcal{D}_P$ contains a valid disc whenever it is non-empty.
    \end{proof}
\end{prop}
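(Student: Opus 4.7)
The plan is to establish (a) $\Leftrightarrow$ (b) and (b) $\Leftrightarrow$ (c) by direct appeal to already-established results, and then tackle (c) $\Leftrightarrow$ (d) as the substantive step. For (a) $\Leftrightarrow$ (b), the equation $y^2 = \overline{\rho_0}(x_{\alpha,\beta})$ describing $\SF{\YY_D}$ makes it immediate that the singular points lie over the zeros of $\overline{\rho_0}'$, which by \Cref{dfn mu} are precisely the points where $\mu(\XX_D, P) > 0$; this was already remarked at the start of \S\ref{sec models hyperelliptic inseparable}. For (b) $\Leftrightarrow$ (c), since \Cref{prop normalization model} ensures that $\YY_D$ has reduced special fiber, \Cref{cor rst isomorphism failure} applies and yields $\Ctr(\XX_D, \Xrst) = f(\Sing(\SF{\YY_D}))$; on the other hand, $\Ctr(\XX_D, \Xrst) = \bigcup_{D' \in \Drst}\Ctr(\XX_D, \XX_{D'})$ is exactly the set of $P$ such that $\mathfrak{D}_P \neq \varnothing$, since each $\Ctr(\XX_D, \XX_{D'})$ is at most a singleton.

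The implication (d) $\Rightarrow$ (c) is tautological, so the real content lies in (c) $\Rightarrow$ (d), which I plan to prove by contradiction. Assume $\mathfrak{D}_P \neq \varnothing$ and choose a disc $D' \in \mathfrak{D}_P$ that is minimal with respect to inclusion among all elements of $\mathfrak{D}_P$. Suppose for contradiction that $D' \in \mathfrak{D}_{\mathrm{insep}}$. Then \Cref{prop part of rst inseparable} (applied to $D'$, which satisfies $\XX_{D'} \leq \Xrst$ since $D' \in \Drst$) forces $|R_{\mathrm{sing}}| \geq 3$, and \Cref{cor rst isomorphism failure} applied to $\XX_{D'}$ yields $|\Ctr(\XX_{D'}, \Xrst)| \geq 3$. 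Consequently, three distinct discs $D_1', D_2', D_3' \in \Drst$ give rise to components of $\SF{\Xrst}$ that contract to three distinct points of $\SF{\XX_{D'}}$, and since at most one of these points can be $\infty \in \SF{\XX_{D'}}$, \Cref{prop relative position smooth models line} forces at least two of them, say $D_1'$ and $D_2'$, to be proper subsets of $D'$. The goal is to exhibit one of $D_1', D_2'$ as an element of $\mathfrak{D}_P$, contradicting the minimality of $D'$.

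The verification splits into cases based on the position of $P$. When $P$ is a finite point of $\SF{\XX_D}$, then $D' \subsetneq D$ by \Cref{prop relative position smooth models line}(b); hence $D_i' \subsetneq D' \subsetneq D$, and a direct comparison using the formula in \Cref{prop relative position smooth models line}(b) shows that $D_i'$ contracts to the same finite point $P$ in $\SF{\XX_D}$ as $D'$ does (the centers agree modulo $\beta$), so $D_i' \in \mathfrak{D}_P$. When $P = \infty$, either $D \cap D' = \varnothing$, in which case $D_1', D_2' \subsetneq D'$ are automatically disjoint from $D$ and lie in $\mathfrak{D}_\infty$ by \Cref{prop relative position smooth models line}(c); or $D \subsetneq D'$, in which case the key geometric observation is that the minimal disc containing $D_1' \cup D_2'$ must equal $D'$ itself (otherwise $D_1'$ and $D_2'$ would contract to a common point of $\SF{\XX_{D'}}$, contradicting our choice). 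Hence $D_1' \cup D_2'$ cannot be contained in the proper subset $D \subsetneq D'$, so at least one of $D_1'$, $D_2'$, say $D_1'$, fails to lie in $D$; by the disc trichotomy, $D_1'$ must then either properly contain $D$ or be disjoint from $D$, placing it in $\mathfrak{D}_\infty$ by \Cref{prop relative position smooth models line}. The main obstacle is precisely this geometric bookkeeping in the $P = \infty$, $D \subsetneq D'$ subcase, where one must rule out the possibility that $D_1', D_2'$ both sink into $D$; the minimal-disc-containing-the-union observation is the crux of that argument.
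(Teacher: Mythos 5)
Your proof is correct. The treatments of (a)$\Leftrightarrow$(b) and (b)$\Leftrightarrow$(c) match the paper's. For (c)$\Rightarrow$(d), the paper defers to \Cref{prop evanescence inseparable components} (the fact that $\Drst = (\Drst_{\mathrm{sep}})^{\mathrm{st}}$) and says the conclusion is ``easy to see,'' whereas you sidestep that result entirely and work directly from \Cref{prop part of rst inseparable} and \Cref{cor rst isomorphism failure} combined with a minimality argument: you take $D' \in \mathfrak{D}_P$ minimal under inclusion, show that if $D'$ were inseparable then the three or more points of $\Ctr(\XX_{D'},\Xrst)$ would force a strictly smaller element of $\mathfrak{D}_P$ to exist, and split into the three configurations of \Cref{prop relative position smooth models line}. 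The case analysis is complete: in the finite-$P$ case the sub-discs still contract to $P$ since their centers agree with that of $D'$ modulo depth $b$; in the disjoint case everything contracts to $\infty$; and in the $D \subsetneq D'$ case your observation that the minimal disc containing $D_1' \cup D_2'$ equals $D'$ (because they reduce to distinct finite points of $\SF{\XX_{D'}}$) correctly rules out both of them landing inside $D$, and the disc trichotomy then places the one escaping $D$ into $\mathfrak{D}_\infty$. The two arguments rest on the same underlying facts, but yours is self-contained and makes explicit the disc bookkeeping that the paper's citation packages away; the trade-off is that you re-derive a fragment of the geometric content already encapsulated in \Cref{prop evanescence inseparable components}, which the paper's proof reuses as a black box.
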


As in \S\ref{sec models hyperelliptic inseparable}, we let $R_{\mathrm{sing}}$ denote the set of points of $\SF{\XX_D}$ satisfying the equivalent conditions above.  Given any $P\in R_{\mathrm{sing}}$, we write $\Xrst_P$ for the model of the line $X$ corresponding to $\mathfrak{D}_P$ and let $\Yrst_P$ be the corresponding model of $Y$.  We now present the main result of this subsection.

\begin{prop}
    \label{prop partition rank}
    In the setting above, the model $\Yrst_P$ satisfies the following properties:
    \begin{enumerate}[(a)]
        \item the strict transform $C_P$ of $\SF{\Yrst_P}$ in $\SF{\Yrst}$ intersects the rest of $\SF{\Yrst}$ at a single node (when it does not coincide with the whole special fiber $\SF{\Yrst}$); and 
        \item the arithmetic genus of the $k$-curve $C_P$ (which is to say, the sum of the abelian and toric rank of the special fiber of $\Yrst_P$) is equal to $\frac{1}{2}\mu(\XX_D,P)$.
    \end{enumerate}
\end{prop}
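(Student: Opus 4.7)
For part (a), I will establish two claims: first, that $\mathfrak{D}_P$ forms a connected subtree of the dual graph of $\SF{\Xrst}$ attached to the complement at a single edge, giving a unique attachment node $N^* \in \SF{\Xrst}$; and second, that the cover $\SF{\Yrst} \to \SF{\Xrst}$ is ramified above $N^*$. For the first claim, assuming without loss of generality that $P \neq \infty$ (reducing to this case via a reciprocal map if needed, cf.\ \Cref{rmk reciprocal}), \Cref{prop relative position smooth models line} characterizes $\mathfrak{D}_P$ as those $D' \in \Drst$ with $D' \subsetneq D$ whose center reduces to $P$ in $\SF{\XX_D}$. If $D' \in \mathfrak{D}_P$ and $D' \subseteq D'' \subsetneq D$ in $\Drst$, then taking the center of $D''$ to be a center of $D'$ (which is possible since $D' \subseteq D''$) yields that $D'' \in \mathfrak{D}_P$ as well. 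Hence $\mathfrak{D}_P$ is upward-closed within $\{D' \in \Drst : D' \subsetneq D\}$, which forces it to be a connected subtree of the tree $\Drst$ attached to the complement only at its maximal element.

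For the second claim, I argue by contradiction using \Cref{prop viable correspondence}: if $N^*$ were unramified, it would correspond to a viable cluster $\mathfrak{s}$ with two valid discs $D_+ \subsetneq D_-$ linked to it, with $D_+ \in \mathfrak{D}_P$ and $D_- \notin \mathfrak{D}_P$. Since the center of $D_-$ can be taken to be a center of $D_+$, it has the same reduction $P$ in $\SF{\XX_D}$, so the only way for $D_-$ to escape $\mathfrak{D}_P$ is $D_- \supsetneq D$, forcing $D_+ \subsetneq D \subsetneq D_-$. But then the depth of $D$ lies strictly between $b_-(\mathfrak{s})$ and $b_+(\mathfrak{s})$, hence in the interior of $J(\mathfrak{s})$, which by \Cref{lemma not a valid disc} and the discussion in \S\ref{sec depths construction valid discs} gives $\tbest{\RR}{D} = 2v(2)$ and implies that $\SF{\YY_D} \to \SF{\XX_D}$ is separable --- contradicting our hypothesis. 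Hence $N^*$ is ramified, and $C_P$ meets the rest of $\SF{\Yrst}$ at a single node.

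For part (b), I start from the decomposition furnished by part (a): $\SF{\Yrst}$ consists of the normalization $\NSF{\YY_D} \cong \mathbb{P}^1_k$ of the inseparable curve $\SF{\YY_D}$, joined to each $C_P$ (for $P \in R_{\mathrm{sing}}$) through a single node, with distinct $C_P$'s disjoint. Therefore $p_a(\SF{\Yrst}) = g(\NSF{\YY_D}) + \sum_P p_a(C_P) = \sum_P p_a(C_P)$, matching the analogous decomposition $p_a(\SF{\YY_D}) = g(\NSF{\YY_D}) + \sum_P \delta_P$, where $\delta_P$ is the local $\delta$-invariant of $\SF{\YY_D}$ at $P$. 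To prove the per-$P$ identity $p_a(C_P) = \delta_P$, I restrict to the formal neighborhood of $P$ in $\XX_D$: there, $\Yrst \to \YY_D$ becomes a resolution of the isolated unibranch singularity of $\SF{\YY_D}$ at $P$, whose exceptional fiber realizes $C_P$, and the general theory of resolutions of curve singularities on surfaces identifies its arithmetic genus with the local $\delta$-invariant $\delta_P$.

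Finally I compute $\delta_P = \tfrac{1}{2}\mu(\XX_D, P)$ by analyzing the local equation $y^2 = \overline{\rho_0}(x)$ at $P = 0$. In characteristic $2$, substituting $y \mapsto y + q(x)$ with $q(x)^2$ absorbing the even-degree terms of $\overline{\rho_0}$ yields $y^2 = x^{2m+1}u(x)$ with $u(0) \neq 0$, where $2m = \mu(\XX_D, P)$. Since $u(x) = \sum_{j \geq 0} a_{2m+2j+1}x^{2j}$ has only even powers of $x$, it is a square in $k[[x]]$, say $u = r(x)^2$, so substituting $y/r(x)$ for $y$ reduces the singularity to the standard cusp $y^2 = x^{2m+1}$, parametrized by $x = t^2$, $y = t^{2m+1}$. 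The quotient $k[[t]] / k[[t^2, t^{2m+1}]]$ then has $k$-basis $\{t, t^3, \ldots, t^{2m-1}\}$, giving $\delta_P = m = \tfrac{1}{2}\mu(\XX_D, P)$. The main obstacle will be rigorously establishing the local identity $p_a(C_P) = \delta_P$: the abelian ranks of components and loops from viable clusters within $\mathfrak{D}_P$ (given by \Cref{prop genus of components} and \Cref{thm toric rank}) all feed into $p_a(C_P)$, and invoking the correct surface-resolution theorem is what ensures these contributions sum to $\delta_P$ without requiring a direct term-by-term bookkeeping.
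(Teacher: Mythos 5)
Your proof of part (a) is essentially the paper's argument: both use \Cref{prop viable correspondence} to derive, from the assumption that the attachment node is unramified, a chain $D_+ \subsetneq D \subsetneq D_-$ of valid discs linked to some viable cluster $\mathfrak{s}$, placing the depth of $D$ in the interior of $J(\mathfrak{s})$ and thereby contradicting inseparability via $\mathfrak{t}^{\RR}(D) = 2v(2)$. Your preliminary observation that $\mathfrak{D}_P$ forms a connected subtree is worth stating (the paper tacitly assumes a unique attachment node $P'$), although "upward-closed within $\{D' \subsetneq D\}$'' does not immediately rule out two disjoint maximal elements; one still needs the semistability condition on $\Drst$ or the connectedness of fibers of $\Xrst\to\XX_D$ to close this.

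Part (b), however, has a genuine gap that you yourself flag at the end. The global identity
$\sum_P p_a(C_P) = p_a(\SF{\Yrst}) = g = p_a(\SF{\YY_D}) = g(\NSF{\YY_D}) + \sum_P \delta_P = \sum_P \delta_P$
is fine (and a nice observation), and your local computation $\delta_P = m$ for the cusp $y^2 = x^{2m+1}$ via the semigroup $\langle 2, 2m+1\rangle$ is correct. But the bridge from the sum identity to the per-$P$ identity $p_a(C_P) = \delta_P$ is exactly the content of the proposition, and the sum identity alone gives one linear relation among the (potentially several) unknowns $p_a(C_P)$. Your appeal to "the general theory of resolutions of curve singularities on surfaces" does not directly apply here: (i) the morphism $\Yrst \to \YY_D$ need not exist --- indeed \Cref{prop part of rst inseparable} shows $\XX_D \not\le \Xrst$ whenever $|R_{\mathrm{sing}}| < 3$, so $\Yrst$ does not dominate $\YY_D$ in general; (ii) $C_P$ is the strict transform of a \emph{model} of $Y$, not the exceptional fiber of a surface resolution, and the standard theorem relates $\delta_Q$ to the \emph{drop} $p_a(C)-p_a(\tilde C)$ under blowups, not to the arithmetic genus of the exceptional curve. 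Making the "restrict to a formal neighborhood'' idea precise would amount to showing that the sub-configuration $\mathcal{D}_P$ (and the genera/nodes it contributes) depends only on the local behavior of $f$ near $P$; this is morally what the paper's machinery in \S\ref{sec depths separating roots} formalizes, but it is not a ready-made invocation. The paper instead proves part (b) by a direct combinatorial induction on $n = \max_i |D'_i \cap \RR|$ over the maximal valid discs contained in $D_\varepsilon$, using \Cref{lemma merging of points} to reduce to the case $h=1$ and then tallying abelian-rank contributions via \Cref{prop genus of components} and toric-rank contributions via \Cref{thm toric rank}. That term-by-term bookkeeping is exactly what your approach is trying to avoid, but as written it cannot be bypassed without establishing the local invariance of $C_P$ first.
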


\begin{rmk}
    \label{rmk separating tame case}
    An analogous result holds in the $p\neq 2$ setting if $D$ is taken to be any disc and $P\in \SF{\XX_D}(k)$ is a point over which $\SF{\YY_D}$ exhibits a unibranch singularity (i.e.\ $P\in R_1$ in the language of \S\ref{sec models hyperelliptic separable}): in this situation the invariant $\mu(\XX_D,P)$ is given by $N_P-1$, where $N_P$ is the (necessarily odd) number of roots of $\Rinfty$ reducing to $P$.  The proof is analogous to (and in some aspects simpler than) that of \Cref{prop partition rank}.
\end{rmk}

Before presenting the proof, let us introduce the following two lemmas.
\begin{lemma}
    \label{lemma bifurcation valid discs}
    Choose $\alpha\in \bar{K}$, and suppose that, for some rational number $b\in \qq$, we have $\tbest{\RR}{D_{\alpha,b}}<2v(2)$. Let us fix a part-square decomposition $f=q^2+\rho$ that is totally odd with respect to a center $\alpha \in \bar{K}$. Then the function $c\mapsto \vfun_{\rho}(D_{\alpha,c})$ is not differentiable at the input $c=b$ if and only if there is a valid disc $D_{\alpha',b'}$ such that $v(\alpha' - \alpha) = b$ and $b' > b$.
    \begin{proof}
        By \Cref{lemma mathfrakh}(b) the function $c\mapsto \vfun_{\rho}(D_{\alpha,c})$ is differentiable at $c=b$ if and only if some (any) normalized reduction of $(\rho_{\alpha,\beta})'$, for $\beta$ such that $v(\beta)=b$, has a root $P\in \SF{\XX_{D_{\alpha,\beta}}}$ which is neither $\overline{x_{\alpha,b}}=0$ nor $\overline{x_{\alpha,\beta}}=\infty$. By applying \Cref{prop inseparable tfae} to the disc $D_{\alpha,b}$ (taking into account \Cref{prop relative position smooth models line}), this is equivalent to saying that there exists a valid disc $D_{\alpha',b'}$ such that $v(\alpha' - \alpha) = b$ and $b' > b$. 
    \end{proof}
\end{lemma}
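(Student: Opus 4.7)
The plan is to combine \Cref{lemma mathfrakh}(b), which interprets the one-sided derivatives of $c\mapsto \vfun_\rho(D_{\alpha,c})$ in terms of normalized reductions of $\rho_{\alpha,\beta}$, with \Cref{prop inseparable tfae}, which translates singularities of $\SF{\YY_{D_{\alpha,b}}}$ into the existence of valid discs in $\mathfrak{D}_P$. The hypothesis $\tbest{\RR}{D_{\alpha,b}}<2v(2)$ is exactly what guarantees that the cover $\SF{\YY_{D_{\alpha,b}}}\to \SF{\XX_{D_{\alpha,b}}}$ is inseparable, so that \Cref{prop inseparable tfae} can be applied at $D=D_{\alpha,b}$.

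First, I would fix $\beta\in \bar{K}^\times$ with $v(\beta)=b$ and use \Cref{lemma mathfrakh}(b) to express the left and right derivatives of $c\mapsto \vfun_\rho(D_{\alpha,c})$ at $c=b$ as the largest and smallest degrees of $x_{\alpha,\beta}$ appearing in a normalized reduction of $\rho_{\alpha,\beta}$. Thus non-differentiability at $c=b$ is equivalent to the normalized reduction of $\rho_{\alpha,\beta}$ containing at least two distinct degrees. Since the decomposition is totally odd with respect to $\alpha$ and we are in the $p=2$ setting, every degree contributing to $\rho_{\alpha,\beta}$ is odd and hence invertible in $k$, so the formal derivative of a normalized reduction of $\rho_{\alpha,\beta}$ agrees, up to a unit scalar, with a normalized reduction of $(\rho_{\alpha,\beta})'$. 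A short calculation then shows that this derivative admits a root $P\in \SF{\XX_{D_{\alpha,b}}}\smallsetminus\{\overline{x_{\alpha,\beta}}=0,\ \overline{x_{\alpha,\beta}}=\infty\}$ precisely when the normalized reduction of $\rho_{\alpha,\beta}$ fails to be a single monomial, i.e.\ precisely when differentiability fails at $c=b$.

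Next, I would apply \Cref{prop inseparable tfae} to $D=D_{\alpha,b}$ to identify the vanishing locus of a normalized reduction of $(\rho_{\alpha,\beta})'$ (the set $R_{\mathrm{sing}}$ of points where $\mu(\XX_{D_{\alpha,b}},\cdot)>0$) with the set of points $P\in \SF{\XX_{D_{\alpha,b}}}$ for which $\mathfrak{D}_P$ contains a valid disc. The existence of such a $P$ distinct from both $\overline{x_{\alpha,\beta}}=0$ and $\overline{x_{\alpha,\beta}}=\infty$ then translates, via case (b) of \Cref{prop relative position smooth models line}, into the existence of a valid disc $D_{\alpha',b'}\subsetneq D_{\alpha,b}$ whose associated point of contraction $P=\overline{\beta^{-1}(\alpha'-\alpha)}$ is nonzero; this means exactly that $v(\alpha'-\alpha)=b$ and $b'>b$. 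Chaining the three equivalences together yields the lemma.

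The main subtlety I anticipate is the first step, namely checking carefully that ``the normalized reduction of $\rho_{\alpha,\beta}$ involves at least two distinct degrees'' is equivalent to ``a normalized reduction of $(\rho_{\alpha,\beta})'$ has a root off $\{0,\infty\}$''. One must verify both that the derivative does not vanish identically (which uses the totally odd hypothesis together with $p=2$, since each surviving degree is odd and hence a unit in $k$) and that an actual root off $\{0,\infty\}$ must exist whenever two distinct odd degrees survive in the reduction — this last point follows from the elementary observation that a polynomial of the shape $c_1 \bar{x}^{d_1}+c_2 \bar{x}^{d_2}$ with $0<d_1<d_2$ factors as $\bar{x}^{d_1}(c_1+c_2 \bar{x}^{d_2-d_1})$ and thus admits a nonzero root in $\bar{k}$.
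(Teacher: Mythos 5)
Your proof is correct and follows essentially the same route as the paper's: \Cref{lemma mathfrakh}(b) to relate non-differentiability of $c\mapsto \vfun_\rho(D_{\alpha,c})$ to the normalized reduction of $(\rho_{\alpha,\beta})'$ having a root off $\{0,\infty\}$, then \Cref{prop inseparable tfae} together with \Cref{prop relative position smooth models line} to turn that into a valid disc $D_{\alpha',b'}$ with $v(\alpha'-\alpha)=b$ and $b'>b$. Your extra care in the middle step --- verifying that the totally odd hypothesis in residue characteristic $2$ makes the formal derivative of the normalized reduction agree (up to a unit) with the normalized reduction of the derivative, and that a reduction with two surviving odd degrees forces a root of the derivative away from $0$ and $\infty$ --- is precisely the content the paper's proof compresses, and it also makes clear that the equivalence should be stated with ``not differentiable'' on the left-hand side.
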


\begin{rmk} \label{rmk bifurcation valid discs}
    It is clear from \Cref{lemma mathfrakh} and definitions of the functions involved that the non-differentiability condition in the statement of the above lemma is satisfied whenever there is an input $c = b$ which is not equal to the depth of any cluster containing $\alpha$ and at which the function $c \mapsto \mathfrak{t}^\RR(D_{\alpha,c})$ is not differentiable.
\end{rmk}

\begin{lemma}
    \label{lemma merging of points}
    Let $D:=D_{\alpha,b}$ be a disc such that $\tbest{\RR}{D}<2v(2)$, and let $D^\varepsilon = D_{\alpha,b-\varepsilon}$ for some $\varepsilon>0$. Let $r: \SF{\XX_{D}}(k) \to \SF{\XX_{D^\varepsilon}}(k)$ be the map taking all points $P \in \SF{\XX_D}(k) \smallsetminus \{\infty\}$ to $0$ and taking $\infty \in \SF{\XX_D}(k)$ to $\infty \in \SF{\XX_{D^\varepsilon}}(k)$. Then if $\varepsilon$ is sufficiently small, we have $\mu(\XX_{D^\varepsilon},Q)=\sum_{P\in r^{-1}(Q)}\mu(\XX_D,P)$ for all $Q\in \SF{\XX_{D^\varepsilon}}(k)$, and moreover, we have the inclusions 
    \begin{equation} \label{eq merging of points}
        \bigsqcup_{P\in r^{-1}(Q)}\mathfrak{D}_P\subseteq \mathfrak{D}_Q\subseteq \big(\bigsqcup_{P\in r^{-1}(Q)}\mathfrak{D}_P \big)\cup \{D\}.
    \end{equation}
    \begin{proof}
        It is easy to see that both claims of the lemma hold when $\varepsilon$ is chosen such that there is no disc $D_{\alpha',b'} \in \Drst$ with $b - \epsilon < v(\alpha' - \alpha) < b$; the second (resp.\ first) inequality in (\ref{eq merging of points}) is an equality if $D$ is (resp.\ is not) a disc in $\Drst$.
    \end{proof}
\end{lemma}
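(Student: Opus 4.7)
The plan is to pin down an adequate smallness condition on $\varepsilon$, establish the $\mu$-formula through a direct computation with a totally odd part-square decomposition, and then verify the inclusions by a short case analysis of the relative positions of $D' \in \Drst$ with respect to $D$ and $D^\varepsilon$. Since $\Drst$ is finite and $\tbest{\RR}{D} < 2v(2)$ extends by continuity, I can choose $\varepsilon > 0$ small enough so that: (i) $\tbest{\RR}{D^\varepsilon} < 2v(2)$, guaranteeing that the cover $\SF{\YY_{D^\varepsilon}} \to \SF{\XX_{D^\varepsilon}}$ is still inseparable and that $\mu(\XX_{D^\varepsilon},-)$ is defined; and (ii) no $D' \in \Drst$ admits a center $\alpha'$ with $v(\alpha' - \alpha) \in (b - \varepsilon, b)$, which excludes all the inconvenient intermediate positions, namely $D \subsetneq D' \subsetneq D^\varepsilon$, $D' = D^\varepsilon$, and $D' \subsetneq D^\varepsilon$ with $D' \cap D = \varnothing$.

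For the $\mu$-formula, I would fix a totally odd part-square decomposition $f = q^2 + \rho$ with respect to $\alpha$, which exists by \Cref{prop totally odd existence} and is good at every disc centered at $\alpha$ by \Cref{rmk totally odd good}. Writing $\rho_{\alpha,1}(z) = \sum_{i \text{ odd}} R_i z^i$, both normalized reductions $\overline{\rho_0}$ on $\SF{\XX_D}$ and $\overline{\rho_0^\varepsilon}$ on $\SF{\XX_{D^\varepsilon}}$ can be read off from the index sets $I_b = \{i : v(R_i) + ib = \min_j(v(R_j) + jb)\}$ and $I_{b-\varepsilon}$, via \Cref{lemma mathfrakh}. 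An elementary analysis of these linear expressions in the depth shows that for $\varepsilon$ small enough the latter minimum is attained uniquely at $i_{\max} := \max I_b$, so $\overline{\rho_0^\varepsilon} = c\, z^{i_{\max}}$ is a monomial with $i_{\max}$ odd. Its derivative has degree $i_{\max} - 1$ and vanishes to order $i_{\max} - 1$ at $0$; meanwhile, no term of $\overline{\rho_0}$ is killed under differentiation because each index $i \in I_b$ is odd (so the coefficient $i$ reduces to $1$ in $k$), whence $\deg \overline{\rho_0}' = i_{\max} - 1$ as well. Unpacking \Cref{dfn mu} then gives $\mu(\XX_{D^\varepsilon}, 0) = i_{\max} - 1$, $\mu(\XX_{D^\varepsilon}, \infty) = \mu(\XX_D, \infty) = 2g - i_{\max} + 1$, and $\mu(\XX_{D^\varepsilon}, Q) = 0$ for other $Q$; combining with the identity $\sum_P \mu(\XX_D, P) = 2g$ from \Cref{rmk mu}(c) yields $\sum_{P \in r^{-1}(0)} \mu(\XX_D, P) = i_{\max} - 1 = \mu(\XX_{D^\varepsilon}, 0)$, matching the formula for every $Q$.

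For the inclusions, I would partition the $D' \in \Drst$ according to their position relative to $D$ and $D^\varepsilon$; the choice of $\varepsilon$ leaves only three cases. If $D' \subsetneq D$, then \Cref{prop relative position smooth models line} gives $\Ctr(\XX_D, \XX_{D'}) = \{P\}$ with $P \neq \infty$ and $\Ctr(\XX_{D^\varepsilon}, \XX_{D'}) = \{0\}$, so $D'$ sits in both $\mathfrak{D}_P$ and $\mathfrak{D}_0$, consistently with $r(P) = 0$. If $D^\varepsilon \subseteq D'$ or $D' \cap D^\varepsilon = \varnothing$, both contraction sets are $\{\infty\}$, consistent with $r(\infty) = \infty$. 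The remaining case $D' = D$ occurs only when $D \in \Drst$: here $\Ctr(\XX_D, \XX_D) = \varnothing$ while $\Ctr(\XX_{D^\varepsilon}, \XX_D) = \{0\}$, so $D$ is the unique element of $\mathfrak{D}_0$ that does not come from any $\mathfrak{D}_P$; this is precisely the $\{D\}$ discrepancy in the stated inclusions, and gives the described equalities. The main technical step requiring care will be the valuation analysis showing that the support of $\overline{\rho_0^\varepsilon}$ collapses to $\{i_{\max}\}$ for small $\varepsilon$, since all the downstream computations of $\mu$ cascade from this single observation.
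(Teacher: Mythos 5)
Your proposal is essentially a fleshed-out version of the paper's one-sentence "easy to see" proof, and the overall strategy is sound: the $\mu$-formula computation via a totally odd decomposition and the piecewise-linear analysis of the index sets $I_c$, followed by balancing with $\sum_P \mu(\XX_D,P)=2g$, is exactly the right way to make the paper's claim concrete. The case analysis for the inclusions is also the intended approach.

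There is, however, a subtle gap in your exclusion claim. Your condition (ii) requires $v(\alpha'-\alpha)\notin(b-\varepsilon,b)$ for centers $\alpha'$ of discs in $\Drst$, and you assert this excludes the position "$D'\subsetneq D^\varepsilon$ with $D'\cap D=\varnothing$." But when $\alpha\notin D'$ the value $v(\alpha'-\alpha)$ is the \emph{same} for every center $\alpha'$ of $D'$, and it can equal exactly $b-\varepsilon$; this boundary value is not in the open interval $(b-\varepsilon,b)$, so condition (ii) does not rule it out. For such a $D'$, $\Ctr(\XX_{D^\varepsilon},\XX_{D'})$ is a point $Q'\neq 0,\infty$, and then by \Cref{prop inseparable tfae} we have $\mu(\XX_{D^\varepsilon},Q')>0$ while $r^{-1}(Q')=\varnothing$, so both claims of the lemma would fail. (The paper's stated sufficient condition on $\varepsilon$ has exactly the same imprecision.) The fix is cheap: either demand $v(\alpha'-\alpha)\notin[b-\varepsilon,b)$, which holds for all sufficiently small $\varepsilon$ since $\Drst$ is finite; or, better, observe that your own $\mu$-computation (once you have chosen $\varepsilon$ small enough that $I_{b-\varepsilon}=\{i_{\max}\}$) already forces $\mu(\XX_{D^\varepsilon},Q)=0$ for $Q\neq 0,\infty$, which \emph{via} \Cref{prop inseparable tfae} rules out any $D'\in\Drst$ contracting to such a $Q$ --- so the exclusion should really be attributed to the singleton-support condition on $\rho_{\alpha,\beta^\varepsilon}$, not to your condition (ii) alone. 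With that attribution corrected, the proof is complete.
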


\begin{proof}[Proof (of \Cref{prop partition rank})]
    Let $D, P, q, \rho, \Drst$ and $\mathfrak{D}_P$ be as specified at the beginning of this subsection; in particular, since $\tbest{\RR}{D}<2v(2)$, we are in the $p=2$ setting.  Let us write $D = D_{\alpha,b}$ for some $\alpha\in \bar{K}$ and $b\in \qq$, and let $D_\varepsilon = D_{\alpha,b+\varepsilon}$ for $\varepsilon$ arbitrarily small. In this proof, we will always assume for simplicity that $\overline{x_{\alpha,\beta}}(P)=0$ (which is certainly the case for an appropriate choice of $\alpha$); according to \Cref{prop relative position smooth models line} this means that, given a disc $D'$, we have $D'\in \mathfrak{D}_P$ if and only if we have $D'\in \Drst$ and $D'\subseteq D_\varepsilon$.
    
    Let us first address part (a). Let $P'$ denote the node of $\SF{\Xrst}$ at which the strict transform of $\SF{\Xrst_P}$ intersects the rest of $\SF{\Xrst}$, and let $D_1\in \mathfrak{D}_P$ and $D_2\in \Drst\setminus \mathfrak{D}_P$ be the discs corresponding to the two lines of $\SF{\Xrst}$ meeting at $P'$. Suppose by way of contradiction that $P'$ has two distinct inverse images in $\SF{\Yrst}$; then \Cref{prop viable correspondence} ensures that $D_1$ and $D_2$ are the $2$ valid discs linked to some viable cluster $\mathfrak{s}$ and so in particular are not disjoint. From this, since $D_1 \subseteq D_\varepsilon$ but $D_2 \not\subseteq D_\varepsilon$, it follows that we have $D_1\subseteq D_\varepsilon\subseteq D_2$ for all small enough $\varepsilon$. This means that $D$ itself is linked to $\mathfrak{s}$ and its depth $b$ lies in the interval $J(\mathfrak{s})$, and hence we have $\tbest{\RR}{D}=2v(2)$, which contradicts our assumption that $\SF{\YY_D}\to \SF{\XX_D}$ is inseparable. We conclude that $P'$ must have a single inverse image in $\SF{\Yrst}$, and (a) is proved.

    Let us now address part (b).  We write $\mu(\XX_D,P)=2\nu$ where $\nu$ is a positive integer, and we let $D'_1, \ldots, D'_h$ be the maximal valid discs that are contained in $D_\varepsilon$; for each $i$, let us moreover choose $\alpha'_i \in \bar{K}$ such that $D'_i=D_{\alpha'_i,b'_i}$ with $b_i \in \qq$.  Since the discs $D'_i$ are valid, we have $\tbest{\RR}{D'_i}=2v(2)$; thanks to the maximality assumption, we can be sure, in light of \Cref{thm summary depths valid discs}, that we have $\tbest{\RR}{D_{\alpha'_i,c}} < 2v(2)$ for all $c\in [b,b'_i)$.
    
    We proceed by induction on $n := \max_{1 \leq i \leq h}(|D'_i\cap \RR|)$, beginning by proving the result for $n=0$. We preliminarily observe that, for a fixed $n \geq 0$, it is enough to address the case where $h=1$. Indeed, one can verify, by repeatedly applying \Cref{lemma merging of points}, that the result of (b) is true for the disc $D$ and the point $P\in \SF{\XX_D}$ if it is true for the discs $D_{\alpha_i',b_i'-\varepsilon}$ at the points $\overline{\alpha_i'} \in \SF{\XX_{D_{\alpha_i',b_i'-\varepsilon}}}(k)$.
    
    Assume that $h=1$; for simplicity of notation we write $D' := D_{\alpha',b'}$ for $D'_1$. We moreover write $\mathfrak{s}'=D'\cap \RR$ and choose a part-square decomposition $f=\tilde{q}^2+\tilde{\rho}$ that is totally odd with respect to the center $\alpha'$. Since we have $\tbest{\RR}{D_{\alpha',c}} <2v(2)$ for all $c\in [b,b')$ and $D'$ is a valid disc, we have $b'=b_-(\mathfrak{s}',\alpha')$. Let us moreover remark that, by \Cref{lemma mathfrakh}, the right derivative of $c\mapsto \vfun_{\tilde{\rho}}(D_{\alpha',c})$ at $c=b$ is equal to $2\nu+1$ and, since $D'$ is the maximum among the valid discs contained in $D_\varepsilon$, the slope of the function $c \mapsto \vfun_{\tilde{\rho}}(D_{\alpha',c})$ actually remains equal to $2\nu+1$ for all $c\in [b,b']$ by \Cref{lemma bifurcation valid discs}.  In particular, by applying \Cref{lemma mathfrakh} to $c \mapsto \mathfrak{t}^\RR(D_{\alpha',c}) = \vfun_{\tilde{\rho}}(c) - \vfun_{f_{\alpha',1}}(c)$, we can be sure that $\lambda_-(\mathfrak{s}',\alpha')=2\nu+1-n$, recalling that $n=|\mathfrak{s}'|$ in this situation.
    
    We now set out to prove the case $n=0$ and $h=1$. In this case, the only valid disc contained in $D_\varepsilon$ is $D'$, and we have $\mathfrak{D}_P=\{ D'\}$ and that $\SF{\Yrst_P}$ has toric rank $0$ and abelian rank $-1+(\lambda_-(\varnothing,\alpha')+1)/2=\nu$ by \Cref{prop genus of components}(a), as we wanted.
    
    Now assume that $n \geq 2$ and $h=1$, and assume inductively that the conclusion of part (b) holds for all lesser values of $n$ for all $h$.  We can clearly write $\mathfrak{s}'=\mathfrak{s}_1\sqcup \ldots \sqcup\mathfrak{s}_r$, where the $\mathfrak{s}_i$'s are the (even-cardinality) clusters contained in $\mathfrak{s}'$ such that $b_-(\mathfrak{s}_i)\le b_+(\mathfrak{s}_i)<d_+(\mathfrak{s}_i)$ and such that we have $\mathfrak{t}^\RR(D_{\mathfrak{s}_i,c}) = 2v(2)$ for $c\in [b,b_+(\mathfrak{s}_i)]$ and $\mathfrak{t}^\RR(D_{\mathfrak{s}_i,c})<2v(2)$ for $c\in (b_+(\mathfrak{s}_i),d_+(\mathfrak{s}_i)]$. For each $i$, we write the following: let $f^2=q_i^2+\rho_i$ be a part-square decomposition that is totally odd with respect to $\alpha_i$; let $\nu_i$ be the integer such that the right derivative of $c\mapsto \vfun_{\rho_i}({D_{\mathfrak{s}_i,c}})$ at $c=b_+(\mathfrak{s}_i)$ equals $2\nu_i + 1$; and denote the disc $D_{\mathfrak{s}_i,b_+(\mathfrak{s}_i)+\varepsilon}$ by $D_i$.  Note that we have the formula $\lambda_+(\mathfrak{s}_i)=|\mathfrak{s}_i|-(2\nu_i+1)$.
    
    Let $\mathcal{J} \subseteq \mathcal{I}:= \{1, \ldots , r\}$ be the subset of indices such that $d_-(\mathfrak{s}_i) = b_-(\mathfrak{s}_i) = b_+(\mathfrak{s}_i)$, i.e.\ the indices for which $\mathfrak{s}_i$ is not viable; we define the partition $\mathcal{J} = \mathcal{J}_1 \sqcup \ldots \sqcup \mathcal{J}_s$ so that each $\mathcal{J}_j \subseteq \mathcal{J}$ is a maximal subset of indices corresponding to sibling clusters. We classify the valid discs contained in $D$ as follows. 
    \begin{enumerate}[(I)]
        \item For each $i\in \mathcal{I}$, we have the valid discs contained in $D_i$; by the inductive hypothesis, these give a total contribution of $\sum_{1 \leq i \leq r} \nu_i$ to the sum of the abelian and toric ranks of $\SF{\Yrst_P}$.
        \item We have the discs $D_{\mathfrak{s}_i,b_+(\mathfrak{s}_i)}$ for all $i\in \mathcal{I}\setminus \mathcal{J}$, along with the disc $D'$ when $\mathfrak{s}'$ is viable.  Now we observe that 
        \begin{enumerate}[(a)]
            \item each of the discs $D_{\mathfrak{s}_i,b_+(\mathfrak{s}_i)}$ contributes a component of $\SF{\Yrst_P}$ of abelian rank equal to $-1 + (1 + \lambda_+(\mathfrak{s}_i))/2 = -1+\frac{1}{2}|\mathfrak{s}_i| - \nu_i$ by \Cref{prop genus of components}(b)(ii); and 
            \item when $\mathfrak{s}'$ is viable, the disc $D'$ contributes a component of $\SF{\Yrst_P}$ of abelian rank equal to $-1 + (1 + \lambda_-(\mathfrak{s}'))/2 = \nu - \frac{1}{2}|\mathfrak{s}'|$ by \Cref{prop genus of components}(b)(i).
        \end{enumerate}
        \item We have the valid discs $D_{\mathfrak{s}_i,d_-(\mathfrak{s}_i)}$ for $i \in \mathcal{J}$, which are precisely the (distinct) discs $D_{\mathfrak{r}_j,d_+(\mathfrak{r}_j)}$ for $1 \leq j \leq s$ where each $\mathfrak{r}_j$ is the common parent of the clusters $\mathfrak{s}_i$ with $i \in \mathcal{J}_j$ and $\tilde{\alpha}_j = \alpha_i$ for a choice of $i \in \mathcal{J}_j$.  The abelian rank of $\SF{\YY_{D_{\mathfrak{r}_j,d_+(\mathfrak{r}_j)}}}$ can be computed using \Cref{prop genus of components}(c) as follows:
        \begin{enumerate}
            \item if $\mathfrak{r}_j\subsetneq \mathfrak{s}'$ or if $\mathfrak{r}_j=\mathfrak{s}'$ and $\mathfrak{s}'$ is viable, it is equal to 
            \begin{equation*}
                -1 + \sum_{i \in \mathcal{J}_j} \frac{1}{2}(1 + \lambda_+(\mathfrak{s}_i)) = -1 + \sum_{i \in \mathcal{J}_j} (\frac{1}{2}|\mathfrak{s}_i| - \nu_i); \ \mathrm{and}
            \end{equation*}
            \item if $\mathfrak{r}_j=\mathfrak{s}'$ and $\mathfrak{s}'$ is not viable, then it is equal to 
            \begin{equation*}
                -1 + \frac{1}{2}(1+\lambda_-(\mathfrak{s}')) + \sum_{i \in \mathcal{J}_j} \frac{1}{2}(1 + \lambda_+(\mathfrak{s}_i))) = \sum_{i \in \mathcal{J}_j} (\frac{1}{2}|\mathfrak{s}_i| - \nu_i) + (\nu-\frac{1}{2}|\mathfrak{s}'|).
            \end{equation*}
        \end{enumerate}
        Moreover, if $\mathfrak{s}'$ is not viable but there is no index $j \in \{1,\ldots, s\}$ such that $\mathfrak{r}_j=\mathfrak{s}'$, we include $D'$ in the subset of discs of type III(b); the disc $D'$ contributes $\nu-\frac{1}{2}|\mathfrak{s}'|$ to the abelian rank by \Cref{prop genus of components}(c).
        
        \item All other valid discs contained in $D$ are of the form $D_{\mathfrak{r},d_+(\mathfrak{r})}$ for $\mathfrak{r}\subseteq \mathfrak{s}$ a \"{u}bereven cluster containing one of the clusters $\mathfrak{s}_i$; these each contribute $2$ lines to $\SF{\Yrst_P}$ and thus do not increase the abelian rank.
    \end{enumerate}
    
    The discs of type II, III and IV give a contribution to the abelian rank of $C_P$ that adds up to $\nu-\sum_{1 \leq i \leq r} \nu_i - t$, where $t$ is the number of valid discs of type II(a) and III(a). Their contribution to the toric rank of $C_P$ equals $t$ by \Cref{thm toric rank}, taking into account that $t$ equals the number of viable non-\"{u}bereven clusters $\mathfrak{r}\subseteq \mathfrak{s}'$ such that $\mathfrak{s}_i\subseteq \mathfrak{r}$ for some $i\in \mathcal{I}$. Thus, the valid discs contained in $D$ give a total contribution to the abelian and toric rank of $\SF{\Yrst}$ equal to $\nu$, which is what we wanted.
\end{proof}

\begin{cor} \label{cor odd-cardinality clusters}
    Suppose that $\mathfrak{s}$ is a cluster of odd cardinality $2\nu + 1$ with $1 \leq \nu \leq g - 1$. Then the special fiber $\SF{\Yrst}$ consists of two $G$-invariant $k$-curves $C_0$ and $C_\infty$ meeting at a single node $Q_{\mathfrak{s}}\in \SF{\Yrst}$; their arithmetic genera are $\nu$ and $g-\nu$ respectively.
    \begin{proof}
        Choose $D$ to be any disc of the form $D_{\mathfrak{s},b}$ for some $b \in (d_-(\mathfrak{s}), d_+(\mathfrak{s}))$, and let $\alpha \in D_{\mathfrak{s},b}$ be a center and $\beta\in \bar{K}^\times$ be an element of valuation $b$. We have that $\nu$ roots of $\Rinfty$ reduce to $\overline{x_{\alpha,\beta}}=0$, while the remaining $2g-\nu$ roots reduce to $\overline{x_{\alpha,\beta}}=\infty$ in $\SF{\XX_D}$. 
        
        Assume that we are in the $p = 2$ setting. The normalized reduction of $\rho_{\alpha,\beta}$ has the form $x_{\alpha,\beta}^{2m+1}$; meanwhile, the part-square decomposition $f= 0^2 + f$ is good at $D$ by \Cref{prop good decomposition}.  We deduce in particular that $\mathfrak{t}^{\mathcal{R}}(D) = 0$, and that we have that $\mu(\XX_D,0)=2\nu$ and $\mu(\XX_D,\infty)=2g-2\nu$ and that $\mu(\XX_D,P)=0$ at all other points $P\in \SF{\XX_D}$ (see \S\ref{sec models hyperelliptic inseparable}).  As a consequence of \Cref{prop part of rst inseparable}, we have $\XX_D\not\le\Xrst$. Now the corollary follows as an immediate application of \Cref{prop partition rank}.  In the $p\neq 2$ setting, we also have $\XX_D\not\le \Xrst$ by \Cref{thm part of rst separable}, and the corollary follows from \Cref{rmk separating tame case}.
    \end{proof}
\end{cor}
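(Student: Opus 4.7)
The plan is to introduce an auxiliary disc $D := D_{\mathfrak{s},b}$ centered in $\mathfrak{s}$ with depth $b$ strictly between $d_-(\mathfrak{s})$ and $d_+(\mathfrak{s})$, and to read off $\SF{\Yrst}$ from the local structure of $\SF{\YY_D} \to \SF{\XX_D}$. With this choice of $D$, exactly two points of $\SF{\XX_D}$ receive branch points: the $2\nu+1$ elements of $\mathfrak{s}$ reduce to $\overline{x_{\alpha,\beta}}=0$, while the remaining $2g-2\nu$ roots together with $\infty$ reduce to $\overline{x_{\alpha,\beta}}=\infty$. First I would verify that $\XX_D \not\le \Xrst$ in both residue-characteristic regimes. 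In the $p=2$ setting, since $|\mathfrak{s}|$ is odd, Remark \ref{rmk structure of J}(b) gives $\mathfrak{t}^{\RR}(D)=0<2v(2)$, so $\SF{\YY_D}\to\SF{\XX_D}$ is inseparable; then $R_{\mathrm{sing}}\subseteq\{0,\infty\}$ has cardinality at most $2$, and Proposition \ref{prop part of rst inseparable} yields $\XX_D\not\le\Xrst$. In the $p\neq 2$ setting, Theorem \ref{thm part of rst separable} immediately gives the same conclusion since $N=2$.

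Next I would compute the two non-trivial local contributions. In the $p=2$ case, the trivial part-square decomposition $f=0^2+f$ is good at $D$, because a normalized reduction of $f_{\alpha,\beta}$ is a scalar times $x_{\alpha,\beta}^{2\nu+1}$, which is not a square (Proposition \ref{prop good decomposition}(a)). From the explicit shape of $\overline{\rho_0}$ one reads off $\mu(\XX_D,0)=2\nu$, $\mu(\XX_D,\infty)=2g-2\nu$, and $\mu(\XX_D,P)=0$ elsewhere. Applying Proposition \ref{prop partition rank} at $P=0$ and $P=\infty$ produces pieces $C_0$ and $C_\infty$ of $\SF{\Yrst}$, each meeting the rest of the special fiber at exactly one node, with arithmetic genera $\tfrac{1}{2}\mu(\XX_D,0)=\nu$ and $\tfrac{1}{2}\mu(\XX_D,\infty)=g-\nu$ respectively. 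In the $p\neq 2$ case the parallel conclusion follows from the statement in Remark \ref{rmk separating tame case}, noting that $0,\infty\in R_1$ under our hypotheses $1\le\nu\le g-1$.

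Finally, because $R_{\mathrm{sing}}$ (respectively the ``$R_1$-locus'') consists only of $\{0,\infty\}$, the two pieces $C_0$ and $C_\infty$ exhaust $\SF{\Yrst}$, so they must be glued to one another at a single node $Q_{\mathfrak{s}}$. The $G$-invariance of each piece is immediate from the fact that the hyperelliptic involution acts trivially on $X$ and hence on $\Xrst$, which makes the inverse image in $\SF{\Yrst}$ of each component of $\SF{\Xrst}$ automatically $G$-stable. The main obstacle I anticipate is the bookkeeping verification that Proposition \ref{prop partition rank} exhausts $\SF{\Yrst}$ here — that nothing else in the dual graph sits between $C_0$ and $C_\infty$; this is guaranteed precisely by part (a) of that proposition combined with the vanishing of $\mu$ at all other points of $\SF{\XX_D}$.
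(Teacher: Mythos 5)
Your proof is correct and follows essentially the same route as the paper's: choose an interior disc $D = D_{\mathfrak{s},b}$, observe that the cover $\SF{\YY_D}\to\SF{\XX_D}$ is inseparable (with the trivial part-square decomposition being good because the reduction of $f_{\alpha,\beta}$ has odd degree), compute the $\mu$-invariants at $0$ and $\infty$, and apply Proposition \ref{prop partition rank} (or Remark \ref{rmk separating tame case} when $p\neq 2$). You actually fill in some detail that the paper elides: the verification that $C_0$ and $C_\infty$ exhaust $\SF{\Yrst}$ because $\Drst = \mathfrak{D}_0 \sqcup \mathfrak{D}_\infty$, the explicit reason for $G$-invariance, and a corrected branch-point count (the paper's proof writes ``$\nu$ roots ... and the remaining $2g-\nu$ roots,'' which totals $2g$ rather than $2g+2$; your count of $2\nu+1$ at $0$ and $2g-2\nu+1$ at $\infty$ is the right one).
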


\section{Computations for hyperelliptic curves of low genus} \label{sec computations}

In this section we apply our results from \S\ref{sec depths},\ref{sec centers},\ref{sec structure} to determine the possible structures of special fibers of relatively stable models $\Yrst$ of hyperelliptic curves $Y$ defined by equations of the form $y^2 = f(x)$ (with $\deg(f)=2g+1$) over residue characteristic $p = 2$, given the cluster data associated to $f$ along with (when the genus $g \geq 2$) the valuations of elements of $K$ coming from formulas involving the coefficients of certain factors of $f$.  For genera $g = 1,2$, we shall classify hyperelliptic curves over $K$ into several cases that depend on the aforementioned data and show how to compute each component of $(\Yrst)_s$ along with its toric rank on such a case-by-case basis.

In order to simplify notation in the formulas and conditions appearing in our statements below, for the hypotheses of each of the results of this section, we adopt the simplifying assumptions that 
\begin{enumerate}
    \item $f$ is monic;
    \item the depth of the full set of roots $\mathcal{R}$ is $0$; and
    \item one of the roots of $f$ (namely one which is contained in a particular even-cardinality cluster $\mathfrak{s}$ we are working with) is $0$.
\end{enumerate}
Assumption (1) holds after appropriately scaling the $y$-coordinate (by a scalar which lies in at most a quadratic extension of $K$).  Assumptions (2) and (3) hold after making simple changes of coordinates of the defining equation of the hyperelliptic curve which translate and scale the roots of $f$; this is done by translating and scaling the $x$-coordinate and again appropriately scaling the $y$-coordinate.

\begin{rmk}
    Assume that $f$ is monic and one of its roots is 0, i.e.\ it satisfies (1) and (3). Then, condition (2) just means that $f$ has integral coefficients, and at least one of its non-leading coefficients is a unit; equivalently, $f$ has integral roots, and one of its roots is a unit.
\end{rmk}

The following lemma will help us below to characterize those valid discs which are not linked to any cluster.

\begin{lemma}
    \label{lemma crushed components valuations}
    Suppose $f$ is monic; let $\alpha\not\in\RR$, and let $I(\varnothing,\alpha)=[d_-(\varnothing,\alpha),+\infty)$ be the interval defined in \Cref{dfn interval I alpha}. Then we have $d_-(\varnothing,\alpha)=\max_{a \in \RR} v(a-\alpha)$, and, for all discs $D:=D_{\alpha,b}$ with $b\in I(\varnothing,\alpha)$, we have
    \begin{equation}\label{equation lemma crushed}
        \vfun_{f}(D)=\sum_{a\in\RR} v(a-\alpha).
    \end{equation}
    \begin{proof}
        This is an immediate computation.
    \end{proof}
\end{lemma}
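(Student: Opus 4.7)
The first assertion $d_-(\varnothing,\alpha)=\max_{a\in\RR}v(a-\alpha)$ is read off directly from \Cref{dfn interval I alpha} by setting $\mathfrak{s}=\varnothing$, since then $\RR\setminus\mathfrak{s}=\RR$.

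For the formula in (\ref{equation lemma crushed}), the plan is to fix an element $\beta\in\bar{K}^\times$ with $v(\beta)=b$ and compute the Gauss valuation $v(f_{\alpha,\beta})=\vfun_f(D)$ directly from the factorization of $f$. Since $f$ is monic of degree $2g+1$, I would write
\begin{equation*}
    f_{\alpha,\beta}(z)=f(\beta z+\alpha)=\prod_{a\in\RR}(\beta z+\alpha-a)=\beta^{2g+1}\prod_{a\in\RR}\left(z-\frac{a-\alpha}{\beta}\right),
\end{equation*}
so that $f_{\alpha,\beta}$ is a scalar multiple of a monic polynomial whose roots are the scaled differences $(a-\alpha)/\beta$, with $v((a-\alpha)/\beta)=v(a-\alpha)-b$.

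The hypothesis $b\geq d_-(\varnothing,\alpha)=\max_{a\in\RR}v(a-\alpha)$ ensures that each of these roots has valuation $\leq 0$. The key (essentially standard) observation is then that for a monic linear factor $z-c$ with $v(c)\leq 0$, the Gauss valuation equals $v(c)$; multiplicativity of the Gauss valuation yields
\begin{equation*}
    v\Bigl(\prod_{a\in\RR}(z-(a-\alpha)/\beta)\Bigr)=\sum_{a\in\RR}(v(a-\alpha)-b).
\end{equation*}
Multiplying by $v(\beta^{2g+1})=(2g+1)b$ cancels the $-b$ contributions and produces $\vfun_f(D)=\sum_{a\in\RR}v(a-\alpha)$, as desired.

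There is no real obstacle here — the entire argument is a one-line manipulation of the Gauss valuation — which is why the author calls it an immediate computation. The only point to double-check is the elementary fact that a monic polynomial in $\bar{K}[z]$ whose roots all have non-positive valuation has Gauss valuation equal to the valuation of its constant term, and this is transparent from the product formula $v(\prod(z-c_i))=\sum v(z-c_i)$ combined with $v(z-c_i)=\min(0,v(c_i))$.
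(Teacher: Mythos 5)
Your proof is correct and fills in precisely the routine computation the paper declares immediate: factor $f$ as a monic product of linear factors, scale and translate, and use multiplicativity of the Gauss valuation together with the fact that $v(z-c)=\min\{0,v(c)\}$ equals $v(c)$ once $b\ge d_-(\varnothing,\alpha)$ forces all scaled roots $(a-\alpha)/\beta$ to have nonpositive valuation. This is the same argument the paper has in mind; there is nothing to add.
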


\subsection{The \texorpdfstring{$g = 1$}{g=1} case (elliptic curves)} \label{sec computations g=1}

We begin our search for concrete results for hyperelliptic curves by considering the simplest situation: the case that $g = 1$ so that $Y$ is an elliptic curve.  Suppose that $Y : y^2 = f(x)$ is an elliptic curve over $K$, i.e.\ we have $g = 1$ and $\deg(f) = 3$.  We note that this case is treated (in a more concrete and elementary fashion) as the main topic of the second author's paper \cite{yelton2021semistable}.  We label the three roots of $f$ as $a_1 := 0, a_2, a_3 \in \bar{K}$.  Apart from the full set of roots, clearly the only non-singleton cluster we may have is a cluster of cardinality $2$ which we assume coincides with $\{a_1 = 0, a_2\}$.  The second author's previous results \cite[Theorems 1 and 4]{yelton2021semistable} may be rephrased using the terminology of this paper and adapted to our particular desired semistable model $\Yrst$ as follows.

\begin{thm} \label{thm elliptic}
    With the above set-up, let $m = v(a_2)$ (so that $m = 0$ if and only if there is no cardinality-$2$ cluster and otherwise $m$ is the depth of the cardinality-$2$ cluster $\mathfrak{s} = \{0, a_2\}$).
    \begin{enumerate}[(a)]
        \item Suppose that $m > 4v(2)$.  Then there are exactly $2$ valid discs $D_+ := D_{0, m - 2v(2)}$ and $D_- := D_{0, 2v(2)}$, both linked to the cluster $\mathfrak{s}$.  Thus, the special fiber $(\Yrst)_s$ consists of $2$ components each of abelian rank $0$ which intersect at $2$ points.
        \item Suppose that $m \leq 4v(2)$.  Then there is exactly $1$ valid disc $D_{\alpha_1,b_1}$, where
        $\alpha_1$ satisfies $v(\alpha_1)=v(\alpha_1-a_2)=\frac{1}{2}m$ and $v(\alpha_1-a_3)=0$, while $b_1=\frac{1}{3}(m + 2v(2))$; moreover, the center $\alpha_1\in \bar{K}$ can be taken to be a root of the polynomial 
        $$F(T) := P_1^2(T) - 4P_2(T)P_0(T) \in K[T],$$
        where each $P_i(T) \in K[T]$ is the $z^i$-coefficient of $f(z + T) \in K[T][z]$.  The corresponding model $\YY_D$ of $Y$ has smooth special fiber; thus, in this case, the relatively stable model $\Yrst$ coincides with $\YY_D$ and $Y$ attains good reduction.
    \end{enumerate}
\end{thm}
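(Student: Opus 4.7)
The plan for part (a) is to invoke the machinery built up in Sections 7--9. Since $|\mathfrak{s}| = 2$ and $g = 1$, \Cref{cor g=1 B=4v(2)} gives the threshold $B_{f,\mathfrak{s}} = 4v(2)$; the hypothesis $m > 4v(2)$ then combines with \Cref{prop depth threshold} to produce exactly two valid discs linked to $\mathfrak{s}$. By \Cref{prop formulas for b_pm}, their depths are $b_\pm(\mathfrak{s}) = d_\pm(\mathfrak{s}) \mp b_0(\mathfrak{t}^{\mathfrak{s}}_\pm)$. I will apply \Cref{prop cases of slope always 1}(a) with $\mathfrak{r} = \mathfrak{s}$, whose hypothesis on $v(\sigma)$ is automatic by \Cref{rmk cases of slope always 1} since $|\mathfrak{s}| = 2$, to get $b_0(\mathfrak{t}^{\mathfrak{s}}_+) = 2v(2)$; and analogously \Cref{prop cases of slope always 1}(c) with $\mathfrak{r} = \mathfrak{s}$ (verifying $v(a_3^{-1}) = 0 = -d_-(\mathfrak{s})$ using the depth-$0$ hypothesis on $\mathcal{R}$) to get $b_0(\mathfrak{t}^{\mathfrak{s}}_-) = 2v(2)$. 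Hence $D_+ = D_{0, m - 2v(2)}$ and $D_- = D_{0, 2v(2)}$ as claimed.

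To rule out any further valid discs, I will first apply \Cref{thm cluster p=2}(a) to exclude valid discs linked to any odd-cardinality cluster (namely $\mathcal{R}$ or the singletons), and then run a genus count. By \Cref{thm toric rank}, the toric rank of $\SF{\Yrst}$ equals the number of viable non-\"{u}bereven clusters, which is $1$ (since $\mathfrak{s}$ is viable but has singleton children and so is non-\"{u}bereven). By \Cref{prop genus of components}(b)(i)--(ii), the two components corresponding to $D_\pm$ each have abelian rank $0$, because $\lambda_\pm(\mathfrak{s}) = 1$ is the only admissible slope in $\{1, \dots, |\mathfrak{s}|-1\} = \{1, \dots, 2g+1-|\mathfrak{s}|\} = \{1\}$. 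Thus abelian plus toric rank already totals $g(Y) = 1$, so any additional valid disc (necessarily linked to no cluster) would add at least $1$ to the abelian rank by \Cref{cor crushed positive abelian rank}, contradicting \Cref{prop invariance of ranks}. The intersection pattern (two components meeting at exactly two nodes) is immediate from \Cref{prop viable correspondence}.

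For part (b), the hypothesis $m \le 4v(2)$ together with $B_{f,\mathfrak{s}} = 4v(2)$ allows, via \Cref{prop depth threshold}, at most one valid disc linked to $\mathfrak{s}$: exactly one when $m = 4v(2)$, none when $m < 4v(2)$, and $\mathfrak{s}$ does not exist when $m = 0$. In all cases, any valid disc must have its center at a root of the polynomial $F(T) = P_1(T)^2 - 4 P_2(T) P_0(T)$ supplied by \Cref{rmk formulas for F for g=1 and g=2}, thanks to \Cref{cor centers}. Using the explicit coefficients $P_0 = T(T-a_2)(T-a_3)$, $P_1 = 3T^2 - 2(a_2+a_3)T + a_2 a_3$, and $P_2 = 3T - (a_2+a_3)$, a Newton polygon analysis of $F$ will identify a root $\alpha_1$ with the prescribed valuations $v(\alpha_1) = v(\alpha_1 - a_2) = m/2$ and $v(\alpha_1 - a_3) = 0$. \Cref{prop roots of F are centers of clu discs}(a) then furnishes a totally odd decomposition $f = q^2 + \rho$ at $\alpha_1$ with $\rho_{\alpha_1,1}(z) = z^3$, whence $\vfun_\rho(D_{\alpha_1, c}) = 3c$ for all $c \in \qq$.

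The depth $b_1$ is pinned down by a direct valuation computation: the identity $F(\alpha_1) = 0$ forces $v(P_1(\alpha_1)) = v(2) + m/2$, while the prescribed valuations of $\alpha_1$ give $v(P_0(\alpha_1)) = m$ and $v(P_2(\alpha_1)) = 0$. Substituting into $\vfun_f(D_{\alpha_1, c}) = \min\{m, v(P_1(\alpha_1)) + c, v(P_2(\alpha_1)) + 2c, 3c\}$ and locating the smallest $c$ for which $\mathfrak{t}^{\mathcal{R}}(D_{\alpha_1, c}) = \truncate{3c - \vfun_f(D_{\alpha_1, c})}$ attains $2v(2)$ yields $b_1 = (m + 2v(2))/3$, and then \Cref{thm summary depths valid discs} confirms that $D_{\alpha_1, b_1}$ is a valid disc. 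To conclude, \Cref{prop genus of components} shows that $\SF{\YY_D}$ is irreducible of abelian rank $1 = g(Y)$; combined with the equality $p_a(\SF{\YY_D}) = g(Y) = 1$ and semistability, this forces zero toric rank, so $\SF{\YY_D}$ is smooth of genus $1$, giving $\Yrst = \YY_D$ and good reduction. The same genus count rules out any additional valid disc. The main obstacle is the Newton polygon analysis of $F$ pinning down the root $\alpha_1$ with the prescribed valuations, since the polygon's structure shifts subtly as $m$ ranges over $[0, 4v(2)]$; a careful bookkeeping of leading terms in each subcase is needed to rule out spurious roots of $F$ not satisfying the stated valuation conditions.
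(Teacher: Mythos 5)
Your proof of part (a) follows essentially the same route as the paper: the threshold $B_{f,\mathfrak{s}} = 4v(2)$ via \Cref{prop cases of slope always 1}(a),(c), the depths $b_\pm(\mathfrak{s})$ via \Cref{prop formulas for b_pm}, the ranks via \Cref{thm toric rank} and \Cref{prop genus of components}, and the genus count via \Cref{cor crushed positive abelian rank} to exclude extra valid discs. This is fine.

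For part (b), however, you substitute a Newton-polygon analysis of $F$ for the paper's Lemma~\ref{lemma g=1 v(alpha)}, and here there is a genuine gap. The Newton polygon of $F(T) = -3T^4 + 4(a_2+a_3)T^3 - 6a_2a_3 T^2 + a_2^2 a_3^2$ gives only the valuations $v(\alpha_1)$ of the roots (a single segment of slope $-m/2$ whenever $m \le 4v(2)$), not the valuations of the differences $v(\alpha_1 - a_i)$. When $m > 0$ this suffices: $v(\alpha_1) = m/2$ sits strictly between $v(a_3) = 0$ and $v(a_2) = m$, so the strong triangle inequality pins down $v(\alpha_1 - a_2) = m/2$ and $v(\alpha_1 - a_3) = 0$. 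But when $m = 0$ all of $\alpha_1, a_1, a_2, a_3$ have valuation $0$ and the Newton polygon is silent about whether $\alpha_1$ reduces to the same residue as one of the $a_i$ — which would change both $d_-(\varnothing,\alpha_1)$ and the formula for $b_1$. Ruling this out requires something beyond ``careful bookkeeping of leading terms'': for instance showing that $F(a_i)$ is a unit for each $i$ (which does follow from $F(a_i) = P_1(a_i)^2 = f'(a_i)^2$ and $m = 0$), or the paper's structural argument. The paper itself notes that a Newton-polygon proof is possible, but then deliberately avoids it, treating $m = 0$, $0 < m < 4v(2)$, $m = 4v(2)$ separately: the $m = 0$ case is handled by analyzing the singular locus of the inseparable fiber $\SF{\YY_{D_{0,0}}}$ via \Cref{prop inseparable tfae}, the middle range by \Cref{lemma bifurcation valid discs}, and the endpoint by \Cref{cor centers}. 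You have correctly flagged this as the main obstacle, but as written your proof leaves it unresolved; you should either fill in the $F(a_i)$-is-a-unit argument for the $m=0$ case, or switch to the paper's case analysis. A second, smaller point: you claim $v(P_2(\alpha_1)) = 0$ follows from ``the prescribed valuations of $\alpha_1$,'' which is true for $m > 0$ but not automatic for $m=0$; fortunately the subsequent formula $\vfun_f(D_{\alpha_1,c}) = m$ for $c \ge m/2$ holds regardless of whether $v(P_2(\alpha_1))$ is $0$ or positive, and you could sidestep the issue entirely by computing $\vfun_f$ via \Cref{lemma crushed components valuations} as the paper does, rather than via individual coefficients.
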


\begin{figure}
    \centering
    \def\svgwidth{\linewidth}
    \input{fig_cases_g1}
    \caption{The shape of the function $I(\mathfrak{s})\to \zerotwo$, $b\mapsto \tbest{\RR}{D_{0,b}}$ in cases (a) and (b) of \Cref{thm elliptic} provided that $m>0$.}
    \label{fig cases g1}
\end{figure}

\begin{rmk} \label{rmk cases g=1}
    The cases (a) and (b) of \Cref{thm elliptic} (when $m>0$) correspond to the possible shapes of the function $b\mapsto \tbest{\RR}{D_{0,b}}$ as $b$ ranges in $I(\mathfrak{s})=[d_-(\mathfrak{s}),d_+(\mathfrak{s})]=[0,m]$, which are described in \Cref{fig cases g1}.
\end{rmk}

\begin{rmk} \label{rmk Legendre}
    The case that treated in \cite{yelton2021semistable} is when the elliptic curve is a member of the Legendre family, i.e.\ of the form $E_\lambda : y^2 = f(x) := x(x - 1)(x - \lambda)$ for some $\lambda \in K \smallsetminus \{0, 1\}$ with $m = v(\lambda)$ (in other words, we set $a_2 = \lambda$ and $a_3 = 1$, which can always done after appropriate translation and scaling).  In this situation, we make the following observations.
    \begin{enumerate}[(a)]
        \item The above theorem directly implies that the elliptic curve $E_\lambda$ has potentially good reduction if and only if $m \leq 4v(2)$.  This could alternately be deduced as a consequence of the following facts.  It is well known (see for instance \cite[\S IV.1.2]{serre1989abelian} or \cite[Proposition VII.5.5]{silverman2009arithmetic}) that any elliptic curve over a complete discrete valuation field has good (resp.\ multiplicative) reduction over some finite extension of that field if and only if the valuation of its $j$-invariant is nonnegative (resp.\ negative).  The formula for the $j$-invariant of the Legendre curve $E_\lambda$ is given as in \cite[Proposition III.1.7]{silverman2009arithmetic} by 
        \begin{equation} \label{eq j} 
            j(E_\lambda) = 2^8 \frac{(\lambda^2 - \lambda + 1)^3}{\lambda^2 (\lambda - 1)^2}.
        \end{equation}
        It is easily computed from this formula that we have $v(j(E)) = 8v(2) - 2v(\lambda)$, and our claim about potentially good reduction if and only if $m \leq 4v(2)$ follows.
        \item We compute the formulas $P_2(T) = 3T - (\lambda + 1)$, $P_1(T) = 3T^2 - 2(\lambda + 1)T + \lambda$, and $P_0(T) = T^3 - (\lambda + 1)T^2 + \lambda T$.  Then the polynomial $F$ given in \Cref{thm elliptic}(b) can be written in the simpler form 
        \begin{equation} \label{eq F Legendre}
            F(T) = -3T^4 + 4(1 + \lambda)T^3 - 6\lambda T^2 + \lambda^2.
        \end{equation}
        \item For the polynomial defining $E_\lambda$, we have $f^\mathfrak{s}_+(z)=f^\mathfrak{s}_-(z) = 1 - z$, and the obvious totally odd part-square decompositions for both of them induce (as in \S\ref{sec depths separating roots std form}) the decomposition $f(x) = [\sqrt{-1}x]^2 + [x^3 - \lambda x^2 + \lambda x]$ (for some choice of square root of $-1$), which according to \Cref{prop finding J from b0} is good at the discs $D_{0,m-2v(2)}$ and $D_{0,2v(2)}$.  This is helpful for explicitly constructing the components of $(E_{\lambda}^{\mathrm{rst}})_s$ in the case that $m \geq 4v(2)$.
    \end{enumerate}
\end{rmk}

Examples of computations which yield the desired model $\Yrst$ in the case that $m \leq 4v(2)$ are given as \cite[Examples 2 and 3]{yelton2021semistable}.  Below is an example for the $m > 4v(2)$ case, which is treated in \cite[Example 9]{yelton2021semistable} except that there a semistable model whose special fiber has a single (nodal) component, rather than the relatively stable model $\Yrst$, is found.

\begin{ex} \label{ex g=1 p=2}
    Let $Y$ be the elliptic curve over $\zz_2^{\unr}$ given by 
    $$y^2 = x(x - 64)(x - 1),$$
    so that we have a unique even-cardinality cluster $\mathfrak{s} = \{0, 64\}$ of relative (and absolute) depth $m = 6v(2)$.  We are therefore in the situation of \Cref{thm elliptic}(a), and the valid discs can be taken to be $D_1 := D_- = D_{0,2v(2)}$ and $D_2 := D_+ = D_{0,4v(2)}$.  Using the sufficiently odd part-square decomposition given by \Cref{rmk Legendre}(c), we obtain (see \S\ref{sec models hyperelliptic forming}) that the changes in coordinates corresponding to each of these discs may be written as  
    $$x = 4x_1 = 16x_2, \ \ \ y = 8y_1 + 4\sqrt{-1}x_1 = 32y_2 + 16\sqrt{-1}x_2.$$  We now get equations for the models $\YY_1$ and $\YY_2$ corresponding to $D_1$ and $D_2$ respectively as 
    \begin{equation}
        \YY_1: y_1^2 + \sqrt{-1} x_1 y_1 = x_1^3 - 2^{4} x_1^2 + 2^2 x_1, \qquad 
        \YY_2: y_2^2 + \sqrt{-1} x_2 y_2 = 2^{2}x_2^3 - 2^{4}x_2^2 + x_2.
    \end{equation}
    whose special fibers are the $\bar{\ff}_2$-curves described by the equations
    \begin{equation}
        \SF{\YY_1}: y_1^2 + \sqrt{-1} x_1 y_1 = x_1^3, \qquad \SF{\YY_2}: y_2^2 + \sqrt{-1} x_2 y_2 = x_2.
    \end{equation}
    Note that $\YY_1$ is already a semistable model of $Y$, its special fiber being a curve with a node, and is one that could be obtained from \cite[Theorem 4 and Remark 5]{yelton2021semistable}, but it is not the relatively stable model as the node is a vanishing node (see \Cref{dfn relatively stable}).  The desingularizations of $\SF{\YY_1}$ and $\SF{\YY_2}$ are each smooth curves of genus $0$ and give rise to the components of $\SF{\Yrst}$, the configuration of which is shown in \Cref{fig p2 g1 example2}. 
\end{ex}

\begin{figure}
    \centering
    \vspace{-5mm}
    \tikzset{every picture/.style={line width=0.75pt}} 

\begin{tikzpicture}[x=0.75pt,y=0.75pt,yscale=-1,xscale=1]

\draw    (43,18) .. controls (60.75,164.67) and (116.44,164.67) .. (127.74,18) ;
\draw    (43,128) .. controls (60.75,-18.67) and (116.44,-18.67) .. (127.74,128) ;
\draw    (273,128) -- (212.47,18) ;
\draw    (273,18) -- (212.47,128) ;
\draw    (151.95,73) -- (209.47,73) ;
\draw [shift={(212.47,73)}, rotate = 180] [fill={rgb, 255:red, 0; green, 0; blue, 0 }  ][line width=0.08]  [draw opacity=0] (10.72,-5.15) -- (0,0) -- (10.72,5.15) -- (7.12,0) -- cycle    ;

\draw (21,18.4) node [anchor=north west][inner sep=0.75pt]    {$V_{1}$};
\draw (14,110.4) node [anchor=north west][inner sep=0.75pt]    {$V_{2}$};
\draw (275,21.4) node [anchor=north west][inner sep=0.75pt]    {$L_{1}$};
\draw (278,108.4) node [anchor=north west][inner sep=0.75pt]    {$L_{2}$};

\end{tikzpicture}
    \vspace{-8mm}
    \caption{The special fiber $\SF{\Yrst}$, shown above, mapping to $\SF{\Xrst}$; each component $V_i$ of $\SF{\Yrst}$ maps to each component $L_i := \SF{\XX_{D_i}}$ of $\SF{\Xrst}$.}
    \label{fig p2 g1 example2}
\end{figure}




Throughout the rest of this subsection we prove Theorem \ref{thm elliptic}.  By Theorem \ref{thm cluster p=2}(a), we know that for any valid disc $D$, we have either $D \cap \mathcal{R} = \varnothing$ or $D \cap \mathcal{R} = \mathfrak{s}$.  Let us first treat the situation where $\mathfrak{s} := \{0, a_2\}$ is a cluster and search for all valid discs (if any) which contain it.

\subsubsection{Finding valid discs containing a cardinality-$2$ cluster} \label{sec computations g=1 containing cluster}

For the moment, let us assume that $\mathfrak{s} := \{0, a_2\}$ is a cluster; we fix $0$ as a center for any disc containing $\mathfrak{s}$.  Then \Cref{prop cases of slope always 1}(a),(c) (along with \Cref{rmk cases of slope always 1}(c)) directly implies that we have $\mathfrak{t}_{\pm}^{\mathfrak{s}}(b)=\truncate{b}$, so that $b_0(\mathfrak{t}_{\pm}^{\mathfrak{s}}) = 2v(2)$ and hence $B_{f,\mathfrak{s}} = 4v(2)$ (see also \Cref{cor g=1 B=4v(2)}). From this, we deduce immediately that $\SF{\Yrst}$ has toric rank 1 and hence abelian rank 0 (resp.\ toric rank 0 and hence abelian rank 1) if and only if $m>4v(2)$ (resp.\ $m\le 4v(2)$): this is a consequence of \Cref{thm toric rank}. Moreover, there are exactly $2$ (resp.\ $1$, resp.\ $0$) valid discs linked to $\mathfrak{s}$ if and only if we have $m > 4v(2)$ (resp.\ $m = 4v(2)$, resp.\ $m < 4v(2)$), and when $m \geq 4v(2)$ the valid disc(s) containing $\mathfrak{s}$ can be written as $D_{0, 2v(2)}$ and $D_{0, m - 2v(2)}$ (they coincide when $m = 4v(2)$): this follows from \Cref{thm summary depths valid discs} (see also \Cref{prop depth threshold}). Since we have $\lambda_\pm(\mathfrak{s}) = 1$ by \Cref{lemma slopes of t}, by applying \Cref{prop genus of components}(b) we can compute that each of the components of $\SF{\Yrst}$ corresponding to $D_+$ and $D_-$ has abelian rank $0$ if $m > 4v(2)$ (the fact that they intersect at $2$ nodes follows from \Cref{prop viable correspondence}) and that the component of $\SF{\Yrst}$ corresponding to $D_+ = D_-=D_{0,2v(2)}$ is smooth of abelian rank $1$ if $m = 4v(2)$.  This proves \Cref{thm elliptic}(a).

We also remark that, from the formulas for $\mathfrak{t}^{\mathfrak{s}}_\pm$ we have derived, one deduces, by \Cref{prop t^R is min of t^s and t^(R-s)}, that $\tbest{\RR}{D_{0,b}}=\min\{b,m-b,2v(2)\}$ for $b\in [0,m]$, as shown in \Cref{fig cases g1}.

\subsubsection{Finding a center and a depth of a valid disc not containing any roots} \label{sec computations g=1 not containing cluster}
In the previous subsection, we found all valid discs linked to $\mathfrak{s}$. Moreover, we have seen that, when $m> 4v(2)$, the special fiber $\SF{\Yrst}$ has abelian rank zero, while, if $m=4v(2)$, it has abelian rank 1, which is entirely contributed by the unique valid disc $D_+=D_-$ linked to $\mathfrak{s}$. When $m<4v(2)$, the special fiber $\SF{\Yrst}$ must have abelian rank 1 (as we have $g = 1$ but the toric rank is $0$ in the absence of viable clusters by \Cref{prop viable correspondence}), but there is no valid disc linked to $\mathfrak{s}$.

Now, \Cref{cor crushed positive abelian rank} ensures that a valid discs $D$ that is not linked to $\mathfrak{s}$, or that is the unique disc linked to $\mathfrak{s}$, gives a positive contribution to the abelian rank of $\SF{\Yrst}$: it follows that there is no such disc when $m> 4v(2)$, and exactly one when $m\le 4v(2)$. In the latter case, \Cref{cor centers} ensures that this unique valid disc $D$ contains all roots $\alpha$ of 
\begin{equation} \label{eq F elliptic}
    F(T) = P_1^2(T) - 4P_2(T)P_0(T),
\end{equation}
where $P_i(T) \in K[T]$ be defined as in the statement of \Cref{thm elliptic}(b).  Let us therefore assume $m\le 4v(2)$, let $\alpha_1$ be any of the roots of $F$, and let $D=D_{\alpha_1,b_1}$ be the unique valid disc. 
\begin{lemma}
    \label{lemma g=1 v(alpha)}
    In the setting above, we have $v(\alpha_1) = v(\alpha_1-a_2) = \frac{1}{2}m$ and $v(\alpha_1-a_3)=0$.
    \begin{proof}
         This can be proved by directly inspecting the Newton polygon of $F$. We present a more theoretical proof which separately treats the cases $m=0$, $0<m<4v(2)$, and $m=4v(2)$.
         
         When $m=0$, we study the model $\YY_{D'}$ corresponding to the disc $D'=D_{0,0}$. The (normalized) reduction of $f$ has a simple root at $\overline{a_1} = 0, \overline{a_2}, \overline{a_3}$ and $\infty$. In particular, the trivial decomposition $f=0^2+f$ is good at $D'$ by \Cref{prop good decomposition}, and we have $\tbest{\RR}{D'}=0<2v(2)$ and $\mu(\XX_{D'},\overline{a_i})=\mu(\XX_{D'},\infty)=0$ for $i=1,2,3$ (see \S\ref{sec models hyperelliptic inseparable}). Now by \Cref{prop inseparable tfae} we have $D\in \mathfrak{D}_P$ for some $P\neq \overline{a_1}, \overline{a_2}, \overline{a_3},\infty$, which implies, thanks to \Cref{prop relative position smooth models line}, that $v(\alpha_1-a_i)=0$ for all $i=1,2,3$.
        
        When $0<m<4v(2)$, we note that $b \mapsto \mathfrak{t}^\RR(D_{0,b})$ is not differentiable at the input $b = \frac{1}{2}m$ with $\mathfrak{t}^\RR(D_{0,\frac{1}{2}m}) = \frac{1}{2}m < 2v(2)$; from \Cref{lemma bifurcation valid discs} (and \Cref{rmk bifurcation valid discs}), one deduces that $v(\alpha_1)=\frac{1}{2}m$, which proves the lemma.
        
        When $m=4v(2)$, given $\beta$ an element of valuation $\frac{1}{2}m=2v(2)$ (which is the depth of $D$), we have that $D$ is the unique valid disc linked to $\mathfrak{s}$, and the conclusion follows from \Cref{cor centers}.
    \end{proof}
\end{lemma}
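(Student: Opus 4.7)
The plan is to split into three cases according to the value of $m$, namely $m = 0$, $0 < m < 4v(2)$, and $m = 4v(2)$, and in each case combine the already-established description of the valid discs with an appropriate locating criterion --- respectively Proposition \ref{prop inseparable tfae}, Lemma \ref{lemma bifurcation valid discs}, and Corollary \ref{cor centers} --- to pinpoint the three valuations.

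First, in the case $m = 0$, the roots $0, a_2, a_3$ have pairwise distinct reductions in $\SF{\XX_{D_{0,0}}}$, so the normalized reduction of $f$ has three simple finite zeros. The trivial decomposition $f = 0^2 + f$ is therefore good at $D' := D_{0,0}$ by Proposition \ref{prop good decomposition} (the reduction of $f$ has odd degree and so is not a square), yielding $\mathfrak{t}^{\mathcal{R}}(D') = 0 < 2v(2)$ and making $\SF{\YY_{D'}} \to \SF{\XX_{D'}}$ inseparable. Proposition \ref{prop inseparable tfae} then places $D$ inside some $\mathfrak{D}_P$ with $P \in R_{\mathrm{sing}}$. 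Since $R_{\mathrm{sing}}$ consists of zeros of $\overline{f}'$ on the affine line and $\overline{f}'$ retains full degree $2g$, the set $R_{\mathrm{sing}}$ does not contain $\overline{0}, \overline{a_2}, \overline{a_3}$ (simple zeros of $\overline{f}$) or $\infty$; the identification $P = \overline{\alpha_1}$ provided by Proposition \ref{prop relative position smooth models line} then forces $v(\alpha_1 - a_i) = 0$ for $i = 1, 2, 3$.

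For the case $0 < m < 4v(2)$, combining Proposition \ref{prop t^R is min of t^s and t^(R-s)} with Proposition \ref{prop cases of slope always 1}(a),(c) gives $\mathfrak{t}^{\mathcal{R}}(D_{0,b}) = \min\{b, m - b, 2v(2)\}$, which has a corner at $b = \frac{1}{2}m$ with value $\frac{1}{2}m < 2v(2)$; since $\frac{1}{2}m$ is the depth of no cluster containing $0$, Lemma \ref{lemma bifurcation valid discs} (applied via Remark \ref{rmk bifurcation valid discs}) then furnishes a valid disc $D_{\alpha', b'}$ satisfying $v(\alpha') = \frac{1}{2}m$ and $b' > \frac{1}{2}m$. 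The ultrametric inequality shows that $D_{\alpha', b'}$ contains no root of $f$, so the uniqueness of the valid disc not linked to any cluster (established in \S\ref{sec computations g=1 not containing cluster}) identifies it with $D_{\alpha_1, b_1}$, giving $v(\alpha_1) = \frac{1}{2}m$. The remaining valuations $v(\alpha_1 - a_2) = \frac{1}{2}m$ and $v(\alpha_1 - a_3) = 0$ now follow from $v(a_2) = m$ and $v(a_3) = 0$ by the ultrametric inequality.

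In the case $m = 4v(2)$, the disc $D$ must coincide with $D_{0, 2v(2)}$, the unique valid disc linked to $\mathfrak{s}$ identified in \S\ref{sec computations g=1 containing cluster}. Corollary \ref{cor centers} then yields $v(\alpha_1 - a) = b_1 = 2v(2) = \frac{1}{2}m$ for every $a \in \mathfrak{s} = \{0, a_2\}$, while the ultrametric inequality again supplies $v(\alpha_1 - a_3) = 0$. The most delicate step in this outline is in the middle case, where one must verify that the valid disc produced by the bifurcation criterion of Lemma \ref{lemma bifurcation valid discs} really is the unique valid disc not linked to any cluster; everything else reduces to bookkeeping with the ultrametric inequality once the center $\alpha_1$ has been located.
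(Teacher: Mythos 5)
Your proof is correct and follows essentially the same three-case approach as the paper's own argument, with the same invocations of \Cref{prop inseparable tfae} and \Cref{prop relative position smooth models line} for $m=0$, of \Cref{lemma bifurcation valid discs} and \Cref{rmk bifurcation valid discs} for $0<m<4v(2)$, and of \Cref{cor centers} for $m=4v(2)$. You add a little more explicit bookkeeping (the derivation of the piecewise-linear formula for $\mathfrak{t}^{\RR}(D_{0,b})$, the ultrametric verification that the bifurcation disc contains no roots, and the identification with $D_{\alpha_1,b_1}$), but these are precisely the steps left implicit in the paper.
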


From this knowledge of $v(\alpha_1-a_i)$, by applying \Cref{lemma crushed components valuations}, one deduces that $\vfun_{f}(D_{\alpha_1,c})=\sum_{i=1}^3 v(\alpha_1-a_i)=m$ for all $c\ge \frac{1}{2}m$. Meanwhile, we know by \Cref{prop roots of F are centers of clu discs} that there exists a part-square decomposition $f=q^2+\rho$ that is totally odd with respect to the center $\alpha_1$ and such that $\rho_{\alpha_1,1}$ has no linear term; this means that $\rho(x)=(x-\alpha_1)^3$, and we thus get $\vfun_{\rho}(D_{\alpha_1,c})=3c$ for all $c\in \qq$. We conclude that $\tbest{\RR}{D_{\alpha_1,c}}=\truncate{3c-m}$ for $c \geq \frac{1}{2}m$; hence, the depth $b_1$ of the valid disc $D$ can now be obtained by solving the equation $3c-m=2v(2)$ in the variable $c\in [\frac{1}{2}m,+\infty)$ (see \Cref{thm summary depths valid discs}), which gives $b_1=\frac{1}{3}(m+2v(2))$. This proves \Cref{thm elliptic}(b).

\subsection{The \texorpdfstring{$g = 2$}{g=2} case} \label{sec computations g=2}

We now investigate the structure of $(\Yrst)_s$ where $Y$ is a genus-$2$ hyperelliptic curve; let $Y: y^2 = f(x)$ be the equation of $Y$, where the polynomial $f$ has degree $5$ and satisfies the simplifying assumptions (1), (2) and (3) listed at the beginning of this section; the roots of $f$ will be denoted $a_1 := 0, a_2, \ldots , a_5$.  Clearly there may be $0$, $1$, $2$, or $3$ even-cardinality clusters among the cluster data associated to $f$; except for in the last case of $3$ even-cardinality clusters, there may be a single cardinality-$3$ cluster as well.  

The below theorem describes our results on the possible structures of $\SF{\Yrst}$ depending on various arithmetic conditions, under the assumption that there exists at most one even-cardinality cluster. Actually, the theorem only addresses the case in which the even-cardinality cluster, if it exists, has cardinality 2 and its parent cluster coincides with $\RR$, but it may be adapted any other cluster picture having at most one even-cardinality cluster; see \Cref{rmk four of a kind}(a) below for more details.  To treat the case of more than one even-cardinality cluster, instead see \Cref{rmk four of a kind}(b),(c).

\begin{thm} \label{thm g=2}
    Assume that we are in the $g = 2$ situation and retain all of the above assumptions on $f$.  Assume moreover that there are no cardinality-$4$ clusters and there is at most one cardinality-$2$ cluster $\mathfrak{s} \subset \mathcal{R}$; if this cluster exists, we denote its relative depth by $m := \delta(\mathfrak{s})$, whereas if there is no even-cardinality cluster, we set $m = 0$.  It is clear that $\RR$ can contain at most one cardinality-3 cluster $\mathfrak{s}'$; if it exists, we denote its relative depth by $m':=\delta(\mathfrak{s}')$, whereas if there is no cardinality-3 cluster, we set $m'=0$. We assume that, when both $m$ and $m'$ are $>0$, we have $\mathfrak{s} \cap \mathfrak{s}'=\varnothing$.
    
    We label the roots $a_1, \ldots a_5$ of $f$ in such a way that, when $m>0$, we have $\mathfrak{s}=\{a_1=0,a_2\}$, and when $m'>0$, we have $\mathfrak{s}'=\{a_3,a_4,a_5\}$. Under the assumption that $m > 0$, we write the polynomial $f^{\mathfrak{s}}_-(z)$ (see \S\ref{sec depths separating roots std form} for its definition) as $1 + M_1 z + M_2 z^2 + M_3 z^3$ and let $w = v(M_1 - 2\sqrt{M_2})\ge 0$ for some choice of square root of $M_2$; when $m'>0$, we have $w=0$.
    
    Define the polynomial 
    \begin{equation*}
        F(T) = (P_1^2(T) - 4P_2(T)P_0(T))^2 - 64P_4(T)P_0^3(T) \in K[T],
    \end{equation*}
    where $P_i(T)$ is the $z^i$-coefficient of $f(z + T)$ for $0 \leq i \leq 5$, which we have seen in \Cref{rmk formulas for F for g=1 and g=2} is the polynomial $F$ defined in \S\ref{sec centers def}. 
    For any root $\alpha \in \bar{K}$ of $F$, let $f=q^2 + \rho$ be a part-square decomposition that is totally odd with respect to the center $\alpha$
    and such that $\rho_{\alpha,1}$ has no linear term (as is guaranteed to exist by \Cref{prop roots of F are centers of clu discs}(a)), and let $\kappa(\alpha)$ be the valuation of the cubic term of $\rho_{\alpha,1}$.
    
    In the language of \Cref{prop depth threshold}, when $m > 0$, we have $B_{f,\mathfrak{s}} = \max\{4v(2) - w, \frac{8}{3}v(2)\}$.  The set of valid discs and the structure of $(\Yrst)_s$ are fully described more precisely as follows.  All elements $\alpha_i$ mentioned in parts (b), (c), and (d) below may be chosen to be roots of $F$, so that in particular $\kappa(\alpha_i)$ is always defined.
    
    \begin{enumerate}[(a)]
        \item Suppose that $m > \frac{8}{3}v(2)$ and $w \geq \frac{4}{3}v(2)$.  Then there are exactly $2$ valid discs $D_- := D_{0,\frac{2}{3}v(2)}$ and $D_+ := D_{0,m - 2v(2)}$, both of which are linked to $\mathfrak{s}$.  The special fiber $(\Yrst)_s$ consists of $2$ components corresponding to the discs $D_-$ and $D_+$ which intersect at $2$ nodes and have abelian ranks $1$ and $0$ respectively.
        \item Suppose that $m>0$ and $4v(2) - m < w < \frac{4}{3}v(2)$.  Then there are two valid discs $D_+ := D_{0, m - 2v(2)}$ and $D_- := D_{0, 2v(2) - w}$ which are linked to $\mathfrak{s}$; their corresponding components of $\SF{\Yrst}$ each have abelian rank $0$ and intersect each other at $2$ points.  There is moreover another valid disc $D_{\alpha_1, b_1}$, which does not contain a root of $f$; we have $v(\alpha_1-a_i) = \frac{1}{2}w$ for $i=1,2$, $v(\alpha_1-a_i) = m'$ for $i=3,4,5$ and $b_1 = m'+\frac{1}{3}(w - \kappa(\alpha_1) + 2v(2))$.  The corresponding component of $\SF{\Yrst}$ has abelian rank $1$ and intersects the component corresponding to $D_-$ at $1$ node.
        \item Suppose that we have $m>0$, $w < \frac{1}{2}m$, and $w\le 4v(2) - m$. 
        Then there are valid discs $D_1 := D_{\alpha_1,b_1}$ and $D_2 := D_{\alpha_2,b_2}$ with $v(\alpha_1-a_i) = \frac{1}{2}w$ for $i=1,2$, $v(\alpha_1-a_i)=m'$ for $i=3,4,5$, $v(\alpha_2-a_i) = \frac{1}{2}(m - w)$ for $i=1,2$, and $v(\alpha_2-a_i)=0$ for $i=3,4,5$, $b_1 =m'+ \frac{1}{3}(w - \kappa(\alpha_1) + 2v(2))$, and $b_2 = \frac{1}{3}(m - w - \kappa(\alpha_2) + 2v(2))$.  The discs $D_i$ each do not contain a root of $f$ if $w<4v(2)-m$; when $w=4v(2)-m$, the disc $D_1$ does not, but the disc $D_2$ is the unique valid disc linked to $\mathfrak{s}$ and coincides with $D_{0, m - 2v(2)}$. The special fiber $\SF{\Yrst}$ consists of $2$ components corresponding to the discs $D_1$ and $D_2$, each of abelian rank $1$, which intersect at $1$ node.
        \item Finally, suppose that we have $m=0$, or $0<m \leq \min\{2w, \frac{8}{3}v(2)\}$.  Then there is a valid disc $D_1 := D_{\alpha_1,b_1}$ with $v(\alpha_1-a_i) = \frac{1}{4}m$ for $i=1,2$, and $v(\alpha_1-a_i)=0$ for $i=3,4,5$, and $b_1 \geq v(\alpha_1)$.  We have the following subcases.
        \begin{enumerate}[(i)]
            \item Suppose that $\kappa(\alpha_1) < \frac{2}{5}(\frac{1}{2}m + 2v(2))$.  Then there is a second valid disc $D_2 := D_{\alpha_2,b_2}$ where $\alpha_2$ satisfies $v(\alpha_2-a_i) = \frac{1}{4}m$ for $i=1,2$ and $v(\alpha_2-a_i) = m'$ for $i=3,4,5$, and we have $b_1 = \frac{1}{3}(\frac{1}{2}m - \kappa(\alpha_1) + 2v(2))$ and $b_2=b_1+m'$.  Neither of the discs $D_i$ contains a root of $f$.  The special fiber $(\Yrst)_s$ consists of $2$ components corresponding to the discs $D_i$, each of abelian rank $1$, which intersect at $1$ node.
            \item Suppose that $\kappa(\alpha_1) \geq \frac{2}{5}(\frac{1}{2}m + 2v(2))$.  Then the only valid disc is $D_1$; it is (the unique valid disc) linked to $\mathfrak{s}$ if $m = \frac{8}{3}v(2)$ but otherwise does not contain a root of $f$. Its depth is $b_1=\frac{1}{5}(\frac{1}{2}m+2v(2))$. The special fiber $(\Yrst)_s$ thus has exactly $1$ component, which has abelian rank $2$ (so $Y$ attains good reduction in this case).
        \end{enumerate}
    \end{enumerate}
\end{thm}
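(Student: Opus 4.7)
The plan is to proceed through the four cases in parallel by systematically computing the objects $\mathfrak{t}^{\mathfrak{s}}_\pm$, the threshold $B_{f,\mathfrak{s}}$, and the centers and depths of all valid discs, then assembling the geometry of $\SF{\Yrst}$ from these. First I would analyze $f^{\mathfrak{s}}_+$ and $f^{\mathfrak{s}}_-$. Since $|\mathfrak{s}|=2$, \Cref{prop cases of slope always 1}(a) (via \Cref{rmk cases of slope always 1}) gives $\mathfrak{t}^{\mathfrak{s}}_+(b)=\truncate{b}$, hence $b_0(\mathfrak{t}^{\mathfrak{s}}_+)=2v(2)$. For $\mathfrak{t}^{\mathfrak{s}}_-$, I would apply the totally odd decomposition of the cubic $f^{\mathfrak{s}}_-=1+M_1z+M_2z^2+M_3z^3$ from \S\ref{sec computations sufficiently odd deg3}: with $q=1+\sqrt{M_2}\,z$, the polynomial $\rho$ has linear coefficient of valuation $w$ and cubic coefficient of valuation $v(M_3)$, which I will identify with $3m'$ (via the factorization of $f^{\mathfrak{s}}_-$ as a product over $\RR\smallsetminus\mathfrak{s}$, using \Cref{prop product part-square}). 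From $\mathfrak{t}^{\mathfrak{s}}_-(b)=\min\{\truncate{w+b},\truncate{3m'+3b}\}$ I would then read off $b_0(\mathfrak{t}^{\mathfrak{s}}_-)=\max\{2v(2)-w,\tfrac{2}{3}v(2)-m'\}$, and in the cases where $m' = 0$ (which is forced when $w > 0$), this gives $B_{f,\mathfrak{s}}=b_0(\mathfrak{t}^{\mathfrak{s}}_+)+b_0(\mathfrak{t}^{\mathfrak{s}}_-)=\max\{4v(2)-w,\tfrac{8}{3}v(2)\}$ as claimed.

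Second, I would apply \Cref{thm summary depths valid discs} together with \Cref{prop formulas for b_pm} to extract the valid discs linked to $\mathfrak{s}$ in each regime. In case (a) ($m>\tfrac{8}{3}v(2)$, $w\geq\tfrac{4}{3}v(2)$) the threshold is $\tfrac{8}{3}v(2)$ and the dominating slope of $\mathfrak{t}^{\mathfrak{s}}_-$ near $b_0$ is $3$, producing the stated $D_\pm$; in case (b) ($4v(2)-m<w<\tfrac{4}{3}v(2)$) the dominating slope becomes $1$, shifting $b_-(\mathfrak{s})$ to $2v(2)-w$; in case (c), $m\le 4v(2)-m + w$ forces $b_-(\mathfrak{s})>b_+(\mathfrak{s})$ (so $J(\mathfrak{s})=\varnothing$), with the single valid disc linked to $\mathfrak{s}$ at the boundary $w=4v(2)-m$ coinciding with $D_2$; in case (d), there is no valid disc linked to $\mathfrak{s}$. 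The abelian ranks of the components coming from these discs, and the fact that they meet at two nodes whenever both $D_\pm$ exist, come directly from \Cref{prop genus of components} and \Cref{prop viable correspondence}.

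Third, for valid discs not linked to $\mathfrak{s}$, I would invoke \Cref{cor centers}: each such disc is centered at a root $\alpha$ of $F$, and by \Cref{prop roots of F are centers of clu discs}(a) we may choose a totally odd decomposition at $\alpha$ with $\rho_{\alpha,1}(z)=R_3z^3+z^5$ (the quintic coefficient is $1$ by monicity). The key computation is then $\mathfrak{t}^\RR(D_{\alpha,c})=\min\{\kappa(\alpha)+3c,\,5c\}-\sum_i v(a_i-\alpha)$, valid for $c$ in the relevant interval, and the depth $b$ of the corresponding valid disc is determined by setting this equal to $2v(2)$, which yields exactly the formulas in the statement. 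To pin down the valuations $v(\alpha-a_i)$, I would combine two ingredients: (i) \Cref{lemma bifurcation valid discs} and \Cref{rmk bifurcation valid discs}, which force the center of any such valid disc to project to a point of non-differentiability of $c\mapsto\mathfrak{t}^\RR(D_{0,c})$ (these points of non-differentiability being exactly the depths $\tfrac12 w$, $\tfrac12(m-w)$, and $\tfrac14 m$ of the candidate centers, depending on the regime), and (ii) a direct Newton polygon analysis of $F(T)$ as given by the explicit formula \eqref{eq formula for F when g=2}, which shows that $F$ indeed has roots of the prescribed valuations, with the bifurcation in case (d) between subcases (i) and (ii) reflecting which slope (3 or 5) of $\mathfrak{t}^\RR$ reaches $2v(2)$ first at $\alpha_1$.

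Finally, I would verify completeness and assemble the global picture of $\SF{\Yrst}$. Completeness follows from the genus constraint $g(Y)=2$: the sum of abelian and toric ranks of $\SF{\Yrst}$ must equal $2$, and in every case the components I have listed already account for this total (using \Cref{thm toric rank} to count viable clusters and \Cref{prop genus of components} for abelian ranks), forcing there to be no further valid discs. The intersection data (single node vs.\ two nodes) is then read off from \Cref{prop viable correspondence} together with the fact, following from our computation of $\mathfrak{t}^\RR$, that the relevant nodes of $\SF{\Xrst}$ lie over points where the inseparable cover $\SF{\YY_D}\to\SF{\XX_D}$ becomes ramified (handled via \Cref{prop partition rank}). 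The main obstacle will be the bookkeeping in the Newton polygon analysis of $F$: tracing through the four cases with the correct valuations of the coefficients $P_i$ (which themselves depend on $w$, $m$, and $m'$) requires care, and it is in this step that the subcase split $\kappa(\alpha_1)\lessgtr\tfrac{2}{5}(\tfrac12 m+2v(2))$ of part (d) emerges naturally as the comparison deciding whether the slope-3 or the slope-5 linear piece of $\mathfrak{t}^\RR$ attains the truncation value $2v(2)$ first.
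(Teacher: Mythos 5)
Your approach matches the paper's own proof of this theorem very closely: the structure (compute $\mathfrak{t}^{\mathfrak{s}}_\pm$ from the degree-$3$ decompositions of \S\ref{sec computations sufficiently odd deg3}, extract $B_{f,\mathfrak{s}}$ and the cluster-linked discs from Theorem~\ref{thm summary depths valid discs}/Proposition~\ref{prop formulas for b_pm}, locate the non-cluster-linked discs via roots of $F$ together with Lemma~\ref{lemma bifurcation valid discs} and the quintic formula for $\rho_{\alpha,1}$, then close by the genus constraint plus Proposition~\ref{prop partition rank}) is the same as in \S\ref{sec computations g=2}. Two points deserve attention.

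First, the claim $v(M_3)=3m'$ is incorrect: $f^{\mathfrak{s}}_-(z) = \prod_{i=3}^{5}\bigl(1-\beta_{d_-}a_i^{-1}z\bigr)$ has constant term $1$ and all roots of valuation $0$ (since $d_-(\mathfrak{s})=0=v(a_i)$ by the normalizing assumption that the depth of $\mathcal R$ is $0$), so $v(M_3)=0$ regardless of $m'$. The correct formula is $\mathfrak{t}^{\mathfrak{s}}_-(b)=\min\{w+b,\,3b,\,2v(2)\}$, giving $b_0(\mathfrak{t}^{\mathfrak{s}}_-)=\max\{2v(2)-w,\tfrac{2}{3}v(2)\}$. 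This slip happens to be harmless to the final answers precisely because $m'>0$ forces $w=0$, so the erroneous $3m'$ never becomes the active branch of the minimum; but the identity as stated would lead to wrong conclusions in other contexts, so it is worth flagging.

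Second, your reliance on Lemma~\ref{lemma bifurcation valid discs} and Remark~\ref{rmk bifurcation valid discs} to pin down $v(\alpha_i-a_j)$ needs supplementing in the degenerate case $w=0$: the relevant kink of $c\mapsto\mathfrak{t}^\RR(D_{0,c})$ then sits at the endpoint $c=0$ of $I(\mathfrak{s})$, where Remark~\ref{rmk bifurcation valid discs} does not apply (and the one-sided derivative from the left is not part of the picture). The paper handles this by a separate direct inspection of $\SF{\YY_{D'}}$ over $D'=D_{0,0}$ (and over $D'=D_{a_i,m'}$ when $m'>0$), formalized in Lemma~\ref{lemma w = 0}. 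Your proposed Newton-polygon analysis of the explicit polynomial $F(T)$ from \eqref{eq formula for F when g=2} is a legitimate alternative (the paper itself notes the analogous route at genus $1$ in the proof of Lemma~\ref{lemma g=1 v(alpha)}), but since it is left unelaborated, this is the point at which the argument would need to be fleshed out before it is complete.
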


\begin{rmk} \label{rmk cases g=2}
    The cases (a), (b), (c) and (d) of \Cref{thm g=2} (when $m>0$) correspond to the possible shapes of the function $b\mapsto \tbest{\RR}{D_{0,b}}$ as $b$ ranges in $I(\mathfrak{s})=[d_-(\mathfrak{s}),d_+(\mathfrak{s})]=[0,m]$, which are described in \Cref{fig cases g2}.  Note that in cases (a) and (b) we have that $\mathfrak{s}$ is a viable cluster (i.e.\ $m>B_{f,\mathfrak{s}}$), while in cases (c) and (d) there are no viable clusters.
\end{rmk}

\begin{figure}
    \centering
    \def\svgwidth{\linewidth}
    \input{fig_cases_g2}
    \caption{The shape of the function $I(\mathfrak{s})\to \zerotwo$, $b\mapsto \tbest{\RR}{D_{0,b}}$ in cases (a), (b), (c) and (d) of \Cref{thm g=2} provided that $m>0$.}
    \label{fig cases g2}
\end{figure}

\begin{rmk} \label{rmk four of a kind}
    The theorem only treats the situation where there are no cardinality-$4$ clusters and at most one cardinality-$2$ cluster which is not contained in a cardinality-$3$ cluster; here we briefly explain how to treat cases where this hypothesis does not hold.
    \begin{enumerate}[(a)]
        \item If we consider a situation where the only even-cardinality cluster $\mathfrak{s}$ has relative depth $m$ and cardinality $4$ (instead of $2$), then on applying the automorphism $i_a$ as defined in \Cref{rmk reciprocal}, where $a \in \mathfrak{s}$ is a root that does not belong to a cardinality-$3$ cluster, we obtain a cluster picture in which there is a cardinality-$2$ cluster (and possibly a cardinality-$3$ cluster disjoint from it), and then using \Cref{prop reciprocal} one can derive analogous statements to everything in the above theorem.  The rough idea is as follows.  For this case, we write $f^{\mathfrak{s}}_+(z) = 1 + M_1 z + M_2 z^2 + M_3 z^3$ and again set $w = v(M_1 - 2\sqrt{M_2})$.  Then we again get $B_{f,\mathfrak{s}} = \max\{4v(2) - w, \frac{8}{3}v(2)\}$, and under each of the main hypotheses of parts (a), (b), (c), and (d) we get the same outcome in terms of the structure of $\SF{\Yrst}$ (the number of components corresponding to valid discs linked to or not linked to $\mathfrak{s}$, and how they intersect).  The valuations of the centers of the discs as well as their depths are given by different formulas, however.  In particular, the valid discs $D_\pm$ claimed in parts (a) and (b) each have depths $m - b$, where $b$ is the claimed depth in the statement of the theorem: for part (a), we now have valid discs $D_- := D_{a,m-\frac{2}{3}v(2)}$ and $D_+ := D_{a,2v(2)}$ linked to $\mathfrak{s}$, and so on.
        
        Similarly, if we begin with a cluster picture such that there is a cardinality-$3$ cluster $\mathfrak{s}'$ containing $0$, then by applying the automorphism $i_a$ as defined in \Cref{rmk reciprocal} where $a$ is a root in $\mathfrak{s}' \smallsetminus \mathfrak{s}$ (or is any root in $\mathfrak{s}'$ if $m = 0$) and using \Cref{prop reciprocal}, we obtain a cluster picture in which there is a cardinality-$3$ cluster which instead does not contain $0$.
        \item Suppose that there are exactly $2$ even-cardinality clusters $\mathfrak{s}_1$ and $\mathfrak{s}_2$ containing roots $a_1$ and $a_2$ respectively.  Then by applying appropriate parts of \Cref{prop estimating threshold}(d) combined with \Cref{prop estimating threshold}(a),(b), we get $B_{f,\mathfrak{s}_1} = B_{f,\mathfrak{s}_2} = 4v(2)$.  For $i = 1, 2$, the arguments used in \S\ref{sec computations g=1} give us the following statements.  If $m_i := \delta(\mathfrak{s}_i) < 4v(2)$ (resp.\ $\delta(\mathfrak{s}_i) \geq 4v(2)$), then there exist valid discs $D_-^{(1)} := D_{\alpha_i,b_i}$ where $\alpha_i$ is a root of $F$ with $v(\alpha_i - a_i) = d_-(\mathfrak{s}_i) + \frac{1}{2}m_i$ and $b_i = d_-(\mathfrak{s}_i) + \frac{1}{3}(m_i + 2v(2))$ (resp.\ valid discs $D_+^{(i)} := D_{\mathfrak{s}_i,d_+(\mathfrak{s}_i)-2v(2)}$ and $D_-^{(i)} := D_{\mathfrak{s}_i,d_-(\mathfrak{s}_i) + 2v(2)}$; these discs coincide if and only if $m_i = 4v(2)$).  Moreover, if $m_i \leq 4v(2)$ (resp.\ if $m_i > 4v(2)$), then the disc $D_-^{(i)}$ contributes a component of the special fiber $\SF{\Yrst}$ of abelian rank $1$ (resp.\ the discs $D_\pm^{(i)}$ each contribute a component of abelian rank $0$ and these components meet at $2$ nodes), and the components of $\SF{\Yrst}$ corresponding to $D_-^{(1)}$ and $D_-^{(2)}$ intersect at $1$ node.
        
        In fact, it is straightforward to compute that, with the quantity $w$ defined as in the theorem, when there are exactly $2$ even-cardinality clusters, then we have $w = 0$; the above statements can therefore be proved for each $i$ by applying the below arguments in the proof of part (b) (resp.\ (c)) of \Cref{thm g=2} to $\mathfrak{s}_i$ in the case that $m_i > 4v(2)$ (resp.\ $m_i \leq 4v(2)$) to obtain valid discs $D_\pm^{(i)}$ (resp.\ the valid disc $D_-^{(i)}$) with the claimed properties.
        \item In the case that there are $3$ even-cardinality clusters, the computation of valid discs is in general much less straightforward, but in most cases either \Cref{prop deep ubereven cluster} or \Cref{cor g pairs} can be applied to entirely determine the special fiber $\SF{\Yrst}$.
    \end{enumerate}
\end{rmk}

\begin{rmk} \label{rmk kappa}
    Let $\alpha \in \bar{K}$ be a root of $F$.  It is implicit in our proof of \Cref{thm g=2} that the rational number $\kappa(\alpha)$ is well defined in all contexts of the statement in which its precise value is relevant (more precisely, one can show that it does not depend on the choice of totally odd decomposition with no linear term as long as it is $< 2v(2)$, which is guaranteed to be the case outside of parts (a) and (d)(ii)).  We see from the formula for $R_3$ found in \S\ref{sec computations sufficiently odd deg5} that it can be computed as 
    \begin{equation}
        \kappa(\alpha) = v\Big(P_3(\alpha) - 2\sqrt{P_4(\alpha)}\sqrt{P_2(\alpha) - 2\sqrt{P_4(\alpha)P_0(\alpha)}}\Big)
    \end{equation}
    only for particular choices of the square roots in the above formula.
\end{rmk}

\begin{rmk} \label{rmk valuations of roots of F}
    We observe the following regarding valuations of roots of the polynomial $F$.
    \begin{enumerate}[(a)]
        \item The polynomial $F$ has degree $16$; its leading term has unit coefficient; and its constant term equals $(a_2 a_3 a_4 a_5)^4$, and hence, under the hypotheses of the theorem, it has valuation $4m$.
        \item  In light of \Cref{cor centers}, parts (c) and (d) of the theorem now allow us to deduce the valuations of the roots of the polynomial $F$.  If $m>0$, $w < \frac{1}{2}m$ and $w\le 4v(2) - m$, then the statement of \Cref{thm g=2}(c) implies that all roots of $F$ have valuation either $\frac{1}{2}w$ or $\frac{1}{2}(m - w)$; since there are $16$ roots whose valuations must add up to $4m$, we get that $8$ of the roots have valuation $\frac{1}{2}w$ while the other $8$ have valuation $\frac{1}{2}(m - w)$.  Similarly, if $m=0$ or $0<m \leq \min\{2w, \frac{8}{3}v(2)\}$, then the statement of \Cref{thm g=2}(d) implies that all roots of $F$ have valuation $\frac{1}{4}m$.
    \end{enumerate}
\end{rmk}

\begin{ex} \label{ex g=2 p=2}
    Let $Y$ be the hyperelliptic curve of genus $2$ over $\zz_2^\unr$ given by 
    $$y^2 = x(x - 16)(x - 1)(x^2 + x - 1),$$
    so that we have a cardinality-$2$ cluster $\mathfrak{s} = \{0, 16\}$ of relative (and absolute) depth $m = 4v(2)$.  It is straightforward to compute that $f^{\mathfrak{s}}_-(z) = 1-2z+z^3$ and so we have $w = v(-2 - 2\sqrt{0}) = v(2)$.  The hypothesis of \Cref{thm g=2}(b) clearly holds, and so we have valid discs $D_1 := D_- = D_{0,v(2)}$ and $D_2 := D_+ = D_{0,2v(2)}$ which are linked to $\mathfrak{s}$.  By applying the computations in \S\ref{sec computations sufficiently odd deg3}, we get totally odd decompositions of $f^{\mathfrak{s},0}_\pm$, which induce (as in \S\ref{sec depths separating roots std form}) the decomposition 
    $$f(x) = [x]^2 + [x^5 - 16 x^4 - 2 x^3 + 32 x^2 - 16 x],$$
    which according to \Cref{prop finding J from b0} is good at the discs $D_i$.     Using this decomposition and our knowledge of the depths of the valid discs $D_i$, following the computations in \S\ref{sec models hyperelliptic forming} we obtain that the changes in coordinate corresponding to these discs may be written as 
    \begin{equation}
        x = 2x_1 = 4x_2, \ \ \ y = 4y_1 + 2x_1 = 8y_2 + 4x_2.
    \end{equation}
    We now get equations for the corresponding models $\YY_1$ and $\YY_2$ as 
    \begin{align}
    \begin{split}
        \YY_1 &: 
        y_1^2 + x_1 y_1 = 2 x_1^5 - 2^4 x_1^4 - x_1^3 + 2^3 x_1^2 - 2 x_1\\
        \YY_2 &: 
        y_2^2 + x_2 y_2 = 2^{4} x_2^5 - 2^{6} x_2^4 - 2 x_2^3 + 2^{3} x_2^2 -  x_2
    \end{split}
    \end{align}
    The special fibers of these models are the $\bar{\ff}_2$-curves given by 
    \begin{equation}
        \SF{\YY_1}: y_1^2 + x_1 y_1 = x_1^3, \qquad y_2^2 + x_2 y_2 = x_2.
    \end{equation}
    The desingularizations of $\SF{\YY_1}$ and $\SF{\YY_2}$ are each smooth curves of genus $0$ and give rise to $2$ of the components of $\SF{\Yrst}$.  However, these are not all of the components of $\SF{\Yrst}$, as \Cref{thm g=2}(b) asserts the existence of another valid disc $D_3 := D_{\alpha_1,b_1}$ for some root $\alpha_1$ of $F$ with $v(\alpha_1) = \frac{1}{2}v(2)$ and $b_1 = 1 - \frac{1}{3}\kappa(\alpha_1)$.  Now through tedious but straightforward calculations, one can show that $v(P_3(\alpha_1)) = v(2)$ and $v(P_4(\alpha_1)) = \frac{1}{2}v(2)$, from which it follows from considering the cubic coefficient appearing in (\ref{eq rho degree 5}) that we have $\kappa(\alpha_1) = v(2)$ and so $b_1 = \frac{2}{3}v(2)$.
    
    For an appropriate part-square decomposition $f = q^2 + \rho$ that is totally odd with respect to the center $\alpha_1$, the change in coordinates corresponding to $D_3$ can be written as  
    $$x = 2^{2/3}x_3 + \alpha_1, \ \ \ y = 2^{3/2}y_3 + q_{\alpha_1,1}(2^{2/3}x_3)y_3.$$
    We now get an equation for the model $\YY_3$ corresponding to $D_3$ as 
    \begin{equation} \label{eq p=2 g=2 Y_3}
        \YY_3: 
         y_3^2 + 2^{-1/2} q_{\alpha_1,1}(2^{2/3}x_3)y_3 = 2^{-3}\rho_{\alpha_1,1}(2^{2/3}x_3).
    \end{equation}
    Note that using \Cref{lemma crushed components valuations}, we have
    \begin{align}
    \begin{split}
        v(q_{\alpha_1,1}(2^{2/3}x_3)) &= \frac{1}{2}v(f_{\alpha_1,1}(2^{2/3}x_3) - \rho_{\alpha_1,1}(2^{2/3}x_2)) = \frac{1}{2}v(f_{\alpha_1,1}(2^{2/3}x_3)) = v(\alpha_1) = \frac{1}{2}v(2), \\
        v(\rho_{\alpha_1,1}(2^{2/3}x_3)) &= 2v(2) + v(f_{\alpha_1,1}(2^{2/3}x_3)) = 2v(2) + 2v(\alpha_1) = 3v(2).
    \end{split}
    \end{align}
    One can now readily verify that the special fiber of $\YY_3$ is the $\bar{\ff}_2$-curve given by 
    \begin{equation}
        y_3^2 + c_1 y_3 = c_2 x_3^3,
    \end{equation}
    for some $c_1, c_2\in k^\times$, and its desingularization is a smooth curve of genus $1$ which gives rise to the remaining component of $\SF{\Yrst}$.  The configuration of the components of $\SF{\Yrst}$ is seen in \Cref{fig p2 g2 example2}.
\end{ex}

\begin{figure}
    \centering
    \vspace{-1cm}
    \tikzset{every picture/.style={line width=0.75pt}} 

\begin{tikzpicture}[x=0.75pt,y=0.75pt,yscale=-1,xscale=1]

\draw    (11,70) -- (235,70) ;
\draw    (29,109) .. controls (76,107.33) and (75,-37.67) .. (128,26.33) ;
\draw    (128,112.33) .. controls (164,-10.33) and (196,-25.67) .. (213,110.33) ;
\draw    (360,117) -- (394,6) ;
\draw    (477,71) -- (340.47,71) ;
\draw    (262.95,70) -- (320.47,70) ;
\draw [shift={(323.47,70)}, rotate = 180] [fill={rgb, 255:red, 0; green, 0; blue, 0 }  ][line width=0.08]  [draw opacity=0] (10.72,-5.15) -- (0,0) -- (10.72,5.15) -- (7.12,0) -- cycle    ;
\draw    (405,122.33) -- (439,11.33) ;

\draw (13,42.4) node [anchor=north west][inner sep=0.75pt]    {$V_{1}$};
\draw (57,91.4) node [anchor=north west][inner sep=0.75pt]    {$V_{3}$};
\draw (217,92.4) node [anchor=north west][inner sep=0.75pt]    {$V_{2}$};
\draw (456,51.4) node [anchor=north west][inner sep=0.75pt]    {$L_{1}$};
\draw (369,95.4) node [anchor=north west][inner sep=0.75pt]    {$L_{3}$};
\draw (414,100.73) node [anchor=north west][inner sep=0.75pt]    {$L_{2}$};

\end{tikzpicture}
    \caption{The special fiber $\SF{\Yrst}$, shown on the left, mapping to $\SF{\Xrst}$; each component $V_i$ of $\SF{\Yrst}$ maps to each component $L_i := \SF{\XX_{D_i}}$ of $\SF{\Xrst}$.}
    \label{fig p2 g2 example2}
\end{figure}

The rest of this subsection is devoted to proving \Cref{thm g=2}.

\subsubsection{Finding valid discs containing an even-cardinality cluster} \label{sec computations g=2 containing cluster}

Suppose that, in the settting of \Cref{thm g=2}, we have $m>0$, i.e.\ that we have a unique even-cardinality cluster $\mathfrak{s} := \{0, a_2\}$ of relative depth $m$; our goal for the moment is to find all valid discs which are linked to $\mathfrak{s}$.  We adopt the notation and constructions of \S\ref{sec depths separating roots} and get the polynomials $f^{\mathfrak{s}}_+(z) = 1 - z$ and  
\begin{equation}
    f^{\mathfrak{s}}_-(z) = (1 - a_3^{-1}z)(1 - a_4^{-1}z)(1 - a_5^{-1}z) = 1 + M_1 z + M_2 z^2 + M_3 z^3.
\end{equation}
Just as in the situation of \S\ref{sec computations g=1 containing cluster}, we have $\mathfrak{t}^{\mathfrak{s}}_+(b)=\truncate{b}$ and $b_0(\mathfrak{t}^{\mathfrak{s}}_+) = 2v(2)$.  Now applying the computations in \S\ref{sec computations sufficiently odd deg3}, we have a totally odd part-square decomposition $f_-^{\mathfrak{s},0} = [q_-]^2 + \rho_-$ where (for some choice of square roots of $M_2$) we have 
$$\rho_-(z) = (M_1 - 2\sqrt{M_2})z + M_3 z^3.$$
It is immediate to see that we have $v(M_3)=0$, and the formula $\mathfrak{t}^{\mathfrak{s}}_-(b) = \min\{3b, b + w,2v(2)\}$ follows, from which we get $b_0(\mathfrak{t}_-^{\mathfrak{s}})=\max\{ \frac{2}{3}v(2), 2v(2) - w\}$.
Now, using the formula $B_{f,\mathfrak{s}} = b_0(\mathfrak{t}_+^{\mathfrak{s}}) + b_0(\mathfrak{t}_-^{\mathfrak{s}})$ from \Cref{prop depth threshold}, we get $B_{f,\mathfrak{s}} = \max\{\frac{8}{3}v(2),4v(2)-w\}$.
If we assume that $\mathfrak{s}$ is viable (i.e., if we are in the setting $m>\max\{\frac{8}{3}v(2),4v(2)-w\}$ treated by \Cref{thm g=2}(a),(b)), the components of $\SF{\Yrst}$ corresponding to $D_\pm := D_{0,b_\pm(\mathfrak{s})}$ intersect at $2$ nodes (see \Cref{prop viable correspondence}).  Moreover, it follows from \Cref{lemma slopes of t} that $\lambda_+(\mathfrak{s}) = 1$, and it is easily checked from the valuations of the coefficients of $\rho_-$ that we have $\lambda_-(\mathfrak{s}) = 3$ if we moreover have $w \geq \frac{4}{3}v(2)$ (i.e.\ case (a)), whereas  $\lambda_-(\mathfrak{s}) = 1$ if $w< \frac{4}{3}v(2)$ (i.e.\ case (b)).  Now applying \Cref{prop genus of components}(b)(i),(ii) shows us that in case (a) of \Cref{thm g=2}, the components of $\SF{\Yrst}$ corresponding to $D_+$ and $D_-$ have abelian ranks $0$ and $1$ respectively if $m > \frac{8}{3}v(2)$.

In particular, we have proved the formula for $B_{f,\mathfrak{s}}$ at the start of the statement of \Cref{thm g=2}, as well as part (a) of the theorem and part of the statement of part (b). We also note for below use that from the formulas $\mathfrak{t}^{\mathfrak{s}}_+(b) = \truncate{b}$ and $\mathfrak{t}^{\mathfrak{s}}_-(b) = \min\{3b, b + w,2v(2)\}$, by \Cref{prop t^R is min of t^s and t^(R-s)}, we get the formula 
\begin{equation} \label{eq t_f formula g=2 with a 2-cluster}
    \mathfrak{t}^\RR(D_{0,b}) = \min\{3b, b + w, -b + m, 2v(2)\} \qquad \mathrm{for} \ b \in [d_-(\mathfrak{s}), d_+(\mathfrak{s})]=[0,m],
\end{equation}
as displayed in \Cref{fig cases g2}.

\subsubsection{Finding a center and a depth of a valid disc not containing any roots} \label{sec computations g=2 not containing cluster}

We retain all of the above notation and assumptions, except that we now allow the possibility that $m = 0$ (so that there is no even-cardinality cluster), and we set out to find and chararcterize the valid discs $D$ associated to $Y$ 
which either are linked to no cluster or are the unique valid disc linked to $\mathfrak{s}$. 

Below we will need a lemma to treat situations where $w = 0$ (which is possible only under the hypotheses of \Cref{thm g=2}(b),(c)).

\begin{lemma} \label{lemma w = 0}
    With the notation and hypotheses of $\Cref{thm g=2}$, suppose that we have $m>0$ and $w = 0$. Then there is a valid disc $D$ containing no roots of $f$ and which, for all $\alpha\in D$, satisfies
    \begin{equation}
        v(\alpha)=v(\alpha-a_2)=0, \qquad v(\alpha-a_3)=v(\alpha-a_4)=v(\alpha-a_5)=m'.
    \end{equation}
    Moreover, $D$ contributes a component of abelian rank $1$ to the special fiber $\SF{\Yrst}$ that meets the rest of $\SF{\Yrst}$ at $1$ node.
    \begin{proof}
        Let $D':=D_{0,0}$. The hypothesis $w = 0$, by definition of $w$, is equivalent to $v(M_1) = 0$, and one checks straightforwardly from formulas that it implies that $f$ has unit cubic coefficient. Moreover, the polynomial $f$ has unit quintic coefficient (because it is monic), while the presence of the cardinality-$2$ cluster $\mathfrak{s}$ implies that the linear term of $f$ has positive valuation. Hence, the decomposition $f=0^2+f$ is good at $D'$.  Moreover, when $m'=0$, by looking at the roots of $\bar{f}'$ we see that the inseparable curve $\SF{\YY_{D'}}\to \SF{\XX_{D'}}$ is singular exactly over $\overline{x}=0$ and over a second point $P$ to which none of the elements of $\Rinfty$ reduce, with $\mu(\XX_{D'},0)=\mu(\XX_{D'},P)=2$ (see \S\ref{sec models hyperelliptic inseparable}).  Now applying \Cref{prop partition rank} (combined with \Cref{prop relative position smooth models line}), we get the desired statement when $m'=0$.
        When we have $m'>0$, we instead let $D' = D_{a_i,m'}$ for $i=3,4,5$, and letting $\gamma'$ be an element of valuation $m'$, one easily sees that any normalized reduction of $f_{a_i,\gamma'}$ has no quintic term but does have nonzero linear and cubic terms. It follows that $\SF{\YY_{D'}}\to \SF{\XX_{D'}}$ is singular exactly over $\infty$ and over a second point $P$ to which none of the elements of $\Rinfty$ reduce, with $\mu(\XX_{D'},0)=\mu(\XX_{D'},P)=2$. Again, the desired statement follows via \Cref{prop partition rank}.
    \end{proof}
\end{lemma}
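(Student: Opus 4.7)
The plan is to produce the desired valid disc $D$ by applying \Cref{prop partition rank} to an auxiliary inseparable model $\YY_{D'}$, locating a singular point $P \in \SF{\XX_{D'}}$ which is not the reduction of any element of $\Rinfty$ and has $\mu(\XX_{D'}, P) = 2$. That proposition will then produce a subcurve $C_P \subseteq \SF{\Yrst}$ of arithmetic genus $\mu/2 = 1$ meeting the rest at a single node; by \Cref{prop inseparable tfae} the set $\mathfrak{D}_P$ contains a valid disc $D$, and since $P$ is not a root reduction this $D$ will satisfy $D \cap \RR = \varnothing$, forcing $\SF{\YY_D}$ to be irreducible of abelian rank $\ge 1$ by \Cref{cor crushed positive abelian rank}. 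The arithmetic-genus constraint then pins down $D$ as the unique disc in $\mathfrak{D}_P$ and fixes its abelian rank at $1$ with vanishing toric rank, delivering the lemma. The auxiliary disc $D'$ has to be chosen differently depending on whether $m' = 0$ or $m' > 0$.

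For $m' = 0$, I take $D' := D_{0,0}$. The hypothesis $w = 0$, unpacked via the formulas for $M_1, M_2$ and the fact that $v(2) > 0$, is equivalent to $v(c_3) = 0$ where $c_3$ is the cubic coefficient of $f$. Combined with the unit quintic coefficient and the linear coefficient of positive valuation $m$, this implies that the trivial decomposition $f = 0^2 + f$ is good at $D'$ by \Cref{prop good decomposition}, so $\SF{\YY_{D'}} \to \SF{\XX_{D'}}$ is inseparable. In characteristic $2$ a direct computation gives $\bar f'(x) = \bar x^2 (\bar x + \sqrt{\bar c_3})^2$, identifying the singular points as $\bar x = 0$ and $P := \sqrt{\bar c_3}$, each of multiplicity $2$.

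For $m' > 0$, I instead take $D' := D_{a_3, m'}$ after relabeling so that $v(a_j - a_3) = m'$ for $j = 4, 5$. A short Taylor-expansion calculation shows that any normalized reduction of $f_{a_3,\gamma}$ (with $v(\gamma) = m'$) has no quintic term but has nonzero cubic and linear coefficients, so again the trivial decomposition is good at $D'$. The derivative of this reduction then has the form $\bar c z^2 + \bar c'$ with $\bar c, \bar c' \in k^\times$, factoring as $\bar c(z + \sqrt{\bar c'/\bar c})^2$ in char $2$ and producing a finite singular point $P$ of multiplicity $2$; the singularity at $\infty$ accounts for the remaining mass of $\mu = 2g = 4$. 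In both cases the essential check is that $P$ coincides with no root reduction: were $P$ such a reduction, then $\bar f(P) = \bar f'(P) = 0$ would force $(x - P)^2 \mid \bar f$, contradicting that the corresponding root is simple in $\bar f$ (a consequence of the distinctness of the roots $a_i$ together with the cluster structure).

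Once $P$ is in hand, the conclusion follows as outlined in the first paragraph; the valuations $v(\alpha - a_i)$ for $\alpha \in D$ are read off from the coordinate defining the reduction $\alpha \mapsto P$, giving $v(\alpha) = v(\alpha - a_2) = 0$ and $v(\alpha - a_i) = m'$ for $i = 3, 4, 5$ (consistent with $m' = 0$ or $m' > 0$ as appropriate). The main obstacle is the non-coincidence $P \neq \bar a_i$ in each case; this relies on the cluster-theoretic fact that $\bar f$ has only simple roots at the relevant unit residues, a point which must be verified carefully rather than being automatic from $w = 0$ alone.
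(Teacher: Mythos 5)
Your argument follows the paper's own proof essentially step for step: in both cases ($m'=0$ and $m'>0$) you choose the same auxiliary disc $D'$, verify goodness of the trivial decomposition $f = 0^2 + f$ from $w=0$, compute $\mu(\XX_{D'},P)=2$ at a finite singular point $P$ lying away from every root reduction, and conclude via \Cref{prop partition rank} together with \Cref{prop inseparable tfae} and \Cref{cor crushed positive abelian rank}. The one small imprecision is in your closing remark: for $m'>0$ the ``simple root'' fact you need concerns the normalized reduction of $f_{a_3,\gamma'}$ (a cubic with distinct unit roots $0,\bar z_4,\bar z_5$), not $\bar f$ itself, but you have already supplied the correct argument in the body of the proof (where you show $P=\sqrt{\bar c'/\bar c}$ differs from $0,\bar z_4,\bar z_5$ by the distinctness of the reduced roots), so this is a matter of phrasing rather than a gap. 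Your explicit factorizations $\bar f'(x)=\bar x^2(\bar x+\sqrt{\bar c_3})^2$ for $m'=0$ and $\bar c(z+\sqrt{\bar c'/\bar c})^2$ for $m'>0$, and the short argument that $P$ cannot be a root reduction because that root would then be a multiple root of the normalized reduction, spell out what the paper leaves to the reader's inspection; this is a welcome addition of detail but not a different method.
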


In the case treated by \Cref{thm g=2}(a), where $m > \frac{8}{3}v(2)$ and $w \geq \frac{4}{3}v(2)$, we have seen that there are $2$ valid discs linked to $\mathfrak{s}$: they have already been found and determined to contribute $1$ to the abelian rank and $1$ to the toric rank of $\SF{\Yrst}$; since we have $g = 2$ (so that the sum of the ranks must equal $2$; see \S\ref{sec preliminaries abelian toric unipotent}) and valid discs not linked to any cluster correspond to components of positive abelian rank by \Cref{cor crushed positive abelian rank}, it is clear that there is no valid disc which does not contain a root of $f$ or which is the unique one linked to $\mathfrak{s}$.  We therefore assume that the hypothesis of \Cref{thm g=2}(a) does not hold.

Suppose that we have $m>0$ and $w < \min\{\frac{1}{2}m, \frac{4}{3}v(2)\}$ (as is true for the cases treated by \Cref{thm g=2}(b),(c)); we will show that there is a valid disc $D_{\alpha_1,b_1}$ containing no root of $f$ such that $\alpha_1$ satisfies
\begin{equation}
    \label{eq alpha1}
    v(\alpha_1)=v(\alpha_1-a_2)=\frac{1}{2}w, \text{ and  } v(\alpha_1-a_3)=v(\alpha_1-a_4)=v(\alpha_1-a_5)=m'
\end{equation}
and which contributes a component of abelian rank 1 to $\SF{\Yrst}$. If $w = 0$, this follows from \Cref{lemma w = 0}.  We therefore assume for the rest of this paragraph that $w > 0$.  Then we have that $c \mapsto \mathfrak{t}^\RR(D_{0,c})$ is not differentiable at the input $c = \frac{1}{2}w$ with left and right derivatives equal to $3$ and $1$ respectively and that $\mathfrak{t}^\RR(D_{0,\frac{1}{2}w}) = \frac{3}{2}w < 2v(2)$.  Therefore, by \Cref{lemma bifurcation valid discs} (and \Cref{rmk bifurcation valid discs}), there is a center $\alpha_1 \in \bar{K}$ such that $v(\alpha_1) = \frac{1}{2}w$ and $D_{\alpha_1,b_1}$ is a valid disc which is not linked to any cluster for some $b_1>\frac{1}{2}w$.  In fact, we know from the left and right derivatives and applying \Cref{prop partition rank}(b) to the disc $D_{0,\frac{1}{2}w}$ that the abelian rank of the corresponding component of $\SF{\Yrst}$ must be $\frac{1}{2}(3 - 1) = 1$.  When we also have $w > 4v(2) - m$ (so that we are in the case treated by \Cref{thm g=2}(b)), we have already found that $\mathfrak{s}$ is viable and contributes $1$ to the toric rank of $\SF{\Yrst}$ and so we have found all of the valid discs as the ranks must add up to $g = 2$ (see \S\ref{sec preliminaries abelian toric unipotent}). 

In the case treated by \Cref{thm g=2}(c) where we moreover have $w \le 4v(2) - m$ and $w < \frac{1}{2}m$, the function $c \mapsto \mathfrak{t}^\RR(D_{0,c})$ is also not differentiable at the input $c = \frac{1}{2}(m - w)$ with left and right derivative equal to $1$ and $-1$ respectively, and we have $\mathfrak{t}^\RR(D_{0,\frac{1}{2}(m - w)}) =  \frac{1}{2}(m + w) \leq 2v(2)$.  Therefore, by applying \Cref{lemma bifurcation valid discs} and \Cref{rmk bifurcation valid discs} when $w<4v(2)-m$, and by observing that $b_-(\mathfrak{s}) = b_+(\mathfrak{s}) =  \frac{1}{2}(m - w)$ when $w=4v(2)-m$, we have that there is a center $\alpha_2$ satisfying
\begin{equation}
    \label{eq alpha2}
    v(\alpha_2)=v(\alpha_2-a_2)=\frac{1}{2}(m - w)>0, \text{ and  } v(\alpha_2-a_3)=v(\alpha_2-a_4)=v(\alpha_2-a_5)=0,
\end{equation}
and such that $D_{\alpha_2,b_2}$ is a valid disc which is not linked to any cluster (resp.\ is the unique valid disc linked to $\mathfrak{s}$) if $w < 4v(2) - m$ (resp.\ $w = 4v(2) - m$), for some $b_2 \geq  \frac{1}{2}(m - w)$.  By knowing the left and right derivatives and a similar application of \Cref{prop partition rank}(b) to the disc $D_{0,\frac{1}{2}(m-w)}$ (resp.\ using \Cref{prop genus of components}(b)(iii)), the abelian rank of the corresponding component of $\SF{\Yrst}$ must be $\frac{1}{2}(1 - (-1)) = 1$.  Since the $2$ valid discs we have found each contribute $1$ to the abelian rank of $\SF{\Yrst}$, we have again found all of the valid discs as $g = 2$ (see \S\ref{sec preliminaries abelian toric unipotent}).

Let us now address the case treated by \Cref{thm g=2}(d), in which we instead have $m=0$ or $0<m \leq \min\{2w, \frac{8}{3}v(2)\}$ and that the function $b \mapsto \mathfrak{t}^\RR(D_{0,b})$ is not differentiable at the input $b = \frac{1}{4}m$, with  $\mathfrak{t}^\RR(D_{0,\frac{1}{4}m}) = \frac{3}{4}m \leq 2v(2)$. We claim that, in this case, there exist two possibly coinciding valid discs $D_{\alpha_1,b_1}$ and $D_{\alpha_2,b_2}$ such that $\alpha_1$ and $\alpha_2$ satisfy the conditions
\begin{equation}
    \label{eq alpha1'}
    v(\alpha_1)=v(\alpha_1-a_2)=\frac{1}{4}m, \text{ and  } v(\alpha_1-a_3)=v(\alpha_1-a_4)=v(\alpha_1-a_5)=0,
\end{equation}
\begin{equation}
    \label{eq alpha2'}
    v(\alpha_2)=v(\alpha_2-a_2)=\frac{1}{4}m, \text{ and  } v(\alpha_2-a_3)=v(\alpha_2-a_4)=v(\alpha_2-a_5)=m'
\end{equation}
respectively, and such that they are linked to no cluster (when $m<\frac{8}{3}v(2)$), or they both coincide with the unique disc linked to $\mathfrak{s}$ (when $m=\frac{8}{3}v(2)$). When $m=\frac{8}{3}v(2)$, it is straightforward to see that the unique valid disc $D_{0,\frac{2}{3}v(2)}=D_{\alpha_1,b_1}=D_{\alpha_2,b_2}$ linked to $\mathfrak{s}$ contributes a component of abelian rank 2 (see \Cref{prop genus of components}(b)(iii)), which proves the claim directly above. In the $m<\frac{8}{3}v(2)$ case, no valid disc is linked to $\mathfrak{s}$, and hence all valid discs contain no roots of $f$. Since the genus is $g=2$ and the toric rank of $\SF{\Yrst}$ is 0 by \Cref{thm toric rank}, in light of \Cref{cor crushed positive abelian rank} we must have either two distinct valid discs each contributing a component of abelian rank 1 or a unique valid disc contributing a component of abelian rank 2. The fact that these valid discs have the form prescribed by the claim directly above can be easily proved by studying the singularities of $\SF{\YY_{D'}}\to \SF{\XX_{D'}}$ for the disc $D'=D_{0,\frac{m}{4}}$ (and also for the disc $D'=D_{0,m'}$ when $m'>0$) and then exploiting \Cref{prop inseparable tfae}.

We now set out to find a formula for $b_i$ for the valid discs $D_{\alpha_i,b_i}$ that we have found in all cases discussed above, where $i=1,2$ and the centers $\alpha_i$ satisfy (\ref{eq alpha1})-(\ref{eq alpha2}) or  (\ref{eq alpha1'})-(\ref{eq alpha2'}). We also want to know whether, in the case that $m=0$ or $0<m < \min\{2w,\frac{8}{3}v(2)\}$, the valid discs $D_{\alpha_1,b_1}$ and $D_{\alpha_2,b_2}$ give rise to two distinct components of $\SF{\Yrst}$ of abelian rank $1$ or they coincide and provide a unique component of abelian rank 2. 

Let us first remark that $\alpha_i$ can always be chosen to be a root of $F$ thanks to \Cref{cor centers}. Now, using \Cref{lemma crushed components valuations} and the definition of $\kappa(\alpha)$, letting $\nu=2v(\alpha_i)+3v(\alpha_i-a_3)$, we compute the formula 
\begin{equation} \label{eq t^R g=2 crushed}
    \mathfrak{t}^\RR(D_{\alpha_i,b}) = \truncate{\vfun_{\rho}(D_{\alpha_i,b}) - \vfun_{f}(D_{\alpha_i,b})} = \min\{3b + \kappa(\alpha_i)-\nu, 5b-\nu,2v(2)\}.
\end{equation}

We know from \Cref{thm summary depths valid discs}(b) that the depth $b_i$ is the first input at which  $b\mapsto \mathfrak{t}^\RR(D_{\alpha_i,b})$ attains $2v(2)$.  Meanwhile, it can be calculated using \Cref{prop genus of components}(a) and the formula in (\ref{eq t^R g=2 crushed}) that the component of $\SF{\Yrst}$ corresponding to $D_{\alpha_i,b_i}$ has abelian rank $1$ (resp.\ $2$) if we have $3b_i + \kappa(\alpha_i) - \nu < 5b_i - \nu$ (resp.\ $3b_i+\kappa(\alpha_i) - \nu \geq 5{b_i} - \nu$), or equivalently, if we have $\kappa(\alpha_i) < 2b_i$ (resp.\ $\kappa(\alpha_i) \geq 2b_i$).  We therefore have have the following two cases:
\begin{enumerate}
    \item the abelian rank of the component of $\SF{\Yrst}$ corresponding to $D_{\alpha_i,b_i}$ equals $1$; we then have 
    \begin{align}
    \begin{split}
        \mathfrak{t}^\RR(D_{\alpha_i,b_i}) = 3b_i+\kappa(\alpha_i)-\nu = 2v(2) &\implies b_i = \frac{1}{3}(\nu - \kappa(\alpha_i) + 2v(2)), \\ \mathrm{and} \ \ \ \kappa(\alpha_i) < 2b_i &\implies \kappa(\alpha_i) < \frac{2}{5}(\nu + 2v(2)); \qquad \mathrm{or} 
    \end{split}
    \end{align}
    \item the abelian rank of the component of $\SF{\Yrst}$ corresponding to $D_{\alpha_i,b_i}$ equals $2$; we then have 
    \begin{align}
    \begin{split}
        \mathfrak{t}^\RR(D_{\alpha_i,b_i}) = 5b_i-\nu = 2v(2) &\implies b_i = \frac{1}{5}(\nu + 2v(2)), \\ \mathrm{and} \ \ \ \kappa(\alpha_i) \geq 2b_i &\implies \kappa(\alpha_i) \geq \frac{2}{5}(\nu + 2v(2)).
    \end{split}
    \end{align}
\end{enumerate}
Now, on substituting the formulas for $\nu=2v(\alpha_1)+3v(\alpha_1-a_3)$ which we found above on a case-by-case basis, we get the following outcomes:
\begin{enumerate}
    \item if we have $w < \min\{\frac{1}{2}m, \frac{4}{3}v(2)\}$, then the valid disc $D_{\alpha_1,b_1}$ that we have found (corresponding to a component of abelian rank $1$), with $\alpha_1$ satisfying (\ref{eq alpha1}), has depth $b_1 = m'+\frac{1}{3}(w - \kappa(\alpha_1) + 2v(2))$;
    \item if moreover we have $w < \min\{\frac{1}{2}m, 4v(2) - m\}$, then the second valid disc $D_{\alpha_2,b_2}$ that we have found (also corresponding to a component of abelian rank $1$), with $\alpha_2$ satisfying (\ref{eq alpha2}), has depth $b_2 = \frac{1}{3}(m - w - \kappa(\alpha_2) + 2v(2))$; and 
    \item if instead we have $m=0$ or $0<m < \min\{2w, \frac{8}{3}v(2)\}$, then the valid discs $D_{\alpha_i,b_i}$ for $i=1,2$ that we have found, with $\alpha_i$ satisfying (\ref{eq alpha1'})-(\ref{eq alpha2'}), may be distinct and contribute each a component of abelian rank 1, or they may coincide and give a unique component of abelian rank 2.  The first case occurs if we have $\kappa(\alpha_1) < \frac{2}{5}(\frac{1}{2}m + 2v(2))$; in this case, we get $b_1 = \frac{1}{3}(\frac{1}{2}m - \kappa(\alpha_1) + 2v(2))$ and $b_2=m'+b_1$.  The second case (which can only happen if $m'=0$) occurs when $\kappa(\alpha_1) \ge \frac{2}{5}(\frac{1}{2}m + 2v(2))$, and we get $b_1 = b_2 = \frac{1}{5}(\frac{1}{2}m + 2v(2))$.
\end{enumerate}
This finishes the proof of \Cref{thm g=2}.

\bibliographystyle{plain}
\bibliography{bibfile}

\end{document}